\DeclareSymbolFont{rsfs}{U}{rsfs}{m}{n}
\DeclareSymbolFontAlphabet{\mathscrsfs}{rsfs}
\def\bxi{{\boldsymbol \xi}}
\def\cA{{\mathcal A}}
\def\cO{{\mathcal O}}
\def\hE{{\hat{\mathbb E}}}
\def\bU{{\boldsymbol U}}
\def\bV{{\boldsymbol V}}
\def\bz{{\boldsymbol z}}
\def\bOmega{{\boldsymbol \Omega}}
\def\bfe{{\boldsymbol e}}
\def\bG{{\boldsymbol G}}
\def\bbm{{\boldsymbol m}}
\def\bh{{\boldsymbol h}}
\def\bB{{\boldsymbol B}}
\def\beps{{\boldsymbol \varepsilon}}
\def\bH{{\boldsymbol H}}
\def\bK{{\boldsymbol K}}
\def\bQ{{\boldsymbol Q}}
\def\bv{{\boldsymbol v}}
\def\bDelta{{\boldsymbol \Delta}}
\def\bE{{\boldsymbol E}}
\def\bX{{\boldsymbol X}}
\def\bY{{\boldsymbol Y}}
\def\bw{{\boldsymbol w}}
\def\bx{{\boldsymbol x}}
\def\by{{\boldsymbol y}}
\def\bW{{\boldsymbol W}}
\def\hba{\hat{\boldsymbol a}}
\def\ba{{\boldsymbol a}}
\def\bT{{\boldsymbol T}}
\def\bDelta{{\boldsymbol \Delta}}
\def\bu{{\boldsymbol u}}
\def\bg{{\boldsymbol g}}
\def\bA{{\boldsymbol A}}
\def\btheta{{\boldsymbol \theta}}
\def\bTheta{{\boldsymbol \Theta}}
\def\bbeta{{\boldsymbol \beta}}
\def\bJ{{\boldsymbol J}}
\def\bC{{\boldsymbol C}}
\def\boldf{{\boldsymbol f}}
\def\bM{{\boldsymbol M}}
\def\obM{\overline{\boldsymbol M}}
\def\bS{{\boldsymbol S}}
\def\bD{{\boldsymbol D}}
\def\bF{{\boldsymbol F}}
\def\bZ{{\boldsymbol Z}}
\def\bsigma{{\boldsymbol \sigma}}
\def\bq{{\boldsymbol q}}
\def\bXi{{\boldsymbol \Xi}}
\def\bfeta{{\boldsymbol \eta}}
\def\bR{{\boldsymbol R}}
\def\imagunit{{\mathrm i}}
\def\Res{{\boldsymbol\Xi}}
\def\oRes{{\boldsymbol \Pi}}
\def\Restrict{{\mathsf R}}
\def\cT{{\mathcal T}}
\def\cQ{{\mathcal Q}}
\def\cF{{\mathcal F}}
\def\cE{{\mathcal E}}
\def\cH{{\mathcal H}}
\def\bcH{{\boldsymbol {\mathcal H}}}
\def\sHone{{{\sf H}_1}}
\def\sFone{{{\sf F}_1}}
\def\sFtwo{{{\sf F}_2}}
\def\ok{{\overline k}}
\def\oR{{\overline R}}
\def\reals{{\mathbb R}}
\def\complex{{\mathbb C}}
\def\integers{{\mathbb N}}
\def\tm{{\tilde m}}
\def\tsigma{{\tilde \sigma}}
\def\err{{\rm err}}
\def\disk{{\mathbb D}}
\def\bsF{\textbf{\textsf{F}}}
\def\ratio{{\zeta}}
\def\tbtheta{{\tilde \btheta}}
\def\tbx{{\tilde \bx}}
\def\tbB{{\tilde \bB}}
\def\tbQ{{\tilde \bQ}}
\def\tbJ{{\tilde \bJ}}
\def\tbH{{\tilde \bH}}
\def\tQ{{\tilde Q}}
\def\tJ{{\tilde J}}
\def\tH{{\tilde H}}
\def\bfone{{\mathbf 1}}
\def\tz{{\tilde z}}
\def\ob{\mu}
\def\Tr{{\rm Tr}}
\def\normal{{\mathsf N}}
\def\de{{\rm d}}
\def\Unif{{\rm Unif}}
\def\proj{{\mathsf P}}
\def\He{{\rm He}}
\def\Poly{{\rm Poly}}
\def\Coeff{{\rm Coeff}}
\def\RF{{\rm RF}}
\def\diag{{\rm diag}}
\def\Log{{\rm Log}}
\def\normf{F}
\def\hbbeta{\hat{\boldsymbol \beta}}
\def\bSigma{{\boldsymbol\Sigma}}
\def\lsamp{\mbox{\tiny\rm lsamp}}
\def\wide{\mbox{\tiny\rm wide}}
\def\rless{\mbox{\tiny\rm rless}}
\def\sNL{\mbox{\tiny\rm NL}}
\def\hf{\hat{f}}
\def\olambda{\overline{\lambda}}
\def\cuR{\mathscrsfs{R}}
\def\cuE{\mathscrsfs{E}}
\def\cuB{\mathscrsfs{B}}
\def\cuV{\mathscrsfs{V}}
\colorlet{linkequation}{blue}
\begin{document}

\title{The generalization error of random features regression:\\
 Precise asymptotics and double descent curve}

\author{Song Mei\thanks{Institute for Computational and Mathematical
    Engineering, Stanford University} \;\,\, and \,\, Andrea Montanari\thanks{Department of Electrical Engineering and Department of Statistics, Stanford University}}

\maketitle

\begin{abstract}
Deep learning methods operate in regimes that defy the traditional statistical mindset. Neural network architectures often contain more parameters than training samples, and are so rich that they can interpolate the observed labels, even if the latter are replaced by pure noise. Despite their huge complexity, the same architectures achieve small generalization error on real data. 

This phenomenon has been rationalized in terms of a so-called `double descent' curve. As the model complexity increases, the test error follows the usual U-shaped curve at the beginning, first decreasing and then peaking around the interpolation threshold (when the model achieves vanishing training error). However, it descends again as model complexity exceeds this threshold. The global minimum of the test error is found above the interpolation threshold, often in the  extreme overparametrization regime in which the number of parameters is much larger than the number of samples. Far from being a peculiar property of deep neural networks, elements of this behavior have been demonstrated in much simpler settings, including linear regression with random covariates. 

In this paper we consider the problem of learning an unknown function over the $d$-dimensional sphere $\S^{d-1}$, from $n$ i.i.d. samples $(\bx_i, y_i)\in \S^{d-1} \times  \reals$,  $i\le n$. We perform ridge regression on $N$ random features of the form $\sigma(\bw_a^{\sT}\bx)$, $a\le N$. This can be equivalently described as a two-layers neural network with random first-layer weights.  We compute the precise asymptotics of the test error, in the limit $N,n,d\to \infty$ with $N/d$ and $n/d$ fixed. This provides the first  analytically tractable model that captures all the features of the double descent phenomenon without assuming ad hoc misspecification structures. In particular, above a critical value of the signal-to-noise ratio, minimum test error is achieved by extremely overparametrized interpolators, i.e., networks that have a number of parameters much larger than the sample size, and vanishing training error. 
\end{abstract}

\tableofcontents

\section{Introduction}

Statistical lore recommends not to use models that have too many parameters since this will lead to `overfitting' and poor generalization. Indeed, a plot of the test error as a function of the model complexity often reveals a U-shaped curve. The test error first decreases because the model is less and less biased, but then increases because of a variance explosion \cite{Hastie}. In particular, the interpolation threshold, i.e., the threshold in model complexity above which the training error vanishes (the model completely interpolates the data), corresponds to a large test error. It seems wise to keep the model complexity well below this threshold in order to obtain a small generalization error.

These classical prescriptions are in stark contrast with the current practice in deep learning. The number of parameters of modern neural networks can be much larger than the number of training samples, and the resulting models are often so complex that they can perfectly interpolate the data. Even more surprisingly, they can interpolate the data when the actual labels are replaced by pure noise \cite{zhang2016understanding}. Despite such a large complexity, these models have small test error and can outperform others trained in the classical underparametrized regime. 

This behavior has been rationalized in terms of a so-called `double-descent' curve \cite{belkin2018understand,belkin2018reconciling}. A plot of the test error as a function of the model complexity follows the traditional U-shaped curve until the interpolation threshold. However, after a peak at the interpolation threshold, the test error decreases, and attains a global minimum in the overparametrized regime. In fact, the minimum error often appears to be `at infinite complexity': the more overparametrized is the model, the smaller is the error. It is conjectured that the good generalization behavior in this highly overparametrized regime is due to the implicit regularization induced by gradient descent learning: among all interpolating models, gradient descent selects the simplest one, in a suitable sense. An example of double descent curve is plotted in Fig.~\ref{fig:DoubleDescentFirst}. The main contribution of this paper is to describe a natural, analytically tractable model leading to this generalization curve, and to derive precise formulae for the same curve, in a suitable asymptotic regime. 
\begin{figure}[!ht]
\centering
\includegraphics[width = 0.48\linewidth]{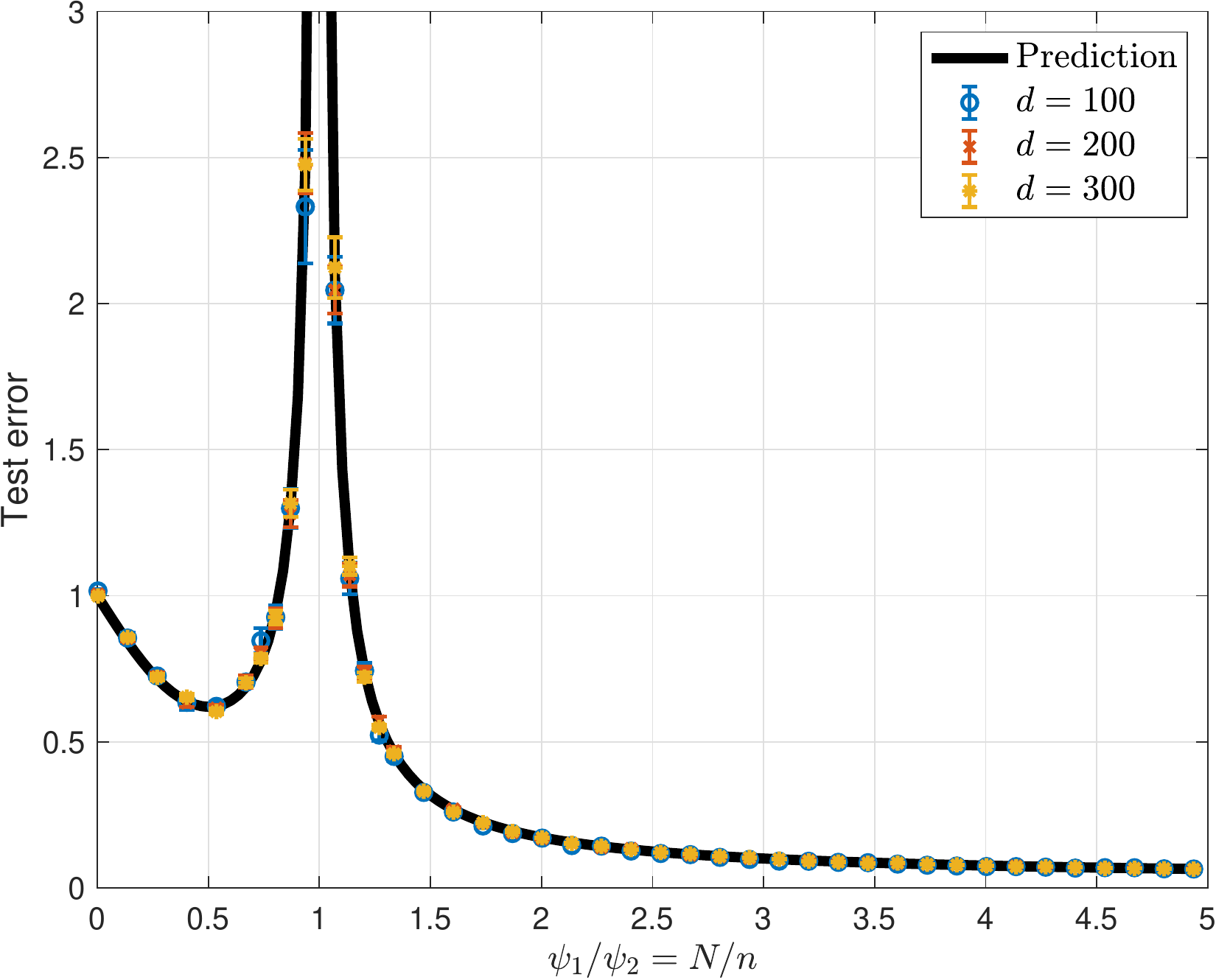}
\includegraphics[width = 0.48\linewidth]{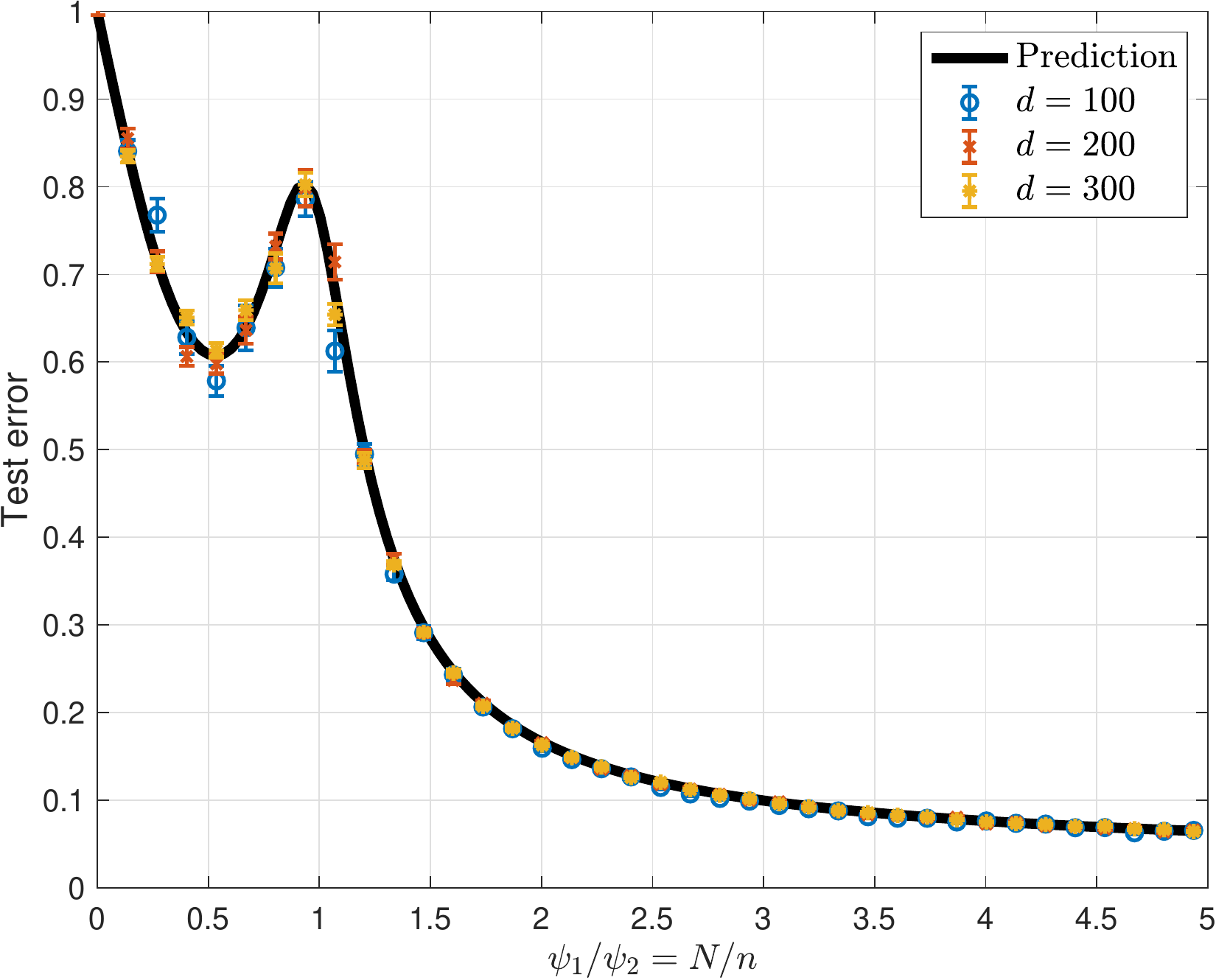}
\caption{Random features ridge regression with ReLU activation ($\sigma = \max\{x, 0\}$). Data are generated via $y_i=\<\bbeta_1,\bx_i\>$ (zero noise) with $\|\bbeta_1\|_2^2 =1$, and $\psi_2 = n / d = 3$. Left frame: regularization $\lambda = 10^{-8}$ (we didn't set $\lambda = 0$ exactly for numerical stability). 
Right frame: $\lambda = 10^{-3}$. The continuous black line is our theoretical prediction, and the colored symbols are numerical results for several dimensions $d$. Symbols are averages over $20$ instances and the error bars report the standard error of the means over these $20$ instances. 
}\label{fig:DoubleDescentFirst}
\end{figure}

The double-descent scenario is far from being specific to neural networks, and was instead demonstrated empirically in a variety of models including random forests and random features models \cite{belkin2018reconciling}. Recently, several elements of this scenario were established analytically in simple least square regression, with certain probabilistic models for the random covariates \cite{advani2017high,hastie2019surprises,belkin2019two}. These papers consider a setting in which we are given i.i.d. samples $(y_i,\bx_i)\in \reals\times \reals^d$, $i\le n$, where $y_i$ is a response variable which depends on covariates $\bx_i$ via $y_i = \<\bbeta,\bx_i\>+\eps_i$, with $\E(\eps_i) = 0$ and $\E(\eps_i^2) = \tau^2$; or in matrix notation, $\by = \bX\bbeta+\beps$. The authors study the test error of `ridgeless least square regression' $\hbbeta = \bX^\dagger \by$ (where $\bX^\dagger$ stands for the pseudoinverse of $\bX$), and use random matrix theory to derive its precise asymptotics in the limit $n,d\to\infty$ with $d/n = \gamma$ fixed, when $\bx_i = \bSigma^{1/2}\bz_i$ with $\bz_i$ a vector with i.i.d. entries.

Despite its simplicity, this random covariates model captures several features of the double descent scenario. In particular, the asymptotic generalization curve is U-shaped for $\gamma<1$, diverging at the interpolation threshold $\gamma=1$, and descends again exceeding that threshold. The divergence at $\gamma=1$ is explained by an explosion in the variance, which is in turn related to a divergence of the condition number of the random matrix $\bX$. At the same time, this simple model misses some interesting features that are observed in more complex settings: $(i)$~In the Gaussian covariates model, the global minimum of the test error is achieved in the underparametrized regime $\gamma<1$, unless ad-hoc misspecification structure is assumed; $(ii)$~The number of parameters is tied to the covariates dimension $d$ and hence the effects of overparametrization are not isolated from the effects of the ambient dimensions; $(iii)$~Ridge regression, with some regularization $\lambda>0$, is always found to outperform the ridgeless limit $\lambda\to 0$. Moreover, this linear model is not directly connected to actual neural networks, which are highly nonlinear in the covariates $\bx_i$.

In this paper, we study the random features model of Rahimi and Recht \cite{rahimi2008random}. The random features model can be viewed either as a randomized approximation to kernel ridge regression, or as a two-layers neural networks with random first layer wights. We compute the precise asymptotics of the test error and show that it reproduces all the qualitative features of the double-descent scenario.

More precisely, we consider the problem of learning a function  $f_d \in L^2(\S^{d-1}(\sqrt d))$ on the $d$-dimensional sphere. (Here and below $\S^{d-1}(r)$ denotes the sphere of radius $r$ in $d$ dimensions, and we set $r=\sqrt{d}$ without loss of generality.) We are given i.i.d. data $\{(\bx_i, y_i)\}_{i \le n} \sim_{iid} \P_{\bx, y}$, where $\bx_i \sim_{iid} \Unif(\S^{d-1}(\sqrt d))$ and $y_i = f_d(\bx_i) + \eps_i$, with $\eps_i \sim_{iid} \P_\eps$ independent of $\bx_i$. The noise distribution satisfies $\E_\eps(\eps_1) = 0$, $\E_\eps(\eps_1^2) = \tau^2$, and $\E_\eps(\eps_1^4) < \infty$. We fit these training data using the random features ($\RF$) model, which is defined as the function class
\begin{align}
\cF_{\RF}(\bTheta) = \Big\{ f(\bx;\ba,\bTheta) \equiv&~ \sum_{i=1}^N a_i \sigma(\< \btheta_i, \bx\>/\sqrt{d}): \;\;\; a_i \in \R\,\, \forall i \in [N] \Big\}\, . \label{eq:RF-Def}
\end{align}
Here, $\bTheta \in \R^{N \times d}$ is a matrix whose $i$-th row is the vector $\btheta_i$, which is chosen randomly, and independent of the data. In order to simplify some of the calculations below, we will assume the normalization $\|\btheta_i\|_2=\sqrt{d}$, which justifies the factor $1/\sqrt{d}$ in the above expression, yielding $\< \btheta_i, \bx_j\>/\sqrt{d}$ of order one. As mentioned above, the functions in $\cF_{\RF}(\bTheta)$ are two-layers neural networks, except that the first layer is kept constant. A substantial literature draws connections between random features models,  fully trained neural networks, and kernel methods. We refer to Section \ref{sec:Related}
for a summary of this line of work.

We learn the coefficients $\ba = (a_i)_{i\le N}$ by performing ridge regression
\begin{align}
\hba(\lambda) = \argmin_{\ba\in\reals^N} \left\{\frac{1}{n}\sum_{j=1}^n \Big( y_j- \sum_{i=1}^N a_i \sigma(\< \btheta_i, \bx_j\> / \sqrt d) \Big)^2  +  \frac{N\lambda}{d}\, \| \ba \|_2^2\right\}\, . \label{eq:Ridge}
\end{align}
The choice of ridge penalty is motivated by the connection to kernel ridge regression, of which this method can be regarded as a finite-rank approximation. Further, the ridge regularization path is naturally connected to the path of gradient flow with respect to the mean square error $\sum_{i\le n}(y_i-f(\bx_i;\ba,\bTheta))^2$, starting at $\ba=0$. In particular, gradient flow converges to the ridgeless limit ($\lambda\to 0$) of $\hba(\lambda)$, and there is a correspondence between positive $\lambda$, and early stopping in gradient descent \cite{yao2007early}.

We are interested in the `prediction' or `test' error (which we will also call `generalization error,' with a slight abuse of terminology), that is the mean square error on predicting $f_d(\bx)$ for $\bx\sim \Unif(\S^{d-1}(\sqrt{d}))$ a fresh sample independent of the training data $\bX = (\bx_i)_{i\le n}$, noise $\beps = (\eps_i)_{i\le n}$, and the random features $\bTheta = (\btheta_a)_{a \le N}$:
\begin{align}\label{eqn:prediction_risk_first_definition}
R_\RF(f_d, \bX, \bTheta, \lambda) = \E_{\bx}\Big[ \Big(f_d(\bx) - f(\bx;\hba(\lambda),\bTheta)\Big)^2\Big]\, .
\end{align} 
Notice that we do not take expectation with respect to the training data $\bX$, the random features $\bTheta$ or the data noise $\beps$.
This is not very important, because we will show that  $R_\RF(f_d, \bX, \bTheta, \lambda)$ concentrates around the expectation 
$\oR_\RF(f_d, \lambda) \equiv \E_{\bX,\bTheta, \beps}R_\RF(f_d, \bX, \bTheta, \lambda)$.
We study the following setting
\begin{itemize}
\item  The random features are uniformly distributed on a sphere: $(\btheta_i)_{i\le N}\sim_{iid}\Unif(\S^{d-1}(\sqrt{d}))$.
\item  $N,n,d$ lie in a proportional asymptotics regime. Namely, $N,n,d\to\infty$ with $N/d\to \psi_1$, $n/d\to \psi_2$ for some $\psi_1,\psi_2\in (0,\infty)$.
\item We consider two models for the regression function $f_d$: $(1)$ A linear model: $f_d(\bx) = \beta_{d, 0} +\<\bbeta_{d, 1}, \bx\>$, where $\bbeta_{d, 1} \in\reals^d$ is  arbitrary with $\| \bbeta_{d, 1} \|_2^2 = \normf_1^2$;
$(2)$ A nonlinear model: $f_d(\bx) =\beta_{d, 0} +\<\bbeta_{d, 1},\bx\> + f_d^{\sNL}(\bx)$ where the nonlinear component  $f_d^{\sNL}(\bx)$ is a centered isotropic Gaussian process indexed by $\bx \in \S^{d-1}(\sqrt d)$. 
%
%
(Note that the linear model is a special case of the nonlinear one, but we prefer to keep the former distinct since it is purely deterministic.)
\end{itemize}

Within this setting, we are able to determine the precise asymptotics of the prediction error, as an explicit function of the dimension parameters $\psi_1,\psi_2$, the noise level $\tau^2$, the activation function $\sigma$, the regularization parameter $\lambda$, and the power of linear and nonlinear components of $f_d$:  $\normf_1^2$ and $\normf_\star^2\equiv\lim_{d\to\infty}\E\{f_d^{\sNL}(\bx)^2\}$. 
The resulting formulae are somewhat complicated, and we defer them to Section \ref{sec:Main}, limiting ourselves to give the general form of our result for the linear model. 

\begin{theorem}{(Linear truth, formulas omitted)}\label{thm:MainIntro}
Let $\sigma: \R \to \R$ be weakly differentiable, with $\sigma'$ a weak derivative of $\sigma$. Assume $\vert \sigma(u)\vert, \vert \sigma'(u)\vert \le c_0 e^{c_1 \vert u \vert}$ for some constants $c_0, c_1 < \infty$. Define the parameters $\ob_0$, $\ob_1$, $\ob_\star$, $\zeta$, and the signal-to-noise ratio $\rho \in [0, \infty]$, via
\begin{align}
\ob_0 = \E[\sigma(G)], ~~~~ \ob_1 = \E[G \sigma(G)], ~~~~ \ob_\star^2 = \E[\sigma(G)^2]- \ob_0^2- \ob_1^2, ~~~~ \zeta \equiv \ob_1^2/\ob_\star^2\, , ~~~~ \rho\equiv F_1^2/\tau^2\, ,
\end{align}
where expectation is taken with respect to $G\sim\normal(0,1)$. Assume $\ob_0, \ob_1, \ob_\star \neq 0$. 

Then, for  linear $f_d$ in the setting described above, for any $\lambda>0$, the prediction risk convergences in probability
\begin{align}
R_\RF(f_d, \bX, \bTheta, \lambda) \overset{p}{\to} (F_1^2+\tau^2) \, \cuR(\rho,\zeta,\psi_1,\psi_2,\lambda/\ob_\star^2) \, ,\label{eq:MainIntro}
\end{align}
where $\cuR(\rho,\zeta,\psi_1,\psi_2,\olambda)$ is explicitly given in Definition \ref{def:formula_ridge}.
\end{theorem}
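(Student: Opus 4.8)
The plan is to write the test error $R_\RF$ as an explicit rational function of a bounded number of scalar statistics of the empirical feature matrix, to identify the deterministic limits of these statistics by random matrix theory, and finally to solve the resulting fixed-point system. Let $\bZ\in\reals^{n\times N}$ have entries $Z_{ja}=\sigma(\<\btheta_a,\bx_j\>/\sqrt d)$, and write $\bsigma(\bx)=(\sigma(\<\btheta_a,\bx\>/\sqrt d))_{a\le N}$, so that $f(\bx;\ba,\bTheta)=\bsigma(\bx)^\sT\ba$ and, from \eqref{eq:Ridge}, $\hba(\lambda)=(\bZ^\sT\bZ+(nN\lambda/d)\,\mathbf I)^{-1}\bZ^\sT\by$. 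Expanding the square in \eqref{eqn:prediction_risk_first_definition},
\[
R_\RF \;=\; \E_\bx[f_d(\bx)^2] \;-\; 2\,\hba^\sT\bv \;+\; \hba^\sT\bU\,\hba,
\]
where $\bU=\E_\bx[\bsigma(\bx)\bsigma(\bx)^\sT]$ and $\bv=\E_\bx[f_d(\bx)\bsigma(\bx)]$ depend only on $(\bTheta,\bbeta_{d,1})$. Since $\bx\sim\Unif(\S^{d-1}(\sqrt d))$ makes $(\<\btheta_a,\bx\>/\sqrt d,\<\btheta_b,\bx\>/\sqrt d)$ asymptotically bivariate Gaussian with covariance controlled by $\<\btheta_a,\btheta_b\>/d$, a Hermite expansion of $\sigma$ gives, to leading order, $\bU\approx\ob_0^2\,\bfone\bfone^\sT+(\ob_1^2/d)\,\bTheta\bTheta^\sT+\ob_\star^2\,\mathbf I$ and a matching formula for $\bv$; the rank-one $\ob_0$ piece, together with the intercept $\beta_{d,0}$, contributes only a vanishing rank-one correction, which is why the limit in \eqref{eq:MainIntro} depends on $\normf_1$ and $\tau$ but not on $\beta_{d,0}$. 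After these substitutions $R_\RF$ is a rational function of $\bZ,\bTheta,\bX,\bbeta_{d,1},\bfone,\beps$, and it suffices to identify the limits of finitely many bilinear and trace functionals of the resolvent $\big(n^{-1}\bZ^\sT\bZ+(N\lambda/d)\,\mathbf I\big)^{-1}$ (and its derivatives in the ridge parameter), with deterministic vectors and low-rank matrices inserted; the rescaling that turns $\lambda$ into $\olambda=\lambda/\ob_\star^2$ emerges after factoring $\ob_\star$ out of $\bZ$.

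The crux is a \emph{linearization} step. Decompose $\sigma=\ob_0+\ob_1\He_1+\sigma_{>1}$ with $\He_1(z)=z$ and $\E[\sigma_{>1}(G)^2]=\ob_\star^2$, so that $\bZ=\ob_0\,\bfone\bfone^\sT+(\ob_1/\sqrt d)\,\bX\bTheta^\sT+\bZ_{>1}$. One must show that, for the resolvent functionals above, $\bZ_{>1}$ may be replaced by $\ob_\star\bW$ with $\bW$ having i.i.d.\ mean-zero entries independent of $(\bX,\bTheta)$, incurring an $o_P(1)$ error; equivalently, that the limiting (and anisotropic) spectral statistics of $\bZ^\sT\bZ$ coincide with those of $\bZ_{\rm lin}^\sT\bZ_{\rm lin}$, $\bZ_{\rm lin}=(\ob_1/\sqrt d)\,\bX\bTheta^\sT+\ob_\star\bW$ (the rank-one $\ob_0$ term being negligible for the bulk). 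I expect this to be the main obstacle: the entries of $\bZ_{>1}$ are dependent and non-Gaussian within each row, so the equivalence cannot come from an operator-norm estimate on $\bZ_{>1}-\ob_\star\bW$; one instead carries the Hermite decomposition through a leave-one-out/cavity analysis of the resolvent, using that $\<\btheta_a,\bx_j\>/\sqrt d$ is close to a standard Gaussian and that $\|\btheta_a\|,\|\bx_j\|,\<\btheta_a,\btheta_b\>$ concentrate, and verifying that the cross-term between $\ob_1\He_1$ and $\sigma_{>1}$ and the contributions of the Hermite components of degree $\ge2$ are genuinely lower order.

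Granting the linearization, the remaining task is standard random matrix theory for the structured model $\bZ_{\rm lin}=(\ob_1/\sqrt d)\,\bX\bTheta^\sT+\ob_\star\bW$, where $\bX\in\reals^{n\times d}$, $\bTheta\in\reals^{N\times d}$, $\bW\in\reals^{n\times N}$ are asymptotically independent with near-Gaussian entries and aspect ratios fixed by $\psi_1,\psi_2$. Introducing one deterministic-equivalent scalar associated with each of the three dimensions $n,N,d$ (plus the analogues coupled to $\beps$ and to $\bbeta_{d,1}$), a leave-one-out argument over the rows of $\bX$, the rows of $\bTheta$, and the rows of $\bW$ (or, equivalently, an operator-valued free-probability computation) yields a closed low-degree polynomial system of self-consistent equations for these scalars, and the same argument supplies the anisotropic laws needed to express $\hba^\sT\bv$ and $\hba^\sT\bU\,\hba$ through them. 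Solving the system gives explicit rational expressions in $(\rho,\zeta,\psi_1,\psi_2,\olambda)$.

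Collecting the limits of the three terms of $R_\RF$, substituting the solved-for scalars, and factoring out $\normf_1^2+\tau^2$ reproduces exactly $\cuR(\rho,\zeta,\psi_1,\psi_2,\olambda)$ of Definition~\ref{def:formula_ridge}. Finally, to pass from convergence of expectations to convergence in probability one uses concentration — e.g.\ a Lipschitz/Poincar\'e bound in the Gaussian variables underlying $(\bX,\bTheta,\beps)$, or a bounded-differences/net argument — and the hypothesis $\lambda>0$ is used only for the uniform boundedness of all resolvents it guarantees (the ridgeless case $\lambda\to0^+$ being recovered afterwards by continuity).
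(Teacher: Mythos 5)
Your first paragraph matches the paper's Proposition~\ref{prop:decomposition}: the risk is $\E_\bx[f_d^2]-2\hba^\sT\bV+\hba^\sT\bU\hba$, the kernel matrix $\bU$ is linearized via the Gegenbauer/Hermite decomposition to $\ob_0^2\bfone\bfone^\sT+\ob_1^2\bQ+\ob_\star^2\id_N$ up to a negligible $\bDelta$, and the rank-one $\ob_0$ contribution is shown to cancel. That step is correct and is essentially the content of Lemmas~\ref{lem:reformulation}--\ref{lem:concentration_beta}.

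The genuine gap is in the phrase ``the same argument supplies the anisotropic laws needed to express $\hba^\sT\bv$ and $\hba^\sT\bU\hba$.'' After the linearization of $\bU$, the quantities you must control are, up to vanishing terms, $\Psi_2=\frac1d\Tr[\Res(\ob_1^2\bQ+\ob_\star^2\id)\Res\,\bZ^\sT\bH\bZ]$ and $\Psi_3=\frac1d\Tr[\Res(\ob_1^2\bQ+\ob_\star^2\id)\Res\,\bZ^\sT\bZ]$, each containing \emph{two} copies of the resolvent $\Res$ with the structured matrices $\bQ$, $\bH$, $\bZ^\sT\bH\bZ$ inserted. These are not functionals of the Stieltjes transform of a single matrix, and a leave-one-out analysis that produces only the self-consistent equations for $m_{1,d},m_{2,d}$ of the block matrix $\bA$ at $\bq=\bzero$ does not determine them. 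This is precisely the obstruction the paper flags relative to \cite[Sec.~8]{hastie2019surprises}: the variance term alone can be reached by the Stieltjes transform, but the bias cannot. The paper's fix is the log-determinant device of Propositions~\ref{prop:G_and_M}--\ref{prop:expression_for_log_determinant}: one enlarges $\bA$ with five auxiliary parameters $\bq=(s_1,s_2,t_1,t_2,p)$, proves a limit for the log-determinant $G_d(\xi;\bq)$ (by integrating the Stieltjes transform in $\xi$ and exchanging with $\nabla_\bq$ via Lemmas~\ref{lem:Taylor_bound}, \ref{lem:bounded_derivatives_G}), and then the derivatives $\partial_{s_i,t_j}^2G_d$ and $\partial_p G_d$ at $\bq=\bzero$ reproduce exactly $\Psi_1,\Psi_2,\Psi_3$. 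Without this or an equivalent anisotropic-resolvent mechanism your outline does not close: the fixed-point system you would derive from leave-one-out is underdetermined for $\Psi_2,\Psi_3$.

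A secondary mismatch: the paper does \emph{not} prove the theorem by replacing $\bZ_{>1}$ with $\ob_\star\bW$ and then analyzing a Gaussian surrogate. That equivalence is stated afterwards (Theorem~\ref{thm:Gaussian_covariates}, Section~\ref{sec:Gaussian_covariates}) as a consequence, but in the actual proof the leave-one-out (Lemma~\ref{lemma:KeyFiniteD}) handles the nonlinear entries directly; the Hermite tail is controlled there via a local perturbation bound on the error matrix $\bE_1$ (Lemma~\ref{lem:concentration_of_E1_in_Stieltjes_proof}), not via a global Gaussian substitute. You are right that an operator-norm bound on $\bZ_{>1}-\ob_\star\bW$ would fail and that the replacement, if attempted, has to be pushed through the cavity step; but once you are doing the cavity step on the nonlinear model anyway, the detour through $\bW$ is not needed, and making the replacement rigorous in its own right is at least as hard as the direct route. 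So your route is not wrong, but it postpones rather than removes the two real tasks: justifying the Gaussian surrogate inside the resolvent, and extracting the anisotropic bilinear/trace limits. Concentration to pass from mean to probability is correct and is indeed handled by Lemma~\ref{lemma:Concentration_Stieltjes} and Lemma~\ref{lem:concentration_beta}.
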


Section \ref{subsec:main_results} also contains an analogous statement for the nonlinear model.
\begin{remark}\label{rmk:Ridgeless}
Theorem \ref{thm:MainIntro} and its generalizations stated below require $\lambda>0$ fixed as $N,n,d\to\infty$. We can then consider the ridgeless 
limit by taking $\lambda\to 0$. Let us stress that this does not necessarily yield the prediction risk of the min-norm least square estimator that is also given by the limit
$\hba(0+) \equiv \lim_{\lambda\to 0} \hba(\lambda)$ at $N,n,d$
fixed. Denoting by $\bZ = \sigma(\bX\bTheta^{\sT}/\sqrt{d}) / \sqrt d$
the design matrix, the latter is given by
 $\hba(0+) = (\bZ^{\sT}\bZ)^\dagger \bZ^{\sT}\by / \sqrt d$.
While we conjecture that indeed this is the same as taking $\lambda\to 0$  in the asymptotic expression of Theorem \ref{thm:MainIntro}, establishing this rigorously would require proving that the limits $\lambda\to 0$ and $d\to\infty$ can be exchanged. We leave this to future work.
\end{remark}

\begin{remark}
As usual, we can decompose the risk $R_\RF(f_d, \bX, \bTheta, \lambda) = \|f_d-\hf\|^2_{L^2}$ (where $\hf(\bx) = f(\bx;\hba(\lambda),\bTheta)$) into a variance component $\|\hf-\E_{\beps}(\hf)\|_{L^2}^2$, and a bias component $\|f_d-\E_{\beps}(\hf)\|_{L^2}^2$.
The asymptotics of the variance component in the $\lambda \to 0+$ limit was concurrently computed in \cite[Section 8]{hastie2019surprises}. Notice that the variance calculation only
requires to consider a pure noise model in which $\by = \beps\sim\normal(0,\tau^2\id_n)$, and indeed \cite{hastie2019surprises}
does not mention the nonparametric model $y_i = f_d(\bx_i)+\eps_i$. The pure noise ridgeless ($\lambda\to 0$) setting
captures the divergence of the risk at $N=n$ but misses most phenomena that are interesting from a statistical viewpoint:
the optimality of vanishing regularization, the optimality of large overparametrization, the disappearance
of double descent for optimally regularized models.

Our work is the first one to provide a complete treatment of
the  nonparametric model in the proportional asymptotics and to establish those phenomena.
From a mathematical viewpoint, the calculation of the test error can be reduced to studying a block-structured
kernel random matrix, with a more intricate structure than the one of \cite{hastie2019surprises}.
The reduction itself is novel in the present context, and goes through  the log determinant of this random matrix,
while the variance computation of \cite{hastie2019surprises} is directly connected to the resolvent. 
\end{remark}

Figure \ref{fig:DoubleDescentFirst} reports numerical results for learning a linear function $f_d(\bx) = \<\bbeta_1,\bx\>$, $\|\bbeta_1\|_2^2 =1$ with $\E[\eps^2] = 0$ using ReLU activation function $\sigma(x) = \max\{ x, 0\}$ and $\psi_2 = n / d = 3$.  We use minimum $\ell_2$-norm least squares (the $\lambda\to 0$ limit of Eq.~\eqref{eq:Ridge}, left figure) and regularized least squares with $\lambda = 10^{-3}$ (right figure), and plot the prediction error as a function of the number of parameters per dimension $\psi_1=N/d$. We compare the numerical results with the asymptotic formula $\cuR(\infty,\zeta,\psi_1,\psi_2,\lambda/\mu_\star^2)$. The agreement is excellent and displays all the key features of the double descent phenomenon, as discussed in the next section.

The proof of Theorem \ref{thm:MainIntro} builds on ideas from random matrix theory. A careful look at these arguments unveils an 
interesting phenomenon. While the random features $\{\sigma(\<\btheta_i,\bx\>/\sqrt{d})\}_{i\le d}$ are highly non-Gaussian, it is possible to
construct a Gaussian covariates model with the same asymptotic prediction error as for the  random features model.
Apart from being mathematically interesting, this finding provides additional intuition for the behavior of random features models, and
opens the way to some interesting future directions. In particular, \cite{montanari2019maxmargin} uses this Gaussian covariates proxy to analyze maximum margin
classification using random features. 

The rest of the paper is organized as follows:
\begin{itemize}
\item In Section \ref{sec:Insights} we summarize the main insights that can be extracted from the asymptotic theory,
and illustrate them through plots.
\item Section \ref{sec:Related} provides a succinct overview of related work.
\item Section \ref{sec:notations} introduces the notations that are used in this paper. 
\item Section \ref{sec:Main} contains formal statements of our main results, which is the asymptotics of prediction error as in Theorem \ref{thm:main_theorem}. This section also presents some special cases of the asymptotic formula. 
\item Section \ref{sec:training} contains the statements of the asymptotics of the training error as in Theorem \ref{thm:training_asymptotics}. 
\item Section \ref{sec:Gaussian_covariates} presents an interesting phenomenon which is that the random features model has the same asymptotic prediction error as a simpler model with Gaussian covariates. 
\item In Section \ref{sec:main_proof} we present the proof of main results. The main results will use several propositions that are proved in the following sections and in the appendices. 
\end{itemize}

\section{Results and insights: An informal overview}
\label{sec:Insights}

Before explaining in detail our technical results ---which we will do in Section \ref{sec:Main}--- it is useful to
pause and describe some consequences of the exact asymptotic formulae that we prove. Our focus here will be 
on insights that have a chance to hold more generally, beyond the specific setting studied here.

\vspace{0.25cm}

\noindent\emph{Bias term also exhibits a singularity at the interpolation threshold.} A prominent feature of the double descent curve is the peak in test error at the interpolation threshold
which, in the present case, is located at $\psi_1=\psi_2$. In the linear regression model of 
\cite{advani2017high,hastie2019surprises,belkin2019two}, this phenomenon is entirely explained by a peak in the variance of the estimator (that diverges in the ridgeless limit $\lambda\to 0$), while its bias is
monotone increasing across to this threshold. 

In contrast, in the random features model studied here, both variance and bias have a peak at the interpolation threshold, diverging there when $\lambda\to 0$. 
This is apparent from Figure \ref{fig:DoubleDescentFirst} which was obtained for $\tau^2=0$, and therefore in a setting in which the error is entirely due to bias.
The fact that the double descent scenario persists in the noiseless limit is particularly important, especially in view of the fact that many machine learning
tasks are usually considered nearly noiseless. 

\vspace{0.25cm}

\begin{figure}[!t]
\centering
\includegraphics[width = 0.48\linewidth]{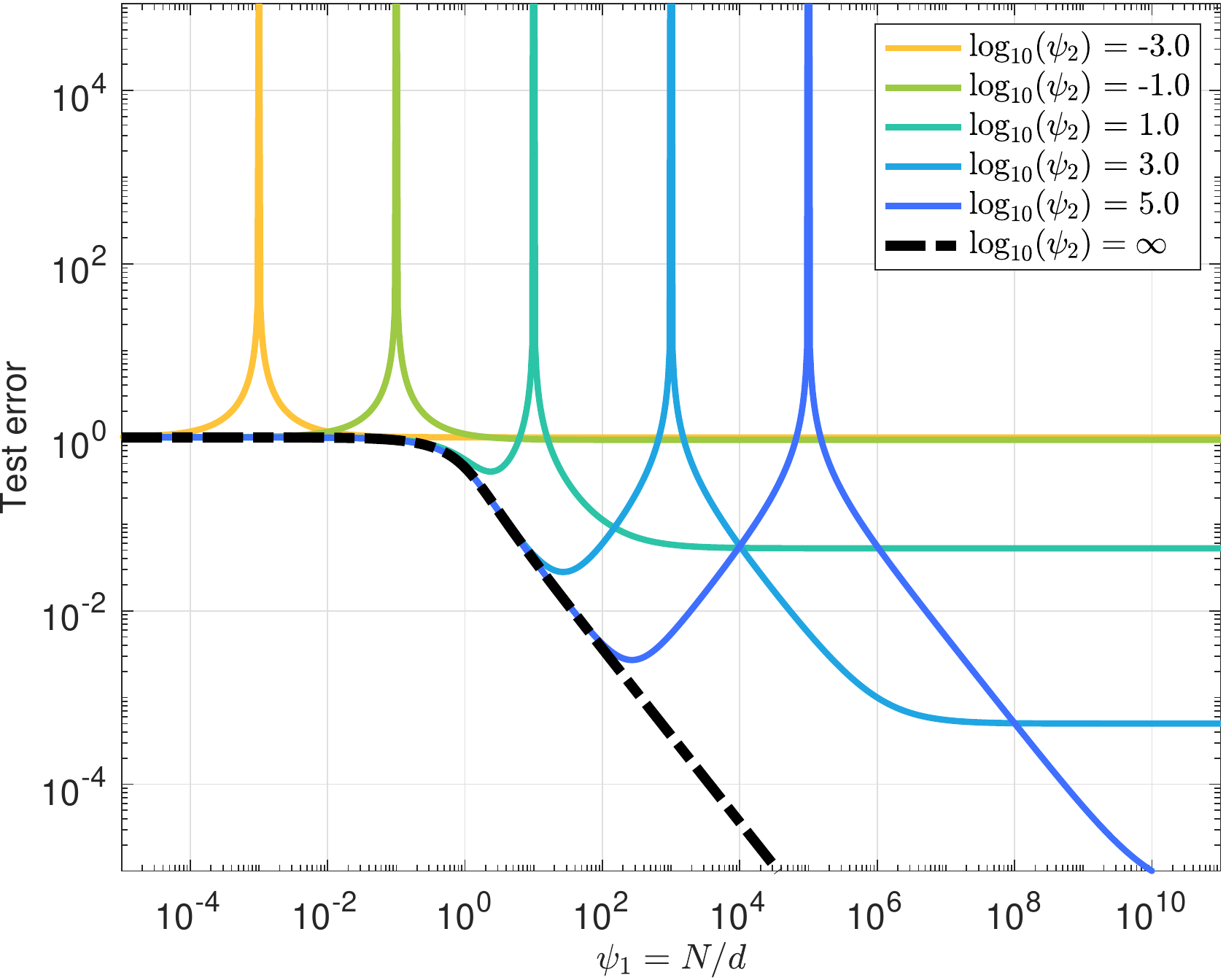}
\includegraphics[width = 0.48\linewidth]{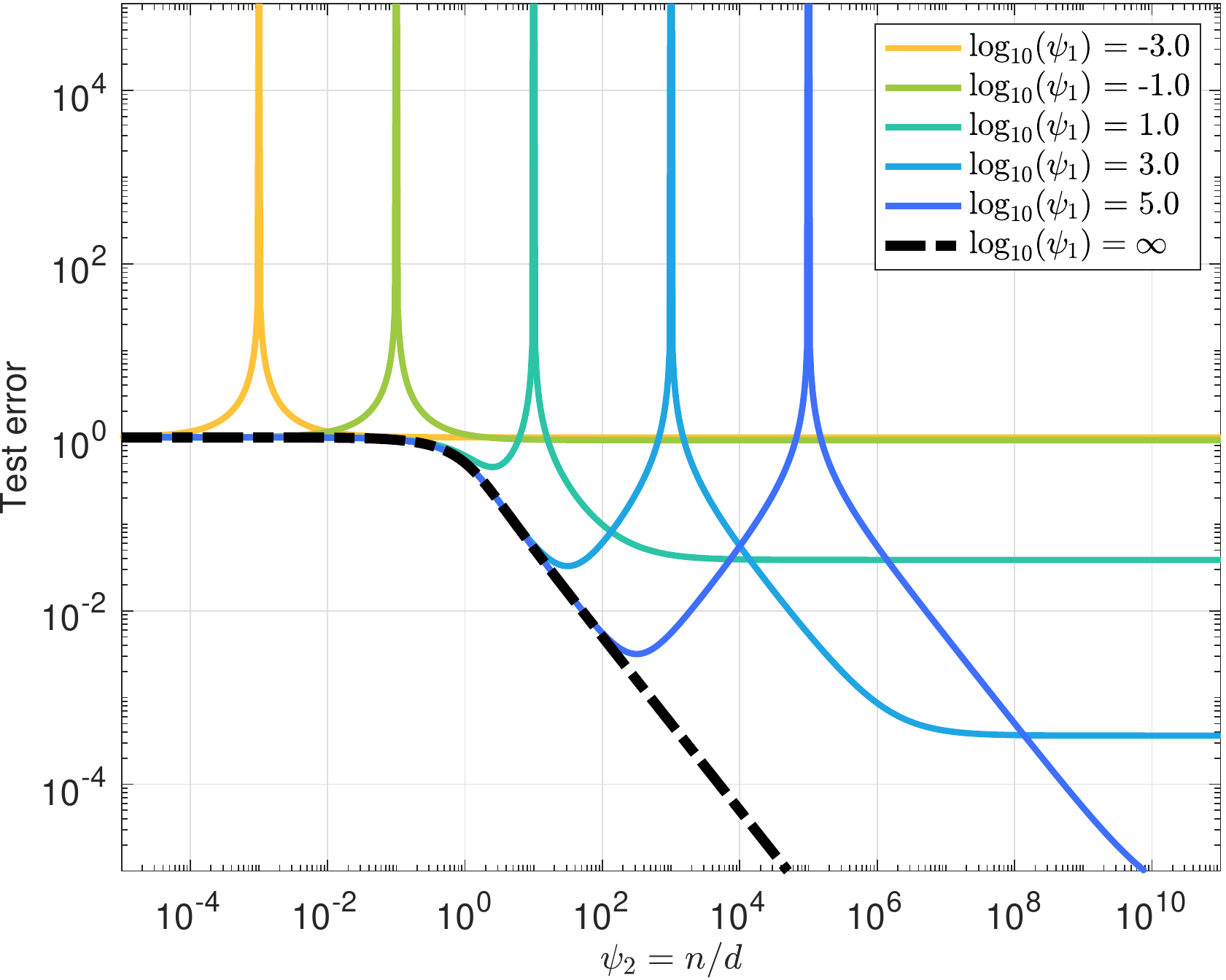}
\caption{Analytical predictions for the test error of learning a linear function $f_d(\bx) = \<\bbeta_1,\bx\>$ with $\| \bbeta_1 \|_2^2 = 1$ using random features with ReLU activation function  $\sigma(x) = \max\{ x, 0 \}$. 
Here we  perform ridgeless regression ($\lambda \to 0$). The signal-to-noise ratio is $\|\bbeta_1\|_2^2/\tau^2 \equiv \rho=2$. In the left figure, we plot the test error as a function of $\psi_1 = N/d$, and different curves correspond to different sample sizes ($\psi_2 = n/d$). In the right figure, we plot the test error as a function of $\psi_2 = n/d$, and different curves correspond to different number of features ($\psi_1 = N/d$). 
}\label{fig:zero_lambda_increased_psi1}
\end{figure}

\begin{figure}[!t]
\centering
\includegraphics[width = 0.48\linewidth]{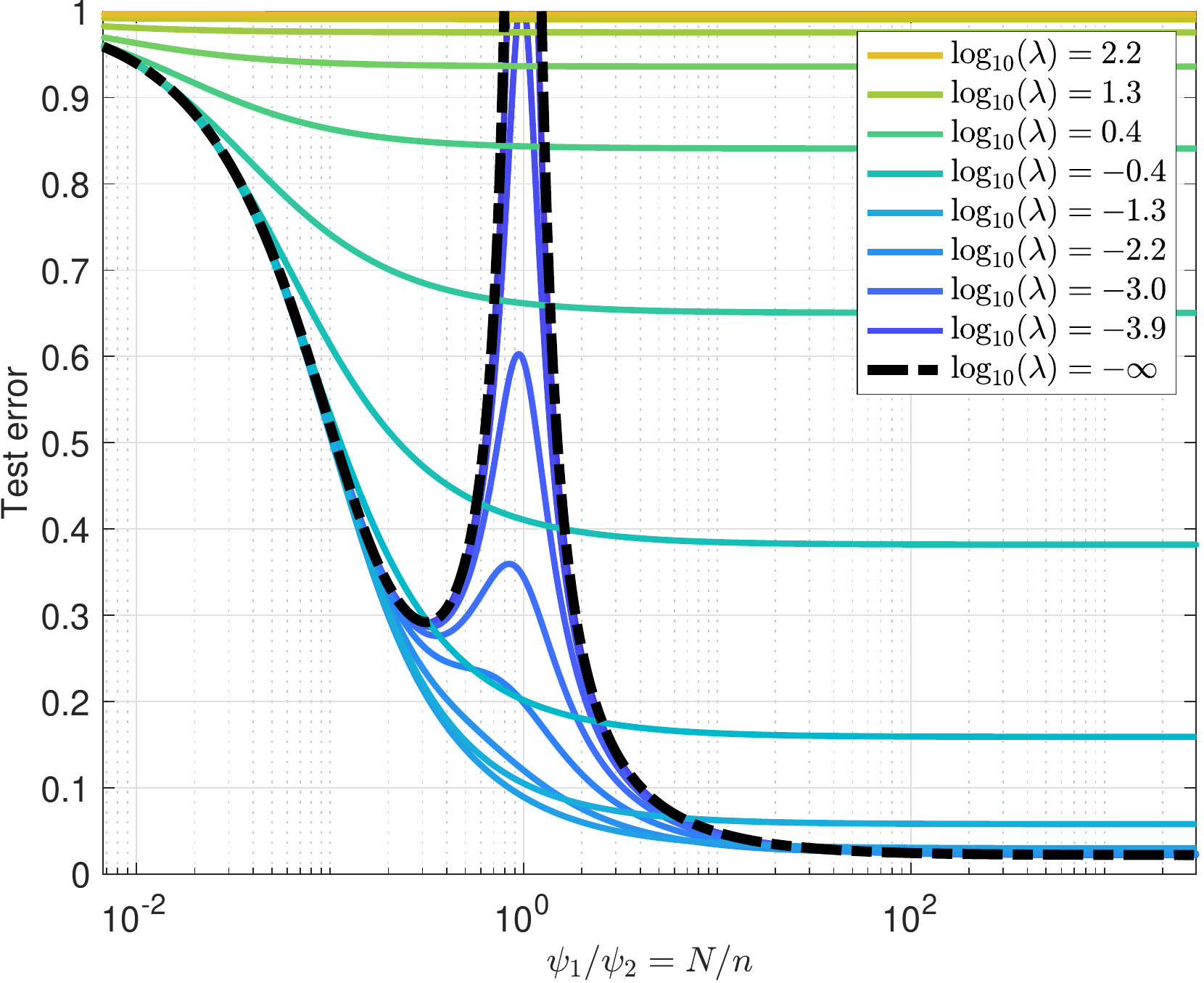}
\includegraphics[width = 0.48\linewidth]{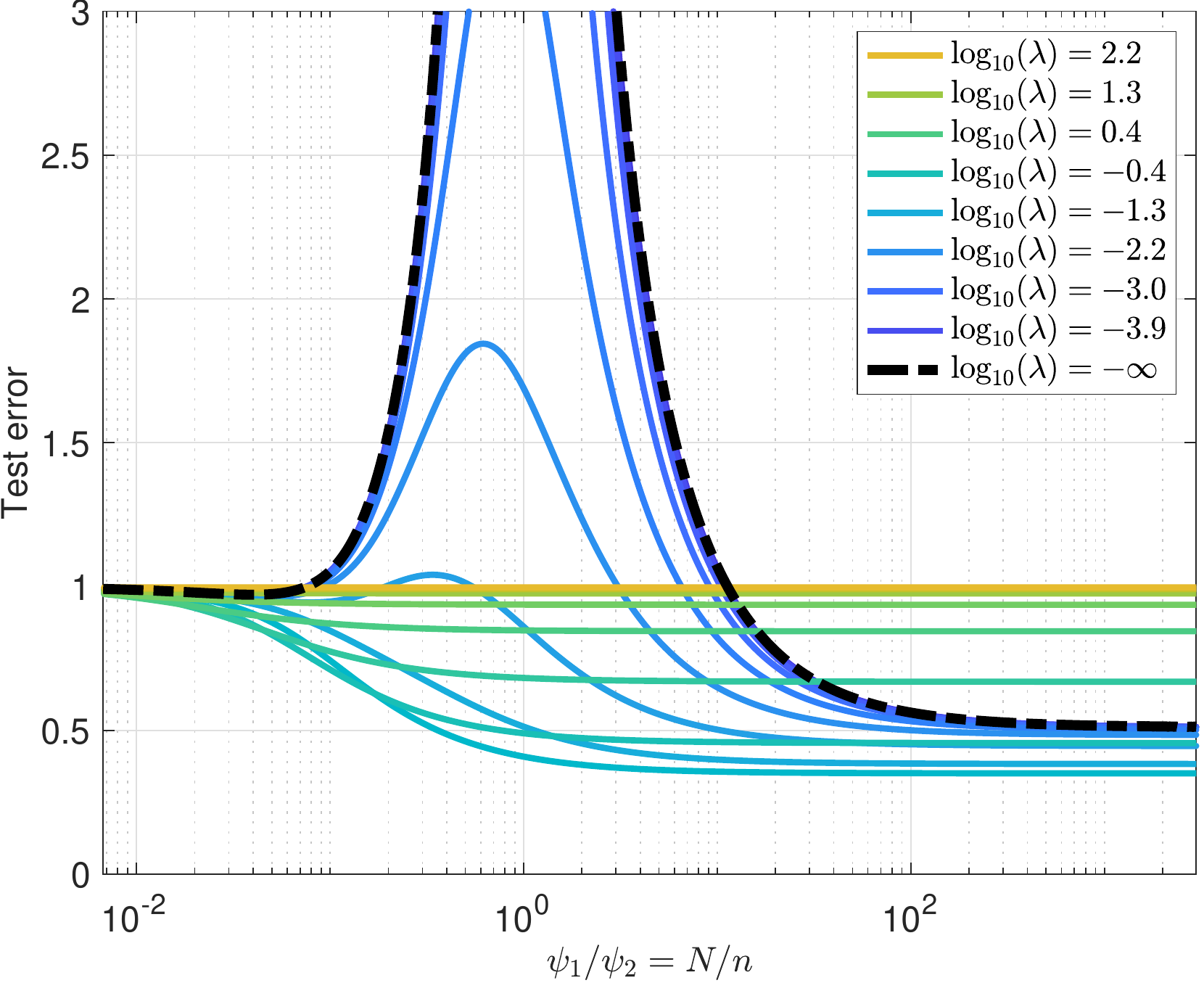}
\caption{Analytical predictions for the test error of learning a linear function $f_d(\bx) = \<\bbeta_1,\bx\>$ with $\| \bbeta_1 \|_2^2 = 1$ using random features with ReLU activation function  $\sigma(x) = \max\{x, 0\}$. 
The rescaled sample size is fixed to $n/d\equiv \psi_2=10$. Different curves are for different values of the regularization $\lambda$.
On the left: high SNR  $\|\bbeta_1\|_2^2/ \tau^2 \equiv \rho=5$.  On the right: low SNR  $\rho=1/5$. }\label{fig:fixed_lambda_increased_psi1}
\end{figure}

\noindent\emph{Optimal prediction error is achieved in the highly overparametrized regime.} Figure \ref{fig:zero_lambda_increased_psi1} (left) reports the predicted test error in the ridgeless limit $\lambda\to 0$ (for a case with non-vanishing noise, $\tau^2>0$) as a function of $\psi_1=N/d$ , for several values of $\psi_2=n/d$.
Figure \ref{fig:fixed_lambda_increased_psi1} plots the predicted test error as a function of $\psi_1/\psi_2=N/n$, for fixed $\psi_2$, 
several values of $\lambda>0$, and two values of the SNR.
We repeatedly observe that: $(i)$ For a fixed $\lambda$, the minimum of test error (over $\psi_1$) is in the highly overparametrized regime $\psi_1\to\infty$; $(ii)$ The global minimum (over $\lambda$ and $\psi_1$) of test error is achieved at a value of $\lambda$ that depends on the SNR, but always at $\psi_1\to\infty$; $(iii)$ In the ridgeless limit $\lambda\to 0$, the generalization curve is monotonically decreasing in $\psi_1$ when $\psi_1>\psi_2$.

To the best of our knowledge, this is the first natural and analytically tractable model which satisfies the following requirements:
$(1)$~Large overparametrization is necessary to achieve optimal prediction; $(2)$~No special misspecification structure needs to be postulated.

\vspace{0.25cm}

\noindent\emph{Optimal regularization eliminated the double-descent. } Figure \ref{fig:fixed_lambda_increased_psi1} reports
the asymptotic prediction for the test error as a function of the overparametrization ratio $N/n$
for various values of the regularization parameter $\lambda$. The peak at the interpolation threshold $N=n$ is
apparent, but it becomes less prominent as the regularization increases. In particular, if we consider the optimal regularization (the lower envelope of these curves), the test error becomes monotone decreasing in the number of parameters: regularization compensates overparametrization.

\begin{figure}[t]
\centering
\includegraphics[width = 0.48\linewidth]{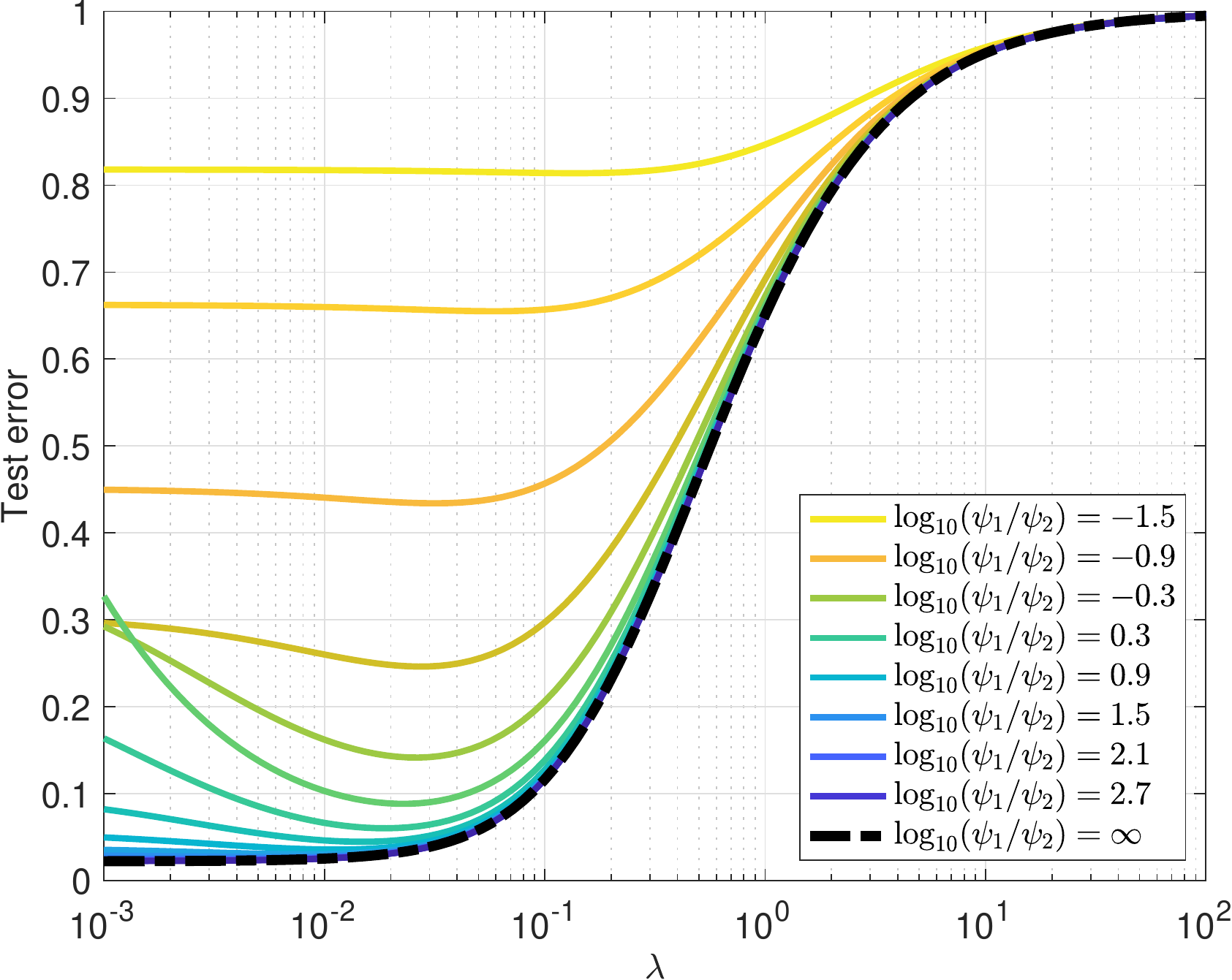}
\includegraphics[width = 0.48\linewidth]{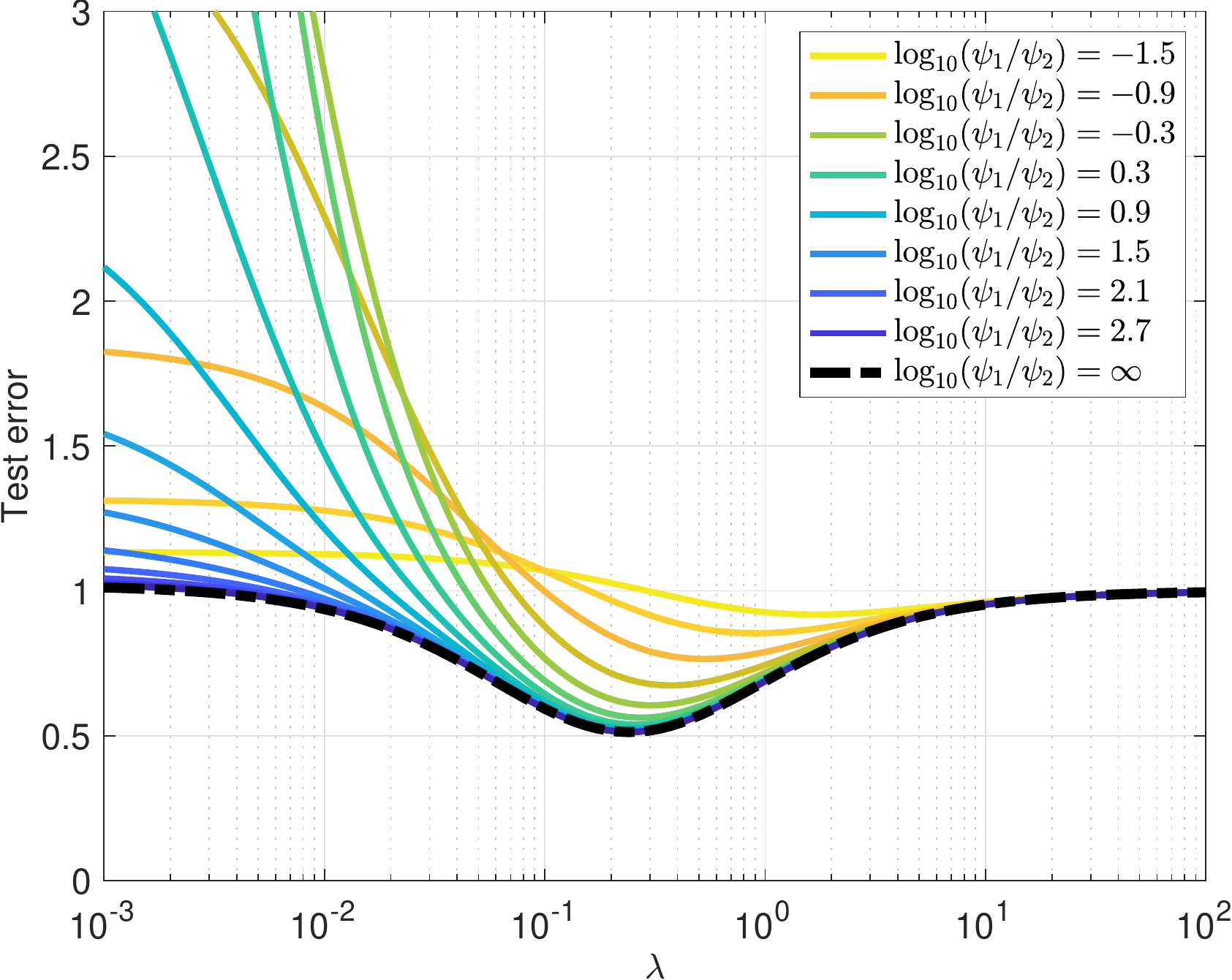}
\caption{Analytical predictions for the test error of learning a linear function $f_d(\bx) = \<\bbeta_1,\bx\>$ with $\| \bbeta_1 \|_2^2 = 1$ using random features with ReLU activation function  $\sigma(x) = \max\{x, 0\}$.
The rescaled sample size is fixed to $\psi_2 = n/d = 10$.
Different curves are for different values of the number of neurons $\psi_1 = N/d$.
On the left: high SNR  $\|\bbeta_1\|_2^2/ \tau^2 \equiv \rho=5$.  On the right: low SNR  $\rho=1/10$. }\label{fig:fixed_psi1_increased_lambda}
\end{figure}

\vspace{0.25cm}

\noindent\emph{Non-vanishing regularization can hurt (at high SNR).} Figure \ref{fig:fixed_psi1_increased_lambda} plots the predicted test error as a function of $\lambda$,
for several values of $\psi_1$, with $\psi_2$ fixed. The lower envelope of these curves is given by the curve at $\psi_1\to\infty$, confirming that the optimal error
is achieved in the highly overparametrized regime. However the dependence of this lower envelope on $\lambda$  changes qualitatively, depending on the SNR. 
For small SNR, the global minimum is achieved as some $\lambda>0$: regularization helps.  However, for a large SNR the minimum error is achieved as 
$\lambda\to 0$. The optimal regularization is vanishingly small. 

These  two noise regime are separated by a phase transition at a critical SNR which we denote by $\rho_{\star}$.
A characterization of this critical value is given in Section  \ref{sec:HighOver}.

Notice that, in the overparametrized regime, the training error vanishes as $\lambda\to 0$, 
and the resulting model is a `near-interpolator'.
We therefore conclude that  highly overparametrized (near)
interpolators\footnote{We cannot prove it is an exact interpolator because here we take $\lambda\to 0$ after $d\to\infty$.
Following Remark \ref{rmk:Ridgeless},  we expect the minimum-$\ell_2$ norm interpolator 
also to achieve asymptotically minimum error.} are statistically optimal 
when the SNR is above the critical value $\rho_{\star}$.

\vspace{0.25cm}

\noindent\emph{Self-induced regularization.} What is the mechanism underlying the optimality of the ridgeless limit
$\lambda\to 0$? An intuitive explanation can be obtained by considering the (random) kernel associated to the ridge regression
\eqref{eq:Ridge}, namely
\begin{align}
  \cH_N(\bx, \bx') = \frac{1}{N} \sum_{i=1}^N \sigma(\< \bx, \btheta_i\>/\sqrt d) \sigma(\< \bx', \btheta_i \> / \sqrt d)\, .
  \label{eq:HN}
\end{align}
The diagonal elements of the empirical kernel $\bcH_{N,n} = (\cH_N(\bx_i,\bx_j))_{i,j\le n}\in\reals^{n\times n}$
concentrate around the value $\E \cH_N(\bx_i,\bx_i)\approx \E\{\sigma(G)^2\}$ (here $G\sim\normal(0,1)$),
while the out-of-diagonal terms are equal to a constant $\E\{\sigma(G)\}^2$ plus fluctuations
of order $1/\sqrt{N}$. One would naively expect that these diagonal elements are equivalent to
a regularization (that we call `self-induced') $\lambda_0$ of order $\Var(\sigma(G))$. The reality is
more complicated because out-of-diagonals are random and not negligible, 
However, this intuition is essentially correct in the wide limit $N/d \to\infty$
(after $N,n,d\to\infty$), see Section \ref{sec:HighOver}.

\section{Related literature}
\label{sec:Related}

\subsection{Learning via interpolation}

A recent stream of papers studied the generalization behavior of machine learning models in the interpolation regime.
An incomplete list of references includes \cite{belkin2018understand, belkin2018does, liang2018just, belkin2018reconciling, rakhlin2018consistency}. The starting point of this line of work were the experimental results in \cite{zhang2016understanding, belkin2018understand}, which showed that deep neural networks as well as kernel methods can generalize even if the prediction function interpolates all the data. It was proved that several machine learning models including kernel regression \cite{belkin2018does} and kernel ridgeless regression \cite{liang2018just} can generalize under certain conditions.  

The double descent phenomenon, which is our focus in this paper, was first discussed in general terms in  \cite{belkin2018reconciling}. The same phenomenon was also observed in \cite{advani2017high, geiger2019scaling}.
The paper \cite{kobak2018implicit} observes that the optimal amount of ridge regularization is sometimes vanishing,
and provides an explanation in terms of noisy features. Analytical predictions confirming this scenario were
obtained, within the linear regression model, in two concurrent papers \cite{hastie2019surprises, belkin2019two}.
In particular, \cite{hastie2019surprises} derives the precise high-dimensional asymptotics of the prediction error,
for a general model with correlated covariates. On the other hand, \cite{belkin2019two} gives exact formula for
any finite dimension, for a model with i.i.d. Gaussian covariates. The same papers also compute the double
descent curve within other models, including over-specified linear model \cite{hastie2019surprises}, and a
Fourier series model \cite{belkin2019two}.

As mentioned in the introduction, \cite[Section 8]{hastie2019surprises} also calculates the variance term of the prediction error in the random features model in the ridgeless limit $\lambda \to 0$. Both the simple linear regression models of
\cite{hastie2019surprises, belkin2019two}, and the variance calculation of \cite[Section 8]{hastie2019surprises}
capture the peak of the test error at the interpolation threshold. However, these calculations do not elucidate
several crucial statistical phenomena, which are instead the main contribution of our work (see Section \ref{sec:Insights}):
optimality of large overparametrization; optimality of interpolators at high SNR ($\lambda\to 0$ limit);
the role of self-induced regularization; disappearance of the double descent at optimal overparametrization.

Rate-optimal bounds on the generalization error of overparametrized linear models were recently derived in
\cite{bartlett2019benign} (see also \cite{muthukumar2019harmless} for a different perspective).  

\subsection{Random features and kernels}

The random features model has been studied in considerable depth since the original work in \cite{rahimi2008random}. A 
classical viewpoint suggests that $\cF_{\RF}(\bTheta)$ should be regarded as random approximation of the reproducing kernel Hilbert space $\cF_H$ defined by the kernel 
\begin{align}
\cH(\bx, \bx') = \E_{\btheta \sim \Unif(\S^{d-1}(\sqrt d))}[\sigma(\< \bx, \btheta\>/\sqrt d) \sigma(\< \bx', \btheta\>/\sqrt d)]. \label{eq:H-Kernel}
\end{align}
Indeed $\cF_{\RF}(\bTheta)$ is an RKHS defined by the finite-rank approximation of this kernel
defined in Eq.~\eqref{eq:HN}.
The paper \cite{rahimi2008random} showed the pointwise convergence of the empirical kernel $H_N$ to $H$. Subsequent work \cite{bach2017equivalence} showed the convergence of the empirical kernel matrix to the population kernel in terms of operator norm and derived bound on the approximation error
(see also \cite{bach2013sharp, alaoui2015fast, rudi2017generalization} for related work). 

The setting in the present paper is quite different, since we take the limit of a large number neurons $N\to\infty$, together with large dimension $d\to\infty$. Our focus on this high-dimensional regime is partially motivated by
\cite{rakhlin2018consistency}, which emphasizes that optimality of interpolators is somewhat un-natural in low dimension.

It is well-known that approximation using two-layers network suffers from the curse of dimensionality,
in particular when first-layer weights are not trained 
\cite{devore1989optimal, bach2017breaking, vempala2018gradient, ghorbani2019linearized}.
The recent paper \cite{ghorbani2019linearized} studies random features regression in a setting similar to ours,
by considering two different regimes: $(1)$ the population limit $n=\infty$, with $N$ scaling as a
polynomial of $d$; $(2)$ the wide limit $N=\infty$, with  $n$ scaling as a
polynomial of $d$. In particular, \cite{ghorbani2019linearized}  proves that, if $d^{k + \delta} \le N \le d^{k+1-\delta}$ and $n=\infty$ or $d^{k + \delta} \le n \le d^{k+1-\delta}$ and $N=\infty$ then a random features model can only fit the projection of the true function $f_d$ onto degree-$k$ polynomials.

Here, we consider $N, n = \Theta_d(d)$,  and therefore \cite{ghorbani2019linearized} only implies
that the test error of the random feature model is (asymptotically) lower bounded by the norm of
the nonlinear component of the target function
$F_{\star}^2=\lim_{d\to\infty}\E(f^{\sNL}_{d}(\bx)^2)$. 
The present results are of course much more precise: we confirm this lower bound which is achieved in the limit $N/d,n/d\to\infty$, but also derive the precise asymptotics of the test error for finite $n/d$, $N/d$.
The connection between neural networks and random features models was pointed out originally in \cite{neal1996priors, williams1997computing} 
and has attracted significant attention recently \cite{hazan2015steps, matthews2018gaussian, lee2017deep, novak2018bayesian, garriga2018deep}. 
The papers \cite{daniely2016toward, daniely2017sgd} showed that, for a certain initialization, gradient descent training of overparametrized neural networks 
learns a function in an RKHS, which corresponds to the random features kernel. A recent line of work 
\cite{jacot2018neural, li2018learning, du2018gradient, du2018gradientb, allen2018convergence, allen2018learning, arora2019fine, zou2018stochastic, oymak2019towards} 
studied the training dynamics of overparametrized neural networks under a second type of initialization, and showed that it learns a function in a different but comparable RKHS, 
which corresponds to the ``neural tangent kernel". A concurrent approach \cite{mei2018mean, rotskoff2018neural, chizat2018global, sirignano2019mean, javanmard2019analysis, nguyen2019mean, rotskoff2019neuron, araujo2019mean} studies the training dynamics of overparametrized neural networks under a third type of initialization, 
and showed that the dynamics of empirical distribution of weights follows Wasserstein gradient flow of a risk functional. The connection
between neural tangent theory and Wasserstein gradient flow  was studied in \cite{chizat2018note, dou2019training, mei2019mean}.

\subsection{Technical contribution}

We use methods from random matrix theory.  The general class of matrices we need to consider are kernel inner
product random matrices, namely matrices of the form $\sigma(\bW\bW^\sT / \sqrt d)$, where $\bW$
is a random matrix with i.i.d. entries, or similar ($\sigma: \R \to \R$ is a scaler function and for a matrix $\bA \in \R^{m \times n}$, $\sigma(\bA) \in \R^{m \times n}$ is a matrix that formed by applying $\sigma$ to $\bA$ elementwisely). The paper \cite{el2010spectrum} studied the spectrum
of random kernel matrices when $\sigma$ can be well approximated by a linear function and hence the
spectrum converges to a scaled Marchenko-Pastur law. In the nonlinear regime, the spectrum
was shown to converge to the free convolution of a Marchenko-Pastur  and a
scaled semi-circular law \cite{cheng2013spectrum}. The extreme eigenvalues of the same random
matrix were studied in \cite{fan2019spectral}. The random matrix we need to consider
is an asymmetric kernel matrix $\bZ = \sigma(\bX \bTheta^\sT / \sqrt d)/\sqrt{d}$, whose asymptotic singular values distribution was calculated in \cite{pennington2017nonlinear} (see also \cite{louart2018random} for $\bX$ deterministic).

The asymptotic singular values distribution of $\bZ$ is not sufficient to compute the asymptotic prediction error,
which also depends on the singular vectors of $\bZ$.
The  paper \cite{hastie2019surprises} addresses this challenge for what concerns the variance
term of the error, and only in the limit $\lambda\to 0$. Notice that the variance term is given (up to constants)
by $\Tr((\bZ^{\sT}\bZ)^{\dagger}\bSigma)$. It is quite straightforward to express this
quantity in terms of the \textit{Stieltjes transform} of a certain block random matrix,  and
\cite{hastie2019surprises}  use the leave-one-out method to characterize
the asymptotics of this Stieltjes transform. 

Unfortunately, the approach of  \cite{hastie2019surprises}  cannot be pushed to compute the full test error (i.e.
both the bias and variance terms): the latter cannot be expressed in terms of  the Stieltjes transform
of the same matrix. A key observation of the present paper is that the full prediction error can be expressed in terms
of derivatives of the \textit{log-determinant} of a different block-structured random
matrix.
In order to compute the asymptotics of this log-determinant, we use leave-one-out arguments
(e.g. \cite[Chapter 3.3]{bai2010spectral}) to derive fixed point equations for the Stieltjes transform of this random matrix, and then integrate this Stieltjes transform.

One further difference from \cite{hastie2019surprises} is that we consider the full nonparametric model
$y_i = f_d(\bx_i)+\eps_i$  while for  the  calculation of \cite{hastie2019surprises}  does not model the target function.
As mentioned above, our setting is similar  to the one of \cite{ghorbani2019linearized}.
However, the main technical content of \cite{ghorbani2019linearized} is to prove that, under polynomial scalings of $n$ and $d$ (at $N=\infty$) or $N$ and $d$ (at $n=\infty$),
the kernel matrix is near isometric. In contrast, here we study a regime in which it is not
true that the same matrix is a near isometry, and we characterize its spectral distribution (alongside those
properties of the eigenvectors that determine the test error).

\section{Notations}\label{sec:notations}

Let $\R$ denote the set of real numbers, $\C$ the set of complex numbers, and $\N = \{ 0, 1, 2, \ldots\}$ the set of natural numbers. For $z \in \C$, let $\Re z$ and $\Im z$ denote the real part and the imaginary part of $z$ respectively. We denote by $\C_+ = \{ z \in \C: \Im z > 0 \}$ the set of complex numbers with positive imaginary part. We denote by $\imagunit = \sqrt{-1}$ the imaginary unit. We denote by $\S^{d-1}(r) = \{ \bx \in \R^d: \| \bx \|_2 = r \}$ the set of $d$-dimensional vectors with radius $r$. For an integer $k$, let $[k]$ denote the set $\{ 1, 2, \ldots, k\}$. 

Throughout the proofs, let $O_d(\, \cdot \, )$  (respectively $o_d (\, \cdot \,)$, $\Omega_d(\, \cdot \,)$) denote the standard big-O (respectively little-o, big-Omega) notation, where the subscript $d$ emphasizes the asymptotic variable. We denote by $O_{d,\P} (\, \cdot \,)$ the big-O in probability notation: $h_1 (d) = O_{d,\P} ( h_2(d) )$ if for any $\eps > 0$, there exists $C_\eps > 0 $ and $d_\eps \in \Z_{>0}$, such that
\[
\begin{aligned}
\P ( |h_1 (d) / h_2 (d) | > C_{\eps}  ) \le \eps, \qquad \forall d \ge d_{\eps}.
\end{aligned}
\]
We denote by $o_{d,\P} (\, \cdot \,)$ the little-o in probability notation: $h_1 (d) = o_{d,\P} ( h_2(d) )$, if $h_1 (d) / h_2 (d)$ converges to $0$ in probability. We write $h(d) = O_d(\Poly(\log d))$, if there exists a constant $k$, such that $h(d) = O_d((\log d)^k)$. 

Throughout the paper, we use bold lowercase letters $\{\bx, \by, \bz, \ldots\}$ to denote vectors and bold uppercase letters $\{\bA, \bB, \bC, \ldots\}$ to denote matrices. We denote by $\id_n \in \R^{n \times n}$ the identity matrix, by $\ones_{n \times m} \in \R^{n \times m}$ the all-ones matrix, and by $\bzero_{n \times m} \in \R^{n \times m}$ the all-zero matrix.  

For a matrix $\bA \in \R^{n \times m}$, we denote by $\| \bA \|_F = (\sum_{i \in [n], j \in [m] } A_{ij}^2 )^{1/2}$ the Frobenius norm of $\bA$, $\| \bA \|_\star$ the nuclear norm of $\bA$, $\| \bA \|_{\op}$ the operator norm of $\bA$, and $\| \bA \|_{\max} = \max_{i \in [n], j \in [m]} \vert A_{ij} \vert$ the maximum norm of $\bA$. Further, we denote by $\bA^\dagger \in \R^{m \times n}$ the Moore–Penrose inverse of matrix $\bA \in \R^{n \times m}$. For a measurable function $h: \R \to \R$ and a matrix $\bA \in \R^{n \times m}$, we denote $h(\bA) = (h(A_{ij}))_{i \in [n], j \in [m]} \in \R^{n \times m}$. For a matrix $\bA \in \R^{n \times n}$, we denote by $\Tr(\bA) = \sum_{i=1}^n A_{ii}$ the trace of $\bA$. For two integers $a$ and $b$, we denote by $\Tr_{[a, b]}(\bA) = \sum_{i=a}^b A_{ii}$ the partial trace of $\bA$. For two matrices $\bA, \bB \in \R^{n \times m}$, let $\bA \odot \bB$ denote the element-wise product of $\bA$ and $\bB$.

Let $\mu_G$ denote the standard Gaussian measure. Let $\gamma_d$ denote the uniform probability distribution on $\S^{d-1}(\sqrt d)$. We denote by $\mu_d$ the distribution of $\< \bx_1, \bx_2\> / \sqrt d$ when $\bx_1, \bx_2 \sim \normal(\bzero, \id_d)$, $\tau_d$ the distribution of $\< \bx_1, \bx_2\> / \sqrt d$ when $\bx_1, \bx_2 \sim \Unif(\S^{d-1}(\sqrt d))$, and $\tilde \tau_d$ the distribution of $\< \bx_1, \bx_2\>$ when $\bx_1, \bx_2 \sim \Unif(\S^{d-1}(\sqrt d))$.

\section{Main results}
\label{sec:Main}

We begin by stating our assumptions and notations for the activation
function $\sigma$. It is straightforward to check that these are satisfied by all commonly-used activations,
including ReLU and sigmoid functions. 
\begin{assumption}\label{ass:activation}
Let $\sigma: \R \to \R$ be weakly differentiable, with weak derivative $\sigma'$. 
Assume $\vert \sigma(u)\vert, \vert \sigma'(u)\vert \le c_0 e^{c_1 \vert u \vert}$ for some constants $c_0, c_1 < \infty$. 
Define
\begin{align}
\ob_0 \equiv \E\{ \sigma(G)\}, ~~~~ \ob_1\equiv\E\{G\sigma(G)\},~~~~
\ob_\star^2 \equiv\E\{\sigma(G)^2\}- \ob_0^2- \ob_1^2 \, ,
\end{align}
where expectation is with respect to $G\sim\normal(0,1)$. 
Assuming $0 < \ob_0^2, \ob_1^2, \ob_{\star}^2 < \infty$, define  $\ratio$ by
\begin{align}
\ratio \equiv \frac{\ob_1}{ \ob_\star}\, . 
\end{align}
\end{assumption}
We will consider sequences of parameters $(N,n,d)$ that diverge proportionally to each other. When necessary, we can
think such sequences to be indexed by $d$, with $N=N(d)$, $n=n(d)$ functions of $d$.
\begin{assumption}\label{ass:linear}
Defining $\psi_{1, d} = N/d$ and $\psi_{2, d} = n / d$, we assume that the following limits exist in $(0,\infty)$:
\begin{align}
\lim_{d \to \infty}  \psi_{1, d} = \psi_1, ~~~~~~~~~
\lim_{d \to \infty} \psi_{2, d} = \psi_2\, .
\end{align}
\end{assumption}

Our last assumption concerns the distribution of the data $(y,\bx)$, and, in particular,  the regression function $f_d(\bx) = \E[y\vert \bx]$.
As stated in the introduction, we take $f_d$ to be the sum of a deterministic linear component, and a nonlinear component that we assume to be random and isotropic.
\begin{assumption}\label{ass:ground_truth}
We assume $y_i=f_d(\bx_i)+\eps_i$, where $(\eps_i)_{i\le n} \sim_{iid} \P_\eps$ independent of $(\bx_i)_{i\le n}$,
 with $\E_\eps(\eps_1) = 0$, $\E_{\eps}(\eps_1^2) = \tau^2$, $\E_{\eps}(\eps_1^4) < \infty$. Further 
\begin{align}
 f_d(\bx) =&~ \beta_{d, 0} +\<\bbeta_{d, 1},\bx\> + f_d^{\sNL}(\bx)\, , 
\end{align}
where $\beta_{d, 0}\in\reals$ and $\bbeta_{d, 1} \in\reals^d$ are deterministic with  $\lim_{d\to\infty}\beta_{d, 0}^2 =F_0^2$, $\lim_{d\to\infty}\|\bbeta_{d, 1}\|_2^2=F_1^2$.
The nonlinear component  $f_d^{\sNL}(\bx)$ is a centered Gaussian process indexed by $\bx \in \S^{d-1}(\sqrt d)$, with covariance 
\begin{align}
\E_{f_d^{\sNL}}\{f_d^{\sNL}(\bx_1) f_d^{\sNL}(\bx_2)\} = \Sigma_d(\<\bx_1,\bx_2\> / d)\,
\end{align}
satisfying $\E_{\bx \sim \Unif(\S^{d-1}(\sqrt d))} \{\Sigma_d( x_1 / \sqrt d)\}= 0$, $\E_{\bx \sim \Unif(\S^{d-1}(\sqrt d))}\{\Sigma_d( x_1 / \sqrt d) x_1 \}=0$, and $\lim_{d \to \infty} \Sigma_d(1) = \normf_\star^2$.
We define the signal-to-noise ratio parameter $\rho$ by
\begin{align}
\rho= \frac{\normf_1^2}{\normf_\star^2 + \tau^2} \, . 
\end{align}
\end{assumption}
\begin{remark}
The last assumption covers, as a special case, deterministic linear functions $f_d(\bx) = \beta_{d, 0}+\<\bbeta_{d, 1},\bx\>$, but also a large class
of  random non-linear functions. As an example, let $\bG = (G_{ij})_{i,j\le d}$, where $(G_{ij})_{i,j\le d} \sim_{iid}\normal(0,1)$, and consider the
random quadratic function 
\begin{align}
f_d(x) = \beta_{d, 0} +\<\bbeta_{d, 1},\bx\> +\frac{F_\star}{d} \big[\<\bx,\bG\bx\>-\Tr(\bG)\big]\, ,
\end{align}
for some fixed $F_\star \in\reals$. It is easy to check that this $f_d$ satisfies Assumption \ref{ass:ground_truth}, where the covariance function gives
\[
\Sigma_d(\< \bx_1, \bx_2\> / d) = \frac{\normf_\star^2}{d^2}  \Big( \< \bx_1, \bx_2\>^2 - d \Big).
\]
Higher order polynomials
can be constructed analogously (or using the expansion of $f_d$ in spherical harmonics).  

We also emphasize that that the nonlinear part  $f_d^{\sNL}(\bx_2)$, although being random, is the same for all 
samples, and hence should not be confused with additive noise $\eps$.
\end{remark}

We finally introduce the formula for the asymptotic prediction error, denoted by $\cuR(\rho,\zeta,\psi_1,\psi_2,\lambda)$ in Theorem \ref{thm:MainIntro}.
\begin{definition}[Formula for the prediction error of random features regression]\label{def:formula_ridge}~
Let the functions $\nu_1, \nu_2: \C_+ \to \C_+$ be be uniquely defined by the following conditions: $(i)$ $\nu_1$, $\nu_2$ are analytic on $\C_+$;
$(ii)$ For $\Im(\xi)>0$, $\nu_1(\xi)$, $\nu_2(\xi)$ satisfy the following equations
\begin{equation}\label{eqn:nu_definition}
\begin{aligned}
\nu_1 =&~ \psi_1\Big(-\xi -  \nu_2 - \frac{\ratio^2 \nu_2}{1- \ratio^2 \nu_1\nu_2}\Big)^{-1}\, ,\\
\nu_2 =&~ \psi_2\Big(-\xi - \nu_1 - \frac{\ratio^2 \nu_1}{1- \ratio^2 \nu_1\nu_2}\Big)^{-1}\, ;
\end{aligned}
\end{equation}
$(iii)$  $(\nu_1(\xi), \nu_2(\xi))$ is the unique solution of these equations with $\vert \nu_1(\xi)\vert\le \psi_1/\Im(\xi)$, $\vert \nu_2(\xi) \vert \le \psi_2/\Im(\xi)$ for $\Im(\xi) > C$, with $C$ a sufficiently large constant.

Let  
\begin{equation}\label{eqn:definition_chi_main_formula}
\chi \equiv \nu_1(\imagunit ( \psi_1 \psi_2 \olambda)^{1/2}) \cdot \nu_2(\imagunit ( \psi_1 \psi_2 \olambda)^{1/2}),
\end{equation}
and
\begin{equation}\label{eq:E012def}
\begin{aligned}
\cuE_0(\ratio, \psi_1, \psi_2, \olambda) \equiv&~  - \chi^5\ratio^6 + 3\chi^4 \ratio^4+ (\psi_1\psi_2 - \psi_2 - \psi_1 + 1)\chi^3\ratio^6 - 2\chi^3\ratio^4 - 3\chi^3\ratio^2 \\
&+ (\psi_1 + \psi_2 - 3\psi_1\psi_2 + 1)\chi^2\ratio^4 + 2\chi^2\ratio^2+ \chi^2+ 3\psi_1\psi_2\chi\ratio^2 - \psi_1\psi_2\, ,\\
\cuE_1(\ratio, \psi_1, \psi_2,\olambda)  \equiv&~ \psi_2\chi^3\ratio^4 - \psi_2\chi^2\ratio^2 + \psi_1\psi_2\chi\ratio^2 - \psi_1\psi_2\, , \\
\cuE_2(\ratio, \psi_1, \psi_2, \olambda) \equiv&~ \chi^5\ratio^6 - 3\chi^4\ratio^4+ (\psi_1 - 1)\chi^3\ratio^6 + 2\chi^3\ratio^4 + 3\chi^3\ratio^2 + (- \psi_1 - 1)\chi^2\ratio^4 - 2\chi^2\ratio^2 - \chi^2\,.\\
\end{aligned}
\end{equation}
We then define
\begin{align}
\cuB(\ratio, \psi_1, \psi_2, \olambda) \equiv&~  \frac{\cuE_1(\ratio, \psi_1, \psi_2, \olambda) }{\cuE_0 (\ratio, \psi_1, \psi_2, \olambda)  }\, , \label{eqn:main_bias}\\
\cuV(\ratio, \psi_1, \psi_2, \olambda) \equiv&~  \frac{\cuE_2(\ratio, \psi_1, \psi_2, \olambda) }{\cuE_0 (\ratio, \psi_1, \psi_2, \olambda)  }\, , \label{eqn:main_var}\\
\cuR(\rho, \ratio, \psi_1, \psi_2, \olambda) \equiv&~ \frac{\rho}{1 + \rho} \, \cuB(\ratio, \psi_1, \psi_2, \olambda) + \frac{1}{1 + \rho} \,  \cuV(\ratio, \psi_1, \psi_2, \olambda)\, .
\label{eq:RTheoryDef}
\end{align}
\end{definition}

The formula for the asymptotic risk can be easily evaluated numerically. In order to gain further insight, it can be simplified in some interesting special cases, as shown in Section \ref{sec:SimplifyFormula}.

\subsection{Statement of main result}\label{subsec:main_results}

We are now in position to state our main theorem, which generalizes Theorem \ref{thm:MainIntro} to the case in which
$f_d$ has a  nonlinear component $f_d^{\sNL}$.

\begin{theorem}\label{thm:main_theorem}
Let $\bX = (\bx_1, \ldots, \bx_n)^\sT \in \R^{n \times d}$ with $(\bx_i)_{i \in [n]} \sim_{iid} \Unif(\S^{d-1}(\sqrt d))$ and $\bTheta
= (\btheta_1, \ldots \btheta_N)^\sT \in \R^{N \times d}$ with $(\btheta_a)_{a\in [N]} \sim_{iid} \Unif(\S^{d-1}(\sqrt d))$ independently. 
Let the activation function $\sigma$ satisfy Assumption \ref{ass:activation}, and consider proportional asymptotics $N/d\to\psi_1$, $n/d\to\psi_2$,  
as per Assumption \ref{ass:linear}. Finally, let the regression function  $\{f_d \}_{d \ge 1}$ and the response variables $(y_i)_{i \in [n]}$ satisfy 
Assumption \ref{ass:ground_truth}.

Then for any value of the regularization parameter $\lambda > 0$, the asymptotic prediction error of random features ridge regression satisfies
\begin{align}\label{eqn:main}
\E_{\bX, \bTheta, \beps, f_d^{\sNL}} \Big\vert R_\RF(f_d, \bX, \bTheta, \lambda) - \Big[  \normf_1^2 \cuB(\ratio, \psi_1, \psi_2, \lambda/\ob_{\star}^2)+ 
(\tau^2 + \normf_\star^2) \cuV(\ratio, \psi_1, \psi_2, \lambda/\ob_{\star}^2) + 
\normf_\star^2  \Big] \Big\vert = o_{d}(1)\, ,
\end{align}
where $\E_{\bX, \bTheta, \beps, f_d^{\sNL}}$ denotes expectation with respect to data covariates $\bX$, feature vectors $\bTheta$, data noise $\beps$, and $f_d^{\sNL}$ the nonlinear part of the true regression function (as a Gaussian process), as per Assumption \ref{ass:ground_truth}.
The functions $\cuB, \cuV$ are given in Definition \ref{def:formula_ridge}. 
\end{theorem}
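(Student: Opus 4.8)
The plan is to follow the route flagged in the introduction: (i) express the random prediction error $R_\RF$ as an explicit bilinear combination of a few ``resolvent functionals'' of a block‑structured random matrix built from $\bZ = \sigma(\bX\bTheta^\sT/\sqrt d)/\sqrt d$; (ii) show those functionals converge in probability to deterministic limits by deriving a closed self‑consistent system for the relevant Stieltjes transforms via leave‑one‑out; (iii) solve that system, integrate, and verify by algebra that the limit equals $\normf_1^2\,\cuB + (\tau^2+\normf_\star^2)\,\cuV + \normf_\star^2$. The $L^1$ statement \eqref{eqn:main} then follows by combining convergence of the mean with concentration of $R_\RF$ about its mean (Lipschitz‑type concentration on the product of spheres, with uniform integrability supplied by the sub‑exponential growth bound on $\sigma$).

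\noindent\textbf{Step 1: reduction to a log‑determinant.} Write the ridge solution in closed form, $\hba(\lambda)=\tfrac{1}{\sqrt d}\big(\bZ^\sT\bZ + \psi_{1,d}\psi_{2,d}\lambda\,\id_N\big)^{-1}\bZ^\sT\by$, and the fitted function on a fresh point as $f(\bx;\hba,\bTheta)=\langle\hba,\bsigma(\bx)\rangle$ with $\bsigma(\bx)=\sigma(\bTheta\bx/\sqrt d)$. Expanding $R_\RF=\E_\bx[(f_d(\bx)-\langle\hba,\bsigma(\bx)\rangle)^2]$ and decomposing both $f_d=\beta_{d,0}+\langle\bbeta_{d,1},\bx\rangle+f_d^{\sNL}(\bx)$ and $\sigma(u)=\ob_0+\ob_1 u+\sigma_\perp(u)$ into constant, linear, and purely‑nonlinear parts, integration over the fresh $\bx$ turns each cross term into a quadratic form of $\hba$ against one of three population ``covariances'' ($\ones\ones^\sT$, $\bTheta\bTheta^\sT/d$, and the nonlinear kernel $\approx\ob_\star^2\,\id_N$), while the true function contributes $\beta_{d,0}^2+\|\bbeta_{d,1}\|_2^2+\Sigma_d(1)$. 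After averaging over $f_d^{\sNL}$ (whose Gram matrix at the training points concentrates to $\normf_\star^2\,\id_n$, so $f_d^{\sNL}$ acts like extra i.i.d. label noise of variance $\normf_\star^2$, plus an irreducible $+\normf_\star^2$ bias that cannot be represented in this scaling) and over $\beps$, $R_\RF$ becomes a finite bilinear expression in the resolvent $\big(\bZ^\sT\bZ + \psi_{1,d}\psi_{2,d}\lambda\,\id_N\big)^{-1}$ evaluated against the relevant vectors ($\bTheta\bbeta_{d,1}$, $\bZ^\sT\bX\bbeta_{d,1}$, $\bZ^\sT\by$) and matrices ($\bZ^\sT\bZ$, $\bTheta\bTheta^\sT$). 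The key move is then to introduce source parameters $\bq$ and a partition function $\Psi_d(\bq)=\tfrac1d\log\det\bG_d(\bq)$, where $\bG_d(\bq)$ is a symmetric block matrix of dimension $O(1)+N+n+d$ containing $\bZ$, $\bX$, $\bTheta$, the regularization, and the source terms, chosen so that all the above bilinear forms (bias and variance pieces alike) are recovered as first and second partial derivatives of $\Psi_d$ at $\bq=0$. The constant part drops out of the asymptotics precisely because $\ob_0\neq 0$ makes the constant function representable at vanishing cost, which is why no $\normf_0^2$ appears.

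\noindent\textbf{Step 2: self‑consistent equations via leave‑one‑out.} The core work is to show $\Psi_d(\bq)$, and each of its relevant low‑order derivatives, concentrates and converges. Differentiating $\log\det\bG_d(\bq)$ reduces everything to the limiting normalized trace and the limiting diagonal blocks of the resolvent of $\bG_d(\bq)$. I would compute these by removing one row of $\bX$ (one sample) or one row of $\bTheta$ (one feature), applying the Sherman--Morrison--Woodbury identity, and using concentration of quadratic forms $\bsigma_i^\sT(\cdots)^{-1}\bsigma_i$ to replace each rank‑one update by its conditional expectation. Here $\bsigma_i=\sigma(\bTheta\bx_i/\sqrt d)$ has, conditionally on $\bTheta$, mean $\approx\ob_0\ones$ and second‑moment operator $\approx\ob_1^2\,\bTheta\bTheta^\sT/d+\ob_\star^2\,\id_N$; this is exactly where the nonlinear random‑features matrix ``linearizes'', where the parameter $\zeta$ relating $\ob_1$ and $\ob_\star$ and the rescaled regularization $\olambda=\lambda/\ob_\star^2$ enter, and where the sample‑side and feature‑side reductions couple into the two‑dimensional fixed‑point system \eqref{eqn:nu_definition} for $(\nu_1,\nu_2)$, evaluated at the point $\xi=\imagunit(\psi_1\psi_2\olambda)^{1/2}$ dictated by the regularization. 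Uniqueness and analyticity of $(\nu_1,\nu_2)$ as in Definition \ref{def:formula_ridge} identify the limit, and the resolvent functionals of Step 1 become explicit rational functions of $\chi=\nu_1\nu_2$.

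\noindent\textbf{Step 3: integration, matching, and the main obstacle.} With the limiting Stieltjes transforms in hand I would recover the limiting log‑determinant $\Psi(\bq)$ by integrating along the $\bq$‑rays (or, equivalently, differentiate the fixed‑point equations directly to extract the needed second derivatives), assemble the bias and variance contributions, and check that they equal $\cuE_1/\cuE_0$ and $\cuE_2/\cuE_0$ respectively — a finite algebraic verification, most cleanly done by eliminating $\nu_1,\nu_2$ from \eqref{eqn:nu_definition} to obtain a single polynomial relation for $\chi$. I expect the genuine difficulty to be the rigorous linearization underlying Step 2: the entries of $\bZ$ are far from independent (row $i$ shares $\bx_i$, column $a$ shares $\btheta_a$), $\sigma$ is only weakly differentiable with merely sub‑exponential growth, and the leave‑one‑out error terms must be controlled at $o(1)$ not only for the normalized trace but for the specific diagonal blocks that feed the bias term. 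Making precise that $\sigma(\bX\bTheta^\sT/\sqrt d)/\sqrt d$ is, for every functional arising in Step 1, interchangeable with its linear‑plus‑Gaussian surrogate is where the bulk of the analytic effort (polynomial and Hermite moment bounds, spherical‑harmonic estimates, operator‑norm control of the nonlinear kernel) will go; it also yields, as a by‑product, the Gaussian‑covariates equivalence of Section \ref{sec:Gaussian_covariates}.
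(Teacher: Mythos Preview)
Your three-step plan---reduce the risk to derivatives of a log-determinant of a block random matrix, characterize its Stieltjes transform via leave-one-out to obtain the fixed-point system \eqref{eqn:nu_definition}, then integrate and differentiate to match $\cuB,\cuV$---is exactly the route the paper takes, and your identification of where $\zeta$, $\olambda=\lambda/\ob_\star^2$, the point $\xi=\imagunit(\psi_1\psi_2\olambda)^{1/2}$, and the ``$f_d^{\sNL}$ behaves as extra noise $+\,\normf_\star^2$'' mechanism arise is correct.

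A few implementation differences are worth noting. The paper's block matrix $\bA(\bq)$ has dimension $N+n$, not $O(1)+N+n+d$: the $\bX,\bTheta$ dependence enters only through $\bQ=\bTheta\bTheta^\sT/d$, $\bH=\bX\bX^\sT/d$ on the diagonals and $\bZ,\bZ_1=\ob_1\bX\bTheta^\sT/d$ off-diagonally, with five scalar sources $\bq=(s_1,s_2,t_1,t_2,p)$; your larger linearization would also work but is not needed. The log-determinant is recovered by integrating the Stieltjes transform in the spectral variable $\xi$ (via $\partial_\xi G_d=-M_d$, from $\xi$ to $\imagunit K$, $K\to\infty$), not along $\bq$-rays; the $\bq$-derivatives are then read off from a variational representation $g=\Xi(\xi,m_1,m_2;\bq)$ at the stationary point $(m_1,m_2)$, so only $\partial_\bq\Xi$ is required. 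Finally, the $L^1$ convergence is not obtained by Lipschitz concentration of $R_\RF$ on the product of spheres: the paper first randomizes $\bbeta_{d,1}$ by rotation invariance and averages analytically over $(\bbeta,\beps,f_d^{\sNL})$ to reduce to three deterministic traces $\Psi_1,\Psi_2,\Psi_3$, bounds $\Var_{\bbeta,\beps}(R_\RF\mid\bX,\bTheta)$ directly as a quadratic-form calculation, and then gets concentration in $(\bX,\bTheta)$ from the Stieltjes-transform concentration for $M_d$ and its $\bq$-derivatives.
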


\begin{remark}
If the regression function $f_d(\bx)$ is linear (i.e., $f^{\sNL}_d(\bx)=0$), we recover Theorem \ref{thm:MainIntro}, where $\cuR$ is defined as per Eq.~\eqref{eq:RTheoryDef}.
Numerical experiments suggest that Eq.~\eqref{eqn:main} holds for any deterministic nonlinear functions $f_d$ as well, and that the convergence in Eq.~\eqref{eqn:main} is uniform over $\lambda$ in compacts. We defer the study of these stronger properties to future work. 
\end{remark}

\begin{remark}
Note that the formula for a nonlinear truth, cf. Eq.~\eqref{eqn:main}, is almost identical to the one for a linear truth in Eq.~\eqref{eq:MainIntro}.
In fact, the only difference is that the  the prediction error increases by a term $F_\star^2$, and the noise level $\tau^2$ is replaced by $\tau^2+F_{\star}^2$.

Recall that the parameter $F_{\star}^2$ is the variance of the nonlinear part $\E(f^{\sNL}_{d}(\bx)^2)\to F_{\star}^2$. Hence, these changes
can be interpreted by saying that random features regression (in $N, n, d$ proportional regime) only estimates the linear component of $f_d$ and the nonlinear component behaves similar to random noise. This finding is consistent with the results of \cite{ghorbani2019linearized}
which imply, in particular, $R_\RF(f_d, \bX, \bTheta, \lambda)\ge F_\star^2+o_{d, \P}(1)$ for any $n$ and for $N = o_d(d^{2-\delta})$ for any $\delta>0$. 
\end{remark}
Figure \ref{fig:DoubleDescentNonlinear} illustrates the last remark. We report the simulated and predicted test error as a function of $\psi_1 / \psi_2 = N/n$, for three different choices of the function $f_d$ and noise level $\tau^2$. In all the settings, the total power of nonlinearity and noise is $\normf_\star^2 + \tau^2 = 0.5$, while the power of the linear component is
 $\normf_1^2 = 1$. The test errors in these three settings appear to be very close, as predicted by our theory.

\begin{figure}[!ht]
\centering
\includegraphics[width = 0.48\linewidth]{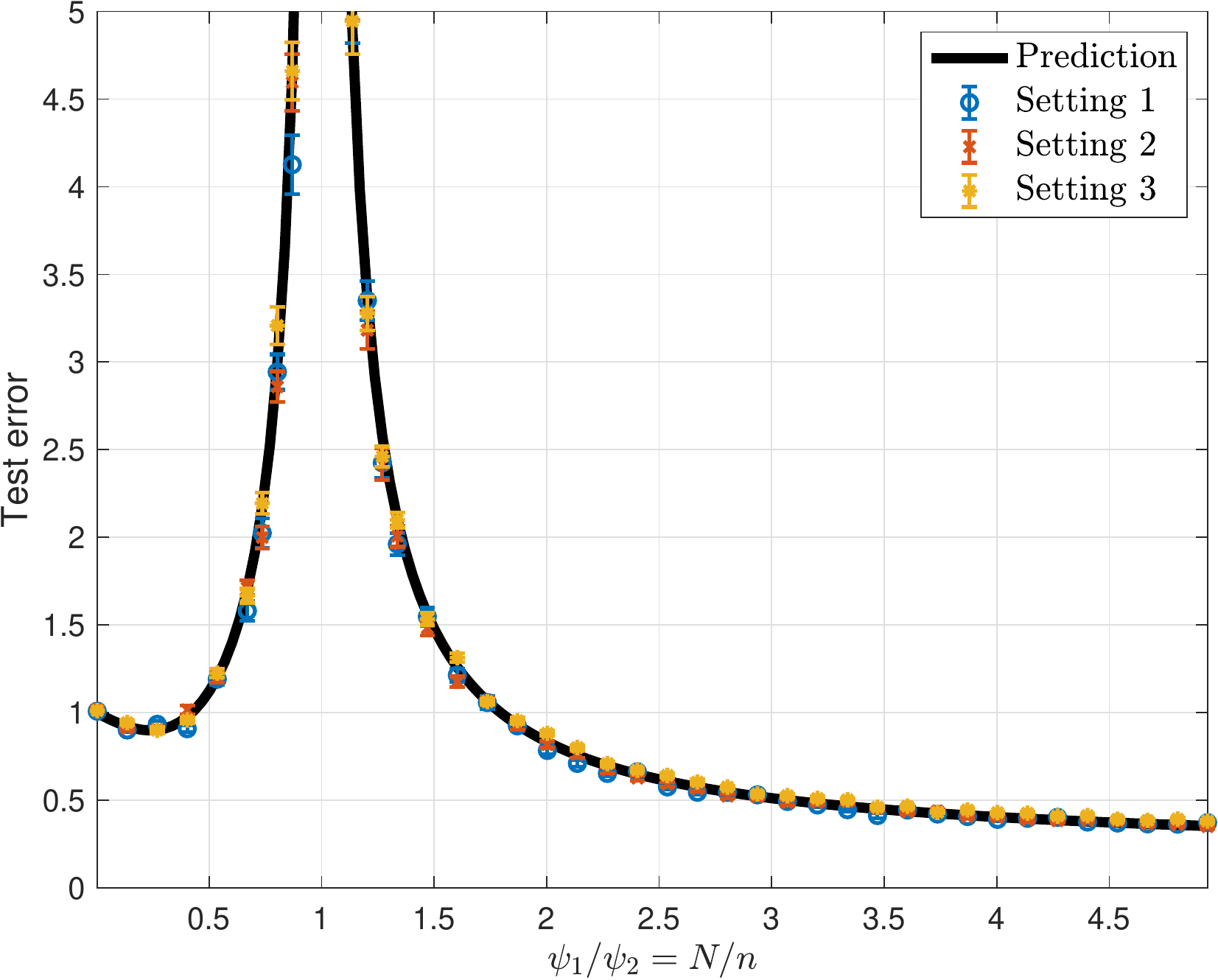}
\includegraphics[width = 0.48\linewidth]{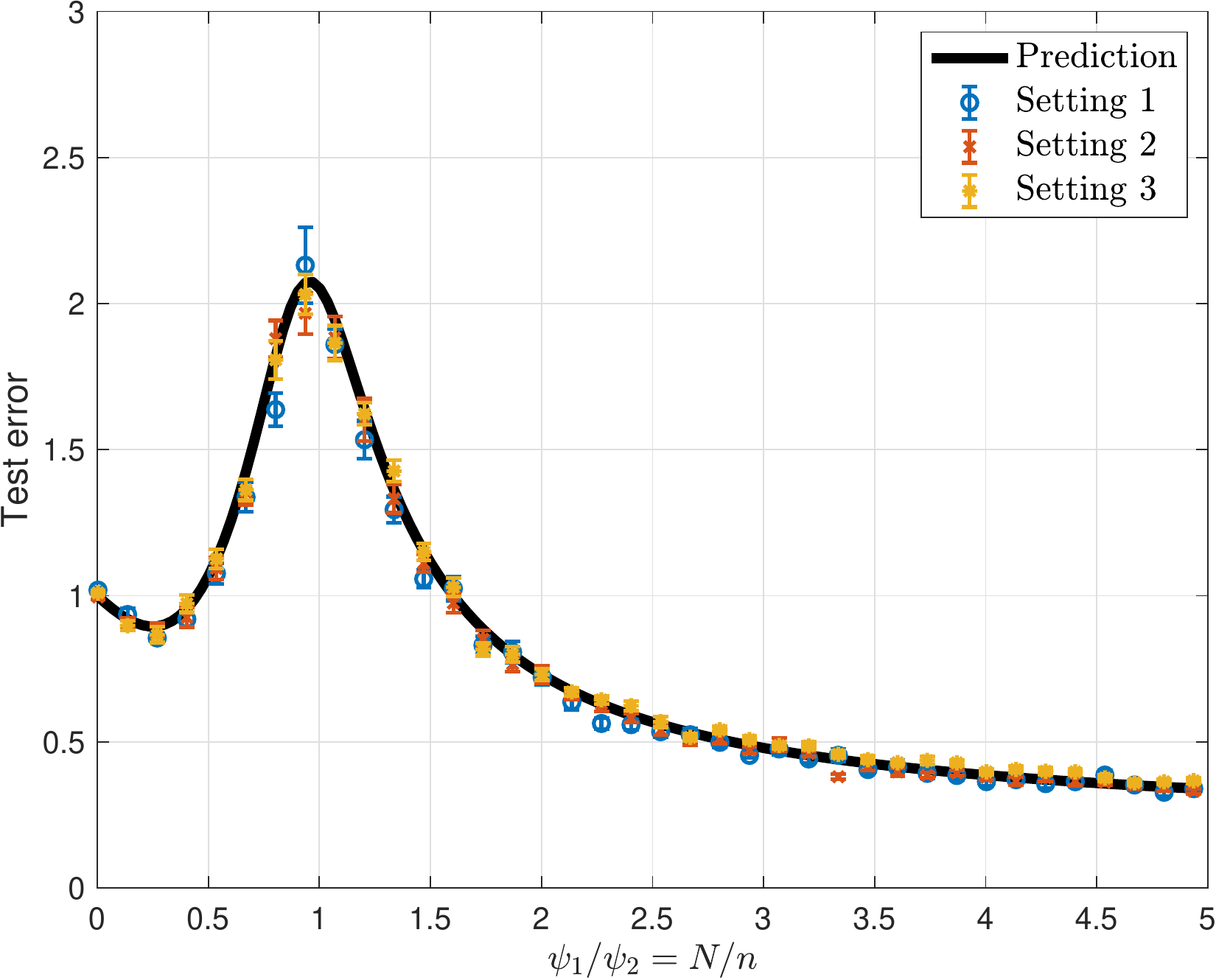}
\caption{Random features regression with ReLU activation ($\sigma = \max\{x, 0\}$). Data are generated according to one of three settings: 
$(1)$ $f_d(\bx) = x_1$ and $\E[\eps^2] = 0.5$; $(2)$ $f_d(\bx) = x_1 + (x_1^2 - 1)/2$ and $\E[\eps^2] = 0$; $(3)$ $f_d(\bx) = x_1 + x_1 x_2 / \sqrt 2$ and $\E[\eps^2] = 0$.
Within any of these settings, the total  power of nonlinearity and noise is $F_{\star}^2 + \tau^2 =0.5$, while the power of linear part is $F_1^2 = 1$.
Left frame: $\lambda = 10^{-8}$. Right frame: $\lambda = 10^{-3}$.  Here $n=300$, $d=100$. The continuous black line is our theoretical prediction, and the 
colored symbols are numerical results.  Symbols are averages over $20$ instances and the error bars report the standard error of the means over these $20$ instances.
}\label{fig:DoubleDescentNonlinear}
\end{figure}

\begin{remark}
The terms $\cuB$ and $\cuV$ in Eq.~\eqref{eqn:main} correspond to the the limits of the bias and variance of the estimated function $f(\bx;\hba(\lambda),\bTheta)$, when the ground truth function $f_d$ is linear. That is, for $f_d$ to be a linear function, we have
\begin{align}
\E_{\bx}\big\{\big[f_{d}(\bx)-\E_{\beps}f(\bx;\hba(\lambda),\bTheta)\big]^2\big\} =&~ \cuB(\ratio, \psi_1, \psi_2, \lambda/\ob_{\star}^2) \normf_{1}^2 +o_{d, \P}(1)\, ,\\
\E_{\bx}\Var_{\beps}\big(f(\bx;\hba(\lambda),\bTheta)\big) =&~ \cuV(\ratio, \psi_1, \psi_2, \lambda/\ob_{\star}^2)\, \tau^2+o_{d, \P}(1)\, .
\end{align}
\end{remark}

\subsection{Simplifying the asymptotic risk in special cases}
\label{sec:SimplifyFormula}

In order to gain further insight into the formula for the asymptotic risk $\cuR(\rho, \ratio, \psi_1, \psi_2, \olambda)$, we consider here
three special cases that are particularly interesting:
\begin{enumerate}
\item The ridgeless limit $\lambda\to 0+$.
\item The highly overparametrized regime $\psi_1\to\infty$ (recall that $\psi_1 = \lim_{d\to\infty}N/d$).
\item The large sample limit $\psi_2\to\infty$ (recall that $\psi_2 = \lim_{d\to\infty}n/d$).
\end{enumerate}
Let us emphasize that these limits are taken \emph{after} the limit $N,n,d\to\infty$ with $N/d\to\infty$ and $n/d\to\infty$.
Hence, the correct interpretation of the highly overparametrized regime is not that the width $N$ is infinite, but rather much larger than $d$ (more precisely,
larger than any constant times $d$). Analogously, the large sample limit does not coincide with infinite sample size $n$, but 
instead sample size that is much larger than $d$.

\subsubsection{Ridgeless limit}

The ridgeless limit $\lambda\to 0+$ is important because it captures the asymptotic behavior the min-norm interpolation predictor
(see also Remark \ref{rmk:Ridgeless}.)
\begin{theorem}\label{thm:ridgeless_limit}
Under the assumptions of  Theorem \ref{thm:main_theorem}, set  $\psi \equiv \min\{ \psi_1, \psi_2\}$ and define
\begin{align}
\chi \equiv - \frac{[(\psi \ratio^2 - \ratio^2 - 1)^2 + 4  \ratio^2 \psi]^{1/2} + (\psi \ratio^2 - \ratio^2 - 1)}{ 2 \ratio^2}, \label{eq:chidef}
\end{align}
and
\begin{equation}\label{eqn:definition_cuE_ridgeless}
\begin{aligned}
\cuE_{0, \rless}(\ratio, \psi_1, \psi_2)\equiv&~   - \chi^5\ratio^6 + 3\chi^4 \ratio^4+ (\psi_1\psi_2 - \psi_2 - \psi_1 + 1)\chi^3\ratio^6 - 2\chi^3\ratio^4 - 3\chi^3\ratio^2\\
&+ (\psi_1 + \psi_2 - 3\psi_1\psi_2 + 1)\chi^2\ratio^4 + 2\chi^2\ratio^2+ \chi^2+ 3\psi_1\psi_2\chi\ratio^2 - \psi_1\psi_2\, ,\\
\cuE_{1, \rless}(\ratio, \psi_1, \psi_2)\equiv&~ \psi_2 \chi^3 \ratio^4 - \psi_2 \chi^2 \ratio^2 + \psi_1 \psi_2 \chi \ratio^2 - \psi_1\psi_2, \\
\cuE_{2, \rless}(\ratio, \psi_1, \psi_2)\equiv&~ \chi^5\ratio^6 - 3\chi^4\ratio^4 + (\psi_1 - 1)\chi^3\ratio^6 + 2\chi^3\ratio^4 + 3\chi^3\ratio^2 + (- \psi_1 - 1)\chi^2\ratio^4 - 2\chi^2\ratio^2 - \chi^2, \\
\end{aligned}
\end{equation}
and
\begin{align}
\cuB_{\rless}(\ratio, \psi_1, \psi_2)\equiv&~  \cuE_{1, \rless} / \cuE_{0, \rless} ,\label{eq:BiasRless}\\
\cuV_{\rless}(\ratio, \psi_1, \psi_2)\equiv&~  \cuE_{2, \rless} / \cuE_{0, \rless}. \label{eq:VarRless}
\end{align}
Then the asymptotic prediction error of random features ridgeless regression is given by
\begin{align}
\lim_{\lambda \to 0} \lim_{d \to \infty} \E[ R_\RF(f_d, \bX, \bTheta,  \lambda)] =\normf_1^2 \cuB_{\rless}(\ratio, \psi_1, \psi_2)+ (\tau^2 + \normf_\star^2) \cuV_{\rless}(\ratio, \psi_1, \psi_2)
+ \normf_\star^2  \, .
\end{align}
\end{theorem}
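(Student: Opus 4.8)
The plan is to obtain Theorem~\ref{thm:ridgeless_limit} from Theorem~\ref{thm:main_theorem} by evaluating the formula in Definition~\ref{def:formula_ridge} in the limit $\olambda = \lambda/\ob_\star^2 \to 0+$. Since Theorem~\ref{thm:main_theorem} already asserts $\E|R_\RF - [\normf_1^2\cuB + (\tau^2+\normf_\star^2)\cuV + \normf_\star^2]| = o_d(1)$ for each fixed $\lambda>0$, the only thing that remains is to compute $\lim_{\olambda\to 0+}\cuB(\ratio,\psi_1,\psi_2,\olambda)$ and $\lim_{\olambda\to 0+}\cuV(\ratio,\psi_1,\psi_2,\olambda)$ and to exchange the order of the limits $\lambda\to 0$ and $d\to\infty$ as written in the statement (which is legitimate here precisely because the claim is phrased as the iterated limit $\lim_{\lambda\to 0}\lim_{d\to\infty}$). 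So the entire content is: track the fixed-point variables $\nu_1,\nu_2$ as the spectral parameter $\xi = \imagunit(\psi_1\psi_2\olambda)^{1/2} \to 0$.

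\medskip

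\noindent\emph{Main computation.} First I would analyze the behavior of $\chi = \nu_1(\xi)\nu_2(\xi)$ as $\xi\to 0$ along the imaginary axis. From the two equations in \eqref{eqn:nu_definition}, multiplying them together gives a single relation for the product $\chi = \nu_1\nu_2$ together with, say, the ratio $\nu_1/\nu_2$; one checks that the ratio tends to $\psi_1/\psi_2$ (more precisely to $\min$- versus $\max$-type behavior depending on which of $\psi_1,\psi_2$ is the bottleneck), while $\chi$ converges to a finite nonzero limit $\chi_0$ solving a quadratic equation. Setting $\xi=0$ formally in \eqref{eqn:nu_definition} and eliminating, one is led to
\begin{equation*}
\ratio^2\chi^2 + (\psi\ratio^2 - \ratio^2 - 1)\chi - \psi = 0\, ,\qquad \psi\equiv\min\{\psi_1,\psi_2\}\, ,
\end{equation*}
whose relevant root (the one reached as the analytic continuation from $\Im\xi$ large, selected by condition $(iii)$ of Definition~\ref{def:formula_ridge}, hence the one with the correct sign/branch) is exactly
\begin{equation*}
\chi = -\,\frac{[(\psi\ratio^2-\ratio^2-1)^2 + 4\ratio^2\psi]^{1/2} + (\psi\ratio^2-\ratio^2-1)}{2\ratio^2}\, ,
\end{equation*}
matching \eqref{eq:chidef}. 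Substituting this limiting $\chi$ into the polynomial expressions $\cuE_0,\cuE_1,\cuE_2$ of \eqref{eq:E012def} — which depend on $\olambda$ only through $\chi$ — immediately yields $\cuE_{0,\rless},\cuE_{1,\rless},\cuE_{2,\rless}$ of \eqref{eqn:definition_cuE_ridgeless}, and hence $\cuB_{\rless} = \cuE_{1,\rless}/\cuE_{0,\rless}$, $\cuV_{\rless} = \cuE_{2,\rless}/\cuE_{0,\rless}$.

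\medskip

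\noindent\emph{Steps in order.} (1)~Show $\nu_1(\xi),\nu_2(\xi)$ extend continuously to $\xi\to 0$ along $\imagunit\reals_+$, with finite limits, by a compactness/normal-families argument on $\C_+$ combined with the a priori bounds $|\nu_j(\xi)|\le\psi_j/\Im(\xi)$ (these do not directly bound at $\xi\to 0$, so the bound must be upgraded: use the equations themselves to show $\nu_1,\nu_2$ stay bounded away from the poles of the right-hand sides, equivalently that the denominators $-\xi-\nu_2-\ratio^2\nu_2/(1-\ratio^2\nu_1\nu_2)$ stay bounded below, which is the genuinely delicate point). (2)~Identify the limiting algebraic system and solve for $\chi_0$, checking the branch via the known $\Im\xi\to\infty$ asymptotics $\nu_j\sim -\psi_j/\xi$. (3)~Verify $\cuE_0\not\to 0$ at $\chi_0$ so that the ratios are well defined (this is where $\psi_1\neq\psi_2$ matters — at the interpolation threshold $\psi_1=\psi_2$ one expects $\cuE_{0,\rless}$ to vanish and the risk to diverge, consistent with the double-descent peak). (4)~Plug $\chi_0$ into \eqref{eq:E012def} and match termwise with \eqref{eqn:definition_cuE_ridgeless}. (5)~Combine with Theorem~\ref{thm:main_theorem} and pass to the limit $\lambda\to 0$ after $d\to\infty$.

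\medskip

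\noindent\emph{Main obstacle.} The hard part is Step~(1): establishing that the fixed-point solution $(\nu_1(\xi),\nu_2(\xi))$ — which is uniquely characterized only for large $\Im\xi$ and then analytically continued — actually has a well-defined, finite, nonzero limit as $\xi\to 0$ along the imaginary axis, rather than developing a singularity at some positive $\Im\xi$ (which would happen exactly at a spectral edge, i.e.\ when $\ratio^2,\psi_1,\psi_2$ put a point mass or edge of the limiting spectral distribution at the origin). One must argue that for $\psi_1\neq\psi_2$ the origin lies off the support of the relevant spectral measure (equivalently, the smallest singular value of the associated block matrix is bounded away from $0$), so the Stieltjes-transform-type functions $\nu_1,\nu_2$ are analytic in a neighborhood of $\xi=0$; this can be done either by an implicit-function-theorem argument checking that the Jacobian of the system \eqref{eqn:nu_definition} is nonsingular at the candidate limit point $(\xi,\nu_1,\nu_2)=(0,\nu_1^0,\nu_2^0)$, or by invoking known results on the edge of the limiting singular value distribution of $\bZ$ from \cite{pennington2017nonlinear}. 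Once analyticity at $0$ is in hand, everything else is the routine algebraic substitution sketched above.
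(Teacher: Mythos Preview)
Your overall plan---reduce to computing $\lim_{\olambda\to 0}\chi$ and then substitute into the polynomials $\cuE_0,\cuE_1,\cuE_2$---is exactly what the paper does. But your Step~(1) contains a concrete error that would block the argument as written.

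You assert that $\nu_1(\xi),\nu_2(\xi)$ extend continuously to $\xi\to 0$ along $\imagunit\reals_+$ \emph{with finite limits}, and you propose to apply the implicit function theorem at a candidate point $(0,\nu_1^0,\nu_2^0)$. This is false when $\psi_1\neq\psi_2$: subtracting the two equations in \eqref{eqn:nu_definition} (at $\bq=\bzero$) gives $\nu_2-\nu_1 = -(\psi_2-\psi_1)/\xi$, so one of $\nu_1,\nu_2$ diverges like $1/\xi$ while the other vanishes like $\xi$, and only the product $\chi=\nu_1\nu_2$ stays bounded. There is no finite limit point $(\nu_1^0,\nu_2^0)$ at which to invoke an implicit function theorem, and your side remark that ``the ratio tends to $\psi_1/\psi_2$'' (which holds only as $\Im\xi\to\infty$, not as $\xi\to 0$) does not survive either.

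The paper handles this (Lemma~\ref{lem:limiting_behavior_stieltjes}) by working with the product directly and using the \emph{random-matrix interpretation} of $m_1,m_2$ rather than pure analytic-continuation arguments. From the representation $m_1(\imagunit u)=\imagunit u\lim_d d^{-1}\E\Tr[(\bZ^\sT\bZ+u^2\id_N)^{-1}]$ and the analogous formula for $m_2$, one reads off that for $\psi_2>\psi_1$ the matrix $\bZ\bZ^\sT$ has rank deficit $n-N$, forcing $m_2(\imagunit u)=\Omega(1/u)$ and hence $m_1(\imagunit u)=O(u)$. The same representation shows $m_1(\imagunit u),m_2(\imagunit u)$ are purely imaginary with positive imaginary part, so $m_0=m_1m_2$ is real and nonpositive. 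Feeding $\imagunit u\cdot m_1\to 0$ back into the first fixed-point equation yields the quadratic $-\ob_1^2 m_\star/(1-\ob_1^2 m_\star)-\ob_\star^2 m_\star-\psi_1=0$ for the accumulation points of $m_0$; this quadratic has exactly one nonpositive root, which pins down the branch and gives \eqref{eq:chidef} after the rescaling $\chi=m_0\ob_\star^2$. In short, branch selection comes not from tracking the large-$\Im\xi$ asymptotics through an analytic continuation, but from the sign constraint $m_0\le 0$ inherited from the positive-definiteness of the underlying resolvents.
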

The proof of this result can be found in Section \ref{sec:simplification}. 

The next proposition establishes the main qualitative properties of the ridgeless limit.
\begin{proposition}\label{prop:limiting_behavior_ridgeless}
Recall the bias and variance  functions $\cuB_{\rless}$ and $\cuV_{\rless}$ defined in Eq.~\eqref{eq:BiasRless} and \eqref{eq:VarRless}.
Then, for any $\ratio \in (0, \infty)$ and fixed $\psi_2 \in (0, \infty)$, we have 
\begin{enumerate}
\item Small width limit $\psi_1\to 0$:
\begin{align}
\lim_{\psi_1 \to 0} \cuB_{\rless} (\ratio, \psi_1, \psi_2) = 1,\;\;\;\;\;\; \lim_{\psi_1 \to 0} \cuV_{\rless}(\ratio, \psi_1, \psi_2) = 0.
\end{align}
\item Divergence at the interpolation threshold $\psi_1 = \psi_2$: 
\begin{align}
\cuB_{\rless} (\ratio, \psi_2, \psi_2) = \infty, \;\;\;\;\;\;\cuV_{\rless}(\ratio, \psi_2, \psi_2) = \infty. 
\end{align}
\item Large width limit $\psi_1 \to \infty$ (here $\chi$ is defined as per Eq.~\eqref{eq:chidef}): 
\begin{align}
\lim_{\psi_1 \to \infty} \cuB_{\rless}(\ratio, \psi_1, \psi_2) =&~ (\psi_2 \chi \ratio^2 - \psi_2) / (( \psi_2 - 1)\chi^3\ratio^6 +(1  - 3 \psi_2)\chi^2 \ratio^4  + 3 \psi_2 \chi \ratio^2 - \psi_2),\\
\lim_{\psi_1 \to \infty} \cuV_{\rless}(\ratio, \psi_1, \psi_2) =&~ (\chi^3\ratio^6 - \chi^2 \ratio^4) / ((\psi_2 - 1)\chi^3\ratio^6 +(1  - 3 \psi_2)\chi^2 \ratio^4  + 3 \psi_2 \chi \ratio^2 - \psi_2)\, .
\end{align}
\item Above the interpolation threshold (i.e. for $\psi_1 \ge \psi_2$), the function $\cuB_{\rless}(\ratio, \psi_1, \psi_2)$ and $\cuV_{\rless}(\ratio, \psi_1, \psi_2)$ 
are strictly decreasing in the rescaled number of neurons $\psi_1$. 
\end{enumerate}
\end{proposition}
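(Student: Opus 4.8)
Since all four assertions concern only the explicit rational functions of Theorem~\ref{thm:ridgeless_limit}, the plan is to manipulate those formulas directly, using as the one structural input that $\chi=\chi(\ratio,\psi)$ with $\psi=\min\{\psi_1,\psi_2\}$ is the (negative) root of the quadratic obtained by squaring \eqref{eq:chidef},
\[
\ratio^2\chi^2+(\psi\ratio^2-\ratio^2-1)\,\chi-\psi=0
\qquad\Longleftrightarrow\qquad
\psi=\frac{\chi(\ratio^2+1-\ratio^2\chi)}{\ratio^2\chi-1},
\]
so that $\chi\to0$ as $\psi\to0$ and $\chi\in(-\infty,0)$ (hence $\ratio^2\chi-1<0$) for every $\psi>0$. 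I would prove the four claims in the order (1), (3), (2), (4), deriving the monotonicity in (4) from the preceding three together with the pole structure of the formulas.

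\emph{Claims (1) and (3) (asymptotic expansions).} For (1) we have $\psi=\psi_1\to0$, so the quadratic gives $\chi=-\psi_1/(\ratio^2+1)+O(\psi_1^2)$; substituting into \eqref{eqn:definition_cuE_ridgeless} and retaining the lowest-order term in $\psi_1$ yields $\cuE_{0, \rless}=-\psi_1\psi_2+O(\psi_1^2)$, $\cuE_{1, \rless}=-\psi_1\psi_2+O(\psi_1^2)$, and $\cuE_{2, \rless}=O(\psi_1^2)$ (every monomial of $\cuE_{2, \rless}$ carries a factor $\chi^2$), whence $\cuB_{\rless}\to1$ and $\cuV_{\rless}\to0$. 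For (3), when $\psi_1\ge\psi_2$ one has $\psi=\psi_2$, so $\chi$ is a constant and each of $\cuE_{0, \rless},\cuE_{1, \rless},\cuE_{2, \rless}$ is an affine function of $\psi_1$; reading off the $\psi_1$-coefficients --- namely $(\psi_2-1)\chi^3\ratio^6+(1-3\psi_2)\chi^2\ratio^4+3\psi_2\chi\ratio^2-\psi_2$ for the denominator, and $\psi_2(\chi\ratio^2-1)$, $\chi^2\ratio^4(\chi\ratio^2-1)$ for the two numerators (both nonzero since $\chi<0$) --- and dividing through by $\psi_1$ gives exactly the stated limits.

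\emph{Claim (2) (divergence at $\psi_1=\psi_2$).} On the diagonal $\psi_1=\psi_2=\psi$ I would show $\cuE_{0, \rless}(\ratio,\psi,\psi)=0$ by viewing it as a polynomial in $\chi$ over $\reals[\ratio,\psi]$ and checking that division by the quadratic above leaves zero remainder; since that quadratic vanishes, so does $\cuE_{0, \rless}(\ratio,\psi,\psi)$. A short specialization-and-factoring step gives $\cuE_{1, \rless}(\ratio,\psi,\psi)=\psi(\ratio^2\chi-1)(\ratio^2\chi^2+\psi)$, and the parametrization yields $\ratio^2\chi^2+\psi=\chi\big[(\ratio^2\chi-1)^2+\ratio^2\big]/(\ratio^2\chi-1)>0$, so this is nonzero; an analogous factorization shows $\cuE_{2, \rless}(\ratio,\psi,\psi)\neq0$. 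Since $\cuB_{\rless},\cuV_{\rless}\ge0$ wherever they are finite (apply Theorem~\ref{thm:ridgeless_limit} with $\tau=\normf_\star=0$, respectively with $\normf_1=\normf_\star=0$), the ratios must blow up to $+\infty$. This algebraic identity is the only genuinely delicate point of the argument, and is best handled by computer algebra, using the quadratic to eliminate $\chi^2$.

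\emph{Claim (4) (monotonicity above threshold).} Fix $\psi_2$ and $\ratio$ and restrict to $\psi_1\ge\psi_2$, where $\chi$ is constant. By the observation in (3), both $\cuB_{\rless}(\ratio,\cdot,\psi_2)$ and $\cuV_{\rless}(\ratio,\cdot,\psi_2)$ are ratios of affine functions of $\psi_1$ over the common denominator $\cuE_{0, \rless}$; that denominator is affine with nonzero slope (by (3)) and vanishes at $\psi_1=\psi_2$ (by (2)), hence only there. So on $(\psi_2,\infty)$ each of $\cuB_{\rless},\cuV_{\rless}$ is a M\"obius function of $\psi_1$ with its sole pole at the left endpoint, and it is non-constant because by (2) its numerator does not vanish at $\psi_1=\psi_2$ while its denominator does, so numerator and denominator are not proportional. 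A non-constant M\"obius function is strictly monotone on any interval avoiding its pole, and since by (2) the function tends to $+\infty$ as $\psi_1\downarrow\psi_2$ and by (3) to a finite limit as $\psi_1\to\infty$, it is strictly decreasing --- which is the assertion. Apart from the identity in (2), everything above is elementary analysis of ratios of low-degree rational functions.
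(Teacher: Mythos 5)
Your proposal is correct and follows essentially the same route as the paper's proof: for each point you manipulate the explicit expressions $\cuE_{0,\rless},\cuE_{1,\rless},\cuE_{2,\rless}$ using the quadratic $\ratio^2\chi^2+(\psi\ratio^2-\ratio^2-1)\chi-\psi=0$ (which is precisely what the paper does implicitly by ``substituting the expression for $\chi$''), and for point (4) you observe as the paper does that the three functions are affine in $\psi_1$ on $[\psi_2,\infty)$ so that the ratios are strictly monotone away from the pole, then pin down the direction via the divergence at $\psi_1=\psi_2$ and the finite limit at $\psi_1\to\infty$. The two small additions you make --- the explicit factorization $\cuE_{1,\rless}(\ratio,\psi,\psi)=\psi(\ratio^2\chi-1)(\ratio^2\chi^2+\psi)$ and the nonnegativity argument (from Theorem~\ref{thm:ridgeless_limit} with appropriate $\normf_1,\normf_\star,\tau$) to justify that the divergence in (2) is to $+\infty$ rather than just in absolute value --- are sound and tighten a point that the paper states without comment.
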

The proof of this proposition is presented in Section \ref{sec:proof_limiting_behavior_ridgeless}. 

As anticipated, point $2$ establishes an important difference with respect to the random covariates linear regression model
of \cite{advani2017high,hastie2019surprises,belkin2019two}.  While in those models the peak in prediction error
is entirely due to a variance divergence, in the present setting both variance and bias diverge. 

Another important difference is established in point $4$: both bias and variance are monotonically decreasing above the
interpolation threshold. This, again, contrasts with the behavior of simpler models, in which bias increases after the interpolation threshold, or after a somewhat larger point in the number of parameters per dimension (if misspecification is added).

This monotone decrease of the bias is crucial, and is at the origin of the observation that highly overparametrized models
outperform underparametrized or moderately overparametrized ones. See Figure \ref{fig:BiasVariance} for an illustration. 

\begin{figure}[!ht]
\centering
\includegraphics[width = 0.48\linewidth]{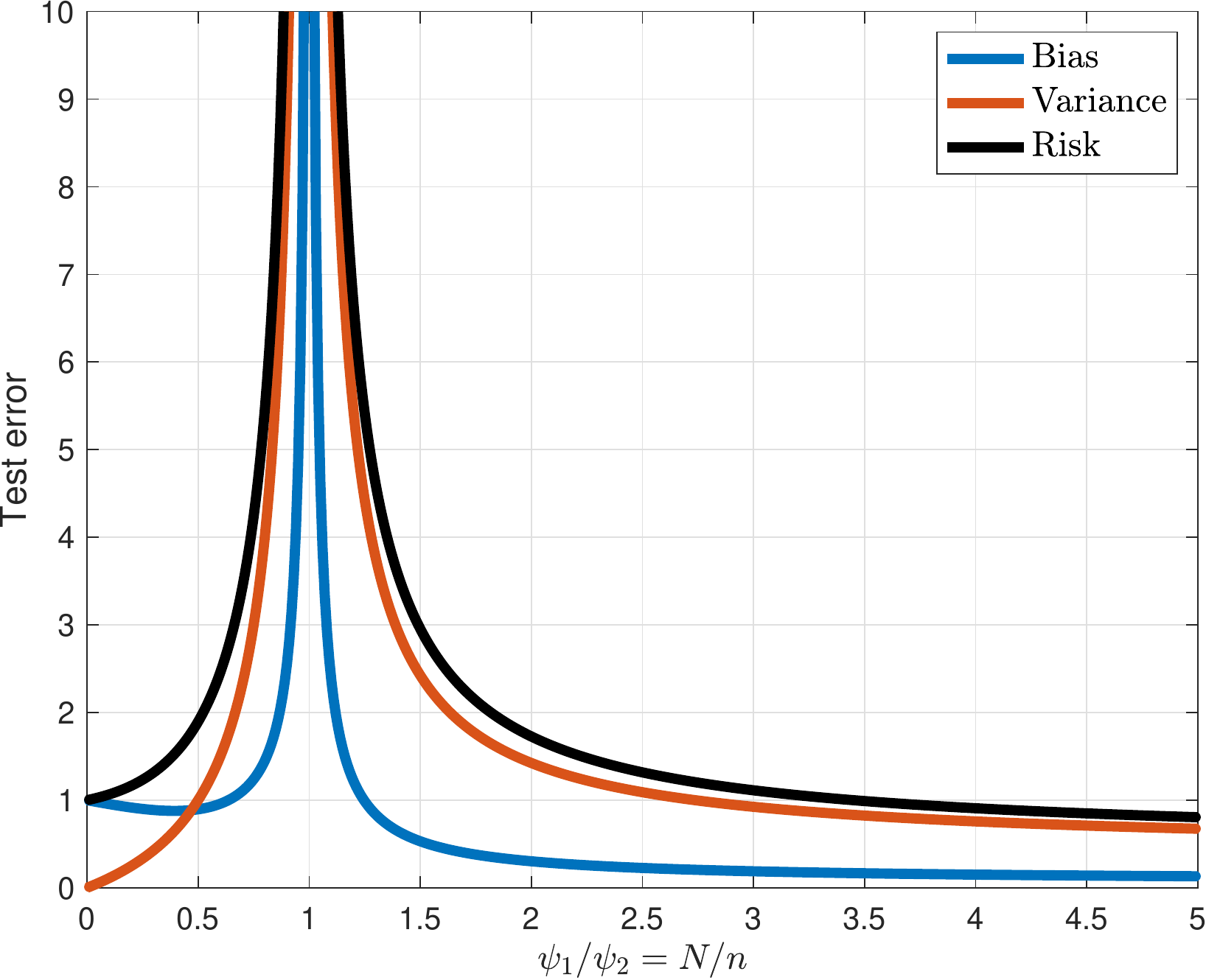}
\caption{Analytical predictions of learning a linear function $f_d(\bx) = \< \bx, \bbeta_1\>$  with ReLU activation ($\sigma = \max\{x, 0\}$) in the ridgeless limit ($\lambda \to 0$). We take $\| \bbeta_1 \|_2^2 = 1$ and $\E[\eps^2] = 1$ . We fix $\psi_2 = 2$ and plot the bias, variance, and the test error as functions of $\psi_1/\psi_2$. Both the bias and the variance term diverge when $\psi_1 = \psi_2$, and decrease in $\psi_1$ when $\psi_1 > \psi_2$. }\label{fig:BiasVariance}
\end{figure}

\subsubsection{Highly overparametrized regime}
\label{sec:HighOver}

As the number of neurons $N$ diverges (for fixed dimension $d$), random features ridge regression is known to approach kernel ridge regression with respect to
the kernel \eqref{eq:H-Kernel}. It is therefore interesting what happens when $N$ and $d$ diverge together, but $N$ is larger than any constant times $d$.
\begin{theorem}\label{thm:overparametrized_limit}
Under the assumptions of  Theorem \ref{thm:main_theorem}, define
\begin{align}
\omega \equiv - \frac{[( \psi_2\ratio^2 - \ratio^2 -\olambda\psi_2 -1)^2 + 4\psi_2\ratio^2(\olambda\psi_2 + 1)]^{1/2} +  (\psi_2\ratio^2 - \ratio^2 -\olambda\psi_2 -1)}
{2(\olambda\psi_2 + 1)}, 
\end{align}
and
\begin{align}
\cuB_{\wide}(\ratio,  \psi_2, \olambda) =&~  \frac{\psi_2\omega - \psi_2}{ (\psi_2 - 1 )\omega^3 +(1 - 3 \psi_2) \omega^2  + 3 \psi_2 \omega - \psi_2},\\
\cuV_{\wide}(\ratio,  \psi_2, \olambda)  =&~  \frac{\omega^3 - \omega^2}{ ( \psi_2 - 1 )\omega^3 +(1 - 3 \psi_2) \omega^2  + 3 \psi_2 \omega - \psi_2}\, .
\end{align}
Then the asymptotic prediction error of random features ridge regression, in the large width limit is given by
\begin{align}
\lim_{\psi_1 \to \infty}& \lim_{d \to \infty} \E[ R_\RF(f_d, \bX, \bTheta,  \lambda)] =
\normf_1^2 \cuB_{\wide}(\ratio, \psi_2,\lambda/\ob_\star^2)+ (\tau^2 + \normf_\star^2) \cuV_{\wide}(\ratio, \psi_2,\lambda/\ob_\star^2)+ \normf_\star^2  \label{eq:WideStatement}\, .
\end{align}
\end{theorem}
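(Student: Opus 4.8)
The statement is a purely deterministic consequence of Theorem~\ref{thm:main_theorem}, requiring no further probabilistic input. Write $\olambda\equiv\lambda/\ob_\star^2$. By Theorem~\ref{thm:main_theorem}, for every fixed $\psi_1\in(0,\infty)$,
\[
\lim_{d\to\infty}\E\big[R_\RF(f_d,\bX,\bTheta,\lambda)\big] = \normf_1^2\,\cuB(\ratio,\psi_1,\psi_2,\olambda)+(\tau^2+\normf_\star^2)\,\cuV(\ratio,\psi_1,\psi_2,\olambda)+\normf_\star^2\, ,
\]
so it suffices to show $\cuB(\ratio,\psi_1,\psi_2,\olambda)\to\cuB_{\wide}(\ratio,\psi_2,\olambda)$ and $\cuV(\ratio,\psi_1,\psi_2,\olambda)\to\cuV_{\wide}(\ratio,\psi_2,\olambda)$ as $\psi_1\to\infty$ (with $\psi_2,\olambda,\ratio$ fixed). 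Recall from Definition~\ref{def:formula_ridge} that $\cuB=\cuE_1/\cuE_0$ and $\cuV=\cuE_2/\cuE_0$ are rational functions of $\chi=\nu_1(\xi_\star)\nu_2(\xi_\star)$, where $\xi_\star=\imagunit(\psi_1\psi_2\olambda)^{1/2}$ and $(\nu_1,\nu_2)$ is the distinguished solution of~\eqref{eqn:nu_definition}.

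\textbf{Step 1 (asymptotics of the fixed point).} I would first track $(\nu_1(\xi_\star),\nu_2(\xi_\star))$ as $\psi_1\to\infty$. Since $\Im\xi_\star=(\psi_1\psi_2\olambda)^{1/2}\to\infty$, we remain in the regime where condition~(iii) of Definition~\ref{def:formula_ridge} pins down the branch, giving the a priori bounds $|\nu_1(\xi_\star)|\le(\psi_1/(\psi_2\olambda))^{1/2}$ and $|\nu_2(\xi_\star)|\le(\psi_2/(\psi_1\olambda))^{1/2}$, hence $|\chi|\le 1/\olambda$. Inserting these back into~\eqref{eqn:nu_definition} yields the self-consistent scaling $\nu_1=\imagunit(\psi_1/(\psi_2\olambda))^{1/2}(1+o(1))$ and $\nu_2=\Theta(\psi_1^{-1/2})\to 0$; in particular $\chi=\nu_1\nu_2$ has a finite limit $\chi_\infty$, and since $\nu_1$ and $\psi_1^{1/2}\nu_2$ both converge to purely imaginary numbers with positive imaginary part, $\chi_\infty$ is real and negative. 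Passing to the limit in~\eqref{eqn:nu_definition} and setting $\omega\equiv\ratio^2\chi_\infty$, the two equations collapse to the scalar relation
\[
(\psi_2\olambda+1)\,\omega^2+(\psi_2\ratio^2-\ratio^2-\psi_2\olambda-1)\,\omega-\psi_2\ratio^2=0\, .
\]
The product of its two roots, $-\psi_2\ratio^2/(\psi_2\olambda+1)$, is negative, so the negative root is the correct one; since the discriminant exceeds the square of the linear coefficient, this negative root is exactly the expression for $\omega$ displayed in the statement.

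\textbf{Step 2 (limit of the formula).} Substituting $\chi=\omega/\ratio^2+o(1)$ into the polynomials $\cuE_0,\cuE_1,\cuE_2$ of~\eqref{eq:E012def} and using that $\chi$ is bounded, each of the three is affine in $\psi_1$; collecting the coefficient of $\psi_1$ and rewriting it via $\chi^j\ratio^{2j}\to\omega^j$ gives
\[
\cuE_0=\psi_1\big[(\psi_2-1)\omega^3+(1-3\psi_2)\omega^2+3\psi_2\omega-\psi_2\big]+O(1)\,,\qquad \cuE_1=\psi_1\psi_2(\omega-1)+O(1)\,,\qquad \cuE_2=\psi_1(\omega^3-\omega^2)+O(1)\,.
\]
Dividing and letting $\psi_1\to\infty$, $\cuE_1/\cuE_0\to(\psi_2\omega-\psi_2)/[(\psi_2-1)\omega^3+(1-3\psi_2)\omega^2+3\psi_2\omega-\psi_2]=\cuB_{\wide}(\ratio,\psi_2,\olambda)$ and $\cuE_2/\cuE_0\to(\omega^3-\omega^2)/[(\psi_2-1)\omega^3+(1-3\psi_2)\omega^2+3\psi_2\omega-\psi_2]=\cuV_{\wide}(\ratio,\psi_2,\olambda)$. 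Plugging these limits into the risk decomposition above yields~\eqref{eq:WideStatement}.

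\textbf{Main obstacle.} The substantive part is Step~1: rigorously establishing the scalings of $\nu_1,\nu_2$ along the curve $\xi=\xi_\star(\psi_1)$, the convergence of $\chi$, and --- most delicately --- that the limiting $\omega$ is the branch singled out by condition~(iii) rather than a spurious root of the limiting quadratic. This requires, in particular, ruling out $1-\ratio^2\chi\to 0$ and checking that the limiting denominator $(\psi_2-1)\omega^3+(1-3\psi_2)\omega^2+3\psi_2\omega-\psi_2$ does not vanish, so that $\cuB_{\wide},\cuV_{\wide}$ are finite. Everything after Step~1 is elementary algebra on the rational functions of Definition~\ref{def:formula_ridge}.
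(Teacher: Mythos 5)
Your proof is correct and follows the same route as the paper: apply Theorem~\ref{thm:main_theorem} to reduce the statement to $\lim_{\psi_1\to\infty}\cuB=\cuB_{\wide}$ and $\lim_{\psi_1\to\infty}\cuV=\cuV_{\wide}$, compute $\lim_{\psi_1\to\infty}\chi$ by passing to the limit in the fixed-point equations (this is exactly Lemma~\ref{lem:limiting_behavior_stieltjes_infinite_width}, whose proof the paper defers as ``similar to the proof of Lemma~\ref{lem:limiting_behavior_stieltjes}''), and then do the rational-function algebra of your Step~2. Your Step~1 supplies the argument the paper omits, with the correct quadratic $(\psi_2\olambda+1)\omega^2+(\psi_2\ratio^2-\ratio^2-\psi_2\olambda-1)\omega-\psi_2\ratio^2=0$ and the right branch selection via the sign of $\chi$, and your concluding remarks correctly flag the same delicate points (branch identification, non-vanishing of the limiting denominator) that the paper glosses over.
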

The proof of this result can be found in Section \ref{sec:simplification}. Note that, as expected,
the risk remains lower bounded by $\normf_\star^2$, even in the limit $\psi_1\to \infty$. Naively, one could have expected to recover kernel ridge regression in this limit, and hence a method that can fit nonlinear functions. However,  as shown in
 \cite{ghorbani2019linearized}, random features methods can only learn linear functions for  $N=O_d(d^{2-\delta})$.

As observed in Figures \ref{fig:zero_lambda_increased_psi1} to \ref{fig:fixed_psi1_increased_lambda}
(which have been obtained by applying Theorem \ref{thm:main_theorem}), the minimum prediction error is often achieved by 
highly overparametrized networks $\psi_1\to\infty$. 
It is natural to ask what is the effect of regularization on such networks. Somewhat surprisingly (and as anticipated in Section \ref{sec:Insights}),
we find that regularization does not always help. Namely, there exists a critical value $\rho_{\star}$ of the signal-to-noise ratio, such
that vanishing regularization is optimal for $\rho>\rho_\star$, and is not optimal for $\rho<\rho_{\star}$.

In order to  state formally this result, we define the following quantities
\begin{align}
\cuR_{\wide}(\rho,\ratio, \psi_2,\olambda)\equiv&~ \frac{\rho}{1 + \rho} \cuB_{\wide}(\ratio, \psi_2,\olambda)+\frac{1}{1 + \rho} \cuV_{\wide}(\ratio, \psi_2,\olambda)\, ,\\
\omega_0(\ratio, \psi_2) \equiv&~ - \frac{[( \psi_2\ratio^2 - \ratio^2 -1)^2 + 4\psi_2\ratio^2]^{1/2} +  (\psi_2\ratio^2 - \ratio^2  -1)}{2},\\
\rho_\star(\ratio, \psi_2)  \equiv&~ \frac{\omega_0^2 - \omega_0}{(1 - \psi_2)\omega_0 + \psi_2}\, .
\end{align}
Notice in particular that $\cuR_{\wide}(\rho,\ratio, \psi_2,\lambda/\ob_\star^2)$ 
is the limiting value of the prediction error (right-hand side of \eqref{eq:WideStatement}) up to an additive constant and an multiplicative constant.
\begin{proposition}\label{prop:limiting_behavior_wide}
Fix $\ratio, \psi_2 \in (0, \infty)$ and $\rho \in (0, \infty)$. Then the function 
$\olambda\mapsto \cuR_{\wide}(\rho,\ratio, \psi_2,\olambda)$ is either strictly increasing in $\olambda$, or strictly decreasing first and then strictly increasing. 

Moreover, we have
\begin{align}
\rho < \rho_\star(\ratio, \psi_2) &\;\;\Rightarrow \;\;\argmin_{\olambda \ge 0} \cuR_{\wide}(\rho,\ratio, \psi_2,\olambda) = 0\, ,\\
\rho > \rho_\star(\ratio, \psi_2) &\;\;\Rightarrow \;\;\argmin_{\olambda \ge 0} \cuR_{\wide}(\rho,\ratio, \psi_2,\olambda)= \olambda_\star(\ratio, \psi_2, \rho)>0\, .
\end{align}
\end{proposition}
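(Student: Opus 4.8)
The plan is to eliminate $\olambda$ in favour of the auxiliary variable $\omega$ of Theorem~\ref{thm:overparametrized_limit} and reduce the whole statement to elementary calculus on a single rational function of $\omega$. \textbf{Step 1 (change of variables).} Squaring the defining formula for $\omega$ shows that $\omega=\omega(\ratio,\psi_2,\olambda)$ is a root of
\[
(\psi_2\olambda+1)\,\omega^{2}+\bigl(\ratio^{2}(\psi_2-1)-\psi_2\olambda-1\bigr)\,\omega-\psi_2\ratio^{2}=0 ,
\]
and from the formula itself $\omega<0$, while the two roots of this quadratic have product $-\psi_2\ratio^{2}/(\psi_2\olambda+1)<0$, so $\omega$ is its unique negative root; at $\olambda=0$ this is $\omega_0(\ratio,\psi_2)$. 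The discriminant $(\psi_2\ratio^{2}-\ratio^{2}-\psi_2\olambda-1)^{2}+4\psi_2\ratio^{2}(\psi_2\olambda+1)$ is strictly positive, so $\olambda\mapsto\omega$ is real-analytic, and implicit differentiation gives $\omega'(\olambda)=\psi_2\,\omega(1-\omega)/\bigl(2(\psi_2\olambda+1)\omega+\ratio^{2}(\psi_2-1)-\psi_2\olambda-1\bigr)$; the denominator equals minus the square root of the discriminant, hence is negative, and $\omega(1-\omega)<0$, so $\omega'(\olambda)>0$. Together with $\omega(0)=\omega_0$ and $\omega(\olambda)\to0^-$ as $\olambda\to\infty$, the map $\olambda\mapsto\omega$ is a strictly increasing real-analytic bijection of $[0,\infty)$ onto $[\omega_0,0)$, with $\olambda=0\leftrightarrow\omega=\omega_0$.

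\textbf{Step 2 (a clean formula).} Substituting the expressions for $\cuB_{\wide}$ and $\cuV_{\wide}$ into $\cuR_{\wide}$, the common denominator $P(\omega)=(\psi_2-1)\omega^{3}+(1-3\psi_2)\omega^{2}+3\psi_2\omega-\psi_2$ has $\omega=1$ as a root and factors as $P(\omega)=(\omega-1)D(\omega)$ with $D(\omega):=(\psi_2-1)\omega^{2}-2\psi_2\omega+\psi_2$, while the numerator collapses to $(\omega-1)(\omega^{2}+\rho\psi_2)$; cancelling $(\omega-1)$ yields
\[
\cuR_{\wide}(\rho,\ratio,\psi_2,\olambda)=g_\rho(\omega):=\frac{\omega^{2}+\rho\psi_2}{(1+\rho)\,D(\omega)} .
\]
A short check, using the quadratic that defines $\omega_0$ and treating $\psi_2<1$ and $\psi_2\ge1$ separately, shows $D(\omega)>0$ throughout $[\omega_0,0]$ and $(1-\psi_2)\omega_0+\psi_2>0$; hence $g_\rho$ is finite, positive, and real-analytic on $[\omega_0,0)$. \textbf{Step 3 (shape of $g_\rho$).} Differentiating, $g_\rho'(\omega)=2\psi_2\,\psi_\rho(\omega)/\bigl((1+\rho)D(\omega)^{2}\bigr)$, where $\psi_\rho(\omega):=-\omega^{2}+(1-\rho(\psi_2-1))\omega+\rho\psi_2$. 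This downward-opening parabola has $\psi_\rho(0)=\rho\psi_2>0$ and product of roots $-\rho\psi_2<0$, hence a unique negative root $\omega_-(\rho)<0$; thus $g_\rho$ is strictly decreasing on $(-\infty,\omega_-(\rho))$ and strictly increasing on $(\omega_-(\rho),0)$. Composing with the increasing bijection of Step 1 establishes the first assertion of the proposition: $\olambda\mapsto\cuR_{\wide}$ is strictly increasing if $\omega_-(\rho)\le\omega_0$, and otherwise strictly decreasing and then strictly increasing, with the interior minimum attained at the unique $\olambda_\star>0$ solving $\omega(\olambda_\star)=\omega_-(\rho)$.

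\textbf{Step 4 (locating the transition).} It remains to relate the position of $\omega_0$ relative to $\omega_-(\rho)$ to the size of $\rho$. Since $\omega_0<0$ and $\psi_\rho$ is positive exactly between its two roots, $\omega_0\ge\omega_-(\rho)\iff\psi_\rho(\omega_0)\ge0\iff\rho\bigl((1-\psi_2)\omega_0+\psi_2\bigr)\ge\omega_0^{2}-\omega_0$; because $(1-\psi_2)\omega_0+\psi_2>0$ this is precisely the comparison of $\rho$ with $\rho_\star(\ratio,\psi_2)=\frac{\omega_0^{2}-\omega_0}{(1-\psi_2)\omega_0+\psi_2}$. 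Combined with Step 3 this gives the claimed dichotomy: on one side of $\rho_\star$ the risk is monotone and $\argmin_{\olambda\ge0}\cuR_{\wide}=0$, on the other it has an interior minimiser $\olambda_\star>0$. (An equivalent route: $\omega_-(\rho)$ is the solution of $\Phi(\omega_-(\rho))=\rho$ with $\Phi(w):=w(w-1)/\bigl((1-\psi_2)w+\psi_2\bigr)$, one has $\rho_\star=\Phi(\omega_0)$, and $\Phi'(w)=-D(w)/\bigl((1-\psi_2)w+\psi_2\bigr)^{2}<0$ on the relevant range, so $\rho$ versus $\rho_\star$ orders oppositely to $\omega_-(\rho)$ versus $\omega_0$.)

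\textbf{Main obstacle.} Once the closed form of Step 2 is in hand the calculus is routine, so the real work is the uniform sign bookkeeping: establishing $(1-\psi_2)\omega_0+\psi_2>0$ and $D(\omega)>0$ on $[\omega_0,0]$ for all admissible $(\ratio,\psi_2)$ — both reducing to locating $\omega_0$ via its defining quadratic, with $\psi_2<1$ and $\psi_2\ge1$ handled separately — and checking that $\olambda\mapsto\omega$ is a genuine bijection onto $[\omega_0,0)$ rather than only a local diffeomorphism, which requires tracking its endpoint behaviour.
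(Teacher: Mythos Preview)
Your approach is essentially the same as the paper's: both reduce $\cuR_{\wide}$ to a rational function of $\omega$ (your $g_\rho$ coincides with the paper's $\overline{\cuR}_{\wide}$ after noting $D(\omega)=\psi_2(1-\omega)^2-\omega^2$), establish that $\olambda\mapsto\omega$ is a strictly increasing bijection onto $[\omega_0,0)$, locate the unconstrained minimiser $\omega_-(\rho)$ (the paper's $\omega_1$) via the same quadratic, and compare it to $\omega_0$. Your proof of $\omega'(\olambda)>0$ via the observation that the implicit-differentiation denominator equals $-\sqrt{\text{discriminant}}$ is cleaner than the paper's case split on $\psi_2\lessgtr1$ in Lemma~\ref{lem:nu_kappa_derivative_non_negative}.

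One remark: your Step~4 is phrased noncommittally (``on one side \ldots\ on the other''). Your own chain gives $\rho\ge\rho_\star\iff\omega_-(\rho)\le\omega_0\iff$ the risk is monotone on $[\omega_0,0)\iff\argmin_{\olambda\ge0}=0$, which is the \emph{reverse} of the displayed implications in the Proposition. This direction agrees with the paper's proof text and with the informal discussion in Section~\ref{sec:HighOver} (high SNR $\Rightarrow$ vanishing regularization optimal), so the displayed inequalities in the Proposition appear to be a typo; your analysis is correct and you should state the direction explicitly rather than hedge.
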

The proof of this proposition is presented in Section \ref{sec:proof_limiting_behavior_wide}, which also provides further information about this phase transition (and, in particular, an explicit expression for $\olambda_\star(\ratio, \psi_2, \rho)$).

\subsubsection{Large sample limit}

As the number of sample $n$ goes to infinity, both training error (minus $\tau^2$) and test error\footnote{The difference between training error and test
error is due  to the fact that we define  the former as $\hE_n\{(y-\hf(\bx))^2\}$ and the latter as  $\E\{(f(\bx)-\hf(\bx))^2\}$.}
converge to the approximation error using random features class to fit the true function $f_d$.  It is therefore interesting what happens when $n$ and $d$ diverge together, but $n$ is larger than any constant times $d$.

\begin{theorem}\label{thm:large_sample_limit}
Under the assumptions of  Theorem \ref{thm:main_theorem}, define
\begin{align}
\omega \equiv - \frac{[( \psi_1\ratio^2 - \ratio^2 -\olambda\psi_1 -1)^2 + 4\psi_1\ratio^2(\olambda\psi_1 + 1)]^{1/2} +  (\psi_1\ratio^2 - \ratio^2 -\olambda\psi_1 -1)}{2(\olambda\psi_1 + 1)}, 
\end{align}
and 
\begin{align}
\cuB_{\lsamp}(\ratio, \psi_1, \olambda) = \frac{ (\omega^3 -  \omega^2) /\ratio^2  + \psi_1 \omega - \psi_1}{( \psi_1 - 1)\omega^3+(1 - 3\psi_1)\omega^2 + 3 \psi_1 \omega - \psi_1}. 
\end{align}
Then the asymptotic prediction error of random features ridge regression, in the large width limit is given by
\begin{align}
\lim_{\psi_2 \to \infty} \lim_{d \to \infty} \E[ R_\RF(f_d, \bX, \bTheta,  \lambda)] = \normf_1^2 \cuB_{\lsamp}(\ratio, \psi_2,\lambda/\ob_\star^2) + \normf_\star^2\, .  \label{eq:LSampleStatement}
\end{align}
\end{theorem}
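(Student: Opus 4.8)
The plan is to take the two limits in order. Theorem~\ref{thm:main_theorem} already supplies the $d\to\infty$ limit, so for every fixed $\psi_2$ the risk converges in probability/expectation to $\normf_1^2\,\cuB(\ratio,\psi_1,\psi_2,\olambda)+(\tau^2+\normf_\star^2)\,\cuV(\ratio,\psi_1,\psi_2,\olambda)+\normf_\star^2$ with $\olambda=\lambda/\ob_\star^2$, where $\cuB=\cuE_1/\cuE_0$ and $\cuV=\cuE_2/\cuE_0$ are the rational functions of $\chi$ from Definition~\ref{def:formula_ridge}, and $\chi=\nu_1(\xi_\star)\,\nu_2(\xi_\star)$ with $\xi_\star=\imagunit(\psi_1\psi_2\olambda)^{1/2}$. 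It then remains to pass to the limit $\psi_2\to\infty$ inside this deterministic expression; no further probabilistic input is needed.

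The crux is identifying $\lim_{\psi_2\to\infty}\chi$, which is delicate because the evaluation point $\xi_\star$ itself diverges like $\imagunit\sqrt{\psi_2}$. Condition $(iii)$ of Definition~\ref{def:formula_ridge} gives, for $\psi_2$ large (so that $\Im\xi_\star>C$), the uniform bounds $\vert\nu_1(\xi_\star)\vert\le\psi_1/\Im\xi_\star=(\psi_1/(\olambda\psi_2))^{1/2}$ and $\vert\nu_2(\xi_\star)\vert\le\psi_2/\Im\xi_\star=(\psi_2/(\olambda\psi_1))^{1/2}$; hence $\nu_1(\xi_\star)\to 0$, $\nu_2(\xi_\star)/\sqrt{\psi_2}$ stays bounded, and $\vert\chi\vert\le 1/\olambda$. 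Moreover, since $\xi_\star$ is purely imaginary and the system~\eqref{eqn:nu_definition} is invariant under $(\xi,\nu_1,\nu_2)\mapsto(-\bar\xi,-\bar\nu_1,-\bar\nu_2)$, uniqueness forces $\nu_1(\xi_\star),\nu_2(\xi_\star)$ to be purely imaginary, hence (being in $\C_+$) on the positive imaginary axis, so $\chi\in[-1/\olambda,0)$ and $1-\ratio^2\chi$ is bounded away from $0$. Passing to a subsequence and writing $\nu_1\to 0$, $\nu_2/\sqrt{\psi_2}\to\tilde\nu_2$, $\chi\to\chi_\infty$, the second equation of~\eqref{eqn:nu_definition} forces $\tilde\nu_2=-1/\tilde\xi$ with $\tilde\xi\equiv\imagunit(\psi_1\olambda)^{1/2}$, and inserting this into $\chi=\nu_1\nu_2$ via the first equation (using $\tilde\xi^2=-\psi_1\olambda$ and clearing the regular factor $1-\ratio^2\chi_\infty$) yields
\[
\ratio^2(\psi_1\olambda+1)\,\chi_\infty^2-(\psi_1\olambda+1+\ratio^2-\psi_1\ratio^2)\,\chi_\infty-\psi_1=0\,.
\]
This quadratic has one positive and one negative root; since $\chi_\infty<0$ it selects the negative one, and setting $\omega\equiv\ratio^2\chi_\infty$ turns it into $(\olambda\psi_1+1)\omega^2+(\psi_1\ratio^2-\ratio^2-\olambda\psi_1-1)\omega-\psi_1\ratio^2=0$, whose relevant root is exactly the $\omega$ in the theorem statement. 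Every subsequential limit satisfies the same equation with the same sign, so $\chi\to\chi_\infty=\omega/\ratio^2$ along the full sequence.

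What remains is explicit computation with the polynomials~\eqref{eq:E012def}. Since $\cuE_2$ carries no $\psi_2$-dependence while $\cuE_0=\Theta(\psi_2)$, we get $\cuV=\cuE_2/\cuE_0\to 0$, which is precisely why the variance term disappears from the limiting formula. For the bias one checks the identity $\cuE_1=\psi_2(\chi\ratio^2-1)(\psi_1+\chi^2\ratio^2)$ and that the $\Theta(\psi_2)$ part of $\cuE_0$ equals $\psi_2(\chi\ratio^2-1)[\psi_1(\chi\ratio^2-1)^2-\chi^2\ratio^4]$; dividing, cancelling the common $\psi_2(\chi\ratio^2-1)$, sending $\chi\to\chi_\infty$, and rewriting through $\omega=\ratio^2\chi_\infty$ (and using the $\omega$-quadratic to factor a common $(\omega-1)$ out of numerator and denominator) produces exactly $\cuB_{\lsamp}(\ratio,\psi_1,\olambda)$ as displayed. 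Combining the pieces gives $\lim_{\psi_2\to\infty}\lim_{d\to\infty}\E[R_\RF(f_d,\bX,\bTheta,\lambda)]=\normf_1^2\,\cuB_{\lsamp}(\ratio,\psi_1,\lambda/\ob_\star^2)+\normf_\star^2$, which is the content of~\eqref{eq:LSampleStatement}.

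The main obstacle is the double limit in the second step: one must let $\psi_2\to\infty$ in the fixed point equations while the spectral parameter $\xi_\star$ simultaneously escapes to $\imagunit\infty$, so a naive single asymptotic expansion is unavailable. The robust route is the compactness-plus-uniqueness scheme above — extract subsequential limits of $\nu_1$, $\nu_2/\sqrt{\psi_2}$, and $\chi$ from the uniform bounds, show each solves the limiting algebraic system on the correct branch, and conclude convergence of the whole sequence — together with the elementary nondegeneracy fact that $1-\ratio^2\chi$ is bounded away from $0$ (which is where $\chi\in[-1/\olambda,0)$ is used). Everything downstream is manipulation of explicit rational functions and is elementary algebra.
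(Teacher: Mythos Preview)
Your proposal is correct and follows essentially the same route as the paper: apply Theorem~\ref{thm:main_theorem} for the inner limit, then let $\psi_2\to\infty$ by showing $\chi\to\omega/\ratio^2$ via the fixed-point equations and a branch selection, and finally take limits in the rational functions $\cuE_0,\cuE_1,\cuE_2$. The paper packages the second step as Lemma~\ref{lem:limiting_behavior_stieltjes_infinite_sample} (proved ``similarly to'' Lemma~\ref{lem:limiting_behavior_stieltjes}), where the branch is fixed through the resolvent representation of $m_1,m_2$; your symmetry-plus-uniqueness argument that $\nu_1,\nu_2\in \imagunit\R_{>0}$ on the imaginary axis is an equally valid way to pin down $\chi<0$. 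Two minor remarks: the bound $|\nu_j(\xi)|\le\psi_j/\Im\xi$ you invoke actually holds on all of $\C_+$ (it is inherited from the Stieltjes-transform nature of $m_j$, cf.\ Lemma~\ref{lemma:BasicProperties}), so you need not worry about the constant $C$ in condition~(iii) depending on $\psi_2$; and the common $(\omega-1)$ factor you cancel between numerator and denominator of $\cuB_{\lsamp}$ is a pure algebraic identity and does not require the quadratic satisfied by $\omega$.
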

The proof of this result can be found in Section \ref{sec:simplification}.

\section{Asymptotics of the training error}\label{sec:training}
\def\cuL{\mathscrsfs{L}}
\def\cuA{\mathscrsfs{A}}

Theorem \ref{thm:main_theorem} establishes the exact asymptotics of the test error in the random features model. However, the technical results obtained in the proofs allow us to characterize several other quantities of interest. Here we consider the behavior of the training error and of the norm of the parameters. We define the regularized training error by
\begin{equation}\label{eqn:regularized_training_loss}
\begin{aligned}
L_{\RF}(f_d, \bX, \bTheta, \lambda) =&~ \min_{\ba}\Big\{  \frac{1}{n} \sum_{i=1}^n \Big( y_i - \sum_{j = 1}^N a_j \sigma(\<\btheta_j, \bx_i \> / \sqrt d) \Big)^2 + \frac{N \lambda}{d}\| \ba \|_2^2\Big\} \,.
\end{aligned}
\end{equation}
We also recall that $\hba(\lambda)$ denotes the minimizer in the last expression, cf. Eq.~\eqref{eq:Ridge}
The next definition presents the asymptotic formulas for these quantities.
\begin{definition}[Asymptotic formula for training error of random features regression]\label{def:training_error}
Let the functions $\nu_1, \nu_2: \C_+ \to \C_+$ be uniquely defined by the following conditions: $(i)$ $\nu_1$, $\nu_2$ are analytic on $\C_+$; $(ii)$ For $\Im(\xi)>0$, $\nu_1(\xi)$, $\nu_2(\xi)$ satisfy the following equations
\begin{equation}\label{eqn:nu_definition_training}
\begin{aligned}
\nu_1 =&~ \psi_1\Big(-\xi -  \nu_2 - \frac{\ratio^2 \nu_2}{1- \ratio^2 \nu_1\nu_2}\Big)^{-1}\, ,\\
\nu_2 =&~ \psi_2\Big(-\xi - \nu_1 - \frac{\ratio^2 \nu_1}{1- \ratio^2 \nu_1\nu_2}\Big)^{-1}\, ;
\end{aligned}
\end{equation}
$(iii)$  $(\nu_1(\xi), \nu_2(\xi))$ is the unique solution of these equations with $\vert \nu_1(\xi)\vert\le \psi_1/\Im(\xi)$, $\vert \nu_2(\xi) \vert \le \psi_2/\Im(\xi)$ for $\Im(\xi) > C$, with $C$ a sufficiently large constant.

Let  
\begin{equation}\label{eqn:definition_chi_training}
\chi \equiv \nu_1(\imagunit ( \psi_1 \psi_2 \olambda)^{1/2}) \cdot \nu_2(\imagunit ( \psi_1 \psi_2 \olambda)^{1/2}),
\end{equation}
and
\begin{equation}
\begin{aligned}
\cuL =&~ - \imagunit \nu_2(\imagunit ( \psi_1 \psi_2 \olambda)^{1/2}) \cdot \Big( \frac{\olambda \psi_1}{\psi_2}\Big)^{1/2} \cdot \Big[ \frac{\rho}{1 + \rho} \cdot \frac{1}{1 - \chi \ratio^2}+ \frac{1}{1 + \rho}  \Big], \\
\cuA_1 =&~ \frac{\rho}{1 + \rho} \Big[ - \chi^2 (\chi \ratio^4 - \chi \ratio^2 + \psi_2 \ratio^2 + \ratio^2 - \chi \psi_2 \ratio^4 + 1)\Big] \\
&~ + \frac{1}{ 1 + \rho} \Big[ \chi^2 (\chi \ratio^2 - 1) (\chi^2 \ratio^4 - 2 \chi \ratio^2 + \ratio^2 + 1) \Big], \\
\cuA_0 =&~ - \chi^5\ratio^6 + 3\chi^4\ratio^4 + (\psi_1\psi_2 - \psi_2 - \psi_1 + 1)\chi^3\ratio^6 - 2\chi^3\ratio^4 - 3\chi^3\ratio^2\\
&~+ (\psi_1 + \psi_2 - 3\psi_1\psi_2 + 1)\chi^2\ratio^4 + 2\chi^2\ratio^2 + \chi^2 + 3\psi_1\psi_2\chi\ratio^2 - \psi_1\psi_2, \\
\cuA =&~ \cuA_1 / \cuA_0. 
\end{aligned}
\end{equation}
\end{definition}

We next state our asymptotic characterization of  $L_\RF(f_d, \bX, \bTheta, \lambda)$ and $\| \hba(\lambda) \|_2^2$.
\begin{theorem}\label{thm:training_asymptotics}
Let $\bX = (\bx_1, \ldots, \bx_n)^\sT \in \R^{n \times d}$ with $(\bx_i)_{i \in [n]} \sim_{iid} \Unif(\S^{d-1}(\sqrt d))$ and $\bTheta
= (\btheta_1, \ldots \btheta_N)^\sT \in \R^{N \times d}$ with $(\btheta_a)_{a\in [N]} \sim_{iid} \Unif(\S^{d-1}(\sqrt d))$ independently. 
Let the activation function $\sigma$ satisfy Assumption \ref{ass:activation}, and consider proportional asymptotics $N/d\to\psi_1$,  $N/d\to\psi_2$,  
as per Assumption \ref{ass:linear}. Finally, let the regression function  $\{f_d \}_{d \ge 1}$ and the response variables $(y_i)_{i \in [n]}$ satisfy Assumption \ref{ass:ground_truth}.

Then for any value of the regularization parameter $\lambda > 0$, the asymptotic regularized training error and norm square of its minimizer satisfy
\begin{equation}\label{eqn:main_training}
\begin{aligned}
\E_{\bX, \bTheta, \beps, f_d^{\sNL}} \Big\vert L_\RF(f_d, \bX, \bTheta, \lambda) - (\normf_1^2 + \normf_\star^2 + \tau^2) \cuL \Big\vert =&~ o_{d}(1), \\
\E_{\bX, \bTheta, \beps, f_d^{\sNL}} \Big\vert \ob_\star^2 \| \hba(\lambda) \|_2^2- (\normf_1^2 + \normf_\star^2 + \tau^2) \cuA \Big\vert =&~ o_{d}(1), \\
\end{aligned}
\end{equation}
where $\E_{\bX, \bTheta, \beps, f_d^{\sNL}}$ denotes expectation with respect to data covariates $\bX$, feature vectors $\bTheta$, data noise $\beps$, and $f_d^{\sNL}$ the nonlinear part of the true regression function (as a Gaussian process), as per Assumption \ref{ass:ground_truth}. The functions $\cuL$ and $\cuA$ are given in Definition \ref{def:training_error}. 
\end{theorem}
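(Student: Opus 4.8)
Write $\bZ = \sigma(\bX\bTheta^\sT/\sqrt d)/\sqrt d \in \R^{n\times N}$ and set $\kappa \equiv \psi_{1,d}\psi_{2,d}\lambda$. Stationarity of the objective in \eqref{eqn:regularized_training_loss} gives $\hba(\lambda) = \tfrac1{\sqrt d}(\bZ^\sT\bZ + \kappa\,\id_N)^{-1}\bZ^\sT\by$, and the push‑through identity $\bZ(\bZ^\sT\bZ+\kappa\id_N)^{-1} = (\bZ\bZ^\sT+\kappa\id_n)^{-1}\bZ$ together with $\bZ\bZ^\sT(\bZ\bZ^\sT+\kappa\id_n)^{-1} = \id_n-\kappa(\bZ\bZ^\sT+\kappa\id_n)^{-1}$ yields the closed forms
\begin{align}
L_\RF(f_d,\bX,\bTheta,\lambda) &= \frac{\kappa}{n}\,\by^\sT\big(\bZ\bZ^\sT + \kappa\,\id_n\big)^{-1}\by\,,\\
\|\hba(\lambda)\|_2^2 &= \frac1d\Big[\by^\sT\big(\bZ\bZ^\sT+\kappa\id_n\big)^{-1}\by - \kappa\,\by^\sT\big(\bZ\bZ^\sT+\kappa\id_n\big)^{-2}\by\Big]\, .
\end{align}
Equivalently, by the envelope theorem $\|\hba(\lambda)\|_2^2 = \psi_{1,d}^{-1}\,\partial_\lambda L_\RF$, so it suffices to control the scalar $g_d(\lambda)\equiv\tfrac1n\by^\sT(\bZ\bZ^\sT+\kappa\id_n)^{-1}\by$ as an analytic function of $\lambda$, and differentiate at the end for the norm statement.

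\textbf{Step 2: isolating the linear signal.} Decompose $\by = \beta_{d,0}\bfone_n + \bX\bbeta_{d,1} + \boldf_{\sNL} + \beps$, with $\boldf_{\sNL}=(f_d^{\sNL}(\bx_i))_{i\le n}$. In the quadratic form $\kappa g_d(\lambda)$, with $M\equiv(\bZ\bZ^\sT+\kappa\id_n)^{-1}$: (a) the constant term is negligible --- since $\bZ$ carries a rank‑one spike $\tfrac{\ob_0}{\sqrt d}\bfone_n\bfone_N^\sT$ of operator norm $\Theta(\sqrt d)$, $\bZ\bZ^\sT$ has a top eigenvalue of order $d$ along a direction $o_{d,\P}(1)$‑close to $\bfone_n/\sqrt n$, hence $\tfrac{\kappa}{n}\beta_{d,0}^2\,\bfone_n^\sT M\bfone_n = O_{d,\P}(1/n)$; (b) the cross terms vanish because, conditionally on $(\bX,\bTheta)$, both $\boldf_{\sNL}$ and $\beps$ are centered and independent, and $\boldf_{\sNL}$ is orthogonal to constants and to linear functions by Assumption \ref{ass:ground_truth}; (c) $\tfrac{\kappa}{n}\boldf_{\sNL}^\sT M\boldf_{\sNL} = (\normf_\star^2+o_{d,\P}(1))\tfrac{\kappa}{n}\Tr M$ and $\tfrac{\kappa}{n}\beps^\sT M\beps = (\tau^2+o_{d,\P}(1))\tfrac{\kappa}{n}\Tr M$, by concentration of quadratic forms (Hanson--Wright for $\beps$; for $\boldf_{\sNL}$, condition on $\bX$ and use that its covariance $\Sigma_d(\langle\bx_i,\bx_j\rangle/d)$ is diagonally dominant with off‑diagonal entries $O_{d,\P}(d^{-1/2})$). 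Hence
\begin{equation}
\kappa\, g_d(\lambda) = \frac{\kappa}{n}\,\bbeta_{d,1}^\sT\bX^\sT M\bX\bbeta_{d,1} + (\normf_\star^2+\tau^2)\,\frac{\kappa}{n}\Tr M + o_{d,\P}(1)\, .
\end{equation}

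\textbf{Step 3: the block‑resolvent deterministic equivalent and assembly.} Both surviving terms are linear images of entries of $\bA^{-1}$, where
\[
\bA \;\equiv\;
\begin{pmatrix}
\imagunit\sqrt{\kappa}\,\id_n & \bZ & \tfrac1{\sqrt d}\bX\bbeta_{d,1}\\[2pt]
\bZ^\sT & \imagunit\sqrt{\kappa}\,\id_N & \bzero\\[2pt]
\tfrac1{\sqrt d}\bbeta_{d,1}^\sT\bX^\sT & \bzero^\sT & 0
\end{pmatrix},
\]
i.e.\ the feature matrix $\bZ$ augmented by the linear‑signal column --- exactly the block kernel random matrix (or a rank‑one modification of it) whose resolvent is analyzed, via leave‑one‑out over the rows of $\bX$ and $\bTheta$ (cf.\ \cite[Ch.~3.3]{bai2010spectral}), in the propositions underlying Theorem \ref{thm:main_theorem}. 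Those propositions give: $\bA^{-1}$ concentrates around a deterministic equivalent whose relevant entries are governed by the unique solution $(\nu_1,\nu_2)$ of \eqref{eqn:nu_definition_training} at $\xi=\imagunit(\psi_1\psi_2\olambda)^{1/2}$, after the harmless rescaling $\bZ\mapsto\bZ/\ob_\star$, $\kappa\mapsto\kappa/\ob_\star^2=\psi_1\psi_2\olambda=-\xi^2$. Reading off the $(1,1)$ block gives $\tfrac{\kappa}{n}\Tr M \to -\imagunit\,\nu_2(\xi)\,(\olambda\psi_1/\psi_2)^{1/2}$ (the common prefactor of $\cuL$), while the Schur complement in the signal block produces the additional factor $(1-\chi\ratio^2)^{-1}$ with $\chi=\nu_1(\xi)\nu_2(\xi)$ as in \eqref{eqn:definition_chi_training}, so $\tfrac{\kappa}{n\normf_1^2}\bbeta_{d,1}^\sT\bX^\sT M\bX\bbeta_{d,1}\to -\imagunit\,\nu_2(\xi)\,(\olambda\psi_1/\psi_2)^{1/2}(1-\chi\ratio^2)^{-1}$. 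Substituting into Step 2 and using $\normf_1^2=\tfrac{\rho}{1+\rho}(\normf_1^2+\normf_\star^2+\tau^2)$, $\normf_\star^2+\tau^2=\tfrac{1}{1+\rho}(\normf_1^2+\normf_\star^2+\tau^2)$ gives $L_\RF\overset{p}{\to}(\normf_1^2+\normf_\star^2+\tau^2)\,\cuL$. The formula for $\cuA$ follows either by differentiating this limit in $\lambda$ --- legitimate since the deterministic equivalents are analytic on $\C_+$ and $\lambda\mapsto L_\RF$ is concave, so a Vitali/Montel argument transfers convergence to derivatives --- or directly, by computing $\tfrac1n\by^\sT\bZ\bZ^\sT(\bZ\bZ^\sT+\kappa\id_n)^{-2}\by$ as $-\partial_\kappa$ of the same block resolvent; the resulting rational simplification into $\cuA_1/\cuA_0$ (with $\cuA_0$ the same polynomial as $\cuE_0$ in Definition \ref{def:formula_ridge}) is mechanical and can be verified symbolically.

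\textbf{Step 4: $L^1$ upgrade, and main obstacle.} Finally $0\le L_\RF\le\tfrac1n\|\by\|_2^2$ and $\ob_\star^2\|\hba(\lambda)\|_2^2\le(\psi_{1,d}\olambda)^{-1}\tfrac1n\|\by\|_2^2$, and $\tfrac1n\|\by\|_2^2$ is bounded in $L^2$ (using $\|\bbeta_{d,1}\|_2^2\to\normf_1^2$, $\E\eps_1^4<\infty$, $\Sigma_d(1)\to\normf_\star^2$), hence uniformly integrable; together with Step 3 this upgrades convergence in probability to $\E|\cdot|=o_d(1)$, exactly as in the proof of Theorem \ref{thm:main_theorem}. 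The substantive content --- the leave‑one‑out analysis yielding the $(\nu_1,\nu_2)$ fixed‑point system and the concentration of the block resolvent --- is shared with that proof and is invoked here. The genuinely new points requiring care are the two reductions of Step 2: that the $\ob_0$‑spike annihilates the $\beta_{d,0}$‑contribution, and that the Gaussian nonlinear component $\boldf_{\sNL}$ behaves, inside the resolvent quadratic form, exactly like isotropic noise of variance $\normf_\star^2$ so that it merges additively with $\tau^2$; the remaining work is the (tedious but routine) algebra of Step 3 turning $(\nu_1,\nu_2,\chi)$ into the explicit rational functions $\cuL$ and $\cuA$.
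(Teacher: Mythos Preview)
Your proposal is essentially correct and lands on the right formulas, but Step 3 takes a different route from the paper, and your invocation of the paper's propositions is not quite accurate.

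\textbf{What the paper does.} After Step 1 (same as yours) the paper \emph{randomizes} $\bbeta_{d,1}$ over $\Unif(\S^{d-1}(\normf_{d,1}))$ (Lemma \ref{lem:randomize_beta}), so that the linear--signal quadratic form becomes the trace $\frac{\psi_1\lambda}{d}\Tr(\oRes\bH)$, while the noise--plus--nonlinear part becomes $\frac{\psi_1\lambda}{d}\Tr(\oRes)$. These two traces, and the corresponding pair with an extra resolvent for $\|\hba\|_2^2$, are then recognized as \emph{first and second $\bq$--derivatives of the log--determinant $G_d(\xi;\bq)$ at $\bq=\bzero$} (specifically $\partial_{t_1}G_d,\partial_{t_2}G_d,\partial^2_{s_1,t_1}G_d,\partial^2_{s_1,t_2}G_d$). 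Convergence follows directly from Proposition \ref{prop:expression_for_log_determinant}, which already establishes convergence of these derivatives; the explicit formulas come from $\partial_{t_1}g=m_2$, $\partial_{t_2}g=m_2/(1-\chi\ratio^2)$ (by differentiating $\Xi$ and using stationarity), which is exactly the factor $(1-\chi\ratio^2)^{-1}$ you wrote.

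\textbf{What you do differently.} You keep $\bbeta_{d,1}$ fixed and encode the signal quadratic form via a rank--one--augmented $(n+N+1)\times(n+N+1)$ block matrix. This is a valid linearization, but it is \emph{not} the matrix $\bA(\bq)$ of Eq.~\eqref{eqn:matrix_A} that Propositions \ref{prop:Stieltjes} and \ref{prop:expression_for_log_determinant} analyze, so those propositions do not apply ``exactly.'' You would need either a Sherman--Morrison reduction of your augmented matrix to $\bA(\bq)$, or---simpler---the paper's randomization of $\bbeta_{d,1}$, which converts your quadratic form to $\Tr(\oRes\bH)=\partial_{t_2}G_d/\text{(prefactor)}$ and puts you back inside the paper's framework. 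Your alternative route for $\|\hba\|_2^2$ via the envelope theorem and Vitali/Montel differentiation in $\lambda$ is elegant and does work; the paper instead computes $\partial^2_{s_1,t_1}g$ and $\partial^2_{s_1,t_2}g$ directly.

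\textbf{One technical caveat in Step 2.} Your justification in (c), ``off--diagonal entries $O_{d,\P}(d^{-1/2})$,'' is only heuristic: entrywise size does not control $\tfrac{\kappa}{n}\Tr(M\Sigma)$. What you actually need is $\|\Sigma_d(\bX\bX^\sT/d)-\normf_\star^2\id_n\|_{\op}=o_{d,\P}(1)$, which is the content of Lemma \ref{lem:gegenbauer_identity} via the Gegenbauer expansion $\Sigma_d=\sum_{k\ge 2}\normf_{d,k}^2 Q_k$. Similarly, your spike argument for (a) is morally right but needs the paper's Lemma \ref{lem:constant_term_one}--type SMW computation (or equivalently an eigenvector perturbation bound using $\|\bJ\|_{\op}=O_{d,\P}(e^{C\sqrt{\log d}})$ from Lemma \ref{lem:concentration_operator_general_sphere}) to be made rigorous.
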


The proof of Theorem \ref{thm:training_asymptotics} is similar to the proof of Theorem \ref{thm:main_theorem}. We will give a sketch of proof of Theorem \ref{thm:training_asymptotics} in Section \ref{sec:training_proof_sketch}.

\subsection{Numerical illustrations}

\begin{figure}[!h]\label{fig:numerical_train_test}
\centering
\includegraphics[width = 0.55\linewidth]{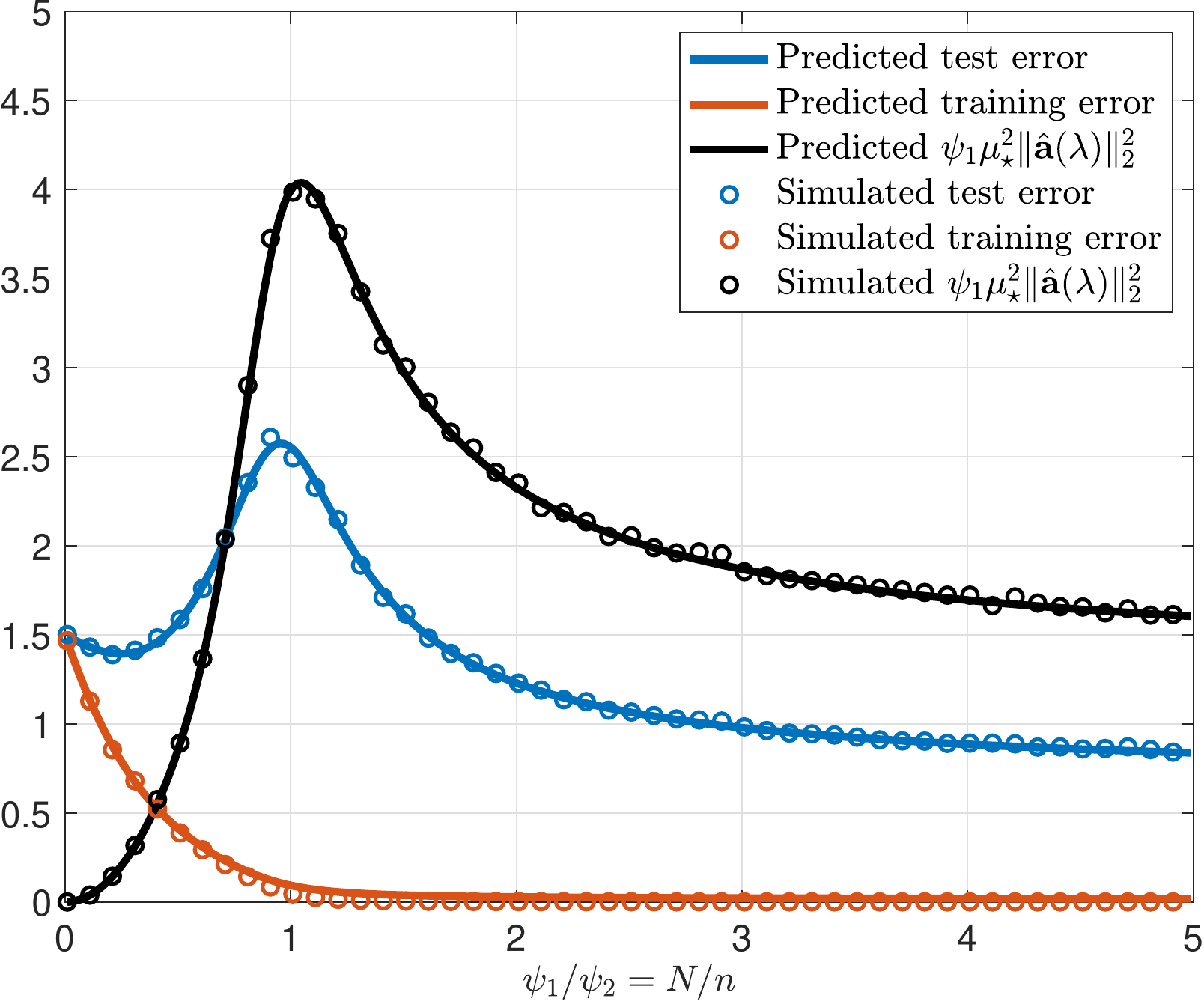}
\caption{Analytical predictions and numerical simulations for the test error and regularized training error. 
Data are generated according to $y_i = \< \bbeta_1, \bx_i \> + \eps_i$ with $\| \bbeta_1 \|_2^2 = 1$ and $\eps_i\sim \normal(0,\tau^2)$,  $\tau^2=0.5$. We fit a random features model with ReLU activations ($\sigma(x) = \max\{ x, 0\}$) and ridge regularization parameter $\lambda = 10^{-3}$. In simulations we use $d = 100$ and $n = 300$. We add $\tau^2=0.5$ to the test error to make it comparable with training error. Symbols are averages over $20$ instances.}
\end{figure}

In this section, we illustrate Theorem \ref{thm:training_asymptotics} through numerical simulations. Figure \ref{fig:numerical_train_test} 
reports the theoretical prediction and numerical results  for the regularized training error, the test error, and the norm of the coefficients $\hba(\lambda)$. We use a small non-zero value of the regularization parameter $\lambda = 10^{-3}$, fix the number of samples per dimension $\psi_2 = n/d$, and follow these quantities as a function of the overparameterization ratio $\psi_1/\psi_2=N/n$. 

As expected, the behavior of the training error strikingly different from the one of the test error. The training error is monotone decreasing in the overparameterization ratio $N/n$, and is close to zero in the overparameterized regime $N/n>1$ (it is not exactly vanishing because we use a small $\lambda>0$). In other words, the fitted model is nearly interpolating the data, and the peak in test error matches the interpolation threshold.

On the other hand, the penalty term $\psi_1 \| \hba(\lambda) \|_2^2$ is non-monotone: it increases up to the interpolation threshold, then decreases  for $N/n>1$, and converges to a constant as $\psi_1 \to \infty$. If we take this as a proxy for the model complexity,  the behavior of $\psi_1 \| \hba(\lambda) \|_2^2$  provides useful intuition about descent of the generalization error. As the number of parameters increases beyond the interpolation threshold, the model complexity decreases instead of increasing.

\begin{figure}[!h]\label{fig:train_test_optimal}
\centering
\includegraphics[width = 0.55\linewidth]{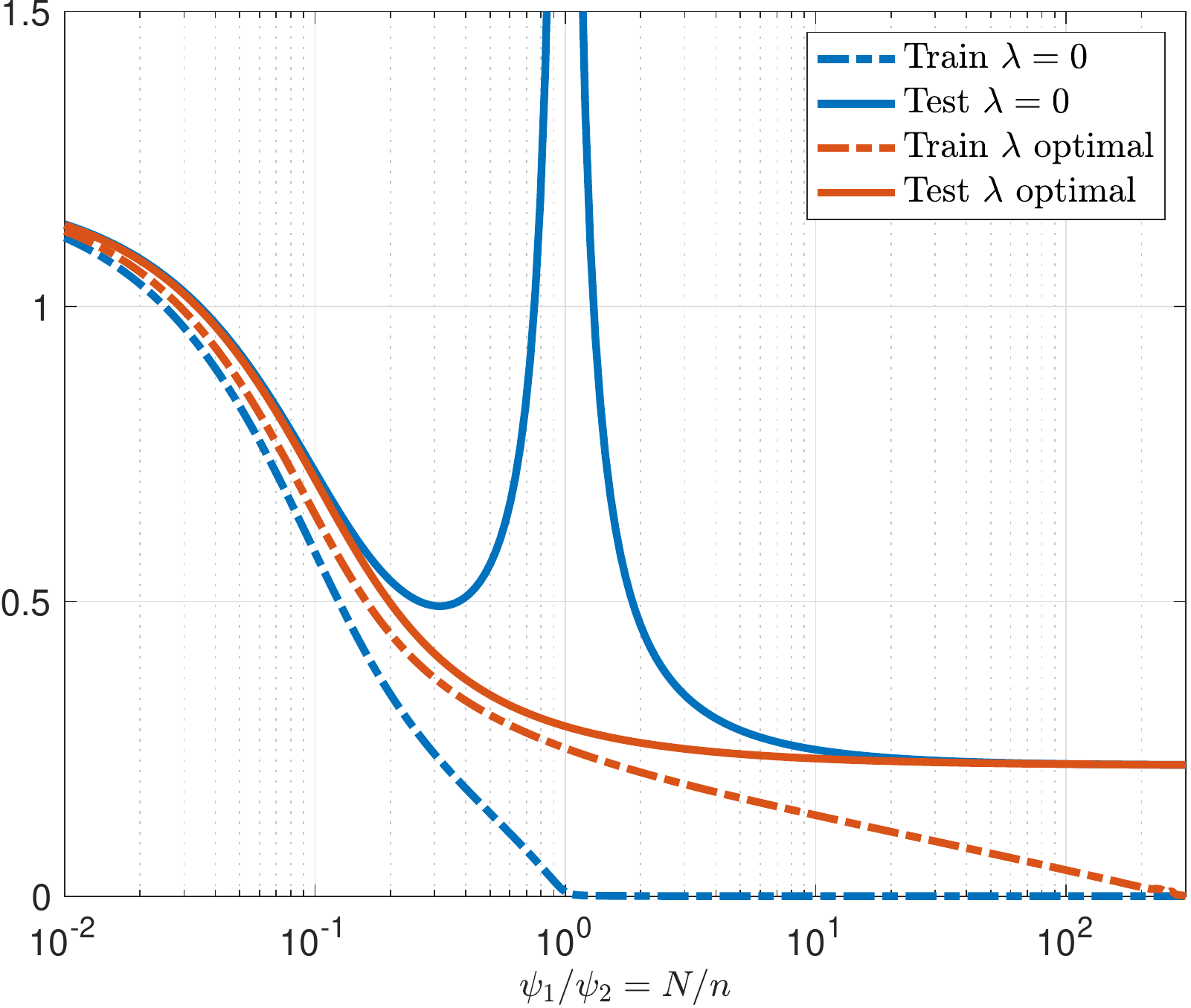}
\caption{Analytical predictions and numerical simulations results for the test error and
  the regularized training error. 
Data are generated according to $y_i = \< \bbeta_1, \bx_i \> + \eps_i$ with $\| \bbeta_1 \|_2^2 = 1$ and $\eps_i\sim \normal(0,\tau^2)$,  $\tau^2=0.2$.
We fit a random features model with ReLU activations ($\sigma(x) = \max\{ x, 0\}$). We fix $\psi_2 = n/d = 10$. We add $\tau^2 = 0.2$ to the test error 
make it comparable with training error. In the optimal ridge setting, we choose $\lambda$ for each value of $\psi_1$ as to minimize the asymptotic test error. }
\end{figure}

We can confirm the intuition that the double descent of the test error is driven by the behavior of the model complexity $\psi_1 \| \hba(\lambda) \|_2^2$, by selecting $\lambda$ in an optimal way. Following \cite{hastie2019surprises}, we expect that the optimal regularization should produce a smaller value of $\psi_1  \| \hba(\lambda) \|_2^2$, and hence eliminate or reduce the double descent phenomenon. Indeed, this is illustrated in Figure \ref{fig:train_test_optimal} demonstrates the prediction of the regularized training error and the test error for two choices of $\lambda$: $\lambda = 0$, and an optimal $\lambda$ such that the test error is minimized. When we choose an optimal $\lambda$, the test error becomes strictly decreasing as $\psi_1 = N / d$ increases. We expect this is a generic phenomenon that also holds in other interesting models.

\section{An equivalent Gaussian covariates  model}\label{sec:Gaussian_covariates}
\def\GC{{\rm GC}}

An examination of the proof of our main result (Theorem \ref{thm:main_theorem}) reveals an interesting phenomenon. The random features model has the same asymptotic prediction error as a simpler model with Gaussian covariates and response that is linear in these covariates, 
provided we use a special covariance and signal structure. 

The construction of the Gaussian covariates model proceeds as follows.
Fix $\bbeta_1 \in \reals^d$,  $\| \bbeta_1 \|_2^2 = \normf_1^2$ and $\bTheta = (\btheta_1, \ldots, \btheta_N)^\sT$ with 
$(\btheta_j)_{j \in [N]} \sim_{iid} \Unif(\S^{d-1}(\sqrt d))$. The joint distribution of 
$(y, \bx, \bu) \in \R \times \R^d \times \R^N$ conditional on $\bTheta$ is defined by the following procedure: 
\begin{enumerate}
\item  Draw $\bx \sim \normal(0, \id_d)$, $\eps\sim\normal(\bzero,\tau^2)$, and $\bw \sim \normal(\bzero, \id_N)$ independently,  conditional on $\bTheta$.
\item Let $y =  \< \bbeta_1, \bx\> + \eps$.
\item Let $\bu = (u_1, \ldots, u_N)^\sT$, $u_j = \ob_0 + \ob_1 \< \btheta_j, \bx\> / \sqrt d + \ob_\star w_j$, for some $0 < \vert \ob_0 \vert, \vert \ob_1\vert, \vert \ob_\star\vert < \infty$.
\end{enumerate}
We will denote by $\P_{y, \bx, \bu \vert \bTheta}$ the probability distribution thus defined. 
As anticipated,  this is a Gaussian covariates model.
Indeed, the covariates vector $\bu\sim\normal(\bzero,\bSigma)$ is Gaussian, with covariance 
$\bSigma = \mu_0^2\bfone\bfone^{\sT}+\mu_1^2\bTheta\bTheta^{\sT}/d+\mu_\star^2\id_N$. Also $(y,\bu)$ are jointly Gaussian and we can 
therefore write $y = \<\tilde{\bbeta}_1,\bu\>+\tilde{\eps}$, for some new vector of coefficients $\tilde{\bbeta}_1$, and noise $\tilde{\eps}$
which is independent of $\bu$.

Let $[ \{ (y_i, \bx_i, \bu_i) \}_{i \in [n]} \vert \bTheta] \sim_{iid} \P_{y, \bx, \bu \vert \bTheta}$. 
We learn a regression function $\hat f(\bx; \ba, \bTheta) = \< \bu, \ba\>$, by performing ridge regression
\begin{align}
\hat \ba(\lambda) = \argmin_{\ba\in\R^N} \left\{\frac{1}{n}\sum_{i=1}^n ( y_i- \< \bu_i, \ba\> )^2  +  
\frac{N\lambda}{d}\, \| \ba \|_2^2\right\}\,. 
\end{align}
The prediction error is defined by
\begin{align}
R_\GC(f_d, \bX, \bTheta, \lambda) = \E_{\bx, \bz\vert \bTheta}[(f_d(\bx) - \< \bu, \hat \ba(\lambda)\> )^2]\, .
\end{align} 

Remarkably, in the proportional asymptotics $N,n,d\to\infty$ with $N / d \to \psi_1, n / d \to \psi_2$,
the behavior of this model is the same as the one of the nonlinear random features model studied in the rest of the paper.
In particular, the asymptotic prediction error $\cuR$ is given by the same formula as in Definition \ref{def:formula_ridge}. 
\begin{theorem}{(Gaussian covariates prediction model)}\label{thm:Gaussian_covariates}
Define $\zeta$ and the signal-to-noise ratio $\rho \in [0, \infty]$ as
\begin{align}
\zeta \equiv \ob_1^2/\ob_\star^2\, , ~~~~ \rho\equiv F_1^2/\tau^2\, ,
\end{align}
and assume $\ob_0, \ob_1, \ob_\star \neq 0$. Then, in the Gaussian covariates model described above, for any $\lambda>0$, we have
\begin{align}
R_\GC(f_d, \bX, \bTheta, \lambda) = (F_1^2+\tau^2) \, \cuR(\rho,\zeta,\psi_1,\psi_2,\lambda/\ob_\star^2)+o_{d, \P}(1) \, ,
\end{align}
where $\cuR(\rho,\zeta,\psi_1,\psi_2,\olambda)$ is explicitly given in Definition \ref{def:formula_ridge}.
\end{theorem}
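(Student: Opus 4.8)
The plan is to reduce the prediction error of the Gaussian covariates model to exactly the same random-matrix-theoretic quantity that governs the random features model, and then invoke the analysis already developed for Theorem \ref{thm:main_theorem}. First I would write the design matrix of the Gaussian model explicitly. Stacking the rows $\bu_i^\sT$ gives $\bU = \mu_0 \bfone_{n}\bfone_N^\sT + \mu_1 \bX\bTheta^\sT/\sqrt d + \mu_\star \bW$, where $\bW\in\reals^{n\times N}$ has i.i.d.\ $\normal(0,1)$ entries independent of $\bX,\bTheta$. Comparing with the random features design $\bZ\sqrt d = \sigma(\bX\bTheta^\sT/\sqrt d)$, the key point is that the Hermite expansion $\sigma(t) = \mu_0 + \mu_1 t + \sigma^{\perp}(t)$ (with $\E[\sigma^\perp(G)]=\E[G\sigma^\perp(G)]=0$, $\E[\sigma^\perp(G)^2]=\mu_\star^2$) means that in the proportional regime the nonlinear part $\sigma^\perp(\bX\bTheta^\sT/\sqrt d)$ behaves, spectrally and in its action on the relevant vectors, like an independent Gaussian matrix with matched variance $\mu_\star^2$ — this is precisely the ``Gaussian equivalence'' embedded in the proof of Theorem \ref{thm:main_theorem}. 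So the Gaussian covariates model is, up to scaling, the $\bZ$-matrix with its nonlinear block literally replaced by the Gaussian surrogate that the RF analysis already uses.

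Concretely, I would carry out the following steps. (1) Express $R_\GC$ as a bias-plus-variance decomposition just as in the remark following Theorem \ref{thm:main_theorem}: since $y = \<\bbeta_1,\bx\> + \eps$ and $\hat f(\bx;\ba,\bTheta) = \<\bu,\ba\>$ with $\bu$ jointly Gaussian with $\bx$, everything reduces to traces of resolvents of $\bU^\sT\bU/n + (N\lambda/d)\id_N$ against fixed matrices built from $\bSigma$ and from the cross-covariance $\E[\bu\,\<\bbeta_1,\bx\>]$. (2) Identify these cross-covariances: $\E[u_j\<\bbeta_1,\bx\>] = \mu_1\<\btheta_j,\bbeta_1\>/\sqrt d$, which is exactly the quantity $\bv = \mu_1\bTheta\bbeta_1/d$ appearing in the RF computation (where the linear component of $f_d$ couples to the features through $\E[\sigma(\<\btheta_j,\bx\>/\sqrt d)\<\bbeta_{d,1},\bx\>] = \mu_1\<\btheta_j,\bbeta_{d,1}\>/\sqrt d$). (3) Show that the relevant resolvent functionals converge to the same limits. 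Here I would cite the block-matrix fixed-point system \eqref{eqn:nu_definition} and the companion log-determinant computations that the proof of Theorem \ref{thm:main_theorem} establishes for $\bZ$: the argument there in fact only ever uses that the nonlinear block has the Gaussian-matched spectrum and is asymptotically free of $\bX\bTheta^\sT/\sqrt d$; since in the Gaussian covariates model that block \emph{is} Gaussian, the same fixed-point equations, the same $\chi = \nu_1\nu_2$, and hence the same $\cuE_0,\cuE_1,\cuE_2$ apply verbatim. (4) Assemble: the bias term gives $F_1^2\,\cuB$, the variance term gives $\tau^2\,\cuV$, and since the Gaussian model has no nonlinear-signal component there is no additive $F_\star^2$; dividing through by $F_1^2+\tau^2$ recovers $\cuR(\rho,\zeta,\psi_1,\psi_2,\lambda/\mu_\star^2)$ with $\rho = F_1^2/\tau^2$.

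The main obstacle — and the step I would spend the most care on — is step (3): making rigorous that one may substitute the true nonlinear block $\sigma^\perp(\bX\bTheta^\sT/\sqrt d)$ by a matched Gaussian block without changing the limiting values of the traces and log-determinants that enter $R_\GC$. In the RF proof this is handled through a careful leave-one-out / Lindeberg-style argument controlling the Stieltjes transform of the relevant block random matrix; for the Gaussian covariates model the situation is strictly easier because the block is Gaussian from the start, so morally one only needs to check that the \emph{same} fixed-point derivation goes through. The real work is bookkeeping: verifying that every place where the RF proof invoked a property of $\sigma(\bX\bTheta^\sT/\sqrt d)$ is a property that $\bU$ shares (correct first and second moments of entries, the three moment parameters $\mu_0,\mu_1,\mu_\star$ entering in the same roles, boundedness of operator norms, independence structure), and confirming that the deterministic equivalents, the choice of spectral argument $\xi = \imagunit(\psi_1\psi_2\olambda)^{1/2}$, and the analyticity/uniqueness conditions $(i)$–$(iii)$ in Definition \ref{def:formula_ridge} are unaffected. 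Once that dictionary is in place the convergence $R_\GC \to (F_1^2+\tau^2)\cuR$ follows with the same $o_{d,\P}(1)$ control as in Theorem \ref{thm:main_theorem}.
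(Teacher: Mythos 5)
Your proposal is correct and follows the same route the paper indicates (the paper itself omits the proof of Theorem~\ref{thm:Gaussian_covariates}, stating only that it is ``almost the same as the one of Theorem~\ref{thm:main_theorem}, with several simplifications because of the greater amount of independence''). You correctly identify that the Gaussian-covariates design $\bU = \mu_0\ones_n\ones_N^{\sT} + \mu_1\bX\bTheta^{\sT}/\sqrt{d} + \mu_\star\bW$ is precisely the Gaussian surrogate used in the analysis of $\bZ\sqrt{d}=\sigma(\bX\bTheta^{\sT}/\sqrt{d})$, that the cross-covariance $\E[u_j\<\bbeta_1,\bx\>]=\mu_1\<\btheta_j,\bbeta_1\>/\sqrt{d}$ exactly matches the linear coupling in the RF model, and that the same block-matrix/log-determinant machinery and fixed-point equations~\eqref{eqn:nu_definition} therefore apply without the leave-one-out decorrelation work that the RF proof must do to treat $\sigma^{\perp}(\bX\bTheta^{\sT}/\sqrt{d})$. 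Two small simplifications you might have flagged but that do not affect correctness: (i) in the GC model the population kernel $\E[\bu\bu^{\sT}]$ is \emph{exactly} $\mu_0^2\ones_N\ones_N^{\sT}+\mu_1^2\bQ+\mu_\star^2\id_N$, so the error term $\bDelta$ appearing in Lemma~\ref{lem:decomposition_of_kernel_matrix} is absent; and (ii) $\bx$ is already Gaussian in the GC model, so the sphere-to-Gaussian transfer of Lemma~\ref{lem:Gaussian_to_sphere_statement} is needed only for $\btheta$.
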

The proof of Theorem \ref{thm:Gaussian_covariates} is is almost the same as the  one of Theorem \ref{thm:main_theorem}
(with several simplifications, because of the greater amount of independence). To avoid repetitions, we will not present a proof here.

\begin{figure}[!t]
\centering
\includegraphics[width = 0.6\linewidth]{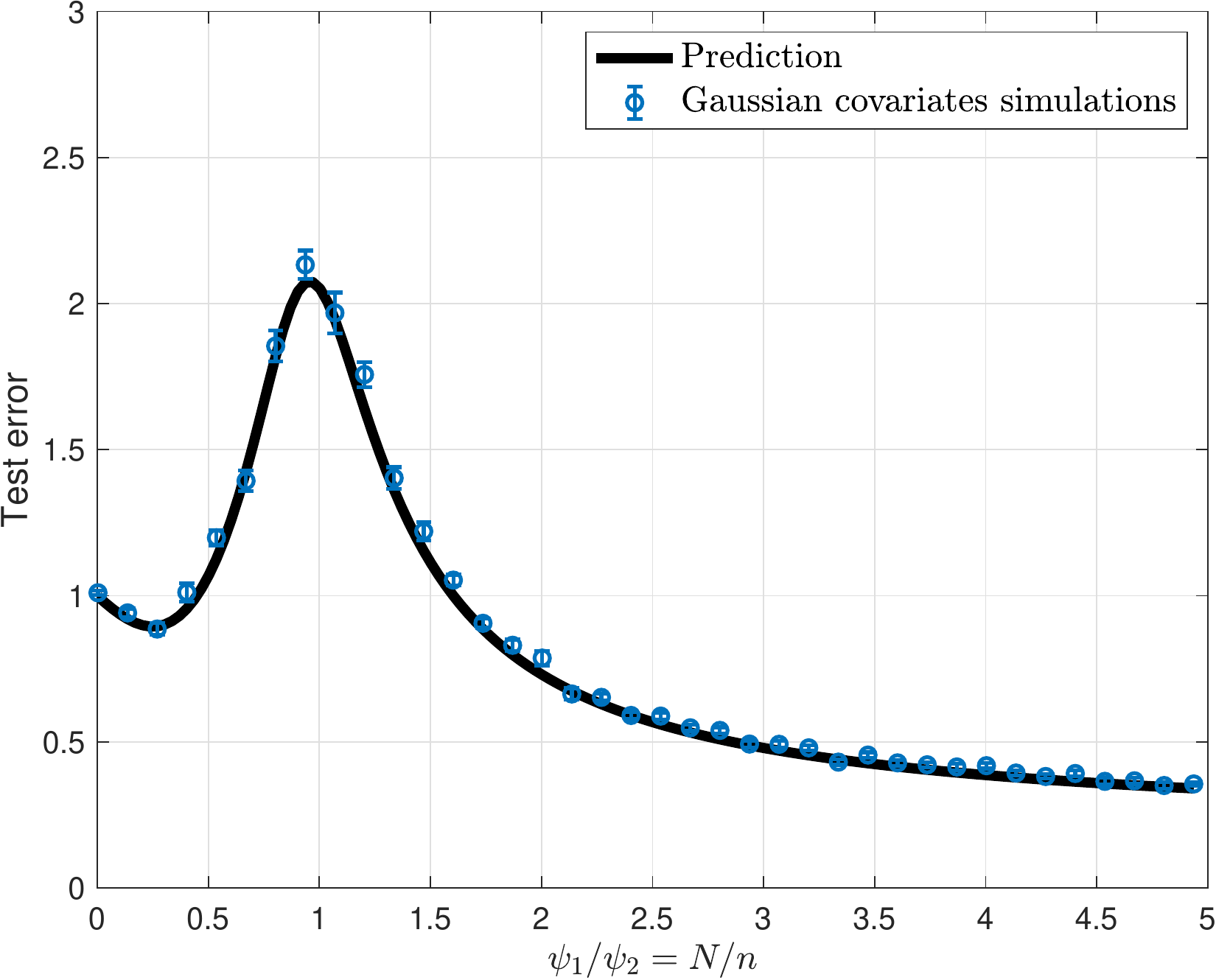}
\caption{Predictions and numerical simulations for the test error of the Gaussian covariates model. 
We fit $y_i = \< \bbeta_1, \bx_i \> + \eps_i$ with $\| \bbeta_1 \|_2^2 = 1$ and $\tau^2 = \E[\eps_i^2] = 0.5$, and parameters 
$\ob_1 = 0.5$, $\ob_\star = \sqrt{(\pi - 2)/ (4 \pi)}$, and $\lambda = 10^{-3}$. This choice of parameters $\ob_1$ and $\ob_\star$ 
matches the corresponding parameters for ReLU activations. Here $n=300$, $d=100$. The continuous black line is our theoretical prediction, and the 
colored symbols are numerical results. Symbols are averages over $20$ instances and the error bars report the standard error of the means over $20$ instances. }\label{fig:G_covariates}
\end{figure}
Figure \ref{fig:G_covariates} illustrates the content of Theorem \ref{thm:Gaussian_covariates} via numerical simulations.
 We report the simulated and predicted test error as a function of $\psi_1 / \psi_2 = N/n$. The theoretical prediction here is exactly the same 
as the one reported in Figure \ref{fig:DoubleDescentNonlinear}. However, numerical simulations were carried out with the
Gaussian covariates model instead of random features. The agreement is excellent, as predicted by Theorem \ref{thm:Gaussian_covariates}.

Why do the $\RF$ and $\GC$ models result in the same asymptotic prediction error?
It is useful to provide a heuristic explanation of this interesting phenomenon. Consider an activation function 
$\sigma: \R \to \R$, with $\ob_k = \E[\He_k(G) \sigma(G)]$ and $\ob_\star^2 = \E[\sigma^2(G)] - \ob_0^2 - \ob_1^2$ for $G \sim \normal(0, 1)$. 
Define the nonlinear component of the activation function by $\sigma^{\perp}(x) \equiv \sigma(x)- \ob_0- \ob_1x$.
Note that we have
\[
\begin{aligned}
\sigma(\< \bx_i, \btheta_j \> / \sqrt d) =&~ \ob_0 + \ob_1 \< \bx_i, \btheta_j\> / \sqrt d +  \ob_\star \tilde{w}_{ij}\, ,\;\;\;\;\;
\tilde{w}_{ij} \equiv \frac{1}{\ob_{\star}}\sigma^{\perp}(\< \bx_i, \btheta_j\> / \sqrt d), \\
u_j =&~ \ob_0 + \ob_1 \< \bx_i, \btheta_j\> / \sqrt d + \ob_\star w_{ij}, 
\end{aligned}
\]
where $(w_{ij})_{i \in [n], j \in [N]} \sim_{iid} \normal(0, 1)$ independent of $\bX$ and $\bTheta$. Note that the first two moments
of $\tilde{w}_{ij}$ match those of $w_{ij}$, i.e. $\E_{\bx|\bTheta}\tilde{w}_{ij}=0$, $\E_{\bx|\bTheta}(\tilde{w}_{ij}^2)=1$. Further, for $i\neq l$,
$\tilde{w}_{ij}$, $\tilde{w}_{il}$ are nearly uncorrelated: $\E_{\bx|\bTheta}\{\tilde{w}_{ij}\tilde{w}_{il}\} = O((\<\btheta_j,\btheta_l\>/d)^2) = O_\P(1/d)$
It is therefore not unreasonable to imagine that they should behave as independents.
The same  intuition also appears in the analysis of the spectrum of kernel random matrices in \cite{cheng2013spectrum, pennington2017nonlinear}.

\clearpage

\section{Proof of Theorem \ref{thm:main_theorem}}\label{sec:main_proof}

This section presents the  proof strategy of Theorem \ref{thm:main_theorem}, deferring a detailed
proof of technical  propositions to the next sections. Throughout the proof, we let $\bX = (\bx_1, \ldots, \bx_n)^\sT \in \R^{n \times d}$ with $(\bx_i)_{i \in [n]} \sim_{iid} \Unif(\S^{d-1}(\sqrt d))$, $\bTheta = (\btheta_1, \ldots, \btheta_n)^\sT \in \R^{N \times d}$ with $(\btheta_a)_{a\in [N]} \sim_{iid} \Unif(\S^{d-1}(\sqrt d))$ independently of $\bX$. Further, we let Assumptions \ref{ass:activation}, \ref{ass:linear}, \ref{ass:ground_truth} hold, and $\lambda > 0$ is kept fixed.

We begin by observing that the minimizer of the training error \eqref{eq:Ridge} is given by 
\[
\hat \ba(\lambda) = \frac{1}{\sqrt d}(\bZ^\sT \bZ + \lambda \psi_{1, d} \psi_{2, d} \id_N)^{-1} \bZ^\sT \by.
\]
It is useful to introduce the following resolvent matrix $\Res\in\reals^{N\times N}$
\begin{align}\label{eqn:resolvent_Z'Z}
\Res \equiv  (\bZ^\sT \bZ +   \lambda \psi_{1, d} \psi_{2, d} \id_N)^{-1}.
\end{align}
Then $\hat \ba(\lambda)$ can be written in a simpler form $\hat \ba(\lambda) =  \Res \bZ^\sT \by / \sqrt d$. After a simple calculation, we obtain
\begin{equation}\label{eqn:prediction_risk_decomposition_first}
\begin{aligned}
R_\RF(f_d, \bX, \bTheta, \lambda) &=\E_{\bx}[f_d(\bx)^2] - 2 \by^\sT \bZ\Res \bV  / \sqrt d +\by^\sT \bZ  \Res \bU \Res  \bZ^\sT \by / d.
\end{aligned}
\end{equation}
Here 
$\bsigma(\bx) =~ (\sigma(\< \btheta_1, \bx\> / \sqrt d), \ldots, \sigma(\< \btheta_N, \bx\> / \sqrt d) )^\sT \in \R^N$,
$\by =~ (y_1, \ldots, y_n)^\sT = \boldf + \beps \in \R^n$, $\boldf = (f_d(\bx_1), \ldots, f_d(\bx_n))^\sT \in \R^n$,
$\beps =~ (\eps_1, \ldots, \eps_n)^\sT \in \R^n$,
and $\bV = (V_1, \ldots, V_N)^\sT \in \R^N$, $\bU = (U_{ij})_{ij \in [N]} \in \R^{N \times N}$, are defined by
\begin{equation}\label{eqn:def_Vi_Uij}
\begin{aligned}
V_i =&~ \E_\bx[f_d(\bx) \sigma(\< \btheta_i, \bx\> / \sqrt d)], \\
U_{ij} =&~ \E_\bx [\sigma(\<\btheta_i, \bx\> /\sqrt d) \sigma(\< \btheta_j, \bx\> / \sqrt d)].  
\end{aligned}
\end{equation}
Our first step is to replace the exact expression \eqref{eqn:prediction_risk_decomposition_first}
by a simpler one involving traces of combinations of $\Res$ and the following four random matrices:
\begin{equation}\label{eqn:QHZZ1_in_decomposition}
\begin{aligned}
\bQ =&~ \frac{1}{d} \bTheta \bTheta^\sT, \, \;\;\;\;\;\;\; \;\;\;\;\;\;\; \;\;\;\;\;\;\;\;\;
\bH =~ \frac{1}{d} \bX \bX^\sT, \\
\bZ =&~ \frac{1}{\sqrt d} \sigma\Big( \frac{1}{\sqrt d} \bX \bTheta^\sT \Big), \, \;\;\;\;\;\;\;
\bZ_1 =~ \frac{\ob_1}{d} \bX \bTheta^\sT . 
\end{aligned}
\end{equation}
\begin{proposition}[Decomposition] \label{prop:decomposition}
We have
\begin{equation}\label{eqn:main_decomposition}
\E_{\bX, \bTheta, \beps, f_d^{\sNL}} \Big\vert R_\RF(f_d, \bX, \bTheta, \lambda) - \Big[ \normf_1^2 (1 - 2 \Psi_1 + \Psi_2) + (\normf_\star^2 + \tau^2) \Psi_3 + \normf_\star^2 \Big] \Big\vert = o_{d}(1),
\end{equation}
where
\begin{equation}\label{eqn:Psi_definitions_in_decomposition}
\begin{aligned}
\Psi_1 =&~ \frac{1}{d} \Tr [ \bZ_1^\sT  \bZ \Res ], \\
\Psi_2 =&~ \frac{1}{d}\Tr[\Res (\ob_1^2 \bQ + \ob_\star^2 \id_N) \Res \bZ^\sT \bH \bZ], \\
\Psi_3 =&~ \frac{1}{d}\Tr[ \Res (\ob_1^2 \bQ + \ob_\star^2 \id_N) \Res  \bZ^\sT \bZ ]. 
\end{aligned}
\end{equation}
\end{proposition}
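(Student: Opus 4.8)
The plan is to start from the exact identity~\eqref{eqn:prediction_risk_decomposition_first}, insert $\by=\boldf+\beps$ and $\boldf=\beta_{d,0}\bfone_n+\bX\bbeta_{d,1}+\boldf^{\sNL}$ with $\boldf^{\sNL}=(f_d^{\sNL}(\bx_i))_{i\le n}$, expand into a finite sum of bilinear and quadratic forms, and argue that after averaging over $\beps$ and over the Gaussian process $f_d^{\sNL}$ each summand concentrates, in $L^1$, around the matching term of $\normf_1^2(1-2\Psi_1+\Psi_2)+(\normf_\star^2+\tau^2)\Psi_3+\normf_\star^2$, up to $o_d(1)$. A first ingredient is \emph{noise and nonlinear-target averaging}. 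Because $\beps$ is independent with mean zero and $\E[\beps\beps^{\sT}]=\tau^2\id_n$, the $\beps$-linear terms vanish under $\E_\beps$ and the $\beps$-quadratic term equals $\tau^2\Tr(\bZ\Res\bU\Res\bZ^{\sT})/d$ plus a fluctuation that is $o_{d,\P}(1)$ by a second-moment bound (using $\E(\eps_1^4)<\infty$) and the a priori estimates $\|\Res\|_{\op}\le(\lambda\psi_{1,d}\psi_{2,d})^{-1}$, $\|\Res\bfone_N\|_2=O_{d,\P}(d^{-1/2})$, and $\|\bZ\Res\|_{\op},\|\bQ\|_{\op},\|\bH\|_{\op}=O_{d,\P}(1)$. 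Conditional on $\bX$, $\boldf^{\sNL}$ is Gaussian with covariance $(\Sigma_d(\<\bx_i,\bx_j\>/d))_{ij}$; the hypotheses $\E_\bx\{\Sigma_d(x_1/\sqrt d)\}=0$ and $\E_\bx\{\Sigma_d(x_1/\sqrt d)x_1\}=0$ force this matrix to equal $\Sigma_d(1)\id_n$ up to a constant-plus-linear correction negligible in operator norm (and force $\E_\bx\{f_d^{\sNL}(\bx)\}=\E_\bx\{f_d^{\sNL}(\bx)\<\bv,\bx\>\}=0$ a.s.). Hence $\boldf^{\sNL}$ enters every quadratic form exactly like extra noise of variance $\Sigma_d(1)\to\normf_\star^2$ — this produces the replacement $\tau^2\mapsto\tau^2+\normf_\star^2$ in the $\Psi_3$ term — while the test-point contribution $\E_\bx\{f_d^{\sNL}(\bx)^2\}\to\normf_\star^2$ supplies the additive $+\normf_\star^2$, and all cross terms of $\boldf^{\sNL}$ with the linear part, the intercept, or $\beps$ average to zero with $o_{d,\P}(1)$ fluctuation.

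A second ingredient is \emph{linearization of the test-point kernels}. Writing $\sigma(u)=\ob_0+\ob_1 u+\sigma^{\perp}(u)$ in $L^2(\mu_G)$ and exploiting the closeness of the law $\tau_d$ of $\<\btheta,\bx\>/\sqrt d$ to $\normal(0,1)$, the near-orthogonality of distinct features, and $\E_\bx[\bx\bx^{\sT}]=\id_d$, one shows
\[
\bV=\ob_0\beta_{d,0}\bfone_N+\frac{\ob_1}{\sqrt d}\bTheta\bbeta_{d,1}+\bdelta_V,\qquad
\bU=\bU_{\bfone}+\ob_1^2\bQ+\ob_\star^2\id_N+\bDelta_U,
\]
where $\bU_{\bfone}$ is supported on the $\bfone_N$-direction (collecting $\ob_0^2\bfone_N\bfone_N^{\sT}$ and the rank-one entrywise mean of the nonlinear part of $\bU$), $\|\bdelta_V\|_2=o_{d,\P}(1)$, and $\|\bDelta_U\|_{\op}=o_{d,\P}(1)$. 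Since $\|\Res\bfone_N\|_2=O_{d,\P}(d^{-1/2})$, every term in which $\bU_{\bfone}$ sits between two copies of $\Res$ is $o_{d,\P}(1)$; combining these $\bfone_N$-terms with the $\ob_0\beta_{d,0}\bfone_N$ part of $\bV$ and the $\beta_{d,0}\bfone_n$ part of $\boldf$ yields a telescoping cancellation $\beta_{d,0}^2(1-2+1)+o_{d,\P}(1)=o_{d,\P}(1)$, so that the intercept disappears from the final formula.

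A third ingredient is \emph{rotational averaging of the deterministic signal}. After the first two steps the surviving linear-signal terms are quadratic forms $\bbeta_{d,1}^{\sT}\bM(\bX,\bTheta)\bbeta_{d,1}$ with $\bM$ assembled from $\bZ,\bQ,\bH,\Res$ flanked by $\bX$'s or $\bTheta$'s, so that $\bM(\bX\bR^{\sT},\bTheta\bR^{\sT})=\bR\,\bM(\bX,\bTheta)\,\bR^{\sT}$ for every orthogonal $\bR$ (each of $\bZ,\bQ,\bH,\Res$ being invariant under the simultaneous right rotation $\bX\mapsto\bX\bR^{\sT}$, $\bTheta\mapsto\bTheta\bR^{\sT}$). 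Since the joint law of $(\bX,\bTheta)$ is invariant under this rotation, $\bbeta_{d,1}^{\sT}\bM\bbeta_{d,1}$ has the law of $\bv^{\sT}\bM\bv$ for $\bv\sim\Unif(\S^{d-1}(\normf_1))$ independent of $\bM$, which concentrates around $(\normf_1^2/d)\Tr\bM$ with fluctuation $O_{d,\P}(\normf_1^2\|\bM\|_F/d)=O_{d,\P}(d^{-1/2})$. Identifying the three traces $(\normf_1^2/d)\Tr\bM$ with $\normf_1^2\Psi_1$ (the cross term) and $\normf_1^2\Psi_2$ (from $\E_\bx\{\hf(\bx)^2\}$ with $\by$ replaced by its linear part $\bX\bbeta_{d,1}$), together with $\E_\bx\{f_d(\bx)^2\}=\beta_{d,0}^2+\|\bbeta_{d,1}\|_2^2+\Sigma_d(1)+o_{d,\P}(1)$ for the leading $\normf_1^2$, completes the identification; uniform integrability (each term has bounded second moment) upgrades the resulting convergences in probability to the claimed $L^1$ statement.

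I expect the crux to be the estimate $\|\bDelta_U\|_{\op}=o_{d,\P}(1)$: $\bDelta_U$ is an $N\times N$ kernel-type matrix with individual entries of size $O(1/d)$ but delicate spectrum — its entrywise mean already carries an $O(1)$ rank-one operator-norm contribution which must be peeled off into $\bU_{\bfone}$, after which the centered residue (whose off-diagonal is governed by the matrix $(\<\btheta_i,\btheta_j\>^2/d^2)$ and higher powers) must be shown to have vanishing operator norm. This relies on the Gegenbauer/Hermite-coefficient estimates and the sphere-versus-Gaussian comparisons developed in the later sections, plus the operator-norm control of centered kernel inner-product random matrices, together with deterministic bounds such as $\|\bZ^{\sT}\bZ\|_{\op}=O_{d,\P}(d)$ and $\|\bH\|_{\op}=O_{d,\P}(1)$. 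Once these spectral inputs are granted, the remaining error terms reduce to routine concentration (second-moment bounds and Hoeffding/Bernstein inequalities on the sphere).
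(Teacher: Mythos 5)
Your overall outline matches the paper's: expand the exact identity for $R_\RF$, average out $\beps$, treat $f_d^{\sNL}$ as effective noise by controlling its covariance matrix (the paper does this by expanding $\Sigma_d$ in Gegenbauer polynomials, Lemma~\ref{lem:decomposition1}, and invoking $\sup_{k\ge2}\|Q_k(\bX\bX^\sT)-\id_n\|_{\op}=o_{d,\P}(1)$, Lemma~\ref{lem:gegenbauer_identity}), linearize $\bU=\lambda_{d,0}^2\bfone_N\bfone_N^\sT+\ob_1^2\bQ+\ob_\star^2\id_N+o_{d,\P}(1)$, randomize $\bbeta_{d,1}$ by rotational symmetry, and finish by concentration. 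That is essentially Lemmas~\ref{lem:decomposition1}--\ref{lem:concentration_beta}, and your identification of $\|\bDelta_U\|_{\op}=o_{d,\P}(1)$ as a key input is accurate.

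There is, however, a genuine gap in your treatment of the constant/intercept sector, and it is not a cosmetic one. You dismiss every term in which $\bU_{\bfone}=\lambda_{d,0}^2\bfone_N\bfone_N^\sT$ sits between two copies of $\Res$ by citing the a priori estimate $\|\Res\bfone_N\|_2=O_{d,\P}(d^{-1/2})$. That estimate is false. One has $\bfone_N^{\sT}\Res\bfone_N=O_{d,\P}(1)$ (this is the nontrivial content the paper proves), whence only $\|\Res\bfone_N\|_2^2 \le \|\Res\|_{\op}\,\bfone_N^{\sT}\Res\bfone_N = O_{d,\P}(1)$; there is no mechanism forcing $\Res\bfone_N$ to live in small-eigenvalue directions once the component of $\bfone_N$ orthogonal to the top eigenvector of $\bZ^{\sT}\bZ$ is accounted for, and one can check (e.g. by block-inverting $\bZ^{\sT}\bZ+\lambda\psi_1\psi_2\id_N$ along $\text{span}(\bfone_N)$, with the off-diagonal block $\bc$ of size $\Theta(\sqrt d)$) that $\|\Res\bfone_N\|_2=\Theta_{d,\P}(1)$. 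With the correct bound, the crude estimate gives $\frac{1}{d}\Tr[\Res\bfone_N\bfone_N^{\sT}\Res\bZ^{\sT}\obM\bZ]\le \frac{1}{d}\|\bZ\|_{\op}^2\|\Res\bfone_N\|_2^2\|\obM\|_{\op}=O_{d,\P}(1)$ because $\|\bZ\|_{\op}=\Theta_{d,\P}(\sqrt d)$ (the rank-one part $\mu_0\bfone_n\bfone_N^{\sT}/\sqrt d$ has a $\Theta(\sqrt d)$ singular value): this does \emph{not} vanish, so the $\bU_{\bfone}$-sector cannot be discarded by operator-norm bookkeeping alone. The paper instead establishes $\E|1-2S_{10}+S_{20}|=o_d(1)$ and $\E|B_\alpha|=o_d(1)$ via a Sherman--Morrison--Woodbury computation (Lemmas~\ref{lem:SMW_calculations}, \ref{lem:constant_term_one}, \ref{lem:constant_term_two}): it writes $\bZ^{\sT}\bZ+\lambda\psi_1\psi_2\id_N=\bE_0+\bF_1\bF_2^{\sT}$ with $\bE_0=\bJ^{\sT}\bJ+\lambda\psi_1\psi_2\id_N$ the \emph{centered} kernel, exploits the structural identity $A_2=A_1^2$ so that $1-2A_1+A_2=(1-A_1)^2$, and uses the lower bound $K_{11}=\psi_2\lambda_{d,0}^2\,\bfone_N^{\sT}\bE_0^{-1}\bfone_N=\Omega_{d,\P}(d)$ together with $K_{12}=O_{d,\P}(\sqrt d)$, $1-K_{22}\in(0,1]$ to get $A_1=1-O_{d,\P}(d^{-1/2})$ and $B=O_{d,\P}(1/d)$. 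None of this follows from the estimate you assert, so the intercept cancellation is a missing step, not routine concentration.

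Two smaller points. First, your list of vanishing cross terms omits the one between the test-point value $f_d^{\sNL}(\bx)$ and the training vector $\boldf^{\sNL}$: these are dependent (same Gaussian process), and showing that this cross contribution vanishes is exactly $\sup_{k\ge2}\E|S_{1k}|=o_d(1)$, which requires $\sup_{k\ge2}\|Q_k(\bTheta\bX^{\sT})\|_{\op}=o_{d,\P}(1)$ — the same lemma you implicitly invoke for $\bU$, but you should state it as a separate input rather than fold it into ``behaves like noise.'' Second, the covariance matrix of $\boldf^{\sNL}$ has \emph{no} constant or linear correction at all (those Gegenbauer modes are killed by the moment assumptions on $\Sigma_d$), so the phrase ``up to a constant-plus-linear correction'' is misleading; what remains after subtracting $\Sigma_d(1)\id_n$ is the sum of $k\ge2$ Gegenbauer blocks, each small in operator norm.
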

The proof of this proposition is deferred to Section \ref{sec:decomposition} and is based on the following main steps:
\begin{itemize}
\item As a preliminary remark,
we show that by invariance of the distributions of $(\btheta_j)_{j\le N}$ and $(\bx_i)_{i\le n}$ under rotations in $\reals^d$,
we  can replace the deterministic vector $\bbeta_{d,1}$ by a uniformly random vector on the sphere with radius
$\|\bbeta_{d,1}\|_2 = F_{d,1}$.
\item Second, we compute the expectation $\E_{\bbeta,\beps}[R_\RF(f_d, \bX, \bTheta, \lambda)]$, and simplify this expression, in particular by proving that a negligible error is incurred by replacing the kernel matrix $\bU$ by $\mu^2_1\bQ+\mu_{\star}^2\id_N$. 
\item Finally, we show that $R_\RF(f_d, \bX, \bTheta, \lambda)$ concentrates around its expectation with respect to $f_d$ (i.e. the coefficients $\{\bbeta_{d,k}\}_{k\ge 1}$) and $\beps$. 
\end{itemize}

In order to  compute the traces $\Psi_j$ appearing in the last proposition, we introduce a block-structured matrix $\bA\in\reals^{M\times M}$, $M=N+n$
as follows. For $\bq = (s_1,s_2, t_1,t_2, p) \in \R^5$, we define
\begin{align}\label{eqn:matrix_A}
\bA =\bA(\bq) := \left[\begin{matrix}
s_1\id_N+s_2\bQ & \bZ^\sT  + p \bZ_1^{\sT}\\
\bZ + p \bZ_1 & t_1\id_n+t_2\bH
\end{matrix}\right]\,.
\end{align}
For  $\xi \in \complex_+$ and $\bq \in \R^5$, we define the Stieltjes transform of $\bA$
(denoted by $m_d$) and its log-determinant (denoted by $G_d$)  via
\begin{equation}\label{eqn:Stieltjes_A}
\begin{aligned}
m_{d}(\xi; \bq)  =&~ \E[M_{d}(\xi; \bq)], \;\;\;\;\; M_{d}(\xi; \bq)
=~ \frac{1}{d} \Tr[(\bA - \xi \id_M)^{-1}]\, ,\\
G_d(\xi; \bq) &= \frac{1}{d} \sum_{i = 1}^M \Log(\lambda_i(\bA(\bq)) - \xi). 
\end{aligned}
\end{equation}
Here $\Log$ is the complex logarithm with branch cut on the negative real axis and $\{ \lambda_i(\bA)\}_{ i \in [M]}$ is
the set of eigenvalues of $\bA$ in non-increasing order.

The next proposition connects the quantities $\Psi_j$ to the transforms $G_d$ and $M_d$. 
\begin{proposition}\label{prop:G_and_M}
For $\xi \in \C_+$ and $\bq \in \R^5$, we have 
\begin{equation}\label{eqn:connection_G_M}
\frac{\de \phantom{\xi}}{\de \xi} G_d(\xi; \bq) = - \frac{1}{d} \sum_{i = 1}^M (\lambda_i(\bA) - \xi)^{-1} = - M_d(\xi; \bq)\, ,
\end{equation}
%
and
\begin{equation}\label{eqn:Psi_j_and_G_d}
\begin{aligned}
\Psi_1 =&~ \frac{1}{2}\partial_p G_d(\imagunit (\psi_1 \psi_2 \lambda)^{1/2}; \bzero), \\
\Psi_2 =&~ - \ob_\star^2 ~ \partial_{s_1, t_2} G_d(\imagunit (\psi_1 \psi_2 \lambda)^{1/2}; \bzero)  - \ob_1^2 ~ \partial_{s_2, t_2} G_d(\imagunit (\psi_1 \psi_2 \lambda)^{1/2}; \bzero), \\
\Psi_3 =&~ - \ob_\star^2~ \partial_{s_1, t_1} G_d(\imagunit (\psi_1 \psi_2 \lambda)^{1/2}; \bzero)  - \ob_1^2~ \partial_{s_2, t_1} G_d(\imagunit (\psi_1 \psi_2 \lambda)^{1/2}; \bzero). 
\end{aligned}
\end{equation}
\end{proposition}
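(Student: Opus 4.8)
The identity~\eqref{eqn:connection_G_M} is immediate. Since $\bA(\bq)$ is symmetric, its eigenvalues $\lambda_i(\bA(\bq))$ are real, so for $\xi\in\complex_+$ each point $\lambda_i(\bA(\bq))-\xi$ lies strictly in the open lower half-plane and hence off the branch cut $(-\infty,0]$ of $\Log$. Therefore $\xi\mapsto\Log(\lambda_i(\bA(\bq))-\xi)$ is holomorphic with derivative $-(\lambda_i(\bA(\bq))-\xi)^{-1}$; summing over $i$ and dividing by $d$ gives $\partial_\xi G_d(\xi;\bq)=-\tfrac1d\sum_i(\lambda_i(\bA)-\xi)^{-1}=-M_d(\xi;\bq)$.

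For~\eqref{eqn:Psi_j_and_G_d} the strategy is to differentiate $G_d$ in the coordinates of $\bq$ and then identify the resulting traces with the $\Psi_j$. First I would record the log-determinant differentiation rules. Writing $G_d(\xi;\bq)=\tfrac1d\Log\det(\bA(\bq)-\xi\id_M)$ for a continuous branch of the logarithm (available because, for real $\bq$, the spectrum of $\bA(\bq)-\xi\id_M$ stays in the lower half-plane, away from the branch cut), the standard log-determinant identity yields, for any $\theta\in\{s_1,s_2,t_1,t_2,p\}$,
\[
\partial_\theta G_d(\xi;\bq)=\tfrac1d\,\Tr\big[(\bA(\bq)-\xi\id_M)^{-1}\,\partial_\theta\bA(\bq)\big],
\]
and, because $\bA(\bq)$ is affine in $\bq$ (so all second derivatives of $\bA$ vanish),
\[
\partial_{\theta,\theta'} G_d(\xi;\bq)=-\tfrac1d\,\Tr\big[(\bA(\bq)-\xi\id_M)^{-1}(\partial_\theta\bA)\,(\bA(\bq)-\xi\id_M)^{-1}(\partial_{\theta'}\bA)\big].
\]
From~\eqref{eqn:matrix_A}, $\partial_{s_1}\bA=\diag(\id_N,\bzero)$, $\partial_{s_2}\bA=\diag(\bQ,\bzero)$, $\partial_{t_1}\bA=\diag(\bzero,\id_n)$, $\partial_{t_2}\bA=\diag(\bzero,\bH)$, and $\partial_p\bA$ is the block-off-diagonal matrix with blocks $\bZ_1^\sT$ (top-right) and $\bZ_1$ (bottom-left). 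Next I would invert $\bB:=\bA(\bzero)-\xi_\lambda\id_M$, the block matrix with diagonal blocks $-\xi_\lambda\id_N,-\xi_\lambda\id_n$ and off-diagonal blocks $\bZ^\sT,\bZ$, where $\xi_\lambda:=\imagunit(\psi_{1,d}\psi_{2,d}\lambda)^{1/2}$, so that $-\xi_\lambda^2=\lambda\psi_{1,d}\psi_{2,d}$ and $\bZ^\sT\bZ-\xi_\lambda^2\id_N=\Res^{-1}$. The Schur-complement formula then gives $(\bB^{-1})_{11}=\xi_\lambda\Res$, $(\bB^{-1})_{12}=\Res\bZ^\sT$, $(\bB^{-1})_{21}=\bZ\Res$ (the $(2,2)$ block is not needed). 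Substituting into the two displays: $\partial_pG_d=\tfrac1d\big(\Tr[(\bB^{-1})_{12}\bZ_1]+\Tr[(\bB^{-1})_{21}\bZ_1^\sT]\big)=\tfrac2d\Tr[\Res\bZ_1^\sT\bZ]=2\Psi_1$, the two traces coinciding because $\Res=\Res^\sT$; and $\partial_{s_1,t_2}G_d=-\tfrac1d\Tr[(\bB^{-1})_{21}(\bB^{-1})_{12}\bH]=-\tfrac1d\Tr[\Res^2\bZ^\sT\bH\bZ]$, $\partial_{s_2,t_2}G_d=-\tfrac1d\Tr[(\bB^{-1})_{21}\bQ(\bB^{-1})_{12}\bH]=-\tfrac1d\Tr[\Res\bQ\Res\bZ^\sT\bH\bZ]$, so that $-\ob_\star^2\partial_{s_1,t_2}G_d-\ob_1^2\partial_{s_2,t_2}G_d=\tfrac1d\Tr[\Res(\ob_\star^2\id_N+\ob_1^2\bQ)\Res\bZ^\sT\bH\bZ]=\Psi_2$; the formula for $\Psi_3$ is identical with $\partial_{t_2}$ replaced by $\partial_{t_1}$, i.e.\ $\bH$ by $\id_n$. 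Finally, replacing $\psi_{i,d}$ by its limit $\psi_i$ in $\xi_\lambda$ perturbs $G_d$ and its derivatives only by $o_d(1)$, which is harmless because the $\Psi_j$ enter only through the $o_d(1)$-accurate identity~\eqref{eqn:main_decomposition}; this is the sense in which the evaluation point in the statement is $\imagunit(\psi_1\psi_2\lambda)^{1/2}$.

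The proposition is essentially bookkeeping, so I do not expect a serious obstacle. The only points needing care are (i) justifying term-by-term differentiation of $\sum_i\Log(\lambda_i(\bA(\bq))-\xi)$ even though the individual eigenvalue maps $\bq\mapsto\lambda_i(\bA(\bq))$ need not be differentiable at crossings --- handled by passing to $\Log\det$ and using that the spectrum of $\bA(\bq)-\xi\id_M$ stays off the branch cut, where holomorphic functional calculus applies --- and (ii) tracking the $\xi_\lambda$-versus-$\xi_\lambda^2$ rescaling inside the Schur complement so that $\Res=(\bZ^\sT\bZ+\lambda\psi_{1,d}\psi_{2,d}\id_N)^{-1}$, rather than $\xi_\lambda\Res$ or $\Res^{-1}$, is what ends up appearing in the final traces.
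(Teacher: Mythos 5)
Your proof is correct and follows essentially the same route as the paper's: identify $G_d$ with a log-determinant (up to a locally constant $2\pi\imagunit$ integer, which is exactly the $k(\bq,\xi)$ bookkeeping the paper performs), use the standard first- and second-order log-determinant differentiation rules together with the fact that $\bA(\bq)$ is affine in $\bq$, write down the block Schur-complement inverse of $\bA(\bzero)-\xi_\lambda\id_M$, and read off the $\Psi_j$ from the resulting traces. Your block-inverse entries, your computation of $\partial_p G_d$, and your identification of the mixed second derivatives with $\Psi_2,\Psi_3$ all agree with the paper's Eq.~\eqref{eqn:connection_G_Psi}. One small remark: the paper intends the evaluation point $\imagunit(\psi_1\psi_2\lambda)^{1/2}$ in the statement to use $\psi_i=\psi_{i,d}$ (i.e.\ $u^2\id_N+\bZ^\sT\bZ=\Res^{-1}$ exactly), so the identity~\eqref{eqn:Psi_j_and_G_d} is an exact algebraic one rather than an $o_d(1)$ statement; your closing caveat is therefore unnecessary under that reading, though it does no harm.
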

The proof of Proposition \ref{prop:G_and_M} follows by basic calculus and linear algebra, and we defer its proof to Appendix \ref{sec:proof_prop_G_and_M}. Despite its simplicity, this statement provides the basic scheme of our proof. We
will determine the asymptotics of $M_d(\xi; \bq)$ using a leave-one-out argument; then extract the behavior
of $G_d(\xi; \bq)$ using Eq.~\eqref{eqn:connection_G_M}; finally we characterize the test error using Eqs.~\eqref{eqn:Psi_j_and_G_d}
and Proposition \ref{prop:decomposition}.
\begin{remark}
  The construction of the matrix $\bA(\bq)$ is related to the linear pencil method in free probability, see e.g., \cite{helton2018applications}. 
  A significantly simpler construction was used in \cite[Section 8]{hastie2019surprises} to calculate the variance part of the
  risk $R_{\RF}$ in the limit $\lambda \to 0$ (in special cases). The approach of \cite{hastie2019surprises} amounts to computing the
  Stieltjes transform of $\bA$ for $p=t_1=t_2=0$, in the limit $\xi\to 0$: unfortunately this quantity is not sufficient to
  extract the prediction error. We overcome this difficulty by considering a more complex block-structured matrix and expressing the risk in terms of derivatives of   the log determinant $G_d(\xi; \bq)$.
\end{remark}

In order to compute the Stieltjes transform of $\bA$, we derive a set of two non-linear equations for the partial
transform $m_{1, d}(\xi; \bq)  = (N/d)\E\{ [(\bA - \xi \id_{M})^{-1}]_{N,N} \}$,
$m_{2, d}(\xi; \bq)  =(n/d)\E\{ [( \bA - \xi \id_{M})^{-1}]_{M, M} \}$ corresponding to the two blocks in the definition
of $\bA$. The starting point is the Schur complement formula with respect to entry $(N,N)$ of matrix  $\bA - \xi \id_{M}$
\begin{align}
m_{1,d}=\frac{N}{d}\,  \E\Big\{\Big(-\xi+s_1+s_2\| \btheta_N\|_2^2/d-\bA_{\cdot, N}^{\sT}(\bB-\xi \id_{M-1})^{-1}\bA_{\cdot, N} \Big)^{-1}\Big\}\, .\label{eq:SchurFirst}
\end{align}
An analogous formula for $m_{2,d}$ is obtained by taking the complement of entry $(M,M)$. Here $\bA_{\cdot, N}\in\reals^{M-1}$ is the $N$-th column of $\bA$, with the $N$-th entry removed and $\bB \in \reals^{(M-1)\times (M-1)}$ is the matrix obtained from $\bA$ by removing the $N$-th column and $N$-th row. As usual in random matrix theory, we aim at expressing the right-hand side as an
explicit deterministic function of $m_{1,d}$, $m_{2,d}$, plus a small error.
Unlike in more standard random matrix models, the matrix $\bB$ is not independent of
the vector $\bA_{\cdot, N}$: both are functions of $(\btheta_a)_{a<N}$ and $(\bx_i)_{i\le n}$. In order to overcome this difficulty,
we decompose these vectors in the components along $\btheta_N$ and the ones orthogonal to $\btheta_N$: the first one carries
most of the dependence and can be treated explicitly, while for the second we can leverage independence.

Unfortunately, even conditional on $\btheta_N$, the projections of $(\btheta_{a})_{a<N}$ and $(\bx_i)_{i\le n}$
along $\btheta_N$ and orthogonal to it
are not independent (because of the sphere constraint). To overcome this problem we replace these by Gaussian vectors
$(\overline \btheta_{a})_{a<N}$ and $(\overline\bx_i)_{i\le n}$ and prove that the two distributions
yield the same asymptotics of the Stieltjes transform. The decomposition of these Gaussian vectors takes the form
\begin{align}
\overline \btheta_a =&~ \eta_a \frac{\overline \btheta_N}{\| \overline \btheta_N\|_2}+\tbtheta_a\, , \;\;\; \<\overline \btheta_N,\tbtheta_a\> = 0, \;\;\; &a&\in [N-1]\,,\\
\overline \bx_i =&~ u_i \frac{\overline \btheta_N}{\| \overline \btheta_N\|} +\tbx_i \, , \;\;\; \<\overline \btheta_N,\tbx_i\> = 0, \;\;\; &i& \in [n]\, .
\end{align}
Note that $\{\eta_a\}_{a \in [N-1] },\{u_i\}_{i \in [n]}\sim_{iid} \normal(0,1)$ are independent of $\overline \btheta_N/\| \overline \btheta_N\|_2$, $\{\tbtheta_a\}_{a\in [N-1]}$, and $\{\tbx_i\}_{i\in [n]}$.
Further, the vector $\overline\bA_{\cdot, N}$ (the equivalent of $\bA_{\cdot,N}$ for the Gaussian model)
only depends on the $\eta_a$'s and $u_i$'s. While the matrix $\overline\bB$ (the equivalent of $\bB$)
depends on all of $\eta_a$'s, $u_i$'s, $\tbtheta_a$'s, $\tbx_i$'s, we show it can be approximated
by $\tilde\bB+\bDelta$ where $\tilde\bB$ only depends on  the $\tbtheta_a$'s, $\tbx_i$'s,
and $\bDelta$ is a low-rank matrix depending only on $\eta_a$'s, $u_i$'s. We thus get:
\begin{align}
m_{1,d} = \frac{N}{d}\E\Big\{\Big(-\xi+s_1+s_2- \overline  \bA_{\cdot, N}^{\sT}( \tbB+\bDelta-\xi\id_{M-1})^{-1} \overline \bA_{\cdot, N} \Big)^{-1} \Big\}+{\sf err}(d)\, .  
\end{align}
At this point independence can be exploited to obtain concentration results on the right-hand side.
Let us emphasize that, while these paragraphs outline the main elements of the leave-one-out
argument, several technical subtleties make the actual proof significantly longer, see
Section \ref{sec:Stieltjes} for details.

We next state the asymptotic characterization of the Stieltjes transform which is obtained by this argument.
Define  $\cQ\subseteq \reals^5$ via
\begin{equation}\label{eqn:definition_of_cQ}
\cQ = \{ (s_1, s_2, t_1, t_2, p): \vert s_2 t_2\vert \le \ob_1^2 (1 + p)^2 / 2\}\, ,
\end{equation}
and two functions
$\sFone(\,\cdot\, ,\,\cdot\,;\xi;\bq, \psi_1,\psi_2, \ob_1, \ob_\star), \sFtwo(\,\cdot\, ,\,\cdot\,;\xi;\bq, \psi_1,\psi_2, \ob_1, \ob_\star):\complex\times\complex \to \complex$ via:
\begin{equation}\label{eq:Fdef}
\begin{aligned}
\sFone(m_1,m_2;\xi;\bq, \psi_1,\psi_2, \ob_1, \ob_\star) \equiv&~  \psi_1\left(-\xi+s_1 - \ob_\star^2 m_2 +\frac{(1+t_2m_2)s_2- \ob_1^2 (1 + p)^2 m_2}{(1+s_2m_1)(1+t_2m_2)- \ob_1^2 (1 + p)^2 m_1m_2}\right)^{-1}\,, \\
\sFtwo(m_1,m_2;\xi;\bq, \psi_1,\psi_2, \ob_1, \ob_\star) \equiv&~  \psi_2\left(-\xi+t_1 - \ob_\star^2 m_1 +\frac{(1+s_2 m_1)t_2- \ob_1^2 (1 + p)^2 m_1}{(1+t_2m_2)(1+s_2m_1)- \ob_1^2 (1 + p)^2 m_1m_2}\right)^{-1}\, .
\end{aligned}
\end{equation}
\begin{proposition}[Stieltjes transform]\label{prop:Stieltjes}
Let $m_1(\,\cdot\, ;\bq)$ $m_2(\,\cdot\, ;\bq):\complex_+\to\complex_+$ be defined, for $\Im(\xi)\ge C$ a sufficiently large constant, as the unique solution of 
the equations
\begin{align}
m_{1} = \sFone(m_1,m_2;\xi;\bq, \psi_1,\psi_2, \ob_1, \ob_\star) \, , \;\;\;\;\;\; m_{2} = \sFtwo(m_1, m_2;\xi;\bq,\psi_1,\psi_2, \ob_1, \ob_\star)\, , \label{eq:FixedPoint}
\end{align}
subject to the condition $\vert m_1\vert \le \psi_1/\Im(\xi)$, $\vert m_2\vert \le \psi_2/\Im(\xi)$. Extend this definition to $\Im(\xi) >0$ by requiring $m_1,m_2$ to be analytic functions in $\complex_+$. Define $m(\xi; \bq) = m_1(\xi; \bq) + m_2(\xi; \bq)$. Then for any $\xi \in \C_+$ with $\Im \xi > 0$,  and
any compact set $\Omega \subseteq \C_+$, we have 
\begin{align}\label{eqn:weak_version_M_m_convergence}
&\lim_{d\to\infty} \E[ \vert M_d(\xi;\bq) - m(\xi;\bq) \vert] = 0\, ,\\
\label{eqn:strongest_in_compact}
&\lim_{d \to \infty} \E \Big[ \sup_{\xi \in \Omega} \vert M_d(\xi; \bq) - m(\xi; \bq) \vert\Big] = 0.
\end{align}
\end{proposition}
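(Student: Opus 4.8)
The plan is to establish the $L^1$ convergence \eqref{eqn:weak_version_M_m_convergence} by a leave-one-out (cavity) argument applied to the block matrix $\bA(\bq)$, and then to bootstrap to the uniform-on-compacts statement \eqref{eqn:strongest_in_compact} by analyticity. For the first part I would start from the Schur complement identity \eqref{eq:SchurFirst} for the partial Stieltjes transform $m_{1,d}(\xi;\bq)=(N/d)\E\{[(\bA-\xi\id_M)^{-1}]_{N,N}\}$, together with its mirror image for $m_{2,d}(\xi;\bq)=(n/d)\E\{[(\bA-\xi\id_M)^{-1}]_{M,M}\}$ obtained by taking the complement at entry $(M,M)$. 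The goal is to show that the quadratic form $\bA_{\cdot,N}^\sT(\bB-\xi\id_{M-1})^{-1}\bA_{\cdot,N}$ appearing in \eqref{eq:SchurFirst} concentrates around an explicit deterministic functional of $(m_{1,d},m_{2,d})$, so that after collecting terms the pair $(m_{1,d},m_{2,d})$ satisfies the system \eqref{eq:FixedPoint} up to an $o_d(1)$ additive error, uniformly for $\Im(\xi)\ge C$.

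The core of the argument --- and its main obstacle --- is to carry out this concentration step in the presence of two features absent from textbook computations. First, $\bB$ and $\bA_{\cdot,N}$ are both measurable with respect to $(\btheta_a)_{a<N}$ and $(\bx_i)_{i\le n}$, hence not independent. To restore the independence structure I would replace the sphere-distributed vectors by Gaussian vectors $\overline\btheta_a\sim\normal(\bzero,\id_d)$, $\overline\bx_i\sim\normal(\bzero,\id_d)$ --- showing, via an interpolation/comparison bound, that this changes the Stieltjes transform by $o_d(1)$ --- then condition on $\overline\btheta_N$ and split $\overline\btheta_a=\eta_a\,\overline\btheta_N/\|\overline\btheta_N\|_2+\tbtheta_a$ and $\overline\bx_i=u_i\,\overline\btheta_N/\|\overline\btheta_N\|_2+\tbx_i$, with $\{\eta_a\},\{u_i\}\sim_{iid}\normal(0,1)$ independent of the orthogonal components. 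Then $\overline\bA_{\cdot,N}$ depends only on the scalars $(\eta_a,u_i)$, while $\overline\bB$ is well approximated by $\tbB+\bDelta$ with $\tbB$ a function of the $\tbtheta_a$'s and $\tbx_i$'s only and $\bDelta$ low-rank; a resolvent expansion in $\bDelta$ reduces the quadratic form to one handled, conditionally on the orthogonal components, by the Hanson--Wright inequality. Second, the entries of $\bZ=\sigma(\bX\bTheta^\sT/\sqrt d)/\sqrt d$ are strongly non-Gaussian; here the point is that the cavity recursion only probes the first two conditional moments of the relevant entries, which match those of the Gaussian-covariates surrogate of Section \ref{sec:Gaussian_covariates}. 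Concretely, writing $\sqrt d\,Z_{ij}=\ob_0+\ob_1\langle\bx_i,\btheta_j\rangle/\sqrt d+\ob_\star\tilde w_{ij}$, the $\ob_0$ piece enters $\bA$ only through a bounded-rank perturbation and so changes the normalized trace $M_d$ by $o_d(1)$ (this is why $\ob_0$ does not appear in \eqref{eq:Fdef}), while the near-orthogonality $\E[\tilde w_{ij}\tilde w_{i\ell}]=O(1/d)$ of the $\tilde w$'s lets them play the role of i.i.d. Gaussians at the level of resolvent quadratic forms. Controlling all these errors uniformly in $\bq$ over the region $\cQ$ is where the bulk of the technical work lies (deferred to Section \ref{sec:Stieltjes}); the exponential bounds of Assumption \ref{ass:activation} on $|\sigma|,|\sigma'|$ are used throughout to supply the needed moment estimates.

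With the approximate system in hand I would close the fixed point. Concentration of $M_d(\xi;\bq)$ around its mean $m_d(\xi;\bq)$ follows from the Azuma--Hoeffding inequality applied to the Doob martingale obtained by revealing $\overline\btheta_1,\ldots,\overline\btheta_N,\overline\bx_1,\ldots,\overline\bx_n$ successively, since resampling any one of them perturbs $\bA$ by a bounded-rank matrix and hence $M_d$ by $O(1/d)$; so it suffices to treat the deterministic quantity $m_d$. For $\Im(\xi)$ large the map $(m_1,m_2)\mapsto(\sFone,\sFtwo)$ of \eqref{eq:Fdef} is a contraction on $\{|m_1|\le\psi_1/\Im(\xi),\,|m_2|\le\psi_2/\Im(\xi)\}$, so \eqref{eq:FixedPoint} has a unique solution there and it is stable under the $o_d(1)$ perturbation; hence $m_d(\xi;\bq)\to m(\xi;\bq):=m_1(\xi;\bq)+m_2(\xi;\bq)$, and therefore $\E|M_d(\xi;\bq)-m(\xi;\bq)|\to0$ whenever $\Im(\xi)\ge C$. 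To pass to all of $\C_+$, observe that $M_d(\cdot;\bq)$ is analytic on $\C_+$ and obeys the deterministic bound $|M_d(\xi;\bq)|\le(M/d)/\Im(\xi)=O(1/\Im(\xi))$, so $\{M_d(\cdot;\bq)\}_d$ is a normal family; every subsequential limit is analytic and agrees with $m$ on $\{\Im(\xi)\ge C\}$, hence with $m$ on all of $\C_+$ by analytic continuation, giving \eqref{eqn:weak_version_M_m_convergence} throughout $\C_+$.

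For the uniform statement \eqref{eqn:strongest_in_compact}, fix a compact $\Omega\subset\C_+$ and set $\delta=\mathrm{dist}(\Omega,\reals)>0$. On the $\delta/2$-neighbourhood of $\Omega$ the functions $M_d(\cdot;\bq)-m(\cdot;\bq)$ are analytic and uniformly bounded by $O(1/\delta)$, so Cauchy's estimate bounds their derivatives uniformly and the family is equicontinuous on $\Omega$; combining equicontinuity with the pointwise $L^1$ convergence established above, applied over a finite $\eps$-net of $\Omega$ together with a union bound, upgrades the convergence to $\E[\sup_{\xi\in\Omega}|M_d(\xi;\bq)-m(\xi;\bq)|]\to0$, as claimed.
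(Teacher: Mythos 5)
Your overall strategy coincides with the paper's: Schur complement at entry $(N,N)$ and $(M,M)$; replacement of sphere vectors by Gaussians; decomposition of $\overline\btheta_a,\overline\bx_i$ into components parallel and orthogonal to $\overline\btheta_N$, with $\overline\bA_{\cdot,N}$ depending only on the scalars $(\eta_a,u_i)$ and $\overline\bB$ split as $\tbB+\bDelta+\bE_0$ with $\bDelta$ low-rank; concentration of $M_d$ around its mean; contraction and uniqueness of the fixed point for large $\Im(\xi)$; analytic continuation / normal-family argument to push the convergence to all of $\C_+$; and a finite-net plus Lipschitz (Cauchy estimate) argument for uniformity on compacts. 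All of this matches the paper's Section \ref{sec:Stieltjes} and Lemmas \ref{lemma:PropertiesMap}--\ref{lemma:Concentration_Stieltjes}.

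There is, however, a genuine gap in how you propose to tame the non-Gaussianity of $\bZ$. You invoke a heuristic — ``the cavity recursion only probes the first two conditional moments, and the near-orthogonality $\E[\tilde w_{ij}\tilde w_{i\ell}]=O(1/d)$ lets the $\tilde w$'s play the role of i.i.d.\ Gaussians'' — together with the exponential bound $|\sigma|,|\sigma'|\le c_0 e^{c_1|u|}$ to supply ``moment estimates.'' That does not actually control the object that must be controlled. After the orthogonal decomposition, the additive error $\bE_0$ in $\overline\bB=\tbB+\bDelta+\bE_0$ has entries $E_{1,ia}=d^{-1/2}\bigl[\vphi_\perp(\langle\tbx_i,\tbtheta_a\rangle/\sqrt d+u_i\eta_a/\sqrt d)-\vphi_\perp(\langle\tbx_i,\tbtheta_a\rangle/\sqrt d)\bigr]$, and what you need is an operator-norm bound $\|\bE_1\|_{\op}=o_{d,\P}(1)$ (so the resolvent perturbation is negligible), not a bound on moments of individual entries. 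The paper obtains this (Lemma \ref{lem:concentration_of_E1_in_Stieltjes_proof}) by a two-term Taylor expansion $\bE=d^{-1/2}\bXi\bF_1\bXi+(2d)^{-1}\bXi^2\bF_2\bXi^2$ and then by appealing to kernel random-matrix operator-norm results (Theorem 1.7 of \cite{fan2019spectral}) to bound $\|\bF_1\|_{\op}$, which requires $\vphi'$ to be a polynomial with $\E[\vphi'(G)]=0$. For this reason the paper first proves the cavity estimate (Lemma \ref{lemma:KeyFiniteD}) \emph{only} for polynomial activations $\vphi$ with $\E[\vphi(G)]=0$, and only afterwards extends to general $\sigma$ satisfying Assumption \ref{ass:activation} by truncating the Hermite expansion and using the nuclear-norm stability $\E|M_d-\tilde M_d|\lesssim\|\sigma-\tsigma\|_{L^2(\tau_d)}$ together with continuity of the fixed point in $(\ob_1,\ob_\star)$. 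Your sketch skips this polynomial-first structure, and without it the operator-norm control on $\bE_1$ does not follow from the stated assumptions.

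Two smaller points. (i) You propose Hanson--Wright for the quadratic form $\bh^\sT\bR\bh$, but after the decomposition the vector $\bh$ has entries like $\vphi(u_i)/\sqrt d$ with $\vphi$ a fixed polynomial evaluated at Gaussians — these are not sub-Gaussian, so classical Hanson--Wright does not apply; the paper instead computes the conditional variance directly using fourth moments (Lemma \ref{lem:variance_calculations}), which is all that is needed. (ii) After you condition on $\overline\btheta_N$ and project onto the orthogonal complement, the effective ambient dimension drops from $d$ to $d-1$, so the orthogonal-component sub-matrix $\tbB$ lives in one dimension lower; to identify the quantities that appear in the Schur complement with $\overline m_{1,d}$ and $\overline m_{2,d}$ you need a stability-in-$d$ lemma (the paper's Lemma \ref{lem:perturbation_dimension_Stieltjes}) in addition to Lemma \ref{eq:BD-Diff} and the concentration Lemma \ref{lemma:Concentration_Stieltjes}; your sketch does not mention this step.
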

The proof of Proposition \ref{prop:Stieltjes} is presented in Section \ref{sec:Stieltjes}.
The fixed point equations \eqref{eq:FixedPoint} arise as a consequence of Eq.~\eqref{eq:SchurFirst}
(and the analogous equation for $m_{2,d}$). Indeed the proof also shows that the solution $(m_1,m_2)$ of these equations gives
the limit of $(m_{1,d},m_{2,d})$ as $n,N,d\to\infty$. 

Recall that,  by Proposition \ref{prop:G_and_M}, we have $M_d(\xi; \bq) = - \de G_d(\xi; \bq) / \de \xi$. We can therefore
derive an asymptotic formula for $G_d(\xi; \bq)$, by integrating the expression for $m(\xi; \bq)$ in
Proposition \ref{prop:Stieltjes} over a path in the $\xi$ plane. Namely, we integrate over a path in $\complex_+$ between
$\xi$ and $\imagunit  K$, and let $K\to\infty$. A priori, one could expect this integral not to have a closed form.
Instead, we obtain a relatively explicit expression given below.
\begin{proposition}\label{prop:expression_for_log_determinant}
Define
\begin{equation}\label{eqn:log_determinant_variation}
\begin{aligned}
\Xi(\xi, z_1, z_2; \bq) \equiv&~ \log[(s_2 z_1 + 1)(t_2 z_2 + 1) - \ob_1^2 (1 + p)^2 z_1 z_2] - \ob_\star^2 z_1 z_2 \\
 &+ s_1 z_1 +  t_1 z_2  - \psi_1 \log (z_1 / \psi_1) - \psi_2 \log (z_2 / \psi_2)  - \xi (z_1 + z_2) - \psi_1 - \psi_2.
\end{aligned}
\end{equation}
For $\xi \in \C_+$ and $\bq \in \cQ$ (c.f. Eq. (\ref{eqn:definition_of_cQ})), let $m_1(\xi; \bq), m_2(\xi; \bq)$ be defined as the analytic continuation of solution of Eq. (\ref{eq:FixedPoint}) as defined in Proposition \ref{prop:Stieltjes}. Define
\begin{equation}\label{eqn:formula_g}
g(\xi; \bq) = \Xi(\xi, m_1(\xi; \bq), m_2(\xi; \bq); \bq). 
\end{equation}
Consider proportional asymptotics $N/d\to\psi_1$,  $N/d\to\psi_2$,  
as per Assumption \ref{ass:linear}. Then for any fixed $\xi \in \C_+$ and $\bq \in \cQ$, we have
\begin{equation}\label{eqn:expression_for_log_determinant}
\begin{aligned}
\lim_{d \to \infty} \E[ \vert G_d(\xi; \bq) -  g(\xi; \bq) \vert] = 0.
\end{aligned}
\end{equation}
Moreover, for any fixed $u \in \R_+$, we have
\begin{align}
\lim_{d \to \infty}  \E[\| \partial_\bq G_d(\imagunit u; \bzero) - \partial_\bq g(\imagunit u; \bzero) \|_2 ] =&~ 0, \label{eqn:convergence_of_derivatives}\\
\lim_{d \to \infty}  \E[\| \nabla_{\bq}^2 G_d(\imagunit u; \bzero) - \nabla_{\bq}^2 g(\imagunit u; \bzero) \|_{\op} ] =&~ 0. \label{eqn:convergence_of_second_derivatives}
\end{align}
\end{proposition}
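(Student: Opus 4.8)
\emph{Proof strategy.} The plan is to recover $G_d(\,\cdot\,;\bq)$ from its $\xi$-derivative. By Proposition~\ref{prop:G_and_M}, $\partial_\xi G_d(\xi;\bq)=-M_d(\xi;\bq)$, and by Proposition~\ref{prop:Stieltjes}, $M_d\to m=m_1+m_2$; so $G_d$ should converge to an antiderivative of $-m$, and the task is to identify that antiderivative with the explicit function $g(\xi;\bq)=\Xi(\xi,m_1(\xi;\bq),m_2(\xi;\bq);\bq)$. The algebraic heart of this identification is the observation that $(z_1,z_2)\mapsto\Xi(\xi,z_1,z_2;\bq)$ is \emph{stationary} at $(z_1,z_2)=(m_1,m_2)$: a direct differentiation of \eqref{eqn:log_determinant_variation} shows that the equations $\partial_{z_1}\Xi=0$ and $\partial_{z_2}\Xi=0$ are \emph{exactly} the fixed-point equations $z_1=\sFone(z_1,z_2;\xi;\bq,\dots)$, $z_2=\sFtwo(z_1,z_2;\xi;\bq,\dots)$ of \eqref{eq:FixedPoint}. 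Hence they hold at $(m_1,m_2)$ by Proposition~\ref{prop:Stieltjes} for $\Im(\xi)$ large, and by analyticity and uniqueness they persist under the analytic continuation to all of $\C_+$ for $\bq\in\cQ$ (the restriction to $\cQ$ being precisely what keeps the argument of the logarithm in $\Xi$ off the branch cut, so that $g$ is well defined and analytic on $\C_+$). Since $\partial_\xi\Xi(\xi,z_1,z_2;\bq)=-(z_1+z_2)$, the chain rule together with the stationarity gives
\[
\partial_\xi g(\xi;\bq)=\partial_\xi\Xi(\xi,z_1,z_2;\bq)\big|_{z_i=m_i(\xi;\bq)}=-\big(m_1(\xi;\bq)+m_2(\xi;\bq)\big)=-m(\xi;\bq).
\]

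\emph{Integration.} Both $G_d(\,\cdot\,;\bq)$ (because $\bA(\bq)$ is symmetric with real eigenvalues) and $g(\,\cdot\,;\bq)$ are analytic on $\C_+$, so for any fixed $\xi\in\C_+$ and any large $K>0$,
\[
G_d(\xi;\bq)-g(\xi;\bq)=\big[G_d(\imagunit K;\bq)-g(\imagunit K;\bq)\big]-\int_{\gamma_K}\big[M_d(\xi';\bq)-m(\xi';\bq)\big]\,\de\xi',
\]
where $\gamma_K\subset\C_+$ runs from $\imagunit K$ to $\xi$. I would split $\gamma_K$ into a portion inside a fixed compact $\Omega\subset\C_+$ near $\xi$, on which $\E\int|M_d-m|\to0$ by the uniform bound \eqref{eqn:strongest_in_compact}, and a portion with $\Im(\xi')\ge T_0$, on which I would use a Neumann expansion of $(\bA(\bq)-\xi')^{-1}$ valid for $|\xi'|>\|\bA(\bq)\|_{\op}$ (which holds with overwhelming probability for $|\xi'|$ larger than an absolute constant, using operator-norm control of $\bZ,\bQ,\bH,\bZ_1$ from Section~\ref{sec:Stieltjes}) together with the matching large-$\xi'$ expansion of the fixed point. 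This yields $\E|M_d(\xi';\bq)-m(\xi';\bq)|\le o_d(1)/|\xi'|+C/|\xi'|^2$ for $|\xi'|\ge T_0$, the $o_d(1)$ coming from $N/d\to\psi_1$, $n/d\to\psi_2$ and $C$ bounded through $\tfrac1d\E\|\bA(\bq)\|_F^2=O(1)$; integrating, the large-$\Im$ portion contributes at most $o_d(1)\log(K/T_0)+C/T_0$, which for fixed $K$ tends to $C/T_0$ as $d\to\infty$.

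\emph{Matching at infinity.} It remains to control $G_d(\imagunit K;\bq)-g(\imagunit K;\bq)$. Using $\Log(\lambda_i-\imagunit K)=\Log(-\imagunit K)+\imagunit\lambda_i/K+O(\lambda_i^2K^{-2})$ one gets $G_d(\imagunit K;\bq)=\tfrac{M}{d}\Log(-\imagunit K)+\tfrac{\imagunit}{K}\tfrac1d\Tr\bA(\bq)+O(K^{-2})\tfrac1d\|\bA(\bq)\|_F^2$, where $\tfrac1d\Tr\bA(\bq)=(s_1+s_2)\tfrac Nd+(t_1+t_2)\tfrac nd$ is deterministic; on the other side, plugging the expansion $m_i(\imagunit K;\bq)=-\psi_i/(\imagunit K)+O(K^{-2})$ into \eqref{eqn:log_determinant_variation} gives $g(\imagunit K;\bq)=(\psi_1+\psi_2)\Log(-\imagunit K)+O(1/K)$, the divergence coming entirely from the $-\psi_i\log(m_i/\psi_i)$ terms. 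The two expansions agree to leading and subleading order (both $O(1/K)$ coefficients equalling $(s_1+s_2)\psi_1+(t_1+t_2)\psi_2$, consistently with the antiderivative identity), so $\E\big|G_d(\imagunit K;\bq)-g(\imagunit K;\bq)\big|\le|\psi_{1,d}+\psi_{2,d}-\psi_1-\psi_2|\,|\Log(-\imagunit K)|+O(1/K)\to0$ as $d\to\infty$ then $K\to\infty$. Combining the three steps, $\limsup_{d\to\infty}\E|G_d(\xi;\bq)-g(\xi;\bq)|\le O(1/K)+C/T_0$ for all $K,T_0$, which gives \eqref{eqn:expression_for_log_determinant}.

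\emph{Convergence of $\bq$-derivatives and the main obstacle.} For \eqref{eqn:convergence_of_derivatives}–\eqref{eqn:convergence_of_second_derivatives} I would apply the convergence just proved at $\xi=\imagunit u$ and at $\bq$ ranging over a neighbourhood of $\bzero$, and combine it with uniform-in-$d$ bounds on the higher $\bq$-derivatives of $G_d(\imagunit u;\cdot)$: these are finite sums of traces $\tfrac1d\Tr[(\bA-\imagunit u)^{-1}\partial_{q_{j_1}}\bA\cdots(\bA-\imagunit u)^{-1}\partial_{q_{j_k}}\bA]$, each controlled by $u^{-k}$ times products of $\tfrac1d\|\partial_q\bA\|_F^2$-type quantities with $\tfrac1d\E\|\partial_q\bA\|_F^2=O(1)$. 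A standard finite-difference argument (pointwise convergence of equi-$C^3$, resp.\ equi-$C^4$, functions forces convergence of their first, resp.\ second, derivatives on compacts) then yields the claims, $g(\imagunit u;\cdot)$ being smooth near $\bzero$ by the implicit function theorem applied to \eqref{eq:FixedPoint}. The main obstacle is the matching-at-infinity step: one must make the expansion of the log-determinant of the random block matrix $\bA(\bq)$ at large imaginary argument precise and uniform in $d$ — requiring good tail control of $\|\bA(\bq)\|_{\op}$ (hence of $\|\bZ\|_{\op}$ and the related matrices) and careful cancellation of the divergent $\Log(-\imagunit K)$ term against its counterpart in $g$ — and arrange the path-integral estimate of Step 2 so that the limit $K\to\infty$ is taken last.
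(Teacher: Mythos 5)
Your proposal follows essentially the same route as the paper's proof: you differentiate $\Xi$ in $(z_1,z_2)$ and observe that the stationarity conditions $\partial_{z_1}\Xi=\partial_{z_2}\Xi=0$ are exactly the fixed-point equations \eqref{eq:FixedPoint}, whence $\partial_\xi g=-m$; you then integrate $M_d-m$ along a path from $\xi$ to $\imagunit K$, match $G_d$ and $g$ at large imaginary argument (the paper's Lemma~\ref{lem:large_imaginary_behavior_G}), and finally upgrade the pointwise convergence to convergence of the first two $\bq$-derivatives via equi-boundedness of higher $\bq$-derivatives of $G_d$ (the paper's Lemmas~\ref{lem:bounded_derivatives_G} and~\ref{lem:Taylor_bound}). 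The only departure is cosmetic: you split the contour into a compact piece and a tail $\Im\xi'\ge T_0$ with a Neumann-series estimate, but for fixed $K$ the whole path from $\xi$ to $\imagunit K$ is already a compact subset of $\C_+$, so the paper simply invokes the uniform convergence \eqref{eqn:strongest_in_compact} over the entire path and only sends $K\to\infty$ at the very end, making the tail estimate unnecessary.
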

For a complete proof of this proposition we refer to Section \ref{sec:PropoIntegrateXi}.

We can now use  
Eqs. (\ref{eqn:convergence_of_derivatives}), (\ref{eqn:convergence_of_second_derivatives}), and (\ref{eqn:Psi_j_and_G_d})
in Proposition \ref{prop:decomposition}, to  get 
\begin{equation}\label{eqn:main_expression_main_proof}
\E_{\bX, \bTheta, \beps, f_d^{\sNL}} \Big\vert R_\RF(f_d, \bX, \bTheta, \lambda)  - \overline \cuR \Big\vert = o_d(1),
\end{equation}
where
\begin{align}
\overline \cuR =&~ \normf_1^2 \cuB + (\normf_\star^2 + \tau^2) \cuV + \normf_\star^2, \label{eqn:risk_main_proof} \\
\cuB =&~ 1 -  \partial_p g(\imagunit (\psi_1 \psi_2 \lambda)^{1/2}; \bzero) - \ob_\star^2\, \partial_{s_1, t_2} g(\imagunit (\psi_1 \psi_2 \lambda)^{1/2}; \bzero)  - \ob_1^2 \, \partial_{s_2, t_2} g(\imagunit (\psi_1 \psi_2 \lambda)^{1/2}; \bzero)\, , \label{eqn:bias_main_proof} \\
\cuV =&~ - \ob_\star^2\, \partial_{s_1, t_1} g(\imagunit (\psi_1 \psi_2 \lambda)^{1/2}; \bzero)  - \ob_1^2\, \partial_{s_2, t_1} g(\imagunit (\psi_1 \psi_2 \lambda)^{1/2}; \bzero)\,. \label{eqn:var_main_proof}
\end{align}

The last display provides the desired asymptotics of bias and variance. However, these expressions
involve  derivatives of $g$ that are very inconvenient to evaluate.
We conclude by proving more explicit expressions for these quantities.
The key remark here is that the expression $g(\xi; \bq)$ in Proposition \ref{prop:expression_for_log_determinant}
has a special property: the fixed point equations \eqref{eq:FixedPoint} imply  that $(m_1(\xi; \bq), m_2(\xi; \bq))$ is a
stationary point of the function $\Xi(\xi, \,\cdot\, , \, \cdot\, ; \bq)$. This simplifies the calculation of derivatives with respect to
$\bq$. In particular, the first derivative are obtained by computing the partial derivatives of $\Xi$ with respect to $\bq$
and evaluating it at $m_1,m_2$.
\begin{lemma}[Formula for derivatives of $g$]\label{lem:formula_derivatives_g}
For fixed $\xi \in \C_+$ and $\bq \in \R^5$, let $m_1(\xi; \bq), m_2(\xi; \bq)$ be defined as the analytic continuation of solution of Eq. (\ref{eq:FixedPoint}) as defined in Proposition \ref{prop:Stieltjes}. Recall the definition of $\Xi$ and $g$ given in Eq. (\ref{eqn:log_determinant_variation}) and (\ref{eqn:formula_g}).
Defining 
\begin{align}\label{eqn:m0_definition}
m_0 = m_0(\xi) \equiv m_1(\xi; \bzero) \cdot m_2(\xi; \bzero), 
\end{align}
we have 
\begin{equation}\label{eqn:derivatives_formula_g_main_proof}
\begin{aligned}
\partial_p g(\xi; \bzero) =&~ 2m_0 \ob_1^2/(m_0 \ob_1^2 - 1), \\
\partial_{s_1, t_1}^2 g(\xi; \bzero) =&~ [m_0^5\ob_1^6\ob_\star^2 - 3m_0^4\ob_1^4\ob_\star^2 + m_0^3\ob_1^4 + 3m_0^3\ob_1^2\ob_\star^2 - m_0^2\ob_1^2 - m_0^2\ob_\star^2
] / S,\\
\partial_{s_1, t_2}^2 g(\xi; \bzero) =&~ [(\psi_2 - 1)m_0^3\ob_1^4 + m_0^3\ob_1^2\ob_\star^2 + (- \psi_2 - 1)m_0^2\ob_1^2 - m_0^2\ob_\star^2]/S,\\
\partial_{s_2, t_1}^2 g(\xi; \bzero) =&~ [(\psi_1 - 1)m_0^3\ob_1^4 + m_0^3\ob_1^2\ob_\star^2 + (- \psi_1 - 1)m_0^2\ob_1^2 - m_0^2\ob_\star^2]/S,\\
\partial_{s_2, t_2}^2 g(\xi; \bzero) =&~[- m_0^6\ob_1^6\ob_\star^4 + 2m_0^5\ob_1^4\ob_\star^4 + (\psi_1\psi_2 - \psi_2 - \psi_1 + 1)m_0^4\ob_1^6 \\
&- m_0^4\ob_1^4\ob_\star^2 - m_0^4\ob_1^2\ob_\star^4 + (2 - 2\psi_1\psi_2)m_0^3\ob_1^4 \\
&+ (\psi_1 + \psi_2 + \psi_1\psi_2 + 1)m_0^2\ob_1^2 + m_0^2\ob_\star^2] / [(m_0 \ob_1^2 - 1) S],
\end{aligned}
\end{equation}
where
\begin{equation}\label{eqn:derivatives_formula_g_S_main_proof}
\begin{aligned}
S =&~ m_0^5\ob_1^6\ob_\star^4 - 3m_0^4\ob_1^4\ob_\star^4 + (\psi_1 + \psi_2 - \psi_1\psi_2 - 1)m_0^3\ob_1^6 \\
&+ 2m_0^3\ob_1^4\ob_\star^2 + 3m_0^3\ob_1^2\ob_\star^4 + (3\psi_1\psi_2 - \psi_2 - \psi_1 - 1)m_0^2\ob_1^4 \\
&- 2m_0^2\ob_1^2\ob_\star^2 - m_0^2\ob_\star^4 - 3\psi_1\psi_2m_0\ob_1^2 + \psi_1\psi_2.\\
\end{aligned}
\end{equation}
\end{lemma}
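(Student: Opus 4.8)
The key structural fact is that $g$ is a \emph{critical value}: differentiating \eqref{eqn:log_determinant_variation} in $z_1$ and $z_2$ shows that the fixed-point equations \eqref{eq:FixedPoint} are exactly the stationarity conditions $\partial_{z_1}\Xi(\xi,z_1,z_2;\bq)=0$, $\partial_{z_2}\Xi(\xi,z_1,z_2;\bq)=0$ (the numerator and denominator of the fraction in $\sFone$ match $\partial_{z_1}$ of the argument of the $\log$ in $\Xi$, and similarly for $\sFtwo$). Hence $(m_1(\xi;\bq),m_2(\xi;\bq))$ is a critical point of $z\mapsto\Xi(\xi,z;\bq)$ and $g(\xi;\bq)=\Xi(\xi,m_1(\xi;\bq),m_2(\xi;\bq);\bq)$. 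The envelope theorem then gives, for any coordinate $q_i$,
\[
\partial_{q_i} g \;=\; \bigl(\partial_{q_i}\Xi\bigr)\big|_{z=m}\,.
\]
Applying this with $q_i=p$, one computes $\partial_p\Xi = -2\ob_1^2(1+p)\,z_1 z_2\big/\bigl[(s_2z_1+1)(t_2z_2+1)-\ob_1^2(1+p)^2 z_1 z_2\bigr]$; at $\bq=\bzero$ the bracket collapses to $1-\ob_1^2 m_0$ and $z_1z_2=m_0$, which yields the first formula in \eqref{eqn:derivatives_formula_g_main_proof}.

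For the second derivatives I differentiate the envelope identity once more. Let $H$ denote the $2\times2$ Hessian of $\Xi$ in $(z_1,z_2)$ at the critical point. Implicit differentiation of $\partial_{z_k}\Xi(\xi,m(\xi;\bq);\bq)=0$ gives $\partial_{q_j} m = -H^{-1}\bigl(\partial_{z_1 q_j}\Xi,\,\partial_{z_2 q_j}\Xi\bigr)^{\sT}$, whence the Schur-complement formula
\[
\partial^2_{q_i q_j} g \;=\; \partial^2_{q_i q_j}\Xi \;-\; \begin{pmatrix}\partial_{z_1 q_i}\Xi & \partial_{z_2 q_i}\Xi\end{pmatrix} H^{-1}\begin{pmatrix}\partial_{z_1 q_j}\Xi\\[1mm]\partial_{z_2 q_j}\Xi\end{pmatrix}\,,
\]
all entries evaluated at $z=m(\xi;\bzero)$, $\bq=\bzero$. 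Invertibility of $H$ holds for $\Im(\xi)$ large, where $m_1\approx-\psi_1/\xi$, $m_2\approx-\psi_2/\xi$ and $H$ is nearly diagonal; by Proposition \ref{prop:Stieltjes} the solution branch extends analytically, so the resulting identities (once established for $\Im(\xi)$ large) extend to all $\xi\in\C_+$ by analytic continuation. It then remains to evaluate, at $\bq=\bzero$, the elementary rational expressions $\partial^2_{z_k z_l}\Xi$, $\partial_{z_k q_i}\Xi$, $\partial^2_{q_i q_j}\Xi$ for $q_i,q_j\in\{s_1,s_2,t_1,t_2\}$; their denominators are powers of $w:=1-\ob_1^2 m_0$, and one finds e.g.\ $\partial_{z_1z_2}\Xi|_\bzero=-\ob_1^2/w^2-\ob_\star^2$, $\partial_{z_k z_k}\Xi|_\bzero=-\ob_1^4 m_{3-k}^2/w^2+\psi_k/m_k^2$, $\partial_{s_2 z_1}\Xi|_\bzero=\partial_{t_2 z_2}\Xi|_\bzero=1/w^2$, $\partial_{s_2 z_2}\Xi|_\bzero=\ob_1^2 m_1^2/w^2$, $\partial_{t_2 z_1}\Xi|_\bzero=\ob_1^2 m_2^2/w^2$, $\partial^2_{s_2 t_2}\Xi|_\bzero=-\ob_1^2 m_0^2/w^2$, and $\partial^2_{s_1 t_1}\Xi=\partial^2_{s_1 t_2}\Xi=\partial^2_{s_2 t_1}\Xi=0$.

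The substantive step is to show that the resulting expressions depend on $\xi,m_1,m_2$ only through $m_0=m_1m_2$, and then to match the stated polynomials. The collapse to $m_0$ has two sources. First, $\Xi$ is invariant under the exchange $(z_1,s_1,s_2,\psi_1)\leftrightarrow(z_2,t_1,t_2,\psi_2)$, which forces every product of factors appearing in the Schur complement to be a \emph{symmetric} function of $(m_1,m_2)$ (each lone $m_k^2$ is paired either with $m_{3-k}^2$ or with a $1/m_k^2$ coming from $\psi_k/m_k^2$). Second, multiplying the fixed-point equations \eqref{eq:FixedPoint} at $\bq=\bzero$ by $m_1$, respectively $m_2$, gives $\psi_1=-\xi m_1-c\,m_0$ and $\psi_2=-\xi m_2-c\,m_0$ with $c:=\ob_\star^2+\ob_1^2/w$; adding and multiplying these yields $\xi^2 = \psi_1\psi_2/m_0+c(\psi_1+\psi_2)+c^2 m_0$ and $\psi_1\psi_2/(m_1^2 m_2^2)=\psi_1\psi_2/m_0^2$, which eliminates $\psi_1/m_1^2$, $\psi_2/m_2^2$ and $\xi$ in favor of $m_0$. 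Putting these together, in particular $\det H = S/(m_0^2 w^3)$ with $S$ as in \eqref{eqn:derivatives_formula_g_S_main_proof}, and the numerators simplify to the polynomials claimed in \eqref{eqn:derivatives_formula_g_main_proof} (the extra factor $m_0\ob_1^2-1=-w$ in the denominator of $\partial^2_{s_2t_2}g$ arises because the $s_2,t_2$ mixed terms carry an extra power of $1/w$).

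\textbf{Main obstacle.} There is no conceptual difficulty beyond the envelope/Schur-complement setup and the two reductions above; the real work is the rational-function bookkeeping that turns the Schur-complement expression into the precise polynomial ratios of \eqref{eqn:derivatives_formula_g_main_proof}. I would carry this out symbolically and verify the resulting polynomial identities (including $S = m_0^2 w^3\det H$) with a computer-algebra check, after which the Lemma follows.
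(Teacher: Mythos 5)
Your proposal is correct and takes essentially the same route as the paper: the paper's Appendix proof also observes that $(m_1,m_2)$ is a stationary point of $\Xi(\xi,\cdot,\cdot;\bq)$, invokes the envelope/implicit-differentiation formula so that $\partial_{q_i}g=\partial_{q_i}\Xi|_{z=m}$ and $\partial^2_{q_iq_j}g$ is the Schur complement of the $(z_1,z_2)$ block of $\nabla^2_{(q,z)}\Xi$, and then asserts that "basic algebra completes the proof." Your additional details — the explicit mixed partials at $\bq=\bzero$, the identity $\det H=S/(m_0^2w^3)$ with $w=1-\ob_1^2m_0$, and the symmetry/fixed-point manipulations explaining why the final expressions depend on $(m_1,m_2)$ only through $m_0=m_1m_2$ — are exactly the "basic algebra" the paper leaves implicit, and they check out.
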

The proof of this lemma follows by simple calculus and can be found in  Appendix \ref{sec:proof_lemma_formula_derivatives_g}.

Define 
\begin{equation}\label{eqn:definition_nu_in_main_proof}
\begin{aligned}
\nu_1(\imagunit \xi) \equiv&~ m_1(\imagunit \xi \ob_\star; \bzero) \cdot \ob_\star, \\
\nu_2(\imagunit \xi) \equiv&~ m_2(\imagunit \xi \ob_\star; \bzero) \cdot \ob_\star. 
\end{aligned}
\end{equation}
By the definition of analytic functions $m_1$ and $m_2$ (satisfying Eq. (\ref{eq:FixedPoint}) and (\ref{eq:Fdef}) with $\bq = \bzero$ as defined in Proposition \ref{prop:Stieltjes}), the definition of $\nu_1$ and $\nu_2$ in Eq. (\ref{eqn:definition_nu_in_main_proof}) above is equivalent to its definition in Definition \ref{thm:MainIntro} (as per Eq. (\ref{eqn:nu_definition})). Moreover, for $\chi$ defined in Eq. (\ref{eqn:definition_chi_main_formula}) with $\olambda = \lambda / \ob_\star^2$ and $m_0$ defined in Eq. (\ref{eqn:m0_definition}), we have
\begin{equation}\label{eqn:chi_m0_relationship}
\begin{aligned}
\chi =&~ \nu_1(\imagunit (\psi_1 \psi_2 \lambda / \ob_\star^2)^{1/2}) \nu_2(\imagunit (\psi_1 \psi_2 \lambda / \ob_\star^2)^{1/2})\\
 =&~ m_1(\imagunit (\psi_1 \psi_2 \lambda)^{1/2}; \bzero) m_2(\imagunit (\psi_1 \psi_2 \lambda)^{1/2}; \bzero)\cdot \ob_\star^2 \\
 =&~ m_0(\imagunit (\psi_1 \psi_2 \lambda)^{1/2}) \cdot \ob_\star^2. 
\end{aligned}
\end{equation}
Plugging in Eq. (\ref{eqn:derivatives_formula_g_main_proof}) and (\ref{eqn:derivatives_formula_g_S_main_proof}) into Eq. (\ref{eqn:bias_main_proof}) and (\ref{eqn:var_main_proof}) and using Eq. (\ref{eqn:chi_m0_relationship}), we can see that the expressions for $\cuB$ and $\cuV$ defined in Eq. (\ref{eqn:bias_main_proof}) and (\ref{eqn:var_main_proof}) coincide with Eq. (\ref{eqn:main_bias}) and (\ref{eqn:main_var}) where $\cuE_0, \cuE_1, \cuE_2$ are provided in Eq. (\ref{eq:E012def}). Combining with Eq. (\ref{eqn:main_expression_main_proof}) and (\ref{eqn:risk_main_proof}) proves the theorem.

\section{Proof of Proposition \ref{prop:decomposition}}\label{sec:decomposition}

Throughout the proof of Proposition \ref{prop:decomposition}, we write that $\psi_1 = \psi_{1, d} = N/d$ and
$\psi_2 = \psi_{2, d} = n/d$ for notation simplicity. Throughout this section, we will denote by $B(d,k)$ the
dimension of the space of spherical harmonics of degree $k$ on $\S^{d-1}(\sqrt{d})$, and by $(Y_{kl}^{(d)})_{l\le B(d,k)}$
a basis for this space. We refer to Appendix \ref{sec:Background} for further background.

As a useful preliminary remark, we note that the Gaussian process $f_d^{\sNL}$ defined in Assumption \ref{ass:ground_truth}
can be explicitly represented as a sum of spherical harmonics with Gaussian coefficients. The
following lemma is standard (see, e.g., \cite[Proposition 6.11]{marinucci2011random}). For the reader's convenience,
we present a simple  proof in Appendix \ref{sec:additional_decomposition}. 
\begin{lemma}\label{lem:decomposition1}
  For any kernel function $\Sigma_d$ satisfying Assumption \ref{ass:ground_truth}, we can always find a sequence $(\normf_{d, k}^2 \in \R_+)_{k \ge 2}$ satisfying: $(1)$  $\sum_{k \ge 2} \normf_{d, k}^2  = \Sigma_d(1)$, $\lim_{d \to \infty} \sum_{k \ge 2} \normf_{d, k}^2 = \normf_\star^2$; $(2)$ There exist a sequence of independent random vectors
  $\bbeta_{d, k} \sim \normal(\bzero, [\normf_{d, k}^2 / B(d, k)] \id_{B(d, k)})$ such that
  \begin{align}
    f_d^{\sNL}(\bx) = \sum_{k \ge 2} \sum_{l \in [B(d, k)]}  (\bbeta_{d, k})_l Y_{kl}^{(d)}(\bx)\, .\label{eq:RepresentationSpherical}
  \end{align}
\end{lemma}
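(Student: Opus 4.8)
The plan is to recognize \eqref{eq:RepresentationSpherical} as the Karhunen--Loève (Mercer) expansion of the isotropic Gaussian field $f_d^{\sNL}$, exploiting that rotationally invariant integral kernels on $\S^{d-1}(\sqrt d)$ are simultaneously diagonalized by all spherical harmonics, with an eigenvalue that depends only on the degree.

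\emph{Step 1 (spectral decomposition of the covariance).} First I would observe that the covariance operator $T_d$ on $L^2(\gamma_d)$ with kernel $(\bx,\bx')\mapsto \Sigma_d(\<\bx,\bx'\>/d)$ is positive semidefinite (being a covariance) and commutes with the $O(d)$ action, so it preserves each degree-$k$ eigenspace of spherical harmonics; since these are irreducible, by Schur's lemma (equivalently, by the Funk--Hecke formula) $T_d$ acts on the degree-$k$ block as a scalar $\lambda_{d,k}\ge 0$. Combined with the addition theorem $\sum_{l\le B(d,k)}Y_{kl}^{(d)}(\bx)Y_{kl}^{(d)}(\bx') = B(d,k)P_k^{(d)}(\<\bx,\bx'\>/d)$, this yields the Mercer expansion
\[
\Sigma_d(\<\bx,\bx'\>/d) \;=\; \sum_{k\ge 0}\lambda_{d,k}\sum_{l\le B(d,k)} Y_{kl}^{(d)}(\bx)\,Y_{kl}^{(d)}(\bx')\,,\qquad \lambda_{d,k}\ge 0\,.
\]
I then set $\normf_{d,k}^2 \equiv B(d,k)\,\lambda_{d,k}\ge 0$.

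\emph{Step 2 (the coefficients are independent Gaussians).} Next I would define $(\bbeta_{d,k})_l \equiv \langle f_d^{\sNL}, Y_{kl}^{(d)}\rangle_{L^2(\gamma_d)}$. Since $\E\int f_d^{\sNL}(\bx)^2\,\gamma_d(\de\bx) = \Sigma_d(1) < \infty$ (using $\E[f_d^{\sNL}(\bx)^2]=\Sigma_d(1)$ for $\bx\in\S^{d-1}(\sqrt d)$ and Fubini), the field lies in $L^2(\gamma_d)$ almost surely and these projections are well defined; being linear functionals of a centered Gaussian process they are jointly centered Gaussian, and plugging the expansion of Step 1 into the double integral together with orthonormality of $\{Y_{kl}^{(d)}\}$ gives $\E[(\bbeta_{d,k})_l(\bbeta_{d,k'})_{l'}] = \lambda_{d,k}\,\delta_{kk'}\delta_{ll'}$. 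Hence the $\bbeta_{d,k}$ are independent with $\bbeta_{d,k}\sim\normal(\bzero,[\normf_{d,k}^2/B(d,k)]\,\id_{B(d,k)})$, and expanding $f_d^{\sNL}$ in the orthonormal basis $\{Y_{kl}^{(d)}\}$ of $L^2(\gamma_d)$ yields \eqref{eq:RepresentationSpherical} with $L^2$ convergence. (Alternatively, I can run this backwards: \emph{define} $f_d^{\sNL}$ by the series with these prescribed independent coefficients and verify that its covariance matches, which sidesteps any measurability concern.)

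\emph{Step 3 (total power and vanishing of degrees $0,1$).} Parseval in $L^2(\gamma_d)$ gives $\sum_{k\ge 0}\normf_{d,k}^2 = \E\|f_d^{\sNL}\|_{L^2(\gamma_d)}^2 = \Sigma_d(1)$, which tends to $\normf_\star^2$ by Assumption~\ref{ass:ground_truth}. The same assumption's conditions $\E_{\bx}\{\Sigma_d(x_1/\sqrt d)\}=0$ and $\E_{\bx}\{\Sigma_d(x_1/\sqrt d)\,x_1\}=0$ say exactly that the degree-$0$ and degree-$1$ Gegenbauer components of $\Sigma_d$ vanish (the constant $1$ and the coordinate $x_1$ being, up to $L^2(\gamma_d)$-normalization, a degree-$0$ and a degree-$1$ spherical harmonic), hence $\lambda_{d,0}=\lambda_{d,1}=0$ and $\normf_{d,0}^2=\normf_{d,1}^2=0$. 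Therefore the expansion runs over $k\ge 2$ and $\sum_{k\ge 2}\normf_{d,k}^2=\Sigma_d(1)$, giving property (1), while property (2) is Step 2. The only point requiring genuine care is the measure-theoretic set-up of the Gaussian field (existence of an $L^2(\gamma_d)$-valued version and the interchange of $\E$ with $\int_{\S^{d-1}(\sqrt d)}$), which can be circumvented entirely by taking the series representation as the definition; everything else is the standard Mercer computation specialized to the sphere.
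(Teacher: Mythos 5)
Your proof is correct and follows essentially the same route as the paper: both expand the rotationally invariant covariance kernel in the degree-$k$ spherical-harmonics blocks (equivalently, in Gegenbauer polynomials via the addition theorem), identify $\normf_{d,k}^2$ as $B(d,k)$ times the degree-$k$ eigenvalue, and observe that the degree-$0$ and degree-$1$ coefficients vanish by Assumption~\ref{ass:ground_truth}; the ``backwards'' construction you note in parentheses is exactly the paper's version of Step 2. Your explicit appeal to positive semidefiniteness to justify $\normf_{d,k}^2\ge 0$ is a small point of extra care that the paper leaves implicit, but it does not change the structure of the argument.
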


By exploiting the symmetry in the problem, the next lemma shows that, to show Eq. (\ref{eqn:main_decomposition}), instead of considering a fixed sequence of $\{ \bbeta_{d, 1} \}_{d \ge 2}$, we can consider to take $\{ \bbeta_{d, 1} \sim \Unif(\S^{d-1}(\normf_{d, 1})) \}_{d \ge 2}$. We defer the proof of this lemma to Section \ref{sec:additional_decomposition}. 

\begin{lemma}\label{lem:randomize_beta}
Let us write the random variable in the left hand side of Eq. (\ref{eqn:main_decomposition}) as a function of $\bbeta_{d, 1}$ and de-emphasize its dependence on other variables, i.e., 
\[
\cE(\bbeta_{d, 1}) \equiv \Big\vert R_\RF(f_d, \bX, \bTheta, \lambda) - \Big[ \normf_1^2 (1 - 2 \Psi_1 + \Psi_2) + (\normf_\star^2 + \tau^2) \Psi_3 + \normf_\star^2 \Big] \Big\vert. 
\]
Let $\bX$, $\bTheta$, $\beps$, and $f_d^{\sNL}$ be distributed as in the statement of Proposition \ref{prop:decomposition}. Then, for any fixed $\bbeta_{d, 1} \in \S^{d-1}(\normf_{d, 1})$, we have 
\[
\E_{\bX, \bTheta, \beps, f_d^{\sNL}} [ \cE(\bbeta_{d, 1})]  = \E_{\tilde \bbeta_{d, 1} \sim \Unif(\S^{d-1}(\normf_{d, 1}))}\E_{\bX, \bTheta, \beps, f_d^{\sNL}} [\cE(\tilde \bbeta_{d, 1})]. 
\]
\end{lemma}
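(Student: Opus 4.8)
The plan is to exploit the rotational invariance built into the model. Fix an orthogonal matrix $\bR \in \reals^{d \times d}$, i.e. $\bR^\sT \bR = \id_d$. The first step is to check that, \emph{for each fixed realization}, the quantity $\cE$ is unchanged under the simultaneous substitution $\bbeta_{d,1}\mapsto \bR\bbeta_{d,1}$, $\bx_i \mapsto \bR\bx_i$ for all $i$, $\btheta_a \mapsto \bR\btheta_a$ for all $a$, and $f_d^{\sNL}(\cdot)\mapsto f_d^{\sNL}(\bR^\sT\,\cdot)$, while $\beps$ is left fixed. Under this substitution the products $\bX\bTheta^\sT$, $\bX\bX^\sT$, $\bTheta\bTheta^\sT$ are all invariant, so $\bZ$, $\bZ_1$, $\bH$, $\bQ$, the resolvent $\Res$, and hence $\Psi_1,\Psi_2,\Psi_3$ do not change; the responses $f_d(\bx_i) = \beta_{d,0}+\<\bR\bbeta_{d,1},\bR\bx_i\>+f_d^{\sNL}(\bR^\sT\bR\bx_i) = \beta_{d,0}+\<\bbeta_{d,1},\bx_i\>+f_d^{\sNL}(\bx_i)$ are unchanged, so $\by$ and $\hba(\lambda)=\Res\bZ^\sT\by/\sqrt d$ are unchanged; and in the prediction error the fitted function becomes $\bx\mapsto f(\bR^\sT\bx;\hba(\lambda),\bTheta)$ and the truth becomes $\bx\mapsto f_d(\bR^\sT\bx)$, so after the change of variables $\bx\mapsto\bR\bx$ in the expectation over the fresh test point --- legitimate because $\Unif(\S^{d-1}(\sqrt d))$ is rotation invariant --- $R_\RF(f_d,\bX,\bTheta,\lambda)$ is unchanged too.

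The second step is to note that the joint law of $(\bX,\bTheta,\beps,f_d^{\sNL})$ is invariant under this same substitution: the rows of $\bX$ and $\bTheta$ are i.i.d. $\Unif(\S^{d-1}(\sqrt d))$, which is invariant under $\bx\mapsto\bR\bx$; $\beps$ is untouched; $f_d^{\sNL}$ is a centered Gaussian process whose covariance $\Sigma_d(\<\bx_1,\bx_2\>/d)$ depends on $(\bx_1,\bx_2)$ only through $\<\bx_1,\bx_2\>$, hence $f_d^{\sNL}(\bR^\sT\,\cdot)\overset{d}{=}f_d^{\sNL}(\cdot)$; and the four objects are mutually independent, so the joint transformation preserves the joint distribution. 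Combining this with the pathwise invariance of $\cE$ gives, for every orthogonal $\bR$,
\[
\E_{\bX,\bTheta,\beps,f_d^{\sNL}}[\cE(\bbeta_{d,1})] = \E_{\bX,\bTheta,\beps,f_d^{\sNL}}[\cE(\bR\bbeta_{d,1})]\, .
\]

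Thus $h(\bbeta)\equiv\E_{\bX,\bTheta,\beps,f_d^{\sNL}}[\cE(\bbeta)]$ is invariant under the orthogonal group, and since the latter acts transitively on $\S^{d-1}(\normf_{d,1})$, the function $h$ is constant on that sphere. Therefore $h(\bbeta_{d,1}) = \E_{\tilde\bbeta_{d,1}\sim\Unif(\S^{d-1}(\normf_{d,1}))}[h(\tilde\bbeta_{d,1})]$, which is exactly the right-hand side of the claimed identity.

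The argument is essentially bookkeeping; the only point that needs a little care is the last item in the first step, where one must remember that $R_\RF$ is itself an expectation over a fresh test point $\bx$, and that both the truth and the fitted predictor get ``rotated'' inside that expectation, so invariance is restored only after using the rotation invariance of $\Unif(\S^{d-1}(\sqrt d))$ once more. A related point is to keep the transformation acting on $f_d^{\sNL}$ as a random function (rather than only on its covariance), so that the pathwise identity and the distributional identity can legitimately be combined.
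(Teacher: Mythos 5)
Your proof is correct and follows the same approach as the paper: establish a pathwise invariance of $\cE$ under the simultaneous orthogonal transformation of $\bbeta_{d,1}$, $\bX$, $\bTheta$, and the Gaussian process $f_d^{\sNL}$, then combine with the distributional invariance of $(\bX,\bTheta,\beps,f_d^{\sNL})$ to conclude that $\E[\cE(\bbeta_{d,1})]$ is constant on the sphere. Your write-up is if anything slightly more explicit than the paper's (which says ``it is easy to see'' for the pathwise step), and the extra care you take with the fresh test-point expectation and with transforming $f_d^{\sNL}$ as a random function rather than just its covariance is exactly the right level of detail.
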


By Lemma \ref{lem:decomposition1}, we can represent the Gaussian process $f_d^{\sNL}$ as
per Eq.~\eqref{eq:RepresentationSpherical}. 
By Lemma \ref{lem:randomize_beta}, we can replace the expectation over $f_d^{\sNL}$ by expectation over 
$\bbeta_{d, 1} \sim \Unif(\S^{d-1}(\normf_{d, 1}))$ and the Gaussian vectors $\{ \bbeta_{d, k} \sim \cN(\bzero, [\normf_{d, k}^2 / B(d, k)] \id_{B(d, k)}) \}_{k \ge 2}$.

In the remaining of this section, we write $\E_\bbeta$ as a shorthand for this expectation.
To simplify our expressions, we sometimes write $\bbeta_k \equiv \bbeta_{d, k}$. It is furthermore useful to introduce two resolvent matrices $\Res \in \reals^{N\times N}$ and $\oRes \in \reals^{n \times n}$ ($\Res$ is the same as defined in Eq. (\ref{eqn:resolvent_Z'Z}) except that we are keeping $\psi_{1, d}$ and $\psi_{2, d}$ fixed here)
\begin{equation}
\begin{aligned}\label{eqn:resolvent_Z'Z_in_proof}
\Res \equiv&~  (\bZ^\sT \bZ +  \psi_1 \psi_2 \lambda \id_N)^{-1}, \\
\oRes\equiv&~  (\bZ \bZ^\sT +  \psi_1 \psi_2 \lambda \id_n)^{-1}.
\end{aligned}
\end{equation}

Next, we state three lemmas that are used in the proof of Proposition \ref{prop:decomposition}. 
\begin{lemma}[Decomposition]\label{lem:reformulation}
Let $\lambda_{d, k}(\sigma)$ be the Gegenbauer coefficients of the function $\sigma$, i.e., we have 
\begin{equation}\label{eqn:expansion_sigma_lem_reformulation}
\sigma(x) = \sum_{k = 0}^\infty \lambda_{d, k}(\sigma) B(d, k) Q_k(\sqrt d \cdot x). 
\end{equation}
Under the assumptions of Proposition \ref{prop:decomposition}, for any $\lambda > 0$, we have
\begin{align}
\E_{\bbeta, \beps}[R_\RF(f_d, \bX, \bTheta, \lambda)] =  \sum_{k=0}^\infty \normf_{d, k}^2(1 - 2 S_{1k} +S_{2k})  + \tau^2 S_3,
\end{align}
where
\begin{equation}\label{eqn:S_definitions_in_decomposition}
\begin{aligned}
S_{1k} =&~ \frac{1}{\sqrt d}\lambda_{d, k}(\sigma) \Tr [ Q_k(\bTheta \bX^\sT)  \bZ  \Res ], \\
S_{2k} =&~ \frac{1}{d} \Tr[  \Res \bU  \Res \bZ^\sT Q_k(\bX \bX^\sT) \bZ ], \\
S_3 =&~ \frac{1}{d} \Tr[  \Res \bU  \Res  \bZ^\sT \bZ],
\end{aligned}
\end{equation}
where $\bU = (U_{ij})_{i, j \in [N]} \in \R^{N \times N}$ is a matrix whose elements is as defined in Eq. (\ref{eqn:def_Vi_Uij}), 
$\bZ$ is given by Eq. (\ref{eqn:QHZZ1_in_decomposition}) and $\Res$ is given by Eq. (\ref{eqn:resolvent_Z'Z_in_proof}). 
\end{lemma}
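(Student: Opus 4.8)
The plan is to start from the exact prediction-risk decomposition \eqref{eqn:prediction_risk_decomposition_first}, take the expectation over the random coefficients $\bbeta = (\bbeta_{d,k})_{k\ge 1}$ (with $\bbeta_{d,1}\sim\Unif(\S^{d-1}(\normf_{d,1}))$ by Lemma \ref{lem:randomize_beta}, and $\bbeta_{d,k}$ Gaussian for $k\ge 2$) and over the noise $\beps$, and to identify the resulting traces with the quantities $S_{1k}, S_{2k}, S_3$. Recall from \eqref{eqn:prediction_risk_decomposition_first} that
\[
R_\RF = \E_{\bx}[f_d(\bx)^2] - \tfrac{2}{\sqrt d}\by^\sT\bZ\Res\bV + \tfrac1d\by^\sT\bZ\Res\bU\Res\bZ^\sT\by,
\]
with $\bV,\bU$ as in \eqref{eqn:def_Vi_Uij} and $\by=\boldf+\beps$. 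Since $\E_{\beps}[\beps]=0$ and $\E_{\beps}[\beps\beps^\sT]=\tau^2\id_n$, the cross term keeps only $\boldf$ in place of $\by$, while the quadratic term splits as $\E_{\beps}[\by^\sT\!\cdots\by]=\boldf^\sT(\cdots)\boldf+\tau^2\Tr(\bZ\Res\bU\Res\bZ^\sT)$; the last trace is exactly $\tau^2 S_3$ after cycling the trace.

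Next I would take the expectation over $\bbeta$. The key inputs are: (i) the spherical-harmonic representation of $f_d$, namely $f_d(\bx)=\sum_{k\ge 0}\sum_{l\le B(d,k)}(\bbeta_{d,k})_l Y_{kl}^{(d)}(\bx)$, where $\bbeta_{d,0}$ encodes the constant and $\bbeta_{d,1}$ the linear part; (ii) the orthonormality $\E_{\bx}[Y_{kl}Y_{k'l'}]=\delta_{kk'}\delta_{ll'}$ together with $\E_{\bbeta}[(\bbeta_{d,k})_l(\bbeta_{d,k'})_{l'}]=\delta_{kk'}\delta_{ll'}\normf_{d,k}^2/B(d,k)$, which gives $\E_{\bbeta}\E_{\bx}[f_d(\bx)^2]=\sum_k\normf_{d,k}^2$; (iii) the Funk--Hecke / Gegenbauer addition formula, which expresses $\E_{\bx}[Y_{kl}^{(d)}(\bx)\,\sigma(\langle\btheta_i,\bx\rangle/\sqrt d)]$ as $\lambda_{d,k}(\sigma)\,\sqrt{B(d,k)}\,Y_{kl}^{(d)}(\btheta_i)$ (with $\lambda_{d,k}(\sigma)$ the Gegenbauer coefficients from \eqref{eqn:expansion_sigma_lem_reformulation}) and, after summing over $l$ against another harmonic evaluated at $\btheta_j$ or $\bx_j$, reconstitutes $Q_k(\bTheta\bX^\sT)$ or $Q_k(\bX\bX^\sT)$ via $\sum_l Y_{kl}^{(d)}(\bu)Y_{kl}^{(d)}(\bv)=B(d,k)Q_k(\langle\bu,\bv\rangle)$. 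Concretely: $\E_{\bbeta}[\bV_i\boldf_j]$ and $\E_{\bbeta}[\boldf_i\boldf_j]$ become, after these identities, entries of $\lambda_{d,k}(\sigma)Q_k(\bTheta\bX^\sT)$ and $Q_k(\bX\bX^\sT)$ respectively, weighted by $\normf_{d,k}^2$; plugging into the cross term yields $-2\sum_k\normf_{d,k}^2 S_{1k}$ and into the quadratic term yields $\sum_k\normf_{d,k}^2 S_{2k}$. Collecting the three contributions gives the claimed formula.

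The main obstacle is bookkeeping with the Gegenbauer/Funk--Hecke machinery: one must be careful with the normalizations of $Y_{kl}^{(d)}$, $Q_k$, $B(d,k)$ and $\lambda_{d,k}(\sigma)$ so that the factors of $\sqrt d$, $B(d,k)$ and $\lambda_{d,k}(\sigma)$ in \eqref{eqn:S_definitions_in_decomposition} come out exactly right; this is where the $1/\sqrt d$ in $S_{1k}$ and the absence of extra $B(d,k)$ factors in $S_{2k}$ must be checked against the conventions fixed in Appendix \ref{sec:Background}. A secondary point is justifying the interchange of $\E_{\bbeta}$ (in particular the infinite sum over $k\ge 2$) with the finite-dimensional linear-algebraic operations: this follows from dominated convergence using $\sum_{k\ge2}\normf_{d,k}^2=\Sigma_d(1)<\infty$ from Lemma \ref{lem:decomposition1} together with the sub-exponential growth bound on $\sigma$ in Assumption \ref{ass:activation}, which controls $\|\bZ\|_{\op}$, $\|\bU\|_{\op}$ and $\|\Res\|_{\op}$ and hence each $S_{ik}$ uniformly. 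Everything else is routine linear algebra (cycling traces, combining $\boldf$- and $\beps$-terms).
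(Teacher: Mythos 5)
Your proposal follows the paper's own proof (Section~\ref{subsec:decomposition_lemma}) essentially step for step: split the exact risk \eqref{eqn:prediction_risk_decomposition_first} into the three terms $T_1,T_2,T_3$, average over $\beps$ to isolate the $\tau^2\,S_3$ contribution, write $\boldf$ and $\bV$ via the spherical-harmonic expansion of $f_d$, and apply the Gegenbauer addition/orthogonality identities to turn $\E_\bbeta[\bV\boldf^\sT]$ and $\E_\bbeta[\boldf\boldf^\sT]$ into $\lambda_{d,k}(\sigma)Q_k(\bTheta\bX^\sT)$ and $Q_k(\bX\bX^\sT)$, weighted by $\normf_{d,k}^2$. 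One normalization slip worth fixing: with the convention in \eqref{eqn:expansion_sigma_lem_reformulation} and the orthogonality \eqref{eq:ProductGegenbauer}, the Funk--Hecke step gives $\E_\bx[Y_{kl}^{(d)}(\bx)\,\sigma(\<\btheta_i,\bx\>/\sqrt d)]=\lambda_{d,k}(\sigma)\,Y_{kl}^{(d)}(\btheta_i)$, with no extra $\sqrt{B(d,k)}$ factor as you wrote; this is precisely what yields $\bV=\sum_k\lambda_{d,k}(\sigma)\bY_{k,\btheta}\bbeta_k$ and hence the stated prefactors in \eqref{eqn:S_definitions_in_decomposition}.
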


\begin{lemma}\label{lem:expression_bound_S}
Under the same definitions and assumptions of Proposition \ref{prop:decomposition} and Lemma \ref{lem:reformulation}, for any $\lambda > 0$, we have ($\E$ is the expectation taken with respect to the randomness in $\bX$ and $\bTheta$)
\begin{align}
\E \vert 1 - 2 S_{10} + S_{20}\vert =&~ o_d(1), \label{eq:S012diff}\\
\E\Big[ \sup_{k \ge 2} \vert S_{1k}\vert \Big]  =&~ o_{d}(1), \label{eq:S1k}\\
\E\Big[ \sup_{k \ge 2} \vert S_{2k} - S_3 \vert \Big]=&~ o_{d}(1), \label{eq:S2k} \\
\E \vert S_{11} - \Psi_1\vert =&~ o_d(1), \label{eq:S11}\\
\E \vert S_{21} - \Psi_2\vert  =&~ o_d(1), \label{eq:S21}\\\
\E \vert S_{3} - \Psi_3\vert  =&~ o_d(1), \label{eq:S31}
\end{align}
where $S_{1k}, S_{2k}, S_3$ are given by Eq. (\ref{eqn:S_definitions_in_decomposition}), and $\Psi_1, \Psi_2, \Psi_3$ are given by Eq. (\ref{eqn:Psi_definitions_in_decomposition}). 
\end{lemma}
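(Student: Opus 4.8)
The plan is to prove the six displayed estimates by splitting the terms according to the Gegenbauer degree $k$, handling the regimes $k=0$, $k=1$ and $k\ge 2$ separately, and feeding in a small collection of a priori bounds on the relevant random matrices: \textbf{(A)} operator-norm control, $\|\bZ\|_{\op},\|\bQ\|_{\op},\|\bH\|_{\op}=O_{d,\P}(1)$ while $\|\Res\|_{\op}\le(\psi_1\psi_2\lambda)^{-1}$ and $\|\bZ\Res\|_{\op}\le(4\psi_1\psi_2\lambda)^{-1/2}$ deterministically; \textbf{(B)} Gegenbauer coefficients of $\sigma$: $\lambda_{d,0}(\sigma)\to\ob_0$, $\sqrt d\,\lambda_{d,1}(\sigma)\to\ob_1$, and the Parseval bounds $\sum_{k\ge0}B(d,k)\lambda_{d,k}(\sigma)^2=\|\sigma\|_{L^2(\tau_d)}^2=O(1)$, $\sum_{k\ge2}B(d,k)\lambda_{d,k}(\sigma)^2\to\ob_\star^2$ (so $\lambda_{d,k}(\sigma)^2\le C/B(d,k)$); \textbf{(C)} Gegenbauer Gram matrices: $\sup_{k\ge2}\|Q_k(\bX\bX^\sT)-\id_n\|_{\op}=o_{d,\P}(1)$, likewise for $\bTheta$, together with the identity $\E\|Q_k(\bTheta\bX^\sT)\|_F^2=Nn/B(d,k)$; \textbf{(D)} linearization of the population kernel, $\bU=\ob_0^2\,\ones_N\ones_N^\sT+\ob_1^2\bQ+\ob_\star^2\id_N+\bDelta$ with $\|\bDelta\|_{\op}=o_{d,\P}(1)$; \textbf{(E)} isolation of the constant mode: writing $\bZ=d^{-1/2}\lambda_{d,0}(\sigma)\,\ones_n\ones_N^\sT+\bZ^{\perp}$ one has $\|\bZ^{\perp}\|_{\op}=O_{d,\P}(1)$, so $\bZ^\sT\bZ$ has exactly one eigenvalue $\mu_d=\lambda_{d,0}(\sigma)^2 nN/d$ of order $d$, with eigenvector within angle $O_{d,\P}(d^{-1/2})$ of $\ones_N/\sqrt N$ and all remaining eigenvalues $O_{d,\P}(1)$, and consequently $\|\Res\ones_N\|=O_{d,\P}(1)$, $\|\bZ\Res\ones_N\|=O_{d,\P}(1)$. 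Most of these are standard given concentration on the sphere and the sub-exponential growth of $\sigma$ from Assumption \ref{ass:activation}, or are quoted from the spectral theory of inner-product kernel matrices (cf.\ \cite{el2010spectrum,cheng2013spectrum,pennington2017nonlinear,fan2019spectral}).

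For $k\ge 2$ the estimates \eqref{eq:S1k}, \eqref{eq:S2k} are obtained crudely. For the cross term, $\vert S_{1k}\vert=d^{-1/2}\vert\lambda_{d,k}(\sigma)\vert\,\vert\Tr[Q_k(\bTheta\bX^\sT)\bZ\Res]\vert\le d^{-1/2}\vert\lambda_{d,k}(\sigma)\vert\,\|Q_k(\bTheta\bX^\sT)\|_F\|\bZ\Res\|_F$, and since $\|\bZ\Res\|_F\le\sqrt{\min(N,n)}\,\|\bZ\Res\|_{\op}=O_{d,\P}(\sqrt d)$ by (A), one has $\sup_{k\ge2}\vert S_{1k}\vert\le O_{d,\P}(1)\big(\sum_{k\ge2}\lambda_{d,k}(\sigma)^2\|Q_k(\bTheta\bX^\sT)\|_F^2\big)^{1/2}$; taking expectations, this sum is $O\big(Nn\sum_{k\ge2}B(d,k)^{-2}\big)=o(1)$ by (B), (C) and $B(d,2)=\Theta(d^2)$, and Markov gives \eqref{eq:S1k}. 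For the self term, $S_{2k}-S_3=d^{-1}\Tr[\Res\bU\Res\,\bZ^\sT(Q_k(\bX\bX^\sT)-\id_n)\bZ]$; using $\|\Res\bU\Res\|_{\op}=O_{d,\P}(1)$ (from (A), (D) and $\|\Res\ones_N\|=O_{d,\P}(1)$ in (E)) and bounding the trace crudely by $\min(N,n)\|\Res\bU\Res\|_{\op}\|\bZ\|_{\op}^2\|Q_k(\bX\bX^\sT)-\id_n\|_{\op}$, we get $\vert S_{2k}-S_3\vert\le O_{d,\P}(1)\|Q_k(\bX\bX^\sT)-\id_n\|_{\op}$ uniformly in $k\ge2$, whence \eqref{eq:S2k} by (C). For $k=1$ the relevant Gegenbauer polynomials are exactly linear, $Q_1(t)=t/d$, so $Q_1(\bTheta\bX^\sT)=\bTheta\bX^\sT/d$ and $Q_1(\bX\bX^\sT)=\bH$ with no approximation; hence $S_{11}-\Psi_1=d^{-2}(\sqrt d\,\lambda_{d,1}(\sigma)-\ob_1)\Tr[\bTheta\bX^\sT\bZ\Res]=o_{d,\P}(1)$ by (A), (B), giving \eqref{eq:S11}, while $S_{21}-\Psi_2=d^{-1}\Tr[\Res(\bU-\ob_1^2\bQ-\ob_\star^2\id_N)\Res\,\bZ^\sT\bH\bZ]$ and $S_3-\Psi_3=d^{-1}\Tr[\Res(\bU-\ob_1^2\bQ-\ob_\star^2\id_N)\Res\,\bZ^\sT\bZ]$; by (D) the inner matrix is $\ob_0^2\ones_N\ones_N^\sT+\bDelta$, where the $\bDelta$-part contributes $O_{d,\P}(1)\|\bDelta\|_{\op}=o_{d,\P}(1)$ and the rank-one part contributes $d^{-1}\ob_0^2\|\bH^{1/2}\bZ\Res\ones_N\|^2=O_{d,\P}(d^{-1})$ (respectively $d^{-1}\ob_0^2\|\bZ\Res\ones_N\|^2$ for $S_3-\Psi_3$) by (E), yielding \eqref{eq:S21}, \eqref{eq:S31}.

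For $k=0$, $Q_0\equiv1$, so $Q_0(\bTheta\bX^\sT)=\ones_{N\times n}$ and $Q_0(\bX\bX^\sT)=\ones_{n\times n}$, whence $S_{10}=\lambda_{d,0}(\sigma)\,\ones_N^\sT\hba$ and $S_{20}=\hba^\sT\bU\hba$ with $\hba=\Res\bZ^\sT\ones_n/\sqrt d$; since $\E_\bx[\bsigma(\bx)]=\lambda_{d,0}(\sigma)\ones_N$ and $\E_\bx[\bsigma(\bx)\bsigma(\bx)^\sT]=\bU$, the quantity $1-2S_{10}+S_{20}=\E_\bx[(1-\langle\bsigma(\bx),\hba\rangle)^2]$ is exactly the prediction error of random-features ridge regression applied to the all-ones response $\by=\ones_n$. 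By (E), $\ones_N/\sqrt N$ is essentially the top right-singular direction of $\bZ$ with singular value of order $\sqrt d$, so $\hba$ behaves like the \emph{unregularized} least-squares fit along that direction: the population-optimal coefficient $\ba^\star=\lambda_{d,0}(\sigma)\bU^{-1}\ones_N$ has $\|\ba^\star\|_2=O_{d,\P}(d^{-1/2})$ and $\lambda_{d,0}(\sigma)^2\,\ones_N^\sT\bU^{-1}\ones_N\to1$ (Sherman--Morrison on $\bU=\ob_0^2\ones_N\ones_N^\sT+\bU_0$ with $\ones_N^\sT\bU_0^{-1}\ones_N$ of order $d$), so the $O(N\lambda/d)$ penalty is asymptotically inactive and $\langle\bsigma(\bx),\hba\rangle\to1$ in $L^2(\gamma_d)$; transferring from the population optimum to the ridge estimator is a routine concentration argument using (A), (E). Thus $1-2S_{10}+S_{20}=o_{d,\P}(1)$ and Markov gives \eqref{eq:S012diff}. (This step uses $\ob_0\neq0$ crucially: it is precisely what makes the random-features class asymptotically contain the constant function.)

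The hardest input is (C) at $k=2$: the bound $\|Q_2(\bX\bX^\sT)-\id_n\|_{\op}=o_{d,\P}(1)$ (and its $\bTheta$-analogue, which also underlies the off-diagonal quadratic part of $\bDelta$ in (D)) goes beyond Frobenius estimates, which give only $\E\|Q_2(\bX\bX^\sT)-\id_n\|_F^2=\Theta(n^2/B(d,2))=\Theta(1)$; one must run a moment/trace-method computation on the degree-two harmonic Gram matrix, in the spirit of the kernel random-matrix results of El Karoui and Cheng--Singer, while the degrees $k\ge3$ are harmless via Frobenius and summation. A second recurring subtlety is the rank-one constant-mode spike of operator norm of order $d$ in both $\bZ^\sT\bZ$ and $\bU$: since it is only aligned with $\ones_N$ up to angle $O_{d,\P}(d^{-1/2})$, showing it does not enter at leading order needs the perturbation estimates packaged in (E) rather than a one-line sandwich bound. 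Finally, every bound above is first obtained in probability and then upgraded to the stated $L^1$ form using polynomial-moment control on $\|\bZ\|_{\op}$, $\|\bQ\|_{\op}$, $\|\bH\|_{\op}$ from the sub-exponential activation tail; as elsewhere in the paper, the honest write-up requires considerably more bookkeeping than this sketch.
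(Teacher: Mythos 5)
Your outline has the right shape (split by Gegenbauer degree, $\bU$-linearization, spike isolation), and two ingredients are genuine departures from the paper: for \eqref{eq:S1k} you use Frobenius norms plus Parseval and $B(d,2)=\Theta(d^2)$ rather than the operator-norm bound $\sup_{k\ge2}\|Q_k(\bTheta\bX^\sT)\|_{\op}=o_{d,\P}(1)$ from Lemma~\ref{lem:gegenbauer_identity} (your route is actually leaner for that one estimate); and for the constant mode you use a Davis--Kahan spike decomposition in place of the paper's Sherman--Morrison--Woodbury identities (Lemmas~\ref{lem:constant_term_one}, \ref{lem:constant_term_two}, \ref{lem:SMW_calculations}), aiming for the stronger $\|\Res\ones_N\|,\|\bZ\Res\ones_N\|=O_{d,\P}(1)$ where the paper only needs $\|\bZ\Res\ones_N\|=o_{d,\P}(\sqrt d)$. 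Both are viable directions. But there are two genuine problems.

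First, item (A) asserts $\|\bZ\|_{\op}=O_{d,\P}(1)$, which is false and inconsistent with your own item (E): since $\ob_0\neq 0$, one has $\bZ=\lambda_{d,0}(\sigma)\ones_n\ones_N^{\sT}/\sqrt d+\bZ^\perp$ with $\|\bZ\|_{\op}=\Theta_{d,\P}(\sqrt d)$. This breaks your proof of \eqref{eq:S2k}: the bound
\[
\vert S_{2k}-S_3\vert\le \frac{\min(N,n)}{d}\,\|\Res\bU\Res\|_{\op}\,\|\bZ\|_{\op}^2\,\|Q_k(\bX\bX^{\sT})-\id_n\|_{\op}
\]
is actually $O_{d,\P}(d)\cdot o_{d,\P}(1)$ once you put in the true size of $\|\bZ\|_{\op}^2$, so it does not vanish. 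You must not peel $\bZ$ off from $\Res$; the deterministic fact $\|\bZ\Res\|_{\op},\|\Res\bZ^{\sT}\|_{\op}\le C$ is what kills the spike. Concretely, split $\bU=\lambda_{d,0}^2\ones_N\ones_N^{\sT}+\bM$ with $\|\bM\|_{\op}=O_{d,\P}(1)$. The $\bM$ part of $S_{2k}-S_3$ is then bounded by $\frac{n}{d}\|\bZ\Res\|_{\op}\|\bM\|_{\op}\|\Res\bZ^{\sT}\|_{\op}\|Q_k-\id_n\|_{\op}=o_{d,\P}(1)$, while the rank-one part becomes $\frac{\lambda_{d,0}^2}{d}\,\vert\ones_N^{\sT}\Res\bZ^{\sT}(Q_k(\bX\bX^{\sT})-\id_n)\bZ\Res\ones_N\vert\le\frac{\lambda_{d,0}^2}{d}\|\bZ\Res\ones_N\|_2^2\|Q_k-\id_n\|_{\op}$, which your (E) does control. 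This is in effect the paper's decomposition (Lemma~\ref{lem:constant_term_two} handles the rank-one part). A cognate slip: (E) claims that all non-spike eigenvalues of $\bZ^{\sT}\bZ$ are $O_{d,\P}(1)$; the rank-two cross terms $\frac{\lambda_{d,0}}{\sqrt d}[\ones_N\ones_n^{\sT}\bZ^\perp+(\bZ^\perp)^{\sT}\ones_n\ones_N^{\sT}]$ have operator norm $O_{d,\P}(\sqrt d\cdot\exp\{C(\log d)^{1/2}\})$ (the sub-exponential activation only gives $\|\bZ^\perp\|_{\op}=O_{d,\P}(\exp\{C(\log d)^{1/2}\})$, cf.\ Lemma~\ref{lem:concentration_operator_general_sphere}), so up to two further eigenvalues may be of order $d^{1/2+o(1)}$. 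This does not spoil the Davis--Kahan angle bound $O_{d,\P}(d^{-1/2+o(1)})$ or its consequences, but (E) as stated is an overclaim.

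Second, the $k=0$ step is underspecified at exactly its hard point. The reinterpretation $1-2S_{10}+S_{20}=\E_{\bx}[(1-\langle\bsigma(\bx),\hba\rangle)^2]$ with $\hba=\Res\bZ^{\sT}\ones_n/\sqrt d$ is correct and elegant, and the Sherman--Morrison computation of $\lambda_{d,0}^2\ones_N^{\sT}\bU^{-1}\ones_N\to 1$ is fine, but the claim that the ridge estimator $\hba$ tracks the population optimum $\ba^\star=\lambda_{d,0}\bU^{-1}\ones_N$ is the actual content here, and ``a routine concentration argument'' does not suffice as a proof. The difficulty is precisely that $\hba$ involves the empirical resolvent $\Res$ and the spike of $\bZ^{\sT}\bZ$, whose top eigendirection is only close to $\ones_N/\sqrt N$ up to an $O(d^{-1/2+o(1)})$ angle; one must track how this angle and the sub-spike spectrum propagate through the ridge map. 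The paper resolves this by an explicit SMW reduction (Lemma~\ref{lem:constant_term_one}) yielding the closed form $(K_{12}+1)^2/(K_{11}(1-K_{22})+(K_{12}+1)^2)^2$ and then explicit bounds on $K_{11},K_{12},K_{22}$. If you keep the Davis--Kahan route you would need to carry out the eigenvector perturbation argument with comparable care; as written it is a gap, not a routine step.
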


\begin{lemma}\label{lem:concentration_beta}
Under the assumptions of Proposition \ref{prop:decomposition}, we have
\begin{equation}\label{eqn:concentration_beta}
\E_{\bX, \bTheta} \Big[ \Var_{\bbeta, \beps} \Big( R_\RF(f_d, \bX, \bTheta, \lambda) \Big\vert \bX, \bTheta \Big)^{1/2} \Big] = o_d(1). 
\end{equation}
\end{lemma}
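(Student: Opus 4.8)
The plan is to show that, conditionally on $(\bX,\bTheta)$, the risk $R_\RF(f_d,\bX,\bTheta,\lambda)$ is a polynomial of degree at most two in a collection of mutually independent random blocks, and then to bound its conditional variance block by block, the bounds coming from small operator and Frobenius norms of the matrices that appear as coefficients. Concretely, by Lemma~\ref{lem:decomposition1} we represent $f_d^{\sNL}$ through its spherical--harmonic expansion with independent Gaussian coefficient vectors $(\bbeta_{d,k})_{k\ge 2}$, by Lemma~\ref{lem:randomize_beta} we may take $\bbeta_{d,1}\sim\Unif(\S^{d-1}(\normf_{d,1}))$, and together with the noise $\beps$ (i.i.d., $\E[\eps_1^4]<\infty$) we collect the randomness orthogonal to $(\bX,\bTheta)$ into the independent blocks $\boldsymbol{\omega}=(\beps,\bbeta_{d,1},\bbeta_{d,2},\ldots)$ (this is exactly the randomness underlying $\Var_{\bbeta,\beps}$). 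Since $f_d$ is affine in $(\bbeta_{d,k})_{k\ge 1}$ and the fitted function $\hf$ is affine in $\by=\boldf+\beps$, hence in $\boldsymbol{\omega}$, the error $\bx\mapsto f_d(\bx)-\hf(\bx)$ is affine in $\boldsymbol{\omega}$ for every $\bx$, so by \eqref{eqn:prediction_risk_decomposition_first}
\[
R_\RF=\boldsymbol{\omega}^{\sT}\bM\,\boldsymbol{\omega}+\bell^{\sT}\boldsymbol{\omega}+c\, ,\qquad \bM=\E_\bx\big[\ell(\bx)\ell(\bx)^{\sT}\big]\succeq 0\, ,
\]
where $\ell(\bx)$ is the vector of coefficients of $f_d(\bx)-\hf(\bx)$ as a linear form in $\boldsymbol{\omega}$, and $\bM,\bell,c$ are measurable functions of $(\bX,\bTheta)$ built from $\Res=(\bZ^{\sT}\bZ+\lambda\psi_1\psi_2\id_N)^{-1}$, $\bZ$, $\bH$, $\bU$, and the spherical--harmonic/Gegenbauer matrices $\bY_k^{(\bX)}=(Y_{kl}^{(d)}(\bx_i))_{il}$, $\bY_k^{(\bTheta)}=(Y_{kl}^{(d)}(\btheta_a))_{al}$, $Q_k(\bX\bX^{\sT})$ of Lemma~\ref{lem:reformulation}.

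I would then tensorize: by sub-additivity of the variance over independent blocks (Efron--Stein),
\[
\Var_{\boldsymbol{\omega}}\big(R_\RF\mid\bX,\bTheta\big)\;\le\;\sum_{b}\E\big[\Var_{\boldsymbol{\omega}_b}\big(R_\RF\mid\boldsymbol{\omega}_{-b},\bX,\bTheta\big)\big]\, ,
\]
the sum over $b\in\{\beps\}\cup\{\bbeta_{d,k}:k\ge 1\}$; with the other blocks frozen $R_\RF$ is again quadratic in $\boldsymbol{\omega}_b$. For the Gaussian blocks $\bbeta_{d,k}$, $k\ge 2$ (covariance $(\normf_{d,k}^2/B(d,k))\id$) I would use the exact variance identity for Gaussian quadratic forms; for $\bbeta_{d,1}$ the Poincar\'e inequality on $\S^{d-1}(\normf_{d,1})$ (spectral gap of order $\normf_{d,1}^2/d$) applied to a quadratic function; for $\beps$ the Hanson--Wright--type variance bound for quadratic forms of independent coordinates with finite fourth moment. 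In every case the contribution is controlled by the squared Frobenius norm of the relevant diagonal block of $\bM$ (times the fourth moment, resp.\ squared spectral gap, resp.\ squared variance scale) plus the squared Euclidean norm of the affine coefficient $\bell_b+2\bM_{b,-b}\boldsymbol{\omega}_{-b}$ (times the variance scale), and the cross-block off-diagonals of $\bM$ are handled by Cauchy--Schwarz between the two scales.

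The heart of the argument, and the step I expect to be the main obstacle, is bounding these norms. The deterministic inputs are $\|\Res\|_{\op}\le(\lambda\psi_1\psi_2)^{-1}$ and $\|\bZ\Res\|_{\op}\le(\lambda\psi_1\psi_2)^{-1/2}/2$ --- valid irrespective of the (large) singular values of $\bZ$. Writing $\bU=\ob_0^2\ones_{N\times N}+\ob_1^2\bQ+\bU^{\ge 2}$ and using $\|\bQ\|_{\op}=O_{d,\P}(1)$, $\|\bH\|_{\op}=O_{d,\P}(1)$, $\|\bU^{\ge 2}\|_{\op}=O_{d,\P}(\Poly(\log d))$, $\|\bX\|_{\op}/\sqrt d=O_{d,\P}(1)$ (standard, and established elsewhere in the paper), together with the damping fact that $\bfone_N$ is an approximate top eigenvector of $\bZ^{\sT}\bZ$ with eigenvalue of order $d$ (because of the constant component $\ob_0$ of $\sigma$), whence $\|\bZ\Res\bfone_N\|_2=O_{d,\P}(1)$ rather than $O_{d,\P}(\sqrt d)$, one gets $\|\bZ\Res\bU\Res\bZ^{\sT}\|_{\op}=O_{d,\P}(\Poly(\log d))$ and similarly for the other coefficient matrices; after the $1/d$ normalisation in \eqref{eqn:prediction_risk_decomposition_first} these become $O_{d,\P}(\Poly(\log d)/d)$ in operator norm, hence $O_{d,\P}(\Poly(\log d)/d)$ in squared Frobenius norm. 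For the harmonic blocks $k\ge 2$ one uses in addition $(\normf_{d,k}^2/B(d,k))\|\bY_k^{(\bX)}\|_{\op}^2=O_{d,\P}(\Poly(\log d))$ (because $\sum_{k\ge 2}(\normf_{d,k}^2/B(d,k))\bY_k^{(\bX)}(\bY_k^{(\bX)})^{\sT}=(\Sigma_d(\<\bx_i,\bx_j\>/d))_{ij}$ has operator norm $O_{d,\P}(\Poly(\log d))$; analogously for $\bTheta$), and that $\sum_{k\ge 2}\normf_{d,k}^2=\Sigma_d(1)=O(1)$ while $B(d,k)\ge B(d,2)\ge c\,d^2$, so the series of per-block contributions is dominated by $d^{-1}\Sigma_d(1)\cdot O_{d,\P}(\Poly(\log d))=o_{d,\P}(1)$; the vanishing of the degree-$0$ and degree-$1$ cross terms enforced by Assumption~\ref{ass:ground_truth} ($\E_\bx\Sigma_d(x_1/\sqrt d)=0$, $\E_\bx\{\Sigma_d(x_1/\sqrt d)x_1\}=0$) removes the potentially dangerous coupling between $f_d^{\sNL}$, the constant, and the linear component. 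The naive operator-norm bound loses a factor of $d$ precisely on the degree-$0$ and degree-$1$ blocks of $\bZ$ and $\bU$, so the careful bookkeeping of how $\Res$ contracts those blocks (and of the summability over $k$) is where all the work lies.

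Combining the previous two steps gives $\Var_{\boldsymbol{\omega}}(R_\RF\mid\bX,\bTheta)=O_{d,\P}(\Poly(\log d)/d)$; carrying out the same estimates in second moment (the exceptional events on which the spectral bounds fail are negligible since all quantities are a priori bounded by powers of $d$ while those probabilities decay faster than any polynomial) yields $\E_{\bX,\bTheta}[\Var_{\boldsymbol{\omega}}(R_\RF\mid\bX,\bTheta)]=o_d(1)$, and Jensen's inequality then gives $\E_{\bX,\bTheta}[\Var_{\bbeta,\beps}(R_\RF\mid\bX,\bTheta)^{1/2}]\le(\E_{\bX,\bTheta}\Var_{\bbeta,\beps}(R_\RF\mid\bX,\bTheta))^{1/2}=o_d(1)$, which is \eqref{eqn:concentration_beta}. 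Apart from the norm bookkeeping above, the only mild subtleties are the use of the Poincar\'e inequality on the sphere and of a fourth--moment (rather than Gaussian) quadratic--form inequality for the non-Gaussian noise, both of which are standard.
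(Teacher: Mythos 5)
Your proposal is correct and follows essentially the same route as the paper's proof: represent $f_d^{\sNL}$ through spherical harmonics with independent Gaussian coefficient vectors (Lemma~\ref{lem:decomposition1}), observe that conditional on $(\bX,\bTheta)$ the risk is quadratic in the collection $(\beps,\bbeta_{d,1},\bbeta_{d,2},\ldots)$, bound the variance block by block via the trace/Frobenius norms of the coefficient matrices, and exploit $\sup_{k\ge 1}\vert\lambda_{d,k}(\sigma)\vert=O(d^{-1/2})$, the summability $\sum_{k\ge 2}F_{d,k}^2=O(1)$ together with $B(d,k)\ge B(d,2)$, and a dedicated argument to handle the rank-one $\ones_N$ direction. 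The differences are only in packaging: the paper decomposes $R_\RF$ explicitly into five bilinear/quadratic terms $\Gamma_1,\ldots,\Gamma_5$ and applies the explicit variance formula of Lemma~\ref{lem:variance_calculations} to each, then invokes Lemma~\ref{lem:constant_term_two} for the dangerous $k=0$ contributions, whereas you propose Efron--Stein sub-additivity over the independent blocks together with Poincar\'e on the sphere (for $\bbeta_{d,1}$) and a Hanson--Wright-type fourth-moment bound (for $\beps$); the paper instead sidesteps the spherical Poincar\'e argument by first replacing $\bbeta_{d,1}\sim\Unif(\S^{d-1}(F_{d,1}))$ with a Gaussian vector. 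Your heuristic that $\ones_N$ is an approximate top eigenvector of $\bZ^\sT\bZ$ with eigenvalue $\Theta(d)$, so that $\bZ\Res\ones_N$ is $O_{d,\P}(1)$ rather than $O_{d,\P}(\sqrt d)$, is precisely the content of the paper's Lemma~\ref{lem:constant_term_two}; both arguments need this same nontrivial fact, so your identification of it as ``the heart of the argument'' is accurate. The choice of concentration inequality per block and the order in which the variance is decomposed are interchangeable; both approaches end up estimating the same collection of traces with the same norm inputs.
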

We defer the proofs of these three lemmas to the following subsections and show here that they imply Proposition \ref{prop:decomposition}.  Indeed, we have
\[
\begin{aligned}
\E_{\bX, \bTheta}\Big\vert \E_{\beps, \bbeta}&[R_\RF(f_d, \bX, \bTheta, \lambda)] - \Big[ \normf_{d, 1}^2 (1 - 2 \Psi_1 + \Psi_2) + \Big(\tau^2 + \sum_{k=2}^\infty \normf_{d, k}^2 \Big)\Psi_3 + \sum_{k=2}^\infty \normf_{d, k}^2  \Big] \Big\vert \\
\stackrel{(a)}{\le}&~ \normf_{d, 0}^2 \cdot \E \vert 1 - 2S_{10} + S_{20} \vert + \normf_{d, 1}^2  \cdot \Big[ \E\vert S_{11} - \Psi_1 \vert + \E\vert S_{21} - \Psi_2 \vert \Big] \\
& + \Big(\sum_{k = 2}^\infty \normf_{d, k}^2\Big) \cdot  \sup_{k\ge2}\Big[2 \E\vert S_{1k} \vert + \E\vert S_{2k} - \Psi_3 \vert \Big] + \tau^2 \E\vert S_3 - \Psi_3\vert\\
\stackrel{(b)}{=}&~ o_d(1). 
\end{aligned}
\]
where $(a)$ follows by Lemma \ref{lem:reformulation} and triangular inequality, and $(b)$ from Lemma
\ref{lem:expression_bound_S}.

Combining with Lemma \ref{lem:concentration_beta} (and $\E[\Psi_1], \E[\Psi_2], \E[\Psi_3] = O_d(1)$) and Lemma \ref{lem:randomize_beta} concludes the proof of Proposition \ref{prop:decomposition}. 
In the remaining
of this section, we will prove Lemma \ref{lem:reformulation}, \ref{lem:expression_bound_S}, and \ref{lem:concentration_beta}.

\subsection{Proof of Lemma \ref{lem:reformulation}}\label{subsec:decomposition_lemma}

Recall the expression \eqref{eqn:prediction_risk_decomposition_first} for the risk.
Taking expectation with respect to $\bbeta$ and $\beps$, we get
\begin{align*}
\E_{\bbeta, \beps}[R_\RF(f_d, \bX, \bTheta, \lambda)] =&~   \sum_{k \ge 0} \normf_{d, k}^2 - 2 T_1 + T_2 + T_3,
\end{align*}
where
\begin{align*}
T_1 = \frac{1}{\sqrt d}\E_{\bbeta}[\boldf^\sT \bZ  \Res \bV], ~~~~~~T_2 = \frac{1}{d} \E_{\bbeta}[\boldf^\sT \bZ   \Res \bU  \Res  \bZ^\sT \boldf],~~~~~ T_3 = \frac{1}{d} \E_{\beps}[\beps^\sT \bZ   \Res \bU  \Res  \bZ^\sT \beps ]. 
\end{align*}
The proof of the lemma follows by evaluating each of these three
terms. It is useful to introduce the matrices  $\bY_{k, \bx}$, $\bY_{k, \btheta}$, which denotes the evaluations of spherical harmonics of degree $k$ at the points $\{\bx_i\}_{i\le n}$ and $\{\btheta_a\}_{a\le N}$
(c.f.  Appendix \ref{sec:Background}): 
\begin{equation}\label{eqn:def_Y_k_x_theta}
\begin{aligned}
\bY_{k, \bx} =&~ (Y_{kl}(\bx_i))_{i \in [n], l \in [B(d, k)]} \in \R^{n \times B(d,k)}, \\
\bY_{k, \btheta} =&~ (Y_{kl}(\btheta_a))_{a \in [N], l \in [B(d, k)]} \in \R^{N \times B(d, k)}.\\
\end{aligned}
\end{equation}
With these notations we have
\begin{equation}\label{eqn:expansion_boldf_V}
\boldf = \sum_{k = 0}^\infty \bY_{k, \bx} \bbeta_k \in \R^n,~~~ \bV = \sum_{k = 0}^\infty \lambda_{d, k}(\sigma) \bY_{k, \btheta} \bbeta_k \in \R^N.
\end{equation}
Since $\bbeta_{k} \sim \normal(\bzero, \normf_{d, k}^2 \id_{B(d, k)} / B(d, k))$ for $k \ge 2$  and
$\bbeta_1 \sim \Unif(\S^{d-1}(\normf_{d, 1}))$ independently, we have
\begin{align}
  \E_{\bbeta}[\bV\boldf^{\sT}] &= \sum_{k = 0}^\infty \normf_{d, k}^2 \lambda_{d, k}(\sigma) Q_k(\bTheta \bX^\sT) \, ,\\
  \E_{\bbeta}[\boldf\boldf^{\sT}] &= \sum_{k = 0}^\infty \normf_{d, k}^2 Q_k(\bX \bX^\sT) \, .
\end{align}
Using these expressions, we can evaluate terms $T_1$ and $T_2$:
\begin{align*}
T_1 =&~ \frac{1}{\sqrt d} \sum_{k = 0}^\infty \normf_{d, k}^2 \lambda_{d, k}(\sigma)  \cdot \Tr\Big[ Q_k(\bTheta \bX^\sT) \bZ  \Res \Big] \, ,\\
T_2 =&~\frac{1}{d} \sum_{k = 0}^\infty \normf_{d, k}^2  \cdot \Tr\Big[  \Res \bU  \Res \bZ^\sT Q_k(\bX \bX^\sT) \bZ \Big]\, .
\end{align*}
We proceed analogously for term $T_3$. By the assumption
$\eps_i \sim_{iid} \P_\eps$ with $\E_\eps(\eps) = 0$ and $\E_\eps(\eps_1^2) = \tau^2$, we have
\begin{align*}
T_3 =&~ \frac{1}{d}\E_{\beps}[ \Tr(\beps \beps^\sT  \Res \bU  \Res  \bZ^\sT \bZ)] =\frac{\tau^2}{d} \cdot \Tr[ \Res \bU  \Res  \bZ^\sT \bZ ]. 
\end{align*}
Combining the above formulas for $T_1$, $T_2$, $T_3$ proves Lemma \ref{lem:reformulation}.

\subsection{Proof of Lemma \ref{lem:expression_bound_S}}\label{subsec:bounds_of_S_terms}

The next two lemmas will be used in the proofs of Lemma \ref{lem:expression_bound_S} and Lemma \ref{lem:concentration_beta},
and hold under the same assumptions. The first of these lemmas will be used to establish Eq.~\eqref{eq:S012diff} (but notice
that its statement does not coincide with that equation), and the second
will be used to control several terms in those proofs. 
The proofs of these lemmas are given in Section \ref{subsec:bounds_key_constant_term}.
\begin{lemma}\label{lem:constant_term_one}
Define
\begin{align}
A_1 \equiv&~  \frac{\lambda_{d, 0}(\sigma)}{\sqrt d} \Tr [ \ones_N \ones_n^\sT  \bZ  \Res ], \label{eq:A1def}\\
A_2 \equiv&~  \frac{\lambda_{d, 0}(\sigma)^2}{d} \Tr [  \Res \ones_N \ones_N^\sT  \Res \bZ^\sT \ones_n \ones_n^\sT \bZ ]. \label{eq:A2def}
\end{align}
Then for any $\lambda > 0$, we have
\[
\begin{aligned}
\E\vert 1 - 2 A_1 + A_2 \vert = o_d(1). 
\end{aligned}
\]
\end{lemma}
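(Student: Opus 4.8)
\emph{Reduction to a scalar.} Cycling the traces in \eqref{eq:A1def}--\eqref{eq:A2def}, using that $\ones_n^\sT\bZ\Res\ones_N$ is a scalar and $\Res$ symmetric, gives $A_1=\tfrac{\lambda_{d,0}(\sigma)}{\sqrt d}\ones_n^\sT\bZ\Res\ones_N$ and $A_2=\tfrac{\lambda_{d,0}(\sigma)^2}{d}(\ones_n^\sT\bZ\Res\ones_N)^2=A_1^2$, so that
\[
1-2A_1+A_2=(1-A_1)^2 .
\]
Thus the lemma is exactly the assertion that $A_1\to1$ in $L^2$. The plan is therefore to prove $A_1\to1$ in probability and then upgrade to $L^2$ by uniform integrability.

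\emph{Isolating the spike.} Decompose $\bZ=\bZ_0+\bW$, where $\bZ_0:=\lambda_{d,0}(\sigma)\ones_n\ones_N^\sT/\sqrt d$ is the degree-$0$ spherical-harmonic component (recall $\lambda_{d,0}(\sigma)=\E_{\bx}[\sigma(\langle\bx,\btheta\rangle/\sqrt d)]\to\ob_0\neq0$) and $\bW$ is an off-constant kernel inner-product matrix with $\|\bW\|_{\op}=O_{d,\P}(1)$ by the standard spectral estimates for such matrices (\cite{el2010spectrum,cheng2013spectrum,fan2019spectral,pennington2017nonlinear}, together with the resolvent bounds of Section~\ref{sec:Stieltjes}). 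Then $\bZ^\sT\ones_n=\tfrac{n\lambda_{d,0}(\sigma)}{\sqrt d}\ones_N+r$ with $r:=\bW^\sT\ones_n$, and $\E[r\mid\bTheta]=0$ (rotational invariance of $\bx$), $\|r\|=O_{d,\P}(\sqrt d)$, and, crucially, $\ones_N^\sT r=\ones_n^\sT\bW\ones_N=O_{d,\P}(\sqrt d)$, this last by a second-moment bound exploiting that the centered entries of $\bW$ are pairwise uncorrelated. As a sanity check, if $\bW=0$ then $\bZ^\sT\bZ=\lambda_{d,0}(\sigma)^2(n/d)\ones_N\ones_N^\sT$ and Sherman--Morrison gives $A_1=\bigl(1+\psi_{1,d}\psi_{2,d}\lambda/(\lambda_{d,0}(\sigma)^2N)\bigr)^{-1}\to1$, since the spike eigenvalue of $\bZ^\sT\bZ$ is of order $d$ whereas $\psi_1\psi_2\lambda=O(1)$.

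\emph{Handling the perturbation.} Substituting $\ones_N=\tfrac{\sqrt d}{n\lambda_{d,0}(\sigma)}(\bZ^\sT\ones_n-r)$ into $A_1$ and using $\bZ\Res\bZ^\sT=\id_n-\psi_{1,d}\psi_{2,d}\lambda\,\oRes$ yields
\[
1-A_1=\frac{\psi_{1,d}\psi_{2,d}\lambda}{n}\,\ones_n^\sT\oRes\ones_n+\frac{\lambda_{d,0}(\sigma)}{\sqrt d}\,\ones_N^\sT\Res\,r+\frac1n\,r^\sT\Res\,r .
\]
The first term is $o_{d,\P}(1)$: since $\ones_n$ is, up to an $O_{d,\P}(1/\sqrt d)$ correction, the top eigenvector of $\bZ\bZ^\sT$ with eigenvalue of order $d$, one has $\ones_n^\sT\oRes\ones_n=O_{d,\P}(1)$ while $\psi_1\psi_2\lambda/n=O(1/d)$. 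For the remaining two terms I would perform the resolvent inversion exactly on the rank-$\le2$ block spanned by $\{\ones_N,r\}$: writing $\Res=\bigl(\bC+[\ones_N\,|\,r]\,\bE_0\,[\ones_N\,|\,r]^\sT\bigr)^{-1}$ with $\bC:=\bW^\sT\bW+\psi_{1,d}\psi_{2,d}\lambda\,\id_N$ (so $\|\bC\|_{\op}=O_{d,\P}(1)$ and $\bC\succeq\psi_1\psi_2\lambda\,\id_N$) and $\bE_0$ the $2\times2$ coefficient matrix of entries $\lambda_{d,0}(\sigma)^2(n/d)$, $\lambda_{d,0}(\sigma)/\sqrt d$, $0$, the Woodbury identity reduces every quadratic form in $\{\ones_N,r\}$ against $\Res$ to the scalars $\ones_N^\sT\bC^{-1}\ones_N$ (which is $\ge N/\|\bC\|_{\op}$, hence of order $d$), $\ones_N^\sT\bC^{-1}r$, and $r^\sT\bC^{-1}r=O_{d,\P}(d)$; the spike eigenvalue of order $d$ inside $\bE_0$ then forces $\tfrac{\lambda_{d,0}(\sigma)}{\sqrt d}\ones_N^\sT\Res r$ and $\tfrac1nr^\sT\Res r$ to be $o_{d,\P}(1)$, and, notably, the exact Stieltjes transform of $\bW^\sT\bW$ is not needed. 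This gives $1-A_1=o_{d,\P}(1)$. To pass to $L^2$, one combines the deterministic crude bound $|1-A_1|\le\mathrm{poly}(d)\,e^{c_1\sqrt d}$ (from $\|\Res\|_{\op}\le(\psi_1\psi_2\lambda)^{-1}$ and $\max_{i,a}|Z_{ia}|\le c_0e^{c_1\sqrt d}/\sqrt d$) with the exponential concentration $\P(\|\bW\|_{\op}>C_0)\le e^{-cd}$ to get uniform integrability of $(1-A_1)^2$, whence $\E[(1-A_1)^2]\to0$.

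\emph{Main obstacle.} The only non-routine part is the control of the two cross terms above. The vector $r$ has Euclidean norm of order $\sqrt d$, which exactly matches the $1/\sqrt d$ scale of the damped resolvent image $\Res\ones_N$ of the spike direction; consequently a naive Cauchy--Schwarz bound on $\ones_N^\sT\Res r$ is off by a constant factor and does \emph{not} give $o(1)$. What rescues the argument is the near-orthogonality $\ones_N^\sT r=O_{d,\P}(\sqrt d)$ (versus $\|\ones_N\|\,\|r\|$ of order $d$), which, fed into the exact $2\times2$ Woodbury inversion, makes the effect of $r$ genuinely negligible; the rest is bookkeeping.
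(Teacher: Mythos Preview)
Your reduction $A_2=A_1^2$, hence $1-2A_1+A_2=(1-A_1)^2$, is correct and is a clean simplification; the paper reaches the same identity only after computing $A_1$ and $A_2$ separately through Lemma~\ref{lem:SMW_calculations}. From that point on your strategy and the paper's coincide: both split $\bZ=\lambda_{d,0}(\sigma)\ones_n\ones_N^\sT/\sqrt d+\bJ$ and invert $\Res$ via Sherman--Morrison--Woodbury on the rank-two span of $\ones_N$ and $\bJ^\sT\ones_n$ (your $r$, the paper's $\sqrt n\,\bT_2$). Your three-term decomposition of $1-A_1$ and the $2\times2$ Woodbury bookkeeping are equivalent to the paper's scalars $K_{11},K_{12},K_{22}$; after the algebra the paper obtains $(1-A_1)^2=(K_{12}+1)^2/[K_{11}(1-K_{22})+(K_{12}+1)^2]^2=O_d(1/d)\cdot(1+\|\bJ\|_{\op}^8)$ and then invokes $\|\bJ\|_{\op}=O_{d,\P}(\exp\{C(\log d)^{1/2}\})$ from Lemma~\ref{lem:concentration_operator_general_sphere}. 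Note that this is strictly weaker than the $O_{d,\P}(1)$ bound you assert by citation; the references you list cover the Gaussian or polynomial-activation case, and the paper does not claim a uniform $O(1)$ bound for general $\sigma$ on the sphere.

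The genuine gap is your passage from $o_{d,\P}(1)$ to $L^1$. You rely on an exponential tail $\P(\|\bW\|_{\op}>C_0)\le e^{-cd}$ to defeat the crude deterministic bound $|1-A_1|\le \mathrm{poly}(d)\,e^{c_1\sqrt d}$; this exponential concentration is not established in the paper (the bad event in Lemma~\ref{lem:concentration_operator_general_gaussian} already carries a $1/M^2$ term, and the transfer to the sphere in Lemma~\ref{lem:operator_interpolation_gaussian_sphere} is not quantified at exponential scale), so the uniform integrability argument as written does not close. The paper sidesteps this entirely: once the Woodbury formula is in hand it observes that $0\le A\le 1$ deterministically, so $A=o_{d,\P}(1)$ and boundedness give $\E|A|=o_d(1)$ directly. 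If you want to salvage your route, replace the exponential tail claim by the deterministic estimate $|A_1|\le |\lambda_{d,0}|\sqrt{nN/d}\,\|\bZ\Res\|_{\op}=O(\sqrt d)$ (from $\|\bZ\Res\|_{\op}\le (4\psi_1\psi_2\lambda)^{-1/2}$), so $(1-A_1)^2\le Cd$ always, and combine with a quantitative bad-event probability of order $o(1/d)$ extracted from the proofs of Lemmas~\ref{lem:concentration_operator_general_gaussian}--\ref{lem:concentration_operator_general_sphere}.
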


\begin{lemma}\label{lem:constant_term_two}
  Let $(\obM_\alpha)_{\alpha \in \cA} \in \R^{n \times n}$ be a collection of symmetric random matrices with $\E[\sup_{\alpha \in \cA} \| \obM_\alpha \|_{\op}^2]^{1/2} = O_{d}(1)$. Define
\begin{align}\label{eqn:Bl_constant_term_two}
B_\alpha \equiv&~  \frac{\lambda_{d, 0}(\sigma)^2}{d} \Tr [  \Res \ones_N \ones_N^\sT  \Res \bZ^\sT \obM_\alpha \bZ ]. 
\end{align}
Then for any $\lambda > 0$, we have
\[
\begin{aligned}
\E\Big[ \sup_{\alpha \in \cA} \vert B_\alpha \vert \Big] =&~ o_d(1). \\
\end{aligned}
\]
\end{lemma}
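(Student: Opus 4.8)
\textbf{Proof plan for Lemma~\ref{lem:constant_term_two}.}
The first step is to turn $B_\alpha$ into a one-dimensional quadratic form. Write $c = \psi_{1,d}\psi_{2,d}\lambda$ (bounded above and below for large $d$) and recall $\Res = (\bZ^\sT\bZ + c\,\id_N)^{-1}$, $\oRes = (\bZ\bZ^\sT + c\,\id_n)^{-1}$ from \eqref{eqn:resolvent_Z'Z_in_proof}. By the push-through identity $\bZ\Res = \oRes\bZ$ we have $\bZ\Res\ones_N = \oRes\bZ\ones_N =: \bw \in \R^n$, and cyclicity of the trace gives $\Tr[\Res\ones_N\ones_N^\sT\Res\bZ^\sT\obM_\alpha\bZ] = \bw^\sT\obM_\alpha\bw$. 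Hence $|B_\alpha| \le (\lambda_{d,0}(\sigma)^2/d)\,\|\obM_\alpha\|_{\op}\,\|\bw\|_2^2$, so that by Cauchy--Schwarz
\[
\E\Big[\sup_{\alpha\in\cA}|B_\alpha|\Big] \;\le\; \frac{\lambda_{d,0}(\sigma)^2}{d}\;\E\Big[\sup_{\alpha\in\cA}\|\obM_\alpha\|_{\op}^2\Big]^{1/2}\;\E\big[\|\bw\|_2^4\big]^{1/2}.
\]
Since $\sigma$ has sub-exponential growth, $|\lambda_{d,0}(\sigma)| = |\E_{\btheta}[\sigma(\<\btheta,\bx\>/\sqrt d)]|$ is bounded uniformly in $d$, and the middle factor is $O_d(1)$ by hypothesis; hence it suffices to prove $\E[\|\bw\|_2^4] = o_d(d^2)$.

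The key structural fact is that $\bZ$ carries a single large singular value aligned with the all-ones direction. Decompose $\bZ = \bZ_0 + \bZ_\perp$, where $\bZ_0 = (\lambda_{d,0}(\sigma)/\sqrt d)\,\ones_n\ones_N^\sT$ is the degree-$0$ Gegenbauer component of $\bZ$ (the $k=0$ term of \eqref{eqn:expansion_sigma_lem_reformulation}, divided by $\sqrt d$): $\bZ_0$ is rank one, with $s_1(\bZ_0)^2 = \lambda_{d,0}(\sigma)^2 nN/d = \Theta_d(d)$ (using $\lambda_{d,0}(\sigma)\to\ob_0\ne 0$), and singular vectors $\ones_n/\sqrt n$, $\ones_N/\sqrt N$. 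The complement $\bZ_\perp$ is a centered kernel random matrix: its degree-$1$ part is $\approx (\ob_1/d)\bX\bTheta^\sT$, of operator norm $O_{d,\P}(1)$ since $\|\bX\|_{\op},\|\bTheta\|_{\op} = O_{d,\P}(\sqrt d)$, and its degree-$\ge 2$ part has operator norm $O_{d,\P}(\Poly(\log d))$ by standard kernel random-matrix bounds (cf.\ \cite{fan2019spectral,pennington2017nonlinear}). This provides an event $\cE_d$ with $\P(\cE_d^c) = o_d(d^{-A})$ for every fixed $A$, on which $\|\bZ_\perp\|_{\op} \le (\log d)^{C}$. On $\cE_d$ we then have $s_1(\bZ)^2 = \Theta_d(d)$, $s_k(\bZ)^2 \le (\log d)^{2C}$ for $k\ge 2$, and, because $\ones_N/\sqrt N$ nearly maximizes the Rayleigh quotient of $\bZ^\sT\bZ$ (its value there is $\ge s_1(\bZ_0)^2 - 2\|\bZ_0\|_{\op}\|\bZ_\perp\|_{\op} = \Theta_d(d)$), an elementary eigenvector-perturbation estimate (or Davis--Kahan/Wedin) gives $\|\ones_N\|_2^2 - \<\bv_1,\ones_N\>^2 = O_d\!\big(\sqrt d\,\Poly(\log d)\big)$, where $\bv_1$ is the top right singular vector of $\bZ$.

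Now bound $\|\bw\|_2$. Writing $\bZ = \sum_k s_k\bu_k\bv_k^\sT$ in SVD, $\bw = \oRes\bZ\ones_N = \sum_k \tfrac{s_k\<\bv_k,\ones_N\>}{s_k^2+c}\,\bu_k$, so, using $s^2/(s^2+c)^2 \le 1/(4c)$,
\[
\|\bw\|_2^2 \;=\; \sum_k \frac{s_k^2\<\bv_k,\ones_N\>^2}{(s_k^2+c)^2} \;\le\; \frac{\<\bv_1,\ones_N\>^2}{s_1(\bZ)^2} + \frac{1}{4c}\Big(\|\ones_N\|_2^2 - \<\bv_1,\ones_N\>^2\Big).
\]
On $\cE_d$ the first term is $\le N/\Theta_d(d) = O_d(1)$ and the second is $O_d(\sqrt d\,\Poly(\log d))$, so $\|\bw\|_2^4 = O_d(d\,\Poly(\log d))$ there. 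Off $\cE_d$ we use the deterministic bound $\|\bw\|_2 = \|\oRes\bZ\ones_N\|_2 \le \|\oRes\bZ\|_{\op}\,\|\ones_N\|_2 \le \sqrt N/(2\sqrt c) = O_d(\sqrt d)$, so $\|\bw\|_2^4 = O_d(d^2)$. Therefore $\E[\|\bw\|_2^4] \le O_d(d\,\Poly(\log d)) + O_d(d^2)\,\P(\cE_d^c) = o_d(d^2)$, and plugging back into the first display gives $\E[\sup_{\alpha\in\cA}|B_\alpha|] = O_d(\Poly(\log d)/\sqrt d) = o_d(1)$.

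The only non-elementary ingredient, and hence the main obstacle, is the spectral input: a high-probability operator-norm bound on the non-constant part $\bZ_\perp$ of the random features matrix (treating the linear and the genuinely nonlinear Gegenbauer components separately), from which the near-alignment of $\ones_N$ with the top right singular vector of $\bZ$ — and thus the cancellation that keeps $\|\bw\|_2^2$ only polylogarithmic rather than of order $d$ — follows by eigenvector perturbation. Everything else is linear algebra and one application of Cauchy--Schwarz.
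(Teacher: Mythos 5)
Your argument is correct and takes a genuinely different route from the paper's. The paper's proof passes through Lemma~\ref{lem:SMW_calculations}: writing $\bZ = \bZ_0 + \bJ$ with $\bJ$ the centered kernel matrix, the Sherman--Morrison--Woodbury formula recasts the resolvent $\Res$ as a rank-three correction of $\bE_0^{-1} = (\bJ^\sT\bJ + \psi_1\psi_2\lambda\id_N)^{-1}$, and $B_\alpha$ collapses to an explicit rational expression in the scalars $K_{ij}, G_{ij}$ of that lemma, which are then bounded using only the operator norm estimate $\|\bJ\|_{\op} = O_{d,\P}(\exp\{C(\log d)^{1/2}\})$ from Lemma~\ref{lem:concentration_operator_general_sphere}. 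You instead observe $B_\alpha = (\lambda_{d,0}(\sigma)^2/d)\,\bw^\sT\obM_\alpha\bw$ with $\bw = \bZ\Res\ones_N$, and control $\|\bw\|_2$ through the SVD of $\bZ$ itself, together with a Wedin/Davis--Kahan perturbation estimate showing that $\ones_N$ is nearly the top right singular vector of $\bZ$, so that the $\Theta_d(d)$ contribution to $\|\bw\|_2^2$ is damped by the factor $s_1/(s_1^2+\psi_1\psi_2\lambda)$. Both proofs rest on the same random-matrix input (a subpolynomial operator-norm bound for $\bJ$); the SMW reduction avoids singular-vector perturbation theory entirely and its algebraic identity is reused for Lemma~\ref{lem:constant_term_one}, whereas your route is more self-contained and makes the cancellation mechanism --- that $\ones_N$ is absorbed by the leading singular pair --- conceptually explicit. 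Two of your intermediate claims are stated more optimistically than the paper's lemmas deliver: the probability bound $\P(\cE_d^c) = o_d(d^{-A})$ for all fixed $A$ overclaims what the union bound inside Lemma~\ref{lem:concentration_operator_general_gaussian} yields (namely $O_d(1/d^2)$), and $\Poly(\log d)$ should really be $\exp\{C(\log d)^{1/2}\}$; but both are inessential, since your argument only uses $\P(\cE_d^c) = o_d(1)$ and a subpolynomial (in fact $o_d(d^{1/4})$ suffices) operator norm for the centered part.
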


We will now use these lemmas to prove Lemma \ref{lem:expression_bound_S}. We begin by recalling
a few  facts that are used several times in the proof. Since $\lambda > 0$, there exists a constant $C < \infty$
depending on $(\lambda, \psi_1, \psi_2)$ such that deterministically 
\begin{equation}\label{eqn:Z(ZZ)-1_bound}
\| \bZ  \Res \|_{\op} = \| \bZ (\bZ^\sT \bZ +  \psi_1 \psi_2 \lambda \id_N)^{-1} \|_{\op} \le C, ~~~~~  \| \Res \|_{\op} = \| (\bZ^\sT \bZ +  \psi_1 \psi_2 \lambda \id_N)^{-1} \|_{\op} \le C.
\end{equation}
By operator norm bounds on Wishart matrices \cite{Guionnet}, we have (the definition of these matrices are given in Eq. (\ref{eqn:QHZZ1_in_decomposition}))
\begin{equation}\label{eqn:HQZ1_bound}
\E[\| \bH \|_{\op}^2], \E[\| \bQ \|_{\op}^2], \E[\| \bZ_1 \|_{\op}^2] = O_{d}(1). 
\end{equation}
Finally we need some simple operator norm bounds on the matrices $Q_k(\bX \bX^\sT) - \id_n$, $Q_k(\bTheta \bTheta^\sT) - \id_N$, and $Q_k(\bTheta \bX^\sT)$. Notice that $Q_k(\bX \bX^\sT)_{ii}=1$ (by the normalization condition of Gegenbauer polynomials) and the out-of-diagonal entries
of $Q_k(\bX \bX^\sT)$ have zero mean and typical size of order $1/d^{k/2}$ (see Appendix \ref{sec:Background}). This suggests the following estimates, which
are formalized in Lemma \ref{lem:gegenbauer_identity}, 
\begin{align}\label{eqn:QXX_bound}
\E\Big[\sup_{k \ge 2} \| Q_k(\bX \bX^\sT) - \id_n \|_{\op}^2 \Big] \vee \E\Big[\sup_{k \ge 2} \| Q_k(\bTheta \bTheta^\sT) - \id_N \|_{\op}^2 \Big] \vee \E\Big[ \sup_{k \ge 2} \| Q_k(\bTheta \bX^\sT)  \|_{\op}^2 \Big] = o_d(1). 
\end{align}
As a consequence of these estimates, we obtain a useful approximation result 
for the matrix $\bU\in\reals^N$ as defined in Eq.~\eqref{eqn:def_Vi_Uij}. In words, $\bU$ is
well approximated by a term that is linear in the weights covariance matrix $\bTheta\bTheta^{\sT}$, plus a
term that is proportional to the identity.
To see this, by the decomposition of $\sigma$ into Gegenbauer polynomials as in Eq. (\ref{eqn:expansion_sigma_lem_reformulation}) and the properties of Gegenbauer polynomials as in Appendix \ref{sec:Background}, we have 
\[
\bU = \sum_{k, l = 0}^{\infty} \lambda_{d, k}(\sigma) \lambda_{d, l}(\sigma) B(d, k) B(d, l) \E_{\bX}[Q_k(\bTheta \bX^\sT) Q_l(\bX \bTheta^\sT)] =  \sum_{k=0}^{\infty} \lambda_{d,k}(\sigma)^2 B(d,k)\, Q_k(\bTheta\bTheta^{\sT}).
\]
Since further $\lambda_{d,k}(\sigma)^2B(d,k)k! \to \ob_k(\sigma)^2$ as $d\to\infty$ (see Eq. (\ref{eqn:relationship_mu_lambda})), we have
\begin{align}\label{lem:U_decomposition_proof_in_lem:constant_term_two}
\bU =  \lambda_{d, 0}^2 \ones_N \ones_N^\sT + \ob_1^2 \bQ  + \ob_\star^2 (\id_N + \bDelta),\;\;\;\;
  \E[\| \bDelta \|_{\op}^2] = o_d(1)\,.
  \end{align}
(This estimate is stated formally in the appendices as  Lemma \ref{lem:decomposition_of_kernel_matrix}.)
It is also useful to introduce  the matrix 
\[
\bM \equiv \ob_1^2 \bQ  + \ob_\star^2 (\id_N + \bDelta),
\]
for which the above implies $\bU =  \lambda_{d, 0}^2 \ones_N \ones_N^\sT + \bM$ and $\E[\| \bM \|_{\op}^2] = O_{d}(1)$.

We have now finished presenting our preliminary estimates and can prove
Lemma \ref{lem:expression_bound_S}.

We begin by considering Eq.~\eqref{eq:S012diff}, where $S_{10}$ and $S_{20}$ are defined in Eq. (\ref{eqn:S_definitions_in_decomposition}).
By the approximate linearization of $\bU$ in Eq.~\eqref{lem:U_decomposition_proof_in_lem:constant_term_two}, we have
\begin{align*}
S_{10} =&~ \frac{\lambda_{d, 0}(\sigma)}{\sqrt d} \Tr(  \ones_N \ones_n^\sT \bZ  \Res ),  ~~~~~ S_{20} = \frac{\lambda_{d, 0}(\sigma)^2}{d} \Tr(  \Res \ones_N \ones_N^\sT  \Res \bZ^\sT \ones_n \ones_n^\sT \bZ ) + \frac{1}{d}\Tr(  \bZ \Res  \bM   \Res \bZ^\sT \ones_n \ones_n^\sT ). 
\end{align*}
Further recall the definition of $A_1$, and $A_2$ in Eqs.~\eqref{eq:A1def}, \eqref{eq:A2def} and the definition of $\Res$ and $\oRes$ as in Eq. (\ref{eqn:resolvent_Z'Z_in_proof}), and we define 
\begin{align*}
B \equiv&~ \frac{1}{d}\Tr(  \bZ \Res \bM \Res \bZ^\sT \ones_n \ones_n^\sT ) = \frac{1}{d}\Tr(  \bZ  \bM \bZ^\sT \oRes \ones_n \ones_n^\sT  \oRes). 
\end{align*}
Then we have $S_{10} = A_1$, $S_{20} = A_2 + B$ and by Lemma \ref{lem:constant_term_one}, we have
\begin{align}
  \E[\vert 1 - 2 S_{10} + S_{20}\vert] =&~ \E[\vert 1 - 2 A_{1} + A_{2} + B\vert] 
 \le \E[\vert 1 - 2 A_{1} + A_{2}\vert] + \E[\vert B\vert]
  \le \E[\vert B\vert] +o_d(1)\, . \label{eqn:constant_term_two_lem_decomposition_1}
\end{align}
By Lemma \ref{lem:constant_term_two} and the fact that $\E[\| \bM \|_{\op}^2] = O_d(1)$ as in Eq (\ref{lem:U_decomposition_proof_in_lem:constant_term_two}) (when applying Lemma \ref{lem:constant_term_two}, we change the role of $N$ and $n$, the role of $\Res$ and $\oRes$, and the role of $\bTheta$ and $\bX$; this can be done because the role of $\bTheta$ and $\bX$ is symmetric), we have 
\[
\E[\vert B\vert] = o_{d}(1). 
\]
Plugging these bounds into Eq. (\ref{eqn:constant_term_two_lem_decomposition_1}), we get
$\E[\vert 1 - 2 S_{10} + S_{20}\vert]  = o_d(1)$ as claimed. 

We next consider Eq.~\eqref{eq:S1k}, which requires to control  $S_{1k}$, defined in
Eq.~\eqref{eqn:S_definitions_in_decomposition}). By Eq.~\eqref{eqn:Z(ZZ)-1_bound}, we have
\begin{equation}\label{eqn:constant_term_two_lem_decomposition_4}
\begin{aligned}
\sup_{k \ge 2} \vert S_{1k} \vert \le&~ \sup_{k \ge 2} \Big[ \vert \sqrt d \lambda_{d, k}(\sigma)\vert \cdot \| Q_k(\bTheta \bX^\sT)  \bZ \Res \|_{\op} \Big] \\
\le&~ \sup_{k \ge 2} \Big[ C \cdot \vert \sqrt d \lambda_{d, k}(\sigma)\vert \cdot \| Q_k(\bTheta \bX^\sT)  \|_{\op} \Big]. 
\end{aligned}
\end{equation}
Further note $\| \sigma \|_{L^2(\tau_d)}^2 = \sum_{k \ge 0} \lambda_{d, k}(\sigma)^2 B(d, k) = O_d(1)$, $B(d, k) = \Theta(d^k)$, and for fixed $d$, $B(d, k)$ is non-decreasing in $k$ \cite[Lemma 1]{ghorbani2019linearized}. Therefore 
\[
\sup_{k \ge 2} \vert \lambda_{d, k}(\sigma) \vert \le \sup_{k \ge 2} \Big[ \| \sigma \|_{L^2(\tau_d)} / \sqrt{B(d, k)} \Big] = O_d(1/d). 
\]
Combining with Eq. (\ref{eqn:QXX_bound}) and Eq. (\ref{eqn:constant_term_two_lem_decomposition_4}), we get 
$\E \Big[ \sup_{k \ge 2} \vert S_{1k}\vert \Big]  = o_d(1)$.

We next consider Eq.~\eqref{eq:S2k}, whereby $S_{2k}$ and $S_3$ are defined as per Eq.~\eqref{eqn:S_definitions_in_decomposition}.
Recall that, by Eq.~\eqref{lem:U_decomposition_proof_in_lem:constant_term_two}, we have
$\bU =  \lambda_{d, 0}^2 \ones_N \ones_N^\sT +\bM$ where $\E[\|\bM\|^2_{\op}]=O_d(1)$.
We have therefore
\begin{equation}\label{eqn:constant_term_two_lem_decomposition_2}
\begin{aligned}
\sup_{k \ge 2} \vert S_{2k} - S_3 \vert \le I_1 + I_2,
\end{aligned}
\end{equation}
where
\[
\begin{aligned}
I_1 =&~ \sup_{k \ge 2} \Big\vert  \frac{\lambda_{d, 0}^2}{d} \Tr\Big[  \Res \ones_N \ones_N^\sT  \Res \bZ^\sT (Q_k(\bX \bX^\sT) - \id_n) \bZ \Big] \Big\vert,\\
I_2 =&~ \sup_{k \ge 2} \Big\vert  \frac{1}{d} \Tr\Big[ \Res \bM  \Res \bZ^\sT (Q_k(\bX \bX^\sT) - \id_n) \bZ \Big] \Big\vert.
\end{aligned}
\]
By Lemma \ref{lem:constant_term_two} (with $\obM_k= Q_k(\bX \bX^\sT) - \id_n$) and Eq.~(\ref{eqn:QXX_bound}), we get 
\[
\E[ I_1 ] = o_{d}(1).
\] 
Moreover, by Eqs.~\eqref{eqn:Z(ZZ)-1_bound}, \eqref{eqn:HQZ1_bound}, we have
\[
\begin{aligned}
\E[I_2 ] \le&~ \E\Big[ \sup_{k \ge 2}  \lambda_{d, 0}^2  \|  \bZ  \Res\|_{\op} \| \bM \|_{\op} \| \Res \bZ^\sT \|_{\op}  \| Q_k(\bX \bX^\sT) - \id_n \|_{\op} \Big] \\
\le&~ O_{d}(1) \cdot \E[\| \bM \|_{\op}^2]^{1/2} \cdot \E\Big[\sup_{k \ge 2} \| Q_k(\bX \bX^\sT) - \id_n \|_{\op}^2 \Big]^{1/2} = o_d(1). 
\end{aligned}
\]
Plugging these bounds into Eq. (\ref{eqn:constant_term_two_lem_decomposition_2}), we get
the desired bound $\E [ \sup_{k \ge 2} \vert S_{2k} - S_3 \vert ] = o_{d}(1)$.

We next consider Eq.~\eqref{eq:S11}, where we recall the definition of $S_{11}$ in Eq. (\ref{eqn:S_definitions_in_decomposition}) and the definition of $\Psi_1$ in Eq. (\ref{eqn:Psi_definitions_in_decomposition}). By observing that $\lim_{d \to \infty} \sqrt d \lambda_{1, d}(\sigma) = \ob_1$ (see Eq. (\ref{eqn:relationship_mu_lambda})) and that $\ob_1 Q_1(\bX \bTheta^\sT) = \ob_1 \bX \bTheta^\sT / d = \bZ_1$, we immediately get 
\[
\E \vert S_{11} - \Psi_1\vert = o_d(1)  \cdot \E[\vert \Psi_1 \vert] = o_d(1).
\] 

In order to prove Eq.~\eqref{eq:S21},
recall  the definition of $S_{21}$ in Eq. (\ref{eqn:S_definitions_in_decomposition}) and the definition of $\Psi_2$ in Eq. (\ref{eqn:Psi_definitions_in_decomposition}). By the decomposition of $\bU$ in Eq. (\ref{lem:U_decomposition_proof_in_lem:constant_term_two}) and
recalling that $Q_1(\bX \bX^\sT) = \bH$, we have 
\begin{equation}\label{eqn:constant_term_two_lem_decomposition_3}
\begin{aligned}
\vert S_{21} - \Psi_2 \vert \le I_3 + I_4, 
\end{aligned}
\end{equation}
where 
\[
\begin{aligned}
I_3 =&~ \Big  \vert  \frac{\lambda_{d, 0}(\sigma)^2}{d} \Tr\Big[ \Res \ones_N \ones_N^\sT  \Res \bZ^\sT \bH \bZ \Big] \Big \vert,\\
I_4 =&~ \Big  \vert \frac{\ob_\star^2}{d} \Tr\Big[  \Res \bDelta  \Res \bZ^\sT \bH \bZ \Big] \Big \vert. 
\end{aligned}
\]
By Lemma \ref{lem:constant_term_two} and Eq. (\ref{eqn:HQZ1_bound}), we get 
\[
\E[ I_3 ] = o_{d}(1).
\]
Moreover, by Eq. (\ref{eqn:Z(ZZ)-1_bound}), (\ref{eqn:HQZ1_bound}), we have
\[
\begin{aligned}
\E[ I_4 ] \le&~ \E\Big[\ob_\star^2 \| \bZ  \Res \|_{\op} \| \bDelta\|_{\op} \| \Res \bZ^\sT \|_{\op} \| \bH \|_{\op}\Big]\le O_d(1) \cdot \E[\| \bDelta \|_{\op}^2] \cdot \E[\| \bH \|_{\op}^2] = o_d(1). 
\end{aligned}
\]
Plugging these bounds into Eq.~\eqref{eqn:constant_term_two_lem_decomposition_3}, we get
the desired bound $\E \vert S_{21} - \Psi_2 \vert = o_d(1)$.

Finally,  Eq.~\eqref{eq:S31} is  proved analogously to Eq.~\eqref{eq:S21}: this completes the proof of the lemma.

\subsection{Proof of Lemma  \ref{lem:concentration_beta}}\label{subsec:concentration_beta}

Instead of taking $\bbeta_{d, 1} \sim \Unif(\S^{d-1}(\normf_{d, 1}))$,  in the proof we will assume $\bbeta_{d, 1} \sim \normal(\bzero,
[\normf_{d, 1}^2 / d] \id_d)$. Note for $\bbeta_{d, 1} \sim \normal(\bzero, [\normf_{d, 1}^2 / d] \id_d)$, we have $\normf_{d, 1}
\bbeta_{d, 1}/ \| \bbeta_{d, 1} \|_2 \sim \Unif(\S^{d-1}(\normf_{d, 1}))$. Moreover, in high dimension, $\| \bbeta_{d, 1}\|_2$ 
concentrates tightly around $\normf_{d, 1}$. Using these properties, it
is not hard to translate the proof from Gaussian  $\bbeta_{d, 1}$ to
spherical $\bbeta_{d, 1}$. 

To prove Lemma \ref{lem:concentration_beta}, we begin with by rewriting the prediction risk ---cf. Eq. (\ref{eqn:prediction_risk_decomposition_first})--- as  (note that $\by = \boldf + \beps$)
\begin{align*}
R_\RF(f_d, \bX, \bTheta, \lambda)
=&~  \sum_{k \ge 0} \normf_{d, k}^2 - 2 \Gamma_1 + \Gamma_2 + \Gamma_3 - 2 \Gamma_4 + 2 \Gamma_5, 
\end{align*}
where
\begin{align*}
\Gamma_1 =&~ \boldf^\sT \bZ  \Res \bV / \sqrt{d}, \\
\Gamma_2 =&~ \boldf^\sT \bZ   \Res \bU  \Res  \bZ^\sT \boldf / d, \\
\Gamma_3 =&~ \beps^\sT \bZ   \Res \bU  \Res  \bZ^\sT \beps / d, \\
\Gamma_4 =&~ \beps^\sT \bZ  \Res \bV / \sqrt{d}, \\
\Gamma_5 =&~ \beps^\sT \bZ   \Res \bU  \Res  \bZ^\sT \boldf / d, 
\end{align*}
and $\bV \in \R^N$ and $\bU  \in \R^{N \times N}$  given in Eq.~\eqref{eqn:def_Vi_Uij}. We will regard $\Gamma_1,\dots,\Gamma_5$
as quadratic forms in the vectors $\bbeta$, $\beps$, and bound their variances individually. Namely, we
claim that
\[
\E_{\bX, \bTheta}[\Var_{\bbeta, \beps} ( \Gamma_k )] = o_d(1) ,\;\;\; \forall k\le 5.
\]
This obviously implies the claims of the lemma.
In the rest of this proof, we show the variance bound for $\Gamma_1$, as the other bounds are very similar.

Recall the definition of $\bY_{k, \bx}$ and $\bY_{k, \btheta}$ in Eq. (\ref{eqn:def_Y_k_x_theta}), the definition of Gegenbauer coefficients $\lambda_{d, l} \equiv \lambda_{d, l}(\sigma)$ in Eq. (\ref{eqn:expansion_sigma_lem_reformulation}) and the expansion of $\boldf$ and $\bV$ vectors in Eq. (\ref{eqn:expansion_boldf_V}). 
We rewrite $\Gamma_1$ as
\[
\Gamma_1 = \frac{1}{ \sqrt d} \Big(\sum_{k=0}^\infty \bY_{k, \bx} \bbeta_{d, k} \Big)^\sT \bZ \Res \Big( \sum_{l = 0}^\infty \lambda_{d, l} \bY_{l, \btheta} \bbeta_{d, l} \Big) .
\] 

Calculating the variance of $\Gamma_1$ with respect to $\bbeta_{d, k} \sim \normal(\bzero, (\normf_{d, k}^2 / B(d, k)) \id)$ for $k \ge 1$ using Lemma \ref{lem:variance_calculations} (which follows from direct calculation), we get
\begin{align*}
\Var_{\bbeta}(\Gamma_1) 
=&~ \sum_{l \neq k} \frac{\lambda_{d, l}^2}{d} \Var_\bbeta \Big(   \bbeta_{d, k}^\sT\bY_{k, \bx}^\sT \bZ \Res  \bY_{l, \btheta} \bbeta_{d, l} \Big) + \sum_{k\ge 1} \frac{ \lambda_{d, k}^2}{d} \Var_\bbeta \Big(  \bbeta_{d, k}^\sT \bY_{k, \bx}^\sT \bZ \Res \bY_{k, \btheta} \bbeta_{d, k} \Big)\\
=&~ \sum_{l \neq k}  \normf_{d, l}^2 \normf_{d, k}^2  \frac{\lambda_{d, l}^2}{d}  \Tr\Big(  \Res \bZ^\sT Q_k(\bX \bX^\sT) \bZ \Res   Q_l(\bTheta \bTheta^\sT) \Big) \\
& + \sum_{k\ge 1} \normf_{d,k}^4    \frac{\lambda_{d, k}^2}{d} \Big[ \Tr\Big(  \Res \bZ^\sT Q_k(\bX \bX^\sT) \bZ \Res  Q_k(\bTheta \bTheta^\sT) \Big) + \Tr\Big(  \bZ \Res Q_k(\bTheta \bX^\sT) \bZ \Res Q_k(\bTheta \bX^\sT) \Big)\Big]. 
\end{align*}
Notice that we have  $\| \bZ \Res \|_{\op} \le C$  almost surely for some constant $C$, and recall
the bounds \eqref{eqn:QXX_bound} which imply $\E\{\sup_{k \ge 1} \| Q_k(\bX \bX^\sT) \|^2\} = O_d(1)$,
$\E\{\sup_{k \ge 1} \| Q_k(\bTheta \bX^\sT) \|^2\} = o_d(1)$, and $\E\{\sup_{k \ge 1} \| Q_k(\bTheta \bTheta^\sT) \|^2\} = O_d(1)$
(the case $k=1$ corresponds to standard Wishart matrices).

By taking expectation in the above expression, using $d^{-1}\Tr(\bA)\le C\|\bA\|_{\op}$ for $\bA\in\reals^{n\times n}$ or $\bA\in\reals^{N\times N}$, and Cauchy-Schwartz,  we obtain
\begin{align*}
\begin{aligned}
  \E_{\bX,\bTheta}\big[\Var_{\bbeta}(\Gamma_1) \big]  \le&~
  \sum_{k\ge1}  \normf_{d, 0}^2 \normf_{d, k}^2  \frac{\lambda_{d, 0}^2}{d}  \E_{\bX, \bTheta} \Tr\Big(  \Res \bZ^\sT Q_k(\bX \bX^\sT) \bZ \Res \ones_N\ones_N^\sT \Big)\\
&~  + \sum_{l>1}  \normf_{d, l}^2 \normf_{d, 0}^2  \frac{\lambda_{d, l}^2}{d} \E_{\bX, \bTheta}  \Tr\Big(  \Res \bZ^\sT \ones_n\ones_n^{\sT}
  \bZ \Res  Q_l(\bTheta \bTheta^\sT) \Big)+
  C \sum_{l \neq k\ge 1}  \normf_{d, l}^2 \normf_{d, k}^2  \lambda_{d, l}^2+
  C\sum_{k\ge 1} \normf_{d,k}^4    \lambda_{d, k}^2\, .
\end{aligned}
  \end{align*}
  Further note that $\| \sigma \|_{L^2(\tau_d)}^2 = \sum_{k \ge 0} \lambda_{d, k}^2 B(d, k) = O_d(1)$, $B(d, k) = \Theta(d^k)$, and for fixed $d$, $B(d, k)$ is non-decreasing in $k$ \cite[Lemma 1]{ghorbani2019linearized}. Therefore 
\[
\sup_{k \ge 1} \vert \lambda_{d, k}(\sigma) \vert \le \sup_{k \ge 1} \Big[ \| \sigma \|_{L^2(\tau_d)} / \sqrt{B(d, k)} \Big] = O_d(1/\sqrt d). 
\]
Substituting above we obtain, and using the fact that $\sum_{k\ge 1}F_{d,k}^2=O_d(1)$ by construction, we have
\begin{equation}\label{eq:BoundVarGamma1}
\begin{aligned}
  \E_{\bX,\bTheta}\big[\Var_{\bbeta}(\Gamma_1) \big] \le&~
  \sum_{k\ge 1}  \normf_{d, 0}^2 \normf_{d, k}^2  \frac{\lambda_{d, 0}^2}{d}  \E_{\bX, \bTheta} \Tr\Big(  \Res \bZ^\sT Q_k(\bX \bX^\sT) \bZ \Res \ones_N\ones_N^\sT\Big)\\
&~  + \sum_{l>1}  \normf_{d, l}^2 \normf_{d, 0}^2  \frac{\lambda_{d, l}^2}{d}  \E_{\bX, \bTheta} \Tr\Big(  \Res \bZ^\sT \ones_n\ones_n^{\sT} \bZ \Res  Q_l(\bTheta \bTheta^\sT) \Big)+o_d(1)\, .
\end{aligned}
\end{equation}
To bound the remaining two terms in this expression, note that 
\[
\begin{aligned}
&\sup_{k \ge 1} \E_{\bX, \bTheta}  \Big\vert \frac{1}{d}  \Tr\Big(  \Res \bZ^\sT \ones_n \ones_n^\sT \bZ \Res  Q_k(\bTheta \bTheta^\sT) \Big)  \Big\vert =\sup_{k \ge 1} \E_{\bX, \bTheta}  \Big\vert \frac{1}{d}  \Tr\Big(  \oRes  \ones_n \ones_n^\sT  \oRes  \bZ Q_k(\bTheta \bTheta^\sT)  \bZ^\sT \Big)  \Big\vert = o_d(1), 
\end{aligned}
\]
where the bound is implied by Lemma \ref{lem:constant_term_two} (when applying Lemma \ref{lem:constant_term_two}, we change the role of $N$ and $n$, the role of $\Res$ and $\oRes$, and the role of $\bTheta$ and $\bX$; this can be done because the role of $\bTheta$ and $\bX$ is symmetric), and by Eq. (\ref{eqn:HQZ1_bound}) and $\lambda_{d, 0}(\sigma) = \Theta_d(1)$ (by Assumption \ref{ass:activation} and note that $\mu_0(\sigma) = \lim_{d \to \infty} \lambda_{d, 0}(\sigma)$ by Eq. (\ref{eqn:relationship_mu_lambda})). This proves that
\begin{align*}
\sum_{l>1}  \normf_{d, l}^2 \normf_{d, 0}^2  \frac{\lambda_{d, l}^2}{d}  \E_{\bX, \bTheta}\Tr\Big(  \Res \bZ^\sT \ones_n\ones_n^{\sT} \bZ \Res  Q_l(\bTheta \bTheta^\sT) \Big) =o_d(1)\, .
\end{align*}
The bound on the first term in Eq.~\eqref{eq:BoundVarGamma1} is obtained analogously and we omit it for brevity.

\section{Proof of Proposition \ref{prop:Stieltjes}}\label{sec:Stieltjes}

This section is organized as follows. 
We collect the elements to prove Proposition \ref{prop:Stieltjes} in Sections \ref{subsec:lemma_Stieltjes}, \ref{subsec:property_fixed_point}, \ref{subsec:leave_one_out_proof}, \ref{subsec:equivalence_G_S}, and prove the proposition in Section \ref{subsec:proof_Stieltjes}.

More specifically, in Section \ref{subsec:lemma_Stieltjes} we state the key Lemma \ref{lemma:KeyFiniteD}:
the partial Stieltjes transforms of $\bA$  approximately satisfy the fixed point equation, when $\bx_i, \btheta_a$
are Gaussian vectors and  the activation function $\vphi$ is a polynomial with $\E_{G \sim \normal(0, 1)}[\vphi(G)] = 0$.
In Section \ref{subsec:property_fixed_point}, and Section \ref{subsec:leave_one_out_proof} we first establish some useful properties
of the fixed point equations and then prove Lemma \ref{lemma:KeyFiniteD}.
Finally, in Section \ref{subsec:equivalence_G_S}, we show that the Stieltjes transform
does not change significantly when changing the distribution of $\bx_i, \btheta_a$ from uniform on the sphere to Gaussian.

\subsection{The key lemma: partial Stieltjes transforms are approximate fixed point}\label{subsec:lemma_Stieltjes}

In this subsection, we state Lemma \ref{lemma:KeyFiniteD}, which is the key lemma that is used to prove Proposition \ref{prop:Stieltjes}. Lemma \ref{lemma:KeyFiniteD} studies $\overline m_{1, d}$ and $\overline m_{2, d}$, the partial Stieltjes transforms of the Gaussian counterparts of the matrix $\bA$ as defined in Eq. (\ref{eqn:matrix_A}). This lemma shows that these partial Stieltjes transforms $\overline m_{1, d}$ and  $\overline m_{2, d}$ approximately satisfy the fixed point equation which involves functions $\sFone$ and $\sFtwo$ as defined in Eq. (\ref{eq:Fdef}). We will prove Lemma \ref{lemma:KeyFiniteD} in Section \ref{subsec:leave_one_out_proof}. Later in Section \ref{subsec:equivalence_G_S}, we will show that the Gaussian counterpart of the Stieltjes transform shares the same asymptotics with its spherical version.

First let us define the Gaussian counterparts of the partial Stieltjes transforms. Let $(\overline \btheta_a)_{a\in [N]}\sim_{iid} \normal(0,\id_d)$ and $(\overline \bx_i)_{i\in[n]}\sim_{iid} \normal(0,\id_d)$. We denote by $\overline \bTheta\in \reals^{N\times d}$ the matrix whose $a$-th row is given by $\overline\btheta_a$, and by $\overline\bX\in\reals^{n\times d}$ the matrix whose $i$-th row is given by $\overline \bx_i$. We consider a polynomial activation functions $\vphi: \reals\to\reals$. Denote $\ob_k = \E[\vphi(G) \He_k(G)]$ and $\ob_\star^2 = \sum_{k\ge 2} \ob_k^2/k!$. We define the following matrices,
\begin{align}
  \overline \bQ =&~ \frac{1}{d} \overline \bTheta \overline \bTheta^\sT, \, \;\;\;\;\;\;\; \;\;\;\;\;\;\; \;\;\;\;\;\;\;\;\;
\overline \bH =~ \frac{1}{d}\overline  \bX \overline \bX^\sT, \label{eq:GaussMatrices1}\\
\overline \bJ =&~ \frac{1}{\sqrt d} \varphi\Big( \frac{1}{\sqrt d} \overline \bX \overline \bTheta^\sT \Big), \, \;\;\;\;\;\;\;
       \overline \bJ_1 =~ \frac{\ob_1}{d}\overline  \bX \overline \bTheta^\sT , \label{eq:GaussMatrices2}
\end{align}
as well as the block matrix $\overline \bA \in\reals^{M\times M}$, $M=N+n$, defined by
\begin{align}\label{eqn:matrix_A_bar}
\overline \bA =&~ \left[\begin{matrix}
s_1\id_N+s_2\overline \bQ & \overline \bJ^\sT  + p \overline \bJ_1^{\sT}\\
\overline \bJ + p \overline \bJ_1 & t_1\id_n+t_2\overline \bH
\end{matrix}\right]\,.
\end{align}
The matrix $\overline \bA$ is in parallel with its spherical version matrix $\bA$ defined as in Eq. (\ref{eqn:matrix_A}).

In what follows, we will write $\bq = (s_1,s_2, t_1,t_2, p)$. We would like to calculate the asymptotic behavior of the following partial Stieltjes transforms 
\begin{equation}\label{eqn:m_Gaussian_definition}
\begin{aligned}
\overline m_{1, d}(\xi; \bq)  =&~ \frac{N}{d}\E\{ (\overline \bA - \xi \id_{M})^{-1}_{11} \} = \E[\overline M_{1, d}(\xi; \bq)], \\
\overline m_{2, d}(\xi; \bq)  =&~ \frac{n}{d}\E\{ (\overline \bA - \xi \id_{M})^{-1}_{N+1, N+1}  \} = \E[\overline M_{2, d}(\xi; \bq)],
\end{aligned}
\end{equation}
where
\begin{equation}\label{eqn:M_Gaussian_definition}
\begin{aligned}
\overline M_{1, d}(\xi; \bq) =&~ \frac{1}{d} \Tr_{[1,N]}[(\overline \bA - \xi \id_M)^{-1}], \\
\overline M_{2, d}(\xi; \bq) =&~ \frac{1}{d} \Tr_{[N+1,N+n]}[(\overline \bA - \xi \id_M)^{-1}].
\end{aligned}
\end{equation}
Here, the partial trace notation $\Tr_{[\cdot, \cdot]}$ is defined as follows: for a matrix $\bK \in \C^{M \times M}$ and $1 \le a \le b \le M$, define
\[
\Tr_{[a, b]}(\bK) = \sum_{i=a}^b K_{ii}. 
\]

The crucial step consists in showing that the expected Stieltjes transforms $\overline m_{1,d}, \overline  m_{2,d}$ are approximate solutions of the fixed point equations \eqref{eq:FixedPoint}.

\begin{lemma}\label{lemma:KeyFiniteD}
Assume that $\vphi$ is a polynomial with $\E[\vphi(G)] = 0$ and $\ob_1 \equiv \E[\vphi(G) G] \neq 0$. Consider the linear regime Assumption \ref{ass:linear}. Then for any $\bq \in \cQ$ and for any $\xi_0 > 0$, there exists $C=C(\xi_0, \bq, \psi_1,\psi_2, \vphi)$ which is uniformly bounded when $(\bq, \psi_1,\psi_2)$ is in a compact set, and a function $\err(d)$ with $\lim_{d \to \infty} \err(d) \to 0$, such that for all $\xi \in \C_+$ with $\Im(\xi)>\xi_0$, we have
\begin{align}
\Big\vert \overline m_{1,d} - \sFone(\overline m_{1,d}, \overline m_{2,d};\xi;\bq,\psi_{1,d},\psi_{2,d}, \ob_1, \ob_\star)\Big\vert \le C  \, \cdot \err(d)\, ,\label{eqn:1_KeyFiniteD}\\
 \Big\vert \overline m_{2,d} - \sFtwo(\overline m_{1,d}, \overline m_{2,d};\xi;\bq,\psi_{1,d},\psi_{2,d}, \ob_1, \ob_\star)\Big\vert \le C \cdot \err(d)\,. \label{eqn:2_KeyFiniteD}
\end{align}
\end{lemma}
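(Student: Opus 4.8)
\emph{Approach.} I would prove the two estimates by a leave-one-out (Schur complement) argument, one for each block; I describe the plan for \eqref{eqn:1_KeyFiniteD}, the plan for \eqref{eqn:2_KeyFiniteD} being symmetric under exchanging $(N,\overline\bTheta,\psi_1,s_1,s_2)\leftrightarrow(n,\overline\bX,\psi_2,t_1,t_2)$ and transposing the off-diagonal block. First I would use $\ob_0=\E[\vphi(G)]=0$ to split $\vphi=\ob_1\He_1+\vphi^\perp$, hence $\overline\bJ=\overline\bJ_1+\overline\bJ_\perp$ with $\overline\bJ_\perp=d^{-1/2}\vphi^\perp(\overline\bX\overline\bTheta^\sT/\sqrt d)$, where $\E[\vphi^\perp(G)]=\E[\He_1(G)\vphi^\perp(G)]=0$ and $\E[\vphi^\perp(G)^2]=\ob_\star^2$. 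Applying the Schur complement to $\overline\bA-\xi\id_M$ at its $N$-th coordinate gives
\begin{equation}
\overline m_{1,d}=\frac{N}{d}\,\E\Big[\Big(-\xi+s_1+s_2\|\overline\btheta_N\|_2^2/d-Q\Big)^{-1}\Big],\qquad Q=\overline\bA_{\cdot,N}^\sT\,(\overline\bB-\xi\id_{M-1})^{-1}\,\overline\bA_{\cdot,N},
\end{equation}
with $\overline\bB$ the principal minor of $\overline\bA$ deleting row/column $N$ and $\overline\bA_{\cdot,N}$ the corresponding column with its $N$-th entry dropped. Since $\|\overline\btheta_N\|_2^2/d\to1$ and $\|(\overline\bB-\xi\id)^{-1}\|_{\op}\le 1/\Im(\xi)$, the task reduces to showing that $Q$ concentrates and to expressing its limit through $\overline m_{1,d},\overline m_{2,d}$; matching with \eqref{eq:Fdef} then yields the claim.

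\emph{Breaking the dependence.} The difficulty is that $\overline\bB$ and $\overline\bA_{\cdot,N}$ both involve $\overline\bx_1,\dots,\overline\bx_n$, hence are dependent. I would exploit Gaussianity: writing $\overline\btheta_a=\eta_a\hat\btheta_N+\tbtheta_a$ ($a<N$) and $\overline\bx_i=u_i\hat\btheta_N+\tbx_i$ ($i\le n$), with $\hat\btheta_N=\overline\btheta_N/\|\overline\btheta_N\|_2$ and $\tbtheta_a,\tbx_i\perp\overline\btheta_N$, the scalars $\{\eta_a\}\cup\{u_i\}$ are i.i.d.\ $\normal(0,1)$, independent of $\overline\btheta_N$ and of all tilded vectors. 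Then $\overline\bA_{\cdot,N}$ depends only on $\overline\btheta_N$ and $(\eta,u)$ --- its $\btheta$-block is $(s_2\|\overline\btheta_N\|_2/d)\,\eta$ and its $\bx$-block is $(h(u_i)/\sqrt d+r_i)_{i\le n}$, where $h(x)\equiv(1+p)\ob_1x+\vphi^\perp(x)$ (so $\E[h(G)]=0$, $\E[h(G)^2]=(1+p)^2\ob_1^2+\ob_\star^2$) and $\|r\|_2=o_{d,\P}(1)$ accounts for $\|\overline\btheta_N\|_2/\sqrt d-1$ and the local-Lipschitz bound on the polynomial $\vphi^\perp$. Simultaneously I would write $\overline\bB=\tbB+\bDelta+\bE_0$: $\tbB$ has the block form of $\overline\bA$ built solely from the $(d-1)$-dimensional Gaussians $\{\tbtheta_a\},\{\tbx_i\}$ (hence independent of $(\eta,u)$); $\bDelta$ has rank $O(1)$ and depends only on $(\eta,u)$, carrying the rank-one pieces $d^{-1}\eta\eta^\sT$, $d^{-1}uu^\sT$ of $\overline\bQ,\overline\bH$ and $d^{-1}u\eta^\sT$ of $\overline\bJ_1$; and $\|\bE_0\|_{\op}=o_{d,\P}(1)$ absorbs the change in $\overline\bJ_\perp$ from replacing $\<\overline\bx_i,\overline\btheta_a\>$ by $\<\tbx_i,\tbtheta_a\>$. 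I would stress that $\bDelta$ is not negligible: its Woodbury correction feeds an $O(1)$ term into $Q$, and this is exactly what produces the rational term in \eqref{eq:Fdef}.

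\emph{Concentration and identification.} Conditioning on $\{\tbtheta_a\},\{\tbx_i\}$ and setting $\bR=(\tbB-\xi\id)^{-1}$ (partitioned into a $\btheta$-block $\bR_{11}$, an $\bx$-block $\bR_{22}$, and an off-diagonal block $\bR_{12}$), the Woodbury identity turns $Q$ into a fixed rational function of $d^{-1}\eta^\sT\bR_{11}\eta$, $d^{-1}h(u)^\sT\bR_{22}h(u)$, $d^{-1}\eta^\sT\bR_{12}h(u)$, $d^{-1}\Tr_{[1,N-1]}\bR$, $d^{-1}\Tr_{[N,M-1]}\bR$ and a handful of analogous forms against the columns of $\bDelta$. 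By Hanson--Wright each concentrates around its conditional mean; $\eta\perp u$ centered forces $d^{-1}\eta^\sT\bR_{12}h(u)\to0$, $d^{-1}\eta^\sT\bR_{11}\eta\to d^{-1}\Tr\bR_{11}$, and $d^{-1}h(u)^\sT\bR_{22}h(u)\to\E[h(G)^2]\,d^{-1}\Tr\bR_{22}$. To evaluate $d^{-1}\Tr\bR_{11}$ and $d^{-1}\Tr\bR_{22}$ I would run a second Schur reduction inside $\tbB$ and invoke Gaussian equivalence for $\overline\bJ_\perp$: its columnwise i.i.d.\ structure together with $\E[\vphi^\perp(G_1)\vphi^\perp(G_2)]=O(\mathrm{corr}(G_1,G_2)^2)$ for jointly Gaussian $(G_1,G_2)$ allows replacing $\overline\bJ_\perp$ by an effective variance-$\ob_\star^2$ term in the relevant normalized traces, up to $o_d(1)$. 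Since $\tbB$ is an instance of the same model with parameters $(N-1,n,d-1)$, rank stability gives $d^{-1}\Tr_{[1,N-1]}\bR=\overline m_{1,d}+O_{d,\P}(1/d)$ and $d^{-1}\Tr_{[N,M-1]}\bR=\overline m_{2,d}+O_{d,\P}(1/d)$. Substituting back, taking expectations, and using that on $\{\Im\xi>\xi_0\}$ with $\bq\in\cQ$ the quantity $-\xi+s_1+s_2-Q$ stays bounded away from $0$ in modulus, an algebraic simplification matches the result with $\sFone(\overline m_{1,d},\overline m_{2,d};\xi;\bq,\psi_{1,d},\psi_{2,d},\ob_1,\ob_\star)$ up to $C\cdot\err(d)$ with $C$ locally bounded; this is \eqref{eqn:1_KeyFiniteD}, and \eqref{eqn:2_KeyFiniteD} follows by symmetry.

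\emph{Main obstacle.} The hard part will be the Gaussian-equivalence step for $\vphi^\perp$ \emph{inside} $\tbB$: unlike in the leave-one-out vector, whose coordinates are genuinely independent, the matrix $\overline\bJ_\perp$ is correlated with the block $t_1\id+t_2\tbH$, so the quadratic decorrelation of $\vphi^\perp$ has to be combined with yet another leave-one-out (or a moment expansion exploiting that $\vphi$ is a polynomial) before the relevant traces stabilize. A second source of technical work is keeping exact track of the bounded-rank corrections $\bDelta$ through the Woodbury identity, so that no $O(1)$ contribution to $Q$ is lost --- this is precisely what separates the present block model from a textbook Marchenko--Pastur computation. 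The supporting estimates (operator-norm control of $\vphi^\perp(\overline\bX\overline\bTheta^\sT/\sqrt d)$, tail bounds on $\max_{i,a}|\<\overline\bx_i,\overline\btheta_a\>|/\sqrt d$ for polynomial $\vphi$, and the rank-stability perturbation bounds) are routine but numerous.
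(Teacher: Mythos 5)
Your proposal follows the paper's proof almost line for line: the Schur complement at coordinate $N$, the orthogonal decomposition of $\overline\btheta_a,\overline\bx_i$ along $\overline\btheta_N$, the splitting $\overline\bB=\tbB+\bDelta+\bE_0$ with $\bDelta$ rank-two and $\|\bE_0\|_{\op}=o_{d,\P}(1)$, the Sherman--Morrison--Woodbury reduction of the $\bDelta$-correction, the Hanson--Wright concentration of the quadratic forms against $\bR=(\tbB-\xi\id)^{-1}$, and the absorption of $p$ into $\vphi_\star(x)=(1+p)\ob_1 x+\vphi^\perp(x)$. One place where your stated route drifts off course is the ``main obstacle'' paragraph. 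You do not need (and should not attempt) a second Schur reduction inside $\tbB$ or a Gaussian-equivalence replacement of $\overline\bJ_\perp$ by an effective $\ob_\star^2$-variance term: once the concentration step reduces everything to $d^{-1}\Tr_{[1,N-1]}\bR$ and $d^{-1}\Tr_{[N,M-1]}\bR$, the argument closes by \emph{self-consistency} --- these are the partial Stieltjes transforms of the same model with one feature and one ambient dimension removed, and three perturbation facts (stability under deleting a row/column of the same index, Lemma~\ref{eq:BD-Diff}; stability under replacing $d$ by $d-1$, Lemma~\ref{lem:perturbation_dimension_Stieltjes}; and concentration, Lemma~\ref{lemma:Concentration_Stieltjes}) immediately give $d^{-1}\Tr_{[1,N-1]}\bR=\overline m_{1,d}+o_{d,\P}(1)$ and likewise for the other block. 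Pursuing a second Schur reduction would lead to an infinite regress, and a columnwise Gaussian equivalence for $\overline\bJ_\perp$ is exactly what the fixed-point identification is designed to avoid proving from scratch. Also be aware that the dimension-perturbation estimate, which you fold into ``rank stability,'' is not an instance of the rank-one perturbation bound; it is a separate nontrivial lemma in the paper that requires its own Schur/nuclear-norm argument to control the change from $\overline\bX\overline\bTheta^\sT$ to $\underline\bX\underline\bTheta^\sT$.
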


The proof of Lemma \ref{lemma:KeyFiniteD} uses a leave-one-out argument in deriving the fixed point equation for Stieltjes transform of random matrices, e.g. \cite[Chapter 3.3]{bai2010spectral} and \cite{cheng2013spectrum}. We will prove Lemma \ref{lemma:KeyFiniteD} in Section \ref{subsec:leave_one_out_proof}. In the next subsection, we will collect some lemmas that are used in this proof.

\subsection{Preliminaries of the proof of Lemma \ref{lemma:KeyFiniteD}: Stieltjes transforms and the fixed point equation}\label{subsec:property_fixed_point}

First we establish some useful properties of the fixed point characterization \eqref{eq:FixedPoint}, where $\sFone, \sFtwo$ is defined via Eq.~\eqref{eq:Fdef}.
For the sake of simplicity, we will write $\bbm = (m_1,m_2)$ and introduce the function $\bsF(\,\cdot\, ;\xi;\bq,\psi_1,\psi_2, \ob_1, \ob_\star):\complex\times\complex\to \complex\times\complex$ via
\begin{align}\label{eqn:def_bsF}
\bsF(\bbm;\xi;\bq,\psi_1,\psi_2, \ob_1, \ob_\star) = \left[
\begin{matrix}
\sFone(m_1,m_2;\xi;\bq,\psi_1,\psi_2, \ob_1, \ob_\star) \\
\sFtwo(m_1,m_2;\xi;\bq,\psi_1,\psi_2, \ob_1, \ob_\star) 
\end{matrix}\right]\, .
\end{align}
In the following lemma, we fix a $\bq \in \cQ$ (as defined in Eq. (\ref{eqn:definition_of_cQ})) and fix $0 < \psi_1,\psi_2, \ob_1, \ob_\star < \infty$. Since the parameters $\bq, \psi_1, \psi_2, \ob_1, \ob_\star$ are fixed, we will drop them from the argument of $\bsF$ unless necessary. 
In these notations,  Eq.~\eqref{eq:FixedPoint} reads
\begin{align}
\bbm = \bsF(\bbm;\xi)\, .\label{eq:FixedPointShort}
\end{align}
The following lemma shows that there exists a unique fixed point of the equation above in a certain domain provided $\Im \xi$ is large enough. 
%
%

\begin{lemma}\label{lemma:PropertiesMap}
Let $\disk(r) = \{z:\;  \vert z\vert <r\}$ be the disk of radius $r$ in the complex plane. There exists $\xi_0=\xi_0(\bq, \psi_1,\psi_2, \ob_1, \ob_\star)>0$ such that, for any $\xi \in \C_+$ with $\Im(\xi)\ge \xi_0$, $\bsF(\,\cdot\,; \xi)$ maps domain $\disk(2 \psi_1 / \xi_0)\times\disk(2 \psi_2 / \xi_0)$ into itself and further is $1/2$-Lipschitz continuous. As a result, Eq.~\eqref{eq:FixedPoint} admits a unique solution in $\disk(2 \psi_1 / \xi_0) \times \disk(2 \psi_2 / \xi_0)$. 
\end{lemma}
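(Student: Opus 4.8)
The plan is to establish the statement by a contraction-mapping argument, with all of the (elementary) work going into rational-function estimates that become available once $\xi_0$, and hence $\Im(\xi) \ge \xi_0$, is taken large. Introduce the shorthand $w_1(\bbm) = -\xi + s_1 - \ob_\star^2 m_2 + N_1/D_1$ and, by the symmetry of the definition, $w_2(\bbm) = -\xi + t_1 - \ob_\star^2 m_1 + N_2/D_1$, where $N_1 = (1+t_2 m_2)s_2 - \ob_1^2(1+p)^2 m_2$, $N_2 = (1+s_2 m_1)t_2 - \ob_1^2(1+p)^2 m_1$, and $D_1 = (1+s_2 m_1)(1+t_2 m_2) - \ob_1^2(1+p)^2 m_1 m_2$, so that $\sFone = \psi_1/w_1$ and $\sFtwo = \psi_2/w_2$. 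Set $D = \overline{\disk(2\psi_1/\xi_0)} \times \overline{\disk(2\psi_2/\xi_0)}$, which is compact and hence a complete metric space.

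First I would check that $\bsF(\,\cdot\,;\xi)$ is well defined and analytic on a neighbourhood of $D$: for $\bbm \in D$ the moduli $|m_1| \le 2\psi_1/\xi_0$ and $|m_2| \le 2\psi_2/\xi_0$ can be made as small as we wish by enlarging $\xi_0$, so $|D_1 - 1| \le 1/2$, in particular $|D_1| \ge 1/2$, and everything else is polynomial in $(m_1, m_2)$. Next, using that the components of $\bq$ are real, $\Im(w_1(\bbm)) = -\Im(\xi) - \ob_\star^2 \Im(m_2) + \Im(N_1/D_1)$, and on $D$ the last two summands are bounded in modulus by a constant $C_0 = C_0(\bq, \psi_1, \psi_2, \ob_1, \ob_\star)$ independent of $\xi$ --- here the constraint $\bq \in \cQ$, i.e. $|s_2 t_2| \le \ob_1^2(1+p)^2/2$, is what keeps $|N_1/D_1|$ bounded. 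Choosing $\xi_0 \ge 2C_0$ gives $|w_1(\bbm)| \ge |\Im(w_1(\bbm))| \ge \Im(\xi) - C_0 \ge \xi_0/2$, hence $|\sFone(\bbm)| = \psi_1/|w_1(\bbm)| \le 2\psi_1/\xi_0$, and symmetrically $|\sFtwo(\bbm)| \le 2\psi_2/\xi_0$; thus $\bsF(\,\cdot\,;\xi)$ maps $D$ into itself, indeed into the open polydisk.

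For the Lipschitz estimate I would differentiate. Since the only part of $w_1$ depending on $m_1$ is $N_1/D_1$, and one checks $\partial_{m_1}D_1 = N_1$ while $\partial_{m_1}N_1 = 0$, we get $\partial_{m_1}w_1 = -N_1^2/D_1^2$; likewise $\partial_{m_2}w_1 = -\ob_\star^2 + \partial_{m_2}(N_1/D_1)$ with $\partial_{m_2}D_1 = N_2$. On $D$ (with $\xi_0$ large) the quantities $N_1, N_2, D_1^{-1}$ and their first partials are all bounded by a constant depending only on $(\bq, \psi_1, \psi_2, \ob_1, \ob_\star)$, so $|\partial_{m_j}w_i| \le C_1$ and therefore $|\partial_{m_j}\sFone| = \psi_1 |w_1|^{-2}|\partial_{m_j}w_1| \le \psi_1 (4/\xi_0^2) C_1$, with the analogous bound for $\sFtwo$. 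Hence the Jacobian of $\bsF(\,\cdot\,;\xi)$ has operator norm at most $C_2(\bq, \psi_1, \psi_2, \ob_1, \ob_\star)/\xi_0^2$ on the convex set $D$, and enlarging $\xi_0$ once more makes this $\le 1/2$; by the mean value inequality $\bsF(\,\cdot\,;\xi)$ is $1/2$-Lipschitz on $D$.

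Finally, Banach's fixed-point theorem applied to the $1/2$-contraction $\bsF(\,\cdot\,;\xi): D \to D$ on the complete space $D$ yields a unique fixed point, which by the second step lies in the open polydisk $\disk(2\psi_1/\xi_0) \times \disk(2\psi_2/\xi_0)$; uniqueness in the open polydisk is immediate, since any fixed point there also lies in $D$. I expect the main obstacle to be purely organizational: arranging a single threshold $\xi_0 = \xi_0(\bq, \psi_1, \psi_2, \ob_1, \ob_\star)$ that simultaneously guarantees non-vanishing of $D_1$ (analyticity), the self-map property, and contraction constant $1/2$, which requires tracking that the constants $C_0, C_1, C_2$ are genuinely uniform in $\xi$ --- and this in turn relies on using $\bq \in \cQ$ and the reality of the entries of $\bq$. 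No deeper analytic ingredient is needed.
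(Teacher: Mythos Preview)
Your proposal is correct and follows essentially the same strategy as the paper: rewrite $\sFone$ as $\psi_1$ divided by $-\xi$ plus a bounded rational perturbation, show the perturbation is uniformly bounded on a small polydisk, conclude the self-map property via a lower bound on $|w_1|$, then differentiate to get a Jacobian of order $1/\xi_0^2$ and invoke Banach's fixed-point theorem. The paper packages the perturbation as a continued-fraction-like expression $\sHone(m_1,m_2)$ and bounds $|\sHone|\le 2+2|s_2|$ directly, whereas you keep the $N_1/D_1$ form and use the imaginary-part lower bound $|w_1|\ge|\Im w_1|$; these are cosmetic variants of the same estimate.

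One small correction: you attribute the boundedness of $|N_1/D_1|$ to the constraint $\bq\in\cQ$, but this is not needed here. You already showed $|D_1-1|\le 1/2$ (hence $|D_1|\ge 1/2$) purely from $|m_1|,|m_2|$ being small, and $N_1$ is a fixed affine function of the bounded $m_2$; so $|N_1/D_1|$ is bounded for any fixed $\bq$, not just $\bq\in\cQ$. The paper's proof likewise makes no use of $\cQ$ in this lemma; that constraint enters later, in the leave-one-out argument, to guarantee invertibility of the $2\times 2$ matrix $\bM$.
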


\begin{proof}[Proof of Lemma \ref{lemma:PropertiesMap}]

We rewrite the first equation in Eq.~\eqref{eq:Fdef} as
\begin{align}
\sFone(m_1,m_2;\xi) =&~ \frac{\psi_1}{-\xi+s_1+ \sHone(m_1,m_2)}\, , \label{eq:FHdef}\\
\sHone(m_1,m_2) =&~ - \ob_\star^2  m_2 +\frac{1}{m_1+\frac{1+t_2 m_2}{s_2 +(t_2s_2-\ob_1^2 (1 + p)^2)m_2}}\, . \label{eq:Hdef}
\end{align}
It is easy to see that, for $r_0=r_0(\bq, \ob_1, \ob_\star)$ small enough, $\vert \sHone(\bbm)\vert \le 2 + 2 \vert s_2 \vert$ for any $\bbm \in \disk(r_0)\times\disk(r_0)$. Therefore $\vert\sFone(\bbm;\xi)\vert \le \psi_1/(\Im(\xi)-2 - 2 \vert s_2 \vert) < 2 \psi_1 / \xi_0$ provided $\Im \xi \ge \xi_0> 4 + 4 \vert s_2 \vert$. Similarly, we have $\vert\sFtwo(\bbm;\xi)\vert < 2 \psi_2 / \xi_0$ provided $\Im \xi \ge \xi_0> 4 + 4 \vert t_2 \vert$. We enlarge $\xi_0$ so that $2 \max\{\psi_1, \psi_2 \} / \xi_0 \le r_0$. This shows that $\bsF$ maps domain $\disk(2 \psi_1 / \xi_0)\times\disk(2 \psi_2 / \xi_0)$ into itself.

In order to prove the Lipschitz continuity of $\bsF$ in this domain, notice that $\sFone$ is differentiable and
\begin{align}
\nabla_{\bbm}\sFone(\bbm;\xi) = \frac{\psi_1}{(-\xi+s_1+ \sHone(\bbm))^2} \, \nabla_{\bbm}\sHone(\bbm)\, .
\end{align}
By enlarging $\xi_0$, we can ensure $\|\nabla_{\bbm}\sHone(\bbm)\|_2\le C(\bq, \ob_1, \ob_\star)$ for all $\bbm\in \disk(2 \psi_1 / \xi_0)\times\disk(2 \psi_2 / \xi_0)$, whence in the same domain
$\|\nabla_{\bbm}\sFone(\bbm;\xi)\|_2\le C(\bq, \ob_1, \ob_\star) \psi_1/(\Im(\xi)-2 - 2 \vert s_2 \vert)^2$. This result similarly holds for $\sFtwo$. Therefore, by enlarging $\xi_0$, we get $\bsF$ is $1/2$-Lipschitz on $\disk(2 \psi_1 / \xi_0)\times\disk(2 \psi_2 / \xi_0)$. 
 
As a consequence, we have shown that $\bsF$ is a contraction on domain $\disk(2 \psi_1 / \xi_0)\times\disk(2 \psi_2 / \xi_0)$. The existence of a unique fixed point follows by Banach fixed point theorem. 
\end{proof}

Next, we establish some properties of the Stieltjes transforms as in Eq. (\ref{eqn:m_Gaussian_definition}). Notice that the functions $\xi \mapsto \overline m_{i,d}(\xi;\bq) / \psi_{i,d}$, $i\in\{1,2\}$ can be shown to be Stieltjes transforms of certain probability measures on the reals line $\reals$ \cite{hastie2019surprises}. As such, they enjoy several useful properties (see, e.g., \cite{Guionnet}). The next three lemmas are
standard, and already stated in \cite{hastie2019surprises}. For the reader's convenience, we reproduce them here without proof: although the present definition of the matrix $\overline\bA$ is slightly more general, the proofs are unchanged.

\begin{lemma}[Lemma 7 in \cite{hastie2019surprises}]\label{lemma:BasicProperties}
The functions $\xi\mapsto \overline m_{1,d}(\xi)$, $\xi\mapsto \overline m_{2,d}(\xi)$ have the following properties:
\begin{enumerate}
\item[$(a)$] $\xi\in\complex_+$, then $\vert \overline m_{i, d}\vert \le \psi_i /\Im(\xi)$ for $i \in \{ 1, 2\}$.
\item[$(b)$] $\overline m_{1,d}$, $\overline m_{2,d}$ are analytic on $\complex_+$ and map $\complex_+$ into $\complex_+$. 
\item[$(c)$] Let $\Omega\subseteq\complex_+$ be a set with an accumulation point. If $\overline m_{i, d}(\xi)\to m_i(\xi)$ for all $\xi\in\Omega$, then $m_i(\xi)$ has a unique analytic continuation to $\complex_+$ and  $\overline m_{i, d}(\xi)\to m_i(\xi)$ for all $\xi\in\complex_+$. Moreover, the convergence is uniform over compact sets $\Omega \subseteq \C_+$. 
\end{enumerate}
\end{lemma}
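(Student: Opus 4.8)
The plan is to identify $\overline m_{1,d}$ and $\overline m_{2,d}$, up to the deterministic scalars $\psi_{1,d}=N/d$ and $\psi_{2,d}=n/d$, with Stieltjes transforms of genuine probability measures on $\R$, and then to read off (a)--(c) from the classical theory of such transforms together with a normal families argument. Since $\overline\bA=\overline\bA(\bq)$ is real symmetric, I would first diagonalize it for fixed $d$, writing $\overline\bA=\sum_{k=1}^M\lambda_k\bv_k\bv_k^\sT$ with $\lambda_k\in\R$ and $\{\bv_k\}_{k\le M}$ orthonormal. For $\xi\in\C_+$ the resolvent $(\overline\bA-\xi\id_M)^{-1}$ is well defined, with $\|(\overline\bA-\xi\id_M)^{-1}\|_\op\le 1/\Im(\xi)$, and $\overline M_{1,d}(\xi;\bq)=\frac1d\sum_{k=1}^M \|\proj_{[1,N]}\bv_k\|_2^2/(\lambda_k-\xi)=\int_\R(\lambda-\xi)^{-1}\,\de\mu_{1,d}(\lambda)$, where $\proj_{[1,N]}$ is the coordinate projection onto the first $N$ coordinates and $\mu_{1,d}=\frac1d\sum_k\|\proj_{[1,N]}\bv_k\|_2^2\,\delta_{\lambda_k}$ is a nonnegative random measure of total mass $\frac1d\Tr(\proj_{[1,N]})=\psi_{1,d}$. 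Taking expectations gives $\overline m_{1,d}(\xi;\bq)=\int_\R(\lambda-\xi)^{-1}\,\de\overline\mu_{1,d}(\lambda)$ with $\overline\mu_{1,d}=\E\mu_{1,d}$ a finite positive measure of mass $\psi_{1,d}$, and the analogue holds for $\overline m_{2,d}$ with the partial trace over the block $\{N+1,\dots,M\}$.

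Given this representation, parts (a) and (b) are immediate. For (a), $\big|\int_\R(\lambda-\xi)^{-1}\,\de\nu(\lambda)\big|\le\nu(\R)/\Im(\xi)$ for any finite positive measure $\nu$, since $|\lambda-\xi|\ge\Im(\xi)$; applied to $\overline\mu_{i,d}$ this gives $|\overline m_{i,d}(\xi)|\le\psi_{i,d}/\Im(\xi)$, which is the stated bound up to the vanishing discrepancy $\psi_{i,d}-\psi_i$ (one may equally well keep $\psi_{i,d}$ throughout). For (b), $\xi\mapsto\int_\R(\lambda-\xi)^{-1}\,\de\overline\mu_{i,d}(\lambda)$ is analytic on $\C\setminus\R$ (differentiation under the integral being justified by $|(\lambda-\xi)^{-2}|\le\Im(\xi)^{-2}$), hence analytic on $\C_+$; and since $\Im\big[(\overline\bA-\xi\id_M)^{-1}\big]=\Im(\xi)\,(\overline\bA-\xi\id_M)^{-1}(\overline\bA-\bar\xi\id_M)^{-1}\succeq 0$, its partial trace over any nonempty coordinate block has strictly positive imaginary part when $\Im(\xi)>0$, and this positivity survives taking expectations, so $\overline m_{i,d}$ maps $\C_+$ into $\C_+$.

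For (c), I would use the Vitali--Porter / normal families argument. By (a), the family $\{\overline m_{i,d}\}_{d\ge 1}$ is locally uniformly bounded on $\C_+$, hence normal by Montel's theorem. Suppose $\overline m_{i,d}(\xi)\to m_i(\xi)$ for every $\xi$ in a set $\Omega\subseteq\C_+$ that has an accumulation point in $\C_+$. Any locally uniform limit of a subsequence is analytic on $\C_+$ and agrees with $m_i$ on $\Omega$, so by the identity theorem all subsequential limits coincide; therefore the whole sequence $\overline m_{i,d}$ converges locally uniformly on $\C_+$ to a single analytic function, which is the unique analytic continuation of $m_i$. In particular $\overline m_{i,d}(\xi)\to m_i(\xi)$ for every $\xi\in\C_+$, uniformly on compact subsets.

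There is no genuine obstacle here: the content is entirely the classical theory of Cauchy/Stieltjes transforms of finite positive measures, supplemented by Montel's theorem for part (c). The only points needing a little care are the bookkeeping of the $\psi_{i,d}$ normalization and the verification that $\overline M_{i,d}$ really has the claimed integral representation against a positive measure; once that is in place, everything is routine. Indeed the lemma is quoted verbatim from \cite{hastie2019surprises}, and the matrix $\overline\bA$ here differs from the one there only by the extra parameters $\bq=(s_1,s_2,t_1,t_2,p)$, which enter $\overline\bA$ linearly and preserve its symmetry, so the argument of \cite{hastie2019surprises} transfers without change.
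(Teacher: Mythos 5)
Your proof is correct and takes the standard route: represent $\overline m_{i,d}$ as the Stieltjes transform of a positive measure of mass $\psi_{i,d}$ via the spectral decomposition of the real-symmetric $\overline\bA$, obtain (a)--(b) from the elementary resolvent bound and the positivity identity $\Im\big[(\overline\bA-\xi\id)^{-1}\big]=\Im(\xi)\,RR^*\succeq 0$, and obtain (c) from Montel's theorem together with the identity theorem. The paper supplies no proof of its own here; it cites Lemma~7 of \cite{hastie2019surprises} and remarks that the more general $\overline\bA$ leaves the argument unchanged, and your write-up is precisely that standard argument (with the harmless $\psi_{i,d}$ vs.\ $\psi_i$ normalization noted).
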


\begin{lemma}[Lemma 8 in \cite{hastie2019surprises}]\label{eq:BD-Diff}
Let $\bW\in\reals^{M \times M}$ be a symmetric matrix, and denote by $\bw_i$ its $i$-th column,
with the $i$-th entry set to $0$. Let $\bW^{(i)} \equiv \bW-\bw_i\bfe_i^{\sT}-\bfe_i\bw_i^{\sT}$, where $\bfe_i$ is the $i$-th element of the canonical basis
(in other words, $\bW^{(i)}$ is obtained from $\bW$ by zeroing all elements in the $i$-th row and column except on the diagonal). 
Finally, let $\xi\in\complex_+$ with $\Im(\xi)\ge\xi_0>0$. Then for any subset $S \subseteq [M]$, we have
\begin{align}
\Big\vert \Tr_{S}\big[(\bW-\xi\id_M)^{-1}\big]- \Tr_{S}\big[(\bW^{(i)}-\xi\id_M)^{-1}\big]\Big\vert \le \frac{3}{\xi_0}\, .
\end{align}
\end{lemma}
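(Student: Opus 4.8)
The plan is to reduce the rank-two modification $\bW-\bW^{(i)}$ to two successive rank-one updates and to bound the effect of each rank-one update on the partial trace of the resolvent by $1/\xi_0$, using the Sherman--Morrison formula together with the positive-semidefiniteness of the imaginary part of the resolvent of a real symmetric matrix. No control on $\|\bw_i\|_2$ or on the spectrum of $\bW$ will be needed, which is why the bound comes out dimension- and matrix-free.

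Concretely, write $\bW-\bW^{(i)}=\bw_i\bfe_i^\sT+\bfe_i\bw_i^\sT$ and use the elementary identity $\bw_i\bfe_i^\sT+\bfe_i\bw_i^\sT=\tfrac12(\bw_i+\bfe_i)(\bw_i+\bfe_i)^\sT-\tfrac12(\bw_i-\bfe_i)(\bw_i-\bfe_i)^\sT$; setting $\bc_\pm=(\bw_i\pm\bfe_i)/\sqrt2\in\reals^M$ gives $\bW=\bW^{(i)}+\bc_+\bc_+^\sT-\bc_-\bc_-^\sT$. Introduce the intermediate matrix $\bW'=\bW^{(i)}+\bc_+\bc_+^\sT$ (so that $\bW=\bW'-\bc_-\bc_-^\sT$), and the resolvents $\bR=(\bW-\xi\id_M)^{-1}$, $\bR'=(\bW'-\xi\id_M)^{-1}$, $\bR^{(i)}=(\bW^{(i)}-\xi\id_M)^{-1}$, all well-defined since the matrices are real symmetric and $\Im(\xi)>0$. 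I would then prove $\bigl|\Tr_S[\bR']-\Tr_S[\bR^{(i)}]\bigr|\le 1/\xi_0$ and $\bigl|\Tr_S[\bR]-\Tr_S[\bR']\bigr|\le 1/\xi_0$ separately and conclude by the triangle inequality; this yields $2/\xi_0\le 3/\xi_0$, which suffices (the slack in the stated constant presumably reflects the slightly different bookkeeping of \cite{hastie2019surprises}).

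For a single rank-one step, say from $\bW^{(i)}$ to $\bW'=\bW^{(i)}+\bc\bc^\sT$ with $\bc=\bc_+$, Sherman--Morrison gives $\bR'-\bR^{(i)}=-\bR^{(i)}\bc\bc^\sT\bR^{(i)}/(1+\bc^\sT\bR^{(i)}\bc)$; writing $\bP_S$ for the diagonal projector onto the coordinates in $S$ and $\bd=\bR^{(i)}\bc$, and using that $\Tr_S[\bA]=\Tr[\bP_S\bA]$, cyclicity of the trace, and the symmetry of $\bR^{(i)}$, one gets
\[
\Tr_S[\bR']-\Tr_S[\bR^{(i)}]=-\,\frac{\bd^\sT\bP_S\bd}{1+\bc^\sT\bR^{(i)}\bc}\,,\qquad |\bd^\sT\bP_S\bd|=\Bigl|\textstyle\sum_{j\in S}d_j^2\Bigr|\le\|\bd\|_2^2\,.
\]
For the denominator I would use the standard fact that, since $\bW^{(i)}$ is real symmetric, $\Im[\bR^{(i)}]=\Im(\xi)\,\bR^{(i)}(\bR^{(i)})^{*}\succeq 0$; expanding the quadratic form for the real vector $\bc$ and using symmetry of $\bR^{(i)}$ yields $\Im(\bc^\sT\bR^{(i)}\bc)=\Im(\xi)\|\bd\|_2^2$, hence $|1+\bc^\sT\bR^{(i)}\bc|\ge\Im(1+\bc^\sT\bR^{(i)}\bc)=\Im(\xi)\|\bd\|_2^2$ and the ratio above is bounded in modulus by $1/\Im(\xi)\le 1/\xi_0$ (the degenerate case $\bc=\bzero$ being trivial). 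The step from $\bW'$ to $\bW=\bW'-\bc_-\bc_-^\sT$ is identical, with $1+\bc^\sT\bR^{(i)}\bc$ replaced by $1-\bc_-^\sT\bR'\bc_-$, whose imaginary part is $-\Im(\xi)\|\bR'\bc_-\|_2^2\le 0$, so the same lower bound on its modulus applies.

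The only mildly delicate point — and the place I would be most careful — is that $\bR^{(i)},\bR',\bR$ are complex symmetric but \emph{not} Hermitian, so one must consistently use the transpose (not the conjugate transpose) when expanding $\bd^\sT\bP_S\bd$, while invoking the Hermitian-PSD identity $\Im[\bR]=\Im(\xi)\bR\bR^{*}$ only for the denominator; keeping these two conventions straight is essentially the whole content of the argument. Everything else is routine Sherman--Morrison manipulation.
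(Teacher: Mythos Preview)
The paper does not actually prove this lemma: it is quoted verbatim from \cite{hastie2019surprises} and explicitly reproduced ``without proof,'' so there is no in-paper argument to compare against. Your proposal stands on its own and is correct. The decomposition $\bw_i\bfe_i^{\sT}+\bfe_i\bw_i^{\sT}=\bc_+\bc_+^{\sT}-\bc_-\bc_-^{\sT}$ reduces the problem to two rank-one Sherman--Morrison updates; the key inequality $|1\pm\bc^{\sT}\bR\bc|\ge \Im(\xi)\,\|\bR\bc\|_2^2$ follows exactly as you say from $\Im[\bR]=\Im(\xi)\,\bR\bR^{*}$ (here $\bR$ is normal because the underlying real symmetric matrix is orthogonally diagonalizable, so $\|\bR\bc\|_2=\|\bR^{*}\bc\|_2$), and this also guarantees the Sherman--Morrison denominators are nonzero whenever $\bc\neq\bzero$. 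Your care in distinguishing the transpose (used for $\bd^{\sT}\bP_S\bd$) from the conjugate transpose (used for $\|\bd\|_2^2$) is exactly the point that needs attention, and you handle it correctly: $|\sum_{j\in S}d_j^2|\le\sum_{j\in S}|d_j|^2\le\|\bd\|_2^2$.

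In fact your argument gives the sharper bound $2/\xi_0$; the constant $3$ in the statement is an artifact of whatever bookkeeping \cite{hastie2019surprises} used, and your route loses nothing by comparison.
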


The next lemma establishes the concentration of Stieltjes transforms to its mean, whose proof is the same as the proof of Lemma 9 in \cite{hastie2019surprises}. 
\begin{lemma}[Concentration]\label{lemma:Concentration_Stieltjes}
Let $\Im(\xi) \ge \xi_0 > 0$ and consider the partial Stieltjes transforms $\overline M_{i, d}(\xi; \bq)$ as per Eq. (\ref{eqn:M_Gaussian_definition}). Then there exists $c_0= c_0(\xi_0)$ such that, for $i\in\{1,2\}$,
\begin{align}
\P\big(\big\vert \overline M_{i,d}(\xi; \bq) - \E \overline M_{i,d}(\xi; \bq)\big\vert \ge u\big) \le 2\, e^{-c_0 d u^2}, \label{eq:Concentration}
\end{align}
In particular, if $\Im(\xi) >0$, then $\vert \overline M_{i,d}(\xi; \bq) - \E \overline M_{i,d}(\xi;\bq)\vert \to 0$ almost surely and in $L^1$. 
\end{lemma}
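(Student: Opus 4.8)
\medskip
\noindent\textbf{Proof strategy.} The plan is to prove this by a bounded‑differences (McDiarmid) argument, exactly as in the proof of Lemma~9 of \cite{hastie2019surprises}; only the combinatorial dependence of the block matrix $\overline\bA$ on its random sources has to be checked. Throughout fix $\xi\in\complex_+$ with $\Im(\xi)\ge\xi_0>0$ and $i\in\{1,2\}$, and regard $\overline M_{i,d}(\xi;\bq)$ as a (complex‑valued) function of the $N+n$ independent vectors $\overline\btheta_1,\dots,\overline\btheta_N,\overline\bx_1,\dots,\overline\bx_n$. First I would record the deterministic bound: since $\overline\bA$ is symmetric, $\|(\overline\bA-\xi\id_M)^{-1}\|_{\op}\le 1/\Im(\xi)\le 1/\xi_0$, hence $|\overline M_{i,d}(\xi;\bq)|\le M/(\xi_0 d)=O_d(1)$, so in particular $\overline m_{i,d}=\E\overline M_{i,d}$ is well defined.

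\medskip
\noindent The core step is the bounded‑difference estimate. Inspecting the four blocks in the definition \eqref{eqn:matrix_A_bar}, resampling a single vector $\overline\btheta_a$ modifies only the $a$‑th row and the $a$‑th column of $\overline\bA$ (it changes row/column $a$ of $s_1\id_N+s_2\overline\bQ$ and row $a$ of $\overline\bJ^\sT+p\overline\bJ_1^\sT$, together with its transpose); likewise resampling $\overline\bx_i$ changes only row/column $N+i$. Let $\overline\bA'$ be the matrix obtained by such a resampling, affecting index $k$, and let $\overline\bA^{(k)}$ denote $\overline\bA$ with row and column $k$ zeroed off the diagonal as in Lemma~\ref{eq:BD-Diff}; then $\overline\bA^{(k)}=(\overline\bA')^{(k)}$. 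Two applications of Lemma~\ref{eq:BD-Diff}, with $S=\{1,\dots,N\}$ when $i=1$ and $S=\{N+1,\dots,N+n\}$ when $i=2$, together with the triangle inequality, give
\begin{equation*}
\big|\,\Tr_{S}[(\overline\bA-\xi\id_M)^{-1}]-\Tr_{S}[(\overline\bA'-\xi\id_M)^{-1}]\,\big|\le 6/\xi_0\, ,
\end{equation*}
so $\overline M_{i,d}(\xi;\bq)$ changes by at most $6/(\xi_0 d)$ under any single resampling.

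\medskip
\noindent Finally I would apply McDiarmid's inequality to $\Re\overline M_{i,d}$ and $\Im\overline M_{i,d}$ separately: each is a function of $N+n$ independent coordinates with individual bounded difference $6/(\xi_0 d)$, and $N+n=O_d(d)$ by Assumption~\ref{ass:linear}, so $\sum_k c_k^2=O_d(1/d)$; after adjusting the constant and union bounding over the real and imaginary parts one obtains \eqref{eq:Concentration} with $c_0=c_0(\xi_0)$. For the last statement take $\xi_0=\Im(\xi)$: summability $\sum_d e^{-c_0 d u^2}<\infty$ and Borel--Cantelli give $\overline M_{i,d}(\xi;\bq)-\E\overline M_{i,d}(\xi;\bq)\to0$ almost surely, while the Gaussian‑type tail gives $\E|\overline M_{i,d}-\E\overline M_{i,d}|=\int_0^\infty\P(|\overline M_{i,d}-\E\overline M_{i,d}|\ge u)\,\de u\le\int_0^\infty 2e^{-c_0 d u^2}\,\de u=O_d(d^{-1/2})\to0$, i.e.\ $L^1$ convergence.

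\medskip
\noindent I do not expect a genuine obstacle here; this is essentially a restatement of Lemma~9 of \cite{hastie2019surprises}. The only step needing real care is confirming that resampling one of the $N+n$ generating vectors perturbs exactly one row and one column of $\overline\bA$, so that Lemma~\ref{eq:BD-Diff} applies with the stated constant after passing through the zeroed matrix $\overline\bA^{(k)}$; this uses the specific block structure \eqref{eqn:matrix_A_bar} but is a short verification.
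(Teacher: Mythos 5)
Your approach is the same as the paper's: the paper does not spell out a proof and simply remarks that the argument is identical to Lemma~9 of \cite{hastie2019surprises}, which is a bounded-differences (McDiarmid) argument built on the rank-two perturbation bound recorded as Lemma~\ref{eq:BD-Diff}. So the plan is right.

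There is, however, one genuine slip in the bounded-difference step: the claim $\overline\bA^{(k)}=(\overline\bA')^{(k)}$ is false in general. Lemma~\ref{eq:BD-Diff} zeroes the $k$-th row and column \emph{except on the diagonal}, so $\overline\bA^{(k)}$ retains the entry $\overline A_{kk}$. When you resample $\overline\btheta_a$ (with $k=a\le N$), the diagonal entry $\overline A_{aa}=s_1+s_2\|\overline\btheta_a\|_2^2/d$ changes whenever $s_2\neq 0$; likewise $\overline A_{N+i,N+i}=t_1+t_2\|\overline\bx_i\|_2^2/d$ changes upon resampling $\overline\bx_i$ whenever $t_2\neq 0$. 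So for general $\bq\in\cQ$ the two zeroed matrices differ in the $(k,k)$ entry, and the two applications of Lemma~\ref{eq:BD-Diff} do not close the chain as written. The fix is short: after a permutation, $\overline\bA^{(k)}-\xi\id_M$ and $(\overline\bA')^{(k)}-\xi\id_M$ are block-diagonal, sharing the $(M-1)\times(M-1)$ principal block obtained by deleting row and column $k$, and differing only in the scalar block $\overline A_{kk}-\xi$ versus $\overline A'_{kk}-\xi$. Hence
\[
\big|\Tr_S[(\overline\bA^{(k)}-\xi\id_M)^{-1}]-\Tr_S[((\overline\bA')^{(k)}-\xi\id_M)^{-1}]\big|
\le \Big|\tfrac{1}{\overline A_{kk}-\xi}-\tfrac{1}{\overline A'_{kk}-\xi}\Big|\le \tfrac{2}{\xi_0}\,,
\]
so the per-coordinate bounded-difference constant is $8/(\xi_0 d)$ rather than $6/(\xi_0 d)$. (Alternatively, one can skip the detour through $\overline\bA^{(k)}$ altogether: deleting row and column $k$ from $\overline\bA$ and from $\overline\bA'$ yields the \emph{same} $(M-1)\times(M-1)$ matrix, and a direct Schur-complement estimate shows that removing one row/column changes $\Tr_S$ of the resolvent by at most $2/\xi_0$, giving $4/(\xi_0 d)$ in one step.) Either way the constant only affects $c_0(\xi_0)$, and the rest of your argument — McDiarmid for the real and imaginary parts (with the factor $2$ in the tail obtained after adjusting $c_0$), Borel--Cantelli for almost-sure convergence, and the tail integral for $L^1$ convergence — is correct.
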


\begin{lemma}[Lemma 5 in \cite{ghorbani2019linearized}]\label{lemma:square_integrable}
Assume $\sigma$ is an activation function with $\sigma(u)^2\le c_0 \, \exp(c_1\, u^2/2)$ for some constants $c_0 > 0$ and $c_1<1$ (this is implied by Assumption \ref{ass:activation}). 
Then 
\begin{enumerate}
\item[$(a)$] $\E_{G \sim \normal(0, 1)}[\sigma(G)^2] < \infty$. 
\item[$(b)$] Let $\|\bw\|_2=1$. Then there exists $d_0=d_0(c_1)$ such that, for $\bx \sim \Unif(\S^{d-1}(\sqrt  d))$,
\begin{align}
\sup_{d \ge d_0} \E_{\bx}[\sigma(\<\bw,\bx\>)^2] < \infty\, .
\end{align}
\item[$(c)$] Let $\|\bw\|_2=1$. Then there exists a coupling of $G\sim\normal(0,1)$ and $\bx\sim \Unif(\S^{d-1}(\sqrt  d))$ such that
\begin{align}\label{eq:ConvergenceExpMud}
\lim_{d \to \infty} \E_{\bx, G} [(\sigma(\<\bw,\bx\>) - \sigma(G))^2] =&~ 0.
\end{align}
\end{enumerate}
\end{lemma}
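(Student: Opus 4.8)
Part $(a)$ is a one-line computation that I would dispose of first: the growth bound gives $\E_{G\sim\normal(0,1)}[\sigma(G)^2]\le c_0\,\E[e^{c_1 G^2/2}]=c_0(1-c_1)^{-1/2}$, which is finite precisely because $c_1<1$.

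For part $(b)$, I would reduce to $\bw=\be_1$ by rotational invariance and use the explicit law of the first coordinate of a uniform point on the sphere: $\<\bw,\bx\>$ has density $\frac{c_d}{\sqrt d}(1-z^2/d)^{(d-3)/2}$ on $[-\sqrt d,\sqrt d]$, where $c_d=\Gamma(d/2)/(\sqrt\pi\,\Gamma((d-1)/2))$ satisfies $c_d/\sqrt d\to(2\pi)^{-1/2}$ and is therefore bounded in $d$. From $\log(1-x)\le-x$ one gets $(1-z^2/d)^{(d-3)/2}\le e^{-(1-3/d)z^2/2}$. Choosing $d_0$ so that $1-3/d_0-c_1>0$ (possible since $c_1<1$) and combining with the growth bound yields
\[
\sup_{d\ge d_0}\E_{\bx}[\sigma(\<\bw,\bx\>)^2]\le c_0\sup_{d\ge d_0}\frac{c_d}{\sqrt d}\int_{\R}e^{-(1-3/d-c_1)z^2/2}\,\de z<\infty .
\]

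For part $(c)$, the plan is to use the natural coupling $\bx=\sqrt d\,\bg/\|\bg\|_2$ and $G=\<\bw,\bg\>$ with $\bg\sim\normal(\bzero,\id_d)$; then $Z:=\<\bw,\bx\>=\sqrt d\,G/\|\bg\|_2$ and $Z\to G$ almost surely, since $\|\bg\|_2^2/d\to1$ almost surely. I would then split $\sigma=\sigma\chi_R+\sigma(1-\chi_R)$, with $\chi_R$ a continuous cutoff equal to $1$ on $[-R,R]$ and $0$ off $[-R-1,R+1]$, so that $\sigma\chi_R$ is bounded and continuous (here I use that $\sigma$ may be taken continuous, which follows from the weak differentiability in Assumption \ref{ass:activation}; a mollification handles the merely measurable case). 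For each fixed $R$, bounded convergence gives $\E[((\sigma\chi_R)(Z)-(\sigma\chi_R)(G))^2]\to0$ as $d\to\infty$. For the tails, $\E[\sigma(G)^2\mathbf{1}_{|G|>R}]\to0$ as $R\to\infty$ by part $(a)$, while $\E[\sigma(Z)^2\mathbf{1}_{|Z|>R}]\to0$ as $R\to\infty$ \emph{uniformly} over $d\ge d_0$ by the same density estimate used in $(b)$. A triangle inequality in $L^2$ (pick $R$ large first, then send $d\to\infty$) then gives $\E[(\sigma(Z)-\sigma(G))^2]\to0$, which is $(c)$ (after taking $\eps\downarrow0$ in the usual way).

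The easy parts are $(a)$ and $(b)$; I expect the one genuinely delicate point to be the uniform-in-$d$ tail bound $\sup_{d\ge d_0}\E[\sigma(Z)^2\mathbf{1}_{|Z|>R}]\to0$ inside part $(c)$. This is exactly where the strict inequality $c_1<1$ is essential: one must dominate the law of $\<\bw,\bx\>$ by a centered Gaussian whose variance exceeds $1$ only by $O(1/d)$ and still keep the product with $e^{c_1 z^2/2}$ integrable with a constant independent of $d$.
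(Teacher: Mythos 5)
The paper itself gives no proof of this lemma: it is quoted verbatim as Lemma~5 of \cite{ghorbani2019linearized} and used without further argument, so there is nothing in the source to compare your proof against line by line. Judged on its own terms, your proof is correct and self-contained. Part $(a)$ is the right one-line Gaussian integral, $c_0(1-c_1)^{-1/2}<\infty$. For part $(b)$, your density formula for $\<\bw,\bx\>$, the Stirling estimate $c_d/\sqrt d\to (2\pi)^{-1/2}$, and the bound $(1-z^2/d)^{(d-3)/2}\le e^{-(1-3/d)z^2/2}$ from $\log(1-x)\le -x$ are all correct, and the choice $1-3/d_0-c_1>0$ uses $c_1<1$ exactly where it is needed. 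For part $(c)$, the coupling $\bx=\sqrt d\,\bg/\|\bg\|_2$, $G=\<\bw,\bg\>$ gives $Z\to G$ almost surely, and the $\sigma\chi_R$ truncation plus the uniform-in-$d$ tail bound from $(b)$ closes the argument; you correctly identify that the uniformity of the tail in $d$ is the only delicate step and that it again comes from $c_1<1$.

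Two small remarks. First, your claim that $\sigma$ may be taken continuous follows because weak differentiability on $\R$ implies local absolute continuity, so $\sigma$ has a continuous representative; since all laws involved have densities, replacing $\sigma$ by this representative is harmless, and the mollification aside is unnecessary for the setting of this paper. Second, within the truncation step be explicit that $\|\sigma(Z)-(\sigma\chi_R)(Z)\|_{L^2}\le \|\sigma(Z)\mathbf 1_{|Z|>R}\|_{L^2}$ and likewise for $G$, and that the bounded-convergence step uses that $\sigma\chi_R$ is bounded and continuous while $Z\to G$ a.s.; you sketch this, and it is sound.
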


\subsection{Proof of Lemma \ref{lemma:KeyFiniteD}: Leave-one-out argument}\label{subsec:leave_one_out_proof}

Throughout the proof, we write $F(d) = O_d(G(d))$ if there exists a constant $C=C(\xi_0, \bq, \psi_1,\psi_2, \vphi)$ which is uniformly bounded when $(\xi_0, \bq, \psi_1,\psi_2)$ is in a compact set, such that $\vert F(d) \vert \le C\cdot \vert G(d)\vert$. We write $F(d) = o_d(G(d))$ if for any $\eps > 0$, there exists a constant $C = C(\eps, \xi_0, \bq, \psi_1,\psi_2, \vphi)$ which is uniformly bounded when $(\xi_0, \bq, \psi_1,\psi_2)$ is in a compact set, such that $\vert F(d) \vert \le \eps \cdot \vert G(d)\vert$ for any $d \ge C$. We use $C$ to denote generically such a constant, that can change from line to line. 

We write $F(d) = O_{d, \P}(G(d))$ if for any $\delta > 0$, there exists constant $K = K(\delta, \xi_0, \bq,\psi_1,\psi_2, \vphi), d_0 = d_0(\delta, \xi_0, \bq, \psi_1,\psi_2, \vphi)$ which are uniformly bounded when $(\xi_0, \bq, \psi_1,\psi_2)$ is in a compact set, such that $\P(\vert F(d)\vert > K \vert G(d) \vert ) \le \delta$ for any $d \ge d_0$. We write $F(d) = o_{d, \P}(G(d))$ if for any $\eps, \delta > 0$, there exists constant $d_0 = d_0(\eps, \delta, \xi_0, \bq, \psi_1,\psi_2, \vphi)$ which are uniformly bounded when $(\xi_0, \bq, \psi_1,\psi_2)$ is in a compact set, such that $\P(\vert F(d)\vert > \eps \vert G(d) \vert ) \le \delta$ for any $d \ge d_0$.

We will assume $p = 0$ throughout the proof. For $p \neq 0$, the lemma holds by viewing $\overline \bJ + p \overline \bJ_1 = \vphi_\star(\bX \bTheta^\sT / \sqrt d)/\sqrt d$ as a new kernel inner product matrix, with $\vphi_\star(x) = \vphi(x) + p \ob_1 x$. 

\noindent
{\bf Step 1. Calculate the Schur complement and define some notations. }

Let $\overline \bA_{\cdot, N}\in\reals^{M-1}$ be the $N$-th column of $\overline \bA$, with the $N$-th entry removed. We further denote by $\overline\bB \in \reals^{(M-1)\times (M-1)}$ be the the matrix obtained from $\overline \bA$ by removing the $N$-th column and $N$-th row. Applying Schur complement formula with respect to element $(N,N)$, we get 
\begin{align}
\overline m_{1,d}=\psi_{1,d}\,  \E\Big\{\Big(-\xi+s_1+s_2\|\overline \btheta_N\|_2^2/d- \overline \bA_{\cdot, N}^{\sT}(\overline \bB-\xi \id_{M-1})^{-1}\overline \bA_{\cdot, N} \Big)^{-1}\Big\}\, .
\end{align}
We decompose the vectors $\overline \btheta_a, \overline \bx_i$ in the components along $\overline \btheta_N$ and the orthogonal component:
\begin{align}
\overline \btheta_a =&~ \eta_a \frac{\overline \btheta_N}{\| \overline \btheta_N\|_2}+\tbtheta_a\, , \;\;\; \<\overline \btheta_N,\tbtheta_a\> = 0, \;\;\; &a&\in [N-1]\,,\\
\overline \bx_i =&~ u_i \frac{\overline \btheta_N}{\| \overline \btheta_N\|} +\tbx_i \, , \;\;\; \<\overline \btheta_N,\tbx_i\> = 0, \;\;\; &i& \in [n]\, .
\end{align}
Note that $\{\eta_a\}_{a \in [N-1] },\{u_i\}_{i \in [n]}\sim_{iid} \normal(0,1)$ are independent of all the other random variables, and 
$\{\tbtheta_a\}_{a\in [N-1]},\{\tbx_i\}_{i\in [n]}$ are conditionally independent given $\overline \btheta_N$, with 
$\tbtheta_a,\tbx_i\sim\normal(\bzero,\proj_{\perp})$, where $\proj_{\perp}$ is the projector orthogonal to $\overline \btheta_N$. 

With this decomposition we have
\begin{align}
\overline Q_{a,b}=&~\frac{1}{d}\Big(\eta_a\eta_b+\<\tbtheta_a,\tbtheta_b\>\Big)\, , & a&,b\in [N-1]\, ,\\
\overline J_{i,a} =&~ \frac{1}{\sqrt{d}}\varphi\left(\frac{1}{\sqrt{d}}\<\tbx_i,\tbtheta_a\> +\frac{1}{\sqrt{d}} u_i\eta_a\right) \, , & a&\in [N-1],\, i\in [n] \, ,\\
\overline H_{ij} =&~ \frac{1}{d}\Big(u_iu_j+\<\tbx_i,\tbx_j\>\Big)\, , & i&,j\in [n]\, .
\end{align}
Further we have $\overline \bA_{\cdot, N} = (\overline A_{1, N}, \ldots, \overline A_{M-1, N})^\sT \in \R^{M-1}$ with
\begin{align}
\overline  A_{i, N} = \begin{cases}
\frac{1}{d} s_2\eta_i\|\overline \btheta_N\|_2 & \mbox{ if $i\le N-1$,}\\
\frac{1}{\sqrt{d}} \varphi\Big(\frac{1}{\sqrt{d}} u_i\|\overline \btheta_N\|_2\Big) & \mbox{ if $i\ge N$.}\\
\end{cases}\label{eq:AcolDef}
\end{align}

We next write $\overline \bB$ as the sum of three terms:
\begin{align}
\overline \bB =&~ \tbB+ \bDelta+\bE_0 \in \R^{(M-1) \times (M-1)}\, ,\label{eq:DecompositionB}
\end{align}
where
\begin{align}\label{eq:Decomposition_B_tilde}
\tbB =&~ \left[\begin{matrix}
s_1\id_{N-1}+s_2\tbQ & \tbJ^{\sT}\\
\tbJ & t_1\id_n+t_2\tbH
\end{matrix}\right]\, ,\;\;\;\;\;
\bDelta  = \left[\begin{matrix}
\frac{s_2}{d}\bfeta\bfeta^{\sT}& \frac{\ob_1}{d}\bfeta\bu^{\sT}\\
\frac{\ob_1}{d}\bu\bfeta^{\sT} & \frac{t_2}{d}\bu\bu^{\sT}
\end{matrix}\right]\, ,\;\;\;\;\;\;\;
\bE_0  = \left[\begin{matrix}
\bzero& \bE_1^{\sT}\\
\bE_1 & \bzero
\end{matrix}\right]\, ,
\end{align}
and $\bfeta = (\eta_1, \ldots, \eta_{N-1})^\sT$, $\bu = (u_1, \ldots, u_n)^\sT$, and 
\begin{align}
\tQ_{a,b}=&~ \frac{1}{d}\<\tbtheta_a,\tbtheta_b\>\, ,\;\;\;\;\; &a&,b\in [N-1]\, ,\label{eq:tQdef}\\
\tJ_{i,a} =&~ \frac{1}{\sqrt{d}}\varphi\left(\frac{1}{\sqrt{d}}\<\tbx_i,\tbtheta_a\>\right) \, ,\;\;\;\;\; &a&\in [N-1],\, i\in [n]\, ,\label{eq:tJdef}\\
\tH_{ij} =&~ \frac{1}{d}\<\tbx_i,\tbx_j\>\, ,\;\;\;\;\; &i&,j\in [n]\, .\label{eq:tHdef}
\end{align}
Further, we have $\bE_1 = (E_{1,ia})_{i\in [n], a\in [N-1]} \in \R^{n \times N}$, where 
\begin{align}
E_{1,ia} =&~ \frac{1}{\sqrt{d}}\left[\varphi\Big(\frac{1}{\sqrt{d}}\<\tbx_i,\tbtheta_a\>+\frac{1}{\sqrt{d}}u_i\eta_a\Big)-\varphi\Big(\frac{1}{\sqrt{d}}\<\tbx_i,\tbtheta_a\>\Big)-\frac{\ob_1}{\sqrt{d}}u_i\eta_a\right]\\
=&~ \frac{1}{\sqrt{d}}\left[\varphi_{\perp}\Big(\frac{1}{\sqrt{d}}\<\tbx_i,\tbtheta_a\>+\frac{1}{\sqrt{d}}u_i\eta_a\Big)-\varphi_{\perp}\Big(\frac{1}{\sqrt{d}}\<\tbx_i,\tbtheta_a\>\Big)\right]\,,
\end{align}
where $\vphi_\perp(x) \equiv \vphi(x) - \ob_1 x$.

\noindent
{\bf Step 2. Perturbation bound for the Schur complement. }

Denote
\begin{align}
\omega_1 =&~ \Big(-\xi+s_1+s_2\|\overline \btheta_N\|_2^2/d- \overline \bA_{\cdot, N}^{\sT}(\overline \bB-\xi \id_{M-1})^{-1}\overline \bA_{\cdot, N} \Big)^{-1}, \label{eqn:def_omega_1_in_Stieltjes_proof}\\
\omega_2 =&~ \Big(-\xi+s_1+s_2- \overline  \bA_{\cdot, N}^{\sT}( \tbB+\bDelta-\xi\id_{M-1})^{-1} \overline \bA_{\cdot, N} \Big)^{-1}. \label{eqn:def_omega_2_in_Stieltjes_proof}
\end{align}
Note we have $\overline m_{1, d} = \psi_{1, d} \E[\omega_1]$. Combining Lemma \ref{lem:stieltjes_first_reformulation}, \ref{lem:concentration_of_E1_in_Stieltjes_proof}, and \ref{lem:concentration_A_vector_in_Stieltjes_proof} below, we have 
\[
\vert \omega_1 - \omega_2 \vert \le O_{d}(1) \cdot \Big\vert \|\overline \btheta_N\|_2^2/d - 1 \Big\vert + O_d(1) \cdot \| \overline  \bA_{\cdot, N}\|_2^2  \cdot \| \bE_1 \|_{\op} = o_{d, \P}(1).
\]
Moreover, by Lemma \ref{lem:stieltjes_first_reformulation}, $\vert \omega_1 - \omega_2 \vert$ is deterministically bounded by $2/ \xi_0$. This gives
\begin{equation}\label{eqn:interpolation_omega_2_m}
\vert \overline m_{1, d} - \psi_{1, d} \E[\omega_2]\vert \le \psi_{1, d} \E[\vert \omega_1 - \omega_2 \vert] = o_{d}(1). 
\end{equation}

\begin{lemma}\label{lem:stieltjes_first_reformulation}
Using the definitions of $\omega_1$ and $\omega_2$ as in  Eq. (\ref{eqn:def_omega_1_in_Stieltjes_proof}) and (\ref{eqn:def_omega_2_in_Stieltjes_proof}), for $\Im \xi \ge \xi_0$, we have
\[
\begin{aligned}
\vert \omega_1 - \omega_2 \vert \le  \Big[  s_2 \vert \|\overline \btheta_N\|_2^2/d - 1 \vert / \xi_0^2 + 2 \| \overline  \bA_{\cdot, N}\|_2^2 \| \bE_1 \|_{\op}  / \xi_0^4 \Big] \wedge [ 2 / \xi_0 ].
\end{aligned}
\]
\end{lemma}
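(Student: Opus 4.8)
The plan is to treat this as a deterministic perturbation estimate: first show that the two scalars being inverted have imaginary part bounded away from zero, then use the resolvent identity. Write $D_1 = -\xi+s_1+s_2\|\overline \btheta_N\|_2^2/d- \overline \bA_{\cdot, N}^{\sT}(\overline \bB-\xi \id_{M-1})^{-1}\overline \bA_{\cdot, N}$ and $D_2 = -\xi+s_1+s_2- \overline  \bA_{\cdot, N}^{\sT}( \tbB+\bDelta-\xi\id_{M-1})^{-1} \overline \bA_{\cdot, N}$, so that $\omega_1 = D_1^{-1}$ and $\omega_2 = D_2^{-1}$. Since $\bq\in\reals^5$ and $\overline\bA_{\cdot,N}$ is a real vector by Eq.~\eqref{eq:AcolDef}, while $\overline\bB$ and $\tbB+\bDelta$ are real symmetric, for any real $\bv$ and real symmetric $\bW$ a spectral decomposition gives $\Im\big(\bv^{\sT}(\bW-\xi\id)^{-1}\bv\big) = \Im(\xi)\,\|(\bW-\xi\id)^{-1}\bv\|_2^2\ge 0$. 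Hence $\Im(D_j)\le -\Im(\xi)\le -\xi_0$, so $|D_1|,|D_2|\ge\xi_0$. This already yields $|\omega_1-\omega_2|\le |D_1|^{-1}+|D_2|^{-1}\le 2/\xi_0$, which is the second alternative in the claimed minimum, and it also gives $|D_1D_2|^{-1}\le\xi_0^{-2}$.

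For the first alternative, I would write $\omega_1-\omega_2 = (D_2-D_1)/(D_1D_2)$ and bound $|D_2-D_1|$. The scalar terms contribute $s_2\big(1-\|\overline\btheta_N\|_2^2/d\big)$. For the quadratic-form terms, the decomposition $\overline\bB = \tbB+\bDelta+\bE_0$ from Eq.~\eqref{eq:DecompositionB} together with the resolvent identity gives
\[
(\overline\bB-\xi\id)^{-1} - (\tbB+\bDelta-\xi\id)^{-1} = -(\overline\bB-\xi\id)^{-1}\,\bE_0\,(\tbB+\bDelta-\xi\id)^{-1}.
\]
Both resolvents on the right-hand side have operator norm at most $1/\Im(\xi)\le 1/\xi_0$ since the matrices are real symmetric, and $\|\bE_0\|_{\op}=\|\bE_1\|_{\op}$ because $\bE_0$ is block-off-diagonal with blocks $\bE_1$ and $\bE_1^{\sT}$ (cf. Eq.~\eqref{eq:Decomposition_B_tilde}). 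Hence $\big|\overline\bA_{\cdot,N}^{\sT}[(\overline\bB-\xi\id)^{-1} - (\tbB+\bDelta-\xi\id)^{-1}]\overline\bA_{\cdot,N}\big| \le \|\overline\bA_{\cdot,N}\|_2^2\,\|\bE_1\|_{\op}/\xi_0^2$. Combining, $|D_2-D_1|\le |s_2|\,\big|\|\overline\btheta_N\|_2^2/d-1\big| + \|\overline\bA_{\cdot,N}\|_2^2\|\bE_1\|_{\op}/\xi_0^2$, and multiplying by $|D_1D_2|^{-1}\le\xi_0^{-2}$ produces exactly the first alternative in the stated bound, with a factor of $2$ of slack on the second term.

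I do not expect a genuine obstacle here: every step reduces to the resolvent identity and the elementary estimate $\|(\bW-\xi\id)^{-1}\|_{\op}\le 1/\Im(\xi)$ for real symmetric $\bW$. The only point needing minor care is checking that all the matrices involved ($\overline\bB$, $\tbB$, $\bDelta$, $\bE_0$) are real symmetric and that $\overline\bA_{\cdot,N}$, $s_1$, $s_2$ are real, so that the imaginary-part sign argument goes through; this is immediate from the definitions in Eqs.~\eqref{eq:AcolDef} and \eqref{eq:Decomposition_B_tilde}.
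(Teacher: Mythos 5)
Your proof is correct and follows essentially the same resolvent-perturbation strategy as the paper's. The only cosmetic difference is that you bound $D_1-D_2$ in one step and divide by $|D_1D_2|$, whereas the paper introduces an intermediate $\omega_{1.5}$ (with $s_2\|\overline\btheta_N\|_2^2/d$ replaced by $s_2$) and telescopes; your version also uses the sharper $\|\bE_0\|_{\op}=\|\bE_1\|_{\op}$ rather than the paper's lossier $\|\bE_0\|_{\op}\le 2\|\bE_1\|_{\op}$, which is why your bound beats the stated one by a factor of $2$ on the second term and hence still implies it.
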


\begin{proof}[Proof of Lemma \ref{lem:stieltjes_first_reformulation}] Note that
\[
\Im ( -\omega_1^{-1}) \ge \Im \xi + \Im ( \overline \bA_{\cdot, N}^{\sT}(\overline \bB-\xi \id_{M-1})^{-1}\overline \bA_{\cdot, N} ) \ge \Im \xi > \xi_0. 
\]
Hence we have  $\vert \omega_1 \vert \le 1 / \xi_0$, and, using a similar argument, $\vert \omega_2 \vert \le 1/ \xi_0$. Hence we get the bound $\vert \omega_1 - \omega_2 \vert \le 2 / \xi_0$. 

Denote
\[
\omega_{1.5} = \Big(-\xi+s_1+s_2 - \overline \bA_{\cdot, N}^{\sT}(\overline \bB-\xi \id_{M-1})^{-1}\overline \bA_{\cdot, N} \Big)^{-1}, \\
\]
we get
\[
\begin{aligned}
\vert \omega_1 - \omega_{1.5}\vert= s_2 \Big \vert \omega_1 (\|\overline \btheta_N\|_2^2/d - 1) \omega_{1.5} \Big\vert \le s_2 \Big\vert \|\overline \btheta_N\|_2^2/d - 1 \Big\vert/\xi_0^2. 
\end{aligned}
\]
Moreover, we have 
\[
\begin{aligned}
\vert \omega_{1.5} - \omega_2 \vert =&~ \vert \omega_{1.5}\omega_2 \overline  \bA_{\cdot, N}^{\sT}[ ( \tbB+\bDelta-\xi\id_{M-1})^{-1} - ( \tbB+\bDelta + \bE_0 -\xi\id_{M-1})^{-1} ] \overline \bA_{\cdot, N} \vert \\
=&~\vert \omega_{1.5}\omega_2 \overline  \bA_{\cdot, N}^{\sT} ( \tbB+\bDelta-\xi\id_{M-1})^{-1} \bE_0 ( \tbB+\bDelta + \bE_0 -\xi\id_{M-1})^{-1} \overline \bA_{\cdot, N} \vert\\
\le&~ (1/ \xi_0^2) \cdot \| \overline \bA_{\cdot, N}\|_2^2 (1/ \xi_0^2) \| \bE_0 \|_{\op} \le 2 \| \bE_1 \|_{\op} \|  \overline \bA_{\cdot, N}\|_2^2 / \xi_0^4. 
\end{aligned}
\]
This proves the lemma. 
\end{proof}

\begin{lemma}\label{lem:concentration_of_E1_in_Stieltjes_proof}
Under the assumptions of Lemma \ref{lemma:KeyFiniteD}, we have
\begin{align}
\|\bE_1\|_{\op} = O_{d, \P}(\Poly(\log d) / d^{1/2}). 
\end{align}
\end{lemma}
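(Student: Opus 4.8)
The plan is to split $\bE_1$ into the part that is \emph{linear} in the small perturbation $s_{ia}:=u_i\eta_a/\sqrt d$ and a quadratic-and-higher remainder, and to control the former by a sharp operator-norm estimate for polynomial inner-product kernel matrices. Write $r_{ia}:=\langle\tbx_i,\tbtheta_a\rangle/\sqrt d$; since $\varphi_\perp=\varphi-\ob_1 x$ is a polynomial (say of degree $p$), Taylor's expansion terminates and
\begin{equation}
E_{1,ia}=\frac{1}{\sqrt d}\big[\varphi_\perp(r_{ia}+s_{ia})-\varphi_\perp(r_{ia})\big]
=\frac{u_i\eta_a}{d}\,\varphi_\perp'(r_{ia})
+\frac{1}{\sqrt d}\Big[\varphi_\perp(r_{ia}+s_{ia})-\varphi_\perp(r_{ia})-\varphi_\perp'(r_{ia})\,s_{ia}\Big]
=:E_{1,ia}'+E_{1,ia}''\,.
\end{equation}
In matrix form $\bE_1=\bE_1'+\bE_1''$ with $\bE_1'=\tfrac1d\,\diag(\bu)\,\bK\,\diag(\bfeta)$, where $\bK\in\reals^{n\times(N-1)}$ has entries $K_{ia}=\varphi_\perp'(r_{ia})$, $\bu=(u_i)_{i\le n}$, $\bfeta=(\eta_a)_{a\le N-1}$. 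I would work on a high-probability event $\mathcal G_d$, with $\P(\mathcal G_d^{c})=O(d^{-10})$ by a standard union bound, on which $\max_i|u_i|\vee\max_a|\eta_a|\le C\sqrt{\log d}$, $\|\tbx_i\|_2^2\vee\|\tbtheta_a\|_2^2\le 2d$ for all $i,a$ (so, conditioning on $\tbtheta_a$ and using Gaussianity of $r_{ia}$ plus a union bound over the $nN=O(d^2)$ pairs, $\max_{i,a}|r_{ia}|\le C\sqrt{\log d}$), and $\|\tbX\|_{\op}\vee\|\tbTheta\|_{\op}\le C\sqrt d$ by the operator-norm bound for Gaussian matrices. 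Since $\P(\mathcal G_d^{c})=o(1)$, it suffices to produce a $C\,\Poly(\log d)\,d^{-1/2}$ bound on $\|\bE_1\|_{\op}$ valid on $\mathcal G_d$ (and with high probability).

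For the remainder $\bE_1''$, a crude entrywise bound suffices, the extra factor $s_{ia}^{2}$ buying a factor $d^{-1}$. On $\mathcal G_d$, $|s_{ia}|\le C\,\Poly(\log d)\,d^{-1/2}$ is bounded, and Taylor's theorem with remainder gives $|\varphi_\perp(r+s)-\varphi_\perp(r)-\varphi_\perp'(r)s|\le C\,(1+|r|)^{p}\,s^{2}$; hence $|E_{1,ia}''|\le C\,\Poly(\log d)\,d^{-3/2}\,u_i^{2}\eta_a^{2}$ on $\mathcal G_d$, so
\begin{equation}
\|\bE_1''\|_{\op}\le\|\bE_1''\|_F\le C\,\Poly(\log d)\,d^{-3/2}\Big(\sum_i u_i^{4}\Big)^{1/2}\Big(\sum_a\eta_a^{4}\Big)^{1/2}=O_{d,\P}\big(\Poly(\log d)\,d^{-1/2}\big)\,,
\end{equation}
using $\sum_i u_i^4=O_{d,\P}(n)$, $\sum_a\eta_a^4=O_{d,\P}(N)$ and $n,N=\Theta(d)$.

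The main point is the linear part: on $\mathcal G_d$, $\|\bE_1'\|_{\op}\le\tfrac1d\big(\max_i|u_i|\big)\big(\max_a|\eta_a|\big)\,\|\bK\|_{\op}\le\tfrac{C\,\Poly(\log d)}{d}\,\|\bK\|_{\op}$, so what is needed is $\|\bK\|_{\op}=\|\varphi_\perp'(\bW)\|_{\op}=O_{d,\P}\big(\sqrt d\,\Poly(\log d)\big)$, where $\bW=(r_{ia})_{i\le n,\,a\le N-1}$ is the inner-product kernel matrix with argument $\langle\tbx_i,\tbtheta_a\rangle/\sqrt d$ and $\varphi_\perp'$ is applied entrywise. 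Here the crude Frobenius bound $\|\varphi_\perp'(\bW)\|_F=O_{d,\P}(d\,\Poly(\log d))$ is \emph{not} good enough --- it gives only $\|\bE_1'\|_{\op}=O_{d,\P}(\Poly(\log d))$ --- so a genuine operator-norm estimate for polynomial kernel matrices is required, and this is the hard part of the argument. The structural reason such a bound holds with the ``right'' order $\sqrt d$ rather than $d$ is that $\varphi_\perp'$ is \emph{centered} in the Hermite sense: since $\He_k'=k\He_{k-1}$ and $\varphi_\perp=\sum_{k\ge 2}\tfrac{\ob_k}{k!}\He_k$, we have $\varphi_\perp'=\sum_{k\ge 2}\tfrac{\ob_k}{(k-1)!}\He_{k-1}=\sum_{j\ge 1}\tfrac{\ob_{j+1}}{j!}\He_j$, hence $\E[\varphi_\perp'(G)]=0$. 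Consequently $\varphi_\perp'(\bW)$ has no rank-one $\ones\ones^{\sT}$-type component of operator norm $\Theta(d)$ (which \emph{would} be present for a polynomial with nonzero Gaussian mean), and its operator norm is of the Marchenko--Pastur / free-convolution order $\Theta(\sqrt d)$. To establish the required high-probability bound I would invoke (or reprove, as a self-contained appendix lemma) the operator-norm control of polynomial inner-product kernel matrices from nonlinear random matrix theory \cite{cheng2013spectrum,pennington2017nonlinear,fan2019spectral}: expanding $\varphi_\perp'(\bW)=\sum_{j\ge1}c_j\He_j(\bW)$, the $j=1$ term is $c_1\bW$ with $\|\bW\|_{\op}\le\|\tbX\|_{\op}\|\tbTheta\|_{\op}/\sqrt d\le C\sqrt d$ on $\mathcal G_d$, while each higher Hadamard power $\He_j(\bW)$ ($j\ge2$) is still Hermite-centered and can be bounded, e.g.\ by the moment method applied to $\Tr\big[(\He_j(\bW)\He_j(\bW)^{\sT})^{m}\big]$ after centering, or via the tensorization identity $\He_j(\langle\tbx_i,\tbtheta_a\rangle/\sqrt d)\leftrightarrow\langle\tbx_i^{\otimes j},\tbtheta_a^{\otimes j}\rangle/d^{j/2}$ together with operator-norm bounds on the symmetrized tensor design matrices. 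Granting $\|\bK\|_{\op}=O_{d,\P}(\sqrt d\,\Poly(\log d))$, we obtain $\|\bE_1'\|_{\op}=O_{d,\P}(\Poly(\log d)\,d^{-1/2})$, and adding the bound on $\bE_1''$ proves the lemma. Finally, two routine points: the case $p\neq0$ requires nothing new, since replacing $\varphi$ by $\varphi_\star(x)=\varphi(x)+p\,\ob_1 x$ leaves $\varphi_{\star,\perp}=\varphi-\ob_1 x=\varphi_\perp$ unchanged, so $\bE_1$ is literally the same object; and the fact that $\tbx_i,\tbtheta_a$ have covariance $\proj_\perp$ (the projector orthogonal to $\overline\btheta_N$) rather than $\id_d$ is handled by passing to an orthonormal basis of $\mathrm{range}(\proj_\perp)\cong\reals^{d-1}$, where they become i.i.d.\ standard Gaussians and every estimate above applies verbatim with $d$ replaced by $d-1$.
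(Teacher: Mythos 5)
Your proposal is correct and follows essentially the same route as the paper's proof: a first-order Taylor split of $E_{1,ia}$ around $r_{ia}$, a crude Frobenius bound on the quadratic remainder, and a sharp operator-norm bound on the Hermite-centered kernel matrix $\varphi_\perp'(\bW)$ as the key input (which the paper gets from Theorem~1.7 of \cite{fan2019spectral}). The only organizational difference is that the paper embeds $\bE_1$ into the larger \emph{symmetric} matrix $\bE$ indexed by the pooled set of $\tbtheta_a$'s and $\tbx_i$'s (so that $\bE = \frac{1}{\sqrt d}\bXi\bF_1\bXi + \frac{1}{2d}\bXi^2\bF_2\bXi^2$, and the symmetric kernel result applies directly to $\bF_1$, with $\|\bF_2\|_{\op}\le\|\bF_2\|_F$ controlled in expectation), whereas you work with the rectangular block $\bK$ and would either need the asymmetric variant of the kernel bound or, as the paper implicitly does, pass to the symmetric joint kernel; you correctly flag this and both options are standard. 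Your observation that $\varphi_\perp'$ has vanishing zeroth Hermite coefficient is exactly the hypothesis that makes the cited bound applicable, and your two closing remarks (invariance under $p\neq 0$ since $\varphi_{\star,\perp}=\varphi_\perp$, and reduction of $\proj_\perp$-Gaussians to standard $(d-1)$-dimensional Gaussians by a change of basis) are both correct.
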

\begin{proof}[Proof of Lemma \ref{lem:concentration_of_E1_in_Stieltjes_proof}]
Define $\bz_i =\tbtheta_i$ for $i\in [N-1]$,  $\bz_i = \tbx_{i-N+1}$ for $N\le i\le M -1$, and 
$\zeta_i = \eta_i$ for $i\in [N-1]$,  $\zeta_i = u_{i-N+1}$ for $N\le i\le M -1$. Consider the symmetric matrix $\bE\in\reals^{(M-1)\times (M-1)}$
with $E_{ii}=0$, and
\begin{align}
E_{ij} =&~\frac{1}{\sqrt{d}}\left[\varphi_{\perp}\Big(\frac{1}{\sqrt{d}}\<\bz_i,\bz_j\>+\frac{1}{\sqrt{d}}\zeta_i\zeta_j\Big)-\varphi_{\perp}\Big(\frac{1}{\sqrt{d}}\<\bz_i,\bz_j\>\Big)\right]\, .
\end{align}
Since $\bE_1$ is a sub-matrix of $\bE$, we have $\|\bE_1\|_{\op}\le \|\bE\|_{\op}$. By the intermediate value theorem 
\begin{align}
\bE =&~ \frac{1}{\sqrt{d}}\bXi \bF_1\bXi+\frac{1}{2 d}\bXi^2\bF_2\bXi^2\, ,\\
\bXi \equiv&~ \diag(\zeta_1,\dots,\zeta_{M-1})\, ,\\
F_{1,ij} \equiv&~ \frac{1}{\sqrt{d}}\varphi'_{\perp}\Big(\frac{1}{\sqrt{d}}\<\bz_i,\bz_j\>\Big)\, \bfone_{i\neq j},\\
F_{2,ij} \equiv&~ \frac{1}{\sqrt{d}}\varphi''_{\perp}\big(\tz_{ij}\big)\, \bfone_{i\neq j}\, , \;\;\;\;\; \tz_{ij}\in \Big[\frac{1}{\sqrt{d}}\<\bz_i,\bz_j\>,\frac{1}{\sqrt{d}}\<\bz_i,\bz_j\>+\frac{1}{\sqrt{d}}\zeta_i\zeta_j\Big]\, .
\end{align}
Hence we get
\[
\| \bE \|_{\op} \le (\| \bF_1 \|_{\op}/\sqrt d) \| \bXi \|_{\op}^2 + (\| \bF_2 \|_{\op} / d) \| \bXi \|_{\op}^4. 
\]
Note that $\varphi''_{\perp}(x) = \varphi''(x)$ is a polynomial with some fixed degree $\bar k$. Therefore we have
\[
\begin{aligned}
\E\{\|\bF_2\|_F^2 \}  =&~[ M(M-1) / d] \cdot \E[\varphi''_\perp(\tz_{12})^2] \le O_d(d) \cdot \E[ (1 + \vert \tz_{12} \vert)^{2 \bar k}] \\
\le&~ O_d(d) \cdot \Big\{ \E\big[(1 + \vert \<\bz_i,\bz_j\> / \sqrt d \vert)^{2 \bar k}\big] + \E\big[(1 + \vert \<\bz_i,\bz_j\> + \zeta_i \zeta_j / \sqrt d \vert )^{2 \bar k}\big] \Big\} = O_d(d). 
\end{aligned}
\]
Moreover, by the fact that $\vphi_\perp'$ is a polynomial with $\E[\vphi_\perp'(G)] = 0$, and by Theorem 1.7 in \cite{fan2019spectral}, we have $\|\bF_1\|_{\op} = O_{d, \P}(1)$. By the concentration bound for $\chi$-squared random variable, we get $\| \bXi \|_{\op} = O_{d, \P}(\sqrt{\log d})$. Therefore, we have
\[
\| \bE \|_{\op} \le O_{d, \P}(d^{-1/2}) O_{d, \P}(\Poly(\log d)) + O_{d, \P}(d^{-1/2}) O_{d, \P}(\Poly(\log d)) = O_{d, \P}(\Poly(\log d) / d^{-1/2}). 
\]
This proves the lemma. 
\end{proof}

\begin{lemma}\label{lem:concentration_A_vector_in_Stieltjes_proof}
Under the assumptions of Lemma \ref{lemma:KeyFiniteD}, we have
\begin{align}
\|\overline \bA_{\cdot,N}\|_{2} = O_{d, \P}(1). 
\end{align}
\end{lemma}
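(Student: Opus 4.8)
The plan is to reduce the claim to a first-moment estimate: I will show $\E\big[\|\overline\bA_{\cdot,N}\|_2^2\big]=O_d(1)$, which by Markov's inequality gives $\|\overline\bA_{\cdot,N}\|_2^2=O_{d,\P}(1)$ and hence the lemma. Recall from Eq.~(\ref{eq:AcolDef}) that, under the standing assumption $p=0$,
\begin{align}
\|\overline\bA_{\cdot,N}\|_2^2
= \frac{s_2^2\,\|\overline\btheta_N\|_2^2}{d^2}\sum_{i=1}^{N-1}\eta_i^2
+ \frac{1}{d}\sum_{i=1}^{n}\varphi\Big(\frac{u_i\,\|\overline\btheta_N\|_2}{\sqrt d}\Big)^2\,.
\end{align}
(For $p\neq 0$ one replaces $\varphi$ by the polynomial $\varphi_\star(x)=\varphi(x)+p\,\ob_1 x$, exactly as in the reduction at the start of the proof of Lemma~\ref{lemma:KeyFiniteD}; nothing changes.) The structural facts I will use are those established in Step~1 of that proof: $\overline\btheta_N$ is independent of $\bfeta=(\eta_a)_{a\le N-1}$ and of $\bu=(u_i)_{i\le n}$, the coordinates of $\bfeta$ and $\bu$ are i.i.d.\ $\normal(0,1)$, and $\|\overline\btheta_N\|_2^2$ is chi-squared with $d$ degrees of freedom.

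For the first term, independence gives $\E\big[\|\overline\btheta_N\|_2^2\sum_{i\le N-1}\eta_i^2\big]=\E[\|\overline\btheta_N\|_2^2]\cdot\E\big[\sum_{i\le N-1}\eta_i^2\big]=d(N-1)$, so its contribution to $\E\|\overline\bA_{\cdot,N}\|_2^2$ equals $s_2^2(N-1)/d$, which is $O_d(1)$ since $N/d\to\psi_1$ (Assumption~\ref{ass:linear}). For the second term I condition on $\overline\btheta_N$ and set $r^2\equiv\|\overline\btheta_N\|_2^2/d$; then $u_i\|\overline\btheta_N\|_2/\sqrt d\mid\overline\btheta_N$ are i.i.d.\ $\normal(0,r^2)$, and since $\varphi$ is a polynomial of fixed degree $k$, expanding $\varphi(sZ)^2$ for $Z\sim\normal(0,1)$ and taking Gaussian moments shows $\E\big[\varphi(u_i\|\overline\btheta_N\|_2/\sqrt d)^2\mid\overline\btheta_N\big]=P(r^2)$ for a fixed polynomial $P$ of degree $\le k$. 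Taking expectations, the second term has mean $\tfrac nd\,\E[P(r^2)]$, and since $\E[(r^2)^j]=d^{-j}\prod_{\ell=0}^{j-1}(d+2\ell)\le (2j-1)!!$ for each fixed $j$, we get $\E[P(r^2)]=O_d(1)$ and hence $\tfrac nd\,\E[P(r^2)]=O_d(1)$ (again by Assumption~\ref{ass:linear}).

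Adding the two contributions gives $\E\|\overline\bA_{\cdot,N}\|_2^2=O_d(1)$, and Markov finishes the proof. There is no real obstacle here beyond careful bookkeeping: the single point to get right is the conditioning/independence structure ($\overline\btheta_N\perp(\bfeta,\bu)$, coming from the orthogonal decomposition of the Gaussian vectors in Step~1), together with the fact that, because $\varphi$ is polynomial, every moment that appears is finite and grows at most polynomially in $d$, so nothing beyond the elementary chi-squared moment formula and Markov's inequality is needed.
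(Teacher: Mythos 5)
Your proof is correct and follows essentially the same route as the paper's: bound $\E\big[\|\overline\bA_{\cdot,N}\|_2^2\big]$ by splitting into the $\bfeta$-block and the $\bu$-block, using the independence of $\overline\btheta_N$ from $(\bfeta,\bu)$ for the first and the polynomial growth of $\varphi$ for the second, then apply Markov. You supply slightly more detail than the paper (explicit chi-squared moment bound, the $p\neq 0$ reduction), but there is no substantive difference in approach.
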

\begin{proof}[Proof of Lemma \ref{lem:concentration_A_vector_in_Stieltjes_proof}]
Recall the definition of $\overline \bA_{\cdot, N}$ as in Eq. (\ref{eq:AcolDef}). Denote $\ba_1 = s_2 \bfeta \| \overline \btheta_N \|_2 / d \in \R^{N-1}$, and $\ba_2 = \vphi(\bu \|\overline \btheta_N\|_2 / \sqrt d) / \sqrt d \in \R^n$, where $\bfeta \sim \normal(\bzero, \id_{N-1})$, and $\bu \sim \normal(\bzero, \id_n)$. Then $\overline \bA_{\cdot,N} = (\ba_1; \ba_2) \in \R^{n + N - 1}$. 

For $\ba_1$, note we have $\| \ba_1 \|_2 = \vert s_2\vert\cdot \| \bfeta \|_2 \| \overline \btheta_N \|_2 / d$ where $\bfeta \sim \normal(\bzero, \id_{N-1})$ and $\overline \btheta_N \sim \normal(\bzero, \id_d)$ are independent. Hence we have 
\[
\E[\| \ba_1 \|_2^2] =  s_2^2 \E[\| \bfeta \|_2^2 \| \overline \btheta_N \|_2^2] / d^2 = O_{d}(1). 
\]
For $\ba_2$, note $\vphi$ is a polynomial with some fixed degree $\bar k$, hence we have 
\[
\E[\| \ba_2 \|_2^2] = \E[ \vphi(u_i \| \overline \btheta_N \|_2 / \sqrt d)^2] = O_{d}(1). 
\]
This proves the lemma. 
\end{proof}

\noindent
{\bf Step 3. Simplification using Sherman-Morrison-Woodbury formula. }

Notice that $\bDelta$ is a matrix with rank at most two. Indeed
\begin{align}
\bDelta =&~ \bU\bM\bU^{\sT} \in \R^{(M-1) \times (M-1)} ,\;\;\;\;\; \bU = \frac{1}{\sqrt{d}}\left[\begin{matrix}\bfeta & \bzero\\
\bzero &\bu\end{matrix} \right] \in \R^{(M - 1) \times 2} ,\;\;\; \bM = \left[\begin{matrix} s_2 & \ob_1\\
\ob_1 & t_2\end{matrix} \right] \in \R^{2 \times 2} .\label{eqn:def_M_lemma_leave_one_out}
\end{align}
Since we assumed $\bq \in \cQ$ so that $\vert s_2 t_2 \vert \le \ob_1^2 / 2$, the matrix $\bM$ is invertible with $\| \bM^{-1} \|_{\op} \le C$.

Recall the definition of $\omega_2$ in Eq. (\ref{eqn:def_omega_2_in_Stieltjes_proof}). By the Sherman-Morrison-Woodbury formula, we get
\begin{align}\label{eqn:def_omega2_lemma_leave_one_out}
\omega_2 =&~ \Big(-\xi+s_1+s_2- v_1 + \bv_2^\sT (\bM^{-1} + \bV_3)^{-1} \bv_2 \Big)^{-1}, 
\end{align}
where
\begin{align}
v_1 =&~  \overline \bA_{\cdot, N}^{\sT}(\tbB-\xi\id_{M-1})^{-1} \overline  \bA_{\cdot, N}, & \bv_2 =&~ \bU^{\sT}(\tbB-\xi\id_{M-1})^{-1} \overline  \bA_{\cdot, N}, & \bV_3  =&~ \bU^{\sT}(\tbB-\xi\id_{M-1})^{-1}\bU. \label{eqn:def_V3_lemma_leave_one_out}
\end{align}
We define
\begin{align}
\overline v_1 =&~ s_2^2  \overline m_{1,d}+( \ob_1^2 + \ob_\star^2) \overline m_{2,d}, &  \overline \bv_2 =&~ \begin{bmatrix}s_2 \overline  m_{1,d}\\
\ob_1 \overline m_{2,d}\end{bmatrix},&  \overline\bV_3  =&~ \begin{bmatrix} \overline m_{1,d} & 0\\
0 & \overline  m_{2,d}\end{bmatrix},\label{eqn:def_barV3_lemma_leave_one_out}
\end{align}
and
\begin{align}
\omega_3 = \Big(-\xi+s_1+s_2-  \overline v_1 + \overline \bv_2^\sT (\bM^{-1} +  \overline \bV_3)^{-1} \overline \bv_2 \Big)^{-1}. \label{eq:LOO-3}
\end{align}
By  auxiliary Lemmas \ref{lem:stieltjes_second_reformulation}, \ref{lem:concentration_v_in_Stieltjes_proof}, and \ref{lem:Reverse_SMW_in_Stieltjes_proof} below, we get
\[
\E[ \vert \omega_2 - \omega_3 \vert] = o_d(1),
\]
Combining with Eq. (\ref{eqn:interpolation_omega_2_m}) we get
\[
\vert \overline m_{1, d} - \psi_{1,d} \omega_3 \vert = o_{d}(1). 
\]
Elementary algebra simplifying Eq.~\eqref{eq:LOO-3} gives $\psi_{1, d} \omega_3 = \sFone(\overline m_{1,d}, \overline m_{2,d};\xi;\bq,\psi_{1,d},\psi_{2,d}, \ob_1, \ob_\star)$. This proves Eq. (\ref{eqn:1_KeyFiniteD}) in Lemma \ref{lemma:KeyFiniteD}. Eq. (\ref{eqn:2_KeyFiniteD}) follows by the same argument (exchanging $N$ and $n$). In the rest of this section, we prove auxiliary Lemmas \ref{lem:stieltjes_second_reformulation}, \ref{lem:concentration_v_in_Stieltjes_proof}, and \ref{lem:Reverse_SMW_in_Stieltjes_proof}.

\begin{lemma}\label{lem:stieltjes_second_reformulation}
Using the formula of $\omega_2$ and $\omega_3$ as in Eq. (\ref{eqn:def_omega2_lemma_leave_one_out}) and (\ref{eq:LOO-3}), for $\Im \xi \ge \xi_0$, we have 
\[
\begin{aligned}
\vert \omega_2 - \omega_3 \vert \le&~  O_d(1) \cdot \Big\{ \Big[  \vert v_1 - \overline v_1 \vert + \| \overline \bv_2 \|_{2}^2 \| (\bM^{-1} + \bV_3)^{-1} \|_{\op} \| (\bM^{-1} + \overline \bV_3)^{-1} \|_{\op} \| \bV_3 - \overline \bV_3 \| _{\op} \\
& + (\| \bv_2 \|_2 + \| \overline \bv_2 \|_2) \| (\bM^{-1} + \bV_3)^{-1} \|_{\op} \| \bv_2 - \overline \bv_2 \|_2  \Big] \wedge 1 \Big\}.
\end{aligned}
\]
\end{lemma}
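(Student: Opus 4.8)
The plan is a standard two-step resolvent-perturbation argument that uses only the crude a priori bounds on the scalars $\omega_2,\omega_3$ together with the second resolvent identity; all of the genuinely delicate probabilistic work is then pushed into the subsequent auxiliary lemmas.

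First I would record that $\vert\omega_2\vert,\vert\omega_3\vert\le 1/\xi_0$ deterministically. Since $\Im\xi\ge\xi_0>0$, the matrix $(\tbB-\xi\id_{M-1})^{-1}$ has positive semidefinite imaginary part, so $v_1$ and $\overline v_1$ have nonnegative imaginary part, while $\bM^{-1}+\bV_3$ and $\bM^{-1}+\overline\bV_3$ have strictly positive definite imaginary parts (arising from $\Im\bV_3$, $\Im\overline\bV_3\succ 0$); hence $\Im\big(\bv_2^\sT(\bM^{-1}+\bV_3)^{-1}\bv_2\big)\le 0$ and likewise for the barred version. Consequently $\Im(-\omega_2^{-1})\ge\Im\xi>\xi_0$, so $\vert\omega_2\vert\le 1/\xi_0$, and the same for $\omega_3$; in particular $\vert\omega_2-\omega_3\vert\le 2/\xi_0$, which is what produces the ``$\wedge\,1$'' in the statement once $2/\xi_0$ is absorbed into the $O_d(1)$ prefactor. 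This step also notes that $\bq\in\cQ$ makes $\bM$ invertible with $\Vert\bM^{-1}\Vert_{\op}\le C$, so every inverse appearing below is well defined.

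Second, I would write $\omega_2-\omega_3=\omega_2\omega_3(\omega_3^{-1}-\omega_2^{-1})$, so that $\vert\omega_2-\omega_3\vert\le\xi_0^{-2}\,\vert\omega_3^{-1}-\omega_2^{-1}\vert$, and expand
\[
\omega_3^{-1}-\omega_2^{-1}=(v_1-\overline v_1)+\Big(\overline\bv_2^\sT(\bM^{-1}+\overline\bV_3)^{-1}\overline\bv_2-\bv_2^\sT(\bM^{-1}+\bV_3)^{-1}\bv_2\Big).
\]
The first term contributes exactly $\vert v_1-\overline v_1\vert$. For the quadratic-form difference I would telescope in three pieces (first change the central matrix, then each outer vector):
\[
\overline\bv_2^\sT\big[(\bM^{-1}+\overline\bV_3)^{-1}-(\bM^{-1}+\bV_3)^{-1}\big]\overline\bv_2 + (\overline\bv_2-\bv_2)^\sT(\bM^{-1}+\bV_3)^{-1}\overline\bv_2 + \bv_2^\sT(\bM^{-1}+\bV_3)^{-1}(\overline\bv_2-\bv_2),
\]
then apply the second resolvent identity $(\bM^{-1}+\overline\bV_3)^{-1}-(\bM^{-1}+\bV_3)^{-1}=(\bM^{-1}+\overline\bV_3)^{-1}(\bV_3-\overline\bV_3)(\bM^{-1}+\bV_3)^{-1}$ to the first piece and Cauchy--Schwarz to the other two. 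This bounds the central-matrix piece by $\Vert\overline\bv_2\Vert_2^2\Vert(\bM^{-1}+\bV_3)^{-1}\Vert_{\op}\Vert(\bM^{-1}+\overline\bV_3)^{-1}\Vert_{\op}\Vert\bV_3-\overline\bV_3\Vert_{\op}$ and the outer-vector pieces by $(\Vert\bv_2\Vert_2+\Vert\overline\bv_2\Vert_2)\Vert(\bM^{-1}+\bV_3)^{-1}\Vert_{\op}\Vert\bv_2-\overline\bv_2\Vert_2$. Collecting the three contributions and multiplying through by $\xi_0^{-2}$ yields exactly the claimed inequality, with the $O_d(1)$ prefactor a constant depending only on $\xi_0$ (incorporating the $\max\{1,2/\xi_0\}$ coming from the minimum with $1$).

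There is no real obstacle in this lemma: it is elementary algebra plus the resolvent identity, and the only point needing care is the positivity bookkeeping of the first step --- guaranteeing $\vert\omega_i\vert\le1/\xi_0$ and the invertibility of $\bM^{-1}+\bV_3$ and $\bM^{-1}+\overline\bV_3$ so the resolvent identity applies --- which is immediate from $\Im\xi\ge\xi_0$ and $\bq\in\cQ$. The substantive work, namely showing that $v_1\approx\overline v_1$, $\bv_2\approx\overline\bv_2$, $\bV_3\approx\overline\bV_3$, and that $\Vert(\bM^{-1}+\bV_3)^{-1}\Vert_{\op}=O_{d,\P}(1)$ so that this bound actually tends to $0$, is carried out in Lemmas \ref{lem:concentration_v_in_Stieltjes_proof} and \ref{lem:Reverse_SMW_in_Stieltjes_proof}.
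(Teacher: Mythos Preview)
Your approach is essentially the paper's: bound $|\omega_2-\omega_3|$ by $|\omega_2||\omega_3|\cdot|\omega_2^{-1}-\omega_3^{-1}|$, then telescope the quadratic-form difference and apply the second resolvent identity. The paper routes this through an intermediate $\omega_{2.5}$ (swapping $v_1\to\overline v_1$ first, then the quadratic form) and telescopes the outer vectors before the central matrix rather than after, but the algebra and the resulting bound are the same.

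One small caution on the positivity bookkeeping: your inference ``$\Im(\bM^{-1}+\bV_3)\succ 0$, hence $\Im\big(\bv_2^\sT(\bM^{-1}+\bV_3)^{-1}\bv_2\big)\le 0$'' is valid only when $\bv_2$ is \emph{real}, because the bilinear form $\bv^\sT A\bv$ (transpose, not conjugate) does not inherit a sign from $\Im A$ for complex $\bv$; here $\bv_2=\bU^\sT(\tbB-\xi\id)^{-1}\overline\bA_{\cdot,N}$ is complex. This only affects the assertion $|\omega_3|\le 1/\xi_0$, which you use both for the ``$\wedge 1$'' and for the prefactor $|\omega_2\omega_3|\le\xi_0^{-2}$. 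The paper does not justify $|\omega_3|\le O_d(1)$ either---it simply asserts $|\omega_2|+|\omega_3|\le O_d(1)$ at the end---so this is a shared loose end rather than a defect of your argument relative to the paper's. It can be patched by noting that $\omega_3$ is deterministic with $\psi_{1,d}\omega_3=\sFone(\overline m_{1,d},\overline m_{2,d};\xi;\bq)$, or, for the purpose of the downstream use, by observing that on the event where the bracketed quantity is small one has $|\omega_3^{-1}-\omega_2^{-1}|$ small and $|\omega_2^{-1}|\ge\xi_0$, whence $|\omega_3|\le 2/\xi_0$.
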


\begin{proof}[Proof of Lemma \ref{lem:stieltjes_second_reformulation}]
Denote
\[
\omega_{2.5} = \Big(-\xi+s_1+s_2- \overline v_1 + \bv_2^\sT (\bM^{-1} + \bV_3)^{-1} \bv_2 \Big)^{-1}
\]
We have 
\[
\begin{aligned}
\vert \omega_2 - \omega_{2.5} \vert =&~ \vert \omega_{2} (v_1 - \overline v_1) \omega_{2.5}\vert \le \vert v_1 - \overline v_1 \vert / \xi_0^2. 
\end{aligned}
\]
Moreover, we have 
\[
\begin{aligned}
\vert \omega_{2.5} - \omega_3\vert \le&~ (1/\xi_0^2) \vert \bv_2^\sT (\bM^{-1} + \bV_3)^{-1} \bv_2 - \overline \bv_2^\sT (\bM^{-1} + \overline \bV_3)^{-1} \overline \bv_2\vert \\
\le&~  (1/\xi_0^2) \Big\{ (\| \bv_2 \|_2 + \| \overline \bv_2 \|_2) \| (\bM^{-1} + \bV_3)^{-1} \|_{\op} \| \bv_2 - \overline \bv_2 \|_2 \\
& + \| \overline \bv_2 \|_2^2 \| (\bM^{-1} + \bV_3)^{-1} \|_{\op} \| (\bM^{-1} + \overline \bV_3)^{-1} \|_{\op} \| \bV_3 - \overline \bV_3 \|_{\op} \Big\}. 
\end{aligned}
\]
Combining with $\vert \omega_2 - \omega_3 \vert \le \vert \omega_2 \vert + \vert \omega_3 \vert \le O_d(1)$ proves the lemma.  
\end{proof}

\begin{lemma}\label{lem:concentration_v_in_Stieltjes_proof}
Under the assumptions of Lemma \ref{lemma:KeyFiniteD}, we have (following the notations of Eq. (\ref{eqn:def_V3_lemma_leave_one_out}) and (\ref{eqn:def_barV3_lemma_leave_one_out}))
\begin{align}
\| \overline \bv_2 \|_{2} =&~ O_{d}(1),\\
\vert v_1 - \overline v_1 \vert =&~ o_{d, \P}(1), \label{eq:hRh}\\
\| \bv_2 - \overline \bv_2 \|_2 =&~o_{d, \P}(1), \label{eq:hRU}\\
\| \bV_3 - \overline \bV_3 \|_{\op}  =&~ o_{d, \P}(1). \label{eq:URU}
\end{align}
\end{lemma}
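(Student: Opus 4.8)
The plan is to recognize all four quantities as quadratic or bilinear forms in the i.i.d.\ standard Gaussian vectors $\bfeta=(\eta_a)_{a\le N-1}$ and $\bu=(u_i)_{i\le n}$ evaluated against the resolvent $\bK\equiv(\tbB-\xi\id_{M-1})^{-1}$, and to exploit that $\tbB$ is a function only of $(\tbtheta_a)_{a\le N-1},(\tbx_i)_{i\le n}$, hence independent of $(\bfeta,\bu)$ and of $\overline\btheta_N$. Since $\tbB$ is real symmetric and $\Im(\xi)\ge\xi_0$ we have the deterministic bound $\|\bK\|_{\op}\le 1/\xi_0$. I would condition throughout on $\tbB$ and on $r^2\equiv\|\overline\btheta_N\|_2^2/d$, under which $\bfeta,\bu$ remain i.i.d.\ $\normal(0,1)$, partition $\bK$ conformally into blocks $\bK_{11}\in\reals^{(N-1)\times(N-1)}$, $\bK_{12}$, $\bK_{22}$, and recall $\ba_1=(s_2r/\sqrt d)\bfeta$, $\ba_2=\vphi(r\bu)/\sqrt d$, $\overline\bA_{\cdot,N}=(\ba_1;\ba_2)$, $\bU=\tfrac1{\sqrt d}\mathrm{blockdiag}(\bfeta,\bu)$.

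\emph{Concentration against a fixed resolvent.} The first step is the standard second-moment estimate: for fixed symmetric $\bK$ with $\|\bK\|_{\op}\le 1/\xi_0$ and a random vector $\bx$ with independent coordinates of mean zero, variance $O(1/d)$ and fourth moment $O(1/d^2)$, one has $\Var(\bx^\sT\bK\bx\mid\bK)=O(1/d)$, and for two \emph{independent} such vectors $\bx,\by$, $\E[\bx^\sT\bK\by\mid\bK]=0$ and $\Var(\bx^\sT\bK\by\mid\bK)=O(1/d)$. The coordinates of $\ba_1$ and of $\ba_2$ are independent (measurable functions of the disjoint blocks $\bfeta$, $\bu$) and, $\vphi$ being a fixed polynomial, satisfy these moment bounds given $r$. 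Applying the estimate to the three pieces of $v_1=\ba_1^\sT\bK_{11}\ba_1+2\ba_1^\sT\bK_{12}\ba_2+\ba_2^\sT\bK_{22}\ba_2$, and analogously to $\bv_2$ and $\bV_3$, reduces everything to the conditional means. For the $\bU$-type forms one first uses $\ba_1=(s_2r/\sqrt d)\bfeta$ to write $(\bv_2)_1=\tfrac1{\sqrt d}(\bfeta^\sT\bK_{11}\ba_1+\bfeta^\sT\bK_{12}\ba_2)=\tfrac{s_2r}{d}\bfeta^\sT\bK_{11}\bfeta+\tfrac1{\sqrt d}\bfeta^\sT\bK_{12}\ba_2$ — a genuine quadratic form in $\bfeta$ plus a mean-zero bilinear form in the independent pair $(\bfeta,\ba_2)$, the latter being $O_{d,\P}(d^{-1/2})$; the diagonal entries of $\bV_3$ are treated the same way, and its off-diagonal entry $\tfrac1d\bfeta^\sT\bK_{12}\bu$ is a mean-zero bilinear form in independent vectors.

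\emph{Conditional means and identification with $\overline m_{1,d},\overline m_{2,d}$.} After the previous step, $v_1=s_2^2r^2\,\tfrac1d\Tr\bK_{11}+\E[\vphi(rG)^2]\,\tfrac1d\Tr\bK_{22}+o_{d,\P}(1)$, $(\bv_2)_1=s_2r\,\tfrac1d\Tr\bK_{11}+o_{d,\P}(1)$, $(\bv_2)_2=r\,\E[G\vphi(rG)]\,\tfrac1d\Tr\bK_{22}+o_{d,\P}(1)$, and $(\bV_3)_{11}=\tfrac1d\Tr\bK_{11}+o_{d,\P}(1)$, $(\bV_3)_{22}=\tfrac1d\Tr\bK_{22}+o_{d,\P}(1)$, where $G\sim\normal(0,1)$ and the off-diagonal terms of $\ba_2^\sT\bK_{22}\ba_2$ contribute only $\E[\vphi(rG)]^2\cdot\tfrac1d(\bfone^\sT\bK_{22}\bfone-\Tr\bK_{22})=O_{d,\P}(d^{-1})$ since $\E[\vphi(rG)]=O(r^2-1)$ and $|\bfone^\sT\bK_{22}\bfone|\le n/\xi_0$. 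Using $\|\overline\btheta_N\|_2^2/d-1=O_{d,\P}(d^{-1/2})$ we replace $r^2$ by $1$, and a short Hermite computation (using $\ob_0=\E[\vphi(G)]=0$) gives $\E[\vphi(rG)^2]\to\ob_1^2+\ob_\star^2$ and $\E[G\vphi(rG)]\to\ob_1$ with error $O(r^2-1)$. It then remains to show $\tfrac1d\Tr_{[1,N-1]}\bK=\overline m_{1,d}+o_{d,\P}(1)$ and $\tfrac1d\Tr_{[N,M-1]}\bK=\overline m_{2,d}+o_{d,\P}(1)$: concentration of $\tfrac1d\Tr_S\bK$ about its expectation follows from the argument of Lemma \ref{lemma:Concentration_Stieltjes} applied to $\tbB$, and for the expectation one chains through $\overline\bB=\tbB+\bDelta+\bE_0$ and the removal of row/column $N$ relating $\overline\bB$ to $\overline\bA$: the rank-$2$ matrix $\bDelta$ and the removed row/column change $\tfrac1d\Tr_S$ of the resolvent by $O(1/d)$ (Lemma \ref{eq:BD-Diff} and rank-perturbation bounds), while $\|\bE_0\|_{\op}=\|\bE_1\|_{\op}=O_{d,\P}(\Poly(\log d)/\sqrt d)$ by Lemma \ref{lem:concentration_of_E1_in_Stieltjes_proof} contributes at most $\tfrac{|S|}{d\xi_0^2}\|\bE_0\|_{\op}=o_{d,\P}(1)$; finally exchangeability of the first $N$ coordinates gives $\E[\Tr_{[1,N-1]}((\overline\bA-\xi\id_M)^{-1})]=(N-1)\,\E[(\overline\bA-\xi\id_M)^{-1}_{11}]=d\,\overline m_{1,d}+O(1)$, and symmetrically for the second block. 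Combining the three steps yields $\vert v_1-\overline v_1\vert,\ \|\bv_2-\overline\bv_2\|_2,\ \|\bV_3-\overline\bV_3\|_{\op}=o_{d,\P}(1)$, while $\|\overline\bv_2\|_2=O_d(1)$ is immediate from $\vert\overline m_{i,d}\vert\le\psi_{i,d}/\xi_0$.

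\emph{Main obstacle.} The only genuinely delicate part is the bookkeeping for the $\bU$-dependent forms $\bv_2$ and $\bV_3$: because $\bU$ and $\overline\bA_{\cdot,N}$ share the randomness $\bfeta,\bu$, these are \emph{not} bilinear forms in independent vectors, and one must carefully separate the self-interaction terms — which converge to normalized traces and produce the nonzero limits $s_2\overline m_{1,d}$, $\ob_1\overline m_{2,d}$, $\overline m_{i,d}$ — from the genuinely cross terms, which vanish. Everything else is routine resolvent perturbation theory together with moment bounds for low-degree polynomials of Gaussians.
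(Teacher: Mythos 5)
Your proposal is correct and follows essentially the same strategy as the paper's proof: condition on $\tbB$ (and $r^2=\|\overline\btheta_N\|_2^2/d$), use the deterministic bound $\|\bK\|_{\op}\le 1/\xi_0$ on the resolvent, compute conditional second moments of the quadratic/bilinear forms to get concentration at rate $O_{d,\P}(d^{-1/2})$, identify the conditional means with partial normalized traces of $\bK$, and then relate those traces back to $\overline m_{1,d},\overline m_{2,d}$ by resolvent perturbation and concentration of partial Stieltjes transforms. Two worthwhile differences deserve comment. First, instead of replacing $\ba=\overline\bA_{\cdot,N}$ by the ``unit-radius'' vector $\bh$ (which has exactly zero mean since $\ob_0=0$, making all cross-terms vanish exactly) as the paper does, you keep the $r$-dependence, which forces you to track the sub-leading bias $\E[\vphi(rG)]=O(r^2-1)=O_{d,\P}(d^{-1/2})$ in the off-diagonal contributions; this is handled correctly but is slightly more bookkeeping than the paper's one-step replacement $\ba\mapsto\bh$. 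Second, and more substantively, for the trace identification you chain $\tbB\to\overline\bB\to\overline\bA$ using $\overline\bB=\tbB+\bDelta+\bE_0$ together with $\|\bE_0\|_{\op}=O_{d,\P}(\Poly(\log d)/\sqrt d)$ from Lemma~\ref{lem:concentration_of_E1_in_Stieltjes_proof} and Lemma~\ref{eq:BD-Diff} for the row/column removal; the paper instead invokes the separate dimension-perturbation result Lemma~\ref{lem:perturbation_dimension_Stieltjes} to compare the $(d-1)$-dimensional matrix $\tbB$ directly to the $d$-dimensional $\overline\bA$. Your route reuses a bound that is already established at this point in the argument and avoids an extra lemma, which is a small but genuine simplification; the cost is that it uses a high-probability operator-norm bound on $\bE_0$ rather than purely deterministic perturbation estimates. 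Both chains are valid, and the remainder (block decomposition of $\bK$, variance calculations for the quadratic and bilinear forms, the Hermite limits $\E[\vphi(rG)^2]\to\ob_1^2+\ob_\star^2$ and $\E[G\vphi(rG)]\to\ob_1$) matches the intended argument.
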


\begin{proof}[Proof of Lemma \ref{lem:concentration_v_in_Stieltjes_proof}]

The first bound is because (see Lemma \ref{lemma:BasicProperties} for the boundedness of $\overline m_{1, d}$ and $\overline m_{2, d}$)
\[
\| \overline \bv_2 \|_{2}  \le \vert s_2\vert\cdot \vert \overline m_{1, d} \vert + \vert \ob_1\vert\cdot \vert \overline m_{2, d}\vert\le (\psi_1 + \psi_2) (\vert s_2\vert + \vert \ob_1 \vert)/\xi_0 = O_d(1). 
\]
In the following, we limit ourselves to proving Eq.~\eqref{eq:hRh}, since Eq. (\ref{eq:hRU}) and (\ref{eq:URU})  follow by similar arguments. 
 
Recall the definition of $\tbB$ as in Eq. (\ref{eq:Decomposition_B_tilde}). Let $\bR \equiv (\tbB-\xi\id_{M-1})^{-1}$. Then we have $\|\bR\|_{\op}\le 1/ \xi_0$. Define $\ba, \bh$ as
\[
\begin{aligned}
\ba =&~ \overline \bA_{\cdot, N} =  \Big[ \frac{1}{d} s_2\bfeta^\sT\|\overline \btheta_N\|_2, \frac{1}{\sqrt{d}} \varphi\Big(\frac{1}{\sqrt{d}} \bu^\sT \|\overline \btheta_N\|_2\Big) \Big]^\sT, \\
\bh =&~ \Big[ \frac{1}{\sqrt d} s_2\bfeta^\sT , \frac{1}{\sqrt{d}} \varphi( \bu^\sT ) \Big]^\sT. \\
\end{aligned}
\]
Then by the definition of $v_1$ in Eq. (\ref{eqn:def_V3_lemma_leave_one_out}), we have $v_1 = \ba^\sT \bR \ba$. Note we have 
\[
\| \bh - \ba\|_2 \le ( s_2 \|\bfeta \|_2 +  \| \vphi'(\bu \odot \bxi) \|_2)  \cdot \vert \| \overline \btheta_N \|_2 / \sqrt d - 1\vert / \sqrt d,
\]
for some $\bxi = (\xi_1, \ldots, \xi_n)^\sT$ with $\xi_i$ between $\| \overline \btheta_N \|_2 / \sqrt d$ and $1$. Since $\vert \| \overline \btheta_N \|_2 / \sqrt d - 1\vert = O_{d, \P}(\sqrt{\log d} / \sqrt d)$, $\| \bfeta \|_2 = O_{d, \P}(\sqrt d)$, and $\| \vphi'(\bu \cdot \bxi)\|_2 = O_{d, \P}(\Poly(\log d)\cdot \sqrt d)$, we have
\[
\| \bh - \ba \|_{2} = o_{d, \P}(1). 
\]
By Lemma \ref{lem:concentration_A_vector_in_Stieltjes_proof} we have $\| \ba \|_2 = O_{d, \P}(1)$ and hence $\| \bh \|_{2} = O_{d, \P}(1)$. Combining all these bounds, we have 
\begin{align}\label{eqn:bound_v_hRh}
\vert v_1 - \bh^\sT \bR \bh \vert = \vert \ba^\sT \bR \ba - \bh^\sT \bR \bh \vert \le (\| \ba \|_2 + \| \bh\|_2) \| \bh - \ba \|_2 \| \bR \|_{\op} = o_{d, \P}(1).
\end{align}

Denote by $\bD$ the covariance matrix of $\bh$. Since $\bh$ has independent elements, $\bD$ a diagonal matrix with $\max_{i} D_{ii}= \max_i \Var(h_i)\le C/d$. Since $\E[\bh] = \bzero$, we have
\begin{align}
\E\{\bh^{\sT}\bR\bh\vert \bR\} =&~ \Tr(\bD\bR). 
\end{align}
We next compute $\Var(\bh^{\sT}\bR\bh\vert \bR)$. By a similar calculation of Lemma \ref{lem:variance_calculations}, we have (for a complex matrix, denote $\bR^\sT$ to be the transpose of $\bR$, and $\bR^*$ to be the conjugate transpose of $\bR$)
\[
\begin{aligned}
&\Var(\bh^{\sT}\bR\bh\vert \bR) 
= \sum_{i=1}^{M-1} \vert R_{ii} \vert^2 (\E[h_i^4] - 3 \E[h_i^2]^2) +  \Tr(\bD \bR^\sT \bD \bR^*) + \Tr(\bD \bR \bD \bR^*). \\
\end{aligned}
\]
Note that we have $\max_i [\E[h_i^4] - 3 \E[h_i^2]^2] = O_d(1/ d^2)$, so that
\[
\sum_{i=1}^{M-1} \vert R_{ii} \vert^2 (\E[h_i^4] - 3 \E[h_i^2]^2) \le O_d(1/d^2) \cdot \| \bR \|_F^2 \le O_d(1/d) \| \bR \|_{\op}^2 = O_d(1/d). 
\]
Moreover, we have
\[
\vert \Tr(\bD \bR^\sT \bD \bR^*) + \Tr(\bD \bR \bD \bR^*)\vert \le \|\bD \bR \|_F^2 + \| \bD \bR \|_F \| \bD \bR^* \|_F \le 2 \| \bD \|_{\op}^2 \| \bR \|_F^2 = O_d(1/d),
\]
which gives
\[
\Var(\bh^{\sT}\bR\bh\vert \bR) = O_d(1/d), 
\]
and therefore
\begin{align}\label{eqn:bound_gRg}
\vert \bh^{\sT}\bR\bh - \Tr(\bD\bR) \vert = O_{d, \P}(d^{-1/2}). 
\end{align}

Combining Eq. (\ref{eqn:bound_gRg}) and (\ref{eqn:bound_v_hRh}), we obtain
\begin{equation}\label{eqn:bound_v1_trdr}
\begin{aligned}
\vert v_1 - \Tr(\bD\bR) \vert  \le&~\vert \ba^\sT \bR \ba - \bh^{\sT}\bR\bh \vert + \vert \bh^{\sT}\bR\bh - \Tr(\bD\bR) \vert = o_{d, \P}(1). 
\end{aligned}
\end{equation}
Finally, notice that
\[
\Tr(\bD \bR) = s_2^2 \Tr_{[1, N-1]}((\tbB-\xi\id_{M-1})^{-1}) / d + (\ob_1^2 + \ob_\star^2) \Tr_{[N, M-1]}((\tbB-\xi\id_{M-1})^{-1}) / d. 
\]
By Lemmas \ref{eq:BD-Diff}, partial Stieltjes transforms are stable with respect to deleting one row and one column of the same index. By Lemma \ref{lem:perturbation_dimension_Stieltjes} (which will be stated and proved later), partial Stieltjes transforms are stable with respect to small changes of the dimension $d$. Moreover, by Lemma \ref{lemma:Concentration_Stieltjes}, partial Stieltjes transforms concentrate tightly around their mean. As a consequence of all these lemmas (Lemma \ref{eq:BD-Diff}, \ref{lem:perturbation_dimension_Stieltjes}, \ref{lemma:Concentration_Stieltjes}), we have 
\[
\begin{aligned}
\vert \Tr_{[1, N-1]}((\tbB-\xi\id_{M-1})^{-1}) / d  - \overline m_{1, d} \vert =&~ o_{d, \P}(1), \\
\vert \Tr_{[N, M-1]}((\tbB-\xi\id_{M-1})^{-1}) / d  - \overline m_{2, d} \vert =&~ o_{d, \P}(1),
\end{aligned}
\]
so that 
\[
\vert \Tr(\bD \bR) - \overline v_1 \vert  = o_{d, \P}(1). 
\]
Combining with Eq. (\ref{eqn:bound_v1_trdr}) proves Eq. (\ref{eq:hRh}). 
\end{proof}

The following lemma is the analog of Lemma B.7 and B.8 in \cite{cheng2013spectrum}. 
\begin{lemma}\label{lem:Reverse_SMW_in_Stieltjes_proof}
Under the assumptions of Lemma \ref{lemma:KeyFiniteD}, we have (using the definitions in Eq. (\ref{eqn:def_M_lemma_leave_one_out}), (\ref{eqn:def_V3_lemma_leave_one_out}) and (\ref{eqn:def_barV3_lemma_leave_one_out}) )
\begin{align}
\| (\bM^{-1} + \bV_3)^{-1} \|_{\op} =&~ O_{d, \P}(1), \\
\| (\bM^{-1} + \overline \bV_3)^{-1}\|_{\op} =&~O_{d}(1).
\end{align}
\end{lemma}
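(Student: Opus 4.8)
The plan is to reduce both operator-norm bounds to a lower bound on the (Hermitian part of the) relevant $2\times 2$ complex matrix, via the elementary fact that if $\bA\in\C^{2\times 2}$ satisfies $(\bA-\bA^*)/(2\imagunit)\succeq\delta\,\id$ for some $\delta>0$, then $\bA$ is invertible with $\|\bA^{-1}\|_{\op}\le 1/\delta$ (for every unit vector $x$ one has $\|\bA x\|_2\ge|x^*\bA x|\ge x^*[(\bA-\bA^*)/(2\imagunit)]x\ge\delta$). So it suffices to produce such a $\delta$, uniformly up to high-probability events, for $\bM^{-1}+\bV_3$ and for $\bM^{-1}+\overline{\bV}_3$.

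First I would record that $\bM^{-1}$ is real symmetric, so it has zero imaginary part, and that $\bV_3=\bU^\sT(\tbB-\xi\id_{M-1})^{-1}\bU$ is \emph{complex symmetric} because $\tbB$ is real symmetric and $\bU$ is real (cf. Eq.~\eqref{eq:Decomposition_B_tilde}, Eq.~\eqref{eqn:def_M_lemma_leave_one_out}); hence $(\bM^{-1}+\bV_3)^*=\bM^{-1}+\overline{\bV}_3$ and the Hermitian part of $\bM^{-1}+\bV_3$ equals the entrywise imaginary part $\bU^\sT\,\Im[(\tbB-\xi\id_{M-1})^{-1}]\,\bU$. Writing $\bR=(\tbB-\xi\id_{M-1})^{-1}$ and using $\tbB^\sT=\tbB$ so that $(\tbB-\bar\xi\id)^{-1}=\bR^*$, the resolvent identity gives $\Im[\bR]=(\Im\xi)\,\bR\bR^*\succeq(\Im\xi)\,(\|\tbB\|_{\op}+|\xi|)^{-2}\,\id$, and therefore the Hermitian part of $\bM^{-1}+\bV_3$ is $\succeq(\Im\xi)(\|\tbB\|_{\op}+|\xi|)^{-2}\,\bU^\sT\bU$. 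Two ingredients remain: $\bU^\sT\bU=\diag(\|\bfeta\|_2^2/d,\|\bu\|_2^2/d)$ is $\succeq\tfrac12\min(\psi_1,\psi_2)\,\id$ with probability $1-o_d(1)$ by $\chi^2$ concentration; and $\|\tbB\|_{\op}=O_{d,\P}(1)$, since the diagonal blocks $s_1\id+s_2\tbQ,\ t_1\id+t_2\tbH$ involve Wishart matrices $\tbQ,\tbH$, while the off-diagonal block $\tbJ=\vphi(\tbX\tbTheta^\sT/\sqrt d)/\sqrt d$ is an inner-product kernel random matrix with polynomial $\vphi$, $\E[\vphi(G)]=0$, whose operator norm is $O_{d,\P}(1)$ by the extreme-eigenvalue bounds of \cite{fan2019spectral}. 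Intersecting an event $\{\|\tbB\|_{\op}\le K\}$ of probability $\ge 1-\eps$ with the $\chi^2$ event, we get the Hermitian part of $\bM^{-1}+\bV_3$ bounded below by $\delta\,\id$ with $\delta=\delta(\xi,\bq,\psi_1,\psi_2,\vphi)>0$ on that event, hence $\|(\bM^{-1}+\bV_3)^{-1}\|_{\op}\le 1/\delta$ there; as $\eps$ is arbitrary this is the desired $O_{d,\P}(1)$ bound.

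For the second bound, $\overline{\bV}_3=\diag(\overline m_{1,d},\overline m_{2,d})$ is deterministic, so I would instead produce a deterministic lower bound on $\Im\overline m_{i,d}$. Applying the same resolvent identity to the real symmetric matrix $\overline{\bA}$, $\Im\overline m_{i,d}=(\Im\xi)\,\E[\tfrac1d\Tr_{[1,N]}((\overline{\bA}-\xi\id_M)^{-1}(\overline{\bA}-\bar\xi\id_M)^{-1})]\ge(\Im\xi)\,\psi_{i,d}\,\E[(\|\overline{\bA}\|_{\op}+|\xi|)^{-2}]$ (with $\Tr_{[N+1,M]}$ for $i=2$); since $\|\overline{\bA}\|_{\op}=O_{d,\P}(1)$ by the same argument as for $\tbB$, this expectation is bounded below by a positive constant for $d$ large, so $\Im\overline m_{i,d}\ge\delta'>0$ for a constant $\delta'$. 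Because $\bM^{-1}$ is real symmetric and $\overline{\bV}_3$ is complex diagonal, the Hermitian part of $\bM^{-1}+\overline{\bV}_3$ equals $\diag(\Im\overline m_{1,d},\Im\overline m_{2,d})\succeq\delta'\,\id$, and the elementary fact gives the deterministic bound $\|(\bM^{-1}+\overline{\bV}_3)^{-1}\|_{\op}\le 1/\delta'=O_d(1)$.

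The main obstacle, and the only non-routine input, is the sharp $O_{d,\P}(1)$ operator-norm control of the nonlinear kernel blocks $\tbJ$ and $\overline{\bJ}$: a naive bound (e.g. through the Frobenius norm) would only yield a $\Poly(\log d)$ factor, which would degrade the final estimates to $O_{d,\P}(\Poly(\log d))$ rather than $O_{d,\P}(1)$, so one genuinely needs the extreme-eigenvalue theory for centered-polynomial inner-product kernel matrices. A secondary point that must be handled with care is that $\bV_3$ is complex symmetric and not Hermitian, so that "Hermitian part" coincides with the entrywise imaginary part precisely because $\bM^{-1}$ and $\tbB$ are real symmetric; this is what makes the positive-semidefiniteness $\Im[\bR]=(\Im\xi)\bR\bR^*\succeq 0$ usable.
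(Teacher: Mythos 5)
Your proof is correct, and it is a genuinely different argument from the paper's. The paper handles the first bound by applying the Sherman--Morrison--Woodbury formula in the opposite direction: it rewrites $(\bM^{-1}+\bV_3)^{-1} = \bM - \bM\bU^{\sT}(\tbB+\bU\bM\bU^{\sT}-\xi\id_{M-1})^{-1}\bU\bM$, and since $\tbB+\bU\bM\bU^{\sT}$ is \emph{real symmetric}, the resolvent norm is $\le 1/\xi_0$ for free; the only random input is then $\|\bU\|_{\op}^2=O_{d,\P}(1)$ by $\chi^2$ concentration. Your route instead lower-bounds the skew-Hermitian part $(\bA-\bA^*)/(2\imagunit)$ of $\bM^{-1}+\bV_3$ by $(\Im\xi)\,\bU^{\sT}\bR\bR^*\bU\succeq (\Im\xi)(\|\tbB\|_{\op}+|\xi|)^{-2}\bU^{\sT}\bU$, which requires the Fan--Montanari extreme-eigenvalue bound on the centered-polynomial kernel block $\tbJ$ to make $\|\tbB\|_{\op}=O_{d,\P}(1)$ rather than merely polylog-bounded. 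So the paper's proof of the first inequality is strictly lighter in machinery than yours. Conversely, your proof of the second inequality is more self-contained: you bound $\Im\,\overline m_{i,d}$ directly from below via $\Im[(\overline\bA-\xi\id)^{-1}]=(\Im\xi)(\overline\bA-\xi\id)^{-1}(\overline\bA-\bar\xi\id)^{-1}$ and positivity of the integrand, whereas the paper bootstraps: it uses Lemma \ref{lem:concentration_v_in_Stieltjes_proof} ($\|\bV_3-\overline\bV_3\|_{\op}=o_{d,\P}(1)$) to transfer the first bound to the second via the perturbation identity $(\id+\overline\bG)^{-1}=\{\id-(\bG-\overline\bG)(\id+\bG)^{-1}\}(\id+\bG)^{-1}$ and then observes the result must be deterministic. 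Both of your ingredients are available (the paper invokes \cite{fan2019spectral} elsewhere anyway), so the argument is sound.

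Two small remarks. First, in the sentence ``$(\bM^{-1}+\bV_3)^*=\bM^{-1}+\overline{\bV}_3$'' you are using $\overline{\bV}_3$ to mean the entrywise complex conjugate of $\bV_3$, but in Eq.~\eqref{eqn:def_barV3_lemma_leave_one_out} the symbol $\overline{\bV}_3$ already denotes the deterministic matrix $\diag(\overline m_{1,d},\overline m_{2,d})$; your meaning is clear from context, but the notational clash should be avoided. Second, you call $(\bA-\bA^*)/(2\imagunit)$ the ``Hermitian part,'' whereas standard usage reserves that for $(\bA+\bA^*)/2$; the quantity you mean is the (Hermitian) imaginary part or skew-Hermitian part divided by $\imagunit$, and the inequality $\|\bA x\|_2\ge |\Im\langle x,\bA x\rangle|\ge\delta$ that you use is correct.
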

\begin{proof}[Proof of Lemma \ref{lem:Reverse_SMW_in_Stieltjes_proof}]~ 

\noindent
{\bf Step 1. Bounding $\| (\bM^{-1} + \bV_3)^{-1} \|_{\op}$. } By Sherman-Morrison-Woodbury formula, we have 
\[
\begin{aligned}
&(\bM^{-1} + \bV_3)^{-1} = (\bM^{-1} + \bU^{\sT}(\tbB-\xi\id_{M-1})^{-1}\bU)^{-1}\\
=&~ \bM - \bM \bU^\sT (\tbB - \xi \id_{M-1}+ \bU \bM \bU^\sT)^{-1} \bU \bM.
\end{aligned}
\]
Note we have $\| \bM \|_{\op} = O_d(1)$, and $\| (\tbB - \xi \id_{M-1}+ \bU \bM \bU^\sT)^{-1} \|_{\op} \le 1/ \xi_0 = O_d(1)$. Therefore, by the concentration of $\| \bfeta \|_2 / \sqrt d$ and $\| \bu \|_2 / \sqrt d$, we have 
\[
(\bM^{-1} + \bV_3)^{-1} = O_d(1) \cdot (1 +  \| \bU \|_{\op}^2) = O_d(1) (1 + \| \bfeta \|_2 /\sqrt d + \| \bu \|_2 / \sqrt d) = O_{d, \P}(1). 
\]

\noindent
{\bf Step 2. Bounding $\| (\bM^{-1} + \overline \bV_3)^{-1} \|_{\op}$. } Define $\bG = \bM^{1/2} \bV_3 \bM^{1/2}$ and $\overline \bG = \bM^{1/2} \overline \bV_3 \bM^{1/2}$. By Lemma \ref{lem:concentration_v_in_Stieltjes_proof}, we have 
\begin{align}\label{eqn:C13_eq1}
\| \bG - \overline \bG \|_{\op} = o_{d, \P}(1). 
\end{align}
By the bound $\|(\bM^{-1} + \bV_3)^{-1}\|_{\op} = O_{d, \P}(1)$, we get
\begin{align}\label{eqn:C13_eq2}
\| (\id_2 + \bG )^{-1}\|_{\op} = \| \bM^{-1/2} (\bM^{-1} + \bV_3 )^{-1} \bM^{-1/2}\|_{\op} \le \| (\bM^{-1} + \bV_3)^{-1} \| \cdot \| \bM^{- 1/2} \|_{\op}^2 = O_{d, \P}(1).
\end{align}
Note we have 
\[
(\id_2 + \overline \bG)^{-1} -(\id_2 +  \bG)^{-1} = (\id_2 + \overline \bG)^{-1} (\bG - \overline \bG) (\id_2 + \bG)^{-1}, 
\]
so that 
\[
(\id_2 + \overline \bG)^{-1} = \{\id_2 -  (\bG - \overline \bG) (\id_2 + \bG)^{-1}\} (\id_2 +  \bG)^{-1}. 
\]
Combining with Eq. (\ref{eqn:C13_eq1}) and (\ref{eqn:C13_eq2}), we get
\[
\| (\id_2 + \overline \bG)^{-1} \|_{\op} \le \| \id_2 -  (\bG - \overline \bG) (\id_2 + \bG)^{-1} \|_{\op} \| (\id_2 +  \bG)^{-1} \|_{\op} = O_{d, \P}(1) = O_d(1). 
\]
The last equality holds because $\| (\id_2 + \overline \bG)^{-1} \|_{\op}$ is deterministic. Hence we have 
\[
\| (\bM^{-1} + \overline \bV_3)^{-1} \|_{\op} = \| \bM^{1/2} (\id_2 + \overline \bG)^{-1}  \bM^{1/2}  \|_{\op} \le \| (\id_2 + \overline \bG)^{-1} \|_{\op} \| \bM^{1/2} \|_{\op}^2 = O_{d}(1). 
\]
This proves the lemma. 
\end{proof}

The following lemma shows that, the partial resolvents are stable with respect to small changes of the dimension $d$.
%
\begin{lemma}\label{lem:perturbation_dimension_Stieltjes}
Follow the assumptions of Lemma \ref{lemma:KeyFiniteD}. Let $\overline \bX = (\overline \bx_1, \ldots, \overline \bx_n)^\sT \in \R^{n \times d}$ with $(\overline \bx_i = (\overline x_{i, 1}, \ldots, \overline x_{i, d})^\sT )_{i \in [n]} \sim_{iid} \normal(\bzero, \id_d)$, and $\overline \bTheta = (\overline \btheta_1, \ldots, \overline \btheta_N)^\sT \in \R^{N \times d}$ with $(\overline \btheta_a = (\overline \theta_{a, 1}, \ldots, \overline \theta_{a, d})^\sT)_{a \in [N]} \sim_{iid} \normal(\bzero, \id_d)$. Let $\underline \bX = (\underline \bx_1, \ldots, \underline \bx_n )^\sT \in \R^{n \times (d-1)}$ with $\underline \bx_i = \Restrict \overline \bx_i = (\overline x_{i, 1}, \ldots, \overline x_{i, d-1})^\sT \in \R^{d-1}$ and $\underline \bTheta = (\underline \btheta_1, \ldots, \underline \btheta_N)^\sT \in \R^{N \times (d-1)}$ with $\underline \btheta_a = \Restrict \overline \btheta_a = (\overline \theta_{a, 1}, \ldots, \overline \theta_{a, d-1})^\sT \in \R^{d-1}$, where $\Restrict = (\id_{d-1}, \bzero_{(d-1) \times 1}) \in \R^{(d - 1) \times d}$ is the matrix form of the operator that restricts a vector to its $1$'st to $(d-1)$'th canonical coordinates. Denote 
\begin{align}
\overline \bJ \equiv&~ \frac{1}{\sqrt{d}}\vphi \Big(\frac{1}{\sqrt{d}}\overline \bX\, \overline \bTheta^{\sT}\Big), & \underline \bJ \equiv&~ \frac{1}{\sqrt{d}}\vphi \Big(\frac{1}{\sqrt d}\underline \bX\underline \bTheta^{\sT}\Big), \\
\overline\bQ \equiv&~ \frac{1}{d} \overline \bTheta\, \overline \bTheta^{\sT}\, & \underline \bQ \equiv&~ \frac{1}{d}  \underline \bTheta \underline \bTheta^{\sT}\, ,\\
\overline\bH \equiv&~ \frac{1}{d}\overline \bX\, \overline\bX^{\sT}\, & \underline \bH \equiv&~ \frac{1}{d} \underline \bX \underline \bX^{\sT} \, ,
\end{align}
as well as the block matrix $\overline \bA, \underline \bA \in\reals^{M\times M}$, $M=N+n$, defined by
\begin{align}
\overline \bA =&~ \begin{bmatrix}
s_1\id_N+s_2\overline \bQ & \overline \bJ^\sT \\
\overline \bJ & t_1\id_n+t_2\overline \bH
\end{bmatrix}\, & 
\underline \bA =&~ \begin{bmatrix}
s_1\id_N+s_2\underline  \bQ & \underline \bJ^\sT  \\
\underline \bJ & t_1\id_n+t_2 \underline \bH
\end{bmatrix}\,.
\end{align}
Then for any $\xi \in \C_+$ with $\Im \xi \ge \xi_0 > 0$, we have 
\begin{align}
\frac{1}{d}\E \Big\vert \Tr_{[1, N]}[(\overline \bA - \xi \id_M)^{-1}] -\Tr_{[1, N]}[(\underline \bA - \xi \id_M)^{-1}] \Big\vert =&~ o_{d}(1), \label{lem:perturbation_dimension_1}\\
\frac{1}{d}\E \Big\vert \Tr_{[N+1, M]}[(\overline \bA - \xi \id_M)^{-1} - \Tr_{[N+1, M]}[(\underline \bA - \xi \id_M)^{-1}] \Big\vert =&~ o_{d}(1). \label{lem:perturbation_dimension_2}
\end{align}
%
\end{lemma}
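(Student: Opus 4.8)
The plan is to compare $\overline{\bA}$ and $\underline{\bA}$ through the resolvent identity and reduce the claim to a nuclear-norm bound on the perturbation $\bE\equiv\overline\bA-\underline\bA$. Let $\bP_1\in\R^{M\times M}$ be the orthogonal projector onto the first $N$ coordinates, so that $\Tr_{[1,N]}(\bK)=\Tr(\bP_1\bK)$. The resolvent identity gives $(\overline\bA-\xi\id_M)^{-1}-(\underline\bA-\xi\id_M)^{-1}=-(\overline\bA-\xi\id_M)^{-1}\bE(\underline\bA-\xi\id_M)^{-1}$, whence by cyclicity of the trace, the duality bound $\vert\Tr(\bM\bN)\vert\le\|\bM\|_{\op}\|\bN\|_\star$, and $\|(\overline\bA-\xi\id_M)^{-1}\|_{\op},\|(\underline\bA-\xi\id_M)^{-1}\|_{\op}\le1/\xi_0$,
\[
\Big\vert\Tr_{[1,N]}[(\overline\bA-\xi\id_M)^{-1}]-\Tr_{[1,N]}[(\underline\bA-\xi\id_M)^{-1}]\Big\vert\le\frac{1}{\xi_0^2}\,\|\bE\|_\star\,,
\]
and the same bound holds for $\Tr_{[N+1,M]}$ with $\bP_1$ replaced by the complementary projector. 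Since moreover the left-hand side divided by $d$ is deterministically at most $2N/(d\xi_0)=O_d(1)$, it suffices to prove $\frac1d\|\bE\|_\star=o_{d,\P}(1)$; the upgrade to $L^1$ in \eqref{lem:perturbation_dimension_1}--\eqref{lem:perturbation_dimension_2} then follows by bounded convergence.

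Next I would decompose $\bE$ into a low-rank part and a small-operator-norm part. Writing $\btheta^{(d)}=(\overline\theta_{1,d},\dots,\overline\theta_{N,d})^\sT$ and $\bx^{(d)}=(\overline x_{1,d},\dots,\overline x_{n,d})^\sT$ for the vectors of last coordinates, one has $\overline\bQ-\underline\bQ=\frac1d\btheta^{(d)}(\btheta^{(d)})^\sT$, $\overline\bH-\underline\bH=\frac1d\bx^{(d)}(\bx^{(d)})^\sT$, and, splitting $\vphi(x)=\ob_1 x+\vphi_\perp(x)$ (so $\vphi_\perp$ has vanishing degree-$0$ and degree-$1$ Hermite coefficients), $\overline\bJ-\underline\bJ=\frac{\ob_1}{d}\bx^{(d)}(\btheta^{(d)})^\sT+\bE_1'$, where $(\bE_1')_{ia}=\frac1{\sqrt d}\big[\vphi_\perp(\frac1{\sqrt d}(\langle\underline\bx_i,\underline\btheta_a\rangle+\overline x_{i,d}\overline\theta_{a,d}))-\vphi_\perp(\frac1{\sqrt d}\langle\underline\bx_i,\underline\btheta_a\rangle)\big]$. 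Thus $\bE=\bE_{\mathrm{lr}}+\widetilde\bE_1'$, where $\bE_{\mathrm{lr}}$ assembles the three displayed rank-one blocks (so $\rank(\bE_{\mathrm{lr}})\le4$) and $\widetilde\bE_1'$ carries $\bE_1'$ in the off-diagonal blocks and $\bzero$ on the diagonal. For the low-rank part, $\|\bE_{\mathrm{lr}}\|_\star\le4\|\bE_{\mathrm{lr}}\|_{\op}$, and each block has operator norm $O_{d,\P}(1)$ because $\frac1d\|\btheta^{(d)}\|_2^2$ and $\frac1d\|\bx^{(d)}\|_2^2$ are $\chi^2$-averages converging to $\psi_1$ and $\psi_2$; hence $\frac1d\|\bE_{\mathrm{lr}}\|_\star=O_{d,\P}(1/d)=o_{d,\P}(1)$.

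For $\widetilde\bE_1'$ I would reuse the estimate of Lemma~\ref{lem:concentration_of_E1_in_Stieltjes_proof}: stacking $(\underline\btheta_a)_{a\le N}$ and $(\underline\bx_i)_{i\le n}$ into a single family $\bz_1,\dots,\bz_M\sim_{iid}\normal(\bzero,\id_{d-1})$ and the corresponding last coordinates into $\zeta_1,\dots,\zeta_M\sim_{iid}\normal(0,1)$, the matrix $\bE_1'$ is a submatrix of the symmetric $M\times M$ matrix with $(i,j)$ entry $\frac1{\sqrt d}\big[\vphi_\perp(\frac1{\sqrt d}\langle\bz_i,\bz_j\rangle+\frac1{\sqrt d}\zeta_i\zeta_j)-\vphi_\perp(\frac1{\sqrt d}\langle\bz_i,\bz_j\rangle)\big]$ for $i\ne j$; the intermediate value theorem writes this matrix as $\frac1{\sqrt d}\bXi\bF_1\bXi+\frac1{2d}\bXi^2\bF_2\bXi^2$ with $\bXi=\diag(\zeta_1,\dots,\zeta_M)$, and $\|\bF_1\|_{\op}=O_{d,\P}(1)$ by Theorem~1.7 of \cite{fan2019spectral} (applicable since $\E[\vphi_\perp'(G)]=\E[G\vphi_\perp(G)]=0$), $\E\|\bF_2\|_F^2=O_d(d)$, $\|\bXi\|_{\op}=O_{d,\P}(\sqrt{\log d})$, giving $\|\bE_1'\|_{\op}=O_{d,\P}(\Poly(\log d)/\sqrt d)$ exactly as there. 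Converting to the nuclear norm via the crude bound $\|\bE_1'\|_\star\le\min(n,N)\|\bE_1'\|_{\op}$ then yields $\frac1d\|\widetilde\bE_1'\|_\star=O_{d,\P}(\Poly(\log d)/\sqrt d)=o_{d,\P}(1)$, and combining with the previous paragraph $\frac1d\|\bE\|_\star=o_{d,\P}(1)$, completing the proof.

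The only delicate step is the control of $\bE_1'$, and the argument succeeds precisely because the increment involves only the nonlinear part $\vphi_\perp$: peeling the Hermite-$1$ component of $\vphi$ off into the exactly rank-one term $\frac{\ob_1}{d}\bx^{(d)}(\btheta^{(d)})^\sT$ is what forces $\|\bE_1'\|_{\op}$ down to $O_{d,\P}(\Poly(\log d)/\sqrt d)$, so the lossy rank bound $\|\cdot\|_\star\le\min(n,N)\|\cdot\|_{\op}$ still beats the factor $\min(n,N)=O_d(d)$. A naive attempt to bound $\|\overline\bJ-\underline\bJ\|$ with the linear part included would only give operator norm $\Theta_{d,\P}(1)$ and fail; the Hermite split is essential, and it is the point where Lemma~\ref{lemma:KeyFiniteD}'s hypothesis ($\vphi$ polynomial, so \cite{fan2019spectral} applies) is used.
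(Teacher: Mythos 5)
Your proof is correct, and it takes a genuinely cleaner route than the paper's. The paper first reduces $\Tr_{[1,N]}$ of the full resolvent to $\Tr$ of an inverse Schur complement $\overline\bOmega$, then telescopes through three intermediate matrices $\bOmega_1,\bOmega_2,\bOmega_3$, substituting one block of $\overline\bA$ for the corresponding block of $\underline\bA$ at each step and controlling each difference with a resolvent identity and a nuclear-norm estimate. You apply the resolvent identity once, directly to the full $M\times M$ matrices, insert the coordinate projector inside the trace, and trade it off against the operator norms of the two resolvents; this collapses the whole problem to the single estimate $\frac1d\|\overline\bA-\underline\bA\|_\star=o_{d,\P}(1)$. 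The key decomposition into a rank-$\le4$ piece plus a $\vphi_\perp$-increment of small operator norm is the same in both arguments, and in both it is Lemma~\ref{lem:concentration_of_E1_in_Stieltjes_proof} (hence the $\vphi$-polynomial hypothesis inherited from Lemma~\ref{lemma:KeyFiniteD}, which makes Theorem~1.7 of \cite{fan2019spectral} applicable) that supplies the $O_{d,\P}(\Poly(\log d)/\sqrt d)$ operator-norm bound on the nonlinear part. What your version buys is a considerably shorter proof: the Schur-complement reduction and the four-step telescope are superfluous once you observe that the partial trace can be handled by inserting $\bP_1$ and that $\bP_1$ costs nothing in operator norm. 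You are also right to flag the Hermite split as essential: without peeling off the $\ob_1$ term, $\overline\bJ-\underline\bJ$ has operator norm $\Theta_{d,\P}(1)$, so the rank-times-operator-norm bound on its nuclear norm would be $\Theta_{d,\P}(d)$ rather than $o(d)$, and the whole argument would fail.
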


\begin{proof}[Proof of Lemma \ref{lem:perturbation_dimension_Stieltjes}]~

\noindent
{\bf Step 1. The Schur complement. }

We denote $\overline \bA_{ij}$ and $\underline \bA_{ij}$ for $i,j\in[2]$ to be 
\[
\begin{aligned}
\overline \bA =&~ \begin{bmatrix}
\overline \bA_{11} & \overline \bA_{12}\\
\overline \bA_{21} & \overline \bA_{22}\\
\end{bmatrix} = \begin{bmatrix}
s_1 \id_N + s_2 \overline \bQ & \overline \bJ^\sT \\
\overline \bJ & t_1 \id_n + t_2 \overline \bH\\
\end{bmatrix},&\underline \bA =&~ \begin{bmatrix}
\underline \bA_{11} & \underline \bA_{12}\\
\underline \bA_{21} & \underline \bA_{22}\\
\end{bmatrix} = \begin{bmatrix}
s_1\id_N+s_2\underline  \bQ & \underline \bJ^\sT  \\
\underline \bJ & t_1\id_n+t_2 \underline \bH
\end{bmatrix}.
\end{aligned}
\]
Define
\[
\begin{aligned}
\overline \omega =&~ \frac{1}{d}\Tr_{[1, N]}[(\overline \bA - \xi \id_M)^{-1}], & \underline \omega =&~ \frac{1}{d}\Tr_{[1, N]}[(\underline \bA - \xi \id_M)^{-1}],
\end{aligned}
\]
and
\[
\begin{aligned}
\overline \bOmega =&~ \Big(\overline \bA_{11} - \xi \id_N - \overline \bA_{12}( \overline \bA_{22} - \xi \id_n)^{-1} \overline \bA_{21} \Big)^{-1}, \\
\underline \bOmega =&~ \Big(\underline \bA_{11} - \xi \id_N - \underline \bA_{12}( \underline \bA_{22} - \xi \id_n)^{-1} \underline \bA_{21} \Big)^{-1}. \\
\end{aligned}
\]
Then we have 
\[
\begin{aligned}
\overline \omega =&~ \frac{1}{d}\Tr(\overline \bOmega), & \underline \omega =&~ \frac{1}{d}\Tr(\underline  \bOmega). 
\end{aligned}
\]
Define 
\[
\begin{aligned}
\bOmega_1 =&~ \Big(\underline \bA_{11} - \xi \id_N - \overline \bA_{12}( \overline \bA_{22} - \xi \id_n)^{-1} \overline \bA_{21} \Big)^{-1}, \\
\bOmega_2 =&~ \Big(\underline \bA_{11} - \xi \id_N - \underline \bA_{12}( \overline \bA_{22} - \xi \id_n)^{-1} \overline \bA_{21} \Big)^{-1}, \\
\bOmega_3 =&~ \Big(\underline \bA_{11} - \xi \id_N - \underline \bA_{12}( \overline \bA_{22} - \xi \id_n)^{-1} \underline \bA_{21} \Big)^{-1}, \\
\end{aligned}
\]
Then it's easy to see that $\| \overline \bOmega \|_{\op}, \| \bOmega_1 \|_{\op}, \| \bOmega_2 \|_{\op}, \|  \bOmega_3 \|_{\op}, \| \underline \bOmega \|_{\op} \le 1/ \xi_0$. 

Calculating their difference, we have
\[
\begin{aligned}
\Big\vert \frac{1}{d}\Tr(\overline \bOmega) - \frac{1}{d}\Tr(\bOmega_1) \Big\vert =&~ \Big\vert \frac{1}{d} \Tr( \overline \bOmega (\underline \bA_{11} - \overline \bA_{11}) \underline \bOmega ) \Big\vert \le O_d(1) \cdot \frac{1}{d} \| \underline \bA_{11} - \overline \bA_{11} \|_{\star}, \\
\Big\vert \frac{1}{d}\Tr(\bOmega_1) - \frac{1}{d}\Tr(\bOmega_2) \Big\vert \le&~ O_d(1) \cdot \frac{1}{d} \| (\underline \bA_{12} - \overline \bA_{12} ) ( \overline \bA_{22} - \xi \id_n)^{-1} \overline \bA_{21} \|_{\star}, \\
\Big\vert \frac{1}{d}\Tr(\bOmega_2) - \frac{1}{d}\Tr(\bOmega_3) \Big\vert \le&~ O_d(1) \cdot \frac{1}{d}  \| (\underline \bA_{12} - \overline \bA_{12} ) ( \overline \bA_{22} - \xi \id_n)^{-1} \underline \bA_{21} \|_{\star}, \\
\Big\vert \frac{1}{d}\Tr(\bOmega_3) - \frac{1}{d}\Tr(\underline \bOmega) \Big\vert \le&~ O_d(1) \cdot \frac{1}{d}  \| \underline \bA_{12} ( \overline \bA_{22} - \xi \id_n)^{-1}(\overline \bA_{22} - \underline \bA_{22}) ( \underline \bA_{22} - \xi \id_n)^{-1} \underline \bA_{21} \|_{\star}. \\
\end{aligned}
\]

\noindent
{\bf Step 2. Bounding the differences. }

First, we have 
\[
\overline \bA_{11} - \underline \bA_{11} = s_2 (\overline \bQ - \underline \bQ) = s_2 ( \overline \theta_{a d} \overline \theta_{b d} / d )_{a, b \in [N]} = s_2 \bfeta \bfeta / d, 
\]
where $\bfeta = (\overline \theta_{1d}, \ldots, \overline \theta_{Nd})^\sT \sim \normal(\bzero, \id_N)$. This gives 
\[
\| \overline \bA_{11} - \underline \bA_{11}\|_{\star} / d = s_2 \| \bfeta \|_2^2 / d^2 = o_{d, \P}(1),
\]
and therefore 
\[
\Big\vert \frac{1}{d}\Tr(\overline \bOmega) - \frac{1}{d}\Tr(\bOmega_1) \Big\vert = o_{d, \P}(1). 
\]

By Theorem 1.7 in \cite{fan2019spectral}, and by the fact that $\vphi$ is a polynomial with $\E[\vphi(G)] = 0$, we have 
\[
\begin{aligned}
\| \overline \bA_{12} \|_{\op} =&~ \| \overline \bJ \|_{\op} = O_{d, \P}(1),& \| \underline \bA_{12} \|_{\op} =&~ O_{d, \P}(1). 
\end{aligned}
\]
It is also easy to see that
\[
\| ( \overline \bA_{22} - \xi \id_n)^{-1} \|_{\op}, \| ( \underline \bA_{22} - \xi \id_n)^{-1} \|_{\op} \le 1/ \xi_0 = O_{d}(1). 
\]
Moreover, we have 
\[
\overline \bA_{22} - \underline \bA_{22} = t_2 ( \overline \bH - \underline \bH) = t_2 ( \overline x_{id} \overline x_{jd} / d )_{i, j \in [n]} = t_2 \bu \bu / d, 
\]
where $\bu = (\overline x_{1d}, \ldots, \overline x_{nd})^\sT \sim \normal(\bzero, \id_n)$. This gives 
\[
\begin{aligned}
&\| \underline \bA_{12} ( \overline \bA_{22} - \xi \id_n)^{-1}(\overline \bA_{22} - \underline \bA_{22}) ( \underline \bA_{22} - \xi \id_n)^{-1} \underline \bA_{21} \|_{\star}/ d\\
 \le&~ t_2 \| \underline \bA_{12} ( \overline \bA_{22} - \xi \id_n)^{-1} \bu \|_2 \| \underline \bA_{12} ( \underline \bA_{22} - \xi \id_n)^{-1} \bu\|_2 / d^2 \\
  \le&~ t_2 \| \underline \bA_{12}\|_{\op}^2 \|( \overline \bA_{22} - \xi \id_n)^{-1}\|_{\op}^2 \| \bu \|_2^2 / d^2 \\
 =&~ O_{d, \P}(1) \cdot \| \bu \|_2^2 / d^2 = o_{d, \P}(1),
 \end{aligned}
\]
and therefore 
\[
\Big\vert \frac{1}{d}\Tr(\bOmega_3) - \frac{1}{d}\Tr(\underline \bOmega) \Big\vert = o_{d, \P}(1). 
\]

By Lemma \ref{lem:concentration_of_E1_in_Stieltjes_proof}, defining
\[
\bE = \overline \bA_{12} - \underline \bA_{12} - \ob_1 \bu \bfeta^\sT / d, 
\]
we have $\| \bE \|_{\op} = O_d(\Poly(\log d) / \sqrt d)$. Therefore, we get
\[
\begin{aligned}
&\| (\underline \bA_{12} - \overline \bA_{12} ) ( \overline \bA_{22} - \xi \id_n)^{-1} \overline \bA_{21} \|_{\star}/ d\\
\le&~\| (\ob_1 \bu \bfeta^\sT / d) ( \overline \bA_{22} - \xi \id_n)^{-1} \overline \bA_{21} \|_{\star}/ d + \| \bE ( \overline \bA_{22} - \xi \id_n)^{-1} \overline \bA_{21} \|_{\star}/ d\\
\le&~ \ob_1 \| \bfeta\|_2  \| (\overline \bA_{22} - \xi \id_n)^{-1}\|_{\op} \| \overline \bA_{21}\|_{\op} \| \bu \|_2 /d^2 + \| \bE \|_{\op} \| ( \overline \bA_{22} - \xi \id_n)^{-1} \|_{\op} \|\overline \bA_{21} \|_{\op} \\
=&~ o_{d, \P}(1),
\end{aligned}
\]
and therefore
\[
\Big\vert \frac{1}{d}\Tr(\bOmega_1) - \frac{1}{d}\Tr(\bOmega_2) \Big\vert, \Big\vert \frac{1}{d}\Tr(\bOmega_2) - \frac{1}{d}\Tr(\bOmega_3) \Big\vert = o_{d, \P}(1). 
\]
Combining all these bounds establishes Eq. (\ref{lem:perturbation_dimension_1}). Finally, Eq. (\ref{lem:perturbation_dimension_2}) can be shown using the same argument. 
\end{proof}

\subsection{Equivalence between Gaussian and sphere version of Stieltjes transforms}\label{subsec:equivalence_G_S}

In this subsection, we show that the Stieltjes transform of matrix $\bA$ as defined in Eq. (\ref{eqn:matrix_A}) and that of matrix $\overline \bA$ as defined in Eq. (\ref{eqn:matrix_A_bar}) share the same asymptotics. For the reader's convenience, we restate the definitions of these two matrices here. 

Let $(\overline \btheta_a)_{a\in [N]}\sim_{iid} \normal(0,\id_d)$, $(\overline \bx_i)_{i \in
 [n]}\sim_{iid} \normal(0,\id_d)$. We denote by $\overline \bTheta\in \reals^{N\times d}$ the matrix whose $a$-th row is given by $\overline\btheta_a$, and by $\overline\bX\in\reals^{n\times d}$ the matrix whose $i$-th row is given by $\overline \bx_i$. We denote by $\bTheta\in \reals^{N\times d}$ the matrix whose $a$-th row is given by $\btheta_a = \sqrt d \cdot \overline\btheta_a /\| \overline \btheta_a \|_2 $, and by $\bX\in\reals^{n\times d}$ the matrix whose $i$-th row is given by $\bx_i = \sqrt  d \cdot \overline \bx_i / \| \overline \bx_i\|_2$. Then we have $(\bx_i)_{i \in [n]} \sim_{iid} \Unif(\S^{d-1}(\sqrt d))$ and $(\btheta_a)_{a \in [N]} \sim_{iid} \Unif(\S^{d-1}(\sqrt d))$ independently. 

We consider activation functions $\sigma, \vphi: \reals\to\reals$ with $\vphi(x) = \sigma(x) - \E_{G \sim \normal(0, 1)}[\sigma(G)]$. We define the following matrices (where $\ob_1$ is the first Hermite coefficients of $\sigma$)
\begin{align}
\overline \bJ \equiv&~ \frac{1}{\sqrt{d}}\vphi \Big(\frac{1}{\sqrt{d}}\overline \bX\, \overline \bTheta^{\sT}\Big), & \bZ \equiv&~ \frac{1}{\sqrt{d}}\sigma \Big(\frac{1}{\sqrt d}\bX\bTheta^{\sT}\Big), \\
\overline \bJ_1 \equiv&~ \frac{\ob_1}{d} \overline \bX\, \overline \bTheta^\sT, & \bZ_1 \equiv&~ \frac{\ob_1}{d}  \bX \bTheta^\sT  \,, \\
\overline\bQ \equiv&~ \frac{1}{d} \overline \bTheta\, \overline \bTheta^{\sT}, & \bQ \equiv&~ \frac{1}{d}  \bTheta\bTheta^{\sT}\, ,\\
\overline\bH \equiv&~ \frac{1}{d}\overline \bX\, \overline\bX^{\sT}, & \bH \equiv&~ \frac{1}{d} \bX\bX^{\sT} \, ,
\end{align}
as well as the block matrix $\overline \bA, \bA \in\reals^{M\times M}$, $M=N+n$, defined by
\begin{align}
\overline \bA =&~ \left[\begin{matrix}
s_1\id_N+s_2\overline \bQ & \overline \bJ^\sT  + p \overline \bJ_1^{\sT}\\
\overline \bJ + p \overline \bJ_1 & t_1\id_n+t_2\overline \bH
\end{matrix}\right], & 
\bA =&~ \left[\begin{matrix}
s_1\id_N+s_2 \bQ & \bZ^\sT  + p  \bZ_1^{\sT}\\
\bZ + p \bZ_1 & t_1\id_n+t_2 \bH
\end{matrix}\right]\,,
\end{align}
and the Stieltjes transforms $\overline M_d(\bxi; \bq)$ and $M_d(\bxi; 
\bq)$, defined by
\begin{align}\label{eqn:two_Stieltjes_transforms}
\overline M_{d}(\xi; \bq) =&~ \frac{1}{d} \Tr[(\overline \bA - \xi \id_M)^{-1}], &~~~~ M_{d}(\xi; \bq) =&~ \frac{1}{d} \Tr[(\bA - \xi \id_M)^{-1}].
\end{align}
The readers could keep in mind: a quantity with an overline corresponds to the case when features and data are Gaussian, while a quantity without overline usually corresponds to the case when features and data are on the sphere. 

\begin{lemma}\label{lem:Gaussian_to_sphere_statement}
Let $\sigma$ be a fixed polynomial. Let $\vphi(x) = \sigma(x) - \E_{G \sim \normal(0, 1)}[\sigma(G)]$. Consider the linear regime of Assumption \ref{ass:linear}. For any fixed $\bq \in \cQ$ and for any $\xi_0 > 0$, we have 
\[
\E\Big[ \sup_{\Im \xi \ge \xi_0} \vert \overline M_{d}(\xi; \bq) - M_{d}(\xi; \bq) \vert \Big] = o_{d}(1). 
\]
\end{lemma}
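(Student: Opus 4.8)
\medskip
\noindent\textbf{Proof strategy.}
The plan is to control $\E[\sup_{\Im\xi\ge\xi_0}|\overline M_d(\xi;\bq)-M_d(\xi;\bq)|]$ by combining the resolvent identity with a high–probability "good event," and to split the comparison into two independent pieces. First I would remove the discrepancy between $\sigma$ and $\vphi$: let $\bA^\flat$ be the matrix obtained from $\bA$ by replacing its off–diagonal block $\bZ=d^{-1/2}\sigma(d^{-1/2}\bX\bTheta^\sT)$ by $\bZ^\flat=d^{-1/2}\vphi(d^{-1/2}\bX\bTheta^\sT)$ (the $\bZ_1$ block is unchanged, since $\sigma$ and $\vphi$ have the same first Hermite coefficient $\ob_1$). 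Because $\vphi=\sigma-\ob_0$, we have $\bZ-\bZ^\flat=(\ob_0/\sqrt d)\,\ones_{n\times N}$, which has rank one; hence $\bA-\bA^\flat$ is a symmetric perturbation of rank at most two, and eigenvalue interlacing gives, uniformly in $\Im\xi\ge\xi_0$,
\begin{equation*}
\big|M_d(\xi;\bq)-M_d^\flat(\xi;\bq)\big|\le \frac{2}{d\,\Im\xi}\le \frac{2}{d\,\xi_0}=o_d(1),
\end{equation*}
where $M_d^\flat$ denotes the Stieltjes transform of $\bA^\flat$. It remains to compare $M_d^\flat$ (spherical rows, activation $\vphi$) with $\overline M_d$ (Gaussian rows, activation $\vphi$).

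\medskip
\noindent\textbf{Sphere versus Gaussian.}
Using the coupling in the statement, write $r_i=\|\overline\bx_i\|_2/\sqrt d$, $s_a=\|\overline\btheta_a\|_2/\sqrt d$, $\bR=\diag(r_1,\dots,r_n)$, $\bS=\diag(s_1,\dots,s_N)$, so that $\bx_i=\overline\bx_i/r_i$, $\btheta_a=\overline\btheta_a/s_a$, and therefore
\begin{equation*}
\bQ=\bS^{-1}\overline\bQ\,\bS^{-1},\qquad \bH=\bR^{-1}\overline\bH\,\bR^{-1},\qquad \bZ_1=\bR^{-1}\overline\bJ_1\bS^{-1},\qquad Z^\flat_{ia}=\tfrac{1}{\sqrt d}\,\vphi\!\big(y_{ia}/(r_is_a)\big),
\end{equation*}
where $y_{ia}\equiv\langle\overline\bx_i,\overline\btheta_a\rangle/\sqrt d$ and $\overline J_{ia}=\vphi(y_{ia})/\sqrt d$. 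By the resolvent identity and $\|(\,\cdot\,-\xi\id_M)^{-1}\|_{\op}\le1/\Im\xi\le1/\xi_0$,
\begin{equation*}
\sup_{\Im\xi\ge\xi_0}\big|\overline M_d(\xi;\bq)-M_d^\flat(\xi;\bq)\big|
=\sup_{\Im\xi\ge\xi_0}\Big|\tfrac1d\Tr\big[(\overline\bA-\xi\id_M)^{-1}(\bA^\flat-\overline\bA)(\bA^\flat-\xi\id_M)^{-1}\big]\Big|
\le \frac{1}{\xi_0^2}\cdot\frac{1}{d}\|\overline\bA-\bA^\flat\|_\star .
\end{equation*}
Since this upper bound does not depend on $\xi$, it suffices to show $d^{-1}\E[\|\overline\bA-\bA^\flat\|_\star\,\bfone_{\cE}]=o_d(1)$ on the good event $\cE=\{\max_i|r_i^2-1|\vee\max_a|s_a^2-1|\le C_0\sqrt{\log d}/\sqrt d\}$, and to handle $\cE^c$ with the trivial deterministic bound $|\overline M_d|\vee|M_d^\flat|\le (N+n)/(d\xi_0)\le C/\xi_0$.

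\medskip
\noindent\textbf{Bounding the nuclear norm.}
Decomposing $\overline\bA-\bA^\flat$ into its blocks and using $\|[\,\bzero,\bB^\sT;\bB,\bzero]\|_\star=2\|\bB\|_\star$, we get
\begin{equation*}
\|\overline\bA-\bA^\flat\|_\star\le |s_2|\,\|\overline\bQ-\bQ\|_\star+|t_2|\,\|\overline\bH-\bH\|_\star+2\|\overline\bJ-\bZ^\flat\|_\star+2|p|\,\|\overline\bJ_1-\bZ_1\|_\star .
\end{equation*}
For the $\bQ$ term, $\overline\bQ-\bQ=(\id_N-\bS^{-1})\overline\bQ+\bS^{-1}\overline\bQ(\id_N-\bS^{-1})$, so by $\|\bM_1\bM_2\bM_3\|_\star\le\|\bM_1\|_{\op}\|\bM_3\|_{\op}\|\bM_2\|_\star$, on $\cE$ we have $\|\overline\bQ-\bQ\|_\star\le C\,(\max_a|1-s_a^{-1}|)\,\Tr\overline\bQ$, and $d^{-1}\E[\Tr\overline\bQ]=N/d=O(1)$; likewise for $\bH$, and for $\bZ_1$ using $\|\overline\bJ_1\|_\star\le (\ob_1/d)\|\overline\bX\|_F\|\overline\bTheta\|_F$ with $\E[\|\overline\bX\|_F^2]=nd$, $\E[\|\overline\bTheta\|_F^2]=Nd$. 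For the $\vphi$ block I would pass through the Frobenius norm: $\|\overline\bJ-\bZ^\flat\|_\star\le\sqrt{\min(n,N)}\,\|\overline\bJ-\bZ^\flat\|_F$, and by the mean value theorem (with $\bar k=\deg\vphi$) $|\vphi(y_{ia})-\vphi(y_{ia}/(r_is_a))|\le C\,(1+|y_{ia}|)^{\bar k}\,|1-(r_is_a)^{-1}|$, so on $\cE$,
\begin{equation*}
\E\big[\|\overline\bJ-\bZ^\flat\|_F^2\,\bfone_{\cE}\big]\le \frac{C\log d}{d^2}\sum_{i,a}\E\big[(1+|y_{ia}|)^{2\bar k}\big]=O(d\log d),
\end{equation*}
using that $y_{ia}=\langle\overline\bx_i,\overline\btheta_a\rangle/\sqrt d$ has all moments bounded uniformly in $d$. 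Combining, $d^{-1}\E[\|\overline\bA-\bA^\flat\|_\star\,\bfone_{\cE}]=O(\sqrt{\log d}/\sqrt d)=o_d(1)$. Finally, a $\chi^2_d$ tail bound together with a union bound over the $N+n=O(d)$ indices gives $\P(\cE^c)=o_d(1)$ for $C_0$ large enough, so $\E[\sup_{\Im\xi\ge\xi_0}|\overline M_d-M_d^\flat|]\le \xi_0^{-2}\,d^{-1}\E[\|\overline\bA-\bA^\flat\|_\star\bfone_{\cE}]+2C\xi_0^{-1}\P(\cE^c)=o_d(1)$; adding the rank–two estimate from the first step completes the proof.

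\medskip
\noindent\textbf{Main obstacle.}
The only genuinely delicate point is the control of $\|\overline\bJ-\bZ^\flat\|_F$ when $\vphi$ is a high–degree polynomial: one must show that the multiplicative perturbation $y_{ia}\mapsto y_{ia}/(r_is_a)$ of the argument produces only an $O_\P(\sqrt{\log d}/\sqrt d)$ relative error after applying $\vphi$ and summing $n N=\Theta(d^2)$ terms, which requires the uniform-in-$d$ moment bounds on $y_{ia}$ together with the restriction to $\cE$. Everything else --- the rank-two reduction, the resolvent identity, and the nuclear-norm manipulations --- is routine linear algebra and standard Gaussian concentration.
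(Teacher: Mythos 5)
Your proposal is correct and follows the same overall strategy as the paper---resolvent identity plus a nuclear-norm control of $\|\bA-\overline\bA\|_\star/d$---with one genuine organizational improvement. Where the paper keeps the constant block $\bZ_0=\E[\sigma(G)]\ones_n\ones_N^\sT/\sqrt d$ inside the nuclear-norm decomposition (and bounds its contribution by $O(\sqrt{nN}/d^{3/2})=O(d^{-1/2})$), you peel it off first by noting that $\bZ-\bZ^\flat=(\ob_0/\sqrt d)\ones_{n\times N}$ makes $\bA-\bA^\flat$ a symmetric perturbation of rank at most two, so that the standard interlacing bound $|\Tr[(\bA-\xi)^{-1}]-\Tr[(\bA^\flat-\xi)^{-1}]|\le\rank(\bA-\bA^\flat)/\Im\xi$ gives the sharper $2/(d\,\xi_0)$, uniformly in $\xi$. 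This is a minor but genuinely cleaner route to the same place, and it lets the remaining nuclear-norm argument proceed with $\vphi$ (which has $\E[\vphi(G)]=0$) on both sides. Your explicit ``good event / uniform deterministic bound'' formulation is also a useful way to make precise the paper's implicit step of upgrading $\|\bA-\overline\bA\|_\star/d=o_{d,\P}(1)$ to an expectation bound.

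One small arithmetic slip: you claim $\E[\|\overline\bJ-\bZ^\flat\|_F^2\,\bfone_\cE]=O(d\log d)$, but your own displayed bound $\tfrac{C\log d}{d^2}\sum_{i,a}\E[(1+|y_{ia}|)^{2\bar k}]$ with $nN=\Theta(d^2)$ terms gives $O(\log d)$, not $O(d\log d)$. This is harmless---with $O(\log d)$, you get $\E[\|\overline\bJ-\bZ^\flat\|_\star\,\bfone_\cE]\le\sqrt{\min(n,N)}\cdot O(\sqrt{\log d})=O(\sqrt{d\log d})$ and hence $d^{-1}\E[\|\cdot\|_\star\,\bfone_\cE]=O(\sqrt{\log d/d})$, matching your final combined estimate; but had $O(d\log d)$ been the true order, the argument would not close. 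One further cosmetic point: for the nonlinear block you must rescale both the rows ($\bx_i=\overline\bx_i/r_i$) and the columns ($\btheta_a=\overline\btheta_a/s_a$) of the Gaussian inner products, which you do; the paper's Step~3 writes only the $\bD_\bx$ rescaling explicitly but the argument is symmetric.
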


\begin{proof}[Proof of Lemma \ref{lem:Gaussian_to_sphere_statement}]~

\noindent
{\bf Step 1. Show that the resolvent is stable with respect to nuclear norm perturbation. }

We define
\[
\Delta(\bA, \overline \bA, \xi) = M_{d}(\xi; \bq) - \overline M_{d}(\xi; \bq). 
\]
Then we have deterministically
\[
\begin{aligned}
\vert \Delta(\bA, \overline \bA, \xi) \vert \le \vert M_{d}(\xi; \bq)  \vert + \vert \overline M_{d}(\xi; \bq) \vert \le 4 (\psi_1 + \psi_2) / \Im \xi. 
\end{aligned}
\]
Moreover, we have 
\[
\begin{aligned}
\vert \Delta(\bA, \overline \bA, \xi) \vert =&~ \vert \Tr( (\bA - \xi \id)^{-1} (\bA - \overline \bA) (\overline \bA - \xi \id)^{-1} ) \vert / d\\
\le&~ \| (\bA - \xi \id)^{-1} (\overline \bA - \xi \id)^{-1} \|_{\op} \| \bA- \overline\bA \|_{\star}/d \\
\le&~ \| \bA - \overline \bA \|_{\star} / ( d (\Im \xi)^2).
\end{aligned}
\]
Therefore, if we can show $\| \bA - \overline \bA \|_{\star} / d = o_{d, \P}(1)$, then $\E[\sup_{\Im \xi \ge \xi_0}\vert \Delta(\bA, \overline \bA, \xi)\vert] = o_d(1)$.

\noindent
{\bf Step 2. Show that $\| \bA - \overline \bA \|_\star / d = o_{d, \P}(1)$. }

Denote $\bZ_0 = \E_{G \sim \normal(0, 1)}[\sigma(G)] \ones_n \ones_N^\sT / \sqrt d$ and $\bZ_\star = \vphi(\bX \bTheta^\sT / \sqrt d) / \sqrt d$. Then we have $\bZ = \bZ_0 + \bZ_\star$, and 
\[
\begin{aligned}
\bA - \overline \bA = &s_2 \begin{bmatrix}
\bQ - \overline \bQ & \bzero \\
\bzero & \bzero 
\end{bmatrix}
+
t_2 \begin{bmatrix}
\bzero & \bzero \\
\bzero & \bH - \overline \bH
\end{bmatrix}
+
p \begin{bmatrix}
\bzero & \bZ_1^\sT - \overline \bJ_1^\sT \\
\bZ_1 - \overline \bJ_1 & \bzero 
\end{bmatrix}\\
&+ 
\begin{bmatrix}
\bzero & \bZ_\star^\sT - \overline \bJ^\sT \\
\bZ_\star - \overline \bJ & \bzero 
\end{bmatrix}
+
\begin{bmatrix}
\bzero & \bZ_0^\sT \\
\bZ_0 & \bzero 
\end{bmatrix}.
\end{aligned}
\]
Since $\bq = (s_1, s_2, t_1, t_2, p)$ is fixed, we have
\begin{align*}
\frac{1}{d}\| \bA - \overline \bA \|_\star \le &C \left[\frac{1}{\sqrt
    d} \| \overline  \bQ - \bQ \|_F + \frac{1}{\sqrt d} \| \overline
  \bH - \bH \|_F  + \frac{1}{\sqrt d} \| \overline  \bJ_1 - \bZ_1 \|_F
  + \frac{1}{\sqrt d}\| \overline \bJ - \bZ_\star \|_F \right.\\
&\left.+ \frac{1}{d} \left\| \begin{bmatrix}\bzero & \bZ_0^\sT\\ \bZ_0 & \bzero \end{bmatrix} \right\|_\star \right]. 
\end{align*}

The nuclear norm of the term involving $\bZ_0$ can be easily bounded by 
\[
\begin{aligned}
\frac{1}{d} \left\| \begin{bmatrix}\bzero & \bZ_0^\sT\\ \bZ_0 & \bzero \end{bmatrix} \right\|_\star =&~ \frac{1}{d^{3/2}}\vert \E_{G \sim \normal(0, 1)}[\sigma(G)] \vert \cdot \left\| \begin{bmatrix}\bzero& \ones_N \ones_n^\sT \\ \ones_n \ones_N^\sT& \bzero \end{bmatrix} \right\|_{\star} 
   = o_d(1). 
\end{aligned}
\]
For term $\overline \bH - \bH$, denoting $\bD_\bx = \diag(\sqrt{d}/\| \overline \bx_1\|_2, \ldots, \sqrt d / \|\overline \bx_n \|_2 )$, we have 
\[
\| \overline \bH - \bH \|_{F} / \sqrt d \le \| \overline \bH - \bH \|_{\op} \le \| \id_n - \bD_\bx \|_{\op} \| \overline \bH \|_{\op} (1 + \| \bD_\bx \|_{\op}) = o_{d, \P}(1),  
\]
where we used the fact that $\| \bD_\bx - \id_n \|_{\op} = o_{d, \P}(1)$ and $\| \bH \|_{\op} = O_{d, \P}(1)$. Similar argument shows that 
\[
\| \overline \bQ - \bQ \|_F / \sqrt d = o_{d, \P}(1), ~~~~~ \| \overline \bJ_1 - \bZ_1 \|_F / \sqrt d = o_{d, \P}(1). 
\]

\noindent
{\bf Step 3. Bound for $\| \overline \bJ - \bZ_\star \|_F / \sqrt d$. }

Define $\overline \bZ_\star = \vphi(\bD_\bx \overline \bX\, \overline \bTheta^\sT / \sqrt d) / \sqrt d$. Define $r_i = \sqrt d / \| \overline \bx_i \|_2$. We have (for $\zeta_{ia}$ between $r_i$ and $1$)
\[
\begin{aligned}
\overline \bZ_\star - \overline \bJ  =&~ \Big( \vphi(r_i \< \overline \bx_i, \overline \btheta_a\> / \sqrt d) / \sqrt d - \vphi(\< \overline \bx_i, \overline \btheta_a\> / \sqrt d) / \sqrt d \Big)_{i \in [n], a \in [N]}\\
=&~ \Big( (r_i - 1) (\< \overline \bx_i, \overline \btheta_a\> / \sqrt d ) \vphi'(\zeta_{ia} \< \overline \bx_i, \overline \btheta_a\> / \sqrt d)  / \sqrt d \Big)_{i \in [n], a \in [N]} \\
=&~ (\bD_\bx - \id_n) \bar \vphi(\bXi \odot ( \overline \bX\, \overline \bTheta^\sT /\sqrt d) ) / \sqrt d,
\end{aligned}
\]
where $\bXi = (\zeta_{ia})_{i \in [n], a \in [N]}$, 
and $\bar \vphi(x) = x \vphi'(x)$ (so $\bar \vphi$ is a polynomial). It is easy to see that 
\[
\| \bD_\bx - \id_n \|_{\op} = \max_i\vert r_i - 1\vert = O_{d, \P}(\sqrt{\log d }/ \sqrt d), ~~~~ \| \bXi \|_{\max} = O_{d, \P}(1), ~~~~ \| \overline \bX\, \overline \bTheta^\sT/\sqrt d\|_{\max} = O_{d, \P}(\sqrt{\log d}).
\]
Therefore, we have (denoting $\deg(\vphi)$ to be the degree of polynomial $\vphi$, and $C(\vphi)$ to be a constant that only depends on $\vphi$)
\[
\begin{aligned}
&\| \overline \bZ_\star - \overline \bJ \|_{F}/\sqrt d  = \| (\bD_\bx - \id_n) \bar \vphi(\bXi \odot ( \overline \bX\, \overline \bTheta^\sT /\sqrt d) ) \|_F / d  \\
\le&~ \| \bD_\bx - \id_n \|_{\op} \| \bar \vphi(\bXi \odot ( \overline \bX\, \overline \bTheta^\sT/\sqrt d) )  \|_F /  d \\
 \le&~ C(\vphi) \cdot \| \bD_\bx - \id_n \|_{\op} (1 + \| \bXi \|_{\max} \| \overline \bX\, \overline \bTheta^\sT/\sqrt d \|_{\max})^{\deg(\vphi)} = O_{d, \P}((\log d)^{\deg(\vphi) + 1} / \sqrt d) = o_{d, \P}(1). 
\end{aligned}
\]
This proves the lemma. 
\end{proof}

\subsection{Proof of Proposition \ref{prop:Stieltjes}}\label{subsec:proof_Stieltjes}

\noindent
{\bf Step 1. Polynomial activation function $\sigma$. }

First we consider the case when $\sigma$ is a fixed polynomial with $\E_{G \sim \normal(0, 1)}[\sigma(G) G] \neq 0$. Let $\vphi(u) = \sigma(u) - \E[\sigma(G)]$, and let $\overline \bbm_d \equiv (\overline m_{1,d}, \overline m_{2,d})$ (whose definition is given by Eq. (\ref{eqn:m_Gaussian_definition}) and (\ref{eqn:M_Gaussian_definition})),  and recall that $\psi_{1,d} \to \psi_1$ and $\psi_{2,d}\to \psi_2$ as $d\to\infty$. By Lemma \ref{lemma:KeyFiniteD}, together with the continuity of $\sFone, \sFtwo$ with respect to $\psi_{1},\psi_2$, we have, for any $\xi_0 > 0$, there exists $C=C(\xi_0, \bq, \psi_1,\psi_2, \vphi)$ and $\err(d) \to 0$ such that for all $\xi \in \C_+$ with $\Im \xi \ge \xi_0$, 
\begin{align}\label{eqn:bbmd_approximate_fixed_point}
\big\|\overline \bbm_{d} - \bsF(\overline \bbm_{d};\xi)\big\|_2 \le  C \cdot \err(d). 
\end{align}

By Lemma \ref{lemma:PropertiesMap}, there exists $\xi_0 = \xi_0(\bq, \psi_1, \psi_2, \vphi) > 0$, such that for any $\xi \in \C_+$ with $\Im \xi \ge \xi_0$, $\bsF(\, \cdot\,; \xi)$ a continuous mapping from $\disk(2 \psi_1 / \xi_0) \times \disk(2 \psi_2 / \xi_0)$ to itself and has a unique fixed point $\bbm(\xi)$ in the same domain. By Lemma \ref{lemma:BasicProperties} (a), we have $\overline \bbm_d(\xi) \in \disk(\psi_1 / \xi_0) \times \disk(\psi_2 / \xi_0)$. Combining the above facts with Eq. (\ref{eqn:bbmd_approximate_fixed_point}), we have 
\[
\| \overline \bbm_d(\xi) - \bbm(\xi)\|_2 = o_d(1), ~~ \forall \xi\in \C_+, \Im\xi \ge \xi_0. 
\]
By the property of Stieltjes transform as in Lemma \ref{lemma:BasicProperties} $(c)$, we have 
\[
\|\overline \bbm_d(\xi) - \bbm(\xi) \|_2 = o_d(1), ~~~ \forall \xi\in\complex_+.
\]
By the concentration result of Lemma \ref{lemma:Concentration_Stieltjes}, for $\overline M_d(\xi) = d^{-1} \Tr[(\overline \bA - \xi \id_M)^{-1}]$, we also have 
\begin{equation}\label{eqn:resolvent_to_transform_sphere}
\E\vert \overline M_{d}(\xi) - m(\xi) \vert = o_d(1), ~~~ \forall \xi \in \C_+. 
\end{equation}

Then we use Lemma \ref{lem:Gaussian_to_sphere_statement} to transfer this property from $\overline M_d$ to $M_d$. Recall the definition of resolvent $M_d(\xi; \bq)$ in sphere case in Eq. (\ref{eqn:Stieltjes_A}).  Combining Lemma \ref{lem:Gaussian_to_sphere_statement} with Eq. (\ref{eqn:resolvent_to_transform_sphere}), we have
\begin{equation}
\E \vert M_{d}(\xi) - m(\xi) \vert  = o_d(1), ~~~ \forall \xi \in \C_+. 
\end{equation}

\noindent
{\bf Step 2. General activation function $\sigma$ satisfying Assumption \ref{ass:activation}. }

Next consider the case of a general function $\sigma$ as in the theorem statement satisfying Assumption \ref{ass:activation}. Fix $\eps>0$ and let $\tsigma$ is a polynomial be such that $\|\sigma-\tsigma\|_{L^2(\tau_d)}\le \eps$, where $\tau_d$ is the marginal distribution of $\< \bx, \btheta\>/\sqrt d$ for $\bx, \btheta \sim_{iid} \Unif(\S^{d-1}(\sqrt d))$. In order to construct such a polynomial, consider the expansion of $\sigma$ in the orthogonal basis of Hermite polynomials 
\begin{align}
\sigma(x) = \sum_{k=0}^{\infty}\frac{\ob_k}{k!}\He_k(x)\, .
\end{align}
Since this series converges in $L^2(\mu_G)$, we can choose $\ok<\infty$ such that, letting
$\tsigma(x) =  \sum_{k=0}^{\ok}(\ob_k/k!)\He_k(x)$, we have $\|\sigma-\tsigma\|_{L^2(\mu_G)}^2 \le \eps/2$. By Lemma \ref{lemma:square_integrable} (cf. Eq.~\eqref{eq:ConvergenceExpMud}) we therefore 
have $\|\sigma-\tsigma\|_{L^2(\tau_d)}^2 \le \eps$ for all $d$ large enough. 

Write $\ob_k(\tsigma) = \E[\tsigma(G) \He_k(G)]$ and $\ob_\star(\tsigma)^2 = \sum_{k = 2}^{\ok} \ob_k^2 / k!$. Notice that, by construction we have $\ob_0(\tsigma)=\ob_0(\sigma)$, $\ob_1(\tsigma)=\ob_1(\sigma)$ and $\vert \ob_\star (\tsigma)^2 -\ob_\star (\sigma)^2 \vert \le  \eps$. Let $\tm_{1,d}, \tm_{2,d}$ be the Stieltjes transforms associated to activation $\tsigma$, and $\tm_1,\tm_2$ be the solution of the 
corresponding fixed point equation \eqref{eq:FixedPoint} (with $\ob_\star =\ob_\star(\tsigma)$ and $\ob_1= \ob_1(\tsigma)$), and $\tilde m = \tilde m_1 + \tilde m_2$. Denoting by $\tilde \bA$ the matrix obtained by replacing the $\sigma$ in $\bA$ to be $\tsigma$, and $\tilde M_d(\xi) = (1/d) \Tr[(\tilde \bA - \xi \id)^{-1}]$. Step 1 of this proof implies 
\begin{align}\label{eqn:tilde_M_m_convergnece}
\E \vert \tilde M_{d}(\xi) - \tilde m(\xi)\vert = o_{d}(1), ~~~ \forall \xi \in \C_+.
\end{align}
Further, by continuity of the solution of the fixed point equation with respect to $\ob_\star, \ob_1$ when $\Im \xi \ge \xi_0$ for some large $\xi_0$ (as stated in Lemma \ref{lemma:PropertiesMap}), we have for $\Im \xi \ge \xi_0$, 
\begin{align}\label{eqn:tilde_m_m_convergnece}
\vert \tilde m(\xi) -  m(\xi) \vert \le C(\xi, \bq) \eps. 
\end{align}
Eq. (\ref{eqn:tilde_m_m_convergnece}) also holds for any $\xi \in \C_+$, by the property of Stieltjes transform as in Lemma \ref{lemma:BasicProperties} $(c)$.

Moreover, we have (for $C$ independent of $d$, $\sigma$, $\tilde \sigma$ and $\eps$, but depend on $\xi$ and $\bq$)
\[
\begin{aligned}
&\E \Big[ \Big\vert M_{d}(\xi)-\tilde M_{d}(\xi)\Big\vert \Big] \le \frac{1}{d}\E \Big[ \Big\vert \Tr [(\bA-\xi\id)^{-1} (\tilde \bA-\bA) (\tilde \bA-\xi\id)^{-1}]\Big\vert \Big]\\ 
\le&~ \frac{1}{d}\E\Big[ \|(\tilde \bA-\xi\id)^{-1}(\bA-\xi\id)^{-1} \|_{\op} \|\tilde \bA-\bA \|_\star \Big] \\
\le&~ [1/ (\xi_0^2d)] \cdot \E [ \|\tilde \bA-\bA\|_\star ] \le [1/(\xi_0^2 \sqrt d)] \cdot \E\{\|\tilde \bA-\bA\|_F^2\}^{1/2}  \le C(\xi, \bq) \cdot \|\sigma-\tsigma\|_{L^2(\tau_{d})}.
\end{aligned}
\]
Therefore 
\begin{align}\label{eqn:tilde_M_M_convergnece}
\limsup_{d\to\infty}  \E[ \vert M_{d}(\xi) - \tilde M_d(\xi) \vert ] \le C(\xi, \bq) \eps, ~~ \forall \xi \in \C_+.
\end{align}
Combining Eq. (\ref{eqn:tilde_M_m_convergnece}), (\ref{eqn:tilde_m_m_convergnece}), and (\ref{eqn:tilde_M_M_convergnece}), we obtain
\[
\limsup_{d \to \infty} \E \vert M_{d}(\xi) - m(\xi)\vert \le C(\xi, \bq) \eps, ~~~ \forall \xi \in \C_+.
\]
Taking $\eps \to 0$ proves Eq. (\ref{eqn:weak_version_M_m_convergence}).

\noindent
{\bf Step 3. Uniform convergence in compact sets (Eq. (\ref{eqn:strongest_in_compact})). }

Note $m_d(\xi; \bq) = \E[M_d(\xi; \bq)]$ is an analytic function on $\C_+$. By Lemma \ref{lemma:BasicProperties} (c), for any compact set $\Omega \subseteq \C_+$, we have 
\begin{align}\label{eqn:uniform_Md_eq0}
\lim_{d \to \infty} \Big[ \sup_{\xi \in \Omega} \vert \E[M_d(\xi; \bq)] - m(\xi; \bq) \vert\Big] = 0.
\end{align}

In the following, we show the concentration of $M_d(\xi; \bq)$ around its expectation uniformly in the compact set $\Omega \subset \C_+$. Define $L = \sup_{\xi \in \Omega} (1/(\Im \xi)^2)$. Since $\Omega \subset \C_+$ is a compact set, we have $L < \infty$, and $M_d(\xi; \bq)$ (as a function of $\xi$) is $L$-Lipschitz on $\Omega$. Moreover, for any $\eps > 0$, there exists a finite set $\cN(\eps, \Omega) \subseteq \C_+$ which is an $\eps / L$-covering of $\Omega$. That is, for any $\xi \in \Omega$, there exists $\xi_\star \in \cN(\eps, \Omega)$ such that $\vert \xi - \xi_\star \vert \le \eps /L$. Since $M_d(\xi; \bq)$ (as a function of $\xi$) is $L$-Lipschitz on $\Omega$, we have 
\begin{equation}\label{eqn:uniform_Md_eq1}
\begin{aligned}
\sup_{\xi \in \Omega} \inf_{\xi_\star \in \cN(\eps, \Omega)}\vert M_d(\xi; \bq) - M_d(\xi_\star; \bq)\vert \le& \eps, \\
\sup_{\xi \in \Omega} \inf_{\xi_\star \in \cN(\eps, \Omega)}\vert \E[M_d(\xi; \bq)] - \E[M_d(\xi_\star; \bq)]\vert \le& \eps. 
\end{aligned}
\end{equation}
By the concentration of $M_d(\xi_\star; \bq)$ to its expectation (which is the spherical version of Lemma \ref{lemma:Concentration_Stieltjes}), we have 
\[
\vert M_d(\xi_\star; \bq) - \E[M_d(\xi_\star; \bq)] \vert = o_{d, \P}(1), 
\]
and since $\cN(\eps, \Omega)$ is a finite set, we have
\begin{align}
\sup_{\xi_\star \in \cN(\eps, \Omega)} \vert M_d(\xi_\star; \bq) - \E[M_d(\xi_\star; \bq)] \vert = o_{d, \P}(1). 
\end{align}
This high probability bound will become an expectation bound by the uniform boundedness of $M_d(\xi;\bq)$ for $\xi$ in any compact domain. That is, we have 
\begin{align}\label{eqn:uniform_Md_eq2}
\E\Big[\sup_{\xi_\star \in \cN(\eps, \Omega)} \vert M_d(\xi_\star; \bq) - \E[M_d(\xi_\star; \bq)] \vert \Big] = o_{d}(1). 
\end{align}
Combining Eq. (\ref{eqn:uniform_Md_eq0}), (\ref{eqn:uniform_Md_eq1}) and (\ref{eqn:uniform_Md_eq2}), we have 
\[
\E\Big[\sup_{\xi \in \Omega} \vert M_d(\xi; \bq) - m(\xi; \bq) \vert \Big] \le \eps + o_{d}(1). 
\]
Letting $\eps \to 0$ proves Eq. (\ref{eqn:strongest_in_compact}). This concludes the proof of Proposition \ref{prop:Stieltjes}.


\section{Proof of Proposition \ref{prop:expression_for_log_determinant}}\label{sec:PropoIntegrateXi}

In Section \ref{subsec:preliminary_log_determinant} we state and prove some lemmas that are used in the proof of Proposition \ref{prop:expression_for_log_determinant}. We prove Proposition \ref{prop:expression_for_log_determinant} in Section \ref{subsec:prove_proposition_log_determinant}.

\subsection{Properties of the Stieltjes transforms and the log determinant}\label{subsec:preliminary_log_determinant}

%

The first lemma concerns the behavior of the partial Stieltjes transforms $m_1$ and $m_2$ when $\Im \xi \to \infty$. 

\begin{lemma}\label{lem:asymptotics_m_large_xi}
For $\xi \in \C_+$ and $\bq \in \cQ$ (c.f. Eq. (\ref{eqn:definition_of_cQ})), let $m_1(\xi; \bq), m_2(\xi; \bq)$ be defined as the analytic continuation of solution of Eq. (\ref{eq:FixedPoint}) as defined in Proposition \ref{prop:Stieltjes}. Denote $\xi = \xi_r + \imagunit K$ for some fixed $\xi_r \in \R$. Then we have 
\[
\lim_{K \to \infty} \vert m_1(\xi; \bq) \xi + \psi_1 \vert = 0, ~~ \lim_{K \to \infty} \vert m_2(\xi; \bq) \xi + \psi_2 \vert = 0.
\]
\end{lemma}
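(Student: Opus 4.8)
The plan is to extract the large-$\Im\xi$ asymptotics directly from the fixed point equations \eqref{eq:FixedPoint}, using the a priori bounds $|m_1(\xi;\bq)|\le\psi_1/\Im(\xi)$ and $|m_2(\xi;\bq)|\le\psi_2/\Im(\xi)$ that hold for $\Im(\xi)\ge C$ by the defining condition in Proposition \ref{prop:Stieltjes} (these bounds extend from the region $\Im\xi\ge C$ to all of $\C_+$ by analytic continuation, but along the vertical ray $\xi=\xi_r+\imagunit K$ we only need $K$ large, so we may use them directly). First I would write $\xi=\xi_r+\imagunit K$ and note that as $K\to\infty$ both $m_1(\xi;\bq)$ and $m_2(\xi;\bq)$ tend to $0$ at rate $O(1/K)$. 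Substituting this into the definitions \eqref{eq:Fdef} of $\sFone$ and $\sFtwo$: the denominators of the ``correction'' fractions, namely $(1+s_2m_1)(1+t_2m_2)-\ob_1^2(1+p)^2m_1m_2$, converge to $1$; the numerators $(1+t_2m_2)s_2-\ob_1^2(1+p)^2m_2$ and $(1+s_2m_1)t_2-\ob_1^2(1+p)^2m_1$ converge to $s_2$ and $t_2$ respectively; and the terms $-\ob_\star^2 m_2$, $-\ob_\star^2 m_1$ vanish. Hence
\[
-\xi+s_1-\ob_\star^2 m_2+\frac{(1+t_2m_2)s_2-\ob_1^2(1+p)^2m_2}{(1+s_2m_1)(1+t_2m_2)-\ob_1^2(1+p)^2m_1m_2} = -\xi + s_1 + s_2 + o(1),
\]
and similarly for the second block with $s_1+s_2$ replaced by $t_1+t_2$.

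From this it follows that $m_1\cdot\big(-\xi+s_1+s_2+o(1)\big)=\psi_1$, i.e. $m_1\,\xi = -\psi_1\big(1+\tfrac{s_1+s_2+o(1)}{-\xi}\big)^{-1}\cdot\ldots$; more cleanly, $m_1(\xi;\bq)\,\xi + \psi_1 = m_1(\xi;\bq)\,(s_1+s_2) + m_1(\xi;\bq)\cdot o(1)$, and since $m_1(\xi;\bq)=O(1/K)\to0$ while $s_1+s_2$ is a fixed constant, the right-hand side tends to $0$. This gives $\lim_{K\to\infty}|m_1(\xi;\bq)\xi+\psi_1|=0$, and the identical argument applied to the second equation gives $\lim_{K\to\infty}|m_2(\xi;\bq)\xi+\psi_2|=0$.

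The only technical care needed is to make the ``$o(1)$'' estimates on the correction fractions rigorous and uniform enough: one should bound, for all $K$ large, the differences between the numerators/denominators and their limiting values $s_2,t_2,1$ by an explicit constant times $\max(|m_1|,|m_2|)=O(1/K)$, using that $\bq$ is fixed and that the limiting denominator is $1$ (so the denominator stays bounded away from $0$ for $K$ large, keeping the fractions well-defined and Lipschitz in $m_1,m_2$ on the relevant small disk). This is a routine continuity/Taylor-expansion estimate — the main (minor) obstacle is simply keeping track of all the terms in \eqref{eq:Fdef} and verifying the denominator does not degenerate — and once it is in place the conclusion follows immediately by multiplying the fixed point identity through by $\xi$ and letting $K\to\infty$.
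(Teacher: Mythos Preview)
Your argument is correct. The paper's proof takes a slightly different route: instead of substituting the actual solution $(m_1,m_2)$ into the fixed point equation and using the a priori bounds $|m_j|\le\psi_j/\Im(\xi)$, it evaluates $\bsF$ at the \emph{guess} $\overline{\bbm}=(-\psi_1/\xi,-\psi_2/\xi)$, checks that $\xi\|\overline{\bbm}-\bsF(\overline{\bbm};\xi)\|_2\to0$, and then invokes the $1/2$-Lipschitz contraction property of $\bsF$ from Lemma~\ref{lemma:PropertiesMap} to deduce $\xi\|\overline{\bbm}-\bbm\|_2\le 2\xi\|\overline{\bbm}-\bsF(\overline{\bbm};\xi)\|_2\to0$. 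Your approach is more self-contained for this lemma since it bypasses the contraction lemma entirely; the paper's approach is a natural reuse of machinery already in place. Both yield the same conclusion with essentially the same amount of work.
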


\begin{proof}[Proof of Lemma \ref{lem:asymptotics_m_large_xi}] Define $\overline m_1 = - \psi_1 / \xi$, $\overline m_2 = -\psi_2 / \xi$, $\overline \bbm = (\overline m_1, \overline m_2)^\sT$, and $\bbm = (m_1, m_2)^\sT$. Let $\sFone, \sFtwo$ be defined as in Eq. (\ref{eq:Fdef}), $\bsF$ be defined as in Eq. (\ref{eqn:def_bsF}), and $\sHone$ defined as in Eq. (\ref{eq:Hdef}). By simple calculus we can see that 
\[
\lim_{K \to \infty} \sHone(\overline \bbm)  = s_2. 
\]
This gives
\[
\xi [\overline m_1 - \sFone(\overline \bbm; \xi)] = \psi_1 \frac{s_1 + \sHone(\overline \bbm)}{\xi - s_1 - \sHone(\overline \bbm)} \to 0, ~~~ \text{ as } K \to \infty. 
\]
As a result, we have $\xi \| \overline \bbm - \bsF(\overline \bbm; \xi) \|_2 \to 0$ as $K \to \infty$. Moreover, by Lemma \ref{lemma:PropertiesMap}, there exists sufficiently large $\xi_0$, so that for any $\Im \xi = K \ge \xi_0$, $\bsF(\bbm; \xi)$ is $1/2$-Lipschitz on domain $\bbm \in \disk(2 \psi_1 / \xi_0) \times \disk(2 \psi_2 / \xi_0)$. Therefore, for $\Im \xi = K \ge \xi_0$, we have (note we have $\bbm = \bsF(\bbm; \xi)$)
\[
\begin{aligned}
\| \overline \bbm - \bbm  \|_2 =&~ \| \bsF(\overline \bbm; \xi) - \bsF(\bbm; \xi) +  \overline \bbm - \bsF(\overline \bbm; \xi)\|_2 \\
\le&~  \| \bsF(\overline \bbm; \xi) - \bsF(\bbm; \xi) \|_2 + \| \overline \bbm - \bsF(\overline \bbm; \xi) \|_2 \\
\le&~ \| \overline \bbm - \bbm  \|_2 / 2 + \| \overline \bbm - \bsF(\overline \bbm; \xi) \|_2, 
\end{aligned}
\]
so that 
\[
\xi \| \overline \bbm - \bbm  \|_2 \le 2 \xi \| \overline \bbm - \bsF(\overline \bbm; \xi) \|_2 \to 0, ~~~ \text{ as } K \to \infty. 
\]
This proves the lemma. 
\end{proof}

The next lemma concerns the behavior of the log-determinants when $\Im \xi \to \infty$. 

\begin{lemma}\label{lem:large_imaginary_behavior_G}
Follow the notations and settings of Proposition \ref{prop:expression_for_log_determinant}. For any fixed $\bq$, we have
\begin{align}
\lim_{K \to \infty} \sup_{d \ge 1} \E \vert G_d(\imagunit K; \bq) - (\psi_1 + \psi_2)\Log( -\imagunit K) \vert  = 0, \label{eqn:large_imaginary_behavior_prelimit}\\
\lim_{K \to \infty} \vert g(\imagunit K; \bq) - (\psi_1 + \psi_2)\Log(- \imagunit K) \vert = 0. 
\end{align}
\end{lemma}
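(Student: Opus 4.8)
The plan is to treat the two displayed limits separately. The statement about the deterministic function $g$ follows by substituting the large-$\Im\xi$ behaviour of $m_1,m_2$ (Lemma \ref{lem:asymptotics_m_large_xi}) into the explicit formula \eqref{eqn:log_determinant_variation} for $\Xi$; the statement about the random log-determinant $G_d$ follows from an eigenvalue expansion together with Frobenius-norm moment bounds on the matrix $\bA(\bq)$.

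For $g$: by Lemma \ref{lem:asymptotics_m_large_xi}, with $\xi=\imagunit K$ one has $m_i(\imagunit K;\bq)=\psi_i/(-\imagunit K)+o(1/K)\to 0$. Plugging $z_1=m_1(\imagunit K;\bq)$, $z_2=m_2(\imagunit K;\bq)$ into \eqref{eqn:log_determinant_variation}: the term $\log[(s_2z_1+1)(t_2z_2+1)-\ob_1^2(1+p)^2 z_1z_2]$ tends to $\log 1=0$; the terms $\ob_\star^2 z_1z_2$, $s_1z_1$, $t_1z_2$ tend to $0$; and $-\imagunit K(z_1+z_2)-\psi_1-\psi_2\to(\psi_1+\psi_2)-\psi_1-\psi_2=0$, again by Lemma \ref{lem:asymptotics_m_large_xi}. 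The only surviving contribution is $-\psi_1\log(z_1/\psi_1)-\psi_2\log(z_2/\psi_2)$; since $z_i/\psi_i=(-\imagunit K)^{-1}(1+o_K(1))$ stays away from the negative real axis, $\log(z_i/\psi_i)=-\Log(-\imagunit K)+o_K(1)$, so this term equals $(\psi_1+\psi_2)\Log(-\imagunit K)+o_K(1)$. This gives $|g(\imagunit K;\bq)-(\psi_1+\psi_2)\Log(-\imagunit K)|\to 0$.

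For $G_d$: write $G_d(\imagunit K;\bq)=\frac1d\sum_{i=1}^M\Log(\lambda_i-\imagunit K)$ with $\lambda_i=\lambda_i(\bA(\bq))\in\reals$ and $M=N+n$. Since $\lambda_i-\imagunit K=(-\imagunit K)(1+\imagunit\lambda_i/K)$ and $\arg(-\imagunit K)+\arg(1+\imagunit\lambda_i/K)\in(-\pi,0]$, the principal logarithm is additive and
\[
G_d(\imagunit K;\bq)-\tfrac{M}{d}\Log(-\imagunit K)=\frac1d\sum_{i=1}^M\Log\!\big(1+\imagunit\lambda_i/K\big)\,,\qquad \tfrac{M}{d}=\psi_{1,d}+\psi_{2,d}\,.
\]
I would split the sum according to $|\lambda_i|\le K$ or $|\lambda_i|>K$. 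On $\{|\lambda_i|\le K\}$, $|\Log(1+\imagunit\lambda_i/K)|\le 2|\lambda_i|/K$, so this part is $\le\frac{2}{dK}\|\bA(\bq)\|_\star\le\frac{2\sqrt M}{dK}\|\bA(\bq)\|_F$. On $\{|\lambda_i|>K\}$ there are at most $\|\bA(\bq)\|_F^2/K^2$ indices, each worth at most $\log_+(\|\bA(\bq)\|_F/K)+C\le\|\bA(\bq)\|_F/K+C$ for an absolute constant $C$. Taking expectations and using $\E\|\bA(\bq)\|_F^2=O(d)$ and $\E\|\bA(\bq)\|_F^4=O(d^2)$, this gives $\E|G_d(\imagunit K;\bq)-(\psi_{1,d}+\psi_{2,d})\Log(-\imagunit K)|=O(1/K)$ uniformly in $d$; combining with $\psi_{1,d}+\psi_{2,d}\to\psi_1+\psi_2$ and the merely logarithmic growth of $|\Log(-\imagunit K)|$ yields the first claim. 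The needed moment bounds are routine: $\E\|\bQ\|_{\op}^k,\E\|\bH\|_{\op}^k=O(1)$ for fixed $k$ (Wishart), $\E\|\bX\bTheta^\sT\|_F^4=O(d^6)$ by direct computation (hence $\E\|\bZ_1\|_F^4=O(d^2)$), and $\E\|\bZ\|_F^{2k}=O(d^k)$ for $k\in\{1,2\}$ because $\E_{\bx}[\sigma(\<\bw,\bx\>)^{2k}]=O(1)$ uniformly in $d$ for $\|\bw\|_2=1$, which holds by the argument of Lemma \ref{lemma:square_integrable}(b) applied to the (still sub-Gaussian-integrable) function $\sigma^{2k}$.

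The main obstacle is the uniform-in-$d$ control of the large-eigenvalue contribution: because $\sigma$ is only assumed to grow exponentially, $\|\bA(\bq)\|$ is not deterministically bounded, and one must convert the coarse estimate ``at most $\|\bA(\bq)\|_F^2/K^2$ eigenvalues of magnitude $>K$, each contributing $O(\log\|\bA(\bq)\|_F)$'' into an expectation bound that is $o_K(1)$ uniformly in $d$ --- this is exactly what requires a fourth Frobenius moment of $\bA(\bq)$ (equivalently a uniformly bounded fourth moment of $\sigma(\<\bx,\btheta\>/\sqrt d)$). The remaining ingredients --- additivity of $\Log$ along the relevant rays, the elementary bounds $|\Log(1+\imagunit t)|\le 2|t|$ for $|t|\le1$ and $|\Log(1+\imagunit t)|\le\log(1+|t|)+\pi/2$ in general, and the substitution into $\Xi$ --- are straightforward.
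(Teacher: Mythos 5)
Your treatment of the deterministic part $g(\imagunit K;\bq)$ is essentially the paper's: plug the asymptotics of $m_1,m_2$ from Lemma \ref{lem:asymptotics_m_large_xi} into $\Xi$, observe that all terms vanish except $-\psi_1\log(z_1/\psi_1)-\psi_2\log(z_2/\psi_2)$, which produces $(\psi_1+\psi_2)\Log(-\imagunit K)$.

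For the random part $G_d$ there is a genuine gap in your large-eigenvalue accounting. You bound the contribution of $\{i:|\lambda_i|>K\}$ by (count) $\times$ (worst term), namely $\|\bA\|_F^2/K^2$ times $\log_+(\|\bA\|_F/K)+C$, which after the relaxation to $\|\bA\|_F/K+C$ gives $\tfrac{1}{d}\bigl(\|\bA\|_F^3/K^3+C\|\bA\|_F^2/K^2\bigr)$. Taking expectations with $\E\|\bA\|_F^4=O(d^2)$ yields $\E\|\bA\|_F^3=O(d^{3/2})$, so the first piece is $O(\sqrt d/K^3)$. This is \emph{not} $o_K(1)$ uniformly in $d$ — at any fixed $K$ it diverges as $d\to\infty$ — so the claimed ``$O(1/K)$ uniformly in $d$'' is unsupported. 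The loss comes precisely from bounding each individual $\log(1+|\lambda_i|/K)$ by the \emph{global} quantity $\log(1+\|\bA\|_F/K)$ before multiplying by the count; the same issue persists even if you keep the logarithm, giving at best $O((\log d)/K^2)$.

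The paper avoids the split entirely and needs only a \emph{second} Frobenius moment. Writing the difference per eigenvalue as $\Log(\lambda_i-\imagunit K)-\Log(-\imagunit K)$ and separating real and imaginary parts, the real part equals $\tfrac12\log(1+\lambda_i^2/K^2)\le\lambda_i^2/(2K^2)$, and the imaginary part equals $\arctan(\lambda_i/K)$ with $|\arctan t|\le|t|$; summing and using Cauchy--Schwarz on the imaginary part gives
\[
\E\,\Big|\frac1M\sum_{i=1}^M\bigl[\Log(\lambda_i-\imagunit K)-\Log(-\imagunit K)\bigr]\Big|
\;\le\;\frac{\E\|\bA\|_F^2}{2MK^2}+\frac{\bigl(\E\|\bA\|_F^2\bigr)^{1/2}}{M^{1/2}K}\,,
\]
which is $O(1/K)$ uniformly in $d$ since $\E\|\bA\|_F^2=O(M)$. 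You can repair your version by using $\log(1+|\lambda_i|/K)\le|\lambda_i|/K$ for \emph{each} $i$ (not the worst $i$) and then summing, which reproduces the same bound via $\|\bA\|_\star\le\sqrt M\|\bA\|_F$ — but at that point the split into small and large eigenvalues, and the fourth Frobenius moment, are unnecessary.
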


\begin{proof}[Proof of Lemma \ref{lem:large_imaginary_behavior_G}]~ 

\noindent
{\bf Step 1. Asymptotics of $G_d(\imagunit K; \bq )$. } First we look at the real part. We have 
\[
\begin{aligned}
& \Big\vert \Re \Big[ \frac{1}{M}\sum_{i=1}^M \Log (\lambda_i(\bA) - \imagunit K) - \Log(- \imagunit K) \Big] \Big\vert \\
=&~ \frac{1}{2 M}\sum_{i=1}^M \log (1 + \lambda_i(\bA)^2 / K^2) \le \frac{1}{2 M K^2}\sum_{i=1}^M  \lambda_i(\bA)^2  =\frac{\| \bA \|_F^2}{2 M K^2}. 
\end{aligned}
\]
For the imaginary part, we have 
\[
\begin{aligned}
& \Big\vert \Im \Big[ \frac{1}{M}\sum_{i=1}^M \Log (\lambda_i(\bA) - \imagunit K) - \Log(- \imagunit K) \Big] \Big\vert  \\
=&~ \Big\vert \frac{1}{M}\sum_{i=1}^M \arctan(\lambda_i(\bA) / K) \Big\vert \le \frac{1}{ M K}\sum_{i=1}^M \vert \lambda_i(\bA) \vert  \le \frac{\| \bA \|_F}{ M^{1/2} K}. 
\end{aligned}
\]
Combining the bound of the real part and the imaginary part, we have 
\[
\begin{aligned}
\E \Big\vert \frac{1}{M}\sum_{i=1}^M \Log (\lambda_i(\bA) - \imagunit K) - \Log(- \imagunit K) \Big\vert 
\le&~ \frac{\E[\| \bA \|_F^2]}{2 MK^2} + \frac{\E[\| \bA \|_F^2]^{1/2}}{ M^{1/2} K}.
\end{aligned}
\]
Note that
\[
\begin{aligned}
\frac{1}{M }\E[\| \bA \|_F^2]  \le \frac{1}{M}\Big( \E \| s_1 \id_N + s_2 \bQ \|_F^2 + \E \| t_1 \id_n + t_2 \bH \|_F^2 +  2 \E[\| \bZ + p \bZ_1 \|_F^2]\Big) = O_d(1).
\end{aligned}
\]
This proves Eq. (\ref{eqn:large_imaginary_behavior_prelimit}).

\noindent
{\bf Step 2. Asymptotics of $g(\imagunit K; \bq )$. } 

Recall the definition of $\Xi$ as given in Eq. (\ref{eqn:log_determinant_variation}). Define
\begin{equation}
\begin{aligned}
\Xi_1(z_1, z_2; \bq) \equiv&~~ \log[(s_2 z_1 + 1)(t_2 z_2 + 1) - \ob_1^2 (1 + p)^2 z_1 z_2] - \ob_\star^2 z_1 z_2 + s_1 z_1 +  t_1 z_2, \\
\Xi_2(\xi, z_1, z_2) \equiv&~~  - \psi_1 \log (z_1 / \psi_1) - \psi_2 \log (z_2 / \psi_2)  - \xi (z_1 + z_2) - \psi_1 - \psi_2.
\end{aligned}
\end{equation}
Then we have 
\begin{equation}\label{eqn:large_imaginary_behavior_G_0}
\Xi(\xi, z_1, z_2; \bq) = \Xi_1(z_1, z_2; \bq) + \Xi_2(\xi, z_1, z_2). 
\end{equation}

It is easy to see that for any fixed $\bq$, we have 
\[
\lim_{z_1, z_2 \to 0} \Xi_1(z_1, z_2, \bq) = 0. 
\]
By Lemma \ref{lem:asymptotics_m_large_xi}, we have $\lim_{K \to \infty} m_1(\imagunit K) = 0$ and $\lim_{K \to \infty} m_2(\imagunit K) = 0$ (for notational simplicity, here and below, we suppressed the argument $\bq$ in $m_1$ and $m_2$), which gives
\begin{equation}\label{eqn:large_imaginary_behavior_G_1}
\lim_{K \to \infty} \Xi_1(m_1(\imagunit K), m_2(\imagunit K), \bq) = 0. 
\end{equation}

Moreover, we have 
\[
\begin{aligned}
&\vert \Xi_2(\imagunit K, m_1(\imagunit K), m_2(\imagunit K)) - \Xi_2(\imagunit K, \imagunit \psi_1 / K, \imagunit \psi_2 / K) \vert\\
\le&~ \psi_1 \vert \log(- \imagunit K m_1(\imagunit K) / \psi_1) \vert + \psi_2 \vert \log(- \imagunit K m_2(\imagunit K) / \psi_2) \vert + \vert \imagunit K m_1(\imagunit K) + \psi_1 \vert + \vert \imagunit K m_2(\imagunit K) + \psi_2 \vert. 
\end{aligned}
\]
By Lemma \ref{lem:asymptotics_m_large_xi} again, we have
\[
\lim_{K \to \infty} \vert \imagunit K m_1(\imagunit K) + \psi_1 \vert = \lim_{K \to \infty} \vert \imagunit K m_2(\imagunit K) + \psi_2 \vert = 0,
\]
and hence
\begin{equation}\label{eqn:large_imaginary_behavior_G_2}
\begin{aligned}
&\lim_{K \to \infty} \vert \Xi_2(\imagunit K, m_1(\imagunit K), m_2(\imagunit K)) - \Xi_2(\imagunit K, \imagunit \psi_1 / K, \imagunit \psi_2 / K) \vert = 0. 
\end{aligned}
\end{equation}
Combining Eq. (\ref{eqn:large_imaginary_behavior_G_0}), (\ref{eqn:large_imaginary_behavior_G_1}) and (\ref{eqn:large_imaginary_behavior_G_2}), we get
\[
\lim_{K \to \infty}\Big\vert \Xi(\xi, m_1(\imagunit K), m_2(\imagunit K); \bq) - \Xi_2(\imagunit K, \imagunit \psi_1 / K, \imagunit \psi_2 / K) \Big\vert = 0. 
\]
Finally, by the definition of $g$ as in Eq. (\ref{eqn:formula_g}) and noting that we have $\Xi_2(\imagunit K, \imagunit \psi_1 / K, \imagunit \psi_2 / K) = (\psi_1 + \psi_2) \Log (-\imagunit K)$, this proves the lemma. 
\end{proof}

Next, we give some uniform upper bounds on the difference of derivatives of $G_d$ and $g$.

\begin{lemma}\label{lem:bounded_derivatives_G}
Follow the notations and settings of Proposition \ref{prop:expression_for_log_determinant}. For fixed $u \in \R_+$, we have
\[
\begin{aligned}
\limsup_{d \to \infty} \sup_{\bq \in \R^5} \E \| \nabla_\bq G_d(\imagunit u; \bq) - \nabla_\bq g(\imagunit u; \bq)\|_2 <& \infty,\\
\limsup_{d \to \infty} \sup_{\bq \in \R^5} \E \| \nabla_\bq^2 G_d(\imagunit u; \bq) - \nabla_\bq^2 g(\imagunit u; \bq) \|_{\op} <& \infty,\\
\limsup_{d \to \infty} \sup_{\bq \in \R^5} \E \| \nabla_{\bq}^3 G_d(\imagunit u; \bq) - \nabla_\bq^3 g(\imagunit u; \bq)\|_{\op}  <& \infty.
\end{aligned}
\]
\end{lemma}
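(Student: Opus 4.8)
The strategy is to bound $\E\|\nabla_\bq^k G_d(\imagunit u;\bq)\|$ and $\|\nabla_\bq^k g(\imagunit u;\bq)\|$ separately, uniformly in $\bq$, and combine by the triangle inequality. The $G_d$ side is a routine resolvent estimate; the $g$ side is the delicate part.

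\emph{The $G_d$ side.} Write $\bA(\bq) = \bA(\bzero) + s_1\bD_1 + s_2\bD_2 + t_1\bD_3 + t_2\bD_4 + p\,\bD_5$, where $\bD_1 = \diag(\id_N,\bzero_n)$, $\bD_2 = \diag(\bQ,\bzero_n)$, $\bD_3 = \diag(\bzero_N,\id_n)$, $\bD_4 = \diag(\bzero_N,\bH)$, and $\bD_5$ is the symmetric block matrix with off-diagonal blocks $\bZ_1^\sT$ and $\bZ_1$; these are random but do not depend on $\bq$. Since $\bA(\bq)$ is real symmetric, $\bR(\bq) \equiv (\bA(\bq) - \imagunit u\,\id_M)^{-1}$ satisfies $\|\bR(\bq)\|_{\op}\le 1/u$ deterministically, and $G_d(\imagunit u;\bq) = \tfrac1d\Tr\,\Log(\bA(\bq) - \imagunit u\,\id_M)$ (the matrix logarithm being analytic in $\bA$ because $\bA(\bq)-\imagunit u\id_M$ has no eigenvalue on the branch cut) is real-analytic in $\bq$, with $k$-th order partials equal to finite signed sums of terms $\tfrac1d\Tr[\bR\bD_{\beta_1}\bR\bD_{\beta_2}\cdots\bR\bD_{\beta_k}]$. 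Bounding each by $u^{-k}\big(\prod_{j=1}^{k-1}\|\bD_{\beta_j}\|_{\op}\big)\cdot\tfrac1d\|\bD_{\beta_k}\|_\star$ and using H\"older, it suffices that $\E[\|\bD_\alpha\|_{\op}^{p}] = O_d(1)$ for every fixed $p$ and $\E[(\tfrac1d\|\bD_\alpha\|_\star)^2] = O_d(1)$. The nuclear-norm bounds are immediate: $\|\bD_1\|_\star = N$, $\|\bD_2\|_\star = \Tr\bQ = N$, $\|\bD_3\|_\star = n$, $\|\bD_4\|_\star = \Tr\bH = n$ (using $\|\btheta_a\|_2^2 = \|\bx_i\|_2^2 = d$), and $\|\bD_5\|_\star = 2\|\bZ_1\|_\star \le 2\sqrt d\,\|\bZ_1\|_F$ with $\E\|\bZ_1\|_F^2 = O_d(d)$; the operator-norm moment bounds are the higher-moment versions of \eqref{eqn:HQZ1_bound} and follow from standard concentration for Wishart and kernel inner-product matrices (\cite{Guionnet}, \cite{fan2019spectral}). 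Because the $\bD_\alpha$ are $\bq$-free, this gives $\limsup_d\sup_{\bq\in\R^5}\E\|\nabla_\bq^k G_d(\imagunit u;\bq)\|_{\op} < \infty$ for every $k\le4$.

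\emph{The $g$ side.} Recall $g(\imagunit u;\bq) = \Xi(\imagunit u, m_1(\imagunit u;\bq), m_2(\imagunit u;\bq);\bq)$, with $(m_1,m_2)$ a stationary point of $\Xi(\imagunit u,\cdot,\cdot;\bq)$. On the open set $\cQ^\circ = \{\,|s_2 t_2| < \ob_1^2(1+p)^2/2\,\}$ (which contains $\bzero$, the only point needed downstream) Proposition~\ref{prop:expression_for_log_determinant} gives $g(\imagunit u;\bq) = \lim_d \E[G_d(\imagunit u;\bq)]$; combined with the uniform order-$(\le4)$ bounds above, the functions $\bq\mapsto\E[G_d(\imagunit u;\bq)]$ are precompact in $C^3_{\mathrm{loc}}(\cQ^\circ)$, so $g$ is $C^3$ there with $\|\nabla_\bq^k g(\imagunit u;\bq)\|_{\op} \le \limsup_d\sup_{\bq}\E\|\nabla_\bq^k G_d(\imagunit u;\bq)\|_{\op}$ for $k\le3$. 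For $\bq\in\R^5\setminus\cQ$ one argues directly: by the implicit function theorem together with $\partial_{z_1}\Xi = \partial_{z_2}\Xi = 0$ at $(m_1,m_2)$, each $\nabla_\bq^k g$ ($k\le3$) is a fixed polynomial in the partials $\partial^i_\bq\partial^j_z\Xi$ evaluated at $(\imagunit u,m_1,m_2;\bq)$ and in the entries of $(\partial^2_{zz}\Xi)^{-1}$, each $\partial^i_\bq\partial^j_z\Xi$ being an explicit rational function of $(m_1,m_2,\bq)$ whose denominators are powers of $m_1$, $m_2$ and of $\mathcal D(\bq) \equiv (s_2 m_1+1)(t_2 m_2+1) - \ob_1^2(1+p)^2 m_1 m_2$. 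One then needs, uniformly in $\bq\in\R^5$ at $\Im\xi = u$: (i) $|m_i(\imagunit u;\bq)| \le \psi_i/u$ (Lemma~\ref{lemma:BasicProperties}(a)); (ii) boundedness of the ``dangerous'' products $s_1 m_1$, $t_1 m_2$, $s_2 m_1$, $t_2 m_2$, $\ob_1^2(1+p)^2 m_1 m_2$ and of $\mathcal D^{-1}$, obtained by rewriting \eqref{eq:FixedPoint} as $\tfrac{1+t_2 m_2}{\mathcal D} = 1 - \psi_1 - \imagunit u\, m_1 + s_1 m_1 - \ob_\star^2 m_1 m_2$ and its $1\leftrightarrow2$ counterpart; and (iii) a uniform lower bound on $\det\partial^2_{zz}\Xi(\imagunit u,m_1,m_2;\bq)$ — equivalently, invertibility of the $2\times2$ Jacobian $\id_2 - \partial_\bbm\bsF$ of the fixed-point map, which holds because $\imagunit u$ is not a branch point of $\xi\mapsto(m_1(\xi;\bq),m_2(\xi;\bq))$ (branch points lie on $\R$). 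Putting (i)--(iii) into the explicit formulas gives $\sup_{\bq\in\R^5}\|\nabla_\bq^k g(\imagunit u;\bq)\|_{\op} < \infty$.

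Combining the two estimates by the triangle inequality yields all three displays. The one nontrivial point is parts (ii)--(iii): uniform control over the \emph{entire} parameter space $\R^5$ of the fixed-point quantities $m_1,m_2,\mathcal D$ and of the non-degeneracy of $\partial^2_{zz}\Xi$, at the fixed height $\Im\xi = u$ where the contraction of Lemma~\ref{lemma:PropertiesMap} is no longer available, so this must be extracted from the algebraic structure of the fixed-point equations and the analyticity of Stieltjes transforms. The $G_d$-side bound and the final assembly are routine.
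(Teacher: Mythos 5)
Your proposal follows the same overall route as the paper: write the claim as a triangle inequality, bound the $G_d$ side by resolvent/trace estimates using the $\bq$-free decomposition $\bA(\bq)=\bA(\bzero)+\sum_i q_i\bS_i$, and bound the $g$ side separately. On the $G_d$ side you and the paper make essentially the same estimate; the only cosmetic difference is where the nuclear norm lands (you place it on one of the $\bD_{\beta_j}$'s, the paper on the resolvent $\bR$, via $\tfrac1d\|\bR\|_\star\le M/(du)$), and both give the same $O_d(1)$ bound.

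The real divergence is on the $g$ side, where the paper simply writes ``Similarly we can show $\sup_{\bq\in\R^5}\|\nabla_\bq^j g(\imagunit u;\bq)\|<\infty$'' and stops. Your compactness argument on $\cQ^\circ$ — using $\E[G_d(\imagunit u;\cdot)]\to g(\imagunit u;\cdot)$ from Proposition~\ref{prop:expression_for_log_determinant} together with the uniform $C^4$ bounds to extract $C^3_{\mathrm{loc}}$ bounds on $g$ by Arzel\`a--Ascoli — is a genuinely nice addition that the paper does not make explicit, and it does deliver the bound on the region actually used downstream (a small ball around $\bzero$, inside $\cQ^\circ$, in the proof of Proposition~\ref{prop:expression_for_log_determinant}). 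Your direct argument for $\bq\in\R^5\setminus\cQ$, however, does not yet close: item (i) is fine (Lemma~\ref{lemma:BasicProperties}(a)), but (ii) only rewrites the fixed-point equation — the manipulation you display leaves $s_1m_1$ (etc.) on the right side, so it is a relation rather than a bound — and (iii), the uniform-in-$\bq$ nondegeneracy of $\partial_{zz}^2\Xi$, is asserted from ``branch points lie on $\R$,'' which gives pointwise invertibility at each fixed $\bq$ but not uniformity as $\bq\to\infty$. You acknowledge this yourself at the end; the paper's ``similarly'' elides the same point. In short: on $\cQ^\circ$ (which is all that the downstream application requires) your argument is complete and a bit sharper than the paper's, while on $\R^5\setminus\cQ$ both your sketch and the paper's statement leave the uniform bound unproved.
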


\begin{proof}[Proof of Lemma \ref{lem:bounded_derivatives_G}]

Define $\bq = (s_1, s_2, t_1, t_2, p) \equiv (q_1, q_2, q_3, q_4, q_5)$, and 
\[
\bS_1 = \begin{bmatrix}\id_N & \bzero\\ \bzero& \bzero \end{bmatrix},  ~~~ \bS_2 = \begin{bmatrix}\bQ& \bzero\\ \bzero& \bzero \end{bmatrix}, ~~~ \bS_3 = \begin{bmatrix} \bzero & \bzero \\ \bzero & \id_n \end{bmatrix}, ~~~ \bS_4 = \begin{bmatrix} \bzero & \bzero\\ \bzero& \bH \end{bmatrix}, ~~~ \bS_5 = \begin{bmatrix}\bzero &  \bZ_1^\sT\\ \bZ_1& \bzero\end{bmatrix}.
\]
Then by the bound on the operator norm of Wishart matrix \cite{Guionnet}, for any fixed $k \in \N$, we have
\[
\begin{aligned}
\limsup_{d \to \infty}\sup_{i \in [5]}\E[\| \bS_i \|_{\op}^{2k}] < \infty. 
\end{aligned}
\]
Moreover, define $\bR = (\bA - \imagunit u \id_M)^{-1}$. Then we have almost surely $\sup_{\bq} \| \bR \|_{\op} \le 1/u$. 

Therefore
\[
\begin{aligned}
\sup_{\bq}\E \vert \partial_{q_i} G_d(\imagunit u; \bq) \vert =&~ \sup_{\bq} \frac{1}{d} \E \vert \Tr( \bR \bS_i ) \vert  \le \sup_{\bq} \frac{1}{u} \E[\|\bS_i \|_{\op}] = O_d(1), \\
\sup_{\bq}\E \vert \partial_{q_i, q_j}^2 G_d(\imagunit u; \bq) \vert =&~ \sup_{\bq} \frac{1}{d} \E \vert \Tr( \bR \bS_i \bR \bS_j ) \vert  \le \sup_{\bq} \frac{1}{u^2} (\E[\|\bS_i \|_{\op}^2] \E\| \bS_j \|_{\op}^2])^{1/2} = O_d(1), \\
\sup_{\bq}\E \vert \partial_{q_i, q_j, q_l}^3 G_d(\imagunit u; \bq) \vert =&~ \sup_{\bq} \frac{1}{d} \Big[ \E \vert \Tr( \bR \bS_i \bR \bS_j \bR \bS_l) \vert   + \E \vert \Tr( \bR \bS_i \bR \bS_l \bR \bS_j) \vert \Big] \\
 \le&~ 2 \sup_{\bq} \frac{1}{u^3} \Big[ \E[\|\bS_i \|_{\op}^4] \E[\|\bS_j \|_{\op}^4] \E[ \|\bS_l \|_{\op}^4]\Big]^{1/4} = O_d(1). \\
\end{aligned}
\]
Similarly we can show that for fixed $u > 0$, we have $\sup_{\bq \in \R^5}  \| \nabla_\bq^j g(\imagunit u; \bq) \| < \infty$ for $j = 1, 2, 3$. The lemma holds by the following inequality, 
\[
\limsup_{d \to \infty} \sup_{\bq \in \R^5} \E \| \nabla_{\bq}^j G_d(\imagunit u; \bq) - \nabla_\bq^j g(\imagunit u; \bq)\| \le \limsup_{d \to \infty} \sup_{\bq \in \R^5} \Big[ \E \|\nabla_{\bq}^j G_d(\imagunit u; \bq) \| + \| \nabla_\bq^j g(\imagunit u; \bq) \| \Big] < \infty
\]
for $j = 1, 2, 3$. 
\end{proof}

Finally, we show that the derivatives of a function in a region can be upper bounded by the function value and the second derivatives of the function in the region.
\begin{lemma}\label{lem:Taylor_bound}
Let $f \in C^2([a, b])$. Then we have 
\[
\sup_{x \in [a, b]} \vert f'(x) \vert \le \Big \vert \frac{f(a) - f(b)}{a - b} \Big\vert + \frac{1}{2} \sup_
{x\in [a, b]} \vert f''(x) \vert \cdot \vert a - b\vert. 
\]
As a consequence, letting $f \in C^2(\ball^d(\bzero, 2 r))$ where $\ball^d(\bzero, r) = \{\bx \in \R^d: \| \bx \|_2 \le r \}$, we have 
\[
\sup_{\bx \in \ball(\bzero, r)} \| \nabla f(\bx)\|_{2} \le   r^{-1}\sup_{\bx \in \ball(\bzero, 2 r)} \vert f(\bx) \vert +  2 r \sup_{\bx \in \ball(\bzero, 2 r)} \| \nabla^2 f(\bx) \|_{\op}. 
\]
\end{lemma}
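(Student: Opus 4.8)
The plan is to establish the one-dimensional inequality first and then obtain the multivariate bound by restricting $f$ to line segments. For the scalar statement, fix an arbitrary $x_0\in[a,b]$; since $f'$ is continuous it suffices to bound $|f'(x_0)|$. The key trick for getting the sharp constant $\tfrac12$ is to average a telescoping identity: by the fundamental theorem of calculus, $f'(x_0)=f'(x_1)+\int_{x_1}^{x_0}f''(t)\,\de t$ for every $x_1\in[a,b]$, and averaging over $x_1\in[a,b]$ and using $\frac{1}{b-a}\int_a^b f'(x_1)\,\de x_1=\frac{f(b)-f(a)}{b-a}$ gives
\[
f'(x_0)=\frac{f(b)-f(a)}{b-a}+\frac{1}{b-a}\int_a^b\!\int_{x_1}^{x_0}f''(t)\,\de t\,\de x_1 .
\]
Bounding the double integral by $\big(\sup_{[a,b]}|f''|\big)\int_a^b|x_0-x_1|\,\de x_1=\big(\sup_{[a,b]}|f''|\big)\,\tfrac12\big[(x_0-a)^2+(b-x_0)^2\big]$ and invoking the elementary fact that $(x_0-a)^2+(b-x_0)^2\le(b-a)^2$ for $x_0\in[a,b]$ (the left side is convex in $x_0$, hence maximized at an endpoint, where it equals $(b-a)^2$), one gets exactly $|f'(x_0)|\le\big|(f(a)-f(b))/(a-b)\big|+\tfrac12\,|a-b|\sup_{[a,b]}|f''|$.

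For the consequence, fix $\bx\in\ball^d(\bzero,r)$ and a unit vector $\bu\in\reals^d$, and apply the scalar inequality to $h(t)\equiv f(\bx+t\bu)$ on $[-r,r]$. Since $\|\bx+t\bu\|_2\le \|\bx\|_2+|t|\le 2r$ for $|t|\le r$, the whole segment lies in $\ball^d(\bzero,2r)$, so $h\in C^2([-r,r])$ with $h'(t)=\nabla f(\bx+t\bu)^\sT\bu$ and $h''(t)=\bu^\sT\nabla^2f(\bx+t\bu)\,\bu$, hence $|h''(t)|\le\|\nabla^2f(\bx+t\bu)\|_{\op}$. The scalar bound then yields
\[
|\nabla f(\bx)^\sT\bu|=|h'(0)|\le\frac{|h(r)|+|h(-r)|}{2r}+r\sup_{t\in[-r,r]}|h''(t)|\le r^{-1}\sup_{\ball^d(\bzero,2r)}|f|+r\sup_{\ball^d(\bzero,2r)}\|\nabla^2f\|_{\op}.
\]
Taking the supremum over unit $\bu$ and over $\bx\in\ball^d(\bzero,r)$, and using $r\le 2r$ to match the constant stated in the lemma, finishes the argument.

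I do not anticipate any real obstacle here: the proof is routine calculus. The only mild subtlety is obtaining the factor $\tfrac12$ rather than the naive constant $1$ in the scalar estimate, which the averaging identity above handles cleanly (equivalently, one can expand $f$ by Taylor's theorem with Lagrange remainder around $x_0$ at both endpoints $a$ and $b$, subtract, and use the same pointwise inequality $(x_0-a)^2+(b-x_0)^2\le(b-a)^2$).
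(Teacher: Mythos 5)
Your proof is correct, and since the paper only states that the lemma ``is elementary and simply follows from Taylor expansion'' without giving details, you are filling in exactly the kind of argument the authors had in mind. The averaging identity $f'(x_0)=\frac{f(b)-f(a)}{b-a}+\frac{1}{b-a}\int_a^b\int_{x_1}^{x_0}f''$ is a clean way to obtain the factor $\tfrac12$, your bound $(x_0-a)^2+(b-x_0)^2\le(b-a)^2$ is correct (it reduces to $2uv\ge 0$ with $u=x_0-a$, $v=b-x_0$), and the restriction of $f$ to segments $\bx+t\bu$, $t\in[-r,r]$, stays inside $\ball^d(\bzero,2r)$ as needed. In fact your argument gives the slightly sharper constant $r$ in place of $2r$ in front of $\sup\|\nabla^2 f\|_{\op}$, which you correctly note implies the stated bound.
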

The proof of Lemma \ref{lem:Taylor_bound} is elementary and simply follows from Taylor expansion. 

%

\subsection{Proof of Proposition \ref{prop:expression_for_log_determinant}}\label{subsec:prove_proposition_log_determinant}

By the expression of $\Xi$ in Eq. (\ref{eqn:log_determinant_variation}), we have 
\[
\begin{aligned}
\partial_{z_1} \Xi(\xi, z_1, z_2; \bq) =&~ \frac{s_2(t_2 z_2 + 1) - \ob_1^2 (1 + p)^2 z_2}{(s_2 z_1 + 1)(t_2 z_2 + 1) - \ob_1^2 (1 + p)^2 z_1 z_2} - \ob_\star^2 z_2 + s_1 - \psi_1/ z_1  - \xi,\\
\partial_{z_2} \Xi(\xi, z_1, z_2; \bq) =&~ \frac{t_2(s_2 z_1 + 1) - \ob_1^2 (1 + p)^2 z_1}{(s_2 z_1 + 1)(t_2 z_2 + 1) - \ob_1^2 (1 + p)^2 z_1 z_2} - \ob_\star^2 z_1 + s_2 - \psi_2/ z_2  - \xi. 
\end{aligned}
\]
By fixed point equation (\ref{eq:FixedPoint}) with $\sFone, \sFtwo$ defined in (\ref{eq:Fdef}), we obtain that 
\[
\nabla_{(z_1, z_2)}\Xi(\xi, z_1, z_2; \bq) \vert_{(z_1, z_2) = (m_1(\xi; \bq), m_2(\xi; \bq))} = \bzero. 
\]
As a result, by the definition of $g$ given in Eq. (\ref{eqn:formula_g}), and by formula for implicit differentiation, we have
\[
\frac{\de \phantom{\xi}}{\de \xi}g(\xi; \bq) = - m(\xi; \bq). 
\]
Hence, for any $\xi \in \C_+$ and $K \in \R$ and compact continuous path $\phi(\xi, \imagunit K)$ that connects $\xi$ and $\imagunit K$, we have
\begin{align}\label{eqn:G_and_M_0}
g(\xi; \bq) - g(\imagunit K; \bq) = \int_{\phi(\xi, \imagunit K)} m(\eta; \bq) \de \eta.
\end{align}
By Proposition \ref{prop:G_and_M}, for any $\xi \in \C_+$ and $K \in \R$, we have
\begin{align}\label{eqn:G_and_M_1}
G_d(\xi; \bq) - G_d(\imagunit K; \bq ) =  \int_{\phi(\xi, \imagunit K)} M_{d}(\eta; \bq) \de \eta. 
\end{align}
Combining Eq. (\ref{eqn:G_and_M_1}) with Eq. (\ref{eqn:G_and_M_0}), we get
\begin{align}\label{eqn:G_and_M_4}
\E[\vert G_d(\xi; \bq)- g(\xi; \bq) \vert] \le \int_{\phi(\xi, \imagunit K)} \E \vert M_{d}(\eta; \bq) - m(\eta; \bq) \vert \de \eta  + \E \vert G_d(\imagunit K; \bq) - g(\imagunit K; \bq) \vert. 
\end{align}

By Proposition \ref{prop:Stieltjes}, we have 
\begin{align}\label{eqn:G_and_M_2}
\lim_{d \to \infty} \int_{\phi(\xi, \imagunit K)} \E \vert M_{d}( \eta; \bq) - m( \eta; \bq)\vert \de \eta = 0. 
\end{align}
By Lemma \ref{lem:large_imaginary_behavior_G}, we have 
\begin{align}\label{eqn:G_and_M_3}
\lim_{K \to \infty} \sup_{d \ge d_0}\E \vert G_d(\imagunit K; \bq) - g(\imagunit K; \bq) \vert = 0. 
\end{align}
Combining Eq. (\ref{eqn:G_and_M_4}), (\ref{eqn:G_and_M_2}) and (\ref{eqn:G_and_M_3}), we get Eq. (\ref{eqn:expression_for_log_determinant}). 

For fixed $\xi \in \C_+$, define $E_d(\bq) = G_d(\xi, \bq) - g(\xi; \bq)$. By Lemma \ref{lem:Taylor_bound}, we have
\begin{equation}\label{eqn:bound_function_second_derivative}
\sup_{\bq \in \ball(\bzero, \eps)} \| \nabla E_d(\bq)\|_{2} \le \eps^{-1}\sup_{\bq \in \ball(\bzero, 2 \eps)} \vert E_d(\bq) \vert + 2 \eps \sup_{\bq \in \ball(\bzero, 2 \eps)} \| \nabla^2 E_d(\bq) \|_{\op}. 
\end{equation}
By Eq. (\ref{eqn:expression_for_log_determinant}) and Lemma \ref{lem:bounded_derivatives_G}, by the covering number argument (similar to the Step 3 in Section \ref{subsec:proof_Stieltjes}), we get that for any compact region $\cQ_\star$, there is
\[
\lim_{d \to \infty} \E\Big[\sup_{\bq \in \cQ_\star} \vert E_d(\bq) \vert \Big] = 0. 
\]
Taking $\cQ_\star = \ball(\bzero, 2 \eps)$, by Eq. (\ref{eqn:bound_function_second_derivative}) and by Lemma \ref{lem:bounded_derivatives_G} again, there exists some constant $C$, such that
\[
\limsup_{d \to \infty} \E\Big[ \sup_{\bq \in \ball(\bzero, \eps)} \| \nabla E_d(\bq)\|_{2}\Big] \le C \eps. 
\]
Sending $\eps \to 0$ gives Eq. (\ref{eqn:convergence_of_derivatives}). By similar argument we get Eq. (\ref{eqn:convergence_of_second_derivatives}). This finishes the proof of Proposition \ref{prop:expression_for_log_determinant}.

\section{Proof of Theorem \ref{thm:ridgeless_limit}, \ref{thm:overparametrized_limit}, and \ref{thm:large_sample_limit}}\label{sec:simplification}

\subsection{Proof of Theorem \ref{thm:ridgeless_limit}}

To prove this theorem, we just need to show that 
\[
\begin{aligned}
\lim_{\olambda \to 0} \cuB(\ratio, \psi_1, \psi_2, \olambda) =&~ \cuB_{\rless}(\ratio, \psi_1, \psi_2), \\
\lim_{\olambda \to 0} \cuV(\ratio, \psi_1, \psi_2, \olambda) =&~ \cuV_{\rless}(\ratio, \psi_1, \psi_2). \\
\end{aligned}
\]
More specifically, we just need to show that, the formula for $\chi$ defined in Eq. (\ref{eqn:definition_chi_main_formula}) as $\olambda \to 0$ coincides with the formula for $\chi$ defined in Eq. (\ref{eq:chidef}). By the relationship of $\chi$ and $m_1 m_2$ as per Eq. (\ref{eqn:chi_m0_relationship}), we just need to show the lemma below. 


\begin{lemma}\label{lem:limiting_behavior_stieltjes}
Let Assumption \ref{ass:activation} and \ref{ass:linear} hold. For fixed $\xi \in \C_+$, let $m_1(\xi; \psi_1, \psi_2)$ and $m_2(\xi; \psi_1, \psi_2)$ be defined by
\begin{equation}\label{eqn:another_def_m1m2}
\begin{aligned}
m_1(\xi; \psi_1, \psi_2) =&~ \lim_{d \to \infty, N/d \to \psi_1, n/d \to \psi_2 }\frac{1}{d} \E\{ \Tr_{[1, N]}[([\bzero, \bZ^\sT; \bZ, \bzero]- \xi \id_M)^{-1}]\},\\
m_2(\xi; \psi_1, \psi_2) =&~ \lim_{d \to \infty, N/d \to \psi_1, n/d \to \psi_2 }\frac{1}{d} \E\{ \Tr_{[N+1, M]}[([\bzero, \bZ^\sT; \bZ, \bzero]- \xi \id_M)^{-1}]\}. \\
\end{aligned}
\end{equation}
By Proposition \ref{prop:Stieltjes} this is equivalently saying $m_1(\xi; \psi_1, \psi_2), m_2(\xi; \psi_1, \psi_2)$ is the analytic continuation of solution of Eq. (\ref{eq:FixedPoint}) as defined in Proposition \ref{prop:Stieltjes}, when $\bq = \bzero$. Defining $\psi = \min(\psi_1, \psi_2)$, we have 
\begin{equation}\label{lem:limiting_behavior_formula}
\lim_{u \to 0} [m_1(\imagunit u; \psi_1, \psi_2) m_2(\imagunit u; \psi_1, \psi_2)] = - \frac{[(\psi \ratio^2 - \ratio^2 - 1)^2 + 4\ratio^2 \psi]^{1/2} + (\psi \ratio^2 - \ratio^2 - 1) }{2 \ob_\star^2 \ratio^2}. 
\end{equation}
\end{lemma}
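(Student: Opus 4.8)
The plan is to reduce the statement to an analysis of the fixed point system \eqref{eq:FixedPoint}–\eqref{eq:Fdef} with $\bq=\bzero$, restricted to the imaginary axis, and then to let $\xi\to 0$. By the rescaling \eqref{eqn:definition_nu_in_main_proof} we have, for every $u>0$, $m_1(\imagunit u;\bzero)\,m_2(\imagunit u;\bzero)=\ob_\star^{-2}\,\nu_1(\imagunit u/\ob_\star)\,\nu_2(\imagunit u/\ob_\star)$, where $\nu_1,\nu_2$ solve \eqref{eqn:nu_definition}; since $\ob_\star>0$ is fixed (Assumption \ref{ass:activation}), it suffices to compute $\chi_0:=\lim_{v\to 0^+}\chi(\imagunit v)$ with $\chi(\xi):=\nu_1(\xi)\nu_2(\xi)$, and then the claimed limit equals $\chi_0/\ob_\star^2$. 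First I would record two consequences of \eqref{eqn:nu_definition}. Multiplying the two equations by $\nu_1$ and $\nu_2$ respectively and writing $g(\chi):=\chi+\ratio^2\chi/(1-\ratio^2\chi)$ gives the identity
\[
g(\chi(\xi)) \;=\; -\xi\,\nu_1(\xi)-\psi_1 \;=\; -\xi\,\nu_2(\xi)-\psi_2\, ,
\]
and subtracting the two middle expressions yields $\nu_1(\xi)-\nu_2(\xi)=-(\psi_1-\psi_2)/\xi$.

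Next I would exploit a symmetry of the $\bq=\bzero$ matrix. With $\bq=\bzero$ the matrix in \eqref{eqn:matrix_A} is $\bA=\bigl[\begin{smallmatrix}\bzero&\bZ^\sT\\ \bZ&\bzero\end{smallmatrix}\bigr]$, and $\bD\bA\bD=-\bA$ for $\bD=\diag(\id_N,-\id_n)$, with $\bD$ acting by $\pm1$ on coordinates; since $\bA$ is real symmetric this forces $[(\bA-\imagunit v\id)^{-1}]_{kk}=-[(\bA+\imagunit v\id)^{-1}]_{kk}=-\overline{[(\bA-\imagunit v\id)^{-1}]_{kk}}$, hence $\Re M_d(\imagunit v;\bzero)=0$ for all $v>0$. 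By Proposition \ref{prop:Stieltjes} (and its block versions) this passes to the limit, so $\Re\,\nu_i(\imagunit v)=0$; write $\nu_i(\imagunit v)=\imagunit p_i$ with $p_i>0$. Then $\chi(\imagunit v)=-p_1p_2<0$ is real, and the identity above reads $g(\chi(\imagunit v))=v p_1-\psi_1=v p_2-\psi_2$. Since $\nu_i$ is $\psi_i$ times a Stieltjes transform of a probability measure (Lemma \ref{lemma:BasicProperties}), $|\nu_i(\imagunit v)|\le\psi_i/v$, so $0<vp_i\le\psi_i$ and therefore $g(\chi(\imagunit v))\in[-\psi,0]$ with $\psi=\min(\psi_1,\psi_2)$. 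On $(-\infty,0)$ one has $g'(\chi)=1+\ratio^2/(1-\ratio^2\chi)^2>0$, $g(0^-)=0$ and $g(\chi)\to-\infty$ as $\chi\to-\infty$, so $g$ is a strictly increasing bijection of $(-\infty,0)$ onto itself; consequently $\chi(\imagunit v)=g^{-1}(vp_i-\psi_i)$ lies in the bounded interval $[\,g^{-1}(-\psi),0)$ for all $v>0$. This uniform boundedness away from the pole $1/\ratio^2$ is the crucial point.

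Given boundedness, along any sequence $v_k\downarrow 0$ some subsequence has $\chi(\imagunit v_k)\to\chi_0\in[\,g^{-1}(-\psi),0]$. On that subsequence $v^2\chi(\imagunit v)=-(vp_1)(vp_2)\to 0$, while $vp_1-vp_2=\psi_1-\psi_2$ (from $\nu_1-\nu_2=-(\psi_1-\psi_2)/\xi$, taking imaginary parts) and each $vp_i\le\psi_i$. Assuming WLOG $\psi_2\le\psi_1$, one gets $vp_1\ge\psi_1-\psi_2$; combined with $(vp_1)(vp_2)\to 0$ this forces $vp_2\to 0$ (when $\psi_1=\psi_2$, $vp_1=vp_2$ and $(vp_2)^2\to0$). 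Hence $g(\chi(\imagunit v))=vp_2-\psi_2\to-\psi_2=-\psi$, and by continuity $g(\chi_0)=-\psi$. Since $g$ is injective on $(-\infty,0)$ this determines $\chi_0=g^{-1}(-\psi)$ uniquely, so the full limit exists and equals $g^{-1}(-\psi)$. Finally, $g(\chi)=-\psi$ is equivalent to $\ratio^2\chi^2+(\psi\ratio^2-\ratio^2-1)\chi-\psi=0$, whose product of roots is $-\psi/\ratio^2<0$; the unique negative root is
\[
\chi_0=-\frac{[(\psi\ratio^2-\ratio^2-1)^2+4\ratio^2\psi]^{1/2}+(\psi\ratio^2-\ratio^2-1)}{2\ratio^2}\,,
\]
and dividing by $\ob_\star^2$ (using $\chi=\ob_\star^2 m_0$, cf. \eqref{eqn:chi_m0_relationship}) gives exactly \eqref{lem:limiting_behavior_formula}.

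The main obstacle is the step just described: controlling $\chi(\imagunit v)$ as $v\to0$, namely showing it stays bounded and bounded away from the singularity $\chi=1/\ratio^2$, and selecting the correct branch of the resulting quadratic. The symmetry $\Re\nu_i(\imagunit v)=0$ does the heavy lifting here: it makes $\chi(\imagunit v)$ real and negative, which together with the a priori Stieltjes bound $|\nu_i|\le\psi_i/\Im\xi$ confines $\chi(\imagunit v)$ to a fixed compact real interval inside $(-\infty,0)$ and simultaneously rules out the positive root. Everything else (the algebraic rearrangement of \eqref{eqn:nu_definition}, continuity of $g$, solving the quadratic) is routine.
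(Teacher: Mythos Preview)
Your proposal is correct and follows essentially the same route as the paper's proof: both work with the fixed-point equations at $\bq=\bzero$, establish that $m_1(\imagunit u)\,m_2(\imagunit u)$ is real and non-positive, derive the difference relation $m_2-m_1=-(\psi_2-\psi_1)/(\imagunit u)$, show boundedness of the product, and identify the limit as the unique non-positive root of the same quadratic. The paper differs only in two minor technical steps --- it reads off pure-imaginarity and the order of each $m_i$ directly from the explicit block-resolvent formula (using that $\bZ\bZ^{\sT}$ has $n-N$ zero eigenvalues when $n>N$) rather than via your symmetry $\bD\bA\bD=-\bA$ and the product/difference argument, and its boundedness step is slightly less clean than your monotonicity-of-$g$ argument.
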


\begin{proof}[Proof of Lemma \ref{lem:limiting_behavior_stieltjes}]~ 
In the following, we consider the case $\psi_2 > \psi_1$. The proof for the case $\psi_2 < \psi_1$ is the same, and the case $\psi_1 = \psi_2$ is simpler. By Proposition \ref{prop:Stieltjes}, $m_1 = m_1(\imagunit u) = m_1(\imagunit u; \psi_1, \psi_2)$ and $m_2 = m_2(\imagunit u) = m_2(\imagunit u; \psi_1, \psi_2)$ must satisfy Eq. (\ref{eq:FixedPoint}) for $\xi = \imagunit u$ and $\bq = \bzero$. A reformulation for Eq. (\ref{eq:FixedPoint}) for $\bq = \bzero$ yields
\begin{align}
\frac{ - \ob_1^2 m_1 m_2}{1 - \ob_1^2 m_1 m_2} - \ob_\star^2 m_1 m_2 - \psi_1 - \imagunit u \cdot m_1 =&~ 0, \label{eqn:equation_for_m1_m2_one}\\
\frac{ - \ob_1^2 m_1 m_2}{1 - \ob_1^2 m_1 m_2} - \ob_\star^2 m_1 m_2 - \psi_2 - \imagunit u \cdot m_2 =&~ 0. \label{eqn:equation_for_m1_m2_two}
\end{align}
Defining $m_0(\imagunit u) = m_1(\imagunit u) m_2(\imagunit u)$. Then $m_0$ must satisfy the following equation
\[
- u^2 m_0 = \Big( \frac{ - \ob_1^2 m_0}{1 - \ob_1^2 m_0} - \ob_\star^2 m_0 - \psi_1 \Big) \Big( \frac{ - \ob_1^2 m_0}{1 - \ob_1^2 m_0} - \ob_\star^2 m_0 - \psi_2 \Big). 
\]
Note we must have $\vert m_0(\imagunit u) \vert \le \vert m_1(\imagunit u) \vert \cdot \vert m_2(\imagunit u) \vert \le \psi_1 \psi_2 /u^2$, and hence $\vert u^2 m_0 \vert = O_{u}(1)$ (as $u \to 0$). This implies that 
\[
\frac{ - \ob_1^2 m_0}{1 - \ob_1^2 m_0} - \ob_\star^2 m_0  = O_u(1),
\]
and hence $m_0 = O_u(1)$. Taking the difference between Eq. (\ref{eqn:equation_for_m1_m2_one}) and (\ref{eqn:equation_for_m1_m2_two}), we get
\begin{equation}\label{eqn:m2_m1_difference}
m_2 - m_1 = -(\psi_2 - \psi_1)/(\imagunit u). 
\end{equation}
This implies one of $m_1$ and $m_2$ should be of order $1/u$ and the other one should be of order $u$, as $u \to 0$. 

By definition of $m_1$ and $m_2$ in Eq. (\ref{eqn:another_def_m1m2}), we have
\[
\begin{aligned}
m_1(\imagunit u) =&~ \imagunit u \lim_{d \to \infty , N/d \to \psi_1, n/d \to \psi_2 }\frac{1}{d} \E\{ \Tr [(\bZ^\sT \bZ + u^2 \id_N)^{-1}]\},\\
m_2(\imagunit u) =&~ \imagunit u \lim_{d \to \infty , N/d \to \psi_1, n/d \to \psi_2 }\frac{1}{d} \E\{ \Tr [(\bZ \bZ^\sT + u^2 \id_N)^{-1}]\}. 
\end{aligned}
\]
When $\psi_2 > \psi_1$ (i.e., $n > N$), $(\bZ \bZ^\sT + u^2 \id_N)$ has $(n - N)$ number of eigenvalues that are $u^2$, and therefore we must have $m_2(\imagunit u) = \Omega_u(1/u)$. Hence $m_1(\imagunit u) = O_u(u)$. Moreover, when $u > 0$, $m_1(\imagunit u)$ and $m_2(\imagunit u)$ are purely imaginary and $\Im m_1(\imagunit u), \Im m_2(\imagunit u) > 0$. This implies that $m_0(\imagunit u)$ must be a real number which is non-positive. 


By Eq. (\ref{eqn:equation_for_m1_m2_one}) and $\lim_{u \to 0} \imagunit u \cdot m_1(\imagunit u) = 0$, all the accumulation points of $m_1(\imagunit u) m_2(\imagunit u)$ as $u \to 0$ should satisfy the quadratic equation
\[
\frac{ - \ob_1^2 m_\star}{1 - \ob_1^2 m_\star} - \ob_\star^2 m_\star - \psi_1 = 0. 
\]
Note that the above equation has only one non-positive solution, and $m_0(\imagunit u)$ for any $u > 0$ must be non-positive. Therefore $\lim_{u \to 0} m_1(\imagunit u) m_2(\imagunit u)$ must exists and be the non-positive solution of the above quadratic equation. The right hand side of Eq. (\ref{lem:limiting_behavior_formula}) gives the non-positive solution of the above quadratic equation. 
\end{proof}

\subsection{Proof of Theorem \ref{thm:overparametrized_limit}}

To prove this theorem, we just need to show that 
\[
\begin{aligned}
\lim_{\psi_1 \to \infty} \cuB(\ratio, \psi_1, \psi_2, \olambda) =&~ \cuB_{\wide}(\ratio,  \psi_2, \olambda), \\
\lim_{\psi_1 \to \infty} \cuV(\ratio, \psi_1, \psi_2, \olambda) =&~ \cuV_{\wide}(\ratio, \psi_2, \olambda). \\
\end{aligned}
\]
This follows by simple calculus and a lemma below. 

\begin{lemma}\label{lem:limiting_behavior_stieltjes_infinite_width}
Under the same condition of Lemma \ref{lem:limiting_behavior_stieltjes}, we have
\[
\begin{aligned}
&\lim_{\psi_1 \to \infty} [m_1(\imagunit (\psi_1 \psi_2 \ob_\star^2 \olambda)^{1/2}; \psi_1, \psi_2) m_2(\imagunit (\psi_1 \psi_2 \ob_\star^2 \olambda)^{1/2}; \psi_1, \psi_2)] \\
=&~ - \frac{[(\psi_2 \ratio^2 - \ratio^2 - (\olambda \psi_2 + 1))^2 + 4\ratio^2 \psi_2 (\olambda \psi_2 + 1)]^{1/2} + (\psi_2 \ratio^2 - \ratio^2 - (\olambda \psi_2 + 1)) }{2 \ob_\star^2 \ratio^2 (\olambda \psi_2 + 1)}. 
\end{aligned}
\]
\end{lemma}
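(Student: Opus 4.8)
The plan is to follow the proof of Lemma~\ref{lem:limiting_behavior_stieltjes}, now tracking the dependence on $\psi_1$ carefully; note that here $\xi=\imagunit u$ with $u=(\psi_1\psi_2\ob_\star^2\olambda)^{1/2}\to\infty$ as $\psi_1\to\infty$. Write $m_i=m_i(\imagunit u;\psi_1,\psi_2)$ for $i\in\{1,2\}$ and $m_0=m_1 m_2$. Specializing the fixed-point equations \eqref{eq:FixedPoint} (with $\sFone,\sFtwo$ as in \eqref{eq:Fdef}) to $\bq=\bzero$ and $\xi=\imagunit u$, the algebraic reduction used to obtain \eqref{eqn:equation_for_m1_m2_one}--\eqref{eqn:equation_for_m1_m2_two} gives
\begin{equation}
A(m_0)-\psi_1=\imagunit u\, m_1,\qquad A(m_0)-\psi_2=\imagunit u\, m_2,\qquad A(m_0)\equiv\frac{-\ob_1^2 m_0}{1-\ob_1^2 m_0}-\ob_\star^2 m_0 .
\end{equation}
Multiplying the two identities and using $u^2=\psi_1\psi_2\ob_\star^2\olambda$ yields the single scalar equation
\begin{equation}\label{eqn:scalar_wide_plan}
(A(m_0)-\psi_1)(A(m_0)-\psi_2)=-\psi_1\psi_2\ob_\star^2\olambda\,m_0 .
\end{equation}

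The crucial point is a priori control of $m_0$ uniformly in $\psi_1$. By Lemma~\ref{lemma:BasicProperties}(a), applied to the limiting transforms exactly as in the proof of Lemma~\ref{lem:limiting_behavior_stieltjes}, we have $|m_i|\le\psi_i/u$, hence $|m_0|\le\psi_1\psi_2/u^2=1/(\ob_\star^2\olambda)$, a bound independent of $\psi_1$. Moreover, since $u>0$, the same argument as in Lemma~\ref{lem:limiting_behavior_stieltjes} shows that $m_1(\imagunit u)$ and $m_2(\imagunit u)$ are purely imaginary with positive imaginary part, so $m_0$ is real and non-positive. Consequently $1-\ob_1^2 m_0\ge 1$ and $-\ob_\star^2 m_0\le 1/\olambda$, so $A(m_0)\in[0,\,1+1/\olambda]$ is also bounded uniformly in $\psi_1$.

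To pass to the limit, divide \eqref{eqn:scalar_wide_plan} by $\psi_1$ to get $(A(m_0)/\psi_1-1)(A(m_0)-\psi_2)=-\psi_2\ob_\star^2\olambda\,m_0$. By compactness, along any sequence $\psi_1\to\infty$ there is a subsequence on which $m_0\to m_0^\star\in[-1/(\ob_\star^2\olambda),0]$; then $A(m_0)/\psi_1\to 0$ and $A(m_0)\to A(m_0^\star)$, so in the limit $A(m_0^\star)-\psi_2=\psi_2\ob_\star^2\olambda\,m_0^\star$. Writing $\omega^\star=\ob_1^2 m_0^\star$ (so $\ob_\star^2 m_0^\star=\omega^\star/\ratio^2$) and clearing denominators converts this to the quadratic
\begin{equation}\label{eqn:quad_wide_plan}
(\olambda\psi_2+1)(\omega^\star)^2+(\psi_2\ratio^2-\ratio^2-\olambda\psi_2-1)\,\omega^\star-\psi_2\ratio^2=0 .
\end{equation}
The product of the roots of \eqref{eqn:quad_wide_plan} is $-\psi_2\ratio^2/(\olambda\psi_2+1)<0$, so it has exactly one non-positive root; since $\omega^\star\le 0$, this determines $\omega^\star$ uniquely, and it coincides with the quantity $\omega$ of Theorem~\ref{thm:overparametrized_limit}. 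As every subsequential limit of $m_0$ equals $\omega^\star/\ob_1^2=\omega/(\ratio^2\ob_\star^2)$, the full limit $\lim_{\psi_1\to\infty}m_1 m_2$ exists and equals this value, which is exactly the right-hand side of the claimed identity; a short calculation then propagates this into the rational expressions for $\cuB$ and $\cuV$, yielding Theorem~\ref{thm:overparametrized_limit}.

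The main (and only mild) obstacle is the degeneracy of the fixed-point system in the limit: $m_1=(A(m_0)-\psi_1)/(\imagunit u)$ is of order $\psi_1^{1/2}$ and diverges, while $m_2=(A(m_0)-\psi_2)/(\imagunit u)\to 0$, so one cannot pass to the limit in the two equations separately. Passing instead through the product $m_0$ and the auxiliary scalar $A(m_0)$, together with the $\psi_1$-uniform bound $|m_0|\le 1/(\ob_\star^2\olambda)$ coming from $|m_i|\le\psi_i/u$, resolves this, and the non-positivity of $m_0$ selects the correct root of \eqref{eqn:quad_wide_plan}.
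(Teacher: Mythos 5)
Your proposal is correct and follows essentially the route the paper intends (the paper only says the proof is "similar to the proof of Lemma \ref{lem:limiting_behavior_stieltjes}"). You reproduce the reduction to the scalar product equation $(A(m_0)-\psi_1)(A(m_0)-\psi_2)=-u^2 m_0$ with $A(m_0)=-\ob_1^2 m_0/(1-\ob_1^2 m_0)-\ob_\star^2 m_0$, use the $\psi_1$-uniform bound $|m_0|\le 1/(\ob_\star^2\olambda)$ from $|m_i|\le\psi_i/\Im\xi$ together with the non-positivity of $m_0$, and then handle the joint $\psi_1,u\to\infty$ limit cleanly by dividing through by $\psi_1$; this selects precisely the non-positive root of the quadratic in $\omega^\star=\ob_1^2 m_0^\star$, which after dividing by $\ob_1^2=\ratio^2\ob_\star^2$ gives the stated expression.
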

The proof of this lemma is similar to the proof of Lemma \ref{lem:limiting_behavior_stieltjes}.

\subsection{Proof of Theorem \ref{thm:large_sample_limit}}

To prove this theorem, we just need to show that 
\[
\begin{aligned}
\lim_{\psi_2 \to \infty} \cuB(\ratio, \psi_1, \psi_2, \olambda) =&~ \cuB_{\lsamp}(\ratio,  \psi_1, \olambda), \\
\lim_{\psi_2 \to \infty} \cuV(\ratio, \psi_1, \psi_2, \olambda) =&~ 0. \\
\end{aligned}
\]
This follows by simple calculus and a lemma below (this lemma is symmetric to Lemma \ref{lem:limiting_behavior_stieltjes_infinite_width}). 

\begin{lemma}\label{lem:limiting_behavior_stieltjes_infinite_sample}
Under the same condition of Lemma \ref{lem:limiting_behavior_stieltjes}, we have
\[
\begin{aligned}
&\lim_{\psi_2 \to \infty} [m_1(\imagunit (\psi_1 \psi_2 \ob_\star^2 \olambda)^{1/2}; \psi_1, \psi_2) m_2(\imagunit (\psi_1 \psi_2 \ob_\star^2 \olambda)^{1/2}; \psi_1, \psi_2)] \\
=&~ - \frac{[(\psi_1 \ratio^2 - \ratio^2 - (\olambda \psi_1 + 1))^2 + 4\ratio^2 \psi_1 (\olambda \psi_1 + 1)]^{1/2} + (\psi_1 \ratio^2 - \ratio^2 - (\olambda \psi_1 + 1)) }{2 \ob_\star^2 \ratio^2 (\olambda \psi_1 + 1)}. 
\end{aligned}
\]
\end{lemma}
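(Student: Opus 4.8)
The plan is to follow, almost verbatim, the strategy used for Lemma~\ref{lem:limiting_behavior_stieltjes} and its variant Lemma~\ref{lem:limiting_behavior_stieltjes_infinite_width}: reduce everything to a scalar equation for the product $m_0 := m_1 m_2$ evaluated at $\xi = \imagunit u$ with $u = u(\psi_2) := (\psi_1\psi_2\ob_\star^2\olambda)^{1/2}$, use a~priori bounds to make the limit $\psi_2\to\infty$ legitimate, and identify the limiting equation with a quadratic whose non-positive root is the claimed expression. Fix $\psi_1$ and $\olambda$ and write $m_i := m_i(\imagunit u;\psi_1,\psi_2)$. By Proposition~\ref{prop:Stieltjes}, specialized to $\bq=\bzero$ (so that $\sFone,\sFtwo$ in \eqref{eq:Fdef} take their $\bq=\bzero$ forms), the pair $(m_1,m_2)$ solves \eqref{eq:FixedPoint} at $\xi=\imagunit u$; this is the same reduction as in the proof of Lemma~\ref{lem:limiting_behavior_stieltjes}, and rearranging yields precisely Eqs.~\eqref{eqn:equation_for_m1_m2_one}--\eqref{eqn:equation_for_m1_m2_two} with the present values of $\psi_1,\psi_2$. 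Writing $P := \frac{-\ob_1^2 m_0}{1-\ob_1^2 m_0} - \ob_\star^2 m_0$, these two relations read $P-\psi_1 = \imagunit u\,m_1$ and $P-\psi_2 = \imagunit u\,m_2$, so
\[
-u^2 m_0 \;=\; (P-\psi_1)(P-\psi_2), \qquad u^2 = \psi_1\psi_2\ob_\star^2\olambda .
\]

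Next I would record the a~priori estimates that control the limit. As in the cited proofs, for $u>0$ the matrix $[\bzero,\bZ^\sT;\bZ,\bzero]$ has spectrum symmetric about $0$, hence $m_1(\imagunit u) = \imagunit u\lim_d d^{-1}\E\Tr[(\bZ^\sT\bZ+u^2\id_N)^{-1}]$ and $m_2(\imagunit u) = \imagunit u\lim_d d^{-1}\E\Tr[(\bZ\bZ^\sT+u^2\id_n)^{-1}]$ lie in $\imagunit\R_{>0}$; therefore $m_0$ is real and non-positive. Moreover the Stieltjes-transform bound $|m_1|\le\psi_1/u$, $|m_2|\le\psi_2/u$ of Lemma~\ref{lemma:BasicProperties}(a) (inherited by the limit) gives $|m_0| \le \psi_1\psi_2/u^2 = 1/(\ob_\star^2\olambda)$, a bound \emph{uniform in} $\psi_2$. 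Since $m_0\le 0$ we have $1-\ob_1^2 m_0\ge 1$, so the map $m_0\mapsto P$ is continuous and bounded on $[-1/(\ob_\star^2\olambda),0]$, uniformly in $\psi_2$. Dividing the displayed identity by $\psi_2$,
\[
-\psi_1\ob_\star^2\olambda\,m_0 \;=\; (P-\psi_1)\bigl(P/\psi_2-1\bigr),
\]
and along any subsequence $\psi_2\to\infty$ on which $m_0\to m_\star$ (such subsequences exist by compactness of $[-1/(\ob_\star^2\olambda),0]$), the right-hand side converges to $-(P(m_\star)-\psi_1)$, giving the limiting scalar equation $P(m_\star) = \psi_1\bigl(1+\ob_\star^2\olambda\,m_\star\bigr)$.

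Finally, clearing the denominator $1-\ob_1^2 m_\star$ in this equation and substituting $\ob_1^2 = \ratio^2\ob_\star^2$ turns it into a quadratic in $m_\star$ whose leading coefficient is $\ratio^2\ob_\star^4(\olambda\psi_1+1)$ and whose constant term is $-\psi_1$; the product of its two roots is therefore $-\psi_1/[\ratio^2\ob_\star^4(\olambda\psi_1+1)]<0$, so exactly one root is non-positive. Hence every subsequential limit of $m_0$ equals that root, $\lim_{\psi_2\to\infty} m_0$ exists, and a direct evaluation of the root (the quadratic-formula branch carrying the minus sign before the square root) reproduces the right-hand side claimed in the lemma. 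The passage from this to Theorem~\ref{thm:large_sample_limit} is then the same bookkeeping as in the proof of Theorem~\ref{thm:ridgeless_limit}: by \eqref{eqn:chi_m0_relationship} one has $\chi = \ob_\star^2 m_0$, so $\chi\to\omega/\ratio^2$ with $\omega$ the quantity in Theorem~\ref{thm:large_sample_limit}, and inserting this into \eqref{eq:E012def}, \eqref{eqn:main_bias}, \eqref{eqn:main_var} gives $\lim_{\psi_2\to\infty}\cuB = \cuB_{\lsamp}$, while $\lim_{\psi_2\to\infty}\cuV = 0$ because the numerator $\cuE_2$ of $\cuV$ stays bounded as $\psi_2\to\infty$ whereas $\cuE_0$ grows linearly in $\psi_2$.

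The argument has no deep obstacle — the lemma is genuinely ``similar'' to Lemma~\ref{lem:limiting_behavior_stieltjes} — but the one point that makes the $\psi_2\to\infty$ limit legitimate rather than formal is the uniform-in-$\psi_2$ control of $m_0$ and of $P(m_0)$: the bound $|m_0|\le 1/(\ob_\star^2\olambda)$ together with $m_0\le 0$ supplies exactly this, simultaneously giving compactness along subsequences and keeping $P$ away from its pole at $\ob_1^2 m_0 = 1$, after which everything reduces to selecting the non-positive root of a quadratic, just as in the ridgeless and infinite-width cases.
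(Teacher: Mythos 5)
Your proof is correct and follows the strategy the paper intends (it only remarks the proof is ``symmetric'' to Lemma \ref{lem:limiting_behavior_stieltjes_infinite_width} and ``similar'' to Lemma \ref{lem:limiting_behavior_stieltjes}): reduce the fixed-point equations at $\bq=\bzero$ to a scalar relation for $m_0=m_1m_2$, establish reality, non-positivity, and the uniform-in-$\psi_2$ bound $\vert m_0\vert\le \psi_1\psi_2/u^2 = 1/(\ob_\star^2\olambda)$, pass to the limit along subsequences, and identify every subsequential limit with the unique non-positive root of the resulting quadratic in $m_\star$. The algebra checks out: the quadratic has leading coefficient $\ratio^2\ob_\star^4(\olambda\psi_1+1)$, linear coefficient $\ob_\star^2(\psi_1\ratio^2-\ratio^2-\olambda\psi_1-1)$, and constant $-\psi_1$, whose non-positive root is exactly the claimed expression.
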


\section{Proof of Proposition \ref{prop:limiting_behavior_ridgeless} and \ref{prop:limiting_behavior_wide}}

\subsection{Proof of Proposition \ref{prop:limiting_behavior_ridgeless}}\label{sec:proof_limiting_behavior_ridgeless}

\noindent
{\bf Proof of Point (1). } When $\psi_1 \to 0$, we have $\chi = O(\psi_1)$, so that $\cuE_{1, \rless}  = - \psi_1 \psi_2 + O(\psi_1^2)$, $\cuE_{2, \rless} = O(\psi_1^2)$ and $\cuE_{0, \rless} = - \psi_1 \psi_2 + O(\psi_1^2)$. This proves Point (1).

\noindent
{\bf Proof of Point (2). } When $\psi_1 = \psi_2$, substituting the expression for $\chi$ into $\cuE_{0, \rless}$, we can see that $\cuE_{0, \rless}(\ratio, \psi_2, \psi_2) = 0$. We also see that $\cuE_{1, \rless}(\ratio, \psi_2, \psi_2) \neq 0$ and $\cuE_{2, \rless}(\ratio, \psi_2, \psi_2) \neq 0$. This proves Point (2).

\noindent
{\bf Proof of Point (3). } When $\psi_1 > \psi_2$, we have 
\[
\begin{aligned}
\lim_{\psi_1 \to \infty} \cuE_{0, \rless}(\ratio, \psi_1, \psi_2) / \psi_1 =&~ ( \psi_2 - 1 )\chi^3\ratio^6 +(1 - 3 \psi_2) \chi^2 \ratio^4  + 3 \psi_2 \chi \ratio^2 - \psi_2, \\
\lim_{\psi_1 \to \infty} \cuE_{1, \rless}(\ratio, \psi_1, \psi_2) / \psi_1 =&~ \psi_2 \chi \ratio^2 - \psi_2, \\
\lim_{\psi_1 \to \infty} \cuE_{2, \rless}(\ratio, \psi_1, \psi_2) / \psi_1 =&~ \chi^3 \ratio^6 - \chi^2 \ratio^4.\\
\end{aligned}
\]
This proves Point (3). 

\noindent
{\bf Proof of Point (4). } For $\psi_1 > \psi_2$, taking derivative of $\cuB_{\rless}$ and $\cuV_{\rless}$ with respect to $\psi_1$, we have 
\[
\begin{aligned}
\partial_{\psi_1} \cuB_{\rless}(\ratio, \psi_1, \psi_2) =&~ (\partial_{\psi_1} \cuE_{1, \rless} \cdot \cuE_{0, \rless} - \partial_{\psi_1} \cuE_{0, \rless} \cdot \cuE_{1, \rless}) / \cuE_{0, \rless}^2, \\
\partial_{\psi_1} \cuV_{\rless}(\ratio, \psi_1, \psi_2) =&~ (\partial_{\psi_1} \cuE_{2, \rless} \cdot \cuE_{0, \rless} - \partial_{\psi_1} \cuE_{0, \rless} \cdot \cuE_{2, \rless}) / \cuE_{0, \rless}^2. 
\end{aligned}
\]
It is easy to check that when $\psi_1 > \psi_2$, the functions $\partial_{\psi_1} \cuE_{1, \rless} \cdot \cuE_{0, \rless} - \partial_{\psi_1} \cuE_{0, \rless} \cdot \cuE_{1, \rless}$ and $\partial_{\psi_1} \cuE_{2, \rless} \cdot \cuE_{0, \rless} - \partial_{\psi_1} \cuE_{0, \rless} \cdot \cuE_{2, \rless}$ are functions of $\ratio$ and $\psi_2$, and are independent of $\psi_1$ (note when $\psi_1 > \psi_2$, $\chi$ is a function of $\psi_2$ and doesn't depend on $\psi_1$). 
Therefore, $\cuB_{\rless}(\ratio, \cdot, \psi_2)$ and $\cuV_{\rless}(\ratio, \cdot, \psi_2)$ as functions of $\psi_1$ must be strictly increasing, strictly decreasing, or staying constant on the interval $\psi_1 \in (\psi_2, \infty)$. However, we know $\cuB_{\rless}(\ratio,\psi_2, \psi_2) = \cuV_{\rless}(\ratio,\psi_2, \psi_2) = \infty$, and $\cuB_{\rless}(\ratio, \infty, \psi_2)$ and $\cuV_{\rless}(\ratio, \infty, \psi_2)$ are finite. Therefore, we must have that $\cuB_{\rless}$ and $\cuV_{\rless}$ are strictly decreasing on $\psi_1 \in (\psi_2, \infty)$.

%

\subsection{Proof of Proposition \ref{prop:limiting_behavior_wide}}\label{sec:proof_limiting_behavior_wide}

In Proposition \ref{prop:limiting_behavior_wide_restate} given by the following, we give a more precise description of the behavior of $\cuR_{\wide}$, which is stronger than Proposition \ref{prop:limiting_behavior_wide}. 

%

\begin{proposition}\label{prop:limiting_behavior_wide_restate}
Denote 
\[
\begin{aligned}
\overline \cuR_{\wide}(u, \rho, \psi_2) =&~ \frac{\psi_2 \rho + u^2}{(1 + \rho)(\psi_2 - 2 u \psi_2 + u^2 \psi_2 - u^2)},\\
\omega(\olambda, \ratio, \psi_2) =&~ - \frac{[( \psi_2\ratio^2 - \ratio^2 -\olambda\psi_2 -1)^2 + 4\psi_2\ratio^2(\olambda\psi_2 + 1)]^{1/2} +  (\psi_2\ratio^2 - \ratio^2 -\olambda\psi_2 -1)}{2(\olambda\psi_2 + 1)},\\
\omega_0(\ratio, \psi_2) =&~ - \frac{[( \psi_2\ratio^2 - \ratio^2 -1)^2 + 4\psi_2\ratio^2]^{1/2} +  (\psi_2\ratio^2 - \ratio^2  -1)}{2},\\
\omega_1(\rho, \psi_2) =&~ - \frac{(\psi_2 \rho - \rho - 1) + [(\psi_2 \rho  - \rho - 1)^2 + 4  \psi_2 \rho  ]^{1/2}}{2},\\
\rho_\star(\ratio, \psi_2) =&~ \frac{\omega_0^2 - \omega_0}{(1  - \psi_2) \omega_0 + \psi_2}, \\
\ratio_\star^2(\rho, \psi_2) =&~ \frac{\omega_1^2   -  \omega_1}{\omega_1 - \psi_2 \omega_1  + \psi_2},\\ 
\olambda_\star(\ratio, \psi_2, \rho) =&~ \frac{\ratio^2  \psi_2 -  \ratio^2  \omega_1 \psi_2 + \ratio^2 \omega_1 + \omega_1 - \omega_1^2  }{(\omega_1^2 - \omega_1) \psi_2}. \\
\end{aligned}
\]

Fix $\ratio, \psi_2 \in (0, \infty)$ and $\rho \in (0, \infty)$. Then the function $\olambda\mapsto \cuR_{\wide}(\rho,\ratio, \psi_2,\olambda)$ is either strictly increasing in $\olambda$, or strictly decreasing first and then strictly increasing. 

Moreover, For any $\rho < \rho_\star(\ratio, \psi_2)$, we have 
\[
\begin{aligned}
\argmin_{\olambda \ge 0} \cuR_{\wide}(\rho, \ratio, \olambda, \psi_2) =&~ 0,\\
\min_{\olambda \ge 0} \cuR_{\wide}(\rho, \ratio, \olambda, \psi_2) =&~  \overline \cuR_{\wide}(\omega_0(\ratio, \psi_2), \rho, \psi_2). \\
\end{aligned}
\]
For any $\rho \ge \rho_\star(\ratio, \psi_2)$, we have 
\[
\begin{aligned}
\argmin_{\olambda \ge 0} \cuR_{\wide}(\rho, \ratio, \olambda, \psi_2) =&~ \olambda_\star(\ratio, \psi_2, \rho),\\
\min_{\olambda \ge 0} \cuR_{\wide}(\rho, \ratio, \olambda, \psi_2) =&~ \overline \cuR_{\wide}(\omega_1(\rho, \psi_2), \rho, \psi_2).\\
\end{aligned}
\]
Minimizing over $\olambda$ and $\ratio$, we have
\[
\min_{\ratio, \olambda \ge 0} \cuR_{\wide}(\rho, \ratio, \olambda, \psi_2) =  \overline \cuR_{\wide}(\omega_1(\rho, \psi_2), \rho, \psi_2). \\
\]
The minimizer is achieved for any $\ratio^2 \ge \ratio_\star^2(\rho, \psi_2)$, and $\olambda = \olambda_\star(\ratio, \psi_2, \rho)$.
\end{proposition}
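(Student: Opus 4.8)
The plan is to reduce the whole statement to the one–variable analysis of an explicit rational function, exploiting the fact (already implicit in Theorem~\ref{thm:overparametrized_limit}) that $\omega=\omega(\olambda,\ratio,\psi_2)$ is an algebraic function of $\olambda$. The crucial first step is a purely algebraic simplification of $\cuR_{\wide}$. Writing $D(\omega)=(\psi_2-1)\omega^3+(1-3\psi_2)\omega^2+3\psi_2\omega-\psi_2$ for the common denominator of $\cuB_{\wide}$ and $\cuV_{\wide}$, one checks by direct expansion that $D(\omega)=(\omega-1)\big(\psi_2(1-\omega)^2-\omega^2\big)$; since the numerators factor as $\psi_2\omega-\psi_2=\psi_2(\omega-1)$ and $\omega^3-\omega^2=\omega^2(\omega-1)$, this yields $\cuB_{\wide}=\psi_2/(\psi_2(1-\omega)^2-\omega^2)$, $\cuV_{\wide}=\omega^2/(\psi_2(1-\omega)^2-\omega^2)$, and hence
\[
\cuR_{\wide}(\rho,\ratio,\psi_2,\olambda)\;=\;\frac{\psi_2\rho+\omega^2}{(1+\rho)\big(\psi_2(1-\omega)^2-\omega^2\big)}\;=\;\overline{\cuR}_{\wide}\big(\omega(\olambda,\ratio,\psi_2),\,\rho,\,\psi_2\big)\, .
\]
Everything else rests on this identity.

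Second, I would make precise the change of variables $\olambda\leftrightarrow\omega$. From its definition $\omega$ is the unique nonpositive root of the quadratic $(\olambda\psi_2+1)\omega^2+(\psi_2\ratio^2-\ratio^2-\olambda\psi_2-1)\omega-\psi_2\ratio^2=0$. Solving this relation for $\olambda$ expresses $\olambda$ as an explicit rational function of $\omega$ — precisely the expression $\olambda_\star$ of the statement with $\omega_1$ replaced by a generic $\omega$. Using $\omega(0,\ratio,\psi_2)=\omega_0(\ratio,\psi_2)<0$, the asymptotics $\omega(\olambda,\ratio,\psi_2)\to0^-$ as $\olambda\to\infty$, and monotonicity (equivalently $\partial_\olambda\omega>0$, obtained by implicit differentiation of the quadratic after noting $\omega<0$), one concludes that $\olambda\mapsto\omega$ is a strictly increasing bijection of $[0,\infty)$ onto $[\omega_0(\ratio,\psi_2),0)$. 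Therefore $\min_{\olambda\ge0}\cuR_{\wide}(\rho,\ratio,\psi_2,\olambda)=\min_{u\in[\omega_0,0)}\overline{\cuR}_{\wide}(u,\rho,\psi_2)$, and the shape of $\olambda\mapsto\cuR_{\wide}$ mirrors that of $u\mapsto\overline{\cuR}_{\wide}(u,\rho,\psi_2)$ on the window $[\omega_0,0)$.

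Third, I would analyze $u\mapsto\overline{\cuR}_{\wide}(u,\rho,\psi_2)$ on $(-\infty,0)$. Since $\cuR_{\wide}$ is a limit of genuine (hence positive) prediction errors for $\olambda>0$, continuity forces the denominator $\psi_2(1-u)^2-u^2$ to be strictly positive throughout $[\omega_0,0)$, so $\overline{\cuR}_{\wide}$ is smooth there. A one–line computation shows $\partial_u\overline{\cuR}_{\wide}$ has the same sign as $-u^2+(1+\rho-\rho\psi_2)u+\psi_2\rho$, a downward parabola with exactly one negative root, which one identifies with $\omega_1(\rho,\psi_2)$; hence $\overline{\cuR}_{\wide}(\cdot,\rho,\psi_2)$ is strictly decreasing on $(-\infty,\omega_1]$ and strictly increasing on $[\omega_1,0)$. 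Pulling this back through the bijection of the previous step gives the dichotomy ``$\cuR_{\wide}$ strictly increasing in $\olambda$, or strictly decreasing then strictly increasing'', and reduces locating the minimum to comparing $\omega_1$ with the left endpoint $\omega_0$: if $\omega_1\in[\omega_0,0)$ the minimum is interior, attained at $\omega=\omega_1$, i.e.\ at $\olambda=\olambda_\star(\ratio,\psi_2,\rho)$ with value $\overline{\cuR}_{\wide}(\omega_1,\rho,\psi_2)$; otherwise ($\omega_1<\omega_0$) it is at the boundary $\omega=\omega_0$, i.e.\ at $\olambda=0$, with value $\overline{\cuR}_{\wide}(\omega_0,\rho,\psi_2)$. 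Finally one checks that $\rho\mapsto\omega_1(\rho,\psi_2)$ is strictly monotone and that $\rho_\star(\ratio,\psi_2)$ is exactly the $\rho$ for which $\omega_1(\rho,\psi_2)=\omega_0(\ratio,\psi_2)$ (substitute $\omega_0$ into the $\omega_1$–quadratic and solve for $\rho$); this converts the comparison $\omega_1$ vs.\ $\omega_0$ into the stated comparison $\rho$ vs.\ $\rho_\star$.

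For the joint minimization over $(\olambda,\ratio)$: as $\ratio$ ranges over $(0,\infty)$ the endpoint $\omega_0(\ratio,\psi_2)$ decreases from $0$ to $-\infty$, so $\bigcup_\ratio[\omega_0(\ratio,\psi_2),0)=(-\infty,0)$; hence $\min_{\ratio,\olambda\ge0}\cuR_{\wide}=\min_{u<0}\overline{\cuR}_{\wide}(u,\rho,\psi_2)=\overline{\cuR}_{\wide}(\omega_1(\rho,\psi_2),\rho,\psi_2)$, attained exactly when $\omega_0(\ratio,\psi_2)\le\omega_1(\rho,\psi_2)$ — equivalently $\ratio^2\ge\ratio_\star^2(\rho,\psi_2)$, with $\ratio_\star$ obtained by substituting $\omega_1$ into the $\olambda=0$ instance of the $\omega$–quadratic — and then $\olambda=\olambda_\star(\ratio,\psi_2,\rho)$. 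I expect the only genuine work, beyond the clean reduction above, to be bookkeeping: verifying the factorization of $D$, the monotonicities of $\omega(\cdot)$, $\omega_0(\cdot)$ and $\omega_1(\cdot)$, and matching the closed forms $\rho_\star,\ratio_\star,\olambda_\star$ to the appropriate quadratic roots — all elementary, but requiring care with signs because $\omega,\omega_0,\omega_1$ are negative.
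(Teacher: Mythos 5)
Your proposal takes essentially the same route as the paper: rewrite $\cuR_{\wide}$ as $\overline\cuR_{\wide}(\omega(\cdot),\rho,\psi_2)$, analyze the one-variable function $\overline\cuR_{\wide}$ and identify $\omega_1$ as the unique negative critical point, establish that $\olambda\mapsto\omega$ is a negative, strictly increasing function (the paper's Lemma~\ref{lem:nu_kappa_derivative_non_negative}), and then compare $\omega_1$ to the endpoint $\omega_0$. The two concrete additions you make --- verifying the factorization $D(\omega)=(\omega-1)(\psi_2(1-\omega)^2-\omega^2)$ that underpins the identity, and pinning down that $\omega$ maps $[0,\infty)$ bijectively onto $[\omega_0,0)$ --- are helpful bookkeeping the paper leaves implicit, and the derivative formula for $\overline\cuR_{\wide}$ matches.

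Two small points worth flagging. First, your claim that $\omega_0(\ratio,\psi_2)\to-\infty$ as $\ratio\to\infty$ holds only for $\psi_2>1$; for $\psi_2<1$ one has $\omega_0\to-\psi_2/(1-\psi_2)$, but since $\omega_1(\rho,\psi_2)$ is confined to the same interval $(-\psi_2/(1-\psi_2),0)$ in that regime the joint-minimization conclusion still goes through, so the argument should be stated via the common rational map $\phi(\omega)=(\omega^2-\omega)/[(1-\psi_2)\omega+\psi_2]$ rather than a crude limit. Second, you say the comparison $\omega_1\gtrless\omega_0$ "converts to the stated comparison $\rho\gtrless\rho_\star$" without committing to the direction; if you carry this out (using that $\rho\mapsto\omega_1$ is strictly decreasing via $\phi$) you get $\rho<\rho_\star\Leftrightarrow\omega_1>\omega_0$, hence interior minimum at $\olambda_\star>0$ for $\rho<\rho_\star$ and boundary minimum at $\olambda=0$ for $\rho>\rho_\star$ --- which agrees with the paper's prose and its Section~\ref{sec:proof_limiting_behavior_wide} proof body but is the \emph{opposite} of the inequalities printed in Proposition~\ref{prop:limiting_behavior_wide} and~\ref{prop:limiting_behavior_wide_restate}. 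The paper has its signs reversed in the proposition statements; your argument, if completed carefully, would surface exactly this.
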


In the following, we prove Proposition \ref{prop:limiting_behavior_wide_restate}. It is easy to see that 
\[
\cuR_{\wide}(\rho, \ratio, \olambda, \psi_2) = \overline \cuR_{\wide}(\omega(\olambda, \ratio, \psi_2), \rho, \psi_2). 
\]
Hence we study the properties of $\overline \cuR_{\wide}$ first. 

\noindent
{\bf Step 1. Properties of the function $\overline \cuR_{\wide}$. } Calculating the derivative of $\overline \cuR_{\wide}$ with respect to $u$, we have
\[
\partial_u \overline \cuR_{\wide}(u, \rho, \psi_2) = - 2 \psi_2 [u^2 + (\psi_2 \rho  - \rho - 1) u - \psi_2 \rho] / [(1+\rho)(\psi_2 - 2 u \psi_2 + u^2 \psi_2 - u^2)^2].
\]
Note the equation 
\[
u^2 + (\psi_2 \rho  - \rho - 1) u - \psi_2 \rho = 0
\]
has one negative and one positive solution, and $\omega_1$ is the negative solution of the above equation. Therefore, when $u \le \omega_1$, $\overline \cuR_{\wide}$ will be strictly decreasing in $u$; when $0 \ge u \ge \omega_1$, $\overline \cuR_{\wide}$ will be strictly increasing in $u$. Therefore, we have 
\[
\argmin_{u \in (-\infty, 0]} \overline \cuR_{\wide}(u, \rho, \psi_2) = \omega_1(\rho, \psi_2). 
\]

\noindent
{\bf Step 2. Properties of the function $\cuR_{\wide}$. } For fixed $(\ratio, \rho, \psi_2)$, we look at the minimizer over $\olambda$ of the function $\cuR_{\wide}(\rho, \ratio, \olambda, \psi_2) = \overline \cuR_{\wide}(\omega(\olambda, \ratio, \psi_2), \rho, \psi_2)$. The minimum $\min_{\olambda \ge 0}\cuR_{\wide}(\rho, \ratio, \olambda, \psi_2)$ could be different from the minimum  $\min_{u \in (- \infty, 0]} \overline \cuR_{\wide}(u, \rho, \psi_2)$, since $\argmin_{u \in (-\infty, 0]} \overline \cuR_{\wide}(u, \rho, \psi_2) = \omega_1(\rho, \psi_2)$ may not be achievable by $\omega(\olambda, \ratio, \psi_2)$ when $\olambda \ge 0$. 

One observation is that $\omega(\cdot, \psi_2, \ratio)$ as a function of $\olambda$ is always negative and increasing. 
\begin{lemma}\label{lem:nu_kappa_derivative_non_negative}
Let 
\[
\omega(\olambda, \psi_2, \ratio) = - \frac{[( \psi_2\ratio^2 - \ratio^2 -\olambda\psi_2 -1)^2 + 4\psi_2\ratio^2(\olambda\psi_2 + 1)]^{1/2} +  (\psi_2\ratio^2 - \ratio^2 -\olambda\psi_2 -1)}{2(\olambda\psi_2 + 1)}. 
\]
Then for any $\psi_2 \in (0, \infty)$, $\ratio \in (0, \infty)$ and $\olambda > 0$, we have 
\[
\begin{aligned}
\omega(\olambda, \psi_2, \ratio) <& 0,\\
\partial_{\olambda} \omega(\olambda, \psi_2, \ratio) >& 0. 
\end{aligned}
\]
\end{lemma}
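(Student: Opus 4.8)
\textit{Proof proposal.} The plan is to recognize $\omega=\omega(\olambda,\psi_2,\ratio)$ as the unique negative root of an explicit quadratic in $\omega$, and then obtain both assertions from elementary sign checks plus a single implicit differentiation (which avoids differentiating the square root head‑on). Introduce the shorthand
\[
a \equiv \psi_2\ratio^2-\ratio^2-\olambda\psi_2-1, \qquad b \equiv 4\psi_2\ratio^2(\olambda\psi_2+1),
\]
so that $\omega = -\bigl(\sqrt{a^2+b}+a\bigr)/\bigl[2(\olambda\psi_2+1)\bigr]$. For $\psi_2,\ratio\in(0,\infty)$ and $\olambda>0$ one has $\olambda\psi_2+1>1$, hence $b>0$; therefore $a^2+b>a^2$, the discriminant never vanishes, and $\sqrt{a^2+b}>|a|\ge -a$. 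This already shows that $\omega$ is a smooth function of $\olambda$ on $(0,\infty)$ (the denominator $2(\olambda\psi_2+1)$ being strictly positive), and that $\sqrt{a^2+b}+a>0$, whence $\omega<0$. This settles the first claim.

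For the monotonicity I would first verify by direct algebra that $\omega$ solves
\[
F(\olambda,\omega) \equiv (\olambda\psi_2+1)\,\omega^2 + a\,\omega - \psi_2\ratio^2 = 0,
\]
i.e. $\omega$ is precisely the root $\bigl(-a-\sqrt{a^2+b}\bigr)/\bigl[2(\olambda\psi_2+1)\bigr]$ of this quadratic; the other root is positive because the product of the two roots equals $-\psi_2\ratio^2/(\olambda\psi_2+1)<0$. From this identification one extracts the key relation
\[
2(\olambda\psi_2+1)\,\omega + a = -\sqrt{a^2+b}.
\]
Differentiating the identity $F(\olambda,\omega(\olambda))\equiv 0$ in $\olambda$ and using $\partial_\olambda F = \psi_2\,\omega(\omega-1)$ together with $\partial_\omega F = 2(\olambda\psi_2+1)\omega+a$ gives
\[
\frac{\partial\omega}{\partial\olambda} \;=\; -\,\frac{\psi_2\,\omega(\omega-1)}{2(\olambda\psi_2+1)\omega+a} \;=\; \frac{\psi_2\,\omega(\omega-1)}{\sqrt{a^2+b}}.
\]
Since $\omega<0$, also $\omega-1<0$, so $\omega(\omega-1)>0$ and the right‑hand side is strictly positive; this yields $\partial_\olambda\omega>0$ and completes the lemma.

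The only step demanding any care is the bookkeeping that identifies $\omega$ with the negative root of $F$ and produces the relation $2(\olambda\psi_2+1)\omega+a=-\sqrt{a^2+b}$: this is what converts $\partial_\omega F$ into the transparent quantity $-\sqrt{a^2+b}$ and makes the sign of $\partial_\olambda\omega$ immediate, whereas differentiating the explicit radical formula directly would give a cumbersome expression whose positivity is not manifest. Everything else is routine. (As a sanity check one may also note that the same quadratic is the $\bq=\bzero$, $\psi_1\to\infty$ fixed‑point equation underlying Lemma \ref{lem:limiting_behavior_stieltjes_infinite_width}, which exhibits $\omega$ as $\ob_\star^2\ratio^2$ times the corresponding limit of $m_1m_2$; but the direct algebraic verification above is the quickest route.)
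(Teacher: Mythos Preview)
Your argument is correct and in fact cleaner than the paper's. Both you and the paper recognize $\omega$ as the negative root of the quadratic $(\olambda\psi_2+1)\omega^2+a\omega-\psi_2\ratio^2=0$ and use implicit differentiation to get $\partial_\olambda\omega=-\psi_2\omega(\omega-1)/\partial_\omega F$. The difference is in handling $\partial_\omega F=2(\olambda\psi_2+1)\omega+a$: the paper splits into two cases, directly differentiating the radical for $\psi_2\ge 1$ (producing a bulky expression whose numerator it then inspects), and for $\psi_2<1$ rewriting the denominator as $(\olambda\psi_2+1)(2\omega-1)+(\psi_2-1)\ratio^2$ to see it is negative. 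Your observation that $2(\olambda\psi_2+1)\omega+a=-\sqrt{a^2+b}$ directly from the explicit formula for $\omega$ makes the sign transparent for all $\psi_2$ at once, eliminating the case split and the heavy computation. The paper's argument buys nothing extra here; yours is strictly simpler.
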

Let us for now admit this lemma holds. When $\rho$ is such that $\omega_1 > \omega_0$ (i.e. $\rho < \rho_\star(\ratio, \psi_2)$), we can choose $\olambda = \olambda_\star(\ratio, \psi_2, \rho) > 0$ such that $\omega(\olambda, \ratio, \psi_2) = \omega(\olambda_\star, \ratio, \psi_2) = \omega_1(\rho, \psi_2)$, and then $\cuR_{\wide}(\rho, \ratio, \olambda_\star(\ratio, \psi_2, \rho), \psi_2)  = \overline \cuR_{\wide}(\omega_1(\rho, \psi_2), \rho, \psi_2)$ gives the minimum of $\cuR_{\wide}$ optimizing over $\olambda \in [0, \infty)$. When $\rho$ is such that $\omega_1 < \omega_0$ (i.e. $\rho > \rho_\star(\ratio, \psi_2)$), there is not a $\olambda$ such that $\omega(\olambda, \ratio, \psi_2) = \omega_1(\rho, \psi_2)$ holds. Therefore, the best we can do is to take $\olambda = 0$, and then $\cuR_{\wide}(\rho, \ratio, 0, \psi_2) = \overline \cuR_{\wide}(\omega_0(\rho, \psi_2), \rho, \psi_2)$ gives the minimum of $\cuR_{\wide}$ optimizing over $\olambda \in [0, \infty)$. 

Finally, when we minimize $\cuR_{\wide}(\rho, \ratio, \olambda, \psi_2)$  jointly over $\ratio$ and $\olambda$, note that as long as $\ratio^2 \ge \ratio_\star^2$, we can choose $\olambda = \olambda_\star(\ratio, \psi_2, \rho) > 0$ such that $\omega(\olambda, \ratio, \psi_2) =\omega(\olambda_\star, \ratio, \psi_2) = \omega_1(\rho, \psi_2)$, and then $\cuR_{\wide}(\rho, \ratio, \olambda_\star(\ratio, \psi_2, \rho), \psi_2)  = \overline \cuR_{\wide}(\omega_1(\rho, \psi_2), \rho, \psi_2)$ gives the minimum of $\cuR_{\wide}$ optimizing over $\olambda \in [0, \infty)$ and $\ratio \in (0, \infty)$. This proves Proposition \ref{prop:limiting_behavior_wide_restate}.

%

In the following, we prove Lemma \ref{lem:nu_kappa_derivative_non_negative}. 

\begin{proof}[Proof of Lemma \ref{lem:nu_kappa_derivative_non_negative}] 
It is easy to see that $\omega(\olambda, \psi_2, \ratio) < 0$. In the following, we show $\partial_{\olambda} \omega(\olambda, \psi_2, \ratio) > 0$. 

\noindent
{\bf Step 1. When $\psi_2 \ge 1$. } We have 
\[
\omega = - \frac{[( \psi_2\ratio^2 - \ratio^2 -\olambda\psi_2 -1)^2 + 4\psi_2\ratio^2(\olambda\psi_2 + 1)]^{1/2} +  (\psi_2\ratio^2 - \ratio^2 -\olambda\psi_2 -1)}{2(\olambda\psi_2 + 1)}.
\]
Then we have
\[
\begin{aligned}
\partial_{\olambda} \omega  = \frac{(\psi_2 - 1)[(\olambda\psi_2 - \psi_2\ratio^2 + \ratio^2 + 1)^2 + 4  \psi_2  \ratio^2(\olambda\psi_2 + 1)]^{1/2} + (\olambda\psi_2^2 + \olambda\psi_2 + (\psi_2 - 1)^2 \ratio^2+ \psi_2  + 1)}{2 \psi_2^2  \olambda [\olambda^2\psi_2^2(\olambda\psi_2 - \psi_2\ratio^2 + \ratio^2 + 1)^2 + 4\olambda^2\psi_2^3\ratio^2(\olambda\psi_2 + 1)]^{1/2}(\olambda\psi_2 + 1)^2}
\end{aligned}
\]
It is easy to see that, when $\olambda>0$ and $\psi_2 > 1$, both the denominator and numerator is positive, so that $\partial_{\olambda} \omega > 0$.

\noindent
{\bf Step 2. When $\psi_2 < 1$. } Note $\omega$ is the negative solution of the quadratic equation
\[
(\olambda\psi_2 + 1) \omega^2 + (\psi_2\ratio^2 - \ratio^2 -\olambda\psi_2 -1) \omega - \psi_2\ratio^2 = 0.
\]
Differentiating the quadratic equation with respect to $\olambda$, we have 
\[
\psi_2 \omega^2 + 2 (\olambda\psi_2 + 1) \omega \partial_{\olambda} \omega- \psi_2 \omega + (\psi_2\ratio^2 - \ratio^2 -\olambda\psi_2 -1) \partial_{\olambda} \omega = 0,
\]
which gives
\[
\partial_{\olambda} \omega = (\psi_2 \omega - \psi_2 \omega^2) / [2 (\olambda\psi_2 + 1) \omega + \psi_2\ratio^2 - \ratio^2 -\olambda\psi_2 -1] = (\psi_2 \omega - \psi_2 \omega^2) /  [ (\olambda\psi_2 + 1) (2 \omega - 1) + (\psi_2-1)\ratio^2].  
\]
We can see that, since $\omega < 0$, when $\psi_2 < 1$, both the denominator and numerator is negative. This proves $\partial_{\olambda} \omega > 0$ when $\psi_2 < 1$. 
\end{proof}

\section*{Acknowledgements}

This work was partially supported by grants NSF CCF-1714305, IIS-1741162, and ONR
N00014-18-1-2729.

\bibliographystyle{amsalpha}
\bibliography{paper_sm_v22.bbl}

\clearpage

\appendix

\section{Technical background}
\label{sec:Background}

In this section we introduce the technical background which will be useful for the proofs in the next sections.
In particular, we will use decompositions in (hyper-)spherical harmonics on the  $\S^{d-1}(\sqrt{d})$ and in orthogonal polynomials
on the real line. All of the properties listed below are classical: we will however prove a few facts that are slightly less standard. 
We refer the reader to \cite{costas2014spherical,szego1939orthogonal,chihara2011introduction,ghorbani2019linearized} for further information on these topics. Expansions in spherical harmonics have been used in the past in the statistics literature, for instance in \cite{donoho1989projection,bach2017breaking}.

\subsection{Functional spaces over the sphere}

For $d \ge 1$, we let $\S^{d-1}(r) = \{\bx \in \R^{d}: \| \bx \|_2 = r\}$ denote the sphere with radius $r$ in $\reals^d$.
We will mostly work with the sphere of radius $\sqrt d$, $\S^{d-1}(\sqrt{d})$ and will denote by $\gamma_d$  the uniform probability measure on $\S^{d-1}(\sqrt d)$. 
All functions in the following are assumed to be elements of $ L^2(\S^{d-1}(\sqrt d) ,\gamma_d)$, with scalar product and norm denoted as $\<\,\cdot\,,\,\cdot\,\>_{L^2}$
and $\|\,\cdot\,\|_{L^2}$:
\begin{align}
\<f,g\>_{L^2} \equiv \int_{\S^{d-1}(\sqrt d)} f(\bx) \, g(\bx)\, \gamma_d(\de \bx)\,.
\end{align}

For $\ell\in\integers_{\ge 0}$, let $\tilde{V}_{d,\ell}$ be the space of homogeneous harmonic polynomials of degree $\ell$ on $\reals^d$ (i.e. homogeneous
polynomials $q(\bx)$ satisfying $\Delta q(\bx) = 0$), and denote by $V_{d,\ell}$ the linear space of functions obtained by restricting the polynomials in $\tilde{V}_{d,\ell}$
to $\S^{d-1}(\sqrt d)$. With these definitions, we have the following orthogonal decomposition
\begin{align}
L^2(\S^{d-1}(\sqrt d) ,\gamma_d) = \bigoplus_{\ell=0}^{\infty} V_{d,\ell}\, . \label{eq:SpinDecomposition}
\end{align}
The dimension of each subspace is given by
\begin{align}
\dim(V_{d,\ell}) = B(d, \ell) = \frac{2 \ell + d - 2}{\ell} { \ell + d - 3 \choose \ell - 1} \, .
\end{align}
For each $\ell\in \integers_{\ge 0}$, the spherical harmonics $\{ Y_{\ell j}^{(d)}\}_{1\le j \le B(d, \ell)}$ form an orthonormal basis of $V_{d,\ell}$:
\[
\<Y^{(d)}_{ki}, Y^{(d)}_{sj}\>_{L^2} = \delta_{ij} \delta_{ks}.
\]
Note that our convention is different from the more standard one, that defines the spherical harmonics as functions on $\S^{d-1}(1)$.
It is immediate to pass from one convention to the other by a simple scaling. We will drop the superscript $d$ and write $Y_{\ell, j} = Y_{\ell, j}^{(d)}$ whenever clear from the context.

We denote by $\proj_k$  the orthogonal projections to $V_{d,k}$ in $L^2(\S^{d-1}(\sqrt d),\gamma_d)$. This can be written in terms of spherical harmonics as
\begin{align}
\proj_k f(\bx) \equiv&~ \sum_{l=1}^{B(d, k)} \< f, Y_{kl}\>_{L^2} Y_{kl}(\bx). 
\end{align}
Then for a function $f \in L^2(\S^{d-1}(\sqrt d))$, we have 
\[
f(\bx) = \sum_{k = 0}^\infty \proj_k f(\bx) = \sum_{k = 0 }^\infty \sum_{l = 1}^{B(d, k)} \< f, Y_{kl}\>_{L^2} Y_{kl}(\bx). 
\]

\subsection{Gegenbauer polynomials}
\label{sec:Gegenbauer}

The $\ell$-th Gegenbauer polynomial $Q_\ell^{(d)}$ is a polynomial of degree $\ell$. Consistently
with our convention for spherical harmonics, we view $Q_\ell^{(d)}$ as a function $Q_{\ell}^{(d)}: [-d,d]\to \reals$. The set $\{ Q_\ell^{(d)}\}_{\ell\ge 0}$ forms an orthogonal basis on $L^2([-d,d], \tilde \tau_d)$ (where $\tilde \tau_d$ is the distribution of $\<\bx_1, \bx_2\>$ when $\bx_1, \bx_2 \sim_{i.i.d.} \Unif(\S^{d-1}(\sqrt d))$), satisfying the normalization condition:
\begin{align}
\< Q^{(d)}_k, Q^{(d)}_j \>_{L^2(\tilde \tau_d)} = \frac{1}{B(d,k)}\, \delta_{jk} \, .  \label{eq:GegenbauerNormalization}
\end{align}
In particular, these polynomials are normalized so that  $Q_\ell^{(d)}(d) = 1$. 
As above, we will omit the superscript $d$ when clear from the context (write it as $Q_\ell$ for notation simplicity).

Gegenbauer polynomials are directly related to spherical harmonics as follows. Fix $\bv\in\S^{d-1}(\sqrt{d})$ and 
consider the subspace of  $V_{\ell}$ formed by all functions that are invariant under rotations in $\reals^d$ that keep $\bv$ unchanged.
It is not hard to see that this subspace has dimension one, and coincides with the span of the function $Q_{\ell}^{(d)}(\<\bv,\,\cdot\,\>)$.

We will use the following properties of Gegenbauer polynomials
\begin{enumerate}
\item For $\bx, \by \in \S^{d-1}(\sqrt d)$
\begin{align}
\< Q_j^{(d)}(\< \bx, \cdot\>), Q_k^{(d)}(\< \by, \cdot\>) \>_{L^2(\S^{d-1}(\sqrt d), \gamma_d)} = \frac{1}{B(d,k)}\delta_{jk}  Q_k^{(d)}(\< \bx, \by\>).  \label{eq:ProductGegenbauer}
\end{align}
\item For $\bx, \by \in \S^{d-1}(\sqrt d)$
\begin{align}
Q_k^{(d)}(\< \bx, \by\> ) = \frac{1}{B(d, k)} \sum_{i =1}^{ B(d, k)} Y_{ki}^{(d)}(\bx) Y_{ki}^{(d)}(\by). \label{eq:GegenbauerHarmonics}
\end{align}
\end{enumerate}
Note in particular that property 2 implies that --up to a constant-- $Q_k^{(d)}(\< \bx, \by\> )$ is a representation of the projector onto 
the subspace of degree-$k$ spherical harmonics
\begin{align}
(\proj_k f)(\bx) = B(d,k) \int_{\S^{d-1}(\sqrt{d})} \, Q_k^{(d)}(\< \bx, \by\> )\,  f(\by)\, \gamma_d(\de\by)\, .\label{eq:ProjectorGegenbauer}
\end{align}
For a function $\sigma \in L^2([-\sqrt d, \sqrt d], \tau_d)$ (where $\tau_d$ is the distribution of $\< \bx_1, \bx_2 \> / \sqrt d$ when $\bx_1, \bx_2 \sim_{iid} \Unif(\S^{d-1}(\sqrt d))$), denoting its spherical harmonics coefficients $\lambda_{d, k}(\sigma)$ to be 
\begin{align}\label{eqn:technical_lambda_sigma}
\lambda_{d, k}(\sigma) = \int_{[-\sqrt d , \sqrt d]} \sigma(x) Q_k^{(d)}(\sqrt d x) \tau_d(x),
\end{align}
then we have the following equation holds in $L^2([-\sqrt d, \sqrt d],\tau_d)$ sense
\begin{equation}\label{eqn:sigma_G_decomposition}
\sigma(x) = \sum_{k = 0}^\infty \lambda_{d, k}(\sigma) B(d, k) Q_k^{(d)}(\sqrt d x). 
\end{equation}

\subsection{Hermite polynomials}

The Hermite polynomials $\{\He_k\}_{k\ge 0}$ form an orthogonal basis of $L^2(\reals,\mu_G)$, where $\mu_G(\de x) = e^{-x^2/2}\de x/\sqrt{2\pi}$ 
is the standard Gaussian measure, and $\He_k$ has degree $k$. We will follow the classical normalization (here and below, expectation is with respect to
$G\sim\normal(0,1)$):
\begin{align}
\E\big\{\He_j(G) \,\He_k(G)\big\} = k!\, \delta_{jk}\, .
\end{align}
As a consequence, for any function $\sigma \in L^2(\reals,\mu_G)$, we have the decomposition
\begin{align}\label{eqn:sigma_He_decomposition}
\sigma(x) = \sum_{k=1}^{\infty}\frac{\mu_k(\sigma )}{k!}\, \He_k(x)\, ,\;\;\;\;\;\; \mu_k(\sigma) \equiv \E\big\{\sigma(G)\, \He_k(G)\}\, .
\end{align}

The Hermite polynomials can be obtained as high-dimensional limits of the Gegenbauer polynomials introduced in the previous section. Indeed, 
the Gegenbauer polynomials (up to a $\sqrt d$ scaling in domain) are constructed by Gram-Schmidt orthogonalization of the monomials $\{x^k\}_{k\ge 0}$ with respect to the measure 
$\tau_d$, while Hermite polynomial are obtained by Gram-Schmidt orthogonalization with respect to $\mu_G$. Since $\tau_d\Rightarrow \mu_G$
(here $\Rightarrow$ denotes weak convergence),
it is immediate to show that, for any fixed integer $k$, 
\begin{align}
\lim_{d \to \infty} \Coeff\{ Q_k^{(d)}( \sqrt d x) \, B(d, k)^{1/2} \} = \Coeff\left\{ \frac{1}{(k!)^{1/2}}\,\He_k(x) \right\}\, .\label{eq:Gegen-to-Hermite}
\end{align}
Here and below, for $P$ a polynomial, $\Coeff\{ P(x) \}$ is  the vector of the coefficients of $P$. As a consequence, for any fixed integer $k$, we have 
\begin{align}\label{eqn:relationship_mu_lambda}
\mu_k(\sigma) = \lim_{d \to \infty} \lambda_{d, k}(\sigma) (B(d, k) k!)^{1/2},
\end{align}
where $\mu_k(\sigma)$ and $\lambda_{d, k}(\sigma)$ are given in Eq. (\ref{eqn:sigma_He_decomposition}) and (\ref{eqn:technical_lambda_sigma}).

\clearpage

\section{Proof of Proposition \ref{prop:G_and_M}}\label{sec:proof_prop_G_and_M}

We can see Eq. (\ref{eqn:connection_G_M}) is trivially implied by the definition of $G_d$ and $M_d$ as in Eq. (\ref{eqn:Stieltjes_A}). To prove Eq. (\ref{eqn:Psi_j_and_G_d}), it is enough to prove the following equations: for $u \in \R$, we have
\begin{equation}\label{eqn:connection_G_Psi}
\begin{aligned}
\partial_p G_d(\imagunit u; \bzero) =&~ \frac{2}{d}  \Tr\Big( ( u^2 \id_N + \bZ^\sT  \bZ)^{-1}  \bZ_1^\sT \bZ \Big), \\
\partial_{s_1, t_1}^2 G_d(\imagunit u; \bzero) =&~ - \frac{1}{d} \Tr\Big( ( u^2 \id_N + \bZ^\sT  \bZ)^{-2} \bZ^\sT \bZ  \Big),\\
\partial_{s_1, t_2}^2 G_d(\imagunit u; \bzero) =&~ - \frac{1}{d} \Tr\Big( ( u^2 \id_N + \bZ^\sT  \bZ)^{-2} \bZ^\sT \bH \bZ  \Big),\\
\partial_{s_2, t_1}^2 G_d(\imagunit u; \bzero) =&~ -  \frac{1}{d} \Tr\Big( ( u^2 \id_N + \bZ^\sT  \bZ)^{-1} \bQ ( u^2 \id_N + \bZ^\sT  \bZ)^{-1} \bZ^\sT \bZ  \Big),\\
\partial_{s_2, t_2}^2 G_d(\imagunit u; \bzero) =&~ - \frac{1}{d}  \Tr\Big( ( u^2 \id_N + \bZ^\sT  \bZ)^{-1} \bQ ( u^2 \id_N + \bZ^\sT  \bZ )^{-1}  \bZ^\sT \bH \bZ  \Big). 
\end{aligned}
\end{equation}

Now we prove Eq. (\ref{eqn:connection_G_Psi}). For any fixed $\bq \in \R^5$, $\xi \in \C_+$ and a fixed instance $\bA(\bq)$, the determinant can be represented as $\det(\bA(\bq) - \xi \id_M) = r(\bq, \xi) \exp(\imagunit \theta(\bq, \xi))$ for $\theta(\bq, \xi) \in (-\pi, \pi]$. Without loss of generality, we assume for this fixed $\bq$ and $\xi$, we have $\theta(\bq, \xi) \neq \pi$, and then $\Log(\det(\bA(\bq) - \xi \id_M)) = \log r(\bq, \xi) + \imagunit \theta(\bq, \xi)$ (when $\theta(\bq, \xi) = \pi$, we use another definition of $\Log$ notation, and the proof is the same). For this $\bq$, $\xi$, and $\bA(\bq)$, there exists some integer $k = k(\bq, \xi) \in \N$, such that 
\[
\sum_{i = 1}^M \Log(\lambda_i(\bA(\bq)) - \xi) = \Log \det(\bA(\bq) - \xi \id_M) + 2 \pi \imagunit k(\bq, \xi). 
\]
Moreover, the set of eigenvalues of $\bA(\bq) - \xi \id_M$ and $\det(\bA(\bq) - \xi \id_M)$ are continuous with respect to $\bq$. Therefore, for any perturbation $\Delta \bq$ with $\| \Delta \bq \|_2 \le \eps$ and $\eps$ small enough, we have $k(\bq + \Delta \bq, \xi) = k(\bq, \xi)$. As a result, we have 
\[
\partial_{q_i} \Big[ \sum_{i = 1}^M \Log(\lambda_i(\bA(\bq)) - \xi)\Big] = \partial_{q_i} \Log\Big[ \det(\bA(\bq) - \xi \id_M) \Big] = \Tr\Big[ ( \bA(\bq)  - \xi \id_M) ^{-1} \partial_{q_i} \bA(\bq) \Big]. 
\]
Moreover, $\bA(\bq)$ (defined as in Eq. (\ref{eqn:matrix_A})) is a linear matrix function of $\bq$, which gives $\partial_{q_i, q_j} \bA(\bq) = \bzero$. Hence we have 
\[
\begin{aligned}
&\partial_{q_i, q_j}^2 \Big[ \sum_{i = 1}^M \Log(\lambda_i(\bA(\bq)) - \xi)\Big] \\
=&~ \partial_{q_i, q_j}^2 \Log\Big[ \det(\bA(\bq) - \xi \id_M) \Big] \\
=&~ \partial_{q_j}  \Tr \Big[ ( \bA(\bq)  - \xi \id_M) ^{-1} \partial_{q_i} \bA(\bq) \Big] \\
=&~ -\Tr\Big[ ( \bA(\bq)  - \xi \id_M) ^{-1} \partial_{q_j} \bA(\bq) ( \bA(\bq)  - \xi \id_M) ^{-1} \partial_{q_i} \bA(\bq) \Big]. 
\end{aligned}
\]
Note
\begin{align*}
\partial_{s_1} \bA(\bzero) =&~ \begin{bmatrix}
\id_N & \bzero \\
\bzero & \bzero
\end{bmatrix}, 
 & \partial_{s_2} \bA(\bzero) =&~ \begin{bmatrix}
\bQ & \bzero \\
\bzero & \bzero
\end{bmatrix}, \\
\partial_{t_1} \bA(\bzero) =&~ \begin{bmatrix}
\bzero & \bzero \\
\bzero & \id_n
\end{bmatrix}, 
&\partial_{t_2} \bA(\bzero) =&~ \begin{bmatrix}
\bzero & \bzero \\
\bzero & \bH
\end{bmatrix}, 
&\partial_{p} \bA(\bzero) =&~ \begin{bmatrix}
\bzero & \bZ_1^\sT \\
\bZ_1 & \bzero
\end{bmatrix},
\end{align*}
and using the formula for block matrix inversion, we have
\[
( \bA(\bzero)  - \imagunit u \id_M)^{-1} = \begin{bmatrix}
(- \imagunit u \id_N - \imagunit \bZ^\sT \bZ / u)^{-1} &  (u^2 \id_N + \bZ^\sT \bZ)^{-1}\bZ^\sT  \\
\bZ (u^2 \id_N + \bZ^\sT \bZ )^{-1}  & (- \imagunit u \id_n - \imagunit \bZ \bZ^\sT / u)^{-1}
\end{bmatrix}. 
\]
With simple algebra, we can show that Eq. (\ref{eqn:connection_G_Psi}) holds.

\clearpage

\section{Additional Proofs in Section \ref{sec:decomposition}}\label{sec:additional_decomposition}

\subsection{Proof of Lemma \ref{lem:decomposition1} and Lemma \ref{lem:randomize_beta}}

\begin{proof}[Proof of Lemma \ref{lem:decomposition1}]~ 
We define the sequence $(\normf_{d, k}^2)_{k \ge 2}$ to be the coefficients of Gegenbauer expansion of $\Sigma_d$:
\[
\Sigma_d(x / \sqrt d) = \sum_{k = 2}^\infty \normf_{d, k}^2 Q_k^{(d)}(\sqrt d x). 
\]
In the expansion, the zeroth and first order coefficients are $0$, because, according to  Assumption \ref{ass:ground_truth}, 
\[
\E_{\bx \sim \Unif(\S^{d-1}(\sqrt d))}[\Sigma_d(x_1/\sqrt d)] = \E_{\bx \sim \Unif(\S^{d-1}(\sqrt d))}[\Sigma_d(x_1/\sqrt d) x_1] = 0.
\] 

To check point (1), we have $\Sigma_d(1) =  \sum_{k = 2}^\infty \normf_{d, k}^2 Q_k^{(d)}(d) = \sum_{k = 2}^\infty \normf_{d, k}^2$, and by Assumption \ref{ass:ground_truth} we have $\lim_{d \to \infty} \Sigma_d(1) = \normf_\star^2$, so that (1) holds. 

To check point (2), defining $(\bbeta_{d, k})_{k \ge 2}$ and $g_d^{\sNL}(\bx)$ accordingly, we have
\[
\begin{aligned}
\E_{\bbeta}[g_d^{\sNL}(\bx_1) g_d^{\sNL}(\bx_2)] =&~ \E_\bbeta \Big[ \Big(\sum_{k \ge 2} \sum_{l \in [B(d, k)]}  (\bbeta_{d, k})_l Y_{kl}^{(d)}(\bx_1) \Big) \Big( \sum_{k \ge 2} \sum_{l \in [B(d, k)]}  (\bbeta_{d, k})_l Y_{k l}^{(d)}(\bx_2) \Big) \Big] \\
=&~ \sum_{k \ge 2} \normf_{d, k}^2 Y_{kl}^{(d)}(\bx_1) Y_{k l}^{(d)}(\bx_2) / B(d, k) = \sum_{k \ge 2} \normf_{d, k}^2 Q_k^{(d)}(\< \bx_1, \bx_2\>) = \Sigma_d(\< \bx_1, \bx_2\> / d). 
\end{aligned}
\]
This proves Lemma \ref{lem:decomposition1}
\end{proof}

\begin{proof}[Proof of Lemma \ref{lem:randomize_beta}]~ 
With a little abuse of notations, let us define
\[
\cE(\bbeta_{d, 1}, f_d^{\sNL}, \bX, \bTheta, \beps) \equiv \Big\vert R_\RF(f_d, \bX, \bTheta, \lambda) - \Big[ \normf_1^2 (1 - 2 \Psi_1 + \Psi_2) + (\normf_\star^2 + \tau^2) \Psi_3 + \normf_\star^2 \Big] \Big\vert. 
\]
For any orthogonal matrix $\cO \in \R^{d \times d}$, it is easy to see that, there exists a transformation $\cT_{\cO}$ that acts on $f_d^{\sNL}$ with $f_d^{\sNL} \stackrel{d}{=} \cT_{\cO}[f_d^{\sNL}]$, such that for any fixed $\bbeta_{d, 1}$, $\bX$, $\bTheta$, $\beps$ and $f_d^{\sNL}$, we have
\[
\cE(\bbeta_{d, 1}, f_d^{\sNL}, \bX, \bTheta, \beps) = \cE(\cO \bbeta_{d, 1}, \cT_\cO[f_d^{\sNL}], \bX \cO^\sT, \bTheta \cO^\sT, \beps). 
\]
Moreover, note that $\bX$, $\bTheta$, $\beps$, and $f_d^{\sNL}$ are mutually independent, $\bX \stackrel{d}{=} \bX \cO^\sT$, $\bTheta \stackrel{d}{=} \bTheta \cO^\sT$, and $f_d^{\sNL} \stackrel{d}{=} \cT_{\cO}[f_d^{\sNL}]$. Then, for any fixed $\bbeta_{d, 1}$, we have 
\[
\cE(\cO \bbeta_{d, 1}, \cT_\cO[f_d^{\sNL}], \bX \cO^\sT, \bTheta \cO^\sT, \beps) \stackrel{d}{=} \cE(\cO \bbeta_{d, 1}, f_d^{\sNL}, \bX, \bTheta, \beps)
\]
where the randomness is given by $(\bX, \bTheta, \beps, f_d^{\sNL})$. As a result, for any $\bbeta_{d, 1} \in \S^{d-1}(\normf_{d, 1})$ and $\cO$ orthogonal matrix, we have 
\[
\E_{\bX, \bTheta, \beps, f_d^{\sNL}} [ \cE(\bbeta_{d, 1})] = \E_{\bX, \bTheta, \beps, f_d^{\sNL}} [ \cE(\cO \bbeta_{d, 1})]. 
\]
This immediately proves the lemma. 
\end{proof}

\subsection{Proof of Lemma \ref{lem:constant_term_one} and Lemma \ref{lem:constant_term_two}}\label{subsec:bounds_key_constant_term}

To prove Lemma \ref{lem:constant_term_one} and \ref{lem:constant_term_two}, first we state a lemma that reformulate $A_1, A_2$ and $B_\alpha$ using Sherman-Morrison-Woodbury formula. 

\begin{lemma}[Simplifications using Sherman-Morrison-Woodbury formula]\label{lem:SMW_calculations}
Use the same definitions and assumptions as Proposition \ref{prop:decomposition} and Lemma \ref{lem:reformulation}. For $\bM \in \R^{N \times N}$, define
\begin{align}
L_1 =&~  \frac{1}{\sqrt d} \lambda_{d, 0}(\sigma) \Tr [ \ones_N \ones_n^\sT  \bZ \Res ],\label{eqn:L1_in_SMW_calculations}\\
L_2(\bM) =&~ \frac{1}{d} \Tr [ \Res \bM \Res \bZ^\sT \ones_n \ones_n^\sT \bZ ]. \label{eqn:L2_in_SMW_calculations}
\end{align}
We then have
\begin{align}
L_1 =&~ 1 - \frac{K_{12} + 1}{K_{11} (1 - K_{22}) + (K_{12} + 1)^2}, \label{eqn:L2_in_SMW_calculations_reformulation_1} \\
L_2(\bM) =&~ \psi_2 \frac{G_{11} (1 - K_{22})^2 + G_{22}(K_{12} + 1)^2 + 2 G_{12} (K_{12} + 1) (1 - K_{22})}{(K_{11} (1 - K_{22}) + (K_{12} + 1)^2)^2},\label{eqn:L2_in_SMW_calculations_reformulation_2}
\end{align}
where
\[
\begin{aligned}
K_{11} =&~  \bT_1^\sT \bE_0^{-1} \bT_1, \\
K_{12} =&~  \bT_1^\sT \bE_0^{-1} \bT_2, \\
K_{22} =&~  \bT_2^\sT \bE_0^{-1} \bT_2,\\
G_{11} =&~  \bT_1^\sT \bE_0^{-1} \bM \bE_0^{-1} \bT_1, \\
G_{12} =&~  \bT_1^\sT \bE_0^{-1} \bM \bE_0^{-1} \bT_2, \\
G_{22} =&~  \bT_2^\sT \bE_0^{-1} \bM \bE_0^{-1} \bT_2, \\
\end{aligned}
\]
and
\[
\begin{aligned}
\vphi_d(x) =&~ \sigma(x) -  \lambda_{d, 0}(\sigma), \\
\bJ =&~ \frac{1}{\sqrt d}\vphi_d\Big(\frac{1}{\sqrt d}\bX \bTheta^\sT \Big), \\
\bE_0 =&~ \bJ^\sT \bJ + \psi_1 \psi_2 \lambda \id_N, \\
\bT_1 =&~ \psi_2^{1/2}  \lambda_{d, 0}(\sigma) \ones_N, \\
\bT_2 =&~ \frac{1}{\sqrt n}\bJ^\sT \ones_n.
\end{aligned}
\]
\end{lemma}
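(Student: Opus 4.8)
The plan is to derive both identities by a single application of the Sherman--Morrison--Woodbury (SMW) formula, after writing $\bZ$ as a rank-two (in fact, rank-one times a constant) correction of its centered version $\bJ$. All manipulations are deterministic finite-dimensional identities, valid for every realization of $\bX,\bTheta$, so no probabilistic input is needed.

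First I would record the elementary decomposition $\bZ = \bJ + (\lambda_{d,0}(\sigma)/\sqrt{d})\,\ones_n\ones_N^{\sT}$. This follows from the Gegenbauer expansion $\sigma(x) = \sum_{k\ge 0}\lambda_{d,k}(\sigma) B(d,k) Q_k^{(d)}(\sqrt{d}\,x)$ together with $B(d,0)=1$ and $Q_0^{(d)}\equiv 1$ (see Section~\ref{sec:Gegenbauer} in Appendix~\ref{sec:Background}), applied entrywise to $\bZ = \sigma(\bX\bTheta^{\sT}/\sqrt d)/\sqrt d$ and $\bJ = \vphi_d(\bX\bTheta^{\sT}/\sqrt d)/\sqrt d$ with $\vphi_d = \sigma - \lambda_{d,0}(\sigma)$. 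Expanding $\bZ^{\sT}\bZ$ and using $n/d = \psi_2$ (recall $\psi_2 = \psi_{2,d}$ is held fixed throughout this section), I would then check the identity
$\bZ^{\sT}\bZ + \psi_1\psi_2\lambda\,\id_N = \bE_0 + \bT\bC\bT^{\sT}$,
where $\bT = [\bT_1\;\bT_2]\in\R^{N\times 2}$, $\bC\in\R^{2\times 2}$ has rows $(1,1)$ and $(1,0)$, and $\bE_0$, $\bT_1$, $\bT_2$ are as defined in the statement: indeed the three mixed terms $\bT_1\bT_1^{\sT} + \bT_1\bT_2^{\sT} + \bT_2\bT_1^{\sT}$ are exactly $\bT\bC\bT^{\sT}$, using $\bT_1^{\sT} = \psi_2^{1/2}\lambda_{d,0}(\sigma)\ones_N^{\sT}$, $\bT_2^{\sT} = \ones_n^{\sT}\bJ/\sqrt n$, and $n\lambda_{d,0}(\sigma)^2/d = \psi_2\lambda_{d,0}(\sigma)^2$. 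Since $\lambda>0$ we have $\bE_0\succ 0$, and $\det\bC = -1\neq 0$, so SMW yields $\Res = \bE_0^{-1} - \bE_0^{-1}\bT(\bC^{-1} + \bK)^{-1}\bT^{\sT}\bE_0^{-1}$ with $\bK = \bT^{\sT}\bE_0^{-1}\bT = (K_{ij})$. Writing $D = K_{11}(1-K_{22}) + (K_{12}+1)^2$, one has $\bC^{-1}+\bK = \left(\begin{smallmatrix}K_{11} & K_{12}+1\\ K_{12}+1 & K_{22}-1\end{smallmatrix}\right)$, so $(\bC^{-1}+\bK)^{-1} = D^{-1}\left(\begin{smallmatrix}1-K_{22} & K_{12}+1\\ K_{12}+1 & -K_{11}\end{smallmatrix}\right)$.

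Next I would reduce the two traces in \eqref{eqn:L1_in_SMW_calculations} and \eqref{eqn:L2_in_SMW_calculations} to scalars involving only $\bT_1,\bT_2$. Using the decomposition of $\bZ$, one computes $\ones_n^{\sT}\bZ = \sqrt n\,(\bT_1+\bT_2)^{\sT}$ and $(\lambda_{d,0}(\sigma)/\sqrt d)\,\ones_N = \bT_1/\sqrt n$, which gives $L_1 = \Tr[\bT_1(\bT_1+\bT_2)^{\sT}\Res] = (\bT_1+\bT_2)^{\sT}\Res\,\bT_1$; similarly $\bZ^{\sT}\ones_n = \sqrt n\,(\bT_1+\bT_2)$, so $L_2(\bM) = (n/d)\,(\bT_1+\bT_2)^{\sT}\Res\,\bM\,\Res\,(\bT_1+\bT_2) = \psi_2\,(\bT_1+\bT_2)^{\sT}\Res\,\bM\,\Res\,(\bT_1+\bT_2)$. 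I then substitute the SMW expression for $\Res$ and evaluate the resulting $2\times2$ quadratic forms, noting that $\bT^{\sT}\bE_0^{-1}\bT_1$ and $\bT^{\sT}\bE_0^{-1}(\bT_1+\bT_2)$ are explicit columns and row-sums of $\bK$, and that $\bT^{\sT}\bE_0^{-1}\bM\bE_0^{-1}\bT = (G_{ij})$. After collecting terms over the common denominator $D$ (for $L_1$) and $D^2$ (for $L_2(\bM)$), this produces exactly \eqref{eqn:L2_in_SMW_calculations_reformulation_1} and \eqref{eqn:L2_in_SMW_calculations_reformulation_2}. The only real obstacle is bookkeeping: carrying the scalar prefactors $\sqrt n$ versus $\sqrt d$ versus $\psi_2^{1/2}$ correctly through the substitutions, and simplifying the $2\times2$ algebra; there is no analytic subtlety, since $\bE_0$ and $\bC$ are invertible for every instance.
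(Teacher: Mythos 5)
Your argument is correct, and it reaches the same identities as the paper by the same broad strategy: decompose $\bZ=\bJ+(\lambda_{d,0}/\sqrt d)\,\ones_n\ones_N^{\sT}$, express $\Res$ via SMW around $\bE_0$, and reduce $L_1$, $L_2(\bM)$ to quadratic forms in $\bT_1+\bT_2$. The one genuine difference is how you encode the low-rank correction. The paper writes $\bZ^{\sT}\bZ+\psi_1\psi_2\lambda\id_N=\bE_0+\bF_1\bF_2^{\sT}$ with $\bF_1=(\bT_1,\bT_1,\bT_2)$, $\bF_2=(\bT_1,\bT_2,\bT_1)\in\R^{N\times 3}$, applying SMW with the $3\times3$ non-symmetric matrix $\id_3+\bF_2^{\sT}\bE_0^{-1}\bF_1$. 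You instead observe that the correction $\bT_1\bT_1^{\sT}+\bT_1\bT_2^{\sT}+\bT_2\bT_1^{\sT}$ lies in $\mathrm{span}\{\bT_1,\bT_2\}$ and write it as the symmetric rank-$\le 2$ form $\bT\bC\bT^{\sT}$ with $\bT=[\bT_1\,\bT_2]$ and the invertible $\bC=\bigl(\begin{smallmatrix}1&1\\1&0\end{smallmatrix}\bigr)$, so SMW only requires inverting the $2\times 2$ symmetric matrix $\bC^{-1}+\bK$. This is a bit more economical and avoids a spurious rank-three blow-up; in particular you get the clean intermediate identity $\Res(\bT_1+\bT_2)=D^{-1}\bE_0^{-1}[(1-K_{22})\bT_1+(K_{12}+1)\bT_2]$ with $D=K_{11}(1-K_{22})+(K_{12}+1)^2$, from which both \eqref{eqn:L2_in_SMW_calculations_reformulation_1} and \eqref{eqn:L2_in_SMW_calculations_reformulation_2} follow immediately. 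The paper's version requires more entrywise bookkeeping on the $3\times3$ inverse but lands in the same place. One small remark worth making explicit in your write-up: the reduction $L_1=(\bT_1+\bT_2)^{\sT}\Res\bT_1$ uses the exact identities $\psi_2^{1/2}=\sqrt{n/d}$ and $(\lambda_{d,0}/\sqrt d)\ones_N=\bT_1/\sqrt n$, which hold because $\psi_2$ is defined here as $\psi_{2,d}=n/d$ (exact, not asymptotic), consistent with the convention fixed at the start of Section~\ref{sec:decomposition}.
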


\begin{proof}[Proof of Lemma \ref{lem:SMW_calculations}]~

\noindent
{\bf Step 1. Term $L_1$. } 

Note we have (denoting $ \lambda_{d, 0} =  \lambda_{d, 0}(\sigma)$)
\[
\bZ =  \lambda_{d, 0} \ones_n \ones_N^\sT / \sqrt d + \bJ. 
\]
Hence we have (denoting $\bT_2 =  \bJ^\sT \ones_n / \sqrt {n}$)
\[
\begin{aligned}
L_1 =&~ \Tr\Big[ \lambda_{d, 0} \ones_N \ones_n^\sT ( \lambda_{d, 0} \ones_n \ones_N^\sT/ \sqrt d  +\bJ) [( \lambda_{d, 0} \ones_n \ones_N^\sT / \sqrt d + \bJ)^\sT ( \lambda_{d, 0} \ones_n \ones_N^\sT / \sqrt d + \bJ) + \psi_1 \psi_2 \lambda \id_N]^{-1}\Big]/ \sqrt d \\
=&~  \Tr\Big[ (\psi_2  \lambda_{d, 0}^2 \ones_N \ones_N^\sT + \psi_2^{1/2} \lambda_{d, 0} \ones_N \bT_2^\sT)[\psi_2  \lambda_{d, 0}^2 \ones_N^\sT \ones_N^\sT +\psi_2^{1/2}  \lambda_{d, 0} \ones_N \bT_2^\sT + \psi_2^{1/2}  \lambda_{d, 0} \bT_2 \ones_N^\sT + \bJ^\sT \bJ + \psi_1 \psi_2 \lambda \id_N]^{-1}\Big] . \\
\end{aligned}
\]
Define
\[
\begin{aligned}
\bE =&~\bZ^\sT \bZ + \psi_1 \psi_2 \lambda \id_N = \bE_0 + \bF_1 \bF_2^\sT, \\
\bE_0 =&~ \bJ^\sT \bJ + \psi_1 \psi_2 \lambda \id_N, \\
\bF_1 =&~ (\bT_1, \bT_1, \bT_2), \\
\bF_2 =&~ (\bT_1, \bT_2, \bT_1), \\
\bT_1 =&~ \psi_2^{1/2}  \lambda_{d, 0} \ones_N, \\
\bT_2 =&~ \bJ^\sT \ones_n / \sqrt {n}.
\end{aligned}
\]
By the Sherman-Morrison-Woodbury formula, we have
\[
\bE^{-1} = \bE_0^{-1} - \bE_0^{-1} \bF_1 (\id_3 + \bF_2^\sT \bE_0^{-1} \bF_1)^{-1} \bF_2^\sT \bE_0^{-1}. 
\]
Then we have
\[
\begin{aligned}
L_1 =&~  \Tr\Big[ ( \bT_1 \bT_1^\sT + \bT_1 \bT_2^\sT)(\bE_0^{-1} - \bE_0^{-1} \bF_1 (\id_3 + \bF_2^\sT \bE_0^{-1} \bF_1)^{-1} \bF_2^\sT \bE_0^{-1})\Big] \\
=&~  (  \bT_1^\sT \bE_0^{-1} \bT_1 - \bT_1^\sT\bE_0^{-1} \bF_1 (\id_3 + \bF_2^\sT \bE_0^{-1} \bF_1)^{-1} \bF_2^\sT \bE_0^{-1} \bT_1) \\
&+ (  \bT_2^\sT \bE_0^{-1}\bT_1 - \bT_2^\sT \bE_0^{-1} \bF_1 (\id_3 + \bF_2^\sT \bE_0^{-1} \bF_1)^{-1} \bF_2^\sT \bE_0^{-1} \bT_1) \\
=&~  (  K_{11} - [K_{11}, K_{11}, K_{12}] (\id_3 + \bK)^{-1} [K_{11}, K_{12}, K_{11}]^\sT) \\
&+(  K_{12} - [K_{12}, K_{12}, K_{22}] (\id_3 + \bK)^{-1} [K_{11}, K_{12}, K_{11}]^\sT) \\
=&~  [K_{11}, K_{11}, K_{12}] (\id_3 + \bK)^{-1} [1, 0, 0]^\sT \\
&+[K_{12}, K_{12}, K_{22}] (\id_3 + \bK)^{-1} [1, 0, 0]^\sT \\
=&~ (K_{12}^2 + K_{12} + K_{11} - K_{11} K_{22})/(K_{12}^2 + 2 K_{12} + K_{11} - K_{11} K_{22} + 1) \\
=&~ 1 - (K_{12} + 1)/[K_{11} (1 - K_{22}) + (K_{12} + 1)^2], 
\end{aligned}
\]
where
\[
\begin{aligned}
K_{11} =&~  \bT_1^\sT \bE_0^{-1} \bT_1 = \psi_2  \lambda_{d, 0}^2 \ones_N^\sT (\bJ^\sT \bJ + \psi_1 \psi_2 \lambda \id_N)^{-1} \ones_N, \\
K_{12} =&~  \bT_1^\sT \bE_0^{-1} \bT_2 =  \lambda_{d, 0} \ones_N^\sT (\bJ^\sT \bJ + \psi_1 \psi_2 \lambda \id_N)^{-1} \bJ^\sT \ones_n / \sqrt { d}, \\
K_{22} =&~  \bT_2^\sT \bE_0^{-1} \bT_2 = \ones_n^\sT \bJ (\bJ^\sT \bJ + \psi_1 \psi_2 \lambda \id_N)^{-1} \bJ^\sT \ones_n / n,\\
\bK =&~ \begin{bmatrix}
K_{11}& K_{11}& K_{12}\\
K_{12}& K_{12}& K_{22}\\
K_{11}& K_{11}& K_{12}
\end{bmatrix}.
\end{aligned}
\]
This prove Eq. (\ref{eqn:L2_in_SMW_calculations_reformulation_1}). 

\noindent
{\bf Step 2. Term $L_2(\bM)$. } We have
\[
\begin{aligned}
\bZ^\sT \ones_n \ones_n^\sT \bZ / d =&~( \lambda_{d, 0} \ones_n \ones_N^\sT / \sqrt d + \bJ)^\sT\ones_n \ones_n^\sT ( \lambda_{d, 0} \ones_n \ones_N^\sT / \sqrt d + \bJ) / d\\
=&~ \psi_2^2  \lambda_{d, 0}^2 \ones_N \ones_N^\sT + \psi_2 \bT_2 \cdot \sqrt{\psi_2}  \lambda_{d, 0} \ones_N^\sT +  \psi_2  \sqrt{\psi_2}  \lambda_{d, 0} \ones_N \bT_2^\sT + \psi_2 \bT_2 \bT_2^\sT = \psi_2 (\bT_1 + \bT_2) (\bT_1 + \bT_2)^\sT. 
\end{aligned}
\]
As a result, we have
\[
\begin{aligned}
L_2(\bM) =&~ \psi_2 \cdot (\bT_1+ \bT_2)^\sT \bE^{-1} \bM \bE^{-1} (\bT_1+ \bT_2)   \\
=&~ \psi_2 \cdot  (\bT_1+ \bT_2)^\sT (\id_N - \bE_0^{-1} \bF_1 (\id_3 + \bF_2^\sT \bE_0^{-1} \bF_1)^{-1} \bF_2^\sT) \\
& \cdot (\bE_0^{-1} \bM \bE_0^{-1}) (\id_N - \bF_2 (\id_3 + \bF_1^\sT \bE_0^{-1} \bF_2)^{-1} \bF_1^\sT  \bE_0^{-1}) (\bT_1+ \bT_2).  \\
\end{aligned}
\]
Simplifying this formula using simple algebra proves Eq. (\ref{eqn:L2_in_SMW_calculations_reformulation_2}).
\end{proof}

\begin{proof}[Proof of Lemma \ref{lem:constant_term_one}]~ 

\noindent
{\bf Step 1. Term $A_1$. } By Lemma \ref{lem:SMW_calculations}, we get
\begin{align}\label{eqn:A1_SMW}
A_1 =&~ 1 - (K_{12} + 1)/(K_{11} (1 - K_{22}) + (K_{12} + 1)^2), 
\end{align}
where
\[
\begin{aligned}
K_{11} =&~  \bT_1^\sT \bE_0^{-1} \bT_1 = \psi_2  \lambda_{d, 0}^2 \ones_N^\sT (\bJ^\sT \bJ +  \psi_1 \psi_2\lambda \id_N)^{-1} \ones_N, \\
K_{12} =&~  \bT_1^\sT \bE_0^{-1} \bT_2 =  \lambda_{d, 0} \ones_N^\sT (\bJ^\sT \bJ + \psi_1\psi_2 \lambda \id_N)^{-1} \bJ^\sT \ones_n / \sqrt { d}, \\
K_{22} =&~  \bT_2^\sT \bE_0^{-1} \bT_2 = \ones_n^\sT \bJ (\bJ^\sT \bJ + \psi_1\psi_2 \lambda \id_N)^{-1} \bJ^\sT \ones_n / n. 
\end{aligned}
\]

\noindent
{\bf Step 2. Term $A_2$. }

Note that we have 
\[
A_2 =  \Tr( (\bZ^\sT \bZ +   \psi_1\psi_2 \lambda \id_N)^{-1}\bU_0 (\bZ^\sT \bZ + \psi_1\psi_2 \lambda \id_N)^{-1} \bZ^\sT \ones_n \ones_n^\sT \bZ ) / d, 
\]
where
\begin{equation}\label{eqn:constant_term_one_1}
\bU_0 =   \lambda_{d, 0}(\sigma)^2 \ones_N \ones_N^\sT = \bT_1 \bT_1^\sT / \psi_2. 
\end{equation}
By Lemma \ref{lem:SMW_calculations}, we have 
\begin{align}\label{eqn:S20_first_simplifications}
A_2 =&~ \psi_2 [G_{11} (1 - K_{22})^2 + G_{22}(K_{12} + 1)^2 + 2 G_{12} (K_{12} + 1) (1 - K_{22})] /(K_{11} (1 - K_{22}) + (K_{12} + 1)^2)^2,
\end{align}
where
\[
\begin{aligned}
G_{11} =&~  \bT_1^\sT \bE_0^{-1} \bU_0 \bE_0^{-1} \bT_1 = K_{11}^2 / \psi_2, \\
G_{12} =&~  \bT_1^\sT \bE_0^{-1} \bU_0 \bE_0^{-1} \bT_2 = K_{11} K_{12} / \psi_2, \\
G_{22} =&~  \bT_2^\sT \bE_0^{-1} \bU_0 \bE_0^{-1} \bT_2 = K_{12}^2 / \psi_2. \\
\end{aligned}
\]
We can simplify $S_{20}$ in Eq. (\ref{eqn:S20_first_simplifications}) further, and get
\begin{align}\label{eqn:A2_SMW}
A_2 = (K_{11} (1 -  K_{22}) + K_{12}^2 + K_{12})^2 / (K_{11} (1 - K_{22}) + (K_{12} + 1)^2)^2. 
\end{align}

\noindent
{\bf Step 3. Combining $A_1$ and $A_2$}

By Eq. (\ref{eqn:A1_SMW}) and (\ref{eqn:A2_SMW}), we have 
\[
A = 1 - 2 A_1+ A_2 = (K_{12} + 1)^2 / (K_{11} (1 - K_{22}) + (K_{12} + 1)^2)^2 \ge 0.
\]
For term $K_{12}$, we have 
\[
\vert K_{12} \vert \le  \lambda_{d, 0} \| (\bJ^\sT \bJ + \psi_1\psi_2 \lambda \id_N)^{-1} \bJ^\sT \|_{\op} \| \ones_n \ones_N^\sT  / \sqrt { d} \|_{\op} = O_{d}(\sqrt d). 
\]
For term $K_{11}$, we have 
\[
 K_{11}  \ge \psi_2  \lambda_{d, 0}^2 N \lambda_{\min}((\bJ^\sT \bJ + \psi_1\psi_2 \lambda \id_N)^{-1}) = \Omega_d(d) / (\| \bJ^\sT \bJ \|_{\op} + \psi_1 \psi_2 \lambda). 
\]
For term $K_{22}$, we have 
\[
\begin{aligned}
 1 \ge 1 - K_{22}  =&~ \ones_n^\sT (\id_n - \bJ (\bJ^\sT \bJ +  \psi_1 \psi_2 \lambda \id_N)^{-1} \bJ^\sT) \ones_n / n \ge 1 - \lambda_{\max}( \bJ (\bJ^\sT \bJ + \psi_1 \psi_2 \lambda \id_N)^{-1} \bJ^\sT)\\
 \ge&~  \psi_1 \psi_2 \lambda / (\psi_1 \psi_2 \lambda + \| \bJ^\sT \bJ \|_{\op}) > 0. 
 \end{aligned}
\]
As a result, we have 
\[
1/ (K_{11} (1 - K_{22}) + (K_{12} + 1)^2)^2 = O_d(d^{-2}) \cdot (1 + \| \bJ \|_{\op}^8),  
\]
and hence
\[
A = O_d(1/d)\cdot (1 + \| \bJ \|_{\op}^8)
\]
Lemma \ref{lem:concentration_operator_general_sphere} in Section \ref{subsec:preliminary_spherical_random_matrix} provides an upper bound on the operator norm of $\| \bJ \|_{\op}$, which gives $\| \bJ \|_{\op} = O_{d, \P}(\exp\{C (\log d)^{1/2}\})$ (note $\bJ$ can be regarded as a sub-matrix of $\bK$ in Lemma \ref{lem:concentration_operator_general_sphere}, so that $\| \bJ \|_{\op} \le \| \bK \|_{\op}$). Using this bound, we get
\[
A = o_{d, \P}(1). 
\]
It is easy to see that $0 \le A \le 1$. Hence the high probability bound translates to an expectation bound. This proves the lemma. 
\end{proof}

\begin{proof}[Proof of Lemma \ref{lem:constant_term_two}] For notation simplicity, we prove this lemma under the case when $\cA = \{ \alpha \}$ which is a singleton. We denote $B = B_\alpha$. The proof can be directly generalized to the case for arbitrary set $\cA$. 

By Lemma \ref{lem:SMW_calculations} (when applying Lemma \ref{lem:SMW_calculations}, we change the role of $N$ and $n$, and the role of $\bTheta$ and $\bX$; this can be done because the role of $\bTheta$ and $\bX$ is symmetric), we have
\begin{equation}\label{eqn:expression_B_constant_term_two}
\begin{aligned}
B =&~ \psi_2 \frac{G_{11} (1 - K_{22})^2 + G_{22}(K_{12} + 1)^2 + 2 G_{12} (K_{12} + 1) (1 - K_{22})}{(K_{11} (1 - K_{22}) + (K_{12} + 1)^2)^2},
\end{aligned}
\end{equation}
where
\[
\begin{aligned}
K_{11} =&~  \bT_1^\sT \bE_0^{-1} \bT_1 = \psi_2  \lambda_{d, 0}(\sigma)^2 \ones_N^\sT (\bJ^\sT \bJ + \psi_1\psi_2  \lambda \id_N)^{-1} \ones_N, \\
K_{12} =&~  \bT_1^\sT \bE_0^{-1} \bT_2 =  \lambda_{d, 0}(\sigma) \ones_N^\sT (\bJ^\sT \bJ + \psi_1 \psi_2 \lambda \id_N)^{-1} \bJ^\sT \ones_n / \sqrt { d}, \\
K_{22} =&~  \bT_2^\sT \bE_0^{-1} \bT_2 = \ones_n^\sT \bJ (\bJ^\sT \bJ + \psi_1 \psi_2 \lambda \id_N)^{-1} \bJ^\sT \ones_n / n,\\
G_{11} =&~  \bT_1^\sT \bE_0^{-1} \bM \bE_0^{-1} \bT_1 = \psi_2  \lambda_{d, 0}(\sigma)^2 \ones_N^\sT (\bJ^\sT \bJ + \psi_1 \psi_2 \lambda \id_N)^{-1} \bM (\bJ^\sT \bJ + \psi_1 \psi_2 \lambda \id_N)^{-1} \ones_N, \\
G_{12} =&~  \bT_1^\sT \bE_0^{-1} \bM \bE_0^{-1} \bT_2 = \lambda_{d, 0}(\sigma) \ones_N^\sT (\bJ^\sT \bJ + \psi_1 \psi_2 \lambda \id_N)^{-1} \bM (\bJ^\sT \bJ + \psi_1 \psi_2 \lambda \id_N)^{-1} \bJ^\sT \ones_n / \sqrt { d}, \\
G_{22} =&~  \bT_2^\sT \bE_0^{-1} \bM \bE_0^{-1} \bT_2 =  \ones_n^\sT \bJ (\bJ^\sT \bJ + \psi_1 \psi_2 \lambda \id_N)^{-1} \bM (\bJ^\sT \bJ + \psi_1 \psi_2 \lambda \id_N)^{-1} \bJ^\sT \ones_n / n. \\
\end{aligned}
\]

Note we have shown in the proof of Lemma \ref{lem:constant_term_one} that 
\[
\begin{aligned}
K_{11}  =&~ \Omega_d(d) / (\psi_1 \psi_2 \lambda + \| \bJ \|_{\op}^2), \\
K_{12} =&~ O_{d}(\sqrt d),\\ 
1 \ge 1 - K_{22} \ge&~ \psi_1 \psi_2 \lambda / (\psi_1 \psi_2 \lambda + \| \bJ \|_{\op}^2), \\
1/(K_{11} (1 - K_{22}) + (K_{12} + 1)^2)^2 =&~ O_d(d^{-2}) \cdot ( 1 \vee \| \bJ \|_{\op}^8). 
\end{aligned}
\]
Lemma \ref{lem:concentration_operator_general_sphere} provides an upper bound on the operator norm of $\| \bJ \|_{\op}$, which gives $\| \bJ \|_{\op} = O_{d, \P}(\exp\{C (\log d)^{1/2}\})$. Using this bound, we get for any $\eps > 0$
\[
\begin{aligned}
(1 - K_{22})^2 / (K_{11} (1 - K_{22}) + (K_{12} + 1)^2)^2 =&~ O_{d, \P}(d^{-2 + \eps}), \\
(K_{12} + 1)^2 / (K_{11} (1 - K_{22}) + (K_{12} + 1)^2)^2 =&~ O_{d, \P}(d^{-1+ \eps}), \\
\vert (K_{12} + 1) (1 - K_{22})\vert / (K_{11} (1 - K_{22}) + (K_{12} + 1)^2)^2 =&~ O_{d, \P}(d^{-3/2 + \eps}). 
\end{aligned}
\]
Since all the quantities above are deterministically bounded by a constant, these high probability bounds translate to expectation bounds. 

Moreover, we have
\[
\begin{aligned}
\E[ G_{11}^2]^{1/2} \le&~ \psi_2  \lambda_{d, 0}(\sigma)^2  ( \psi_1 \psi_2 \lambda)^{-2} \E[ \| \bM \|_{\op}^2]^{1/2} \| \ones_N \ones_N^\sT\|_{\op} = O_d(d), \\
\E[G_{22}^2]^{1/2} \le&~ O_d(1) \cdot \E[  \| \bM \|_{\op}^2]^{1/2} \| \ones_n \ones_n^\sT / n \|_{\op} = O_d(1),\\
\E[G_{12}^2]^{1/2} \le&~O_d(1) \cdot  \lambda_{d, 0}(\sigma) \E[\| \bM \|_{\op}^2]^{1/2} \| \ones_n \ones_N^\sT/ \sqrt { d} \|_{\op} = O_d(d^{1/2}). \\ 
\end{aligned} 
\]
Plugging in the above bounds into Equation (\ref{eqn:expression_B_constant_term_two}),  we have
\[ 
\E[\vert B \vert] = o_d(1). 
\]
This proves the lemma. 
\end{proof}

\subsection{Some auxiliary lemmas}

We denote by $\mu_d$ the probability law of $\<\bx_1,\bx_2\>/\sqrt{d}$ when $\bx_1,\bx_2\sim_{iid}\normal(\bzero,\id_d)$.
Note that $\mu_d$ is symmetric, and $\int x^2 \mu_d(\de x) = 1$.  
By the central limit theorem, $\mu_d$ converges weakly to $\mu_G$  as $d\to\infty$, where $\mu_G$ is the standard Gaussian measure. In fact, we have the following  stronger convergence result. 
\begin{lemma}\label{lemma:Sub-Exp}
For any $\lambda\in [- \sqrt{d}/2, \sqrt{d}/2]$, we have 
\begin{align}
\int e^{\lambda x}\, \mu_d(\de x) \le e^{\lambda^2}\, .\label{eq:ExponentialTailMud}
\end{align}
Further, let $f:\reals\to\reals$ be a continuous function such that $|f(x)|\le c_0\exp(c_1|x|)$ for some constants $c_0,c_1<\infty$. Then
\begin{align}
\lim_{d\to\infty}\int f(x)\, \mu_d(\de x) = \int f(x)\, \mu_G(\de x)\, . \label{eq:ConvergenceExpMudGaussian}
\end{align}
\end{lemma}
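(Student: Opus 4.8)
\textbf{Proof proposal for Lemma \ref{lemma:Sub-Exp}.}

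The plan is to reduce everything to a direct computation of the moment generating function of $\langle \bx_1, \bx_2\rangle/\sqrt d$ for $\bx_1, \bx_2 \sim_{iid} \normal(\bzero, \id_d)$, and then deduce the convergence statement \eqref{eq:ConvergenceExpMudGaussian} from the exponential bound \eqref{eq:ExponentialTailMud} via uniform integrability. First I would condition on $\bx_2$: given $\bx_2$, the random variable $\langle \bx_1, \bx_2\rangle/\sqrt d$ is Gaussian with mean $0$ and variance $\|\bx_2\|_2^2/d$, so
\[
\E\big[e^{\lambda \langle \bx_1,\bx_2\rangle/\sqrt d}\,\big|\,\bx_2\big] = \exp\!\Big(\frac{\lambda^2 \|\bx_2\|_2^2}{2d}\Big).
\]
Taking expectation over $\bx_2$, and using that $\|\bx_2\|_2^2 = \sum_{i=1}^d G_i^2$ with $G_i \sim_{iid} \normal(0,1)$, the problem becomes computing $\E[\exp(t \sum_i G_i^2)] = (1-2t)^{-d/2}$ for $t = \lambda^2/(2d)$, valid when $t < 1/2$, i.e. $\lambda^2 < d$. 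Thus
\[
\int e^{\lambda x}\,\mu_d(\de x) = \Big(1 - \frac{\lambda^2}{d}\Big)^{-d/2}.
\]
For $|\lambda| \le \sqrt d/2$ we have $\lambda^2/d \le 1/4$, and the elementary inequality $-\log(1-u) \le u + u^2 \cdot c$ on $[0,1/4]$ (or more simply $(1-u)^{-1/2} \le e^{u}$ for $u \in [0, 1-e^{-2}]$, which covers $u \le 1/4$) gives $(1-\lambda^2/d)^{-d/2} \le \exp(d \cdot \lambda^2/d) = e^{\lambda^2}$, establishing \eqref{eq:ExponentialTailMud}. I should double-check the constant in that elementary inequality; if a clean bound with exponent $\lambda^2$ is awkward, one can relax to $e^{c\lambda^2}$ for a universal $c$, but since $\mu_d$ converges to a standard Gaussian whose MGF is $e^{\lambda^2/2}$, the bound $e^{\lambda^2}$ is comfortably true for $d$ large and one checks the small-$d$ cases directly — here the exact formula makes this transparent.

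For the second part, \eqref{eq:ConvergenceExpMudGaussian}, I would invoke weak convergence $\mu_d \Rightarrow \mu_G$ (which follows from the central limit theorem applied to $\langle \bx_1, \bx_2\rangle/\sqrt d = \sum_i x_{1,i} x_{2,i}/\sqrt d$, or simply from the explicit MGF above converging pointwise to $e^{\lambda^2/2}$) together with uniform integrability of $f$ against the family $\{\mu_d\}$. Concretely, since $|f(x)| \le c_0 e^{c_1|x|} \le c_0(e^{c_1 x} + e^{-c_1 x})$, for any $p > 1$ the bound \eqref{eq:ExponentialTailMud} gives
\[
\sup_{d \ge d_0} \int |f(x)|^p\,\mu_d(\de x) \le c_0^p\, 2^p \sup_{d\ge d_0}\int \big(e^{pc_1 x} + e^{-pc_1 x}\big)\mu_d(\de x) \le c_0^p 2^{p+1} e^{p^2 c_1^2} < \infty,
\]
valid once $d_0$ is large enough that $pc_1 \le \sqrt{d_0}/2$. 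A family of measures with uniformly bounded $p$-th moments of $|f|$ for some $p>1$ is uniformly integrable with respect to $f$, so weak convergence upgrades to convergence of the integrals $\int f\,\de\mu_d \to \int f\,\de\mu_G$ (the standard fact that weak convergence plus uniform integrability of $g(X_d)$ implies $\E g(X_d) \to \E g(X)$, applied with $g = f$).

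I do not expect a serious obstacle here: the whole statement is classical and the key identity $\int e^{\lambda x}\mu_d(\de x) = (1-\lambda^2/d)^{-d/2}$ is exact. The only mild care points are (i) pinning down the elementary inequality so that the tail bound comes out with the clean exponent $\lambda^2$ on the stated range $|\lambda| \le \sqrt d/2$ (this is where I would be most careful, though the exact formula leaves no real room for error), and (ii) stating the uniform-integrability-plus-weak-convergence implication in a form that handles the two-sided exponential growth of $f$, which is routine given \eqref{eq:ExponentialTailMud}.
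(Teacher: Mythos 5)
Your proposal is correct and takes essentially the same approach as the paper: both derive the exact identity $\int e^{\lambda x}\,\mu_d(\de x) = (1-\lambda^2/d)^{-d/2}$ (the paper by a direct $2d$-dimensional Gaussian integral, you by conditioning on $\bx_2$ and then using the chi-squared MGF — trivially equivalent), apply an elementary inequality to get $e^{\lambda^2}$ on the stated range, and conclude the second part by weak convergence plus uniform integrability (the paper via a Skorokhod-style a.s. coupling, you via de la Vall\'ee Poussin with bounded $p$-th moments — both standard and interchangeable). One small slip: your parenthetical claim that $(1-u)^{-1/2} \le e^u$ holds on all of $[0,\,1-e^{-2}]$ is false (the inequality fails near $u \approx 0.8$), but it does hold on $[0,1/4]$, which is all you use, so the proof is unaffected.
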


\begin{proof}[Proof of Lemma \ref{lemma:Sub-Exp}]~
In order to prove Eq.~\eqref{eq:ExponentialTailMud}, we note that the left hand side is given by
\begin{align*}
\E\big\{e^{\lambda\<\bx_1,\bx_2\>/\sqrt{d}}\big\} =&~ \frac{1}{(2\pi)^d}
\int\exp\Big\{-\frac{1}{2}\|\bx_1\|_2^2 -\frac{1}{2}\|\bx_2\|_2^2+\frac{\lambda}{\sqrt{d}}\<\bx_1,\bx_2\>\Big\}
\de\bx_1\de\bx_2\\
=&~ \left[\det\left(\begin{matrix}
1 & -\lambda/\sqrt{d}\\
-\lambda/\sqrt{d} & 1
\end{matrix}\right)\right]^{-d/2}= \Big(1-\frac{\lambda^2}{d}\Big)^{-d/2}\\
\le&~ e^{\lambda^2}\, ,
\end{align*}
where the last inequality holds for $\vert \lambda\vert \le \sqrt d / 2$ using the fact that $(1-x)^{-1}\le e^{2x}$ for $x\in [0,1/4]$.

In order to prove \eqref{eq:ConvergenceExpMudGaussian}, let $X_d\sim\mu_d$, and $G\sim \normal(0,1)$. Since 
$\mu_d$ converges weakly to $\normal(0,1)$, we can construct such
random variables so that $X_d\to G$ almost surely. Hence $f(X_d)\to f(G)$ almost surely. However
$\vert f(X_d) \vert \le c_0\exp(c_1 \vert X_d\vert )$ which is a uniformly integrable family by the previous point, implying $\E f(X_d) \to \E f(G)$ as claimed.
\end{proof}

The next several lemmas establish general bounds on the operator norm of random kernel matrices which is of independent interest. 

\begin{lemma}\label{lem:concentration_operator_general_gaussian}
Let $\sigma: \R \to \R$ be an activation function satisfying Assumption \ref{ass:activation}, i.e., $\vert \sigma(u) \vert, \vert \sigma'(u) \vert \le c_0 e^{c_1 \vert u \vert}$ for some constants $c_0, c_1 \in (0, \infty)$. Let $(\overline \bz_i)_{i \in [M]} \sim_{iid} \normal(\bzero, \id_d)$. Assume $0 < 1/c_2 \le M /d \le c_2 < \infty$ for some constant $c_2\in (0, \infty)$. 
Consider the random matrix $\overline \bR \in \R^{M\times M}$ defined by
\begin{align}\label{eqn:overlineR_general_Gaussian}
\overline R_{ij} = \ones_{i \neq j}\cdot \sigma(\< \overline \bz_i, \overline \bz_j\> / \sqrt d) / \sqrt d.
\end{align}
Then there exists a constant $C$ depending uniquely on $c_0, c_1, c_2$, and a sequence of numbers $(\overline \eta_d)_{d \ge 1}$ with $\vert \overline \eta_d \vert \le C \exp\{ C (\log d)^{1/2} \}$, such that 
\begin{align}
\| \overline \bR - \overline \eta_d \ones_M \ones_M^\sT / \sqrt d \|_{\op} = O_{d, \P}(\exp\{ C (\log d)^{1/2} \}). 
\end{align}
\end{lemma}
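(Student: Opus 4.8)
\textbf{Proof plan for Lemma~\ref{lem:concentration_operator_general_gaussian}.}
The plan is to first reduce the problem to the case of a polynomial activation function, and then handle the polynomial case using known results on the spectral norm of kernel inner-product random matrices, in particular Theorem~1.7 of \cite{fan2019spectral}. The quantity $\overline\eta_d$ will be chosen as (roughly) $d^{-1/2}$ times the suitably truncated ``mean value'' of $\sigma$ against the law $\mu_d$ of $\<\overline\bz_i,\overline\bz_j\>/\sqrt d$, i.e. the contribution of the degree-$0$ Hermite/Gegenbauer component, which is what creates the rank-one spike that must be subtracted before the operator norm can be controlled.

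Concretely, the first step is truncation: for a large constant $L$ set $\sigma_L(u) = \sigma(u)\bfone_{|u|\le L\sqrt{\log d}}$ (or a smooth version thereof), and split $\overline\bR = \overline\bR_L + (\overline\bR - \overline\bR_L)$ where $\overline\bR_L$ uses $\sigma_L$. Using the sub-exponential tail bound on $\mu_d$ from Lemma~\ref{lemma:Sub-Exp} (Eq.~\eqref{eq:ExponentialTailMud}) together with the growth bound $|\sigma(u)|\le c_0 e^{c_1|u|}$, a union bound over the $O(d^2)$ entries shows that with probability $1-o_d(1)$ all off-diagonal arguments $\<\overline\bz_i,\overline\bz_j\>/\sqrt d$ lie in $[-L\sqrt{\log d}, L\sqrt{\log d}]$ once $L$ is large enough, so that $\overline\bR = \overline\bR_L$ on that event. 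It therefore suffices to bound $\|\overline\bR_L - \overline\eta_d\ones_M\ones_M^\sT/\sqrt d\|_{\op}$ deterministically on this event, which we do by approximating $\sigma_L$ in the $L^2(\mu_d)$ (equivalently $L^\infty$ on the relevant interval) sense by a polynomial $\widetilde\sigma$ of bounded degree $\ok$, with approximation error that can be made a small power of $d$; the corresponding matrix difference has Frobenius norm, and hence operator norm bound, of the desired form $O_{d,\P}(\exp\{C(\log d)^{1/2}\})$ after paying a factor $\sqrt{M^2}=O(d)$.

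For the polynomial piece $\widetilde\sigma(u) = \sum_{k=0}^{\ok} c_{d,k}\,\He_k(u)$ (with $d$-dependent coefficients converging to the Hermite coefficients of $\sigma$), write $\overline\bR_{\widetilde\sigma}$ as a sum of the degree-$k$ contributions. The degree-$0$ term is exactly $c_{d,0}(\ones_M\ones_M^\sT - \id_M)/\sqrt d$, which up to the negligible diagonal correction $c_{d,0}\id_M/\sqrt d = O(d^{-1/2})$ in operator norm is the rank-one spike; setting $\overline\eta_d \approx c_{d,0}$ removes it. The degree-$1$ term is $c_{d,1}(\overline\bZ\,\overline\bZ^\sT/d - \diag)$ with $\overline\bZ$ the Gaussian data matrix, whose operator norm is $O_{d,\P}(1)$ by standard Wishart bounds. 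For $k\ge 2$, the matrix $(\He_k(\<\overline\bz_i,\overline\bz_j\>/\sqrt d)/\sqrt d)_{i\ne j}$ has operator norm $O_{d,\P}(1)$ by Theorem~1.7 of \cite{fan2019spectral}, which applies precisely to centered ($\E[\He_k(G)]=0$ for $k\ge1$) polynomial kernel matrices in the proportional regime $M\asymp d$. Summing the finitely many ($k\le\ok$) contributions and combining with the truncation and polynomial-approximation errors gives $\|\overline\bR - \overline\eta_d\ones_M\ones_M^\sT/\sqrt d\|_{\op} = O_{d,\P}(\exp\{C(\log d)^{1/2}\})$, and the bound $|\overline\eta_d|\le C\exp\{C(\log d)^{1/2}\}$ follows from $|\overline\eta_d| = |c_{d,0}| = |\int \sigma_L\,d\mu_d| \le c_0\int e^{c_1 L\sqrt{\log d}}\,d\mu_d \le C e^{c_1 L\sqrt{\log d}}$.

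\textbf{Main obstacle.} The delicate point is the interplay between the truncation level $L\sqrt{\log d}$ and the polynomial approximation: one needs a polynomial $\widetilde\sigma$ of \emph{bounded} degree (so that \cite{fan2019spectral} applies with a degree independent of $d$) whose error against $\sigma_L$ on the growing interval $[-L\sqrt{\log d},L\sqrt{\log d}]$, when multiplied by $d$ (the Frobenius-to-operator conversion loss), is still $\exp\{O(\sqrt{\log d})\}$. This forces a careful quantitative choice: approximate $\sigma$ in $L^2(\mu_G)$ to within $d^{-10}$ by a fixed-degree $\widetilde\sigma$ (possible since the Hermite series converges, and the degree needed grows like $\log d$ — so one must actually allow $\ok = \ok(d) = O(\log d)$ and check that Theorem~1.7 of \cite{fan2019spectral}, or a routine extension of it, tolerates slowly-growing degree with only a $\exp\{O(\sqrt{\log d})\}$ penalty on the operator norm). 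Managing this growing-degree bookkeeping, and verifying that the $d$-dependent coefficients $c_{d,k}$ (Gegenbauer vs.\ Hermite, cf.\ Eq.~\eqref{eqn:relationship_mu_lambda}) do not blow up, is the technical heart of the argument; everything else is a routine union bound plus Wishart operator-norm estimates.
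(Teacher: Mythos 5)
Your proposal takes a genuinely different route from the paper, and the obstacle you flag in your last paragraph is not a manageable technicality but a real gap that, as far as I can see, makes the argument fail.

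You propose to truncate $\sigma$ to a window $[-L\sqrt{\log d},L\sqrt{\log d}]$, approximate the truncated function by a polynomial $\widetilde\sigma$, and then handle the polynomial kernel matrix by splitting it into Hermite components and applying the centered-polynomial kernel operator-norm bound of \cite{fan2019spectral}. The problem is in the step where you convert an $L^2(\mu_d)$ (or $L^2(\mu_G)$) approximation error to a Frobenius-norm bound: the off-diagonal entry count is $M^2\asymp d^2$ and each entry carries a $1/\sqrt d$, so the Frobenius bound is roughly $\sqrt d\cdot\|\sigma_L-\widetilde\sigma\|_{L^2(\mu_d)}$, and to make this $O(\exp\{C\sqrt{\log d}\})$ you need the $L^2$ approximation error to be of order $d^{-1/2}$ up to $\exp\{O(\sqrt{\log d})\}$ factors. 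Under Assumption~\ref{ass:activation}, $\sigma$ is only weakly differentiable with $\sigma'\in L^2(\mu_G)$, which gives the tail estimate $\sum_{l>k}\mu_l^2/l!=O(1/k)$ and no better; hence the needed degree is $k\gtrsim d^{1-o(1)}$. Theorem~1.7 of \cite{fan2019spectral} is stated for fixed degree, and the routine extension you hope for — tolerating degree polynomial in $d$ with only a subpolynomial penalty — is not available and would be a substantial new result. So the ``technical heart'' you defer is precisely the part that does not close.

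The paper avoids polynomial approximation altogether. On the high-probability event that all off-diagonal inner products lie in $[-16\sqrt{\log d},16\sqrt{\log d}]$ (which you correctly identify via the subexponential tail, Lemma~\ref{lemma:Sub-Exp}), define $\tilde\sigma(u)=\sigma(u)e^{-c_1\overline x}/c_0$ on that interval (constant outside, with $\overline x=16\sqrt{\log d}$). The growth bound on $\sigma'$ makes $\tilde\sigma$ a $1$-bounded-Lipschitz function on all of $\R$, so the centered kernel matrix built from $\tilde\sigma$ has $O(1)$ operator norm by \cite[Lemma~20]{deshpande2016sparse}, uniformly in $d$. Rescaling back by $c_0e^{c_1\overline x}=\exp\{O(\sqrt{\log d})\}$ gives the claimed bound with $\overline\eta_d$ taken to be the corresponding rescaled mean, and the whole argument requires no polynomial structure and no degree control. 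If you want to repair your write-up, replacing the polynomial step with the Lipschitz-truncation and \cite{deshpande2016sparse} is the decisive change; the rest of your setup (truncation event, subtracting the degree-zero rank-one spike, bounding $|\overline\eta_d|$) is consistent with the paper's proof.
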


\begin{proof}[Proof of Lemma \ref{lem:concentration_operator_general_gaussian}]~
By Lemma \ref{lemma:Sub-Exp} and Markov inequality, we have, for any $i\neq j$ and all $0\le t\le\sqrt{d}$,
\begin{align}
\P\Big( \<\overline \bz_i,\overline \bz_j\> / \sqrt d \ge t \Big) \le e^{-t^2/4}\, .
\end{align}
Hence
\begin{equation}\label{eqn:bound_zi_zj_max}
\begin{aligned}
&\P\Big( \max_{1 \le i<j\le M} \Big \vert \frac{1}{\sqrt{d}} \<\overline \bz_i,\overline \bz_j\>\Big \vert \ge 16\sqrt{\log M} \Big)\\
 \le&~ \frac{M^2}{2} \max_{1 \le i<j \le M} \P \Big(\Big\vert \frac{1}{\sqrt{d}} \<\overline \bz_i, \overline \bz_j\>\Big\vert \ge 16\sqrt{\log M}\Big) \le M^2 \exp\{-4(\log M)\}\le \frac{1}{M^2}\, .
\end{aligned}
\end{equation}
We define $\tilde \sigma: \R \to \R$ as follows: for $\vert u \vert \le \overline x \equiv 16\sqrt{\log d}$, define $\tilde \sigma(u) \equiv \sigma(u) e^{- c_1 \vert \overline x \vert} / c_0$; for $u > \overline x$, define $\tilde \sigma(u) = \tilde \sigma(\overline x)$; for $u < - \overline x$, define $\tilde \sigma(u) = \tilde \sigma(- \overline x)$. Then $\tilde \sigma$ is a $1$-bounded-Lipschitz function on $\R$. 
Define $\tilde \eta_d = \E_{\overline \bx, \overline \by \sim \normal(\bzero, \id_d)}[\tilde \sigma(\< \overline \bx, \overline \by\> / \sqrt d)]$ and $\overline \eta_d = \tilde \eta_d c_0 e^{c_1 \vert \overline x \vert} $. Since we have $\vert  \tilde \eta_d \vert \le \max_u \vert \tilde \sigma(u) \vert \le 1$, we have 
\begin{align}\label{eqn:overlineeta_d_general_Gaussian}
\vert \overline \eta_d \vert = O_d(\exp\{C (\log d)^{1/2}\}). 
\end{align}
Moreover, we define $\overline \bK, \tilde \bK \in \R^{M \times M}$ by
\begin{equation}\label{eqn:overlineK_general_Gaussian}
\begin{aligned}
\tilde K_{ij} = \ones_{i \neq j}\cdot (\tilde \sigma(\< \overline \bz_i, \overline \bz_j\> / \sqrt d) - \tilde \eta_d) / \sqrt d,\\
\overline K_{ij} = \ones_{i \neq j}\cdot (\sigma(\< \overline \bz_i, \overline \bz_j\> / \sqrt d) - \overline \eta_d) / \sqrt d. 
\end{aligned}
\end{equation}
By \cite[Lemma 20]{deshpande2016sparse}, there exists a constant $C$ such that 
\[
\P(\| \tilde \bK \|_{\op}\ge C )\le C e^{-d/C}.
\]
Note that \cite[Lemma 20]{deshpande2016sparse} considers one specific choice of
$\tilde \sigma$, but the proof applies unchanged to any $1$-Lipschitz function with zero expectation under the measure $\mu_d$, where $\mu_d$ is the distribution of $\< \overline \bx, \overline \by\>/\sqrt d$ for $\overline \bx, \overline \by \sim \normal(\bzero, \id_d)$. 

Defining the event $\mathcal G \equiv\{ \vert \<\overline \bz_i, \overline\bz_j\>/\sqrt{d} \vert \le 16\sqrt{\log d}, \, \forall 1 \le i<j\le M\}$, we have
\begin{align}\label{eqn:overlineK_probability_bound}
\P\Big( \| \overline \bK \|_{\op} \ge C\, c_0 e^{c_1\vert \overline x \vert} \Big)\le \P\Big( \| \overline \bK \|_{\op} \ge C\, c_0 e^{c_1 \vert \overline x \vert}; \mathcal G \Big) + \P(\mathcal G^c) \le \P\Big( \| \tilde \bK \|_{\op } \ge C \Big) + \frac{1}{M^2} = o_d(1). 
\end{align}
By Eq. (\ref{eqn:overlineR_general_Gaussian}) and (\ref{eqn:overlineK_general_Gaussian}), we have
\[
\overline \bR = \overline \bK - \overline \eta_d \id_M  / \sqrt d + \overline \eta_d \ones_M \ones_M^\sT / \sqrt d.
\]
By Eq. (\ref{eqn:overlineK_probability_bound}) and (\ref{eqn:overlineeta_d_general_Gaussian}), we have 
\[
\| \overline \bR - \overline \eta_d \ones_M \ones_M^\sT / \sqrt d \|_{\op} = \| \overline \bK - \overline \eta_d \id_M / \sqrt d \|_{\op} \le \| \overline \bK \|_{\op} + \overline \eta_d / \sqrt d = O_{d, \P}(\exp\{C (\log d)^{1/2} \}).
\]
This completes the proof.
\end{proof}

\begin{lemma}\label{lem:operator_interpolation_gaussian_sphere}
Let $\sigma: \R \to \R$ be an activation function satisfying Assumption \ref{ass:activation}, i.e., $\vert \sigma(u) \vert, \vert \sigma'(u) \vert \le c_0 e^{c_1 \vert u \vert}$ for some constants $c_0, c_1 \in (0, \infty)$. Let $(\overline \bz_i)_{i \in [M]} \sim_{iid} \normal(\bzero, \id_d)$. Assume $0 < 1/c_2 \le M /d \le c_2 < \infty$ for some constant $c_2\in (0, \infty)$.  Define $\bz_i = \sqrt d \cdot \overline \bz_i / \| \overline \bz_i \|_2$. Consider two random matrices $\bR, \overline \bR \in \R^{M\times M}$ defined by
\[
\begin{aligned}
\overline R_{ij} =&~ \ones_{i \neq j}\cdot \sigma(\< \overline \bz_i, \overline \bz_j\> / \sqrt d) / \sqrt d,\\
R_{ij} =&~ \ones_{i \neq j}\cdot \sigma(\< \bz_i, \bz_j\> / \sqrt d) / \sqrt d.
\end{aligned}
\]
Then there exists a constant $C$ depending uniquely on $c_0, c_1, c_2$, such that 
\[
\| \overline \bR - \bR \|_{\op} = O_{d, \P}(\exp\{ C (\log d)^{1/2} \}).
\]
\end{lemma}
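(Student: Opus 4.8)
\textbf{Proof strategy for Lemma \ref{lem:operator_interpolation_gaussian_sphere}.}
The plan is to write $\overline\bR-\bR$ as the elementwise difference of two kernel matrices evaluated at the same Gaussian vectors but with the arguments rescaled by the (random, nearly unit) factors $r_i=\sqrt d/\|\overline\bz_i\|_2$, and then bound the operator norm of this difference by combining a pointwise Lipschitz estimate with the concentration of the $r_i$'s and control of the extreme entries. Concretely, for $i\neq j$ write $\<\bz_i,\bz_j\>/\sqrt d=r_ir_j\,\<\overline\bz_i,\overline\bz_j\>/\sqrt d$, so that by the mean value theorem
\[
\overline R_{ij}-R_{ij}=\frac{1}{\sqrt d}\Big(\sigma(w_{ij})-\sigma(r_ir_j w_{ij})\Big)
=\frac{1}{\sqrt d}\,(1-r_ir_j)\,w_{ij}\,\sigma'(\zeta_{ij}),
\]
where $w_{ij}\equiv\<\overline\bz_i,\overline\bz_j\>/\sqrt d$ and $\zeta_{ij}$ lies between $w_{ij}$ and $r_ir_jw_{ij}$. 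Thus $\overline\bR-\bR=\frac1{\sqrt d}(\bone_M\bone_M^\sT - \bD\bone_M\bone_M^\sT\bD)\odot \bar\bS$ in a suitable sense, where $\bD=\diag(r_1,\dots,r_M)$ and $\bar S_{ij}=\bone_{i\neq j}w_{ij}\sigma'(\zeta_{ij})$; more usefully, I would just bound $\|\overline\bR-\bR\|_{\op}\le\|\overline\bR-\bR\|_F$ and estimate the Frobenius norm entrywise.

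The key steps, in order, are: (i) Record the standard concentration facts: $\max_i|r_i-1|=O_{d,\P}(\sqrt{\log d}/\sqrt d)$ (from $\chi^2$ concentration of $\|\overline\bz_i\|_2^2/d$ and a union bound over $M=\Theta(d)$ terms), hence $\max_{i,j}|1-r_ir_j|=O_{d,\P}(\sqrt{\log d}/\sqrt d)$; and $\max_{i\neq j}|w_{ij}|=O_{d,\P}(\sqrt{\log d})$ by Eq.~\eqref{eqn:bound_zi_zj_max} in the proof of Lemma \ref{lem:concentration_operator_general_gaussian}. (ii) On the high-probability event where these bounds hold, $\zeta_{ij}$ lies in an interval of length $O(\sqrt{\log d})$ around $w_{ij}$, so $|\zeta_{ij}|=O_{d,\P}(\sqrt{\log d})$ uniformly, and therefore $|\sigma'(\zeta_{ij})|\le c_0 e^{c_1|\zeta_{ij}|}=O_{d,\P}(\exp\{C\sqrt{\log d}\})$ uniformly in $i,j$, using Assumption \ref{ass:activation}. (iii) Combine: each entry satisfies $|\overline R_{ij}-R_{ij}|\le \frac1{\sqrt d}\cdot O_{d,\P}(\sqrt{\log d}/\sqrt d)\cdot O_{d,\P}(\sqrt{\log d})\cdot O_{d,\P}(\exp\{C\sqrt{\log d}\})=O_{d,\P}(\exp\{C'\sqrt{\log d}\}/d)$ uniformly, whence $\|\overline\bR-\bR\|_{\op}^2\le\|\overline\bR-\bR\|_F^2\le M^2\cdot O_{d,\P}(\exp\{2C'\sqrt{\log d}\}/d^2)=O_{d,\P}(\exp\{2C'\sqrt{\log d}\})$ since $M^2=\Theta(d^2)$. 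Taking square roots gives the claim.

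One small technical point to handle carefully is that the various $O_{d,\P}$ bounds in steps (i)--(ii) must be intersected into a single high-probability event before multiplying them (a product of quantities that are each $O_{d,\P}(\cdot)$ is again $O_{d,\P}(\cdot)$ of the product, which is all we need), and that on the complementary low-probability event the difference is still deterministically controlled—$\|\overline\bR-\bR\|_{\op}$ is finite for every instance—so it does not affect the in-probability statement. I do not expect a genuine obstacle here: the lemma is a routine perturbation estimate, and the only mild subtlety is keeping the $\exp\{C\sqrt{\log d}\}$ factors (which come from the exponential growth allowed for $\sigma,\sigma'$ in Assumption \ref{ass:activation}) tracked consistently so that the final bound matches the $\exp\{C(\log d)^{1/2}\}$ scaling used elsewhere, e.g.\ in Lemma \ref{lem:concentration_operator_general_gaussian} and the intended corollary Lemma \ref{lem:concentration_operator_general_sphere}.
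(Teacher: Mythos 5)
Your proof is correct and follows essentially the same route as the paper's: mean value theorem to extract the factor $(r_ir_j-1)w_{ij}\sigma'(\zeta_{ij})$, $\chi^2$ concentration for $\max_i|r_i-1|=O_{d,\P}(\sqrt{\log d}/\sqrt d)$, the uniform inner-product bound $\max_{i\neq j}|w_{ij}|=O_{d,\P}(\sqrt{\log d})$, and then the Frobenius-norm bound $\|\cdot\|_{\op}\le\|\cdot\|_F\le M\max_{i\neq j}|\cdot|$. The only cosmetic difference is that the paper rescales one argument at a time through an intermediate matrix $\tilde R_{ij}=\sigma(r_iw_{ij})/\sqrt d$ rather than doing both at once, and it adds a remark that when $\sigma$ is only weakly differentiable (as Assumption \ref{ass:activation} permits) the mean value theorem should be replaced by its integral form---a point worth including in your write-up.
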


\begin{proof}[Proof of Lemma \ref{lem:operator_interpolation_gaussian_sphere}] In the proof of this lemma, we assume $\sigma$ has continuous derivatives. In the case when $\sigma$ is only weak differentiable, the proof is the same, except that we need to replace the mean value theorem to its integral form. 

Define $r_i = \sqrt d / \| \overline \bz_i \|_2$, and
\[
\tilde R_{ij} = \ones_{i \neq j}\cdot \sigma(r_i \< \overline \bz_i, \overline \bz_j\> / \sqrt d) / \sqrt d.
\]
By the concentration of $\chi$-squared distribution, it is easy to see that 
\[
\max_{i \in [M]} \vert r_i - 1 \vert = O_{d, \P}((\log d)^{1/2} / d^{1/2}). 
\]
Moreover, we have (for $\zeta_i$ between $r_i$ and $1$)
\[
\vert \overline R_{ij} - \tilde R_{ij}\vert \le  \vert \sigma'(\zeta_i \< \overline \bz_i, \overline \bz_j\> / \sqrt d)\vert \cdot  \vert \< \overline \bz_i, \overline \bz_j\> / \sqrt d \vert  \cdot \vert r_i - 1 \vert / \sqrt d. 
\]
By Eq. (\ref{eqn:bound_zi_zj_max}), we have 
\[
\begin{aligned}
\max_{i \neq j \in [M]} [ \< \overline \bz_i, \overline \bz_j \> / \sqrt d ]=&~ O_{d, \P}(( \log d)^{1/2}), \\
\max_{i \neq j \in [M]} [ \zeta_i \< \overline \bz_i, \overline \bz_j \> / \sqrt d ]=&~ O_{d, \P}(( \log d)^{1/2}).
\end{aligned}
\]
Moreover by the assumption that $\vert \sigma'(u)\vert \le c_0 e^{c_1 \vert u \vert}$, we have
\[
\max_{i \neq j \in [M]} \vert \sigma'(\zeta_i \< \overline \bz_i, \overline \bz_j\> / \sqrt d)\vert \cdot  \vert \< \overline \bz_i, \overline \bz_j\> / \sqrt d \vert = O_{d, \P}( \exp\{ C (\log d)^{1/2}\}). 
\]
This gives 
\[
\max_{i \neq j \in [M]}\vert \overline R_{ij} - \tilde R_{ij}\vert = O_{d, \P}( \exp\{ C (\log d)^{1/2}\} / d). 
\]
Using similar argument, we can show that 
\[
\max_{i \neq j \in [M]}\vert  R_{ij} - \tilde R_{ij}\vert = O_{d, \P}( \exp\{ C (\log d)^{1/2}\} / d), 
\]
which gives
\[
\max_{i \neq j \in [M]}\vert  R_{ij} - \overline R_{ij}\vert = O_{d, \P}( \exp\{ C (\log d)^{1/2}\} / d). 
\]
This gives 
\[
\| \bR - \overline \bR \|_{\op} \le \| \bR - \overline \bR \|_F \le d \cdot \max_{i \neq j \in [M]}\vert  R_{ij} - \overline R_{ij}\vert = O_{d, \P}( \exp\{ C (\log d)^{1/2}\}). 
\]
This proves the lemma. 
\end{proof}

\begin{lemma}\label{lem:concentration_operator_general_sphere}
Let $\sigma: \R \to \R$ be an activation function satisfying Assumption \ref{ass:activation}, i.e., $\vert \sigma(u) \vert, \vert \sigma'(u) \vert \le c_0 e^{c_1 \vert u \vert}$ for some constants $c_0, c_1 \in (0, \infty)$. Let $(\bz_i)_{i \in [M]} \sim_{iid} \Unif(\S^{d-1}(\sqrt d))$. Assume $0 < 1/c_2 \le M /d \le c_2 < \infty$ for some constant $c_2\in (0, \infty)$. Define $\lambda_{d, 0} = \E_{\bz_i, \bz_2 \sim \Unif(\S^{d-1}(\sqrt d))}[\sigma(\< \bz_1, \bz_2\> / \sqrt d)]$, and $\vphi_d(u) = \sigma(u) - \lambda_{d, 0}$. Consider the random matrix $\bK \in \R^{M \times M}$ with 
\[
K_{ij} = \ones_{i \neq j}\cdot \frac{1}{\sqrt d}\vphi_d\Big(\frac{1}{\sqrt d}\< \bz_i,  \bz_j\> \Big).
\]
Then there exists a constant $C$ depending uniquely on $c_0, c_1, c_2$, such that
\[
\| \bK \|_{\op} \le O_{d, \P}(\exp\{ C (\log d)^{1/2} \}). 
\] 
\end{lemma}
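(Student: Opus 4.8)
\textbf{Proof proposal for Lemma \ref{lem:concentration_operator_general_sphere}.}

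The plan is to deduce the spherical bound from the Gaussian bound already established in Lemma \ref{lem:concentration_operator_general_gaussian}, using the coupling comparison of Lemma \ref{lem:operator_interpolation_gaussian_sphere}. First I would introduce the Gaussian vectors $(\overline \bz_i)_{i \in [M]} \sim_{iid} \normal(\bzero, \id_d)$ and set $\bz_i = \sqrt d \cdot \overline \bz_i / \| \overline \bz_i \|_2$, so that $(\bz_i)_{i \in [M]} \sim_{iid} \Unif(\S^{d-1}(\sqrt d))$ and both matrices are defined on the same probability space. Let $\overline \bR$ and $\bR$ be the off-diagonal kernel matrices with entries $\overline R_{ij} = \ones_{i\neq j}\sigma(\<\overline\bz_i,\overline\bz_j\>/\sqrt d)/\sqrt d$ and $R_{ij} = \ones_{i\neq j}\sigma(\<\bz_i,\bz_j\>/\sqrt d)/\sqrt d$. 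By Lemma \ref{lem:concentration_operator_general_gaussian}, there is a deterministic sequence $(\overline \eta_d)_{d\ge 1}$ with $\vert \overline \eta_d\vert \le C\exp\{C(\log d)^{1/2}\}$ such that $\| \overline \bR - \overline \eta_d \ones_M\ones_M^\sT/\sqrt d\|_{\op} = O_{d,\P}(\exp\{C(\log d)^{1/2}\})$. By Lemma \ref{lem:operator_interpolation_gaussian_sphere}, $\| \overline \bR - \bR\|_{\op} = O_{d,\P}(\exp\{C(\log d)^{1/2}\})$. Combining these by the triangle inequality gives
\[
\| \bR - \overline \eta_d \ones_M\ones_M^\sT/\sqrt d\|_{\op} = O_{d,\P}(\exp\{C(\log d)^{1/2}\}).
\]

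Next I would relate $\bR$ to the matrix $\bK$ of the statement. Writing $\lambda_{d,0} = \E[\sigma(\<\bz_1,\bz_2\>/\sqrt d)]$ and $\vphi_d(u) = \sigma(u) - \lambda_{d,0}$, the off-diagonal entries of $\bK$ are $K_{ij} = \ones_{i\neq j}(\sigma(\<\bz_i,\bz_j\>/\sqrt d) - \lambda_{d,0})/\sqrt d = R_{ij} - \ones_{i\neq j}\lambda_{d,0}/\sqrt d$, so that
\[
\bK = \bR - \frac{\lambda_{d,0}}{\sqrt d}\big(\ones_M\ones_M^\sT - \id_M\big) = \big(\bR - \overline\eta_d \ones_M\ones_M^\sT/\sqrt d\big) + \frac{\overline\eta_d - \lambda_{d,0}}{\sqrt d}\ones_M\ones_M^\sT + \frac{\lambda_{d,0}}{\sqrt d}\id_M.
\]
The first summand is $O_{d,\P}(\exp\{C(\log d)^{1/2}\})$ in operator norm by the display above; the last summand has operator norm $\vert\lambda_{d,0}\vert/\sqrt d = O_d(1)$ since $\lambda_{d,0}$ is bounded (Assumption \ref{ass:activation} and Lemma \ref{lemma:square_integrable}). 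For the middle term I would note $\|\ones_M\ones_M^\sT\|_{\op} = M = \Theta_d(d)$, so I need $\vert \overline\eta_d - \lambda_{d,0}\vert = O_{d}(\exp\{C(\log d)^{1/2}\}/d)$ — it is not enough that both quantities are individually of size $\exp\{C(\log d)^{1/2}\}$. This cancellation is the one genuinely delicate point, so I would spell it out: by construction in the proof of Lemma \ref{lem:concentration_operator_general_gaussian}, $\overline\eta_d = \E_{\overline\bx,\overline\by}[\sigma^{(\mathrm{tr})}(\<\overline\bx,\overline\by\>/\sqrt d)]$ where $\sigma^{(\mathrm{tr})}$ is the truncation of $\sigma$ at level $\overline x = 16\sqrt{\log d}$, while $\lambda_{d,0} = \E_{\bx,\by\sim\Unif}[\sigma(\<\bx,\by\>/\sqrt d)]$. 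I would bound $\vert\overline\eta_d - \lambda_{d,0}\vert$ by (i) the error from truncation, controlled by $\P(\vert\<\overline\bx,\overline\by\>/\sqrt d\vert > \overline x)\cdot\sup$ of $\sigma$ on the tail, which is super-polynomially small in $d$ by the sub-exponential tail bound of Lemma \ref{lemma:Sub-Exp} together with $\vert\sigma(u)\vert\le c_0 e^{c_1\vert u\vert}$; and (ii) the difference between the Gaussian and spherical expectations of the (now bounded-Lipschitz, after rescaling) truncated activation, which is $O_d(1/\sqrt d)$ by the standard comparison of $\mu_d$ and $\tau_d$ (e.g. via Lemma \ref{lemma:square_integrable}(c) or direct Wasserstein estimates on $\<\bx_1,\bx_2\>/\sqrt d$). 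Both contributions are $o_d(1/\sqrt d)\cdot\exp\{C(\log d)^{1/2}\}$, actually much smaller, so $\frac{\vert\overline\eta_d - \lambda_{d,0}\vert}{\sqrt d}M = O_d(\exp\{C(\log d)^{1/2}\})$, perhaps after enlarging $C$.

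Assembling the three pieces by the triangle inequality yields $\|\bK\|_{\op} = O_{d,\P}(\exp\{C(\log d)^{1/2}\})$, which is the claim. The main obstacle, as indicated, is the quantitative control of $\overline\eta_d - \lambda_{d,0}$: one must exploit the truncation level $\overline x = 16\sqrt{\log d}$ and the exponential growth bound on $\sigma$ to show the truncation error decays faster than any inverse power of $d$, and separately invoke the $O(1/\sqrt d)$-closeness of the inner-product laws $\mu_d$ (Gaussian) and $\tau_d$ (spherical); everything else is bookkeeping with operator-norm triangle inequalities and the two cited lemmas. If a sharper constant is needed one could instead define $\bK$ directly from the spherical mean throughout the argument, but routing through the Gaussian centering $\overline\eta_d$ keeps the proof short and reuses Lemma \ref{lem:concentration_operator_general_gaussian} verbatim.
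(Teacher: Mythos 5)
Your first five steps are identical to the paper's: couple the spherical vectors to Gaussians via normalization, invoke Lemma \ref{lem:concentration_operator_general_gaussian} and Lemma \ref{lem:operator_interpolation_gaussian_sphere}, and conclude $\|\bR - \overline\eta_d \ones_M\ones_M^\sT/\sqrt d\|_{\op}=O_{d,\P}(\exp\{C(\log d)^{1/2}\})$. The divergence is entirely in how you dispose of the rank-one discrepancy $(\overline\eta_d-\lambda_{d,0})\ones_M\ones_M^\sT/\sqrt d$, and there is a genuine gap in your treatment.

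You need $|\overline\eta_d-\lambda_{d,0}|=O(\exp\{C(\log d)^{1/2}\}/\sqrt d)$ because the rank-one matrix has operator norm $|\overline\eta_d-\lambda_{d,0}|\cdot M/\sqrt d=\Theta(\sqrt d)\cdot|\overline\eta_d-\lambda_{d,0}|$. You split this into a truncation error (part (i)) and a Gaussian-vs-spherical comparison (part (ii)). Part (i) is fine: the sub-exponential tail bound of Lemma \ref{lemma:Sub-Exp} together with $|\sigma(u)|\le c_0 e^{c_1|u|}$ does make the truncation contribution super-polynomially small at level $\overline x = 16\sqrt{\log d}$. But part (ii) is where you rely on a quantitative rate --- ``$O_d(1/\sqrt d)$ by \dots Lemma \ref{lemma:square_integrable}(c) or direct Wasserstein estimates on $\<\bx_1,\bx_2\>/\sqrt d$'' --- that is nowhere established. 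Lemma \ref{lemma:square_integrable}(c) gives only qualitative $L^2$ convergence of the coupling with no rate, and the paper contains no quantitative $W_1(\mu_d,\tau_d)$ estimate. The claim is very plausible (both laws are sub-Gaussian with matching low moments up to $O(1/d)$ corrections), but you would have to prove it, and with the rescaling by $c_0 e^{c_1\overline x}=\exp\{\Theta(\sqrt{\log d})\}$ the required bound on the 1-Lipschitz truncated activation genuinely has to beat $1/\sqrt d$.

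The paper avoids this issue entirely. After arriving at $\|\bK+\kappa_d\ones_M\ones_M^\sT/\sqrt d\|_{\op}=O_{d,\P}(\exp\{C(\log d)^{1/2}\})$ with $\kappa_d=\lambda_{d,0}-\overline\eta_d$, it never estimates $\kappa_d$ directly. Instead it exploits that $\vphi_d$ is centered under the \emph{spherical} law $\tau_d$: a straightforward second-moment computation (using that $\E[\vphi_d(\<\bz_i,\bz_{j_1}\>/\sqrt d)\vphi_d(\<\bz_i,\bz_{j_2}\>/\sqrt d)]=0$ for distinct indices, by conditional independence given $\bz_i$) shows $|\ones_M^\sT\bK\ones_M/M|=O_{d,\P}(1)$. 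Since the rank-one matrix $\kappa_d\ones_M\ones_M^\sT/\sqrt d$ has $\ones_M$ as its top eigenvector, its operator norm equals its normalized quadratic form on $\ones_M$, which one can write as the quadratic form of $\bK+\kappa_d\ones_M\ones_M^\sT/\sqrt d$ (already bounded) minus that of $\bK$ (just bounded). This sandwich gives $\|\kappa_d\ones_M\ones_M^\sT/\sqrt d\|_{\op}=O_{d,\P}(\exp\{C(\log d)^{1/2}\})$ with no explicit control of $\kappa_d$, and then $\|\bK\|_{\op}$ follows by one more triangle inequality. The key idea you missed is that centering $\bK$ by the \emph{spherical} mean $\lambda_{d,0}$ is precisely the centering that makes the row sums of $\bK$ concentrate, so the rank-one piece can be extracted algebraically rather than estimated numerically. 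To make your route rigorous you would have to supply the missing quantitative comparison of $\mu_d$ and $\tau_d$, which is extra work the paper's argument renders unnecessary.
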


\begin{proof}[Proof of Lemma \ref{lem:concentration_operator_general_sphere}] 
We construct $(\bz_i)_{i \in [M]}$ by normalizing a collection of independent Gaussian random vectors. Let $(\overline \bz_i)_{i \in [M]} \sim_{iid} \normal(\bzero, \id_d)$ and denote $\bz_i = \sqrt d \cdot \overline \bz_i / \| \overline \bz_i \|_2$ for $i \in [M]$. Then we have $(\bz_i)_{i \in [M]} \sim_{iid} \Unif(\S^{d-1}(\sqrt d))$. 

Consider two random matrices $\overline \bR, \bR \in \R^{M\times M}$ defined by
\[
\begin{aligned}
\overline R_{ij} =&~ \ones_{i \neq j}\cdot \sigma(\< \overline \bz_i, \overline \bz_j\> / \sqrt d) / \sqrt d, \\
R_{ij} =&~ \ones_{i \neq j}\cdot \sigma(\< \bz_i, \bz_j\> / \sqrt d) / \sqrt d. \\
\end{aligned}
\]
By Lemma \ref{lem:concentration_operator_general_gaussian}, there exists a sequence $(\overline \eta_d)_{d \ge 0}$ with $\vert \overline \eta_d \vert \le C \exp\{ C (\log d)^{1/2} \}$, such that 
\[
\| \overline \bR - \overline \eta_d \ones_M \ones_M^\sT / \sqrt d \|_{\op} = O_{d, \P}( \exp\{ C (\log d)^{1/2} \}). 
\]
Moreover, by Lemma \ref{lem:operator_interpolation_gaussian_sphere}, we have, 
\[
\| \overline \bR - \bR \|_{\op} \le O_{d, \P}( \exp\{ C (\log d)^{1/2} \}),
\]
which gives, 
\[
\| \bR - \overline \eta_d \ones_M \ones_M^\sT / \sqrt d\|_{\op} = O_{d, \P}( \exp\{ C (\log d)^{1/2} \}). 
\]
Note we have 
\[
\bR = \bK + \lambda_{d, 0} \ones_M \ones_M^\sT / \sqrt d - \lambda_{d, 0} \id_M / \sqrt d. 
\]
Moreover, note that $\lim_{d \to \infty} \lambda_{d, 0} = \E_{G \sim \normal(0, 1)}[\sigma(G)]$ so that $\sup_d \vert \lambda_{d, 0} \vert \le C$. Therefore, denoting $\kappa_d = \lambda_{d, 0} - \overline \eta_d$, we have 
\begin{equation}\label{eqn:operator_bound_K_plus_etad}
\begin{aligned}
\| \bK + \kappa_d \ones_M \ones_M^\sT / \sqrt d \|_{\op} =&~ \| \bR - \overline \eta_d \ones_M \ones_M^\sT / \sqrt d + \lambda_{d, 0} \id_M / \sqrt d \|_{\op} \\
\le &~ \| \bR - \overline \eta_d \ones_M \ones_M^\sT / \sqrt d\|_{\op} + \lambda_{d, 0} / \sqrt d = O_{d, \P}(\exp\{ C (\log d)^{1/2} \}). 
\end{aligned}
\end{equation}

Notice that
\[
\vert \ones_M^\sT \bK \ones_M / M \vert \le \frac{C}{M^{3/2} }  \Big \vert \sum_{i \neq j }\vphi_d(\< \bz_i, \bz_j\>/ \sqrt d)\Big \vert \le \frac{C}{M}  \sum_{i=1}^M \Big \vert \sum_{j: j\neq i }\vphi_d(\< \bz_i, \bz_j\>/ \sqrt d) / \sqrt {M} \Big \vert \equiv \frac{C}{M}  \sum_{i=1}^M \vert V_i\vert, 
\]
where
\[
V_i =  \frac{1}{\sqrt {M}}\sum_{j: j\neq i }\vphi_d(\< \bz_i, \bz_j\>/ \sqrt d). 
\]
Note $\E[\vphi_d(\< \bz_i, \bz_j\>/ \sqrt d)] = 0$ for $i \neq j$ so that $\E[\vphi_d(\< \bz_i, \bz_{j_1}\>/ \sqrt d) \vphi_d(\< \bz_i, \bz_{j_2}\>/ \sqrt d)] = 0$ for $i, j_1, j_2$ distinct. Calculating the second moment, we have
\[
\sup_{i \in [M]}\E[V_i^2] = \sup_{i \in [M]} \E\Big[ \Big( \sum_{j: j\neq i }\vphi_d(\< \bz_i, \bz_j\>/ \sqrt d) / \sqrt {M} \Big)^2\Big] = \sup_{i \in [M]} \frac{1}{M} \sum_{j: j\neq i }  \E[ \vphi_d(\< \bz_i, \bz_j\>/ \sqrt d)^2 ]  = O_d(1). 
\]
Therefore, we have 
\[
\E[( \ones_M^\sT \bK \ones_M / M)^2] \le \frac{C^2}{M^2} \sum_{i, j =1}^M \E [\vert V_i\vert \cdot \vert V_j\vert] \le \frac{C^2}{M^2} \sum_{i, j =1}^M \E [(V_i^2 + V_j^2) / 2 ]  \le C^2 \sup_{i \in [M]} \E[V_i^2] = O_d(1). 
\]
This gives 
\[
\vert \ones_M^\sT \bK \ones_M / M \vert = O_{d, \P}(1). 
\]
Combining this equation with Eq. (\ref{eqn:operator_bound_K_plus_etad}), we get
\[
\begin{aligned}
&\| \kappa_d \ones_M \ones_M^\sT  / \sqrt d \|_{\op} = \vert \< \ones_M, (\kappa_d \ones_M \ones_M^\sT / \sqrt d) \ones_M \> / M  \vert \\
\le&~ \vert \<\ones_M, (\bK + \kappa_d \ones_M \ones_M^\sT / \sqrt d) \ones_M \> / M \vert + \vert \ones_M^\sT \bK \ones_M / M \vert\\
\le&~ \| \bK + \kappa_d \ones_M \ones_M^\sT / \sqrt d \|_{\op} + \vert \ones_M^\sT \bK \ones_M / M \vert = O_{d, \P}( \exp\{ C (\log d)^{1/2} \}),
\end{aligned}
\]
and hence
\[
\| \bK \|_{\op} \le \| \bK + \kappa_d \ones_M \ones_M^\sT / \sqrt d \|_{\op} + \| \kappa_d \ones_M \ones_M^\sT / \sqrt d \|_{\op} = O_{d, \P}( \exp\{ C (\log d)^{1/2} \}).
\]
This proves the lemma. 
\end{proof}

\subsection{The decomposition of kernel inner product matrices}\label{subsec:preliminary_spherical_random_matrix}

The following lemma (Lemma \ref{lem:gegenbauer_identity}) is a reformulation of Proposition 3 in \cite{ghorbani2019linearized}. We present it in a stronger form, but it can be easily derived from the proof of Proposition 3 in \cite{ghorbani2019linearized}. This lemma was first proved in \cite{el2010spectrum} in the Gaussian case.
(Notice that the second estimate ---on $Q_k(\bTheta \bX^\sT)$---  follows
by applying the first one whereby $\bTheta$ is replaced by $\bW = [\bTheta^{\sT}|\bX^{\sT}]^{\sT}$
\begin{lemma}\label{lem:gegenbauer_identity}
Let $\bTheta = (\btheta_1, \ldots, \btheta_N)^\sT \in \R^{N \times d}$ with $(\btheta_a)_{a\in [N]} \sim_{iid} \Unif(\S^{d- 1}(\sqrt d))$ and $\bX = (\bx_1, \ldots, \bx_n)^\sT \in \R^{n \times d}$ with $(\bx_i)_{i\in [n]} \sim_{iid} \Unif(\S^{d- 1}(\sqrt d))$. Assume $1/c \le n / d, N / d \le c$ for some constant $c \in (0, \infty)$. Then 
\begin{align}
  \E\Big[ \sup_{k \ge 2} \| Q_k(\bTheta \bTheta^\sT) - \id_N \|_{\op}^2 \Big]&= o_d(1)\, ,\label{eq:QTT}\\
  \E\Big[ \sup_{k \ge 2} \| Q_k(\bTheta \bX^\sT) \|_{\op}^2 \Big] &= o_d(1). \label{eq:QTX}
\end{align}
\end{lemma}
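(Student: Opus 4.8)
\noindent\textbf{Proof plan (Lemma \ref{lem:gegenbauer_identity}).}
The plan is to follow \cite{el2010spectrum, ghorbani2019linearized}: isolate the single ``hard'' degree $k=2$, dispose of all degrees $k\ge 3$ by a crude first-moment estimate, and reduce \eqref{eq:QTX} to \eqref{eq:QTT}. For the last point I would set $\bW = [\bTheta^\sT\,|\,\bX^\sT]^\sT\in\reals^{(N+n)\times d}$, whose rows are i.i.d.\ $\Unif(\S^{d-1}(\sqrt d))$ with $(N+n)/d$ bounded away from $0$ and $\infty$; then $Q_k(\bW\bW^\sT)-\id_{N+n}$ is a symmetric matrix whose off-diagonal block is exactly $Q_k(\bTheta\bX^\sT)$, and since the operator norm of an off-diagonal block is at most that of the whole matrix, $\sup_{k\ge2}\|Q_k(\bTheta\bX^\sT)\|_{\op}\le \sup_{k\ge2}\|Q_k(\bW\bW^\sT)-\id_{N+n}\|_{\op}$, so \eqref{eq:QTX} follows from \eqref{eq:QTT} applied to $\bW$.

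For \eqref{eq:QTT}, I would use $\sup_{k\ge2}a_k\le a_2 + \sup_{k\ge3}a_k$ for nonnegative $a_k$ to split the left-hand side. By \eqref{eq:GegenbauerHarmonics}, $Q_k(\bTheta\bTheta^\sT)_{aa}=Q_k^{(d)}(d)=1$, so $Q_k(\bTheta\bTheta^\sT)-\id_N$ has vanishing diagonal, and by \eqref{eq:GegenbauerNormalization}, $\E[Q_k^{(d)}(\<\btheta_1,\btheta_2\>)^2]=B(d,k)^{-1}$. Hence
\begin{align*}
\E\Big[\sup_{k\ge3}\|Q_k(\bTheta\bTheta^\sT)-\id_N\|_{\op}^2\Big]\le\sum_{k\ge3}\E\|Q_k(\bTheta\bTheta^\sT)-\id_N\|_F^2=\sum_{k\ge3}\frac{N(N-1)}{B(d,k)}.
\end{align*}
A routine lower bound such as $B(d,k)\ge d(d-1)^{k-1}/k!$ gives $\sum_{k\ge3}B(d,k)^{-1}=O(d^{-3})$ (the sum is dominated by its $k=3$ term, later terms decaying geometrically until $k\approx d$ and super-exponentially thereafter), so this expectation is $O(N^2/d^3)=O(1/d)=o_d(1)$. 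The point is that a Frobenius bound already suffices for $k\ge 3$.

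The remaining term, $k=2$, is the crux, since there the Frobenius bound gives only $\E\|Q_2(\bTheta\bTheta^\sT)-\id_N\|_F^2=N(N-1)/B(d,2)=\Theta(1)$, and one must exploit genuine cancellation in the operator norm. My plan is to use the explicit form $Q_2^{(d)}(\<x,y\>)=(\<x,y\>^2-d)/(d^2-d)$ to write $Q_2(\bTheta\bTheta^\sT)-\id_N = d^{-1/2}\bK + \bE$, where $\bK$ is the zero-diagonal kernel matrix with entries $K_{ab}=\bfone_{a\ne b}\,d^{-1/2}\He_2(\<\btheta_a,\btheta_b\>/\sqrt d)$ --- which is precisely the matrix of Lemma \ref{lem:concentration_operator_general_sphere} with the fixed polynomial $\sigma=\He_2$, since $\He_2$ is exactly centered under $\tau_d$ (because $\int x^2\,\tau_d(\de x)=1$) --- and $\bE$ collects the finite-$d$ corrections, with $\E\|\bE\|_F^2=O(d^{-2})$. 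Lemma \ref{lem:concentration_operator_general_sphere} then yields $\|\bK\|_{\op}=O_{d,\P}(\exp\{C(\log d)^{1/2}\})$ (one could alternatively quote \cite{el2010spectrum, cheng2013spectrum, fan2019spectral}), so $\|Q_2(\bTheta\bTheta^\sT)-\id_N\|_{\op}\le d^{-1/2}\|\bK\|_{\op}+\|\bE\|_F = O_{d,\P}(\Poly(\log d)/\sqrt d)=o_{d,\P}(1)$. To upgrade this to the asserted $L^2$ bound I would add a routine moment computation giving $\E\|Q_2(\bTheta\bTheta^\sT)-\id_N\|_F^4=O(1)$ (the dominant contributions come from index tuples sharing at most one index), which renders $\{\|Q_2(\bTheta\bTheta^\sT)-\id_N\|_{\op}^2\}_d$ uniformly integrable; together with the in-probability vanishing this gives $\E\|Q_2(\bTheta\bTheta^\sT)-\id_N\|_{\op}^2=o_d(1)$. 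Combining the $k=2$ and $k\ge3$ bounds proves \eqref{eq:QTT}.

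The main obstacle is the degree-$2$ operator-norm estimate: the matrix is $\Theta(1)$-far from $\id_N$ in Frobenius norm, so a union bound over entries cannot work, and one must instead exploit the independence of the rows $\btheta_a$ together with an operator-norm bound for a kernel random matrix with vanishing diagonal --- exactly the content of Lemma \ref{lem:concentration_operator_general_sphere} (whose proof in turn rests on the Gaussian comparison of Lemmas \ref{lem:concentration_operator_general_gaussian}--\ref{lem:operator_interpolation_gaussian_sphere}) and of \cite{el2010spectrum}. Everything else --- the reduction of \eqref{eq:QTX} to \eqref{eq:QTT}, the $k\ge3$ tail, and the $L^2$ upgrade --- is routine once the Gegenbauer identities and the growth of $B(d,k)$ are in hand.
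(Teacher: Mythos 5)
Your proof is correct and takes essentially the same approach the paper relies on. The reduction of \eqref{eq:QTX} to \eqref{eq:QTT} via the stacked matrix $\bW$ is exactly what the paper states after the lemma, and your treatment of \eqref{eq:QTT}---a Frobenius bound for the $k\ge3$ tail, an operator-norm estimate for the degree-$2$ Hermite kernel matrix via Lemma~\ref{lem:concentration_operator_general_sphere} with $\sigma=\He_2$, and a uniform-integrability upgrade to an $L^2$ bound---fills in, along the standard lines, the argument the paper attributes to \cite{ghorbani2019linearized,el2010spectrum}.
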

Notice that the second estimate ---on $Q_k(\bTheta \bX^\sT)$---  follows
by applying the first one ---Eq.~\eqref{eq:QTT}--- whereby $\bTheta$ is replaced by
$\bW = [\bTheta^{\sT}|\bX^{\sT}]^{\sT}$, and we use
$\| Q_k(\bTheta \bX^\sT) \|_{\op} \le \| Q_k(\bW \bW^\sT)-\id_{N+n} \|_{\op}$. 

The following lemma (Lemma \ref{lem:decomposition_of_kernel_matrix}) can be easily derived from Lemma \ref{lem:gegenbauer_identity}. Again, this lemma was first proved in \cite{el2010spectrum} in the Gaussian case. 
\begin{lemma}\label{lem:decomposition_of_kernel_matrix}
Let $\bTheta = (\btheta_1, \ldots, \btheta_N)^\sT \in \R^{N \times d}$ with $(\btheta_a)_{a\in [N]} \sim_{iid} \Unif(\S^{d- 1}(\sqrt d))$. Let activation function $\sigma$ satisfies Assumption \ref{ass:activation}. Assume $1/c \le N / d \le c$ for some constant $c \in (0, \infty)$. Denote 
\[
\bU = \Big(\E_{\bx \sim \Unif(\S^{d-1}(\sqrt d))}[\sigma(\< \btheta_a, \bx\> / \sqrt d) \sigma(\< \btheta_b, \bx \> / \sqrt d)] \Big)_{a, b \in [N]} \in \R^{N \times N}. 
\]
Then we can rewrite the matrix $\bU$ to be
\[
\bU =  \lambda_{d, 0}(\sigma)^2 \ones_N \ones_N^\sT + \ob_1^2 \bQ  + \ob_\star^2 (\id_N + \bDelta),
\]
with $\bQ = \bTheta \bTheta^\sT / d$ and $\E[\| \bDelta \|_{\op}^2] = o_{d}(1)$. 
\end{lemma}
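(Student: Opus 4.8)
The plan is to expand $\sigma$ in Gegenbauer polynomials, use the reproducing/product property \eqref{eq:ProductGegenbauer} to evaluate $\bU$ exactly as a matrix-valued series in the matrices $Q_k(\bTheta\bTheta^\sT)$, extract the degree-$0$ and degree-$1$ contributions, and show that the remaining degrees $k\ge 2$ collapse to $\ob_\star^2\id_N$ up to an operator-norm-small error. This is exactly the formalization of the heuristic computation already carried out around \eqref{lem:U_decomposition_proof_in_lem:constant_term_two} in the proof of Lemma \ref{lem:expression_bound_S}.

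Concretely, I would first invoke Assumption \ref{ass:activation} together with Lemma \ref{lemma:Sub-Exp} to get $\sigma\in L^2(\tau_d)$ and the expansion $\sigma(x)=\sum_{k\ge 0}\lambda_{d,k}(\sigma)B(d,k)Q_k^{(d)}(\sqrt d x)$ in $L^2(\tau_d)$. Inserting this into $U_{ab}=\E_{\bx}[\sigma(\langle\btheta_a,\bx\rangle/\sqrt d)\sigma(\langle\btheta_b,\bx\rangle/\sqrt d)]$ and applying \eqref{eq:ProductGegenbauer} term by term (the interchange of the infinite sum with $\E_{\bx}$ being justified by $L^2$-convergence and Cauchy--Schwarz) yields
\[
\bU=\sum_{k=0}^{\infty}\lambda_{d,k}(\sigma)^2 B(d,k)\,Q_k(\bTheta\bTheta^\sT).
\]
Using $Q_0^{(d)}\equiv 1$, $B(d,0)=1$ and $Q_1^{(d)}(t)=t/d$, $B(d,1)=d$, the $k=0$ term is $\lambda_{d,0}(\sigma)^2\ones_N\ones_N^\sT$ and the $k=1$ term is $d\,\lambda_{d,1}(\sigma)^2\,\bQ$. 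Since $Q_k(\bTheta\bTheta^\sT)$ has unit diagonal for every $k$, the sum over $k\ge 2$ equals $\big(\sum_{k\ge 2}\lambda_{d,k}(\sigma)^2 B(d,k)\big)\id_N+\sum_{k\ge 2}\lambda_{d,k}(\sigma)^2 B(d,k)\big(Q_k(\bTheta\bTheta^\sT)-\id_N\big)$. Collecting terms, $\bU=\lambda_{d,0}(\sigma)^2\ones_N\ones_N^\sT+\ob_1^2\bQ+\ob_\star^2(\id_N+\bDelta)$ with
\[
\ob_\star^2\bDelta=(d\lambda_{d,1}(\sigma)^2-\ob_1^2)\bQ+\Big(\textstyle\sum_{k\ge 2}\lambda_{d,k}(\sigma)^2 B(d,k)-\ob_\star^2\Big)\id_N+\sum_{k\ge 2}\lambda_{d,k}(\sigma)^2 B(d,k)\big(Q_k(\bTheta\bTheta^\sT)-\id_N\big).
\]

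It then remains to bound the three pieces. For the first, $d\lambda_{d,1}(\sigma)^2\to\ob_1^2$ by \eqref{eqn:relationship_mu_lambda} (with $\mu_1(\sigma)=\ob_1$) and $\E[\|\bQ\|_{\op}^2]=O_d(1)$ by the Wishart operator-norm bound \cite{Guionnet}, so its contribution to $\E[\|\bDelta\|_{\op}^2]$ is $o_d(1)$. For the second, $\sum_{k\ge 2}\lambda_{d,k}(\sigma)^2 B(d,k)=\|\sigma\|_{L^2(\tau_d)}^2-\lambda_{d,0}(\sigma)^2-d\lambda_{d,1}(\sigma)^2\to \E[\sigma(G)^2]-\ob_0^2-\ob_1^2=\ob_\star^2$, using $\|\sigma\|_{L^2(\tau_d)}^2\to\E[\sigma(G)^2]$ (Lemma \ref{lemma:square_integrable}), $\lambda_{d,0}(\sigma)\to\ob_0$, and the first limit; so this term is deterministically $o_d(1)$. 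For the third, I would bound its operator norm by $\big(\sum_{k\ge 2}\lambda_{d,k}(\sigma)^2 B(d,k)\big)\cdot\sup_{k\ge 2}\|Q_k(\bTheta\bTheta^\sT)-\id_N\|_{\op}$; the prefactor is $O_d(1)$ (at most $\|\sigma\|_{L^2(\tau_d)}^2$) and $\E[\sup_{k\ge 2}\|Q_k(\bTheta\bTheta^\sT)-\id_N\|_{\op}^2]=o_d(1)$ by Lemma \ref{lem:gegenbauer_identity} (Eq.~\eqref{eq:QTT}), so Cauchy--Schwarz gives $o_d(1)$. Altogether $\E[\|\bDelta\|_{\op}^2]=o_d(1)$, which is the claim.

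The only substantial ingredient is the uniform-in-$k$ spectral estimate of Lemma \ref{lem:gegenbauer_identity}; everything else is bookkeeping of scalar limits. The one point to handle with care is that the $k\ge 2$ Gegenbauer components must be summed \emph{before} taking operator norms, so one cannot bound each $Q_k(\bTheta\bTheta^\sT)-\id_N$ separately: one needs these to be small \emph{simultaneously over all $k\ge 2$}, which is exactly the content of \eqref{eq:QTT}, combined with the summability of the coefficient masses $\lambda_{d,k}(\sigma)^2 B(d,k)$.
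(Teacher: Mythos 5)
Your proof is correct and is precisely the argument the paper has in mind: the paper states this lemma as "easily derived from Lemma \ref{lem:gegenbauer_identity}" and its inline version appears around Eq.~\eqref{lem:U_decomposition_proof_in_lem:constant_term_two}, using the same Gegenbauer expansion $\bU=\sum_{k\ge 0}\lambda_{d,k}(\sigma)^2 B(d,k)\,Q_k(\bTheta\bTheta^\sT)$, the same extraction of the $k=0,1$ terms, and the same appeal to $\E[\sup_{k\ge 2}\|Q_k(\bTheta\bTheta^\sT)-\id_N\|_{\op}^2]=o_d(1)$ together with the summability of the coefficient masses. You have simply filled in the bookkeeping (scalar limits via \eqref{eqn:relationship_mu_lambda} and Lemma \ref{lemma:square_integrable}, Wishart bound for $\bQ$) that the paper leaves implicit.
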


\subsection{A lemma on the variance of the quadratic form}

\begin{lemma}\label{lem:variance_calculations}
Let $\bA \in \R^{n \times N}$ and $\bB \in \R^{n \times n}$. Let $\bg = (g_1, \ldots, g_n)^\sT$ with $g_i \sim_{iid} \P_g$, $\E_g[g] = 0$, and $\E_g[g^2] = 1$. Let $\bh = (h_1, \ldots, h_N)^\sT$ with $h_i \sim_{iid} \P_h$, $\E_h[h] = 0$, and $\E_h[h^2] = 1$. Further we assume that $\bh$ is independent of $\bg$. Then we have 
\[
\begin{aligned}
\Var(\bg^\sT \bA \bh) =&~ \| \bA \|_F^2, \\
\Var(\bg^\sT \bB \bg) =&~ \sum_{i = 1}^n B_{ii}^2 (\E[g^4] - 3) + \| \bB \|_F^2 + \Tr(\bB^2). 
\end{aligned}
\]
\end{lemma}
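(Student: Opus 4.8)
\textbf{Proof proposal for Lemma~\ref{lem:variance_calculations}.}
The statement is a direct computation of the variance of two quadratic forms in independent mean-zero, unit-variance random vectors, so the plan is purely combinatorial bookkeeping of moments; there is no real obstacle, only the need to organize the index sums carefully. First I would handle the bilinear form $\bg^\sT\bA\bh = \sum_{i,j} A_{ij} g_i h_j$. Since $\E[\bg^\sT\bA\bh]=0$ (each term has a factor $\E[g_i]=0$ or $\E[h_j]=0$), one has $\Var(\bg^\sT\bA\bh) = \E[(\sum_{i,j}A_{ij}g_ih_j)^2] = \sum_{i,j,i',j'} A_{ij}A_{i'j'}\E[g_ig_{i'}]\E[h_jh_{j'}]$, using independence of $\bg$ and $\bh$. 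By independence and unit variance within $\bg$ (resp.\ $\bh$), $\E[g_ig_{i'}] = \delta_{ii'}$ and $\E[h_jh_{j'}]=\delta_{jj'}$, so the sum collapses to $\sum_{i,j} A_{ij}^2 = \|\bA\|_F^2$, which is the first claim.

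For the second claim I would expand $\bg^\sT\bB\bg = \sum_{i,j} B_{ij} g_ig_j$, so that $\E[\bg^\sT\bB\bg] = \sum_i B_{ii}$ and
\[
\Var(\bg^\sT\bB\bg) = \sum_{i,j,k,l} B_{ij}B_{kl}\big(\E[g_ig_jg_kg_l] - \E[g_ig_j]\E[g_kg_l]\big).
\]
The key step is to evaluate $\E[g_ig_jg_kg_l]$ by cases on the coincidence pattern of the indices. When all four indices are equal it gives $\E[g^4]$; when they pair up into two distinct pairs (three such pairings: $\{ij\}\{kl\}$, $\{ik\}\{jl\}$, $\{il\}\{jk\}$) it gives $1$; otherwise at least one index is isolated and the expectation vanishes. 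Subtracting the $\E[g_ig_j]\E[g_kg_l] = \delta_{ij}\delta_{kl}$ term removes the pairing $\{ij\}\{kl\}$ and, on the all-equal diagonal, replaces $\E[g^4]$ by $\E[g^4]-1$; I would then be careful that the remaining contribution of the pairing $\{ij\}\{kl\}$ off-diagonal is already cancelled, so what survives is: a diagonal term $\sum_i B_{ii}^2(\E[g^4]-3)$ (the $-3$ accounting for the subtracted $\delta_{ij}\delta_{kl}$ contribution plus the double-counting correction when the pairing patterns coincide with the all-equal case), the pairing $\{ik\}\{jl\}$ giving $\sum_{i,j}B_{ij}B_{ij} = \|\bB\|_F^2$, and the pairing $\{il\}\{jk\}$ giving $\sum_{i,j}B_{ij}B_{ji} = \Tr(\bB^2)$. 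Assembling these yields $\Var(\bg^\sT\bB\bg) = \sum_i B_{ii}^2(\E[g^4]-3) + \|\bB\|_F^2 + \Tr(\bB^2)$.

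The only point requiring genuine care — and the closest thing to an obstacle — is getting the diagonal coefficient exactly right: one must avoid double-counting the configurations where the all-equal pattern is also counted among the three pairing patterns, and correctly track the $-1$ coming from the subtracted $\E[g_ig_j]\E[g_kg_l]$ term on the diagonal. A clean way to present this is to write the fourth-moment tensor as $\E[g_ig_jg_kg_l] = \delta_{ij}\delta_{kl} + \delta_{ik}\delta_{jl} + \delta_{il}\delta_{jk} + (\E[g^4]-3)\delta_{ij}\delta_{ik}\delta_{il}$ (a valid identity precisely because of the inclusion–exclusion correction on the diagonal), and then substitute this directly into the variance sum; the $\delta_{ij}\delta_{kl}$ piece cancels against $\E[g_ig_j]\E[g_kg_l]$, and the remaining three pieces give the three stated terms immediately. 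I would close by noting the same $\delta$-tensor identity makes the bilinear computation a one-line special case as well.
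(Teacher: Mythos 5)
Your proposal is correct and follows essentially the same route as the paper: a direct fourth-moment computation classified by index-coincidence patterns, with the paper organizing the case analysis as a disjoint sum over patterns ($i_1{=}i_2{=}i_3{=}i_4$, $i_1{=}i_2{\neq}i_3{=}i_4$, etc.) and then subtracting $\Tr(\bB)^2$, while you package the same bookkeeping into the moment-tensor identity $\E[g_ig_jg_kg_l]=\delta_{ij}\delta_{kl}+\delta_{ik}\delta_{jl}+\delta_{il}\delta_{jk}+(\E[g^4]-3)\delta_{ij}\delta_{ik}\delta_{il}$. The bilinear case is likewise the same computation (the paper uses a one-line trace identity; you use explicit index sums), so there is no substantive difference.
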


\begin{proof}[Proof of Lemma \ref{lem:variance_calculations}] ~

\noindent
{\bf Step 1. Term $\bg^\sT \bA \bh$. }
Calculating the expectation, we have 
\[
\E[\bg^\sT \bA \bh] = 0. 
\]
Hence we have 
\[
\Var(\bg^\sT \bA \bh) = \E[\bg^\sT \bA \bh \bh^\sT \bA^\sT \bg] = \E[\Tr(\bg\bg^\sT \bA \bh \bh^\sT \bA^\sT)] = \Tr(\bA \bA^\sT) = \| \bA \|_F^2. 
\]

\noindent
{\bf Step 2. Term $\bg^\sT \bB \bg$. }
Calculating the expectation, we have
\[
\E[\bg^\sT \bB \bg] = \E[\Tr(\bB \bg \bg^\sT)] = \Tr(\bB). 
\]
Hence we have 
\[
\begin{aligned}
&\Var(\bg^{\sT}\bB\bg) =\Big\{ \sum_{i_1, i_2, i_3, i_4} \E[g_{i_1} B_{i_1 i_2} g_{i_2} g_{i_3} B_{i_3 i_4} g_{i_4}] \Big\}- \Tr(\bB)^2\\
=&~\Big\{\Big( \sum_{i_1 =  i_2 = i_3 = i_4} + \sum_{i_1 = i_2 \neq i_3 = i_4}  + \sum_{i_1 = i_3 \neq i_2 = i_4}  +  \sum_{i_1 = i_4 \neq i_2 =  i_3}  \Big) \E[g_{i_1} B_{i_1 i_2} g_{i_2} g_{i_3} B_{i_3 i_4} g_{i_4}] \Big\} - \Tr(\bB)^2\\
=&~\sum_{i=1}^{n} B_{ii}^2 \E[g^4] + \sum_{i \neq j} B_{ii} B_{jj} + \sum_{i \neq j}  (B_{ij} B_{ij} + B_{ij} B_{ji}) - \Tr(\bB)^2\\
=&~\sum_{i=1}^{n} B_{ii}^2 (\E[g^4] - 3) +  \Tr(\bB^\sT\bB) + \Tr(\bB^2). \\
\end{aligned}
\]
This proves the lemma. 
\end{proof}

\clearpage

\section{Proof of Lemma \ref{lem:formula_derivatives_g}}\label{sec:proof_lemma_formula_derivatives_g}

\begin{proof}[Proof of Lemma \ref{lem:formula_derivatives_g}]
For fixed $\xi \in \C_+$ and $\bq \in \R^5$, by the fixed point equation satisfied by $m_1, m_2$ (c.f. Eq. (\ref{eq:FixedPoint})), we see that $(m_1(\xi; \bq), m_2(\xi; \bq))$ is a stationary point of function $\Xi(\xi, \cdot, \cdot; \bq)$. Using the formula for implicit differentiation, we have 
\[
\begin{aligned}
\partial_p g(\xi; \bq) =&~ \partial_p \Xi(\xi, z_1, z_2; \bq) \vert_{(z_1, z_2) = (m_1(\xi; \bq), m_2(\xi; \bq))}, \\
\partial_{s_1, t_1}^2 g(\xi; \bq) =&~ \bH_{1, 3} - \bH_{1, [5, 6]}\bH_{[5,6], [5,6]}^{-1}\bH_{[5, 6], 3}, \\
\partial_{s_1, t_2}^2 g(\xi; \bq) =&~ \bH_{1, 4} - \bH_{1, [5, 6]}\bH_{[5,6], [5,6]}^{-1}\bH_{[5, 6], 4}, \\
\partial_{s_2, t_1}^2 g(\xi; \bq) =&~ \bH_{2, 3} - \bH_{2, [5, 6]}\bH_{[5,6], [5,6]}^{-1}\bH_{[5, 6], 3}, \\
\partial_{s_2, t_2}^2 g(\xi; \bq) =&~ \bH_{2, 4} - \bH_{2, [5, 6]}\bH_{[5,6], [5,6]}^{-1}\bH_{[5, 6], 4}, \\
\end{aligned}
\]
where we have, for $\bu = (s_1, s_2, t_1, t_2, z_1, z_2)^\sT$
\[
\bH = \nabla_{\bu}^2 \Xi(\xi, z_1, z_2; \bq) \vert_{(z_1, z_2) = (m_1(\xi; \bq), m_2(\xi; \bq))}. 
\]
Basic algebra completes the proof. 
\end{proof}

\clearpage

\section{Proof sketch for Theorem \ref{thm:training_asymptotics}}\label{sec:training_proof_sketch}

In this section, we sketch the calculations of Theorem \ref{thm:training_asymptotics}. We assume $\psi_{1, d} \equiv N / d = \psi_1$ and $\psi_{2, d} \equiv n / d = \psi_2$ are constants independent of $d$. Recall that the definitions of two useful resolvent matrix $\Res$ and $\oRes$ are 
\[
\begin{aligned}
\Res =&~ (\bZ^\sT \bZ + \lambda \psi_1 \psi_2 \id_N)^{-1}, \\
\oRes =&~ (\bZ \bZ^\sT + \lambda \psi_1 \psi_2 \id_n)^{-1}.
\end{aligned}
\]

\noindent
{\bf Step 1. The expectation of regularized training error. }

By Eq. (\ref{eqn:regularized_training_loss}), the regularized training error of random features regression gives
\[
\begin{aligned}
L_{\RF}(f_d, \bX, \bTheta, \lambda) =&~ \min_{\ba}\Big[ \frac{1}{n} \sum_{i=1}^n \Big( y_i - \sum_{j = 1}^N a_j \sigma(\<\btheta_j, \bx_i \> / \sqrt d) \Big)^2 + \lambda \psi_1 \| \ba \|_2^2\Big]\\
=&~ \min_{\ba}\Big[ \frac{1}{n} \| \by - \sqrt d  \bZ \ba \| ^2 + \lambda \psi_1 \| \ba \|_2^2\Big]\\
=&~ \frac{1}{n} \| \by - \bZ  \Res \bZ^\sT \by \| ^2 + \lambda \psi_1 \|   \Res  \bZ^\sT \by\|_2^2 / d \\
=&~ \frac{1}{n} \Big[ \| \by \|_2^2 - \by^\sT \bZ  \Res \bZ^\sT \by   \Big]. 
\end{aligned}
\]
Its expectation with respect to $f_d^{\sNL}$ (that satisfies Assumption \ref{ass:ground_truth}), $\bbeta_1 \sim \Unif(\S^{d-1}(\normf_{d, 1}))$ and $\beps$ gives
\[
\begin{aligned}
&\E_{\bbeta, \beps}[ L_{\RF}(f_d, \bX, \bTheta, \lambda)] \\
=&~ \frac{1}{n} \Big[ \E_{\bbeta, \beps}[ \| \by \|_2^2] - \E_{\bbeta, \beps}[\by^\sT \bZ  \Res \bZ^\sT \by]   \Big] \\
=&~ \E_{\bbeta}[\| f_d \|_{L^2}^2] + \tau^2 - \frac{1}{n} \E_{\bbeta} \Big[ \boldf^\sT \bZ  \Res \bZ^\sT \boldf \Big]- \frac{1}{n} \E_{\beps} \Big[ \beps^\sT \bZ  \Res \bZ^\sT \beps \Big]\\
=&~ \E_{\bbeta}[\| f_d \|_{L^2}^2] + \tau^2 - \frac{1}{n} \E_{\bbeta}\Big[ \Big( \sum_{k = 0}^\infty \bY_{\bx, k} \bbeta_k \Big)^\sT \bZ \Res \bZ^\sT \Big( \sum_{k = 0}^\infty \bY_{\bx, k} \bbeta_k \Big)\Big]  - \frac{\tau^2}{n} \Tr\Big( \Res \bZ^\sT \bZ \Big)\\
=&~ \sum_{ k = 0}^\infty \normf_k^2 + \tau^2  - \frac{1}{n} \sum_{k = 0}^\infty \normf_k^2 \Tr\Big( \Res \bZ^\sT Q_k(\bX \bX^\sT) \bZ \Big)  - \frac{\tau^2}{n} \Tr\Big( \Res \bZ^\sT \bZ\Big)\\
\end{aligned}
\]
It can be shown that the coefficients before $\normf_0^2$ is asymptotically vanishing, and by Lemma \ref{lem:gegenbauer_identity}, we have $\E[\sup_{k \ge 2} \| \bQ_k(\bX \bX^\sT) - \id_n \|_{\op}^2] = o_d(1)$. Hence we get
\[
\begin{aligned}
\E_{\bbeta, \beps}[ L_{\RF}(f_d, \bX, \bTheta, \lambda)] = &~ \normf_1^2 \Big\{ 1 - \frac{1}{n}  \Tr\Big( \Res \bZ^\sT \bH \bZ \Big) \Big\}  + ( \normf_\star^2 + \tau^2) \cdot \Big\{ 1 -  \frac{1}{n} \Tr\Big( \Res \bZ^\sT \bZ \Big) \Big\} + o_{d, \P}(1). 
\end{aligned}
\]

Using the fact that 
\[
\Res \bZ^\sT = \bZ^\sT \oRes,
\]
we have 
\[
\begin{aligned}
\E_{\bbeta, \beps}[L_{\RF}(f_d, \bX, \bTheta, \lambda)] \stackrel{\cdot}{=}&~ \normf_1^2 \cdot \frac{\psi_1 \lambda}{d} \Tr ( \oRes \bH ) + ( \normf_\star^2 + \tau^2) \cdot \frac{\psi_1 \lambda}{d}  \Tr ( \oRes) + o_{d, \P}(1). \\
\end{aligned}
\]

\noindent
{\bf Step 2. The norm square of minimizers. }

We have 
\[
\begin{aligned}
\| \ba \|_2^2 =&~ \| \by^\sT \bZ  \Res \|_2^2 / d = \by^\sT \bZ  \Res^2 \bZ^\sT \by / d,
\end{aligned}
\]
so that 
\[
\begin{aligned}
\E_{\bbeta, \beps}[ \| \ba \|_2^2] =&~  \E_{\bbeta}[\boldf^\sT \bZ  \Res^2 \bZ^\sT \boldf] / d + \psi_1 \E_\beps[\beps^\sT \bZ  \Res^2 \bZ^\sT \beps] / d \\
=&~  \E_{\bbeta}\Big[ \Big( \sum_{k = 0}^\infty \bY_{\bx, k} \bbeta_k \Big)^\sT \bZ  \Res^2 \bZ^\sT \Big( \sum_{k = 0}^\infty \bY_{\bx, k} \bbeta_k \Big) \Big] / d +  \tau^2 \Tr\Big( \Res^2 \bZ^\sT \bZ \Big) / d \\
=&~  \sum_{k = 0}^\infty \normf_k^2\cdot \Tr \Big( \Res^2 \bZ^\sT Q_k(\bX \bX^\sT) \bZ \Big) / d +  \tau^2  \Tr\Big( \Res^2 \bZ^\sT \bZ \Big) / d \\
=&~  \normf_1^2 \Tr \Big( \Res^2 \bZ^\sT \bH \bZ \Big) / d + (\normf_\star^2 +  \tau^2) \cdot \Tr\Big( \Res^2 \bZ^\sT \bZ \Big) / d  + o_{d, \P}(1). \\
\end{aligned}
\]

\noindent
{\bf Step 3. The derivatives of the log determinant. }

Define $\bq = (s_1,s_2, t_1,t_2, p) \in \R^5$, and introduce a block matrix $\bA\in\R^{M\times M}$ with $M=N+n$, defined by
\begin{align}\label{eqn:matrix_A_bis}
\bA = \left[\begin{matrix}
s_1\id_N+s_2\bQ & \bZ^\sT  + p \bZ_1^{\sT}\\
\bZ + p \bZ_1 & t_1\id_n+t_2\bH
\end{matrix}\right]\,.
\end{align}
For any $\xi \in \C_+$, we consider the quantity
\[
G_d(\xi; \bq) = \frac{1}{d} \sum_{i = 1}^M \log(\lambda_i(\bA(\bq)) - \xi). 
\]
With simple algebra, we can show that
%
\begin{equation}\label{eqn:connection_G_Psi_appendix}
\begin{aligned}
\partial_{t_1} G_d(\imagunit u; \bzero) =&~ \frac{\imagunit u}{d} \Tr\Big( ( u^2 \id_n + \bZ \bZ^\sT )^{-1} \Big),\\
\partial_{t_2} G_d(\imagunit u; \bzero) =&~ \frac{\imagunit u}{d} \Tr\Big( ( u^2 \id_n + \bZ \bZ^\sT )^{-1} \bH  \Big),\\
\partial_{s_1, t_1}^2 G_d(\imagunit u; \bzero) =&~ - \frac{1}{d} \Tr\Big( ( u^2 \id_N + \bZ^\sT  \bZ)^{-2} \bZ^\sT \bZ  \Big),\\
\partial_{s_1, t_2}^2 G_d(\imagunit u; \bzero) =&~ - \frac{1}{d} \Tr\Big( ( u^2 \id_N + \bZ^\sT  \bZ)^{-2} \bZ^\sT \bH \bZ  \Big).\\
\end{aligned}
\end{equation}
Hence, we have
\[
\begin{aligned}
\E[L_{\RF}(f_d, \bX, \bTheta, \lambda)] =&~ - \normf_1^2 \cdot \imagunit \Big( \frac{ \psi_1 \lambda}{\psi_2} \Big)^{1/2} \partial_{t_2} \E[G_d(\imagunit (\lambda \psi_1 \psi_2)^{1/2}; \bzero)] \\
&~ - (\normf_\star^2 + \tau^2) \cdot \imagunit \Big( \frac{ \psi_1 \lambda}{\psi_2} \Big)^{1/2} \partial_{t_1} \E[ G_d(\imagunit (\lambda \psi_1 \psi_2)^{1/2}; \bzero)] + o_{d}(1),  \\
\end{aligned}
\]
and
\[
\begin{aligned}
\E [ \| \ba \|_2^2] =&~ -  \normf_1^2 \partial_{s_1, t_2}^2 \E[ G_d(\imagunit (\lambda \psi_1 \psi_2)^{1/2}; \bzero)] - (\normf_\star^2 +  \tau^2) \cdot \partial_{s_1, t_1}^2 \E[G_d(\imagunit (\lambda \psi_1 \psi_2)^{1/2}; \bzero)] + o_{d}(1). 
\end{aligned}
\]
By Lemma \ref{lem:bounded_derivatives_G}, we get
\[
\begin{aligned}
\E[L_{\RF}(f_d, \bX, \bTheta, \lambda)] =&~ - \normf_1^2 \cdot \imagunit \Big( \frac{ \psi_1 \lambda}{\psi_2} \Big)^{1/2} \partial_{t_2} g(\imagunit (\lambda \psi_1 \psi_2)^{1/2}; \bzero) \\
&~ - (\normf_\star^2 + \tau^2) \cdot \imagunit \Big( \frac{ \psi_1 \lambda}{\psi_2} \Big)^{1/2} \partial_{t_1} g(\imagunit (\lambda \psi_1 \psi_2)^{1/2}; \bzero) + o_{d}(1),  \\
\end{aligned}
\]
and
\[
\begin{aligned}
\E_{\bbeta, \beps} [ \| \ba \|_2^2] =&~ -  \normf_1^2 \partial_{s_1, t_2}^2 g(\imagunit (\lambda \psi_1 \psi_2)^{1/2}; \bzero) - (\normf_\star^2 +  \tau^2) \cdot \partial_{s_1, t_1}^2 g(\imagunit (\lambda \psi_1 \psi_2)^{1/2}; \bzero) + o_{d, \P}(1), 
\end{aligned}
\]
where $g$ is given in Eq. (\ref{eqn:formula_g}). The derivatives of $g$ can be obtained by differentiating Eq. (\ref{eqn:log_determinant_variation}) and using Daskin's theorem. The theorem then follows by simple calculus.

\end{document}